\DeclareMathAlphabet{\mathpzc}{OT1}{pzc}{m}{it}
\newcommand{\R}{\mathbb{R}}
\newcommand{\C}{\mathbb{C}}
\newcommand\Z{\mathbb{Z}}
\newcommand{\N}{\mathbb{N}}
\newcommand{\Q}{\mathbb{Q}}
\renewcommand{\S}{\mathbb{S}}
\newcommand{\Ab}{\mathbf{A}}
\newcommand{\Bb}{\mathbf{B}}
\newcommand{\Mb}{\mathbf{M}}
\newcommand{\Pb}{\mathbb{P}}
\newcommand{\Ub}{\mathbf{U}}
\newcommand{\Vb}{\mathbf{V}}
\newcommand{\fracp}{\mathfrak{p}}
\newcommand{\frakp}{\mathfrak{p}}
\newcommand{\ee}{\mathbf{e}}
\newcommand{\hh}{\mathbf{h}}
\newcommand{\pp}{\mathbf{p}}
\newcommand{\qq}{\mathbf{q}}
\newcommand{\uu}{\mathbf{u}}
\newcommand{\xx}{\mathbf{x}}
\newcommand{\yy}{\mathbf{y}}
\newcommand{\Ocal}{\mathcal{O}}
\newcommand{\Pcal}{\mathcal{P}}
\newcommand{\Qcal}{\mathcal{Q}}
\newcommand{\Hbb}{\mathbb{H}}
\newcommand{\OO}{\mathscr{O}}
\newcommand{\id}{\mathrm{id}}
\newcommand{\Id}{\mathrm{Id}}
\newcommand{\SL}{{\rm SL}}
\newcommand{\GL}{{\rm GL}}
\newcommand{\Aa}{\textrm{Area}}
\newcommand{\vol}{{\rm vol}}
\newcommand{\Vol}{{\rm Vol}}
\newcommand{\End}{\mathrm{End}}
\newcommand{\Jac}{\mathrm{Jac}}
\renewcommand{\Im}{\mathrm{Im}}
\renewcommand{\Re}{\mathrm{Re}}
\newcommand{\Prym}{\mathrm{Prym}}
\renewcommand{\div}{\mathrm{div}}
\newcommand{\Span}{\mathrm{Span}}
\newcommand{\odd}{\mathrm{odd}}
\newcommand{\hol}{\mathrm{hol}}
\newcommand{\cyl}{\mathrm{cyl}}
\newcommand{\s}{\sigma}
\newcommand{\res}{\mathrm{res}}
\newcommand{\reg}{\mathrm{reg}}
\newcommand{\ord}{\mathrm{ord}}
\newcommand{\area}{\mathrm{area}}
\newcommand{\XD}{\mathcal{X}_D}
\newcommand{\hXD}{\hat{\mathcal{X}}_D}
\newcommand{\CD}{\mathcal{C}_D}
\newcommand{\hCD}{\hat{\mathcal{C}}_D}
\newcommand{\tCD}{\tilde{\mathcal{C}}_D}
\newcommand{\hxx}{\hat{\mathbf{x}}}
\newcommand{\hyy}{\hat{\mathbf{y}}}
\newcommand{\hpp}{\hat{\mathbf{p}}}
\newcommand{\vide}{\varnothing}
\newcommand{\Sig}{\Sigma}
\newcommand{\sig}{\sigma}
\newcommand{\eps}{\epsilon}
\newcommand{\ol}{\overline}
\newcommand{\ul}{\underline}
\def\supp{\mathrm{supp}}
\newcommand{\cA}{\mathcal{A}}
\newcommand{\cB}{\mathcal{B}}
\newcommand{\cC}{\mathcal{C}}
\newcommand{\cD}{\mathcal{D}}
\newcommand{\cE}{\mathcal{E}}
\newcommand{\cH}{\mathcal{H}}
\newcommand{\cL}{\mathcal{L}}
\newcommand{\cM}{\mathcal{M}}
\newcommand{\cN}{\mathcal{N}}
\newcommand{\cO}{\mathcal{O}}
\newcommand{\cQ}{\mathcal{Q}}
\newcommand{\cR}{\mathcal{R}}
\newcommand{\cS}{\mathcal{S}}
\newcommand{\cT}{\mathcal{T}}
\newcommand{\cU}{\mathcal{U}}
\newcommand{\cV}{\mathcal{V}}
\newcommand{\cW}{\mathcal{W}}
\newcommand{\cX}{\mathcal{X}}
\newcommand{\cY}{\mathcal{Y}}
\newtheorem{Theorem}{Theorem}[section]
\newtheorem{Corollary}[Theorem]{Corollary}
\newtheorem{Lemma}[Theorem]{Lemma}
\newtheorem{Proposition}[Theorem]{Proposition}
\newtheorem{Claim}[Theorem]{Claim}
\theoremstyle{remark}
\newtheorem{Remark}[Theorem]{Remark}
\begin{document}
\title[Siegel-Veech constants for eigenform loci]{Intersection theory and Siegel-Veech constants for Prym eigenform loci in $\Omega\cM_3(2,2)^\odd$}
\author{Duc-Manh Nguyen}
\address{Université de Tours, Université d'Orléans, CNRS, IDP, UMR 7013, Parc de Grandmont, 37200 Tours, France}
\email[D.-M.~Nguyen]{duc-manh.nguyen@univ-tours.fr}

\date{\today}
\begin{abstract}
We compute  the Siegel-Veech constants associated to saddle connections with distinct endpoints on Prym eigenforms for real quadratic orders with  non-square discriminant in $\Omega \cM_3(2,2)^\odd$.
\end{abstract}

\maketitle

\section{Introduction}\label{sec:intro}
\subsection{Statement of the main result}\label{subsec:statement}
Siegel-Veech constants are dynamical invariants associated with $\GL^+(2,\R)$-orbit closures in moduli space of translation surfaces. Let $\cN$ be a $\GL^+(2,\R)$-orbit closures in a stratum $\Omega\cM_g(\kappa)$ of translation surfaces. It follows from the works of Eskin-Mirzakhani~\cite{EM18} and Eskin-Mirzakhani-Mohammadi~\cite{EMM13} that the subset $\cN_1 \subset \cN$ of surfaces with unit area in $\cN$ is the support of an ergodic $\SL(2,\R)$-invariant probability measure $\nu$. Given any configuration $\cC$ of saddle connections on surfaces in $\cN$, the corresponding Siegel-Veech transform of  any  integrable function  with compact support $\varphi$ on $\R^2$ is the following function 
$$
\begin{array}{cccc}
\widehat{\varphi}: & \Omega_1\cM_g(\kappa) & \to & \R \\	
   & \widehat{\varphi}(M)  & \mapsto  &\sum_{\gamma}\varphi(\hol_M(\gamma))
\end{array}
 $$ 
where $\Omega_1\cM_g(\kappa)$ is the set of surfaces of unit area in $\Omega\cM_g(\kappa)$, $\gamma$ runs through the set of  saddle connections in configuration $\cC$ on  $M$, and $\hol_M(\gamma)$ is the  holonomy vector (equivalently, the period) of $\gamma$. In \cite{Vee98} Veech showed that for all $\varphi$ we have
\begin{equation}\label{eq:S-V:constant:def}
	\int_{\R^2}\varphi d\lambda_{\rm Leb} =c_{\cC}(\nu)\int_{\Omega_1\cM_g(\kappa)}\widehat{\varphi}d\nu.
\end{equation}
where $c_{\cC}(\nu)$ is a constant depending only on $\nu$.
It was proved in \cite{EMM13} that  $c_{\cC}(\nu) >0$ for all ergodic $\SL(2,\R)$-invariant probability measure on $\Omega_1\cM_g(\kappa)$ (for the case $\nu$ is the Masur-Veech volume, this was proved in \cite{EM01}). In fact $c_{\cC}(\nu)$ is the average asymptotic of the number of saddle connections in configuration $\cC$ on every surface $M$ whose $\GL(2,\R)$-orbit  closure equals $\cN$ (cf. \cite[Th. 2.12]{EMM13}). This asymptotic is particularly relevant in applications to  billiards in rational polygons.

If one considers holonomy vectors of regular closed geodesics weighted by the area of the cylinder formed by geodesics in the same homotopy class,  the corresponding constant is called the area Siegel-Veech constant and denoted by $c_{\area}$. Those constants are of interest since they are involved in the computation of the Lyapunov exponents of the Hodge bundle over the Teichm\"uller geodesic flow on moduli space (see \cite{EKZ14}).

%

Getting explicit values of Siegel-Veech constants is a challenging problem of the field.
For lattice surfaces (which generate closed $\GL^+(2,\R)$-orbits), and branched covers of lattice surfaces, such constants were computed by works of  Veech~\cite{Vee89}, Gutkin-Judge~\cite{GJ01}, Eskin-Masur-Schmoll~\cite{EMS03}, and  Eskin-Marklof-Morris~\cite{EMM06}.
For strata of translation surfaces and strata of quadratic differentials equipped with Masur-Veech measures, those  constants were computed by Eskin-Masur-Zorich~\cite{EMZ03}, Masur-Zorich\cite{MZ08}, and Goujard~\cite{Gouj16}. 
The $c_{\area}$ Siegel-Veech constants have also been investigated by many authors see for instance \cite{CM12, EKZ14, Gouj15, CMS23, CGM25}. 


The Prym eigenform loci are affine invariant submanifolds in moduli spaces of translation surfaces  discovered by McMullen~\cite{McM:prym}. Up to now these are the only known infinite family of primitive  invariant submanifolds 
(in the sense that they are not obtained from covering constructions) in fixed genus. It is worth noticing that those loci only exist in genus $2,3,4,5$, and that their affine definition field is (real) quadratic over  $\Q$.
The Siegel-Veech constants of Prym eigenforms in genus two have been computed by Bainbridge~\cite{Bai:GT,Bai:GAFA}.

The main aim of this paper is to compute Siegel-Veech constants for Prym eigenform loci in the stratum $\Omega\cM_3(2,2)^\odd$ of genus three translation surfaces  with two double zeros and odd spin.  Those loci are three-dimensional suborbifolds of  $\Omega\cM_3(2,2)^\odd$.
To state our main result, let us recall some basic facts about Prym eigenform loci in genus two and genus three. Let $D$ be a positive integer such that $D>1$ and $D\equiv 0,1 [4]$. 
We denote by $\cO_D$ the real quadratic order of discriminant $D$.   Let $\Omega E_D(\kappa)$ denote the locus of Prym eigenforms for real multiplication by $\cO_D$ in $\Omega E_D(\kappa)$ (see \textsection\ref{subsec:setting} for  more details on Prym eigforms). 
By a result of McMullen~\cite{McM:prym}, the loci $\Omega E_D(2)$ and $\Omega E_D(4)$ consist of finitely many $\GL^+(2,\R)$-closed orbits in  $\Omega \cM_2(2)$ and $\Omega \cM_3(4)$ respectively. Let $W_D(2)$ (resp. $W_D(4)$) denote the image of $\Omega E_D(2)$ (resp. $\Omega E_D(4)$) in $\Pb\Omega \cM_2$ (resp. $\Pb\Omega \cM_3$). Then $W_D(2)$ (resp. $W_D(4)$) consists of finitely many Teichm\"uller curves. The classifications of the components of $W_D(2)$ and of $W_D(4)$ are obtained respectively by McMullen~\cite{McM:spin} and by Lanneau-Nguyen~\cite{LN:H4}.
 

 By the results of \cite{LN:components}, for all $D\geq 8$, $\Omega E_D(2,2)^\odd$  is non-empty if only if $D\equiv 0,1, 4 \; [8]$. Moreover, $\Omega E_D(2,2)^\odd$ is connected if $D\equiv 0,4 \; [8]$, and  has two connected components, denoted by $\Omega E_{D+}(2,2)^\odd$ and $\Omega E_{D-}(2,2)^\odd$, in the case $D\equiv 1 \; [8]$. 
 
 In \textsection\ref{subsec:triple:tori:ef:def} we will introduce the notion of triple of tori Prym eigenform, which is a generalization of Prym eigenforms to disconnected Riemann surfaces. For each discriminant $D$, the space of triples of tori Prym eigenforms for $\cO_D$ will be denoted by $\Omega E_D(0^3)$. Let $W_D$ be the quotient of $\Omega E_D(0^3)$ by $\C^*$. We will see that $W_D$ is a finite cover of the modular curve $\Hbb/\SL(2,\Z)$ whose Euler characteristic can be computed explicitly (cf. \textsection\ref{sec:triple:tori:comput}).

 In the case $D\equiv 0 \, [4]$, for $k \in\{1,2,3\}$, let $c^{SV}_k(D)$ denote  the Siegel-Veech constant associated with saddle connections with multiplicity $k$ joining the two singularities on surfaces in $\Omega E_D(2,2)^\odd$.
 For $D\equiv 1 \, [8]$, we denote by $c^{SV}_k(D\pm)$ the similar Siegel-Veech constant for $\Omega E_{D\pm}(2,2)^\odd$.
 The main result of this paper is the following 
 \begin{Theorem}\label{th:Siegel:Veech}
 	Let $D \equiv 0, 1, 4 \;  [8], \, D >9$, be a non-square discriminant. 	In what follows $\chi(.)$ designates the Euler characteristic.
 	\begin{itemize} 
 		\item[$\bullet$] If $4 \, | \, D$,  then we have
 		\begin{align*}
 			c_1^{SV}(D) & = \frac{15\chi(W_D(4))}{\chi(W_D(2))+b_D \chi(W_{D/4}(2))+9\chi(W_D(0^3))}\\
 			c_2^{SV}(D) &= \frac{9\left(\chi(W_{D}(2)) + b_D\chi(W_{D:4}(2)) \right)}{\chi(W_D(2))+b_D \chi(W_{D/4}(2))+9\chi(W_D(0^3))} \\
 			c_3^{SV}(D) & = \frac{3\chi(W_D(0^3))}{\chi(W_D(2))+b_D \chi(W_{D/4}(2))+9\chi(W_D(0^3))}
 		\end{align*}
 		with
 		$$
 		b_D= \left\{
 		\begin{array}{ll}
 			0 & \text{ if } D/4 \equiv 2,3 \, [4] \\
 			4 & \text{ if } D/4 \equiv 0 \, [4] \\
 			3 & \text{ if } D/4 \equiv 1 \, [8] \\
 			5 & \text{ if } D/4 \equiv 5 \, [8]. \\
 		\end{array}
 		\right.
 		$$
 		\item[$\bullet$] If $D\equiv 1 \; [8]$, then
 		\begin{align*}
 			c_1^{SV}(D+) = c_1^{SV}(D-) & = \frac{15\chi(W_{D}(4))}{2\chi(W_D(2))+9\chi(W_D(0^3))}\\
 			c_2^{SV}(D+) = c_2^{SV}(D-) &= \frac{18\chi(W_{D}(2))}{2\chi(W_{D}(2))+9\chi(W_D(0^3))} \\
 			c_3^{SV}(D+) = c_3^{SV}(D-) & = \frac{3\chi(W_{D}(0^3))}{2\chi(W_{D}(2))+9\chi(W_D(0^3))}.
 		\end{align*}
 	\end{itemize}	
 \end{Theorem}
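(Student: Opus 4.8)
The plan is to treat the projectivized eigenform locus $\cX_D := \Pb\Omega E_D(2,2)^\odd$ as a complex surface (orbifold), to compactify it to $\overline{\cX}_D$, and to read off the three Siegel-Veech constants as ratios of intersection numbers on $\overline{\cX}_D$, so that the ``intersection theory'' of the title feeds Veech's formula \eqref{eq:S-V:constant:def}. On $\cX_D$ there is a tautological line bundle $\cL$ whose fiber over a surface is the line spanned by its eigenform; its self-intersection $\cL^2$ encodes the total mass of $\nu$, while the loci of surfaces carrying a short saddle connection in a fixed configuration $\cC_k$ are boundary \emph{curves} of $\overline{\cX}_D$. The first task is to describe these boundary curves, and I expect two qualitatively different types. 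Collapsing a single saddle connection joining the two zeros keeps the underlying Riemann surface smooth (the two order-$2$ zeros merge into one zero of order $4$) and produces a divisor identified, up to finite covers, with $W_D(4)$. Collapsing two or three parallel saddle connections instead pinches the surface, giving genuine Deligne--Mumford degenerations: a genus-two eigenform (type $W_D(2)$, and, when $4\mid D$, also $W_{D/4}(2)$) for $\cC_2$, and a triple of tori (type $W_D(0^3)$) for $\cC_3$.

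Next I would classify the configurations $\cC_k$, $k=1,2,3$, of $k$ saddle connections joining the two distinct zeros, using that the Prym involution $\tau$ exchanges the two zeros and hence acts on the set of such configurations. A $\tau$-invariant bundle of $k$ parallel saddle connections consists of $\tau$-swapped pairs together with, when $k$ is odd, one arc invariant under $\tau$ passing through a fixed point. Enumerating the compatible local models at the collapsing zeros, combined with the degrees of the natural maps from the boundary curves onto the $W_D(\cdot)$, produces the factors $15$, $9$ and $3$ appearing in the numerators; these encode the number of ways a boundary surface can be reopened into a surface of $\Omega E_D(2,2)^\odd$ and form the local (Eskin--Masur--Zorich-type) input of the count.

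The computation then splits into two strands. For the denominator I would compute $\cL^2$ by expressing the class of $\cL$ in terms of the Deligne--Mumford boundary divisors coming from $\cC_2$ and $\cC_3$ and evaluating the resulting self-pairing; using the standard relation $\deg_C\cL=-\tfrac12\chi(C)$ for any curve $C$ in an eigenform locus (equivalently, that the eigenform direction realizes the top Lyapunov exponent), this yields $\cL^2=-\tfrac12\bigl(\chi(W_D(2))+b_D\chi(W_{D/4}(2))+9\chi(W_D(0^3))\bigr)$, hence the common denominator. For the numerators I would pair each boundary divisor, weighted by its multiplicity $15$, $9$, $3$, against $\cL$, giving $15\chi(W_D(4))$, $9\bigl(\chi(W_D(2))+b_D\chi(W_{D/4}(2))\bigr)$ and $3\chi(W_D(0^3))$; note that $W_D(4)$ enters only the numerator of $c_1^{SV}$ precisely because the zero-collision divisor is not part of the Deligne--Mumford boundary and so contributes nothing to $\cL^2$. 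The coefficients $b_D$ come from the arithmetic of the degeneration $\cC_2$: the pinched genus-two eigenform carries real multiplication by $\cO_D$ or by $\cO_{D/4}$ according to the index of the relevant sublattice of the Prym lattice, and counting how many boundary curves fall into each case, as a function of $D/4$ modulo $8$, yields the stated values. The case $D\equiv1\,[8]$, where $\Omega E_D(2,2)^\odd$ splits as $\Omega E_{D\pm}(2,2)^\odd$, runs through the same scheme: here $4\nmid D$ removes the $W_{D/4}(2)$ term, the genus-two boundary of each component amounts to two copies of $W_D(2)$, and the involution interchanging the two components forces $c_k^{SV}(D+)=c_k^{SV}(D-)$.

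The hard part will be the arithmetic hidden in $b_D$: deciding exactly when the pinched genus-two surface acquires real multiplication by $\cO_{D/4}$ rather than $\cO_{D}$, and with what multiplicity, requires a careful study of how the $\cO_D$-module structure and the intersection form on the Prym lattice transform under the collapse of the two-saddle-connection configuration, and it is this analysis that forces the case distinction modulo $8$. A secondary but unavoidable difficulty is the correct normalization of the finite-degree covers from the boundary curves onto the various $W_D(\cdot)$ and the treatment of orbifold points, which must be reconciled so that the $-\tfrac12\chi$ contributions assemble into ratios depending only on the stated Euler characteristics.
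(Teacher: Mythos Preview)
Your outline correctly identifies the boundary strata: collapsing $k=1,2,3$ saddle connections leads to $W_D(4)$, $W_D(2)\cup W_{D/4}(2)$, and $W_D(0^3)$ respectively, and this matches the paper. But the core of your strategy contains a genuine obstruction.

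You propose that the total mass of $\nu$ is encoded by the self-intersection $\cL^2$ of the tautological bundle on $\overline{\cX}_D$. This is false here: $\Omega E_D(2,2)^\odd$ is a rank-one invariant subvariety, so the curvature form $\vartheta$ of the Hodge metric on $\cL$ satisfies $\vartheta = \partial \Ab\wedge\bar\partial \Ab/\Ab^2$ (with $\Ab$ the area), hence $\vartheta^2\equiv 0$ pointwise, and therefore $c_1(\cL)^2=0$ on $\hat{\cX}_D$. The volume form on $\Pb\Omega E_D(2,2)^\odd$ is not $\vartheta^2$ but rather $(\imath\vartheta)\wedge(\frac{\imath}{2}\partial\bar\partial h_1)$, where $h_1$ is the normalized relative period joining the two zeros; the ``rel'' direction contributes an essential factor that is invisible to $\cL$. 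Consequently your proposed identity $\cL^2=-\tfrac12\bigl(\chi(W_D(2))+b_D\chi(W_{D/4}(2))+9\chi(W_D(0^3))\bigr)$ cannot hold, and the denominator cannot be obtained this way.

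The paper's route around this is to work on the universal curve $\tilde{\cC}_D\to\hat{\cX}_D$ rather than on $\hat{\cX}_D$ itself. One defines a closed $(2,2)$-current $\Theta=(\imath\vartheta)\wedge(\frac{\imath}{2}\partial\bar\partial\varphi_c)$ on $\tilde{\cC}_D$, where $\varphi_c$ measures the period of a path from a point to its Prym image; then $\mu(\cX_D)=-\frac{\pi}{24}\langle[\Theta],[\omega_{\tilde{\cC}_D/\hat{\cX}_D}]\rangle$. The relative dualizing sheaf is then expressed via the quotient by the Prym involution, which gives a family of $1$-pointed elliptic curves and hence a map to $\overline{\cM}_{1,1}$; pulling back the relation $\overline{\cH}\sim\frac{1}{12}[\delta_{\rm irr}]$ produces a combination of boundary divisors. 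The pairing of $\Theta$ with these is what yields the denominator. Your explanation for why $W_D(4)$ is absent from the denominator is also off: the $W_D(4)$-stratum $\cS_{1,0}$ \emph{is} a genuine boundary divisor of $\hat{\cX}_D$, but the corresponding term $\ol{\cT}^0_{1,0}$ pairs to zero with $\Theta$ because the eigenform vanishes identically on the $\Pb^1$ component over which that divisor sits.
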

 
 The values of $\chi(W_D(2))$ have been calculated by Bainbridge in \cite{Bai:GT} for all discriminants $D$, and the values of $\chi(W_D(4))$ have been calculated by M\"oller in \cite{Mo14} for non-square discriminants. In \textsection~\ref{sec:triple:tori:comput}, we provide explicit formulas computing the Euler characteristic of  $W_D(0^3)$.  The values of $\chi(W_D(.))$ for $D\leq 50$, $D \equiv 0,1 \; [4]$ non-square, are recorded in Table~\ref{tab:Euler:char:values} below (note that $W_D(4)$ and $W_D(0^3)$ do not exist if $D \equiv 5 \; [8]$). 
 \begin{table}[htbp]
 	\centering
 	\begin{tabular}{|  p{0.5cm}|  p{2 cm}|  p{2 cm} | p{2 cm} || p{0.5 cm} | p{2cm}| p{2cm}| p{2cm}|}
 		\hline
 		 $D$ & $-\chi(W_D(4))$ & $-\chi(W_D(2))$ & $-\chi(W_D(0^3))$ & $D$ & $-\chi(W_D(4))$ & $-\chi(W_D(2))$ & $-\chi(W_D(0^3))$ \tabularnewline
 		 \hline
 		 5   & \centering   -  &  \centering 3/10   &  \centering - & 29 & \centering -  &  \centering 9/2  & \centering -   \tabularnewline
 		 \hline  
 		 8   & \centering   12/5  &  \centering 3/4   &  \centering 1/6 & 32 & \centering 5 &  \centering 6  & \centering 2   \tabularnewline
 		 \hline 
 		 12   & \centering   5/6  &  \centering 3/2   &  \centering 1/3 & 33  & \centering 10 &  \centering 9  & \centering 4   \tabularnewline
 		 \hline
 		 13   & \centering   -  &  \centering 3/2   &  \centering - & 37 & \centering -  &  \centering 15/6  & \centering  -  \tabularnewline
 		 \hline
 		 17   & \centering   10/3  &  \centering 3   &  \centering 4/3 & 40 & \centering 35/6  &  \centering 21/2  & \centering 7/3   \tabularnewline
 		 \hline
 		 20   & \centering   5/2  &  \centering 3   &  \centering 1 & 41 & \centering 40/3  &  \centering 12  & \centering 16/3   \tabularnewline
 		 \hline
 		 21   & \centering  -  &  \centering 3   &  \centering - & 44 & \centering  35/6 &  \centering 21/2  & \centering 7/3   \tabularnewline
 		 \hline 
 		  24   & \centering   5/2  &  \centering 9/2   &  \centering 1 & 45 & \centering - &  \centering 6  & \centering  - \tabularnewline
 		 \hline
 		  28  & \centering   10/3  &  \centering 6   &  \centering 4/3 & 48 & \centering 10 &  \centering 12  & \centering 4   \tabularnewline
 		 \hline
 	\end{tabular}  
 	
 	\caption{Euler characteristics of Teichm\"uller curves in $\partial\Pb\ol{\Omega} E_D(2,2)^\odd$}
 	\label{tab:Euler:char:values}
 \end{table}
 
 For $D\equiv 1 \; [8]$, since $c_k^{SV}(D+)=c_k^{SV}(D-)$, let us denote the common value by $c_k^{SV}(D)$.
 Surprisingly,  for all checked values of $c_k^{SV}(D)$ we always have
 \begin{equation*}
 	c_1^{SV}(D) =\frac{25}{9}, \quad c_2^{SV}(D)=3, \quad c_3^{SV}(D)=\frac{2}{9}.
 \end{equation*}
 By definition, all of the loci $\Omega E_D(2,2)^\odd$ are contained in the locus $\tilde{\cQ}(4,-1^4)$ of  canonical double covers of quadratic differentials in the stratum $\cQ(4,-1^2)$. It follows from the main result of~\cite{AN16} that $\tilde{\cQ}(4,-1^4)$ contains a unique proper rank two invariant suborbifods $\tilde{\cH}(2)$ consisting of unramified double covers of surfaces in $\Omega \cM_2(2)$. Since $\Omega E_D(2,2)^\odd$ is clearly not contained in $\tilde{\cH}(2)$ for any $D$, it follows from the results of~\cite{EMM13} (see also \cite{Doz19}) that as $D \to +\infty$ the $\SL(2,\R)$-invariant probability measure supported on $\Omega_1E_D(2,2)^\odd$ equidistributes to the one supported on $\tilde{\cQ}_1(4,-1^4)$ ($\tilde{\cQ}_1(4,-1^4)$ is the space of surfaces of unit area in $\tilde{\cQ}(4,-1^4)$). As a consequence, as $D \to \infty$, the sequence $c_k^{SV}(D)$ converges to the corresponding Siegel-Veech constant of $\tilde{\cQ}(4,-1^4)$ that we denote   by $\tilde{c}_k^{SV}(4,-1^4)$. Following the strategy of Eskin-Masur-Zorich~\cite{EMZ03} (see also \cite{Gouj15, Gouj16}), one can compute $\tilde{c}_k^{SV}(4,-1^4)$ from the Masur-Veech volumes of $\tilde{\cQ}_1(4,-1^4)$ and its boundary strata.  It turns out that we have
 $$
 \tilde{c}_1^{SV}(4,-1^4) =\frac{25}{9}, \quad \tilde{c}_2^{SV}(4,-1^4) =3, \quad \tilde{c}_3^{SV}(4,-1^4) =\frac{2}{9}.
 $$
 In a forthcoming work \cite{Ng26}, we will prove that the constants $c^{SV}_k(D)$ is indeed independent of $D$ and has the expected value.  Interestingly,  in genus two, Bainbridge~\cite{Bai:GAFA} also showed that the Siegel Veech constants of the loci $\Omega E_D(1,1)$ are actually the same for all $D$.

 \subsection{Strategy}\label{subsec:strategy}
 It has been known since pioneer work of Eskin-Masur-Zorich~\cite{EMZ03} that Siegel-Veech constants can be computed from the volumes of  invariant suborbifolds. Our first task is to define a suitable volume form on $\Omega E_D(2,2)^\odd$.  In \textsection\ref{sec:vol:form}, we give a construction of volume forms for Prym eigenform loci in all strata. By pushing forward, we obtain a volume form $d\mu$ on the  space $\Pb\Omega E_D(\kappa):=\Omega E_D(\kappa)/\C^*$.  
 The core of the current paper is the computation of the volume of $\Pb\Omega E_D(2,2)^\odd$ with respect to $d\mu$. 
 
 We will compute $\mu(\Pb\Omega E_D(2,2)^\odd)$ by intersection theory in a compact complex orbifold. To this purpose we first  need a convenient compactification of $\Pb\Omega E_D(2,2)^\odd$. 
 By definition, every element $(X,\omega)$ of $\Omega E_D(2,2)^\odd$ admits an involution $\tau$ which has $4$ fixed points and exchanges the two zeros of $\omega$. The quotient $X/\langle \tau\rangle$ is an elliptic curve with five marked points, four of which are the images of the fixed points of $\tau$, the fifth one is the image of the zeros of $\omega$.  
%
In the literature, the Riemann surface $X$ is called a {\em  bielliptic curve}. 
In view of this, we consider the space $\cB_{4,1}$ of smooth curves of genus three admitting a ramified double cover over an elliptic curve (there must be  $4$ branched points), together with a pair of points that are permuted by the deck transformation. 
It is well known that $\cB_{4,1}$ admits an orbifold compactification $\ol{\cB}_{4,1}$ consisting of stable curves that are {\em admissible double covers}  of curves in $\ol{\cM}_{1,5}$. 
Let $\Omega\ol{\cB}_{4,1}$ denote the Hodge bundle over $\ol{\cB}_{4,1}$. By definition, every curve $C \in \ol{\cB}_{4,1}$ comes equipped with an involution $\tau_C$. Denote by $\Omega(C)^-$ the space of Abelian differentials on $C$ (that is holomorphic sections of the dualizing sheaf $\omega_C$) that are anti-invariant under $\tau_C$. We have $\dim_\C \Omega(C)^-=2$, and $\Omega(C)^-$ is in fact the fiber over $C$ of a rank two holomorphic vector bundle $\Omega'\ol{\cB}_{4,1}\to \ol{\cB}_{4,1}$. 

Let $\Omega'\cB_{4,1}$ be the restriction of $\Omega'\ol{\cB}_{4,1}$ to $\cB_{4,1}$, and $\Omega'\cB_{4,1}(2,2)$ the set of pair $(C,\xi)$ in $\Omega'\cB_{4,1}$ such that $\xi$ has double zeros at the pair of marked points permuted by $\tau_C$. 
Let $\Omega \cX_D$ denote the preimage of $\Omega E_D(2,2)^\odd$ in $\Omega'\cB_{4,1}(2,2)$, and $\cX_D$ the projection of $\Omega\cX_D$ in $\Pb\Omega'\cB_{4,1}$. 
By definition $\Omega\cX_D$ is the complement of the zero section in the total space of the tautological line bundle over $\cX_D$.
We  have a covering $\hat{\rho}_2: \cX_D \to \Pb\Omega E_D(2,2)^\odd$ of degree $4!=24$. 
Denote by $d\mu$ the pullback of the volume form on $\Pb\Omega E_D(2,2)^\odd$ to $\cX_D$. Our goal now is to compute $\mu(\cX_D)$. 
  
Let $\ol{\cX}_D$ be the closure of $\XD$ in $\Pb\Omega'\ol{\cB}_{4,1}$. In general, $\ol{\cX}_D$ is a singular surface. We will show that  the normalization $\hat{\cX}_D$ of $\ol{\cX}_D$ is an orbifold. 
One can coarsely partition the boundary of $\hat{\cX}_D$ into two parts: $\partial_1\hat{\cX}_D$   consists of Abelian differentials which have no simple poles, and $\partial_\infty \hat{\cX}_D$ consists of differentials with simple poles (on singular curves). We will show that $\partial_1\hat{\cX}_D$ is a finite union of the complex curves each of which is a finite cover of one of the curves in $\{W_D(4), W_D(2), W_{D/4}(2), W_D(0^3)\}$. Moreover, points in $\partial_1\hat{\cX}_D$ are smooth points of $\hat{\cX}_D$, while $\partial_\infty \hat{\cX}_D$ contains all the singular points of $\ol{\cX}_D$.

Let $\ol{\cC}_D$ (resp. $\cC_D$) be the universal curve over $\ol{\cX}_D$ (resp. over $\XD$), and $\hat{\cC}_D$ be the pullback of  $\ol{\cC}_D$ to $\hat{\cX}_D$.  
By construction, we have an involution $\hat{\tau}$ on $\hat{\cC}_D$ which restricts to the Prym involution on each fiber of the map $\hat{\pi}: \hat{\cC}_D \to \hat{\cX}_D$.
Note that $\hat{\cC}_D$ is a three-dimensional variety which is singular in general.
Applying some slight modification to $\hat{\cC}_D$, we obtain an orbifold $\tilde{\cC}_D$ together with a projection $\tilde{\pi}: \tilde{\cC}_D \to \hat{\cX}_D$ verifying the followings
\begin{itemize}
	\item[$\bullet$] the fibers of $\tilde{\pi}$ are semi-stable curves,
	
	\item[$\bullet$] the tautological sections associated to the marked points in $\hat{\cC}_D$ lift to sections of $\tilde{\pi}$,
	
	\item[$\bullet$] the boundary of $\tilde{\cC}_D$ is a normal crossing divisor,
	
	\item[$\bullet$] the involution $\hat{\tau}$  on $\hat{\cC}_D$ extends to an involution $\tilde{\tau}$ of $\tilde{\cC}_D$ preserving each fiber of $\tilde{\pi}$.  
\end{itemize}  
We will show that there is a smooth closed $(2,2)$-form $\Theta$ on $\cC_D$  which satisfies
$$
\mu(\cX_D)=\int_{\XD}d\mu=\frac{1}{2}\cdot\int_{\Sigma_5\cap\cC_D}\Theta
$$ 
where $\Sigma_5$ is the divisor in $\tilde{\cC}_D$ associated to the zeros of the differentials parametrized by $\cX_D$.  
The key of our approach is that $\Theta$ defines a closed current on $\tilde{\cC}_D$ with the following properties
\begin{itemize}
	\item[(a)] for any divisor $\cD \subset \hat{\cX}_D$, $\langle [\Theta],[\tilde{\pi}^*\cD]\rangle = 8\pi\cdot c_1(\OO(-1))\cdot [\cD]$, where $[\Theta]$ and $[\cD]$ are the cohomology classes of $\Theta$ and $\cD$ respectively, and $\OO(-1)$ is the tautological line bundle over $\hXD$,
	
	\item[(b)] if $\Sigma \subset \tilde{\cC}_D$ is a section of  $\tilde{\pi}$ which intersects fibers of $\tilde{\pi}$ at smooth points, then we have
	$$
	\langle [\Theta],  [\Sigma]\rangle =\int_{\Sigma\cap \cC_D}\Theta,
	$$

	\item[(c)] for any irreducible component $\cT$ of $\partial_\infty \tilde{\cC}_D:=\tilde{\pi}^{-1}(\partial_\infty\hXD)$, we have $\langle [\Theta], [\cT] \rangle =0$. 
\end{itemize}
Moreover we have
\begin{equation}\label{eq:vol:n:Theta}
	\mu(\cX_D)=
	 \frac{-\pi}{24}\cdot \langle[\Theta],[\omega_{\tilde{\cC}_D/\hat{\cX}_D}]\rangle 
\end{equation} 
where $\omega_{\tilde{\cC}_D/\hat{\cX}_D}$ is the relative dualizing sheaf of $\tilde{\pi}$.

To compute $\langle[\Theta],[\omega_{\tilde{\cC}_D/\hat{\cX}_D}]\rangle $, we look for  a convenient expression of $[\omega_{\tilde{\cC}_D/\hat{\cX}_D}]$. By construction, the quotient $\tilde{\cC}_D/\langle \tilde{\tau}\rangle$ gives a family $\tilde{\cE}_D$ of semi-stable  curves of genus one and $5$ marked points over $\hat{\cX}_D$. Forgetting the first four marked points and passing to the stable model, we obtain a family $\varpi: \cE_D \to \hat{\cX}_D$ of $1$-pointed stable curve of genus one.
It is not difficult to compute the difference between $\omega_{\tilde{\cC}_D/\hXD}$ and the pullback of $\omega_{\cE_D/\hXD}$ to $\tilde{\cC}_D$. 
Using the induced morphism $\hat{\cX}_D \to \ol{\cM}_{1,1}$ and the  fact that $\omega_{\ol{\cC}_{1,1}/\ol{\cM}_{1,1}}$ is the pullback of a $\Q$-divisor in $\ol{\cM}_{1,1}$, we can express $[\omega_{\tilde{\cC}_D/\hXD}]$ as a combination of divisors with support in $\partial \tCD$. The fundamental properties of $[\Theta]$ then allow us to compute $\langle [\Theta], [\omega_{\tCD/\hXD}]\rangle$ in terms of the Euler characteristics of the curves in $\{W_D(2), W_{D/4}(2), W_{D}(0^3)\}$. 
The derivation of the Siegel-Veech constants from the volume of $\Omega E_D(2,2)^\odd$ follows from standard arguments.

 \subsection{Remarks and related works}\label{subsec:comment} \hfill
\begin{itemize} 
	\item[(i)] An analogue of the $(2,2)$-form $\Theta$ can be defined on the universal curve over any (projectivized) invariant suborbifold $\cM$ which has rel one, that is the leaves of the kernel foliation in $\cM$ have dimension one.  It can be shown that \eqref{eq:vol:n:Theta} still holds in this case. Thus, in principle, we have a method to compute the volume of such invariant suborbifolds. However, to get the explicit values, it is necessary to have an adequate expression of the cohomology class of the  relative dualizing sheaf.

	\item[(ii)] In \cite{McM:annals} McMullen  defined an $\SL(2,\R)$-invariant measure on the loci $\Omega_1 E_D(1,1)$ of Prym eigenforms with unit area in the stratum $\Omega \cM_2(1,1)$. It can be shown that the induced measure on $\Pb \Omega E_D(1,1)$ coincides with the volume form $d\mu$ constructed in this paper up to a constant.  
	
	\item[(iii)] The volumes of $\Omega_1 E_D(1,1)$ have been computed  by Bainbridge~\cite{Bai:GT, Bai:GAFA}. An essential ingredient of Bainbridge's approach is the identification of $\Pb\Omega E_D(1,1)$ with open dense subsets of  Hilbert modular surfaces. In our situation, even though there is a map from $\Pb \Omega E_D(2,2)^\odd$ onto an open dense subsets of a  version of Hilbert modular surfaces (see~\cite{Mo14}), the author is not aware of any result on the degree of this map in the literature.
	
	\item[(iv)] A natural compactification of $\Pb \Omega E_D(2,2)^\odd$ is its closure in $\Pb\Omega \cM_3$. However, information about the Prym involution, which is essential to the study of  Prym eigenforms, might be lost in the boundary of this closure. For this reason, the compactification of the lift of $\Pb\Omega E_D(2,2)^\odd$ in the anti-invariant Hodge bundle $\Pb\Omega'\ol{\cB}_{4,1}$ seems to be more relevant.
	
	\item[(v)] Another important invariant of $\GL(2,\R)$-orbit closures of translation surfaces is the Siegel-Veech constant $c_{\cyl}$ associated with the counting of cylinders. Unfortunately, the results of this paper do not allow us to compute this constant for $\Omega E_D(2,2)^\odd$. 
	
	\item[(vi)] In view of the results in this paper, here are some open questions:  How to compute the Siegel-Veech constants associated to cylinders on Prym eigenforms? Can the method of this paper be generalized to other Prym eignform loci for instance $\Omega E_D(2,1,1)$, or to the case $D$ is a square? 
	
\end{itemize}

 \subsection{Outline} The paper is organized as follows: in \textsection\ref{sec:vol:form} we recall some basic properties of Prym eigenforms in general. We then give a construction of a volume form $d\vol$ on any loci $\Omega E_D(\kappa)$ and define the induced measure $\mu$ on $\Pb \Omega E_D(\kappa)$. It turns out that $\mu$ is the measure associated with a volume form $d\mu$. The main result of this section is Theorem~\ref{th:vol:form:proj:diff:express} which provides an explicit local expression of $d\mu$. 
 
 In \textsection\ref{sec:eigen:form:g3} we recall some geometric characteristics of Prym eigenforms in $\Omega E_D(2,2)^\odd$.
 We emphasize on the facts that the surfaces in $\Omega E_D(2,2)^\odd$ are completely periodic, and their cylinder diagrams are parametrized by a finite set.
 
 In \textsection\ref{sec:adm:covers}, we introduce the space of bielliptic curve $\cB_{4,1}$ and its closure $\ol{\cB}_{4,1}$. We define $\Omega \cX_D$ (resp. $\XD$) as the preimage of $\Omega E_D(2,2)^\odd$ (resp. $\Pb\Omega E_D(2,2)^\odd$) in the anti-invariant Hodge bundle $\Omega' \ol{\cB}_{4,1}$ (resp. in $\Pb \Omega'\ol{\cB}_{4,1}$). We close this section by showing that the projection $\cX_D \to \Pb\Omega E_D(2,2)^\odd$ has degree $24$.
 
 In \textsection\ref{sec:classify:bdry:str:XD}, we classify the (projectivized) differentials contained in the boundary of the closure $\ol{\cX}_D$ of $\cX_D$ in $\Pb\Omega'\ol{\cB}_{4,1}$. The complete classification is given in Theorem~\ref{th:bdry:eigen:form:H22}. Since the proof of this theorem has no significant connection with the rest  of the  paper, it will be provided in Appendix \textsection\ref{sec:prf:bdry:egein:form:H22}. 
 The geometry of $\ol{\cX}_D$ in the neighborhood of every point in its boundary is analyzed in \textsection\ref{sec:geometry:bdry:XD}. An immediate consequence of the results in \textsection\ref{sec:geometry:bdry:XD} is that the normalization $\hXD$ of $\ol{\cX}_D$ is an orbifold. 
 
 Let $\hat{\pi}: \hCD \to \hXD$ be the universal curve over $\hXD$. In \textsection \ref{sec:normal:univ:curve}, we show that $\hCD$ admits a modification $\tCD$ (obtained by blowing up finitely many points) which is an orbifold such that the projection $\tilde{\pi}: \tCD \to \hXD$ is a family of semi-stable curves which has essentially the same properties as $\hat{\pi}$ (cf. Proposition~\ref{prop:univ:curves:orb}).

 In preparation to the computation of $\mu(\XD)$, in \textsection\ref{sec:div:relations:in:CD} we prove some crucial relations of tautological divisors in $\tCD$. In particular, in Proposition~\ref{prop:rel:cotangent:class}, we prove a formula which expresses the class $[\omega_{\tCD/\hXD}]$ as a combination of divisors supported in the boundary of $\tCD$ and tautological sections of $\tilde{\pi}$.
 
 In \textsection\ref{sec:curv:curent:vol} we introduce the $(2,2)$-form $\Theta$ on $\cC_D$ and show that it defines a closed current in $\tCD$. To prove the latter, among other things, one needs a detailed description of the neighborhood of every point in the boundary of $\tCD$ as well as an explicit local section of the relative dualizing sheaf. In particular, the constructions in \textsection \ref{sec:geometry:bdry:XD} play an important role in the proof. 
 
 In \textsection\ref{sec:prop:Theta} we prove the fundamental properties of the current  $[\Theta]$. As a consequence, in \textsection\ref{sec:vol:XD:n:Theta} we obtain a formula expressing the volume of $\XD$ as intersection number of $[\Theta]$ and some boundary divisors in $\tCD$ (cf. Theorem~\ref{th:vol:XD}). It turns out that the divisors involved in the computation of $\mu(\XD)$ project to strata of $\partial \hXD$ that are finite covers of the curves $W_D(2), W_{D/4}(2), W_D(0^3)$. In \textsection\ref{sec:triple:tori:comput} and \textsection\ref{sec:W:Teich:curves} we show that the intersection of $[\Theta]$ and the divisors mentioned above can be computed from the Euler characteristics of  $W_D(2), W_{D/4}(2), W_D(0^3)$. For this, it is necessary to determine the degree of the map from some strata of $\partial \hXD$ onto $W_D(2)$ and $W_{D/4}(2)$, as well as the degree of natural projections from $W_D(0^3)$ onto the modular curve $\Hbb/\SL(2,\Z)$.
 
 Once the intersections of $[\Theta]$ and the divisors of $\tCD$ are computed, one immediately deduces the volumes of $\XD$ and of $\Pb\Omega E_D(2,2)^\odd$. Details of the calculations are given in \textsection\ref{sec:vol:Omg:E:D:2:2}. Finally, in \textsection\ref{sec:Siegel:Veech}, we give the proof of Theorem~\ref{th:Siegel:Veech}.

 \subsection{Notation and convention:} Throughout this paper, 
 \begin{itemize}
 	\item[$\bullet$]  $D$ will be an integer such that $D\geq 4$, and $D\equiv 0,1,4 \; [8]$,
 	
 	\item[$\bullet$] $\Delta=\{z\in \C, \; |z| < 1\}$ is the unit disc in $\C$,
 	
 	\item[$\bullet$]  for all $\eps\in \R_{>0}$, $\Delta_\eps = \{z \in \C, \; |z| <  \eps\}$. 
 \end{itemize}

 \subsection{Acknowledgement}
 The author thanks D.~Zvonkin and A.~Page for the helpful discussions.

\section{Volume form on Prym eigenform loci}\label{sec:vol:form}
\subsection{Prym eigenform}\label{subsec:setting}
A real quadratic order is a ring isomorphic to $\Z[x]/(x^2+bx+c)$, with $b,c\in \Z$ such that $D:=b^2-4c >0$. The number $D$ is called the discriminant of the order. 
A quadratic order is determined up to isomorphism  by its discriminant. For all $D \in \N, D\equiv 0,1 \; [4]$, we will denote by $\cO_D$ the real quadratic order of discriminant $D$.

Let $A$ be a polarized Abelian surface. We say that $A$ admits a real multiplication by $\cO_D$ if there exists a faithful ring morphism $\rho: \cO_D \to \End(A)$ such that
\begin{itemize}
	\item the image of $\rho$ consists of self-adjoint endomorphisms with respect to the polarization of $A$.
	
	\item $\rho$ is proper, meaning that if $f \in \End(A)$, and for some $n \in \Z\setminus\{0\}$, we have $nf\in \rho(\cO_D)$, then $f \in \rho(\cO_D)$.
\end{itemize}


Consider a Riemann surface $X$  admitting an involution $\tau$. Let $\Omega(X)^-$  be the eigenspace of the eigenvalue $-1$ for the action of $\tau$ on $\Omega(X)$.
Define  $H_1(X,\Z)^-:=\{c\in H_1(X,\Z), \; \tau_*c=-c\}$. 
The {\em Prym variety} of the pair $(X,\tau)$ to defined to be
$$
\Prym(X,\tau):=(\Omega(X)^-)^*/H_1(X,\Z)^-.
$$
This is an Abelian subvariety of $\Jac(X)$ with polarisation being the restriction of the polarisation on $\Jac(X)$.
Let $\omega$ be a non-trivial holomorphic $1$-form on $X$. The pair $(X,\omega)$ is called a {\em translation surface}. 
Following McMullen~\cite{McM:prym}, we will call an element $(X,\omega)$  a {\em Prym eigenform for real multiplication by $\cO_D$} if we have

\begin{itemize}
	\item $\dim_\C\Prym(X,\tau)=2$, and  $\Prym(X,\tau)$  admits a real multiplication by $\cO_D$,
	
	\item as an element of $\Omega(\Prym(X,\tau))$, $\omega$ is an eigenvector for the action of $\cO_D$ on $\Omega(\Prym(X,\tau))$.
\end{itemize}

Let $g$ be the genus of $X$. Then the pair $(X,\omega)$ is an element of the Hodge bundle $\Omega \cM_g$ over the moduli space $\cM_g$. 
The locus of Prym  eigenform for real multiplication by $\cO_D$ in $\Omega\cM_g$ is denoted by $\Omega E_D$. 
The condition $\dim \Prym(X,\tau)=2$ means that $g(X/\langle\tau\rangle)=g(X)-2$, where $g(.)$ is the genus. It then follows from the Hurwitz formula that we must have $2\leq g \leq 5$
Thus $\Omega E_D$ only exists for $g\in\{2,3,4,5\}$.

The Hodge bundle $\Omega\cM_g$ is naturally stratified as
$$
\Omega\cM_g=\underset{\substack{\kappa=(k_1,\dots,k_n)\\ k_1+\dots+k_n=2g-2}}{\bigsqcup}\Omega\cM_g(\kappa).
$$
where $k_1,\dots,k_n$ are positive integers, and $\Omega\cM_2(k_1,\dots,k_n)$ is the set of Abelian differentials having exactly $n$ zeros with orders $(k_1,\dots,k_n)$. Each $\Omega\cM_g(\kappa)$ is called a stratum of $\Omega\cM_g$.
The intersection of $\Omega E_D$ with a stratum $\Omega\cM_g(\kappa)$ will be denoted by $\Omega E_D(\kappa)$.

It is a well known fact that there is an action of $\GL^+(2,\R)$ on $\Omega \cM_g$ preserving its stratification.
It is shown by McMullen~\cite{McM:prym} that $\Omega E_D(\kappa)$ is a closed suborbifold of $\Omega\cM_g(\kappa)$ which is invariant under the action of $\GL^+(2,\R)$.
If $D$ is  not square then $\Omega E_D(\kappa)$ is primitive in the sense that $\Omega E_D(\kappa)$ does not arise from 
a $\GL^+(2,\R)$-invariant  suborbifold of another space $\Omega\cM_{g'}$ with $g'<g$ by a covering construction.
In particular, it is shown in \cite{McM:prym} that if non-empty, the Prym eigform locus $\Omega E_D(2g-2)$ in the minimal stratum $\Omega \cM_g(2g-2)$ for $g=2,3,4$ consists of finitely many primitive closed $\GL^+(2,\R)$-orbits (their projections into $\cM_g$ are called {\em Teichm\"uller curves}).  
To the author knowledge, the loci $\Omega E_D(\kappa)$, $D$ non-square,  constitute the only known examples of infinite families of primitive $\GL^+(2,\R)$-invariant suborbifolds of $\Omega\cM_g$ for a given $g\geq 2$.

\subsection{Affine structure}\label{sec:affine:structure}
 We first give a description of a neighborhood  of an eigenform $(X,\omega)$ in $\Omega E_D(\kappa)$.
Let $x_1, \dots, x_n$ be the zeros of $\omega$ where $x_i$ has order $k_i$.
Then $\omega$ defines an element  of $H^1(X,\{x_1,\dots,x_n\};\C)$.
 By definition, for any cycle in $H_1(X,\{x_1,\cdots,x_n\};\Z)$ represented by a $C^1$-piecewise path $c$, one has
$$
\omega(c):=\int_c\omega.
$$
If $(X',\omega')\in \Omega \cM_g(\kappa)$ is close enough to $(X,\omega)$, then $H_1(X',\{x'_1,\dots,x'_n\};\Z)$, where $x'_1,\dots,x'_n$ are the zeros of $\omega'$, can be identified with 
$H_1(X,\{x_1,\cdots,x_n\};\Z)$. We thus have a map $\Phi: \cU \to H^1(X,\{x_1,\dots,x_n\},\C)$ defined on a neighborhood $\cU$ of $(X,\omega)$ in $\Omega\cM_g(\kappa)$. 
This map can be defined in more concrete terms as follows: fix a basis $\{\gamma_1,\dots,\gamma_{2g+n-1}\}$ of $H_1(X,\{x_1,\dots,x_n\};\Z)$. Then $\Phi$ is given by
$$
\begin{array}{cccc}
\Phi: & \cU & \to & \C^{2g+n-1} \\
      & (X,\omega) & \mapsto & (\int_{\gamma_1}\omega,\dots,\int_{\gamma_{2g+n-1}}\omega)
\end{array}
$$
The map $\Phi$ is called the {\em period mapping}.
It is a well known fact that period mappings are local biholomorphisms, thus can be used to define an atlas of $\Omega\cM_g(\kappa)$.
Transition maps of this atlas correspond to changing the basis of $H_1(X,\{x_1,\dots, x_n\};\Z)$.

Let $\wp: H^1(X,\{x_1,\dots,x_n\};\C) \to H_1(X,\C)$ be the natural projection. For all any $\eta\in H^1(X,\{x_1,\dots,x_n\};\C)$, $\wp(\eta)$ is the restriction of $\eta$ to the (absolute) cycles in $H_1(X,\C)$.
Define
$$
W:=\Span(\Re(\omega),\Im(\omega)) \subset H^1(X,\C)^-, \quad  \text{ and} \quad W_\R:=W\cap H^1(X,\R)^-.
$$
In \cite{McM:prym}, McMullen proved the following
\begin{Proposition}[McMullen]\label{prop:affine:coord}
The period mapping $\Phi$ identifies a neighborhood of $(X,\omega)$ in $\Omega E_D(\kappa)$ with an open subset of the linear subspace
$$
V:=\wp^{-1}(W)\cap H^1(X,\{x_1,\dots,x_n\};\C)^- \subset H^1(X,\{x_1,\dots,x_n\};\C)^-.
$$
\end{Proposition}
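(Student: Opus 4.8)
The plan is to exhibit $\Omega E_D(\kappa)$, near $(X,\omega)$, as the locus cut out by two linear conditions on the period coordinate $\eta=\Phi(X',\omega')$: an \emph{anti-invariance} condition confining $\eta$ to $H^1(X,\{x_1,\dots,x_n\};\C)^-$, which encodes persistence of the Prym involution, and an \emph{eigenform} condition confining the absolute part $\wp(\eta)$ to $W$, which encodes real multiplication. First I would fix a generator $\theta$ of $\cO_D$ and set $T=\rho(\theta)$, a self-adjoint endomorphism of $\Prym(X,\tau)$ acting on the $4$-dimensional space $H^1(X,\R)^-$. Its characteristic polynomial is $x^2+bx+c$ with $b^2-4c=D>0$ non-square, so $T$ has two distinct real eigenvalues $\lambda,\lambda'$. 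Self-adjointness with respect to the symplectic intersection form $\langle\cdot,\cdot\rangle$ forces $T$ to be semisimple with symplectically orthogonal eigenspaces $E_\lambda,E_{\lambda'}$, each a $2$-dimensional symplectic subspace. Since $\omega$ is a $\lambda$-eigenform, $T(\Re\omega)=\lambda\Re\omega$ and $T(\Im\omega)=\lambda\Im\omega$, and $\Re\omega,\Im\omega$ are $\R$-independent; hence $W_\R=E_\lambda$ and $W=E_\lambda\otimes\C$ is precisely the $\lambda$-eigenspace of $T$ in $H^1(X,\C)^-$, while the second eigenform spans the $\lambda'$-line inside $W':=E_{\lambda'}\otimes\C$.

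Next I would invoke the flatness of $\tau^*$ and of $T$: both are defined over $\Z$, hence parallel for the Gauss--Manin connection, so they transport to every nearby surface through the topological identification underlying the period chart. Thus $W$, $W'$ and $H^1(X,\{x_1,\dots,x_n\};\C)^-$ are \emph{fixed} subspaces, independent of the nearby surface. Necessity of the two conditions is then immediate: if $(X',\omega')\in\Omega E_D(\kappa)$ is close to $(X,\omega)$, then $\omega'$ is anti-invariant, so $\eta\in H^1(X,\{x_1,\dots,x_n\};\C)^-$, and $\omega'$ is a $\lambda$-eigenform, so $\wp(\eta)=[\omega']_\abs$ lies in $W$; therefore $\eta\in V$.

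The substance lies in the converse. Given $\eta\in V$ close to $\Phi(X,\omega)$, the corresponding $(X',\omega')$ lies in the stratum with $\omega'$ anti-invariant, so the involution $\tau'$ isotopic to $\tau$ persists and $\Prym(X',\tau')$ is defined; I must show $T$ still multiplies $\Prym(X',\tau')$ and that $\omega'$ is an eigenform. The key is a Hodge-theoretic splitting. By hypothesis $[\omega']_\abs=\wp(\eta)\in W\cap\Omega(X')^-$. Complete $\omega'$ to a basis $\{\omega',\omega'_2\}$ of the Lagrangian plane $\Omega(X')^-=H^{1,0}(X')^-$ and write $\omega'_2=a+b$ with $a\in W$, $b\in W'$. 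Since the wedge of two holomorphic $1$-forms vanishes, $\langle\omega',\omega'_2\rangle=0$; as $W\perp W'$ and $\omega'\in W$, this yields $\langle\omega',a\rangle=0$, forcing $a\in\C\omega'$ (the symplectic orthogonal of $\omega'$ inside the symplectic plane $W$). Replacing $\omega'_2$ by a suitable $\omega'_2-t\omega'$ puts $\omega'_2\in W'$, so $\Omega(X')^-=\C\omega'\oplus\C\omega'_2$ is $T$-invariant. Hence $T$ preserves the Hodge structure and defines an element of $\End(\Prym(X',\tau'))$; self-adjointness and properness being flat conditions, $\Prym(X',\tau')$ carries real multiplication by $\cO_D$ with $\omega'$ a $\lambda$-eigenform, i.e. $(X',\omega')\in\Omega E_D(\kappa)$. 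A dimension count (both sides equal $\dim_\C W+\dim_\C(\ker\wp\cap H^1(X,\{x_1,\dots,x_n\};\C)^-)$) then shows $V$ is an open neighborhood, not merely a subset, completing the identification.

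The main obstacle is exactly this sufficiency step, where two points deserve care. The first is the persistence of the involution: one must check that confining $\eta$ to the $(-1)$-eigenspace is equivalent to the deformed complex structure remaining $\tau$-equivariant, so that $(X',\tau')$ is again a genuine double cover of the same topological type and $\Prym(X',\tau')$ makes sense. The second is that the flat operator $T$ genuinely integrates to an endomorphism of the deformed abelian variety, which is precisely what the symplectic-orthogonality computation secures. Everything else reduces to linear algebra in the fixed spaces $W$, $W'$ and $H^1(X,\{x_1,\dots,x_n\};\C)^-$.
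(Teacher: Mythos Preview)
The paper does not prove this proposition itself; it attributes the result to McMullen \cite{McM:prym}. Your argument is correct and is essentially McMullen's: the flatness of $\tau^*$ and of the real-multiplication generator $T$ under Gauss--Manin reduces everything to linear algebra in a fixed period chart, and the symplectic-orthogonality computation (forcing the second holomorphic form into $W'$) is exactly what verifies that $T$ preserves the deformed Hodge filtration, hence descends to $\End(\Prym(X',\tau'))$. Two small remarks: the hypothesis that $D$ be non-square is unnecessary, since $D>0$ already yields distinct real eigenvalues of $T$; and the persistence of the Prym involution, which you rightly flag, is standardly handled by identifying the Prym locus with a stratum of quadratic differentials via the canonical double cover, whose own period coordinates are precisely $H^1(X,\{x_1,\dots,x_n\};\C)^-$. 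Once sufficiency is established your closing dimension count is superfluous, since the sufficiency argument already shows the image of $\Phi$ fills out an open neighborhood in $V$.
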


\subsection{Volume form on $\Omega E_D(\kappa)$}
In this section, we introduce a  construction of volume forms on Prym eigenform loci in general. This construction actually works for all rank one invariant sub-orbifolds in $\Omega \cM_g(\kappa)$. We will eventually  compute the total volume of $\Pb\Omega E_D(2,2)^\odd$ with respect to this volume form and derive from this the formulas computing the Siegel-Veech constants in Theorem~\ref{th:Siegel:Veech}.
Throughout this section $(X,\omega)$ is a Prym eigenform in some locus  $\Omega E_D(\kappa) \subset \Omega \cM_g(\kappa)$.

A zero of $\omega$ is either fixed or exchanged  by $\tau$ with another zero. Let $x_1,\dots,x_r$ be the zeros that are fixed by $\tau$ and $x_{r+1},\dots,x_{r+2s}$ be the remaining ones where $x_{r+j}$ and $x_{r+s+j}$ are exchanged by $\tau$.

\begin{Lemma}\label{lm:dual:basis:ker:abs:proj}
For $j=1,\dots,s$, let $c_j$ be a path from $x_{r+j}$ to $x_{r+s+j}$. Then the map
$$
\begin{array}{cccc}
\phi: & V\cap\ker\wp &  \to &  \C^s\\
      & v            & \mapsto & (v(c_1),\dots,v(c_s))
\end{array}
$$
is an isomorphism
\end{Lemma}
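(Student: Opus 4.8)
The plan is to identify the domain $V\cap\ker\wp$ with the $\tau$-anti-invariant part of $\ker\wp$, describe the latter concretely as a space of potential functions on the zeros, and then read off $\phi$ explicitly. Since $\wp(\ker\wp)=\{0\}\subset W$, we have $\ker\wp\subset\wp^{-1}(W)$, hence
$$V\cap\ker\wp=\ker\wp\cap H^1(X,\{x_1,\dots,x_n\};\C)^-=(\ker\wp)^-.$$
Linearity of $\phi$ is immediate, so it remains to prove $\phi$ is a bijection.

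First I would make $\ker\wp$ explicit. From the long exact sequence of the pair $(X,\{x_1,\dots,x_n\})$,
$$0\to H^0(X;\C)\to H^0(\{x_1,\dots,x_n\};\C)\xrightarrow{\;\delta\;}H^1(X,\{x_1,\dots,x_n\};\C)\xrightarrow{\;\wp\;}H^1(X;\C)\to 0,$$
so $\ker\wp=\operatorname{im}\delta\cong\C^n/\C$, where $\C^n=H^0(\{x_1,\dots,x_n\};\C)$ is the space of functions on the zeros and $\C$ the constants. Concretely, a class $v\in\ker\wp$ vanishes on every absolute cycle, so its pairing with a relative path depends only on the endpoints; thus $v=\delta(f)$ for some $f\colon\{x_1,\dots,x_n\}\to\C$, well defined up to an additive constant, with $v(c)=f(\text{end of }c)-f(\text{start of }c)$ for each relative path $c$. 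In particular $v(c_j)$ does not depend on the chosen representative $c_j$.

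Next I would track the $\tau$-action. The connecting map $\delta$ is natural, hence equivariant for the permutation action $f\mapsto f\circ\tau$ on $\C^n$ and for $\tau^*$ on cohomology, so $(\ker\wp)^-\cong(\C^n)^-$. Since $\tau$ fixes $x_1,\dots,x_r$ and swaps $x_{r+j}\leftrightarrow x_{r+s+j}$, an anti-invariant $f$ satisfies $f(x_i)=0$ for $i\le r$ and $f(x_{r+s+j})=-f(x_{r+j})$; the constants lie in the $(+1)$-eigenspace and so do not interfere with the quotient. Hence $(\ker\wp)^-$ has dimension $s$ and a class $v=\delta(f)$ in it is determined by the values $f(x_{r+1}),\dots,f(x_{r+s})$.

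Finally, for such $v$ one computes $v(c_j)=f(x_{r+s+j})-f(x_{r+j})=-2f(x_{r+j})$, so $\phi(v)=-2\bigl(f(x_{r+1}),\dots,f(x_{r+s})\bigr)$. This gives injectivity (if $\phi(v)=0$ then $f$ vanishes on the swapped pairs, and already on the fixed zeros by anti-invariance, hence $v=0$) and surjectivity (given $(a_1,\dots,a_s)\in\C^s$, set $f(x_{r+j})=-a_j/2$, $f(x_{r+s+j})=a_j/2$, and $f\equiv 0$ on the fixed zeros; this $f$ is anti-invariant and $\phi(\delta(f))=(a_1,\dots,a_s)$). The one step requiring care — and essentially the only place the geometry of $\tau$ enters — is the bookkeeping of the $\tau$-action on relative cohomology: one must verify that anti-invariance both forces the potential to vanish at the fixed zeros and eliminates the constant ambiguity, which is exactly what makes the dimension equal to the number $s$ of swapped pairs rather than $n-1$.
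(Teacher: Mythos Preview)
Your proof is correct and follows essentially the same approach as the paper: both identify $V\cap\ker\wp$ with $(\ker\wp)^-$, use the long exact sequence of the pair to express $\ker\wp$ as $\operatorname{im}\delta$, and then characterize the anti-invariant potentials $f$ by the conditions $f(x_i)=0$ for $i\le r$ and $f(x_{r+s+j})=-f(x_{r+j})$. Your version is slightly more explicit in writing out $\phi(v)=-2(f(x_{r+1}),\dots,f(x_{r+s}))$ and checking bijectivity directly, whereas the paper phrases the conclusion as $\{c_1,\dots,c_s\}$ being a dual basis, but this is a difference of presentation only.
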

\begin{proof}[Sketch of proof]
Since $V=\wp^{-1}(W)\cap H^1(X,\{x_1,\dots,x_n\};\C)^-$ and $\ker\wp \subset\wp^{-1}(W)$, we get
$$
V\cap\ker\wp=H^1(X,\{x_1,\dots,x_n\};\C)^-\cap\ker\wp.
$$
We have the following exact sequence in cohomology
\begin{equation}\label{eq:exact:seq:coh}
0 \to H^0(X,\C) \to H^0(\{x_1,\dots,x_n\},\C) \overset{\delta}{\to} H^1(X,\{x_1,\dots,x_n\};\C) \overset{\wp}{\to} H^1(X;\C) \to 0.
\end{equation}
Since $\tau$ acts equivariantly on the terms of this exact sequence, by restricting to the eigenspaces of the eigenvalue $-1$, we get the following exact sequence
\begin{equation}\label{eq:exact:seq:coh:eval:-1}
0 \to H^0(\{x_1,\dots,x_n\},\C)^- \overset{\delta}{\to} H^1(X,\{x_1,\dots,x_n\};\C)^- \overset{\wp}{\to} H^1(X;\C)^- \to 0.
\end{equation}
Elements of $H^0(\{x_1,\dots,x_n\};\C)$ are $\C$-valued functions on the set $\{x_1,\dots,x_n\}$.
By definition, $\delta(f)\in H^1(X,\{x_1,\dots,x_n\};\C)$ is a $\C$-linear form on $H_1(X,\{x_1,\dots,x_n\};\C)$ which associates to a path $c:[0;1] \to X$ with $\partial c \subset \{x_1,\dots,x_n\}$ the number $f(c(1))-f(c(0))$.
Clearly, $f\in H^0(\{x_1,\dots,x_n\};\C)^-$ if and only if
\begin{itemize}
\item[$\bullet$] $f(x_i)=0$, for all $i=1,\dots,r$,

\item[$\bullet$] $f(x_{r+j})=-f(x_{r+s+j})$, for all $j=1,\dots,s$.
\end{itemize}
It follows that the family of paths $\{c_1,\dots,c_s\}$ is basis of $\delta(H^0(\{x_1,\dots,x_n\},\C)^-)^*$, and the lemma follows.
\end{proof}
%
%

Let $\langle.,.\rangle$ denote the intersection form on $H_1(X,\Z)$. By a slight abuse of notation ,we will also denote by $\langle.,.\rangle$ the intersection form on $H^1(X,\R)$. 
We extend $\langle.,.\rangle$ to $H^1(X,\C)$ by $\C$-linearity, and define the  Hermitian form $(.,.)$ on $H^1(X,\C)$ by
$$
(\eta,\xi)=\frac{\imath}{2}\langle \eta,\bar{\xi}\rangle
$$
where $\bar{\xi}$ is the complex conjugate of $\xi$. The restriction of $(.,.)$ to $\Omega^{1,0}(X,\C)$ is positive definite, while  the restriction to $\Omega^{0,1}(X,\C)$ is negative definite.
Since $\{\omega, \ol{\omega}\}$ is a $\C$-basis of $W$, the restriction of $(.,.)$ to $W$ has signature $(1,1)$. In particular, $(.,.)_{|W}$ is non-degenerate. Therefore the imaginary part of $(.,.)$, denoted by $\vartheta$, gives a symplectic form on $W$.

Recall that a neighborhood of $(X,\omega)$ in $\Omega E_D(\kappa)$ is identified with an open subset of $V=\wp^{-1}(W)\cap H^1(X,\{x_1,\dots,x_n\}; \C)^-$.
By a slight abuse of notation, we denote by $\vartheta$ the pullback of the imaginary part of $(.,.)$ to $V$.
Let $\{c_1,\dots,c_s\}$ be the paths in Lemma~\ref{lm:dual:basis:ker:abs:proj}.  We consider the $c_j$'s as elements of $(H^1(X,\{x_1,\dots,x_n\};\C))^*$.

\begin{Proposition}\label{prop:vol:def}
Let
$$
\Xi:=\left(\frac{\vartheta^2}{2}\right)\wedge\left( \frac{\imath}{2}\right)^{s}\cdot c_1\wedge\bar{c}_1\wedge\dots\wedge c_s\wedge \bar{c}_s \in \Lambda^{s+2,s+2}(H^1(X,\{x_1,\dots,x_n\},\C)).
$$
Then the restriction of $\Xi$ to $V$ is a non-trivial volume form, which does not depend on the choice of the paths $\{c_1,\dots,c_s\}$.  As a consequence, $\Xi_{|V}$ gives rise to a well defined volume form on $\Omega E_D(\kappa)$.
\end{Proposition}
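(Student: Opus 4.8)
The plan is to analyze $\Xi$ through the restriction of the exact sequence \eqref{eq:exact:seq:coh:eval:-1} to $V=\wp^{-1}(W)\cap H^1(X,\{x_1,\dots,x_n\};\C)^-$, namely
\[
0 \longrightarrow \ker\wp\cap V \longrightarrow V \overset{\wp}{\longrightarrow} W \longrightarrow 0 .
\]
By Lemma~\ref{lm:dual:basis:ker:abs:proj}, $\dim_\C(\ker\wp\cap V)=s$, and since $\dim_\C W=2$ we get $\dim_\C V=s+2$; as $\vartheta$ is a real $(1,1)$-form and each $c_j$ is of type $(1,0)$, the form $\Xi$ has bidegree $(s+2,s+2)$ and is thus of top degree on $V$, and it is manifestly real by the factors $\imath/2$. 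The single observation driving everything is that, since $\vartheta$ is the $\wp$-pullback of the symplectic form on $W$ and $\vartheta^2/2$ is already a top-degree form on the real $4$-dimensional space $W$, for every $1$-form $\nu$ on $W$ one has $\tfrac{\vartheta^2}{2}\wedge\wp^{*}\nu=0$, because the expression is the $\wp$-pullback of a $5$-form on $W$.

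To prove non-triviality I would fix a complex splitting $V=L\oplus(\ker\wp\cap V)$ with $\wp|_L\colon L\overset{\sim}{\to}W$. On $L$ the factor $\vartheta^2/2$ is the pullback of the volume form of $W$, which is nonzero because $(.,.)_{|W}$ is non-degenerate of signature $(1,1)$. Decomposing $c_1\wedge\bar c_1\wedge\dots\wedge c_s\wedge\bar c_s$ into its component in $\Lambda^{2s}(\ker\wp\cap V)^{*}$ and a remainder carrying at least one factor from $L^{*}$, the displayed vanishing kills the remainder against $\vartheta^2/2$. Hence $\Xi$ equals the exterior product of the $W$-volume on $L$ with $(\imath/2)^{s}\,c_1\wedge\bar c_1\wedge\dots\wedge c_s\wedge\bar c_s$ restricted to $\ker\wp\cap V$, and the latter is a nonzero top form there precisely because $\{c_1,\dots,c_s\}$ is a basis of $(\ker\wp\cap V)^{*}$ by Lemma~\ref{lm:dual:basis:ker:abs:proj}. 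Thus $\Xi_{|V}$ is nowhere vanishing.

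For independence of the paths, let $\{c_1',\dots,c_s'\}$ be another choice with $c_j'$ running from $x_{r+j}$ to $x_{r+s+j}$. Then $c_j'-c_j$ is a closed loop, i.e.\ an absolute cycle, so as a linear form on $H^1(X,\{x_1,\dots,x_n\};\C)$ it annihilates $\ker\wp=\mathrm{im}\,\delta$: indeed $\langle c_j'-c_j,\delta f\rangle=f\big(\partial(c_j'-c_j)\big)=0$ by \eqref{eq:exact:seq:coh}. Consequently $c_j'-c_j$ factors through $\wp$, so on $V$ it equals $\wp^{*}\nu_j$ for some $1$-form $\nu_j$ on $W$. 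Expanding $(\imath/2)^{s}\,c_1'\wedge\bar c_1'\wedge\dots\wedge c_s'\wedge\bar c_s'$ and wedging with $\vartheta^2/2$, every term containing a factor $\wp^{*}\nu_j$ or $\wp^{*}\bar\nu_j$ vanishes by the displayed identity, leaving only the term built from the $c_j$; hence $\Xi$ is unchanged.

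Finally, to descend to $\Omega E_D(\kappa)$ I would work in the period-coordinate atlas of Proposition~\ref{prop:affine:coord}, whose transition maps are the automorphisms of $H^1(X,\{x_1,\dots,x_n\};\C)^-$ induced by changes of the integral basis of $H_1$. Such maps preserve the intersection pairing $\langle.,.\rangle$ and commute with complex conjugation and with $\tau$, hence preserve $(.,.)$, its imaginary part $\vartheta$, and the tautological plane $W=\Span(\Re\omega,\Im\omega)$, so $\vartheta^2/2$ is globally well defined. They may replace each $c_j$ by a path between a possibly permuted $\tau$-exchanged pair, modified by absolute cycles: the absolute part dies exactly as above, permuting the even-degree blocks $(\imath/2)c_j\wedge\bar c_j$ produces no sign, and a reversal $c_j\mapsto-c_j$ leaves $c_j\wedge\bar c_j$ invariant. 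Thus $\Xi$ is invariant under all transition maps and descends to a volume form on the orbifold $\Omega E_D(\kappa)$. I expect the crux of the whole argument to be the uniform degree-counting vanishing $\tfrac{\vartheta^2}{2}\wedge\wp^{*}\nu=0$: it is what simultaneously forces non-degeneracy, path-independence, and monodromy-invariance, by confining every ambiguity to the already saturated $W$-directions.
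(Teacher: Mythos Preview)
Your proof is correct and follows essentially the same approach as the paper: both use the exact sequence $0\to\ker\wp\cap V\to V\to W\to 0$, invoke Lemma~\ref{lm:dual:basis:ker:abs:proj} for the kernel factor, and exploit that $\vartheta^2$ already saturates the $W$-directions so that any additional $1$-form factoring through $\wp$ wedges to zero. The only difference is presentational: the paper verifies the key vanishing by choosing an explicit symplectic basis $(a,b,a',b')$ of $H_1(X,\R)^-$ and computing $\vartheta^2_{|W}$ in coordinates, whereas you argue it abstractly via degree counting on pullbacks from $W$; your formulation is cleaner and makes the mechanism (``every ambiguity lives in the already-saturated $W$-directions'') more transparent, while the paper's computation has the side benefit of yielding the explicit local expression recorded in Remark~\ref{rk:vol:form:Omega:E:D:express}.
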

\begin{proof}
Let $L\subset H_1(X,\R)^-$ be the subspace generated by the dual of $\Re(\omega)$ and $\Im(\omega)$ in $H_1(X,\R)^-$.
Let $L'$ be the orthogonal complement of $L$ with respect to the intersection form on $H_1(X,\R)^-$. Since the restriction of the intersection to $L$ is non-degenerate,  we have
$\dim L = \dim L'=2$, and $H_1(X,\R)^-=L\oplus L'$. 

We can choose a basis $\{a,b\}$ of $L$ and $\{a',b'\}$ such that $\langle a,b\rangle = \langle a', b'\rangle =1$. Note that $\{a,b,a',b'\}$ is a basis of $H_1(X,\R)^-$.   Using this basis, the intersection form on $H^1(X,\R)^-$ is given by $a\wedge b + a'\wedge b'$, that is
$$
\langle \alpha,\beta\rangle =\alpha(a)\beta(b)-\beta(a)\alpha(b)+\alpha(a')\beta(b')-\beta(a')\alpha(b'), \quad \forall \alpha,\beta\in H^1(X,\R).
$$
We now consider $a,b,a',b'$ as complex linear forms on $H^1(X,\C)$.
By definition, for all $c \in H_1(X,\C)$,  $\bar{c}$ is the $\C$-valued linear form on $H^1(X,\C)$ defined by $\bar{c}(\eta)=\ol{\eta(c)}$.
The Hermitian form $(.,.)$ on $H^1(X,\C)^-$ is then given by $\imath(a\otimes\bar{b}-b\otimes\bar{a}+a'\otimes \bar{b}'-b'\otimes\bar{a}')$, and therefore
$$
\vartheta=\frac{\imath}{2}\left(a\wedge\bar{b} - b\wedge\bar{a}+a'\wedge\bar{b}'-b'\wedge\bar{a}'\right).
$$
Since $a'$ and $b'$ vanish on $W$, we get $\vartheta_{\left|W\right.}=\frac{\imath}{2}\left(a\wedge \bar{b} - b \wedge \bar{a}\right)$.
Thus
$$
\vartheta^2_{\left|W \right.}=\frac{-1}{2}a\wedge \bar{a}\wedge b\wedge\bar{b}.
$$
In particular, $\vartheta^2$ restricts to a volume form on $W$.

It follows from Lemma~\ref{lm:dual:basis:ker:abs:proj} that $c_1  \wedge \bar{c}_1 \wedge \dots \wedge c_s \wedge \bar{c}_s$ restricts to a volume form on $\ker\wp\cap H^1(X,\{x_1,\dots,x_n\};\C)^-$. Since the spaces $V, W$, and $\ker\wp\cap H^1(X,\{x_1,\dots,x_n\};\C)^-$ fit into the following exact sequence
$$
0 \to \ker\wp\cap  H^1(X,\{x_1,\dots,x_n\};\C)^- \to V \overset{\wp}{\to} W \to 0,
$$
we conclude that $\Xi$ is a volume form on $V$.
It remains to shows that $\Xi$ does not depend on the choice of the paths $c_1,\dots,c_s$. Let $c'_j$ be a path with the same endpoints as $c_j$. Then as elements of $H_1(X,\{x_1,\dots,x_n\};\Z)^-$, we can write
$$
c'_j=c_j+x_j
$$
where $x_j$ is an absolute cycle, that is an element of $H_1(X,\Z)$. We consider $x_j$ as an element of $H^1(X,\C)^*$. Since $\left(\vartheta^2\wedge x_j\right)_{\left|W\right.}=\left(\vartheta^2\wedge \bar{x}_j\right)_{\left|W\right.}=0$ (because $\vartheta^2_{\left|W\right.}$ is a volume form on $W$). 
As a consequence
$$
\vartheta^2\wedge  c_1\wedge\bar{c}_1\wedge\dots\wedge c_j \wedge\bar{c}_j\wedge\dots\wedge c_s\wedge \bar{c}_s=
\vartheta^2\wedge  c_1\wedge\bar{c}_1\wedge\dots \wedge c'_j \wedge\bar{c}'_j\wedge \dots \wedge c_s\wedge \bar{c}_s
$$
and the proposition is proved.
\end{proof}
\begin{Remark}\label{rk:vol:form:Omega:E:D:express}
	The restriction $\Xi_{|V}$ can be given in more concrete terms as follows: let $a,b, c_1,\dots,c_s$ be as in the proof of Proposition~\ref{prop:vol:def}. Then a neighborhood of $(X,\omega)\in \Omega  E_D(\kappa)$ is identified with an open subset of $\C^{s+2}$ via the period mapping
	$$
	\Phi: (X,\omega) \mapsto \left(\int_a\omega,\int_b\omega,\int_{c_1}\omega,\dots,\int_{c_s}\omega \right)
	$$
	Let $(z_1,z_2,w_1,\dots,w_s)$ be the coordinates on $\C^{2+s}$. Then $\Xi_{|V}$ is the pullback by $\Phi$ of the volume form
	$$
	\frac{-1}{2}\cdot\left(\frac{\imath}{2}\right)^sdz_1d\bar{z}_1dz_2d\bar{z}_2dw_1 d\bar{w}_1\dots dw_s d\bar{w}_s=2\lambda_{2(2+s)},
	$$
	where $\lambda_{2(2+s)}$ is the Lebesgue measure on $\C^{2+s}\simeq \R^{2(2+s)}$.
	
\end{Remark}

Denote by $d\vol$ the volume form on $\Omega E_D(1,1)$ induced by $\Xi_{|V}$.
Recall that for all $(X,\omega) \in \Omega\cM_g$, the {\em Hodge norm} of $\omega$ is defined to  be
$$
||\omega||^2:=(\omega,\omega)=\frac{\imath}{2}\cdot\int_X\omega\wedge\bar{\omega} =\Aa(X,|\omega|),
$$
where $|\omega|$ denote the flat metric defined by $\omega$.
Define
$$
\Omega_1 E_D(\kappa):= \{(X,\omega) \in \Omega E_D(\kappa), \; \Aa(X,|\omega|)=1\},
$$
and
$$
\Omega_{\leq 1} E_D(\kappa):=\{(X,\omega) \in \Omega E_D(\kappa), \; \Aa(X,|\omega|)\leq 1\}.
$$
Note that $\Omega_1E_D(\kappa)$ is an  $\SL(2,\R)$-invariant closed subset of $\Omega \cM_g(\kappa)$.
There is a natural projection from $\Omega_{\leq 1} E_D(\kappa)$ onto $\Omega_1 E_D(\kappa)$ by rescaling.
The volume form $d\vol$ on $\Omega E_D(\kappa)$ defines a measure on $\Omega_{\leq 1} E_D(\kappa)$. The pushforward of this measure on $\Omega_1 E_D(\kappa)$ will be denoted by $d\vol_1$.

In the case $\kappa=(1,1), g=2$, McMullen \cite{McM:annals} defined a measure on $\Omega_1 E_D(1,1)$ which differs from $d\vol_1$ by a multiplicative constant using the foliation of $\Omega_1 E_D(1,1)$ by $\SL(2,\R)$-orbits  (see also \cite[\textsection 4]{Bai:GAFA}). 

\subsection{Volume form on the space of projectivized differentials}\label{subsec:vol:on:proj:diff}
Let $\Pb\Omega\cM_g$ be the projective bundle associated with the Hodge bundle $\Omega\cM_g$.
Let $\Omega\cM_g^*$ denote the complement of the zero section in $\Omega \cM_g$.
For any Abelian diffrerential $(X,\omega)\in \Omega\cM_g^*$, denote by $(X,[\omega])$ its pojection in $\Pb\Omega\cM_g$.
For any subvariety $\cM \subset \Omega\cM_g^*$ which is invariant under the $\C^*$-action, we denote by $\Pb\cM$ its image in $\Pb\Omega\cM_g$.

Consider now the projectivization $\Pb\Omega E_D(\kappa)$ of some Prym eigenform locus $\Omega E_D(\kappa)$. 
We have seen that $\Omega E_D(\kappa)$ can be endowed with a volume form $d\vol$. Let $\mu$ denote measure on $\Pb\Omega E_D(\kappa)$ which is the pushforward of the restriction of $d\vol$ to $\Omega_{\leq 1}E_D(\kappa)$. This means that for all open subset $B$ of $\Pb\Omega E_D(\kappa)$, let $C(B)\subset \Omega E_D(\kappa)$ be the cone over $B$ and $C_1(B):=C(B)\cap\Omega_{\leq 1}E_D(\kappa)$, then we have
$$
\mu(B)=\int_{C_1(B)}d\vol =: \vol(C_1(B)).
$$
One of the interests of considering $\Pb\Omega E_D(\kappa)$ instead of $\Omega_1 E_D(\kappa)$ is that  $\Pb\Omega E_D(\kappa)$ is an algebraic  complex orbifold. Therefore, we can use tools from algebraic and complex analytic geometry to compute the volume of $\Pb\Omega E_D(\kappa)$.

%
It is not difficult to see that $\mu$ is actually the measure associated with a volume form on $\Pb\Omega E_D(\kappa)$.
To give a concrete expression of this volume form, let us consider the following situation: let $V$ be a $\C$-vector space of dimension $d$ equipped with a Hermitian form $H$ of rank $k$.
Let $\Omega$ be the imaginary part of $H$. Let $\{\xi_1,\dots,\xi_s\}$, where $s=d-k$, be an independent family in $V^*$ such that the $(d,d)$-form
$$
d\vol:=\left(\frac{\imath}{2}\right)^s\cdot\frac{\Omega^k}{k!}\wedge\xi_1\wedge\bar{\xi}_1\wedge\dots\wedge\xi_s\wedge\bar{\xi}_s
$$
is non-zero. Let $\vol$ denote the measure on $V$ obtained by integrating $d\vol$.
Define
$$
V^+:=\{v\in V, \; | \; H(v,v) >0\}.
$$
Let $\Pb V^+$ be the image of $V^+$ in the projective space $\Pb V$. Note that $\Pb V^+$ is an open subset of $\Pb V$.
By definition, $H$ gives a Hermitian metric on the tautological line bundle $\OO(-1)_{\Pb V^+}$ over $\Pb V^+$.
The measure $\vol$ on $V^+$ induces a measure $\mu$ on $\Pb V^+$ as follows:
for all open $U\subset \Pb V^+$, let $C_1(U):=\{v\in V^+, \; H(v,v) <1, \; \C\cdot v \in U\}$, then $\mu(U):=\vol(C_1(U))$.

\begin{Proposition}\label{prop:push:meas:proj:sp}
The measure $\mu$ is the one obtained by integrating a volume form $d\mu$ on $\Pb V^+$. 
Let $\xx$  be a point in $\Pb V^+$ and $\sigma$ a holomorphic section of the tautological line bundle $\OO(-1)_{\Pb V^+}$ on a neighborhood $U$ of $\xx$. Let $h(\xx'):=H(\sigma(\xx'),\sigma(\xx'))$ for all $\xx'\in U$. We then have
\begin{equation}\label{eq:vol:form:proj:sp:express}
d\mu=\frac{\pi}{d}\cdot\frac{(-1)^{k-1}}{2^{k-1}(k-1)!}\cdot\left(\frac{\imath}{2}\right)^s\cdot \left(-\imath\partial\bar{\partial}\ln h\right)^{k-1} \wedge \partial\bar{\partial}\left(  \frac{|\xi_1\circ\sigma|^2}{h}\right) \wedge\dots\wedge
\left(\partial\bar{\partial} \frac{|\xi_s\circ\sigma|^2}{h}\right).
\end{equation}
\end{Proposition}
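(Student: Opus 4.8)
The plan is to compute $\mu$ by fibre integration of $d\vol$ over the tautological $\C^{*}$-bundle $\pi\colon V^{+}\to\Pb V^{+}$. Around $\xx$ I fix the holomorphic section $\sigma$ and trivialise $\pi$ by the biholomorphism $\Psi(\xx',\lambda)=\lambda\,\sigma(\xx')$ on $U\times\C^{*}$. With $\rho(v)=H(v,v)$ and $h=\rho\circ\sigma$ one has $\rho\circ\Psi=|\lambda|^{2}h$, so the solid cone $C_{1}(U)$ becomes the disc bundle $\{(\xx',\lambda):|\lambda|^{2}h(\xx')<1\}$; since $\Psi$ is biholomorphic, $\mu(U)=\int_{C_{1}(U)}\Psi^{*}d\vol$. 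Integrating first over $\lambda$ (Fubini) presents $\mu|_{U}$ as $\int_{U}d\mu$ for a $(d-1,d-1)$-form $d\mu$ whose smoothness and positivity are immediate from the smooth positive radius $h^{-1/2}$; this already yields the first assertion, and the problem reduces to the explicit evaluation of the fibre integral.

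To evaluate it I would write every factor of $d\vol$ as $\partial\bar\partial$ of a function. In the orientation convention making $d\vol$ positive, $\Omega=\tfrac{\imath}{2}\partial\bar\partial\rho$ and $\tfrac{\imath}{2}\xi_{j}\wedge\bar\xi_{j}=\tfrac{\imath}{2}\partial\bar\partial|\xi_{j}|^{2}$, hence $d\vol=\tfrac{1}{k!}\bigl(\tfrac{\imath}{2}\bigr)^{d}(\partial\bar\partial\rho)^{k}\wedge\bigwedge_{j}\partial\bar\partial|\xi_{j}|^{2}$. Setting $\psi=\ln\rho$, $\theta=d\lambda/\lambda$, $\omega_{\mathrm{FS}}=\tfrac{\imath}{2}\partial\bar\partial\ln h$ and $\gamma=\tfrac{\imath}{2}\,\partial\psi\wedge\bar\partial\psi$, the identity $\partial\bar\partial\rho=\rho(\partial\bar\partial\psi+\partial\psi\wedge\bar\partial\psi)$ together with $\partial\bar\partial\ln|\lambda|^{2}=0$ gives $\Omega=\rho(\omega_{\mathrm{FS}}+\gamma)$, where $\omega_{\mathrm{FS}}=\tfrac{\imath}{2}\partial\bar\partial\psi$ is pulled back from the base. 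Expanding $\partial\bar\partial(\rho\phi_{j})$ with $\phi_{j}=|\xi_{j}\circ\sigma|^{2}/h$ likewise yields $\tfrac{\imath}{2}\partial\bar\partial|\xi_{j}|^{2}=\rho\,\delta_{j}$ with $\delta_{j}=\tfrac{\imath}{2}\partial\bar\partial\phi_{j}+\phi_{j}(\omega_{\mathrm{FS}}+\gamma)+\tfrac{\imath}{2}(\partial\psi\wedge\bar\partial\phi_{j}+\partial\phi_{j}\wedge\bar\partial\psi)$. Collecting homogeneity factors, $d\vol=\tfrac{\rho^{d}}{k!}(\omega_{\mathrm{FS}}+\gamma)^{k}\wedge\bigwedge_{j}\delta_{j}$.

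The computation then rests on two degeneracy facts coming from $\rk H=k$. First, $\Omega^{k+1}=0$ on $V$ forces $(\omega_{\mathrm{FS}}+\gamma)^{k+1}=0$ on $\pi^{-1}(U)$, and reading off the coefficient of $\theta\wedge\bar\theta$ gives $\omega_{\mathrm{FS}}^{k}=0$; with $\gamma^{2}=0$ this collapses $(\omega_{\mathrm{FS}}+\gamma)^{k}$ to $k\,\omega_{\mathrm{FS}}^{k-1}\wedge\gamma$. Second, because $H$ vanishes on its kernel, $h$ is constant along the $s$ kernel directions, so $\eta=\partial\ln h$ annihilates them and therefore lies in the $(k-1)$-dimensional holomorphic span on which $\omega_{\mathrm{FS}}$ is nondegenerate; consequently $\omega_{\mathrm{FS}}^{k-1}\wedge\eta=\omega_{\mathrm{FS}}^{k-1}\wedge\bar\eta=0$. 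Only the coefficient of the vertical form $\theta\wedge\bar\theta=d\lambda\wedge d\bar\lambda/|\lambda|^{2}$ survives the fibre integration, it is independent of $\lambda$, and the radial integral $\int_{|\lambda|^{2}h<1}|\lambda|^{2d-2}\,d\lambda\wedge d\bar\lambda=\tfrac{-2\pi\imath}{d}h^{-d}$ supplies the factor $\pi/d$ and cancels the $h^{d}$ coming from $\rho^{d}$, leaving $d\mu=\tfrac{-2\pi\imath}{d\,(k-1)!}\bigl[\omega_{\mathrm{FS}}^{k-1}\wedge\gamma\wedge\bigwedge_{j}\delta_{j}\bigr]_{\theta\bar\theta}$.

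It remains to read off this $\theta\wedge\bar\theta$-coefficient. Substituting the expression for $\delta_{j}$, every summand in which $\gamma$ does not supply the full vertical $\theta\wedge\bar\theta$ carries a factor $\eta$ or $\bar\eta$ next to $\omega_{\mathrm{FS}}^{k-1}$ and dies by the second degeneracy fact, while in the surviving summand each $\delta_{j}$ may contribute only $\tfrac{\imath}{2}\partial\bar\partial\phi_{j}$, since its $\phi_{j}\omega_{\mathrm{FS}}$- and $\eta$-parts are again killed by $\omega_{\mathrm{FS}}^{k}=0$ or $\omega_{\mathrm{FS}}^{k-1}\wedge\eta=0$. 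This gives $\bigl[\omega_{\mathrm{FS}}^{k-1}\wedge\gamma\wedge\bigwedge_{j}\delta_{j}\bigr]_{\theta\bar\theta}=\tfrac{\imath}{2}\bigl(\tfrac{\imath}{2}\bigr)^{s}\omega_{\mathrm{FS}}^{k-1}\wedge\bigwedge_{j}\partial\bar\partial\phi_{j}$, and substituting $\omega_{\mathrm{FS}}=\tfrac{\imath}{2}\partial\bar\partial\ln h$ and simplifying the powers of $\imath$ and $2$ reproduces the constant $\tfrac{\pi}{d}\cdot\tfrac{(-1)^{k-1}}{2^{k-1}(k-1)!}$ of the statement. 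Independence of $\sigma$ is then automatic, since $\phi_{j}$ is scale-invariant and $\ln h$ changes only by a pluriharmonic function under a change of section. The main obstacle is the careful bookkeeping of the vertical $\theta,\bar\theta$ contributions in this last step; everything hinges on the two rank-degeneracy identities, which must be established cleanly (for instance by diagonalising $H$), after which the cancellations and the radial integration are routine.
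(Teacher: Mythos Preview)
Your argument is correct and lands on the same formula, but it is organized quite differently from the paper. The paper fixes from the outset the affine section $\sigma(\eps)=(1,\eps_1,\dots,\eps_{d-1})$ in coordinates diagonalizing $H$, computes the cone integral directly to obtain $d\mu=(\tfrac{\imath}{2})^{d-1}(-1)^{k-p}\tfrac{\pi}{d}\,h^{-d}\,d\eps_1d\bar\eps_1\cdots d\eps_{d-1}d\bar\eps_{d-1}$, and then separately computes $(\partial\bar\partial\ln h)^{k-1}$ and each $\partial\bar\partial(|\xi_j\circ\sigma|^{2}/h)$ explicitly in the same coordinates and matches. Your route is more structural: you pull back $d\vol$ through the trivialization, rewrite it as $\rho^{d}/k!\,(\omega_{\mathrm{FS}}+\gamma)^{k}\wedge\bigwedge\delta_j$, and then use the rank condition $\Omega^{k+1}=0$ to deduce $\omega_{\mathrm{FS}}^{k}=0$ and the companion identity $\omega_{\mathrm{FS}}^{k-1}\wedge\eta=0$ to kill all cross terms, so that only the product $\omega_{\mathrm{FS}}^{k-1}\wedge\bigwedge\partial\bar\partial\phi_j$ survives in the $\theta\wedge\bar\theta$ coefficient. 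This makes transparent \emph{why} the answer has the shape it does (both degeneracy identities are consequences of $\rk H=k$), whereas the paper's matching is a straightforward but opaque calculation.

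One point to tighten: your justification of $\omega_{\mathrm{FS}}^{k-1}\wedge\eta=0$ (``$h$ is constant along the $s$ kernel directions'') tacitly uses the affine section, since ``kernel directions'' on $\Pb V^{+}$ only make sense once such a section is chosen. You do acknowledge this in the closing sentence (``for instance by diagonalising $H$''), and your section-independence remark at the end legitimizes proving the identity for one section and transporting it; but the body of the argument reads as if it were coordinate-free, which it is not at that step. The paper simply works in those coordinates from the start, which is less elegant but avoids the ambiguity.
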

\begin{Remark}\label{rk:vol:independ:section}
The right hand side of \eqref{eq:vol:form:proj:sp:express} does not depend on the choice of the section $\sigma$.
\end{Remark}
\begin{proof}
Since $\xx\in \Pb V^+$, we have $\xx=\langle v_0 \rangle$ for some $v_0$ such that $h(v_0)=1$. By choosing an appropriate basis, we can identify $V$ with $\C^d$ in such a way that
\begin{itemize}
\item[$\bullet$] $v_0=(1,0,\dots,0)$,

\item[$\bullet$] if $v=(z_0,z_1,\dots,z_{d-1})$ then $H(v,v)=\sum_{i=0}^{p-1}|z_i|^2-\sum_{i=p}^{k-1}|z_i|^2$ ($p\geq 1$).
\end{itemize}
In these coordinates, we have
$$
\Omega=\frac{\imath}{2}\cdot\left( \sum_{i=0}^{p-1} dz_i\wedge d\bar{z}_i -\sum_{i=p}^{k-1} dz_i\wedge d\bar{z}_i \right).
$$
Thus
$$
\frac{\Omega^k}{k!} = \left(\frac{\imath}{2}\right)^k\cdot(-1)^{k-p}\cdot dz_0\wedge d\bar{z}_0\wedge\dots\wedge dz_{k-1}\wedge d\bar{z}_{k-1}.
$$
Since the $(d,d)$-form $\Omega^k\wedge \xi_1\wedge\bar{\xi}_1\wedge\dots\wedge\xi_s\wedge\bar{\xi}_s$ is non-zero, we can adjust the basis of $V$ such that $\xi_i=dz_{k+i-1}+\sum_{j=0}^{k-1}\lambda_{i,j}dz_j$ for all $i=1,\dots,s$.
In the corresponding coordinate system, we have
$$
d\vol=\left(\frac{\imath}{2}\right)^d\cdot(-1)^{k-p}\cdot dz_0\wedge d\bar{z}_0\wedge\dots\wedge dz_{d-1}\wedge d\bar{z}_{d-1}.
$$
Let $\eps=(\eps_1,\dots,\eps_{d-1})$ be a coordinate system on $U$.
\begin{Claim}\label{clm:vol:form:push:loc:form}
The measure $\mu$ is the one associated with the volume form
\begin{equation}\label{eq:vol:form:push:loc:form}
d\mu=\left(\frac{\imath}{2}\right)^{d-1}\cdot (-1)^{k-p}\cdot\frac{\pi}{d}\cdot\frac{1}{h^{d}(\eps)} \cdot d\eps_1d\bar{\eps}_1\dots d\eps_{d-1}d\bar{\eps}_{d-1}
\end{equation}
on $U$.
\end{Claim}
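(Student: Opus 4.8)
The plan is to prove the Claim by an explicit change of variables adapted to the projectivization map, followed by integration along the $\C^*$-fibers. First I would work in the affine chart centered at $\xx=\lag v_0\rag$: a point of $U$ is written $\eps=(\eps_1,\dots,\eps_{d-1})$ with $\eps_j=z_j/z_0$, so that $\xx$ is the origin, and I take the tautological section $\sigma(\eps)=(1,\eps_1,\dots,\eps_{d-1})$ of $\OO(-1)_{\Pb V^+}$ over $U$. In the coordinates already fixed in the proof one computes directly
$$
h(\eps)=H(\sigma(\eps),\sigma(\eps))=1+\sum_{i=1}^{p-1}|\eps_i|^2-\sum_{i=p}^{k-1}|\eps_i|^2,
$$
which is positive on $U$ because $\sigma(\eps)\in V^+$; note that it does not involve the degenerate coordinates $\eps_k,\dots,\eps_{d-1}$.

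Next I would parametrize the cone $C(U)$ by the map $\Psi\colon \C^*\times U\to V$, $(\lambda,\eps)\mapsto \lambda\,\sigma(\eps)=(\lambda,\lambda\eps_1,\dots,\lambda\eps_{d-1})$. This is a biholomorphism onto $C(U)$, its inverse recovering $\lambda=z_0$ and $\eps_j=z_j/z_0$; moreover $H(\lambda\sigma(\eps),\lambda\sigma(\eps))=|\lambda|^2h(\eps)$, so that $C_1(U)=\Psi(\{(\lambda,\eps):|\lambda|^2h(\eps)<1,\ \eps\in U\})$. The core computation is the pullback $\Psi^*d\vol$. Since $z_0=\lambda$ and $z_j=\lambda\eps_j$, we have $dz_j=\eps_j\,d\lambda+\lambda\,d\eps_j$, and wedging against the leading factor $dz_0=d\lambda$ annihilates every $\eps_j\,d\lambda$ term; hence $dz_0\wedge\dots\wedge dz_{d-1}=\lambda^{d-1}\,d\lambda\wedge d\eps_1\wedge\dots\wedge d\eps_{d-1}$, and likewise for the conjugates. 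Reorganizing the paired form $dz_0\wedge d\bar z_0\wedge\dots$ into a holomorphic block times an antiholomorphic block and back again --- the two reordering signs $(-1)^{d(d-1)/2}$ cancelling --- yields
$$
\Psi^*d\vol=(-1)^{k-p}\Big(\tfrac{\imath}{2}\Big)^{d-1}\Big(\tfrac{\imath}{2}|\lambda|^{2(d-1)}\,d\lambda\wedge d\bar\lambda\Big)\wedge d\eps_1\wedge d\bar\eps_1\wedge\dots\wedge d\eps_{d-1}\wedge d\bar\eps_{d-1}.
$$

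Finally I would integrate over the fibers. For fixed $\eps\in U$, writing $\lambda=re^{\imath\theta}$ so that $\tfrac{\imath}{2}d\lambda\wedge d\bar\lambda=r\,dr\,d\theta$ and $|\lambda|^{2(d-1)}=r^{2d-2}$, the fiber $\{|\lambda|^2h(\eps)<1\}$ contributes
$$
\int_0^{2\pi}\!\!\int_0^{h(\eps)^{-1/2}} r^{2d-1}\,dr\,d\theta=\frac{\pi}{d}\cdot\frac{1}{h^d(\eps)}.
$$
By Fubini applied through the biholomorphism $\Psi$, the quantity $\mu(U)=\vol(C_1(U))=\int_{C_1(U)}d\vol$ then equals the integral over $U$ of the form \eqref{eq:vol:form:push:loc:form}, which proves the Claim. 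The only delicate points are the sign bookkeeping in the reordering of the wedge products (which I expect to be the main nuisance, although the signs cancel in pairs) and checking that $\Psi$ is a genuine biholomorphism onto the cone so that the change of variables and Fubini are legitimate; the essential computational mechanism is the appearance of the factor $|\lambda|^{2(d-1)}$, whose fiber integral against $\{|\lambda|^2h<1\}$ produces exactly the power $h^{-d}$.
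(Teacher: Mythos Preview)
Your proof is correct and takes essentially the same approach as the paper: the paper parametrizes $C(U)$ by $(\theta,t,\eps)\in\S^1\times(0,\infty)\times U$ via $e^{\imath\theta}\cdot t\cdot\sigma(\eps)$, which is just your $\Psi$ written in polar coordinates $\lambda=te^{\imath\theta}$ from the outset, and then integrates the resulting $t^{2d-1}\,d\theta\,dt$ over the fiber to obtain the same $\frac{\pi}{d}\,h(\eps)^{-d}$. Your more explicit handling of the Jacobian and the sign bookkeeping is a slight elaboration, but the argument is identical in substance.
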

\begin{proof}
We have a natural section $\sigma$ of $\OO(-1)_{\Pb V^+}$ over $U$ given by $\sigma(\eps)=(1,\eps_1,\dots,\eps_{d-1})$, for all $\eps=(\eps_1,\dots,\eps_{d-1})\in U$. 
We then have
$$
 h(\eps):= H(\sigma(\eps),\sigma(\eps))= 1+|\eps_1|^2+\dots+|\eps_{p-1}|^2-(|\eps_p|^2+\dots+|\eps_{k-1}|^2).
$$
The cone $C(U)$ over $U$ can be parametrized by $\S^1\times]0,+\infty[\times U$  via the map
$$
\begin{array}{cccc}
\phi: & \S^1\times]0,+\infty[\times U & \to & V \\
      & (\theta,t,\eps) & \mapsto & e^{\imath\theta}\cdot t \cdot\sigma(\eps).
\end{array}
$$
We have $\phi^{-1}(C_1(U))=\{(\theta,t,\eps) \in \S^1\times]0,\infty[\times U, \;  \; t < \frac{1}{\sqrt{h(\eps)}} \}$
and
$$
\phi^*d\vol=\left(\frac{\imath}{2}\right)^{d-1}\cdot (-1)^{k-p}\cdot t^{2d-1}\cdot d\theta\wedge dt\wedge d\eps_1\wedge\bar{\eps}_1\wedge\dots\wedge d\eps_{d-1}\wedge d\bar{\eps}_{d-1}.
$$
It follows that
\begin{align*}
\vol(C_1(U)) & = \int_{C_1(U)}d\vol = \int_{\phi^{-1}(C_1(U))}\phi^*d\vol\\
& = \left(\frac{\imath}{2}\right)^{d-1}\cdot (-1)^{k-p}\cdot \int_0^{2\pi}d\theta\cdot\int_U\left(\int_0^{\frac{1}{\sqrt{h(\eps)}}}t^{2d-1}dt \right)d\eps_1d\bar{\eps}_1\dots d\eps_{d-1}d\bar{\eps}_{d-1}\\
                      & = \left(\frac{\imath}{2}\right)^{d-1}\cdot (-1)^{k-p}\cdot\frac{\pi}{d}\cdot\int_{U}\frac{1}{h^{d}(\eps)}d\eps_1d\bar{\eps}_1\dots d\eps_{d-1}d\bar{\eps}_{d-1}.\\
\end{align*}
By definition, we have $\mu(U)=\vol(C_1(U))$. Thus, $\mu$ is the measure associated with the volume form
$$
d\mu=\left(\frac{\imath}{2}\right)^{d-1}\cdot (-1)^{k-p}\cdot\frac{\pi}{d}\cdot\frac{1}{h^{d}(\eps)} \cdot d\eps_1d\bar{\eps}_1\dots d\eps_{d-1}d\bar{\eps}_{d-1}
$$
\end{proof}

It remains to show that $d\mu$ coincides with the right hand side of \eqref{eq:vol:form:proj:sp:express}.
We first notice that 
$$
\partial\bar{\partial}\ln h  = \partial \left(\frac{\bar{\partial} h}{h}\right) = \frac{\partial\bar{\partial}h}{h} -\frac{\partial h \wedge \bar{\partial} h}{h^2}.
$$
Now
$$
\partial\bar{\partial}h = \sum_{i=1}^{p-1}d\eps_i d\bar{\eps}_i-\sum_{i=p}^{k-1}d\eps_i d\bar{\eps}_i \quad \text{ and } \quad
\partial h = \ol{(\bar{\partial}h)} = \sum_{i=1}^{p-1}\bar{\eps}_i d\eps_i - \sum_{i=p}^{k-1}\bar{\eps}_i d\eps_i.
$$
imply
\begin{align*}
\left(\partial\bar{\partial}\ln h\right)^{k-1} & = \frac{\left(\partial\bar{\partial}h\right)^{k-1}}{h^{k-1}} - (k-1)\frac{\left(\partial\bar{\partial}h\right)^{k-2}\wedge\partial h\wedge \bar{\partial} h}{h^{k}}\\
& = (k-1)!\cdot(-1)^{k-p}\cdot \frac{h-\left(\sum_{i=1}^{p-1}|\eps_i|^2-\sum_{i=p}^{k-1}|\eps_i|^2\right)}{h^k} \cdot d\eps_1d\bar{\eps}_1\dots d\eps_{k-1}d\bar{\eps}_{k-1} \\
& = (k-1)!\cdot(-1)^{k-p}\cdot \frac{d\eps_1d\bar{\eps}_1\dots d\eps_{k-1}d\bar{\eps}_{k-1}}{h^k}.
\end{align*}
Since
$$
\left(\partial\bar{\partial}\ln h\right)^{k-1}\wedge d\eps_i = \left(\partial\bar{\partial}\ln h \right)^{k-1}\wedge d\bar{\eps}_i=0,
\; \quad \hbox{for all $i=1,\dots,k-1$}
$$
it follows 
$$
\left(\partial\bar{\partial}\ln h\right)^{k-1}\wedge\partial h = \left(\partial\bar{\partial}\ln h\right)^{k-1}\wedge\bar{\partial} h= \left(\partial\bar{\partial}\ln h\right)^{k-1}\wedge \partial\bar{\partial} h =0.
$$
For all $i=1,\dots,s$, let $h_i(\eps):= |\xi_i(\sigma(\eps))|^2/h$. We then have
$$
\partial \bar{\partial} h_i = \frac{d\eps_{k+i-1}\wedge d\bar{\eps}_{k+i-1}}{h} + \zeta_i,
$$
where $\zeta_i \in \Lambda^{1,1}(U)$ satisfies  $\left(\partial\bar{\partial}\ln h\right)^{k-1}\wedge\zeta_i=0$.
We thus have
$$
\left(-\imath\partial\bar{\partial}\ln h\right)^{k-1}\wedge(\frac{\imath}{2}\partial\bar{\partial}h_1)\wedge\dots\wedge (\frac{\imath}{2} \partial\bar{\partial}h_s)= (k-1)!\cdot\frac{(-1)^{p+1}}{2^s}\cdot\imath^{d-1} \cdot \frac{d\eps_1d\bar{\eps}_1\dots d\eps_{d-1}d\bar{\eps}_{d-1}}{h^{d}}
$$
which implies
$$
d\mu = \frac{\pi}{d}\cdot\frac{(-1)^{k-1}}{2^{k-1}(k-1)!}\cdot\left(-\imath\partial\bar{\partial}\ln h\right)^{k-1} \wedge(\frac{\imath}{2}\partial\bar{\partial}h_1)\wedge\dots\wedge (\frac{\imath}{2}\partial\bar{\partial}h_s)
$$
and \eqref{eq:vol:form:proj:sp:express} follows.
\end{proof}

Consider now a point $\xx:=(X,[\omega])]$ in $\Pb\Omega E_D(\kappa)$.
Recall that the zeros of $\omega$ are denoted by $\{x_1,\dots,x_n\}$, where $x_1,\dots,x_r$ are fixed, and $x_{r+i}$ and $x_{r+s+i}$ are permuted by the Prym involution.
Let $\sigma: \cU \to \Omega E_D(\kappa)$ be a section of the tautological line bundle over a neighborhood $\cU$ of $\xx$ in $\Pb\Omega E_D(\kappa)$.
Let us write $\sigma(\uu):=(X_{\uu},\omega_{\uu})$ for all $\uu \in \cU$.
Define $$
h(\uu):=||\omega_\uu||^2=\frac{\imath}{2}\cdot\int_{X_\uu}\omega_\uu\wedge\ol{\omega}_\uu.
$$
For each $i\in \{1,\dots,s\}$, we choose a path $c_i$ from $x_{r+i}$ to $x_{r+s+i}$.
If $\cU$ is small enough,  $c_i$ determines a path in $X_\uu$ (up to isotopy)  joining two zeros of $\omega_\uu$ that are permuted by the Prym involution of $X_\uu$. We abusively denote this path on $X_\uu$ again by $c_i$, and define a function $h_i:\cU\to \R^+$ by
$$
h_i(\uu):=\frac{\left|\int_{c_i}\omega_\uu\right|^2}{||\omega_\uu||^2}.
$$
As a consequence of Proposition~\ref{prop:push:meas:proj:sp} we get

\begin{Theorem}\label{th:vol:form:proj:diff:express}
The measure $\mu$ on $\Pb\Omega E_D(\kappa)$ is the one associated with a volume form $d\mu$. In a neighborhood of $\xx$ we have
\begin{equation}\label{eq:vol:form:proj:diff:express}
d\mu=\frac{\pi}{2(s+2)}\cdot(-\imath \partial\bar{\partial}\ln h)\wedge\left( \frac{\imath}{2}\cdot\partial\bar{\partial}h_1\right)\wedge\dots\wedge\left( \frac{\imath}{2}\cdot\partial\bar{\partial}h_s\right).
\end{equation}
\end{Theorem}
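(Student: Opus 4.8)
The plan is to obtain the statement as a pointwise application of Proposition~\ref{prop:push:meas:proj:sp}, after matching the local period coordinates near $\xx=(X,[\omega])$ with the linear-algebraic setup of that proposition. First I would invoke Proposition~\ref{prop:affine:coord}: the period map identifies a neighborhood of $(X,\omega)$ in $\Omega E_D(\kappa)$ with an open subset of the fixed complex vector space $V=\wp^{-1}(W)\cap H^1(X,\{x_1,\dots,x_n\};\C)^-$, and since $||\omega||^2>0$, projectivizing identifies a neighborhood of $\xx$ with an open subset of $\Pb V^+$. The datum I would feed to Proposition~\ref{prop:push:meas:proj:sp} is then: the space $V$ of complex dimension $d=s+2$; the Hermitian form $H:=\wp^*\big((\cdot,\cdot)_{|W}\big)$ pulled back from $W$ along $\wp\colon V\to W$; and the independent linear forms $\xi_i:=c_i$, $i=1,\dots,s$.

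Before substituting, I would verify the three identifications the proposition requires. Since $(\cdot,\cdot)_{|W}$ is non-degenerate of signature $(1,1)$, the form $H$ has rank $k=2$ with $p=1$, and its kernel is exactly $\ker\wp\cap H^1(X,\{x_1,\dots,x_n\};\C)^-$, which has dimension $s=d-k$ by Lemma~\ref{lm:dual:basis:ker:abs:proj}; this is consistent with the exact sequence used in the proof of Proposition~\ref{prop:vol:def}. The $\xi_i$ are independent by the same lemma, and the non-vanishing of $\frac{\Omega^k}{k!}\wedge\xi_1\wedge\bar\xi_1\wedge\dots\wedge\xi_s\wedge\bar\xi_s$ (with $\Omega=\vartheta$) is precisely the content of Proposition~\ref{prop:vol:def}, which moreover shows that for $k=2$ the form $d\vol$ of the general proposition coincides with $\Xi$. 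Finally, for a local section $\sigma(\uu)=(X_\uu,\omega_\uu)$ one has $H(\sigma(\uu),\sigma(\uu))=(\omega_\uu,\omega_\uu)=||\omega_\uu||^2=h(\uu)$ and $|\xi_i\circ\sigma|^2/h=|\int_{c_i}\omega_\uu|^2/||\omega_\uu||^2=h_i(\uu)$, so the intrinsic functions of the theorem agree with those of the proposition. Because period coordinates are flat — the intersection pairing, the Hermitian form, and the relative cycles $c_i$ live on the fixed topological surface and are locally constant — the restriction of $\mu$ to the chart is exactly the measure of Proposition~\ref{prop:push:meas:proj:sp} attached to the fixed datum $(V,H,\{\xi_i\})$.

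With the dictionary in place the conclusion is a direct substitution: putting $k=2$ and $d=s+2$ into \eqref{eq:vol:form:proj:sp:express} turns $\left(-\imath\partial\bar\partial\ln h\right)^{k-1}$ into $-\imath\partial\bar\partial\ln h$ and the scalar $\frac{(-1)^{k-1}}{2^{k-1}(k-1)!}$ into $-\frac{1}{2}$, and distributing $(\imath/2)^s$ across the factors $\partial\bar\partial h_i$ reproduces the right-hand side of \eqref{eq:vol:form:proj:diff:express}. Independence of the section $\sigma$ is Remark~\ref{rk:vol:independ:section}, and since \eqref{eq:vol:form:proj:diff:express} is written intrinsically in $h$ and the $h_i$ the local expressions patch to a globally defined volume form, giving the first assertion of the theorem.

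The hard part will be the sign and orientation bookkeeping rather than any individual computation. The general formula \eqref{eq:vol:form:proj:sp:express} carries a signature-dependent sign inherited from $(-1)^{k-p}$ in its derivation through \eqref{eq:vol:form:push:loc:form}, so I would pin down the overall coefficient by checking directly, diagonalizing $H$ as in the proof of Proposition~\ref{prop:push:meas:proj:sp}, that for the relevant signature $(p,k-p)=(1,1)$ the form $(-\imath\partial\bar\partial\ln h)\wedge(\frac{\imath}{2}\partial\bar\partial h_1)\wedge\dots\wedge(\frac{\imath}{2}\partial\bar\partial h_s)$ is a positive $(d-1,d-1)$-form. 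This positivity is what forces the coefficient to be the positive $\frac{\pi}{2(s+2)}$ consistent with $\mu$ being a genuine (positive) measure.
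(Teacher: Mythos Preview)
Your proposal is correct and follows exactly the paper's approach: identify a neighborhood of $(X,\omega)$ with an open subset of $V$ via Proposition~\ref{prop:affine:coord}, observe that the Hermitian form on $V$ has rank $k=2$, and apply Proposition~\ref{prop:push:meas:proj:sp} with $d=s+2$ and $\xi_i=c_i$. You have spelled out more of the verification (the matching of $h$, $h_i$ with the abstract data, and the sign/positivity check) than the paper does, but the argument is the same.
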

\begin{proof}
By Proposition~\ref{prop:affine:coord}, $\Omega E_D(\kappa)$ is locally modeled on the space $V=\wp^{-1}(W)\cap H^1(X,\{x_1,\dots,x_n\};\C)^-$, where $\dim_\C W=2$ and the restriction of $(.,.)$ to $W$ is non-degenerate. It follows that the rank of the Hermitian form defined by $(.,.)$ on $V$ is equal to $2$.
Let $\xi_i, \; i=1,\dots,s$, denote the element of $(H^1(X,\{x_1,\dots,x_n\};\C))^*$ defined by $c_i$. We can now apply  Proposition~\ref{prop:push:meas:proj:sp}, with $H=(.,.)$, $k=2$, and $d=s+2$ to conclude.
\end{proof}

Theorem~\ref{th:Siegel:Veech} will be derived from the following result, whose proof is given in \textsection~\ref{sec:vol:Omg:E:D:2:2}
\begin{Theorem}\label{th:vol:Omg:E:D:2:2}
	Let $D\in \N, \; D >4$, be an integer such that $D\equiv 0,1,4 \; [8]$ and $D$ is not a square.  If $4 \, | \, D$ then we have
	\begin{equation}\label{eq:vol:Omg:E:D:2:2:formula}
		\mu(\Pb\Omega E_D(2,2)^\odd)=\frac{\pi^2}{36} \left(\chi(W_D(2))+ b_D\chi(W_{D/4}(2))+ 9\chi(W_{D}(0^3))\right)
	\end{equation}
	where
	$$
	b_D= \left\{
	\begin{array}{ll}
		0 & \text{ if } D/4 \equiv 2,3 \, [4] \\
		4 & \text{ if } D/4 \equiv 0 \, [4] \\
		3 & \text{ if } D/4 \equiv 1 \, [8] \\
		5 & \text{ if } D/4 \equiv 5 \, [8]. \\
	\end{array}
	\right.
	$$
	
	If $D \equiv 1 \, [8]$ then we have
	\begin{equation}\label{eq:vol:Omg:E:D:2:2:D:odd}
		\mu(\Pb\Omega E_{D+}(2,2)^\odd) = \mu(\Pb\Omega E_{D-}(2,2)^\odd)=\frac{\pi^2}{72}\left( 2\chi(W_D(2))+9\chi(W_D(0^3))\right).
	\end{equation}
\end{Theorem}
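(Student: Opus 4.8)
The plan is to assemble the machinery of \textsection\ref{sec:curv:curent:vol}--\textsection\ref{sec:W:Teich:curves} into a single intersection computation. First I would reduce the statement to a computation on the cover $\XD$: since $\hat\rho_2\colon \XD \to \Pb\Omega E_D(2,2)^\odd$ has degree $24$ and $d\mu$ on $\XD$ is the pullback of the volume form, one has $\mu(\XD)=24\,\mu(\Pb\Omega E_D(2,2)^\odd)$, so it suffices to evaluate $\mu(\XD)$. For this I would invoke the master identity \eqref{eq:vol:n:Theta}, which rewrites $\mu(\XD)$ as $\frac{-\pi}{24}\langle[\Theta],[\omega_{\tCD/\hXD}]\rangle$; this is the decisive move, turning the transcendental integral of a volume form into an algebraic pairing of the closed current $[\Theta]$ on the $3$-fold $\tCD$ against the class of the relative dualizing sheaf.

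The second step is to produce a computable representative of $[\omega_{\tCD/\hXD}]$, which is the content of Proposition~\ref{prop:rel:cotangent:class}. Here I would use the quotient family $\tilde{\cE}_D = \tCD/\langle\tilde\tau\rangle$ of $5$-pointed genus-one semi-stable curves, pass to its stable $1$-pointed model $\varpi\colon \cE_D \to \hXD$, and compare $\omega_{\tCD/\hXD}$ with the pullback of $\omega_{\cE_D/\hXD}$; the discrepancy is supported on the branch and exceptional divisors and is explicit. Since $\omega_{\ol{\cC}_{1,1}/\ol{\cM}_{1,1}}$ is the pullback of an explicit $\Q$-divisor on $\ol{\cM}_{1,1}$, the induced morphism $\hXD \to \ol{\cM}_{1,1}$ then lets me write $[\omega_{\tCD/\hXD}]$ as a $\Q$-linear combination of the tautological sections $\Sigma_i$ of $\tilde\pi$ and of vertical divisors $\tilde\pi^*\cD$ with $\cD \subset \partial\hXD$.

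The third step is to pair this expansion with $[\Theta]$ using properties (a)--(c). Property (c) annihilates every component of $\partial_\infty\tCD$, and together with property (a) this forces the vertical terms lying over $\partial_\infty\hXD$ to drop out, so only divisors over the finite boundary $\partial_1\hXD$ remain; on those, property (a) evaluates $\langle[\Theta],[\tilde\pi^*\cD]\rangle$ as $8\pi\,c_1(\OO(-1))\cdot[\cD]$, while property (b) evaluates the section terms as genuine integrals $\int_{\Sigma_i\cap\cC_D}\Theta$. By Theorem~\ref{th:vol:XD} the divisors $\cD$ contributing to the sum are finite covers of $W_D(2)$, $W_{D/4}(2)$ and $W_D(0^3)$ sitting inside $\partial_1\hXD$, and for each such curve $c_1(\OO(-1))\cdot[\cD]$ is proportional to its Euler characteristic (the tautological bundle restricted to a Teichm\"uller curve carries the metric whose curvature computes $\chi$ by Gauss--Bonnet). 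Inserting the covering degrees and multiplicities determined in \textsection\ref{sec:triple:tori:comput} and \textsection\ref{sec:W:Teich:curves} turns each term into a rational multiple of $\chi(W_D(2))$, $\chi(W_{D/4}(2))$, or $\chi(W_D(0^3))$.

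Finally I would collect the constants and separate the two congruence cases. For $4\mid D$ the finite boundary $\partial_1\hXD$ meets covers of both $W_D(2)$ and $W_{D/4}(2)$, and the integer $b_D$ records the number and type of the $W_{D/4}(2)$ components as a function of $D/4 \bmod 8$; together with the coefficient $9$ coming from the multiplicity of the triple-tori boundary this yields \eqref{eq:vol:Omg:E:D:2:2:formula}. For $D\equiv 1\,[8]$ there is no $W_{D/4}$ stratum, the locus splits as $\Omega E_{D+} \sqcup \Omega E_{D-}$, and the two components carry combinatorially identical boundary data (their finite boundaries cover $W_D(2)$ and $W_D(0^3)$ with equal degrees), which gives the common value \eqref{eq:vol:Omg:E:D:2:2:D:odd} and the equality of the two volumes. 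The main obstacle is not the formal pairing but the bookkeeping concentrated in the third and fourth steps: correctly matching each component of $\partial_1\hXD$ with a cover of the appropriate $W$-curve, computing its covering degree and the exact multiplicity with which it enters the expansion of $[\omega_{\tCD/\hXD}]$, and verifying the case-dependent count $b_D$ --- this is where the boundary geometry of \textsection\ref{sec:classify:bdry:str:XD}--\textsection\ref{sec:geometry:bdry:XD} and the degree computations of \textsection\ref{sec:triple:tori:comput}--\textsection\ref{sec:W:Teich:curves} do the real work.
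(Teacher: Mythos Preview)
Your proposal is correct and follows the paper's approach almost exactly: reduce to $\mu(\XD)$ via the degree-$24$ cover, invoke \eqref{eq:vol:n:Theta}, expand $[\omega_{\tCD/\hXD}]$ via Proposition~\ref{prop:rel:cotangent:class}, kill the irrelevant terms, and evaluate the survivors via the Euler characteristics computed in \textsection\ref{sec:triple:tori:comput}--\textsection\ref{sec:W:Teich:curves}.

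One imprecision in your third step is worth flagging. You say that property~(c) handles $\partial_\infty\tCD$ and ``only divisors over the finite boundary $\partial_1\hXD$ remain''; but several terms over $\partial_1\hXD$ also vanish, and not via properties (a)--(c). In the expansion of Proposition~\ref{prop:rel:cotangent:class} the terms $[\ol{\cT}^0_{1,0}]$ and $[\ol{\cT}^{a,0}_{2,0}]$ lie over Group~I strata, and the section terms $[\Sigma_i]$ ($i=1,\dots,4$) are Prym-fixed; in each case the restriction of $\Theta$ is identically zero because $\varphi_c\equiv 0$ there (the differential vanishes on those fiber components, or the base point coincides with its Prym image), so $\langle[\Theta],\cdot\rangle=0$ by Proposition~\ref{prop:int:Theta:section} and the direct argument in the proof of Theorem~\ref{th:vol:XD}. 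This is a different mechanism from property~(c). Second, the surviving term $[\ol{\cT}^{a,1}_{2,0}]$ is \emph{not} of the form $\tilde\pi^*\cD$---it is only one irreducible component of $\tilde\pi^{-1}(\ol{\cS}^a_{2,0})$---so property~(a) does not apply to it directly; the paper instead computes $\langle[\Theta],[\ol{\cT}^{a,1}_{2,0}]\rangle$ by integrating $\Theta$ over this smooth divisor (Proposition~\ref{prop:int:Theta:on:T:a:1:2:0:e}), using that $\Theta$ extends smoothly across Group~I strata. These are refinements of your sketch rather than gaps, and the overall architecture is right.
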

\section{Prym eigenforms in genus three}\label{sec:eigen:form:g3}
\subsection{Generalities}\label{subsec:Prym:e:form:g3:general}
We now focus in the case where $(X,\omega)$ is a Prym  eigenform in $\Omega E_D(2,2)^\odd$.
Let $Y:=X/\langle \tau \rangle$, where $\tau$ is the Prym involution of $X$. Then we have $g(Y)=g(X)-2=1$. The Riemann-Hurwitz formula implies that the projection $X \to Y$ is branched over 4 points. This means that $\tau$ has exactly $4$ fixed points.
Since $\tau^*\omega=-\omega$, the zero set of  $\omega$ is invariant by $\tau$. It is not difficult to  see that $(X,\omega)\in \Omega \cM_3(2,2)^\odd$ if and only if the zeros of $\omega$ are permuted by $\tau$ (see \cite{LN:components}). It follows from Proposition~\ref{prop:affine:coord} and Lemma~\ref{lm:dual:basis:ker:abs:proj} that $\dim_\C \Omega E_D(2,2)^\odd=3$.     
The classification of the components of $\Omega E_D(2,2)^{\odd}$ is obtained in \cite{LN:components}. In particular, we  have that $\Omega E_D(2,2)^\odd$ is connected if $D \equiv 0 \mod 4$, and in the case $D\equiv 1 \mod 8$, $\Omega E_D(2,2)^\odd$ has two connected components denoted by $\Omega E_{D\pm}(2,2)^\odd$.

\begin{Lemma}\label{lm:Prym:var:1:2:polarization}
Let $(X,\omega)$ be a Prym eigenform in genus $3$ with Prym involution $\tau$. Then the intersection form on $H^1(X,\Z)^-$ is of type $(1,2)$, that is there is a basis $(a_1,b_1,a_2,b_2)$ of $H^1(X,\Z)^-$ in which the intersection form is given by the matrix $\left(\begin{smallmatrix} 0 & 1 & 0 & 0 \\ -1 & 0 & 0 & 0\\ 0 & 0 & 0 & 2\\ 0 & 0 & -2 & 0 \end{smallmatrix} \right)$.
\end{Lemma}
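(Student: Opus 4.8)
The plan is to recast the statement as a computation of the polarization type of the rank-four lattice $\Lambda^- := H^1(X,\Z)^-$ carrying the restriction $Q$ of the (alternating) intersection form of $\Lambda := H^1(X,\Z)$. First I would check that $Q$ restricts to a \emph{nondegenerate} alternating form on $\Lambda^-$. Since $\tau^*$ is a symplectic automorphism of the unimodular lattice $\Lambda$ and $(\tau^*)^2 = 1$, it is self-adjoint for $Q$; hence for $u\in H^1(X,\Q)^+$ and $v\in H^1(X,\Q)^-$ one gets $Q(u,v)=Q(\tau^*u,v)=Q(u,\tau^*v)=-Q(u,v)$, so the eigenspaces $H^1(X,\Q)^\pm$ are $Q$-orthogonal, and $Q$ is therefore nondegenerate on each of them. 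By the elementary-divisor normal form for alternating forms on a free $\Z$-module, there is then a basis $(a_1,b_1,a_2,b_2)$ of $\Lambda^-$ realizing the stated block matrix with $0<d_1\mid d_2$; the assertion is exactly that $(d_1,d_2)=(1,2)$.

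Next I would compute the discriminant $\det Q|_{\Lambda^-}=(d_1d_2)^2$ and show it equals $4$. The sublattice $\Lambda^-$ is saturated (primitive) in the unimodular lattice $\Lambda$, with $Q$-orthogonal complement the rank-two lattice $\Lambda^+ := H^1(X,\Z)^+$. By Nikulin's theory of discriminant forms, a primitive sublattice of a unimodular lattice and its orthogonal complement have isomorphic discriminant groups, so $|\det Q|_{\Lambda^-}|=|\det Q|_{\Lambda^+}|$, and it suffices to show $\det Q|_{\Lambda^+}=4$. Here the projection $\pi\colon X\to Y=X/\langle\tau\rangle$ enters: for $x,y\in H^1(Y,\Z)$ one has $\langle\pi^*x,\pi^*y\rangle_X=(\deg\pi)\langle x,y\rangle_Y=2\langle x,y\rangle_Y$, so $\pi^*$ multiplies the intersection form by $2$. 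Since $H^1(Y,\Z)$ is unimodular of rank two, \emph{provided} $\pi^*\colon H^1(Y,\Z)\to\Lambda^+$ is an isomorphism, $\Lambda^+$ is the standard symplectic lattice scaled by $2$, whence $\det Q|_{\Lambda^+}=2^2\cdot 1=4$. Combining the two steps gives $(d_1d_2)^2=4$, so $d_1d_2=2$, and $d_1\mid d_2$ forces $(d_1,d_2)=(1,2)$.

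The main obstacle is precisely the surjectivity of $\pi^*$ onto the invariant part $\Lambda^+$. The standard relations $\pi_*\pi^*=\deg\pi=2$ and $\pi^*\pi_*=1+\tau^*$ (which equals $2$ on $\Lambda^+$) yield only $2\Lambda^+\subseteq\pi^*H^1(Y,\Z)\subseteq\Lambda^+$, so a priori the index is $1$, $2$, or $4$; excluding the proper inclusions is where the ramification of $\pi$ must be used. I would settle this by an explicit homological model of the branched double cover: taking generators $\alpha,\beta$ of $H_1(Y,\Z)$ together with small loops $\delta_1,\dots,\delta_4$ about the four branch points (with $\sum_i\delta_i=0$), the cover is classified by the monodromy $H_1(Y\setminus\{q_i\};\Z/2)\to\Z/2$ sending each $\delta_i\mapsto 1$, and a direct computation of $H_1(X,\Z)$ with its $\tau$-action then exhibits a basis of $\Lambda^-$ realizing the type $(1,2)$ form (the four branch points contributing one pair of ``vanishing'' cycles of intersection $2$, and the torus directions a dual pair of intersection $1$). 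This explicit computation both establishes $\det Q|_{\Lambda^+}=4$ and produces the desired basis directly, bypassing the abstract index bookkeeping; alternatively one may invoke the classical formula for the polarization type of the Prym variety of a double cover branched at $2n$ points over a curve of genus $h$, namely $(\underbrace{1,\dots,1}_{n-1},\underbrace{2,\dots,2}_{h})$, which for $h=1$, $n=2$ is exactly $(1,2)$.
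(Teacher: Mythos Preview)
Your approach is correct and genuinely different from the paper's. The paper argues topologically: it shows that any two group morphisms $\chi,\chi'\colon\pi_1(S')\to\Z/2\Z$ (with $S'$ a four-punctured torus) sending each peripheral loop to $1$ are related by a homeomorphism of the punctured torus, so all such branched double covers are homeomorphic; having reduced to a single topological type, it then cites the explicit computation carried out for $\Omega E_D(4)$ in \cite[\S4]{LN:H4}. Your route is lattice-theoretic: Nikulin's theorem reduces the question to $\det Q|_{\Lambda^+}$, and then the multiplicativity of $\pi^*$ on the intersection form finishes once $\pi^*\colon H^1(Y,\Z)\to\Lambda^+$ is known to be surjective. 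What the paper's argument buys is brevity plus the side fact that the topological cover is unique; what yours buys is a self-contained structural argument that generalizes immediately via the classical Prym polarization formula you quote (Mumford), giving type $(1,2)$ for $(h,n)=(1,2)$ with no further work. Regarding the surjectivity of $\pi^*$ that you flag as the obstacle: it does hold for branched double covers with nonempty branch locus, and can be seen directly by checking that $\ker\bigl(\pi_*\colon H_1(X,\Z)\to H_1(Y,\Z)\bigr)=H_1(X,\Z)^-$ (surjectivity of $\pi_*$ uses the branch points: if a loop in $Y$ has nontrivial monodromy, concatenate with a small loop $\delta_i$ about a branch point to make it lift to a closed loop, without changing its class in $H_1(Y,\Z)$) and then dualizing the resulting short exact sequence, which identifies $\mathrm{im}(\pi^*)$ with the annihilator of $H_1(X,\Z)^-$, i.e.\ with $\Lambda^+$.
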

\begin{proof}
Let $p_1,\dots,p_4$ be the fixed points of the Prym involution, and $q_1,\dots,q_4$ their image in $Y=X/\langle \tau \rangle$ ($q_i$ is the image of $p_i$).  Then the restriction of the projection $\pi: X \to Y$ to $X-\{p_1,\dots,p_4\}$ is a covering map of degree $2$ from $X':=X\setminus \{p_1,\dots,p_4\}$ onto $Y'=Y-\{q_1,\dots,q_4\}$. Such a covering is determined up to homeomorphism by the image of $\pi_1(X')$ in $\pi_1(Y')$.
In this case, $\pi_1(X')$ is the kernel of a group morphism $\chi: \pi_1(Y')\to \Z/2\Z$, which sends the boundary of a small disc about $q_i$ to $1\in \Z/2\Z$, for all $i=1,\dots,4$.

Consider now a topological torus $S$ with $4$ marked points $s_1,\dots,s_4$. Denote by $S'$ the punctured surface $S-\{s_1,\dots,s_4\}$. Let $\chi, \chi': \pi_1(S')\to \Z/2\Z$ be two groups morphisms that map the boundary of a small disc about $s_i$ to $1$, for all $i=1,\dots,4$.
We claim that there always exist a homeomorphism $\varphi$ of $S$ fixing the set $\{s_1,\dots,s_4\}$ pointwise such that $\chi'=\chi\circ\varphi$.
To see this, we first remark that $\chi$ and $\chi'$ factor through some morphisms from $H_1(S',\Z)$ to $\Z/2\Z$.
One can always find a pair of simple closed curves $\{a,b\}$ (resp. a pair of simple closed curves $\{a',b'\}$) in $S'$  such that $(a,b)$ (resp. $(a',b')$) is a basis of $H_1(S,\Z)$ and $\chi(a)=\chi(b)=0$ (resp. $\chi'(a')=\chi'(b')=0$).
The complements of $a\cup b$ and of $a'\cup b'$ in $S$ are both topological disc that contains the points $\{s_1,\dots,s_n\}$ in their interior.
We deduce that there  exists a homeomorphism $\varphi: S \to S$ that fixes each of the points in $\{s_1,\dots,s_4\}$ and satisfies $\varphi(a)=a', \varphi(b)=b'$, which proves the claim.

The previous claim means that if $(X,\omega)$ and $(X',\omega')$ are two Prym eigenforms in genus $3$ then $H^1(X,\Z)^-\simeq H^1(X',\Z)^-$.  In \cite[\textsection 4]{LN:H4}, the statement of the lemma was shown for the case $(X,\omega)\in \Omega E_D(4)$. Thus, the same holds true for all Prym eigenform in genus $3$.
\end{proof}

The following lemma follows from direct calculations.
\begin{Lemma}\label{lm:matrix:generator:O:D}
Let $T\in \Mb_{4}(\Z)$ is a self-adjoint matrix with respect to the skew-symmetric form $J=\left(\begin{smallmatrix} 0 & 1 & 0 & 0 \\ -1 & 0 & 0 & 0\\ 0 & 0 & 0 & 2\\ 0 & 0 & -2 & 0 \end{smallmatrix} \right)$. Then we have $T=\left(
\begin{array}{cc}
e\cdot \Id_2 & 2B \\

B^* & f\cdot\Id_2
\end{array}
\right)
$, where $e,f\in \Z$, $B\in \Mb_2(\Z)$, and $\left(\begin{smallmatrix} a & b \\ c & d \end{smallmatrix}\right)^*=\left(\begin{smallmatrix} d & -b \\ -c & a \end{smallmatrix}\right)$.
\end{Lemma}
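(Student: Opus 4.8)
The plan is to translate the self-adjointness condition into a single matrix identity and then read it off block by block. Writing $\langle x,y\rangle = x^\top J y$ for the bilinear form attached to $J$, the requirement that $T$ be self-adjoint, namely $\langle Tx,y\rangle=\langle x,Ty\rangle$ for all $x,y$, is equivalent to $T^\top J = J T$. Since $J=\left(\begin{smallmatrix} J_1 & 0 \\ 0 & 2J_1\end{smallmatrix}\right)$ with $J_1=\left(\begin{smallmatrix} 0 & 1 \\ -1 & 0\end{smallmatrix}\right)$, I would write $T$ in $2\times 2$ blocks as $T=\left(\begin{smallmatrix} P & Q \\ R & S\end{smallmatrix}\right)$ and expand the identity.

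Expanding $T^\top J = J T$ blockwise yields four relations: $P^\top J_1 = J_1 P$ and $S^\top J_1 = J_1 S$ on the diagonal, together with $2R^\top J_1 = J_1 Q$ and $Q^\top J_1 = 2 J_1 R$ off the diagonal (the latter two being equivalent to one another). The key computational fact, which I would isolate as a one-line observation, is that for a $2\times 2$ matrix $M$ one has $-J_1 M^\top J_1 = M^*$; that is, the symplectic adjoint with respect to $J_1$ is exactly the adjugate operation $\left(\begin{smallmatrix} a & b \\ c & d\end{smallmatrix}\right)\mapsto\left(\begin{smallmatrix} d & -b \\ -c & a\end{smallmatrix}\right)$ appearing in the statement. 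Using $J_1^2=-\Id_2$, it follows that $M^\top J_1 = J_1 M$ is equivalent to $M = M^*$, which forces $b=c=0$ and $a=d$, i.e. $M$ is a scalar matrix.

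Applying this to the two diagonal relations gives $P = e\cdot\Id_2$ and $S = f\cdot\Id_2$ with $e,f\in\Z$, integrality being automatic from $T\in\Mb_4(\Z)$. For the off-diagonal part, the relation $J_1 Q = 2R^\top J_1$ rearranges, via $J_1^{-1}=-J_1$ and $-J_1 R^\top J_1 = R^*$, to $Q = 2R^*$; setting $B:=R^*\in\Mb_2(\Z)$ then gives $R = B^*$ (because $(\,\cdot\,)^*$ is an involution) and $Q = 2B$, which is precisely the asserted form, and one checks the remaining relation $Q^\top J_1 = 2J_1 R$ is automatically satisfied. I expect no genuine obstacle here: the proof is a direct block computation, and the only point demanding care is fixing the adjoint convention and recognising the adjugate as the $J_1$-adjoint on the $2\times 2$ blocks.
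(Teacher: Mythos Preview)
Your proof is correct and is precisely the kind of direct calculation the paper has in mind; the paper itself offers no argument beyond the phrase ``follows from direct calculations,'' so there is nothing further to compare. Your identification of the adjugate $M\mapsto M^*$ with the $J_1$-symplectic adjoint $M\mapsto -J_1 M^\top J_1$ is the clean way to organize the computation, and all four block relations are handled correctly.
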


In what follows, given two complex numbers $\alpha$ and $\beta$, we define
$$
\alpha\wedge \beta :=\det\left(\begin{array}{cc}
	\Re(\alpha) & \Re(\beta) \\
	\Im(\alpha) & \Im(\beta)
\end{array} \right) =\Im(\bar{\alpha}\beta) \in \R.
$$
\begin{Proposition}\label{prop:eigen:form:per:eq}
Let $(X,\omega) \in \Omega \cM_3$ be a Prym eigenform for a quadratic  order $\cO_D$ in genus $3$. Let $\{a_1,b_1,a_2,b_2\}$ be a symplectic basis of $H_1(X,\Z)^-$, where $\langle a_1,b_1\rangle=1$ and $\langle a_2,b_2\rangle =2$.
Assume that $D$ is not a square.
Then there exists a generator $T$ of $\cO_D$ such that
\begin{itemize}
\item[(a)] the matrix of $T$ in the basis $\{a_1,b_1,a_2,b_2\}$ has the form $T=\left(
\begin{array}{cc}
e\Id_2 & 2B \\
B^* & 0_2 
\end{array}
\right)$, where $B=\left(\begin{smallmatrix} a & b \\ c & d \end{smallmatrix} \right)\in \Mb_2(\Z)$ satisfies $\gcd(a,b,c,d,e)=1$ and $D=e^2+8\det(B)$,

\item[(b)] $T^*\omega=\lambda\cdot\omega$, where $\lambda$ is a positive root of the polynomial $X^2-eX -2\det(B)$,

\item[(c)] $(\omega(a_2) \; \omega(b_2))=\frac{2}{\lambda}\cdot(\omega(a_1) \; \omega(b_1))\cdot B$.
\end{itemize}
As a consequence, for a given $D$, if $\omega(a_1)\wedge\omega(b_1)>0$ and $\omega(a_2) \wedge\omega(b_2)>0$, then the ratio $\omega(a_2)\wedge\omega(b_2)/\omega(a_1)\wedge\omega(b_1)$ belongs to a finite set.
\end{Proposition}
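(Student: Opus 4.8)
The plan is to read off everything from the real multiplication $\rho\colon\cO_D\to\End(\Prym(X,\tau))$ and its induced action on periods. Since $D$ is not a square, $\cO_D$ is a genuine rank-two order, generated as a ring over $\Z$ by a single element $t$; I start from such a generator and normalize its matrix on $H_1(X,\Z)^-$. By Lemma~\ref{lm:Prym:var:1:2:polarization} the intersection form in the basis $\{a_1,b_1,a_2,b_2\}$ is $J$, and $\rho(t)$ is integral and self-adjoint for $J$, so Lemma~\ref{lm:matrix:generator:O:D} gives $\rho(t)=\left(\begin{smallmatrix} e_0\Id_2 & 2B_0 \\ B_0^* & f_0\Id_2\end{smallmatrix}\right)$. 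Replacing $t$ by $t-f_0$ (a ring generator of the same order, as $\Z[t-f_0]=\Z[t]$) kills the lower-right block and produces $T=\left(\begin{smallmatrix} e\Id_2 & 2B \\ B^* & 0_2\end{smallmatrix}\right)$ with $e=e_0-f_0$, $B=B_0$. Using the adjugate identity $BB^*=B^*B=\det(B)\Id_2$, a block multiplication yields $T^2=eT+2\det(B)\Id_4$, so $t$ satisfies $X^2-eX-2\det(B)$, of discriminant $e^2+8\det(B)$; as this polynomial is irreducible ($D$ non-square) and $t$ generates $\cO_D$, the discriminant is $D$, giving $D=e^2+8\det(B)$. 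Finally $\gcd(a,b,c,d,e)=1$ comes from properness of $\rho$: if $m:=\gcd>1$ then $T=mT'$ with $T'$ integral and $J$-self-adjoint, so properness forces $T'=\rho(t')$ with $t=mt'$, whence $t\in\Z$, contradicting that $t$ generates the rank-two order $\cO_D$. This establishes (a).

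Next I would obtain (b) and (c) together. Since $\omega\in\Omega(X)^-$ is an eigenform for $\cO_D$, we have $T^*\omega=\lambda\omega$ for some real $\lambda$ (real because $T$ is $\Z$-integral and $D>0$); replacing $t$ by $-t$ if necessary—which again has the shape of (a) with $(e,B)\mapsto(-e,-B)$ and $\det(B)$ unchanged—I may assume $\lambda>0$. Dualizing, $T^*\omega=\lambda\omega$ means precisely that the period row vector $\Omega=(\omega(a_1),\omega(b_1),\omega(a_2),\omega(b_2))$ satisfies $\Omega T=\lambda\Omega$. Writing $\Omega=(p\mid q)$ with $p=(\omega(a_1),\omega(b_1))$ and $q=(\omega(a_2),\omega(b_2))$ and multiplying out in blocks, the equation splits into $ep+qB^*=\lambda p$ and $2pB=\lambda q$. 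The second identity is exactly (c); substituting it into the first and using $BB^*=\det(B)\Id_2$ gives $(e+2\det(B)/\lambda)\,p=\lambda p$, i.e. $\lambda^2-e\lambda-2\det(B)=0$, which is (b).

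For the final assertion I would compute the ratio directly from (c). With $\omega(a_2)=\tfrac{2}{\lambda}(a\,\omega(a_1)+c\,\omega(b_1))$ and $\omega(b_2)=\tfrac{2}{\lambda}(b\,\omega(a_1)+d\,\omega(b_1))$, expanding $\omega(a_2)\wedge\omega(b_2)=\Im\big(\overline{\omega(a_2)}\,\omega(b_2)\big)$ and using that $\wedge$ is $\R$-bilinear and alternating, the diagonal terms vanish and one gets $\omega(a_2)\wedge\omega(b_2)=\tfrac{4\det(B)}{\lambda^2}\,\omega(a_1)\wedge\omega(b_1)$. Thus the ratio equals $4\det(B)/\lambda^2$. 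The hypotheses $\omega(a_1)\wedge\omega(b_1)>0$ and $\omega(a_2)\wedge\omega(b_2)>0$ force $\det(B)>0$; combined with $D=e^2+8\det(B)$ this gives $e^2<D$, so $e$ ranges over the finite set $\{e\in\Z:|e|<\sqrt{D}\}$ while $\det(B)=(D-e^2)/8$ is then determined by $e$. Since $\lambda=(e+\sqrt{D})/2$ is the positive root, the ratio $4\det(B)/\lambda^2$ takes only finitely many values for fixed $D$, as claimed.

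I expect the only delicate point to be the normalization and primitivity bookkeeping—namely, arranging simultaneously $f=0$, the sign $\lambda>0$, and $\gcd(a,b,c,d,e)=1$ for one and the same generator $T$, and invoking properness at the right place. Once the normal form is pinned down, the verifications of (b), (c) and the finiteness statement are routine linear algebra.
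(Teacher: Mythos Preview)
Your proof is correct and follows essentially the same path as the paper's: normalize a generator via Lemma~\ref{lm:matrix:generator:O:D} to kill the lower-right block, read off the quadratic relation $T^2=eT+2\det(B)\Id_4$ and hence $D=e^2+8\det(B)$, invoke properness for the gcd condition, extract (c) from the block form of $\Omega T=\lambda\Omega$, and finish the finiteness by noting that $\det(B)>0$ together with $D=e^2+8\det(B)$ confines $(e,\det B,\lambda)$ to a finite set. The only cosmetic differences are that the paper deduces (b) directly from the minimal polynomial of $T$ rather than by substituting (c) back, and bounds $\det(B)$ first rather than $e$; your parenthetical ``real because $T$ is $\Z$-integral and $D>0$'' is slightly loose (integrality alone does not force real eigenvalues), but the conclusion is immediate from the quadratic relation with discriminant $D>0$.
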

\begin{proof}
Let $T \in \End(\Prym(X,\tau))$ be a generator  of $\cO_D$. Since the action of $T$ on $H_1(X,\Z)^-$ is self-adjoint with respect to the intersection form $\langle.,.\rangle$,  by Lemma~\ref{lm:matrix:generator:O:D} it is given by a matrix of the form
$\left(
\begin{array}{cc}
e\cdot \Id_2 & 2B \\
B^* & f\cdot\Id_2
\end{array}
\right)
$, with $B=\left(\begin{smallmatrix} a & b \\ c & d \end{smallmatrix} \right) \in \Mb_2(\Z)$ in the basis $\{a_1,b_1,a_2,b_2\}$.
By replacing $T$ by $T-f$, we can assume that $f=0$. The condition that the subring of $\End(\Prym(X,\tau))$ generated by $T$ is proper means that $\gcd(e,a,b,c,d)=1$. Note that $T$ satisfies
$$
T^2=eT+2\det(B)\Id_4.
$$
Since $T$ generates $\cO_D$, we must have $D=e^2+8\det(B)$.
By assumption, there is a real number $\lambda$ such that $T^*\omega=\lambda\cdot\omega$. Thus we have
\begin{equation}\label{eq:eigenform:matrix:form}
(\omega(a_1) \; \omega(b_1) \; \omega(a_2) \; \omega(b_2))\cdot T=\lambda\cdot (\omega(a_1) \; \omega(b_1) \; \omega(a_2) \; \omega(b_2)).
\end{equation}
Note that $\lambda$ must be a root of the polynomial $P(X)=X^2- eX-2\det(B)$. If $D$ is not a square then $\det(B) \neq 0$ and $\lambda \neq 0$. Replacing $T$ by $-T$ if necessary, we can always suppose that $\lambda >0$.  
Equality \eqref{eq:eigenform:matrix:form} implies that
\begin{equation}\label{eq:eigenform:rel:periods:basis}
(\omega(a_2) \; \omega(b_2))=\frac{2}{\lambda}\cdot(\omega(a_1) \; \omega(b_1))\cdot B.
\end{equation}
It follows that 
$$
\omega(a_2)\wedge \omega(b_2)=\left(\frac{2}{\lambda}\right)^2\cdot\det(B)\cdot\omega(a_1)\wedge\omega(b_1).
$$
If $\omega(a_1)\wedge\omega(b_1)>0$ and $\omega(a_2)\wedge\omega(b_2)>0$, then 
$\det(B) >0$. Since $\det(B) < D$, it follows that $\det(B)$ belongs to a finite set. As a consequence $e$ also belongs to a finite set. Since $\lambda$ is the positive root of the polynomial $X^2 - eX-2\det(B)$, we conclude that
$$
\frac{\omega(a_2)\wedge\omega(b_2)}{\omega(a_1)\wedge\omega(b_1)} =\left( \frac{2}{\lambda}\right)^2\cdot\det(B)
$$
belongs to a finite set.
\end{proof}

Let $K_D=\Q(\sqrt{D})$.  
Considering homology with rational coefficients, we have the following
\begin{Lemma}\label{lm:D:no:square:hol:map:inj}
	Let $(a_1,b_1,a_2,b_2)$ be a basis of $H_1(X,\Q)^-$ such that $\langle a_i,b_i\rangle=1$. 
	Define $\hol: H_1(X,\Q)^-\to \C, \; c \mapsto \omega(c)$. 
	If $D$ is not a square then $\hol$ realizes an isomorphism of $\Q$-vector spaces from $H_1(X,\Q)^-$ and $K_D\cdot\omega(a_1)+K_D\cdot\omega(b_1) \subset \C$.
\end{Lemma}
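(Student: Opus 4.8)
The plan is to exploit the real multiplication: the generator $T$ of $\cO_D$ furnished by Proposition~\ref{prop:eigen:form:per:eq} turns $H_1(X,\Q)^-$ into a vector space over $K_D$ for which $\hol$ is linear, and once this structure is in place injectivity comes almost for free.

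First I would record the algebraic setup. By Proposition~\ref{prop:eigen:form:per:eq} there is a generator $T$ of $\cO_D$ acting on $H_1(X,\Q)^-$, self-adjoint for the intersection form, with $T^2=eT+2\det(B)$ and $D=e^2+8\det(B)$; its eigenvalue on $\omega$ is the positive root $\lambda=(e+\sqrt D)/2$. Since $D$ is not a square, the polynomial $X^2-eX-2\det(B)$ is irreducible over $\Q$, so $\Q[T]\cong\Q(\lambda)=K_D$. As $H_1(X,\Q)^-$ has dimension $4$ over $\Q$ (Lemma~\ref{lm:Prym:var:1:2:polarization}), it is therefore a $2$-dimensional vector space over $K_D$, the element $\sqrt D$ acting as $2T_*-e\cdot\id$. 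I regard $\C$ as a $K_D$-vector space through the embedding $\iota\colon K_D\hookrightarrow\R$ sending $\sqrt D$ to its positive real root, so that $\iota(\lambda)=\lambda$. Because $(T^*\omega)(c)=\omega(T_*c)$ and $T^*\omega=\lambda\omega$, one has $\hol(T_*c)=\lambda\,\hol(c)$, that is, $\hol$ is $K_D$-linear for these two module structures.

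Next I would check that $a_1,b_1$ is a $K_D$-basis. Self-adjointness of $T$ together with antisymmetry of $\langle\cdot,\cdot\rangle$ forces $\langle a_1,T_*a_1\rangle=0$; hence if $b_1$ lay in the $K_D$-line $K_D\cdot a_1=\Q a_1+\Q T_*a_1$, we would obtain $\langle a_1,b_1\rangle=0$, contradicting $\langle a_1,b_1\rangle=1$. So $a_1,b_1$ are $K_D$-independent, hence a basis, and $K_D$-linearity gives immediately $\hol(H_1(X,\Q)^-)=K_D\,\omega(a_1)+K_D\,\omega(b_1)$.

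It remains to prove that $\hol$ is injective, and this is the only step with analytic content. Were $\hol$ not injective, its image, being a $K_D$-subspace of $\C$ of dimension at most $1$, would lie in a single $K_D$-line $K_D\cdot z$; since $\iota(K_D)\subset\R$ this line sits inside the real line $\R z$, so every period $\omega(c)$ with $c\in H_1(X,\Q)^-$ would be a real multiple of $z$. But the periods span $\C$ over $\R$: the harmonic classes $\Re\omega,\Im\omega\in H^1(X,\R)^-$ are $\R$-independent (otherwise $\omega$ would be a complex multiple of a real harmonic form, forcing the Hodge norm $\|\omega\|^2$ to vanish, whence $\omega=0$), and the $\tau$-equivariant pairing $H_1(X,\R)^-\times H^1(X,\R)^-\to\R$ is perfect, so $c\mapsto(\Re\omega(c),\Im\omega(c))$ maps $H_1(X,\R)^-$ onto $\R^2$; as $H_1(X,\Q)^-$ is $\R$-dense in $H_1(X,\R)^-$, the rational periods cannot all be collinear. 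This contradiction yields injectivity, so $\hol$ is an isomorphism onto $K_D\,\omega(a_1)+K_D\,\omega(b_1)$. The one point requiring care is precisely this last dichotomy — ruling out that the periods are all real-collinear — which rests only on $\omega$ being a nonzero holomorphic form; everything else is formal once the $K_D$-structure is identified.
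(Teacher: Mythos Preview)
Your proof is correct and takes a genuinely different route from the paper's. The paper argues computationally: from Proposition~\ref{prop:eigen:form:per:eq} it extracts the explicit relation $(\omega(a_2),\omega(b_2))=(\omega(a_1),\omega(b_1))\cdot B'$ with $B'=B/\lambda\in\Mb_2(K_D)$, uses the area formula $\Aa(X,|\omega|)=(1+\det B')\,\omega(a_1)\wedge\omega(b_1)$ to get $\omega(a_1)\wedge\omega(b_1)\neq 0$, and then checks directly that the $\Q$-linear map $v\mapsto(\Id_2\;B')v$ from $\Q^4$ to $K_D^2$ is injective because $B\in\Mb_2(\Q)$ is invertible while $\lambda\notin\Q$. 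Your approach instead promotes $H_1(X,\Q)^-$ to a $K_D$-module via $T_*$, observes that $\hol$ is $K_D$-linear, and uses self-adjointness to show $\{a_1,b_1\}$ is a $K_D$-basis; injectivity then reduces to the single analytic fact that the periods span $\C$ over $\R$, which is essentially equivalent to the paper's $\omega(a_1)\wedge\omega(b_1)\neq 0$. Your argument is more structural and makes the role of real multiplication transparent; the paper's is more hands-on and yields the explicit matrix relation \eqref{eq:sympl:basis:per:rel}, which is reused later (e.g.\ in Proposition~\ref{prop:non:collapse:sub:curve}).
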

\begin{proof}
 By the same arguments as in Proposition~\ref{prop:eigen:form:per:eq}, there is a generator of $\cO_D$ which is given in the basis $(a_1,b_1,a_2,b_2)$ by a matrix $T$ of the form $T=\left(\begin{smallmatrix} e\Id_2 & B \\ B^* & 0 \end{smallmatrix} \right)$, for some $B\in \Mb_2(\Q)$ satisfying $\det B \neq 0$, such that $T^*\omega=\lambda\omega$ with $\lambda\in \R_{>0}$. 
As a consequence, we  have
\begin{equation}\label{eq:sympl:basis:per:rel}
(\omega(a_2), \omega(b_2)) = (\omega(a_1), \omega(b_1))\cdot B',
\end{equation} 
where $B'=\frac{1}{\lambda}\cdot B \in \Mb_2(K_D )$.
	
We claim that $\omega(a_1)\wedge \omega(b_1) \neq 0$. To see this we remark that
\begin{align*}
\Aa(X,|\omega|) =\frac{\imath}{2}\int_X\omega\wedge\ol{\omega} &=\Im(\ol{\omega}(a_1)\omega(b_1)) + \Im(\ol{\omega}(a_2)\omega(b_2))\\ 
& = \omega(a_1)\wedge\omega(b_1)+ \omega(a_2)\wedge\omega(b_2)\\
& = (1+\det B')\omega(a_1)\wedge \omega(b_1).
\end{align*}
Since $\Aa(X,|\omega|) >0$, we must have $\omega(a_1)\wedge \omega(b_1) \neq 0$.

For all $c\in H_1(X,\Q)^-$, let $V(c) \in \Q^4$ be the coordinates of $c$ in the basis $(a_1,b_1,a_2,b_2)$. 
It follows from \eqref{eq:sympl:basis:per:rel} that
$$
\hol(c)=\omega(c)=(\omega(a_1), \; \omega(b_1))\cdot \left(\Id_2 \;  B'\right)\cdot V(c).
$$
Thus it suffices to shows that the $\Q$-linear map $A: \Q^4 \to \Q(\sqrt{D})^2, \, v \mapsto (\Id_2 \; B')\cdot v$ is an isomorphism.
Since $\dim_\Q(K_D\cdot\omega(a_1)+K_D\cdot \omega(b_1))=4$, we only need to show that $A$ is injective. Since $B'=B/\lambda$, where $B\in\Mb_2(\Q)$, $\det B\neq 0$, and $\lambda\not\in \Q$, we get the desired conclusion.
\end{proof}

\subsection{Periodicity and cylinder decompositions}\label{subsec:commplete:per:n:cyl:dec}
A translation surface is said to be {\em completely periodic} if it satisfies the following condition: for any direction $\theta\in \R\Pb^1$, if there is a regular closed geodesic in direction $\theta$, all trajectories in the same direction are either saddle connections or regular closed geodesics.
If the latter occurs, the surface is then decomposed into a union of finitely many cylinders in direction $\theta$. Throughout this paper, by a {\em cylinder diagram} we will mean the combinatorial data associated with such decompositions. In particular, given two surfaces $(X,\omega)$ and $(X',\omega')$, where $(X,\omega)$ has a cylinder decomposition in direction $\theta$, while $(X',\omega')$ has a cylinder decomposition in direction $\theta'$, we say that $X$ and $X'$ have the same cylinder diagram if there is a homeomorphism from $X$ to  $X'$ mapping a saddle connection in direction $\theta$ of $X$ onto a saddle connection in the direction $\theta$ on $X'$ and respecting the orders of the zeros. Such a map must send a cylinder in direction $\theta$ on $X$ onto a cylinder in direction $\theta'$ on $X'$.

Prym eigenform loci are examples of $\GL_2(\R)$-orbit closures of rank $1$, that is the $\Omega E_D(\kappa)$ are locally parametrized (via the period mappings) by some vector subspaces of $H^1(X,\{x_1,\dots,x_n\};\C)$ whose projection in $H^1(X,\C)$ are two-dimensional.
It is proved in \cite{W:cyldef} that all surfaces in a rank one orbit closure are completely periodic (see also \cite{Calta, LN:finite} for the case of Prym eigenforms).

If $(X,\omega)$ has a cylinder $C$ then this cylinder persists on every surface (in the same stratum) close enough to $(X,\omega)$. This means that any surface in a neighborhood of $(X,\omega)$ has a cylinder corresponding to $C$.
In the case  $(X,\omega)$ belongs to  a rank one orbit closure, this property implies that whenever $X$ admits a cylinder decomposition in some direction $\theta\in \R\Pb^1$, we have a corresponding cylinder decomposition in all surfaces close enough in the same orbit closure.
The cylinder decomposition  on $X$  is then said to be {\em stable} if the corresponding cylinder decomposition on all surfaces nearby has the same diagram  (see \cite{LN:finite, LNW:finite}). In the case of $\Omega E_D(2,2)^\odd$ a cylinder decomposition  is stable if and only if each saddle connection in the direction of the cylinders joins a zero to itself. This notion of stability is of interest since we have

\begin{Proposition}~\label{prop:stable:cyl:dec}
Let $(X,\omega)$ be a surface in some Prym eigenform locus $\Omega E_D(\kappa)$. If $(X,\omega)$ admits a cylinder decomposition in some direction $\theta \in \R\Pb^1$, then for all $(X',\omega')$ in an open dense subset of a neighborhood of $(X,\omega)$ in $\Omega E_D(\kappa)$, the corresponding cylinder decomposition on $(X',\omega')$ is stable.
\end{Proposition}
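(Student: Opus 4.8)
The plan is to reduce the statement to a transversality/genericity argument on the period coordinates provided by Proposition~\ref{prop:affine:coord}. Recall that a cylinder decomposition of $(X,\omega)$ in direction $\theta$ is \emph{stable} precisely when every saddle connection in direction $\theta$ joins a zero of $\omega$ to itself (as stated just before the proposition for $\Omega E_D(2,2)^\odd$). When the decomposition is \emph{not} stable, there is at least one saddle connection $\delta$ in direction $\theta$ joining the two distinct zeros $x_{r+i}$ and $x_{r+s+i}$ that are permuted by $\tau$. The idea is that this configuration is a closed, nowhere-dense condition: perturbing within the eigenform locus $\Omega E_D(\kappa)$ one can move the relative period $\int_\delta \omega$ off the line $\R e^{\imath\theta}$, so that $\delta$ ceases to be a saddle connection in direction $\theta$ and the decomposition becomes stable.

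First I would fix the direction so that $\theta$ is horizontal (applying a rotation in $\SO(2)\subset\GL^+(2,\R)$, which preserves $\Omega E_D(\kappa)$). The cylinder decomposition persists on a neighborhood $\cU$ of $(X,\omega)$ in $\Omega E_D(\kappa)$, with a fixed cylinder diagram; the non-stable saddle connections are exactly those whose endpoints are a $\tau$-permuted pair of zeros. For such a saddle connection $\delta$ joining $x_{r+i}$ to $x_{r+s+i}$, the holonomy $\hol_{(X',\omega')}(\delta)=\int_\delta\omega'$ depends holomorphically on $(X',\omega')\in\cU$, and $\delta$ being a horizontal saddle connection forces $\Im\!\left(\int_\delta\omega'\right)=0$. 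The key point is that this imaginary part is \emph{not} locally constant on $\Omega E_D(\kappa)$: indeed $\delta$ is homologous to one of the paths $c_i$ of Lemma~\ref{lm:dual:basis:ker:abs:proj} up to an absolute cycle, and by that lemma the functionals $v\mapsto v(c_i)$ are genuine coordinates on $V\cap\ker\wp$, so the real-analytic function $(X',\omega')\mapsto\Im\!\left(\int_\delta\omega'\right)$ is a nonconstant coordinate function on $V=\wp^{-1}(W)\cap H^1(X,\{x_1,\dots,x_n\};\C)^-$. Hence its zero locus is a proper real-analytic subset of $\cU$, and its complement is open and dense.

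Then I would take the (finite) union over all the non-stable saddle connections $\delta_1,\dots,\delta_m$ present in the persistent diagram: the set
$$
\cU^{\mathrm{ns}}:=\bigcup_{j=1}^m\left\{(X',\omega')\in\cU:\ \Im\!\left(\textstyle\int_{\delta_j}\omega'\right)=0\right\}
$$
is a finite union of proper real-analytic subsets, hence closed with empty interior, so its complement $\cU\smin\cU^{\mathrm{ns}}$ is open and dense in $\cU$. For $(X',\omega')$ in this complement, no $\tau$-permuted pair of zeros is joined by a horizontal saddle connection, so every horizontal saddle connection of $(X',\omega')$ joins a zero to itself; by the stability criterion recalled above, the corresponding horizontal cylinder decomposition of $(X',\omega')$ is stable. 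This proves the proposition.

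The main obstacle is the nonconstancy claim in the second paragraph, i.e.\ verifying that $\Im\!\left(\int_\delta\omega'\right)$ really varies as $(X',\omega')$ moves within the eigenform locus rather than being constrained to vanish by the Prym-eigenform relations. This is exactly where Lemma~\ref{lm:dual:basis:ker:abs:proj} is essential: it guarantees that the relative periods along the $\tau$-anti-invariant connecting paths $c_i$ furnish free coordinates on the kernel-foliation directions $V\cap\ker\wp$, so the relevant period is an honest nonconstant holomorphic function and not a quantity forced to be real by the real-multiplication constraints. One should also check the minor point that a non-stable horizontal saddle connection joining $x_{r+i}$ to $x_{r+s+i}$ is indeed homologous to $c_i$ modulo $H_1(X,\Z)$, which follows because both are paths with the same pair of endpoints; perturbing the absolute part of the period is then absorbed into the argument, as in the proof of Proposition~\ref{prop:vol:def}.
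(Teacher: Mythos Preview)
The paper does not give its own argument here; it only cites \cite[\S 4]{LN:finite}. Your approach---using Lemma~\ref{lm:dual:basis:ker:abs:proj} to see that the relative periods along the $c_i$ give free coordinates on the kernel-foliation directions, so that the non-stable locus is a proper real-analytic subset---is the right idea and is presumably close to what the cited reference does. But there is a gap in the passage from ``none of the specific $\delta_j$ present on $(X,\omega)$ has real period'' to ``the decomposition on $(X',\omega')$ is stable''. Once you perturb off the hypersurfaces $\{\Im\int_{\delta_j}\omega'=0\}$, the cylinder diagram of $(X',\omega')$ is \emph{different} from that of $(X,\omega)$: its critical graph has edges in relative-homology classes other than the $[\delta_j]$, and you have not ruled out that one of these new edges joins a $\tau$-permuted pair. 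What is missing is the observation that, in a small enough neighborhood, any class $[\gamma]$ from $x_{r+i}$ to $x_{r+s+i}$ realized by a saddle connection in the cylinder direction must differ from some $[\delta_j]$ only by a combination of core curves (since the imaginary parts of the remaining absolute periods---the cylinder heights---are bounded away from zero), so that $\Im\int_\gamma\omega'=0$ forces $\Im\int_{\delta_j}\omega'=0$ after all. You should also track $\Im(e^{-i\theta'}\int_\delta\omega')$ rather than $\Im(\int_\delta\omega')$, since the cylinder direction $\theta'$ moves with $(X',\omega')$; equivalently, use that stability is $\GL^+(2,\R)$-invariant and restrict to the kernel-foliation leaf where $\theta$ stays fixed.

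A second issue: the criterion ``stable $\Leftrightarrow$ every saddle connection in direction $\theta$ is a loop'' is asserted in the paper only for $\Omega E_D(2,2)^\odd$, and is not literally correct for general $\kappa$. A saddle connection $\delta$ between two $\tau$-\emph{fixed} zeros satisfies $2\int_\delta\omega=\int_{\delta-\tau_*\delta}\omega$, an absolute period; hence $\int_\delta\omega$ is rigid along the kernel foliation and such a $\delta$ does not destabilize the diagram even though it joins distinct zeros. Your argument only handles saddle connections between $\tau$-permuted pairs, which suffices for $\Omega E_D(2,2)^\odd$ (where $r=0$) but not for arbitrary $\Omega E_D(\kappa)$.
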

\begin{proof}
See \cite[\textsection 4]{LN:finite}.
\end{proof}
\begin{Remark}\label{rem:stable:cyl:dec}
If the cylinder decomposition on $(X,\omega)$ is stable, then by definition, the corresponding cylinder decompositions on nearby surfaces are also stable and have the same diagram. Otherwise, the neighborhood of $(X,\omega)$ in $\Omega E_D(\kappa)$ is partitioned into several regions, the corresponding cylinder decompositions in each region are stable and have the same diagram.
\end{Remark}

\subsection{Prototypes and stable cylinder diagrams}\label{subsec:cyl:dec:prototype}
Every Prym eigenform in $\Omega\cM_3(2,2)^\odd$ is the canonical double cover of a quadratic differential in the stratum $\Qcal(4,-1^4)$. 
If $(X,\omega) \in \Omega \cM_3(2,2)^\odd$ is horizontally periodic, and the associated cylinder diagram is stable (that is each horizontal saddle connection joins a zero of $\omega$ to itself), then $(X,\omega)$ must have four horizontal cylinders. By inspecting the cylinder diagrams with $4$ cylinders which admit an involution exchanging the two zeros and having exactly $4$ fixed points (the latter condition means that the involution fixes two cylinders and exchanges the two remaining ones), one obtains the following

\begin{Proposition}\label{prop:stable:cyl:dec:models:H22}
There are $4$ stable diagrams for cylinder decompositions of translation surfaces that are canonical double covers of half-translation surfaces in $\Qcal(4,-1^4)$. Those diagrams are shown in Figure~\ref{fig:stable:diag:H:2:2:odd}.
By convention, in all diagrams, the cylinders $C_1$ and $C_2$ are fixed, while the cylinders $C_3$ and $C_4$ are exchanged by the Prym involution.
In Case I.A and Case I.B, all cylinders have  distinct zeros on their top and bottom boundary. In Case II.A and Case II.B, there is a pair of homologous cylinders which are exchanged by the Prym involution.
\begin{figure}[htb]
\begin{minipage}[t]{0.25\linewidth}
\centering
\begin{tikzpicture}[scale=0.40]
\draw (0,0)--(0,4)--(2,4)--(2,2)--(4,2)--(4,4)--(6,4)--(6,0)--(4,0)--(4,-2)--(2,-2)--(2,0)--cycle;
\foreach \x in {(0,0),(0,4),(2,4),(4,4),(6,4),(6,0),(4,0),(2,0)} \filldraw[fill=black] \x circle (3pt);
\foreach \x in {(2,2),(4,2),(4,-2),(2,-2),(0,2),(6,2)} \filldraw[fill=white] \x circle (3pt);
\draw(1,4) node[above] {\tiny $1$};
\draw(3,2) node[above] {\tiny $2$};
\draw(5,4) node[above] {\tiny $3$};
\draw(1,0) node[below] {\tiny $1$};
\draw(3,-2) node[below] {\tiny $2$};
\draw(5,0) node[below] {\tiny $3$};

\draw(3,1) node {\tiny $C_2$};
\draw(3,-1) node {\tiny $C_1$};
\draw(5,3) node {\tiny $C_3$};
\draw(1,3) node {\tiny $C_4$};

\draw(3,-3.5) node {\small {\rm Case I.A}};
\end{tikzpicture}
\end{minipage}
\begin{minipage}[t]{0.25\linewidth}
\centering
\begin{tikzpicture}[scale=0.40]
\draw (0,0)--(0,2)--(-2,2)--(-2,6)--(0,6)--(0,8)--(2,8)--(2,6)--(2,4)--(4,4)--(4,0)--cycle;
\foreach \x in {(0,0),(0,8),(2,8),(2,4),(4,4),(4,0),(-2,4),(2,0)} \filldraw[fill=black] \x circle (3pt);
\foreach \x in {(-2,6),(-2,2), (0,6),(0,2), (2,6), (4,2)} \filldraw[fill=white] \x circle (3pt);
\draw(-1,6) node[above] {\tiny $1$};
\draw(1,8) node[above] {\tiny $2$};
\draw(3,4) node[above] {\tiny $3$};
\draw(-1,2) node[below] {\tiny $1$};
\draw(1,0) node[below] {\tiny $2$};
\draw(3,0) node[below] {\tiny $3$};

\draw (-3,7) node {\tiny $C_1$} (-3,5) node {\tiny $C_3$} (-3,3) node {\tiny $C_2$} (-3,1) node {\tiny $C_4$};

\draw (1,-1.5) node {\small {\rm Case I.B}};
\end{tikzpicture}
\end{minipage}
\begin{minipage}[t]{0.2\linewidth}
\centering
\begin{tikzpicture}[scale=0.40]
\draw (0,0)--(0,8)--(4,8)--(4,6)--(2,6)--(2,4)--(4,4)--(4,2)--(2,2)--(2,0)--cycle;

\foreach \x in {(0,0),(0,8),(2,8), (4,8),(4,2),(2,2),(2,0),(0,2)} \filldraw[fill=black] \x circle (3pt);
\foreach \x in {(4,6),(2,6),(2,4),(4,4),(0,4),(0,6)} \filldraw[fill=white] \x circle (3pt);

\draw(1,8) node[above] {\tiny $1$};
\draw(3,4) node[above] {\tiny $3$};
\draw(3,8) node[above] {\tiny $2$};
\draw(1,0) node[below] {\tiny $1$};
\draw(3,2) node[below] {\tiny $2$};
\draw(3,6) node[below] {\tiny $3$};

\draw(0,1) node[left] {\tiny $C_4$};
\draw(0,3) node[left] {\tiny $C_2$};
\draw(0,5) node[left] {\tiny $C_3$};
\draw(0,7) node[left] {\tiny $C_1$};

\draw (1,-1.5) node {\small {\rm Case II.A}};
\end{tikzpicture}
\end{minipage}
\begin{minipage}[t]{0.2\linewidth}
\centering
\begin{tikzpicture}[scale=0.40]
\draw (0,0)--(0,8)--(4,8)--(4,6)--(2,6)--(2,4)--(4,4)--(4,2)--(2,2)--(2,0)--cycle;
\foreach \x in {(0,0),(0,8),(2,8), (4,8),(4,6),(2,6),(2,0), (0,6)} \filldraw[fill=black] \x circle (3pt);
\foreach \x in {(0,4),(0,2),(2,4),(2,2),(4,4),(4,2)} \filldraw[fill=white] \x circle (3pt);
\draw(1,8) node[above] {\tiny $1$};
\draw(3,4) node[above] {\tiny $3$};
\draw(3,8) node[above] {\tiny $2$};
\draw(1,0) node[below] {\tiny $1$};
\draw(3,2) node[below] {\tiny $3$};
\draw(3,6) node[below] {\tiny $2$};

\draw(0,1) node[left] {\tiny $C_1$};
\draw(0,3) node[left] {\tiny $C_3$};
\draw(0,5) node[left] {\tiny $C_2$};
\draw(0,7) node[left] {\tiny $C_4$};

\draw (1,-1.5) node {\small {\rm Case II.B}};
\end{tikzpicture}
\end{minipage}

\caption{Stable cylinder diagrams of double covers of surfaces in $\Qcal(4,-1^4)$}
\label{fig:stable:diag:H:2:2:odd}
\end{figure}
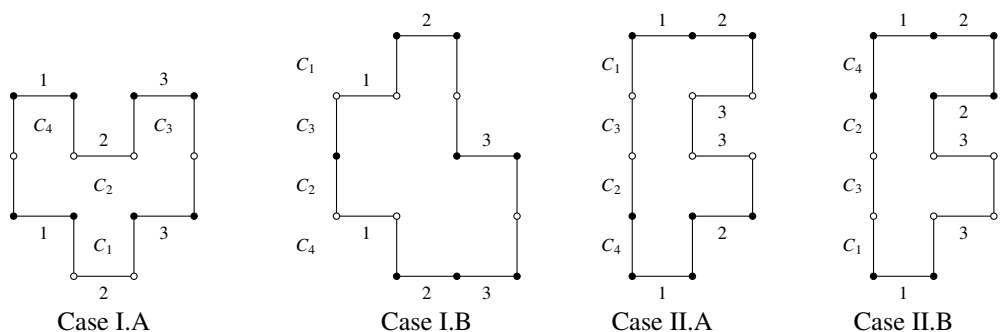
\end{Proposition}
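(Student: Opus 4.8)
The plan is to run the enumeration entirely upstairs on $X$, using the flat geometry of the Prym involution $\tau$ together with stability. Since $\tau^*\omega=-\omega$, in a flat chart $\tau$ acts as $z\mapsto -z$; it is orientation preserving, so it maps horizontal cylinders to horizontal cylinders and swaps the top and bottom boundary of any cylinder it fixes. The first step is to pin down the action of $\tau$ on the four cylinders. Because $\tau$ exchanges the two zeros $P_1,P_2$ of $\omega$ and the decomposition is stable, every horizontal saddle connection joins a zero to itself; as $\tau(P_i)\neq P_i$, no saddle connection is $\tau$-invariant, so the six saddle connections split into three $\tau$-orbits of size two and none of the four fixed points of $\tau$ lies on a horizontal saddle connection. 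Hence each fixed point lies in a cylinder interior, and a short computation in cylinder coordinates $\zeta$ (writing $\tau$ as $\zeta\mapsto -\zeta+c$, which forces $\Im(c)$ to equal the height) shows that a $\tau$-invariant cylinder carries exactly two fixed points, lying on its core, while a cylinder moved by $\tau$ carries none. Since $4=2\cdot(\#\text{fixed cylinders})$, exactly two of the four cylinders are fixed and the remaining two form a transposed pair, which is the announced structure $\tau(C_i)=C_i$ for $i=1,2$ and $\tau(C_3)=C_4$.

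With this structure I would reduce to a finite bookkeeping of gluings. Each zero carries six horizontal prongs alternating east/west, and the twelve prongs assemble into the six saddle connections. Writing $t_i,b_i$ for the number of saddle connections on the top and bottom of $C_i$, the fact that each saddle connection bounds exactly one top and one bottom gives $\sum_i t_i=\sum_i b_i=6$, while the symmetry forces $t_1=b_1$, $t_2=b_2$, $t_3=b_4$, $b_3=t_4$, with all $t_i,b_i\geq 1$. I would then run through the admissible profiles $(t_1,t_2,t_3,t_4)$ and, for each, through the cyclic orders in which the prongs at $P_1$ attach to the tops and bottoms of the cylinders. Because $\tau$ reconstructs the whole gluing from its restriction to the ``$P_1$-half'' of the data (the bottom of $C_1,C_2$ being the $\tau$-image of their top, and the top of $C_4$ the $\tau$-image of the bottom of $C_3$), only this half need be listed.

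For every candidate I would verify three things: that the twelve prongs organize into exactly two cone points of angle $6\pi$, i.e.\ two double zeros permuted by $\tau$; that the glued surface is connected of genus three, checked on the ribbon graph via $\chi=-4$; and that the resulting component is the odd one, which here is automatic since $X$ is by construction the orientation double cover of a differential in $\Qcal(4,-1^4)$. The surviving candidates are then distinguished by whether the transposed pair $C_3,C_4$ has cores representing the same absolute homology class: the non-homologous possibilities yield Cases I.A and I.B, the homologous ones Cases II.A and II.B, the $A/B$ split coming from the two inequivalent ways of inserting the fixed cylinders $C_1,C_2$ relative to the swapped pair. This reproduces exactly the four diagrams of Figure~\ref{fig:stable:diag:H:2:2:odd}. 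The main obstacle is precisely the completeness of this inspection: the real work is to organize the prong-attachment data so that the list is provably exhaustive while distinct combinatorial types are not conflated, and to discard the many profiles that disconnect the surface, merge the two prescribed cone points into a single singularity of the wrong order, or violate the four-fixed-point count.
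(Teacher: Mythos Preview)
Your proposal is correct and follows essentially the same approach as the paper: the paper offers no proof beyond the sentence preceding the statement, asserting that the result follows ``by inspecting the cylinder diagrams with $4$ cylinders which admit an involution exchanging the two zeros and having exactly $4$ fixed points (the latter condition means that the involution fixes two cylinders and exchanges the two remaining ones).'' Your write-up is in effect a careful organization of exactly that inspection, with the added benefit that you justify the two-fixed-two-swapped structure via the fixed-point count on cylinder cores rather than stating it as a parenthetical; the remaining enumeration of prong data and the homologous/non-homologous dichotomy for $C_3,C_4$ match the paper's description of the four cases verbatim.
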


Given a discriminant $D\in \N, \; D\equiv 0,1, 4 \mod 8$, we will
call a quadruple $\frakp=(a,b,d,e)\in \Z^4$ a {\em cylinder prototype} of discriminant $D$ if $\frakp$ satisfies the followings
$$
(\Pcal_{D,\cyl}) \qquad \left\{
\begin{array}{l}
a>0, \;  d >0, 0 \leq b < \gcd(a,d),\\
D=e^2+8ad,\\
\gcd(a,b,d,e)=1.
\end{array}
\right.
$$
The set of cylinder prototypes for a discriminant $D$ is denoted by $\Pcal_{D,\cyl}$.
For each $\frakp \in \Pcal_{D,\cyl}$, we define
$$
\lambda(\frakp):=\frac{e+\sqrt{D}}{2}.
$$
Consider a surface $(X,\omega) \in \Omega E_D(2,2)^\odd$, which admits a stable cylinder decomposition in the horizontal direction. By Proposition~\ref{prop:stable:cyl:dec:models:H22}, the corresponding cylinder diagram of $(X,\omega)$ is given by one of the four cases in Figure~\ref{fig:stable:diag:H:2:2:odd}. We will label the horizontal cylinders of $(X,\omega)$ by $C_1,\dots,C_4$ following the models shown in Figure~\ref{fig:stable:diag:H:2:2:odd}. For each $i\in \{1,\dots,4\}$, the circumference (width) and the height of $C_i$ are denoted by $\ell_i$ and $h_i$ respectively. We have

\begin{Proposition}\label{prop:cyl:dec:length:ratios:22}
Assume that $(X,\omega) \in \Omega E_D(2,2)^\odd$ admits a stable cylinder decomposition in the horizontal direction.
Then there is a prototype $\fracp=(a,b,d,e) \in \Pcal_{D,\cyl}$ such that
\begin{itemize}
\item[(i)] if  the corresponding cylinder diagram  is as in Case I.A, then
\begin{itemize}
\item[$\bullet$]  $\displaystyle \frac{\ell_3}{\ell_1}=\frac{\ell_4}{\ell_1}=\frac{a}{\lambda}$,

\item[$\bullet$] $\displaystyle \frac{h_2+h_4}{h_1+h_2}=\frac{h_2+h_3}{h_1+h_2}=\frac{d}{\lambda}$
\end{itemize}
where $\lambda:=\lambda(\fracp)$.

\item[(ii)] If  the corresponding cylinder diagram  is as in Case I.B, then
\begin{itemize}
\item[$\bullet$] $\displaystyle \frac{\ell_3-\ell_1}{\ell_1}=\frac{\ell_4-\ell_1}{\ell_1}=\frac{a}{\lambda}$,

\item[$\bullet$] $\displaystyle \frac{h_2+h_3}{h_1+h_2+h_3+h_4}=\frac{h_2+h_4}{h_1+h_2+h_3+h_4}=\frac{d}{\lambda}$.
\end{itemize}

\item[(iii)] If  the corresponding cylinder diagram  is as in Case II.A, then
\begin{itemize}
\item[$\bullet$] $\displaystyle \frac{\ell_3}{\ell_1}=\frac{\ell_4}{\ell_1}=\frac{a}{\lambda}$,

\item[$\bullet$] $\displaystyle \frac{h_3}{h_1+h_2}=\frac{h_4}{h_1+h_2}=\frac{d}{\lambda}$.
\end{itemize}

\item[(iv)] If the corresponding cylinder diagram  is as in Case II.B, then
\begin{itemize}
\item[$\bullet$] $\displaystyle \frac{\ell_3}{\ell_1}=\frac{\ell_4}{\ell_1}=\frac{a}{\lambda}$,

\item[$\bullet$] $\displaystyle \frac{h_3}{h_1+h_2}=\frac{h_4}{h_1+h_2}=\frac{d}{\lambda}$.
\end{itemize}
\end{itemize}
\end{Proposition}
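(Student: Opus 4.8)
The plan is to derive every case directly from the eigenform period relation of Proposition~\ref{prop:eigen:form:per:eq}(c). For each of the four diagrams of Figure~\ref{fig:stable:diag:H:2:2:odd} I would exhibit an explicit symplectic basis $\{a_1,b_1,a_2,b_2\}$ of $H_1(X,\Z)^-$, adapted to the horizontal cylinder decomposition and normalized as in Lemma~\ref{lm:Prym:var:1:2:polarization} so that $\langle a_1,b_1\rangle=1$ and $\langle a_2,b_2\rangle=2$; the stated ratios then fall out of matching real and imaginary parts in (c).

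The choice of basis is dictated by the geometry of $\tau$. Since $\tau^*\omega=-\omega$, in the flat coordinate $\tau$ is a half-turn $z\mapsto -z+\mathrm{const}$, so it reverses the orientation of every horizontal core curve. Consequently the core curve $\gamma_1$ of the $\tau$-fixed cylinder $C_1$ already lies in $H_1(X,\Z)^-$, and I would take $a_1=\gamma_1$, giving $\omega(a_1)=\ell_1\in\R_{>0}$. For the exchanged pair one has $\tau_*\gamma_3=-\gamma_4$, so the anti-invariant horizontal class is $a_2=\gamma_3+\gamma_4$, with $\omega(a_2)=\ell_3+\ell_4=2\ell_3$; a dual cross curve $b_2$ meeting both $\gamma_3$ and $\gamma_4$ once gives precisely $\langle a_2,b_2\rangle=2$, which is the geometric origin of the $(1,2)$-polarization. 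The horizontal classes $\gamma_1,\gamma_2,\gamma_3+\gamma_4$ span a two-dimensional isotropic subspace (they are pairwise disjoint and satisfy one relation), so $\{a_1,a_2\}$ is a basis of the horizontal Lagrangian and $\{b_1,b_2\}$ are chosen as vertical cross curves dual to it; the imaginary parts of their periods are the relevant sums of heights, read off from how each cross curve traverses the cylinders in the given diagram.

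With such a basis, relation (c) becomes $(\omega(a_2),\omega(b_2))=\tfrac{2}{\lambda}(\omega(a_1),\omega(b_1))B$ with $B=\left(\begin{smallmatrix} a & b\\ c & d\end{smallmatrix}\right)\in\Mb_2(\Z)$ and $\lambda>0$. Because $\omega(a_2)=2\ell_3$ is real while $\Im\,\omega(b_1)=h_1+h_2\neq0$, comparing imaginary parts in the first coordinate forces $c=0$; hence $B$ is upper triangular and $\det B=ad$, consistent with $D=e^2+8ad$. The real part of the first coordinate then gives $2\ell_3=\tfrac{2a}{\lambda}\ell_1$, i.e. $\ell_3/\ell_1=a/\lambda$, and the imaginary part of the second coordinate gives $\Im\,\omega(b_2)=\tfrac{2d}{\lambda}\,\Im\,\omega(b_1)$; in Case~I.A, where $\Im\,\omega(b_2)=2(h_2+h_3)$ and $\Im\,\omega(b_1)=h_1+h_2$, this reads $(h_2+h_3)/(h_1+h_2)=d/\lambda$, and identically in the remaining cases with the appropriate height sums. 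Positivity $a,d>0$ follows from positivity of the circumferences and heights together with $\lambda>0$; the coprimality $\gcd(a,b,d,e)=1$ is inherited from the properness condition already established in Proposition~\ref{prop:eigen:form:per:eq}; and the residual freedom of modifying $b_1,b_2$ by horizontal classes (changes of symplectic basis fixing the Lagrangian $\langle a_1,a_2\rangle$) normalizes the off-diagonal entry to $0\le b<\gcd(a,d)$, as in McMullen's prototype reduction. This produces the desired $\frakp=(a,b,d,e)\in\Pcal_{D,\cyl}$.

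The main obstacle is bookkeeping rather than any single conceptual step: the four diagrams glue differently, so for each one must read off from Figure~\ref{fig:stable:diag:H:2:2:odd} which saddle connections bound which cylinder, verify that the proposed $\{a_1,b_1,a_2,b_2\}$ is genuinely symplectic with intersection numbers exactly $1$ and $2$, and check that the imaginary period of $b_2$ equals the precise height combination in the statement. This is where the distinction between $h_2+h_4$ and $h_2+h_3$, and between the denominators $h_1+h_2$ and $h_1+h_2+h_3+h_4$, is controlled. Cases~II.A and~II.B are slightly different in that the two exchanged cylinders are homologous, which alters the admissible choice of the cross curve $b_2$ and has to be treated on its own.
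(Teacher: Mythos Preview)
Your proposal is correct and follows essentially the same approach as the paper: construct a symplectic basis $\{a_1,b_1,a_2,b_2\}$ of $H_1(X,\Z)^-$ adapted to the cylinder decomposition (with $a_1$ the core curve of a fixed cylinder and $a_2$ the sum of the core curves of the exchanged pair), apply Proposition~\ref{prop:eigen:form:per:eq}(c), force $c=0$ by comparing imaginary parts, read off the ratios, and normalize $b$ via $b_i\mapsto b_i+\text{(horizontal class)}$. The paper carries out only Case~I.A in detail and leaves the remaining three as analogous bookkeeping, exactly as you anticipate.
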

\begin{proof}
The idea is to look for a symplectic basis $\{a_1,b_1,a_2,b_2\}$ of $H_1(X,\Z)^-$, where $a_1$ and $a_2$ are combinations of core curves of the horizontal cylinders.  We only give the proof for  Case I.A. Recall that in this case the Prym involution fixes $C_1, C_2$ and permutes $C_3$ with $C_4$. As a consequence, we have $\ell_1=\ell_3, \, h_1=h_3$.

Let $a_1$  be a core curve of $C_1$ and $b_1$  a simple closed curve composed by a segment that crosses $C_1$ and  segment crossing $C_2$ which is disjoint from $C_3$ and $C_4$.
Let $a'_2$ be a core curve of $C_3$ and $a''_2$ a core curve of $C_4$.
Let $b'_2$ be a simple closed curve composed by a segment that crosses $C_3$ and a segment that crosses $C_2$ (and disjoint from the cylinders $C_1$ and $C_4$). Similarly, let $b''_2$ be a simple closed curve which is composed by a segment that crosses $C_4$ and a segment that crosses $C_2$.
Define $a_2:=a'_2+a''_2$ and $b_2=b'_2+b''_2$. Then $\{a_1,b_1,a_2,b_2\}$ is a symplectic basis of $H_1(X,\Z)^-$ satisfying
$$
\langle a_1,b_1 \rangle =1, \quad \langle a_2,b_2 \rangle =2, \quad \langle a_1,a_2\rangle=\langle b_1, b_2 \rangle=\langle a_1,b_2\rangle=\langle a_2, b_1\rangle=0.
$$
We have
$$
\left\{\begin{array}{l}
\omega(a_1)=\ell_1, \quad \omega(a_2)=\ell_3+\ell_4=2\ell_3, \\
\Im(\omega(b_1))=h_1+h_2, \quad \Im(\omega(b_2))= 2(h_2+h_3).
\end{array}
\right.
$$
Rescaling $\omega$ by using $\GL_2^+(\R)$, we can assume that $\ell_1=1$ and $h_1+h_2=1$. Let us write $\omega(a_2)=x+ \imath y$, and $\omega(b_2)=z+\imath t$. Since $a'_2$ and $a''_2$ are core curves of horizontal cylinders, we must have $y=0$.

Let $T  \in \End(\Prym(X,\tau))$ be the generator of $\cO_D$ in Proposition~\ref{prop:eigen:form:per:eq}.
The matrix of $T$ in the basis $\{a_1,b_1,a_2,b_2\}$ is of the form $\left(\begin{smallmatrix}e\cdot\Id_2 & 2B \\ B^* & 0 \end{smallmatrix} \right)$, with $B=\left(\begin{smallmatrix} a & b \\ c & d \end{smallmatrix} \right) \in \Mb_2(\Z)$. By assumption, we have
\begin{equation}\label{eq:e:form:condition:mat:prod}
\left(
\begin{array}{cccc}
1 & 0 & x & z \\
0 & 1 & 0 & t
\end{array}
\right)\cdot \left(
\begin{array}{cc}
e\cdot \Id_2 & 2B\\
B^* & 0 \\
\end{array}
\right)=\lambda\cdot\left(
\begin{array}{cccc}
1 & 0 & x & z \\
0 & 1 & 0 & t
\end{array}
\right)
\end{equation}
which is equivalent to
$$
\left(\begin{array}{cc} e & 0 \\ 0 & e \end{array}\right) + \left(\begin{array}{cc} x & z \\ 0 & t \end{array}\right)\cdot \left(\begin{array}{cc} d & -b \\ -c & a \end{array}\right) = \lambda\cdot \Id_2, \quad \text{ and } \quad
2\left(\begin{array}{cc} a & b \\ c & d \end{array}\right) = \lambda\cdot\left(\begin{array}{cc} x & z \\ 0 & t\end{array}\right).
$$
Recall that $\lambda \in \R_{>0}$. It follows that $c=0, x=\frac{2a}{\lambda}$,  and  $t=\frac{2d}{\lambda}$.
Since $x=\omega(a_2)>0$, and $t=\Im(\omega(b_2))>0$, $a$ and $d$ must be positive integers.
Note that the cycles $b_1$  (resp. $b_2$) are only determined up to a multiple of $a_1$ (resp. a multiple of $a_2$).
Replacing $b_1$ by $b_1+ma_1$ and $b_2$ by $b_2+na_2$ amounts to change the tuple $(a,b,d,e)$ into $(a,b-na+md,d,e)$. Thus we can always choose a basis $(a_1,b_1,a_2,b_2)$ such that $0\leq b < \gcd(a,d)$.
By Proposition~\ref{prop:eigen:form:per:eq}, we have $\gcd(a,b,d,e)=1$, $D=e^2+8\det(B)=e^2+8ad$, and $\lambda$ is the positive root of the polynomial $x^2=ex+2ad$, that is $\lambda=\frac{e+\sqrt{D}}{2}$.
In particular, we have $(a,b,d,e) \in \Pcal_{D,\cyl}$.

Recall that we have $\omega(a_1)=\ell_1=1, \; \omega(a_2)=2\ell_3=x$,  $\Im(\omega(b_1))=h_1+h_2=1, \; \Im(\omega(b_2))=2(h_2+h_3)=t$. Therefore, we get
$$
\frac{\ell_3}{\ell_1}=\frac{x}{2}=\frac{a}{\lambda}, \quad \text{ and } \quad \frac{h_2+h_3}{h_1+h_2}=\frac{t}{2}=\frac{d}{\lambda}
$$
as desired.
\end{proof}


\section{Admissible covers}\label{sec:adm:covers}
To apply tools from complex analytic geometry, one needs  ``good" compactifications of Prym eigenform loci.  A  natural compactification of $\Pb\Omega E_D(2,2)^\odd$ is its closure in the projectivized Hodge bundle $\Pb\Omega\cM_3$. However, information about the Prym involution, which is essential to the definition of Prym eigenforms, may be lost in the boundary of this compactification. For this reason, it is more convenient to compactify those loci in the moduli space of admissible double covers.
Here below, we will provide some essential properties of objects parametrized by this moduli space.
For a comprehensible  introduction to  the  notion of admissible covers, we refer to \cite{HM:Invent82, HM98} and \cite[Chap. XVI]{ACG11}.

Let $(X,\omega)$ be a Prym eigenform in $\Omega E_D(2,2)^\odd$.  Then the Prym involution $\tau$ has four fixed points and permutes the pair of zeros of $\omega$.  The quotient $Y=X/\langle\tau\rangle$ is an elliptic curve with $4$ marked points $y_1,\dots,y_4$ that are the images of the fixed points of $\tau$.  In addition, we have another marked point $y_5$ coming from  the pair of zeros permuted by $\tau$. Thus, each $(X,\omega)$ corresponds to an element  $(Y,y_1,\dots,y_4,y_5)$ of $\cM_{1,5}$. By construction, $X$ is a double cover of $Y$ that is ramified over the points $y_1,\dots,y_4$. To get an adequate compactification of $\Pb\Omega E_D(2,2)^\odd$, one needs to extend the construction of the associated double covers to the boundary points of $\ol{\cM}_{1,5}$

Let $(E,q_1,\dots,q_4,q_5)$ be a pointed stable curve representing a point in $\ol{\cM}_{1,5}$.
An {\em admissible double cover} of $(E,q_1,\dots,q_5)$ with profile $(4,1)$ is a stable curve $(C,p_1,\dots,p_4,p_5,p'_5\})$ together with a map $f: C \to E$ such that
\begin{itemize}
\item[$\bullet$] $f^{-1}(\{q_i\})=\{p_i\}, \; i=1,\dots,4$,

\item[$\bullet$] $f^{-1}(\{q_5\})=\{p'_5,p'_5\}$,

\item[$\bullet$] the restriction of $f$ to the smooth part of $C\setminus\{p_1,\dots,p_4\}$ is a covering map of degree $2$.

\item[$\bullet$] $f$ maps the nodes of $C$ to the nodes of $E$.
\end{itemize}
Denote by $\ol{\cB}_{4,1}$ the moduli space of such admissible double covers. One can alternatively define  $\ol{\cB}_{4,1}$ as the moduli space of stable pointed curve  $(C,p_1,\dots,p_5,p'_5)$ of genus $3$ together with an involution $\tau$ such that
\begin{itemize}
\item[$\bullet$] $\tau(p_i)=p_i$, for all  $i=1,\dots,4$, and no other smooth point of $C$ is fixed by $\tau$,

\item[$\bullet$] $\tau(p_5)=p'_5$,

\item[$\bullet$] at any node of $C$ fixed by $\tau$, each local component through this node is mapped to itself.
\end{itemize}
Note that the fixed points of $\tau$ on $C$ are numbered globally, but the pair of points that are permuted by $\tau$ are not.
Let $\cB_{4,1}$ denote the subset of $\ol{\cB}_{4,1}$ consisting of tuples $(C,p_1,\dots,p_5,p'_5,\tau)$ where $C$ is smooth.
It is well known that $\cB_{4,1}$ is an open dense subset of $\ol{\cB}_{4,1}$, and both $\cB_{4,1}, \ol{\cB}_{4,1}$ are complex orbifolds (see for instance~\cite{ACV03} or \cite[Chap. XVI]{ACG11}).

By construction, one has two natural maps: $\rho_1: \ol{\cB}_{4,1}\to \ol{\cM}_{1,5}$  is the map which associates to $\xx:=(C,p_1,\dots,p_5,p'_5,\tau)$ the pointed curve $(E,q_1,\dots,q_5)$ where $E:=C/\langle \tau \rangle$, and $q_i$ is the image of $p_i$. The map $\rho_2: \ol{\cB}_{4,1} \to \ol{\cM}_3$ is the one which associates to $\xx$ the stable model of the curve obtained from $C$ without the marked points.

Let us denote by $\Omega\ol{\cB}_{4,1}$ the pullback of the Hodge bundle over $\cM_3$ to $\ol{\cB}_{4,1}$ by $\rho_2$. The fiber of $\Omega \ol{\cB}_{4,1}$ over $\xx \sim (C,p_1,\dots,p_5,p'_5,\tau)$ can be identified with $H^0(C,\omega_C)$, where $\omega_C$ is the dualizing sheaf of $C$.

For all $\xx \in \ol{\cB}_{4,1}$, let $\Omega^-(C,\tau)$ denote the space $\{\eta\in H^0(C,\omega_C), \; \tau^*\omega=-\omega\}$.
Note that we have $\dim_\C\Omega^-(C,\tau)=2$.
Let $\Omega'\ol{\cB}_{4,1}$ denote the subbundle of $\Omega \ol{\cB}_{4,1}$ whose fiber over $\xx$ is $\Omega^-(C,\tau)$.
Then $\Omega'\ol{\cB}_{4,1}$ is a rank two holomorphic subbundle of $\Omega\ol{\cB}_{4,1}$.
Let $\Pb\Omega' \ol{\cB}^-_{4,1}$ denote the projective bundle associated to $\Omega' \ol{\cB}_{4,1}$.

Now, given a positive integer $D>1, \; D \equiv 0,1,4 \mod 8$, we denote by $\Omega\cX_D$ the subset of $\Omega' \cB_{4,1}$ consisting of tuples $(C,p_1,\dots,p_5,p'_5,\tau,\omega)$, where $\xx=(C,p_1,\dots,p_5,p'_5,\tau) \in \cB_{4,1}$ and $\omega \neq 0$ is an element of $\Omega^-(C,\tau)$ satisfying the followings
\begin{itemize}
\item[$\bullet$] $\div(\omega)=2p_5+2p'_5$,

\item[$\bullet$] $\End(\Prym(C,\tau))$ contains a self-adjoint proper subring isomorphic to $\Ocal_D$ for which $\omega$ is an eigenform.
\end{itemize}
The closure of $\Omega\XD$ in $\Omega'\ol{\cB}_{4,1}$ is denoted by $\Omega\ol{\cX}_D$. The images of $\Omega \ol{\cX}_D$ and $\Omega\cX_D$ in $\Pb\Omega'\ol{\cB}_{4,1}$ are denoted by $\XD$ and $\ol{\cX}_D$ respectively.

\begin{Proposition}\label{prop:adm:cov:e:forms:degree}
Let $\hat{\rho}_2: \Pb\Omega\ol{\cB}_{4,1} \to \Pb\Omega\cM_3$ be the map induced by $\rho_2$. Then for all discriminant $D \geq 9, \; D\equiv 0,1,4 \, [8]$, we have $\hat{\rho}_2(\cX_D)=\Pb\Omega E_D(2,2)^\odd$ and $\deg (\hat{\rho}_{2\left|\cX_D\right.})=4!=24$.
\end{Proposition}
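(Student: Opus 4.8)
The plan is to realize $\hat{\rho}_{2|\cX_D}$ as a finite surjective morphism onto $\Pb\Omega E_D(2,2)^\odd$ and to compute its degree as the cardinality of a generic fibre. First I would verify that $\hat{\rho}_2(\cX_D)=\Pb\Omega E_D(2,2)^\odd$. A point of $\cX_D$ is represented by a tuple $(C,p_1,\dots,p_4,p_5,p_5',\tau,\omega)$ with $C$ smooth, so $\rho_2$ merely forgets the marked points and returns $(C,\omega)=(X,\omega)$ (a smooth curve of genus three is already stable). By the defining conditions of $\Omega\cX_D$, the form $\omega$ is anti-invariant, $\div(\omega)=2p_5+2p_5'$, and $\Prym(C,\tau)$ carries real multiplication by $\cO_D$ with $\omega$ an eigenform; hence $(X,\omega)\in\Omega E_D(2,2)$, and since $\tau$ exchanges the two zeros of $\omega$, the criterion recalled in \textsection\ref{subsec:Prym:e:form:g3:general} gives $(X,\omega)\in\Omega E_D(2,2)^\odd$. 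Conversely, any $(X,[\omega])\in\Pb\Omega E_D(2,2)^\odd$ carries a Prym involution $\tau$ with four fixed points which exchanges the two zeros of $\omega$; labelling the fixed points arbitrarily and declaring the unordered pair of zeros to be $\{p_5,p_5'\}$ produces a preimage. Thus $\hat{\rho}_2$ surjects onto $\Pb\Omega E_D(2,2)^\odd$, and as both spaces are two-dimensional orbifolds (recall $\dim_\C\Omega E_D(2,2)^\odd=3$), the degree equals the generic fibre count.

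To count a generic fibre I would fix $(X,[\omega])$ with $X$ smooth, so that every preimage has underlying curve $C=X$. A preimage is then the datum of an involution $\tau$ making $(X,\omega)$ a Prym eigenform for $\cO_D$ together with an ordering $p_1,\dots,p_4$ of its four fixed points, the exchanged pair $\{p_5,p_5'\}$ being canonically the unordered zero set of $\omega$ and hence contributing a single choice. For generic $X$ the automorphism group is $\langle\tau\rangle$, and $\tau$ fixes each $p_i$ and the pair $\{p_5,p_5'\}$, so it acts trivially on orderings; consequently the $4!$ orderings yield $4!$ genuinely distinct points of $\cX_D$. It therefore remains to prove that $\tau$ is unique. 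If $\tau_1,\tau_2$ both worked, then $\tau_1\tau_2$ would preserve $\omega$ and hence be a translation automorphism of $(X,\omega)$, and $\tau_1=\tau_2$ is equivalent to $\tau_1\tau_2=\mathrm{id}$.

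The heart of the matter, and the main obstacle, is thus the triviality of the group of translation automorphisms of a generic eigenform. I would argue as follows: a nontrivial translation automorphism $\sigma$ satisfies $\sigma^*\omega=\omega$, so $\omega$ descends and $(X,\omega)=p^*(X/\langle\sigma\rangle,\bar\omega)$ for $p\colon X\to X/\langle\sigma\rangle$. If $\sigma$ is free this is a covering construction of lower genus; if $\sigma$ has a fixed point, that point must be a zero of $\omega$ of order two, near which $\sigma$ acts as a nontrivial cube root of unity (so $\sigma$ has order three), and again $p$ exhibits $(X,\omega)$ as a branched covering construction. For $D$ non-square this contradicts the primitivity of $\Omega E_D(2,2)^\odd$ (\cite{McM:prym}, see \textsection\ref{subsec:setting}), since a generic surface of an imprimitive locus would be such a cover. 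Hence the generic fibre has exactly $4!$ points and $\deg(\hat{\rho}_{2|\cX_D})=24$. For the finitely many square discriminants allowed by $D\ge 9$, where primitivity is unavailable, the same uniqueness of $\tau$ can instead be read off from the explicit prototype description of the surfaces in $\Omega E_D(2,2)^\odd$ given in \textsection\ref{subsec:cyl:dec:prototype}.
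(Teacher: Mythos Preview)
Your approach differs from the paper's: rather than invoking \cite[Th.~3.1]{LN:components} directly for uniqueness of the Prym involution, you attempt to deduce it from primitivity of the locus. The translation--automorphism idea is sound for non-square $D$: since each $\tau_i$ swaps the zeros, $\sigma=\tau_1\tau_2$ fixes both zeros, acts locally as a cube root of unity there, so has order $3$, and the quotient is a torus; a generic point of the three-dimensional locus $\Omega E_D(2,2)^\odd$ has dense $\GL^+(2,\R)$-orbit and hence cannot be a torus cover. That part is fine.

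There are, however, two genuine gaps. First, the sentence ``for the finitely many square discriminants allowed by $D\ge 9$'' is simply false: there are infinitely many square discriminants $D\equiv 0,1,4\ [8]$ with $D\ge 9$ (e.g.\ $9,16,25,33^{\,}$--- actually $9,16,25,36,49,\dots$), so deferring all of these to an unspecified prototype check is not a proof. The paper avoids this by citing \cite[Th.~3.1]{LN:components}, which handles every $D\neq 9$ uniformly, and treats $D=9$ separately via a conjugacy argument.

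Second, your logical structure is muddled. You assert ``for generic $X$ the automorphism group is $\langle\tau\rangle$'' and then say ``it remains to prove that $\tau$ is unique''---but the first assertion already contains the second, and what you actually prove (uniqueness of $\tau$ among Prym involutions) does \emph{not} give the first. To conclude that the $4!$ orderings are genuinely distinct you must also rule out automorphisms $\phi$ with $\phi^*\omega=\lambda\omega$ for $\lambda$ a root of unity other than $\pm 1$. This can be done (such a $\phi$ commutes with the real multiplication, hence acts on $\Prym$ through $\cO_D^\times\subset K_D^\times$, and a real quadratic field has no roots of unity besides $\pm 1$), but you have not said so.
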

\begin{proof}
It is clear from the definition that $\hat{\rho}_2(\cX_D)=\Pb\Omega E_D(2,2)^\odd$.

Assume that $D\neq 9$. Let $(X, [\omega])$ be an element of  $\Pb\Omega E_D(2,2)^\odd$ (here $\omega \in \Omega^-(X)\setminus\{0\}$ and $[\omega]$ denotes the complex line generated by $\omega$ in $\Omega(X)$).
It follows from \cite[Th. 3.1]{LN:components} that the Prym involution $\tau$, which is implicitly involved in the definition of $\Omega E_D(2,2)^\odd$, is unique.
The preimage of $(X, [\omega])$ by $\hat{\rho}_2$ consists of tuples $(X,x_1,\dots,x_5,x'_5,\tau,[\omega])$, where $\{x_1,\dots,x_4\}$ is the set of fixed points of $\tau$ and $\{x_5,x'_5\}$ are the zeros of $\omega$ (that are permuted by $\tau$). It is clear that $\{x_5,x'_5\}$ is uniquely determined by $[\omega]$, while the set $\{x_1,\dots,x_4\}$ is determined by $\tau$. Since $\tau$ is unique, different points in the preimage corresponds to different numberings of the fixed points of $\tau$. Thus the preimage contains $4!=24$ points.

If $D=9$ then $\tau$ is not unique. However, the arguments of \cite[Th. 3.1]{LN:components} actually show all the different Prym involutions are conjugate by automorphisms of $X$. Therefore, we get the same conclusion.
\end{proof}

By a slight abuse of notation, we will denote by $d\mu$ the pullback of the volume forms on $\Pb\Omega E_D(2,2)^\odd$ to $\cX_D$.
It follows from Proposition~\ref{prop:adm:cov:e:forms:degree} that we have
\begin{Corollary}\label{cor:vol:XD:n:POmgE:D:2:2} The volumes of $\cX_D$ and $\Pb\Omega E_D(2,2)^\odd$ are related by
\begin{equation}\label{eq:vol:XD:n:POmgE:D:2:2}
	\mu(\cX_D)=24\mu(\Pb\Omega E_D(2,2)^\odd).
\end{equation}
\end{Corollary}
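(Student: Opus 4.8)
The plan is to read the corollary off directly from Proposition~\ref{prop:adm:cov:e:forms:degree} together with the way the volume form on $\cX_D$ was introduced. Proposition~\ref{prop:adm:cov:e:forms:degree} tells us that $\hat{\rho}_{2\left|\cX_D\right.}\colon \cX_D \to \Pb\Omega E_D(2,2)^\odd$ is a degree $24$ covering of orbifolds, and by the convention fixed in the paragraph preceding the statement, the form $d\mu$ on $\cX_D$ is precisely the pullback $\hat{\rho}_{2\left|\cX_D\right.}^{*}d\mu$ of the volume form constructed on $\Pb\Omega E_D(2,2)^\odd$ in Theorem~\ref{th:vol:form:proj:diff:express}. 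So the only thing left is to record the behaviour of volumes under a finite covering.

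For this I would invoke the change-of-variables principle for finite coverings: if $f\colon M \to N$ is a covering of oriented orbifolds of degree $d$ and $\alpha$ is an integrable top-degree form on $N$, then $\int_M f^{*}\alpha = d\int_N\alpha$. I would justify it by discarding a lower-dimensional (hence measure-zero) locus to reduce to an honest unramified $d$-sheeted covering, covering the base by evenly covered charts, applying the local change of variables on each sheet, and summing the $d$ contributions. Applied with $f=\hat{\rho}_{2\left|\cX_D\right.}$ and $\alpha = d\mu$, this yields
\[
\mu(\cX_D)=\int_{\cX_D}\hat{\rho}_{2\left|\cX_D\right.}^{*}d\mu = 24\int_{\Pb\Omega E_D(2,2)^\odd}d\mu = 24\,\mu(\Pb\Omega E_D(2,2)^\odd),
\]
which is the claimed identity.

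The one point I would take care over is the orbifold bookkeeping, and this is where any (mild) difficulty lies. In principle a generic surface in $\Pb\Omega E_D(2,2)^\odd$ might carry automorphisms, in which case both the degree and the change-of-variables formula would have to be weighted by the relevant automorphism orders. However, the proof of Proposition~\ref{prop:adm:cov:e:forms:degree} exhibits the generic fibre as an honest set of $24$ distinct points, corresponding to the $24$ labellings of the four fixed points of the (essentially unique) Prym involution; no such labelling is preserved by a generic automorphism, so on the generic locus the covering is genuinely $24$-to-$1$ with trivial weights. Since $d\mu$ is supported away from the lower-dimensional orbifold strata, the displayed computation is valid exactly as written, and I expect no further obstacle: the entire content of the corollary is already packaged in the degree count carried out in Proposition~\ref{prop:adm:cov:e:forms:degree}.
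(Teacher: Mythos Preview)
Your proof is correct and follows exactly the approach of the paper, which treats the corollary as an immediate consequence of Proposition~\ref{prop:adm:cov:e:forms:degree} together with the convention that $d\mu$ on $\cX_D$ is the pullback of the volume form on $\Pb\Omega E_D(2,2)^\odd$. In fact your argument is more detailed than the paper's, which simply states that the corollary follows from the degree computation without spelling out the change-of-variables step or the orbifold bookkeeping.
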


\section{Stratification of the boundary of $\ol{\cX}_D$}\label{sec:classify:bdry:str:XD}
Define $\partial\ol{\cX}_D:=\ol{\cX}_D-\cX_D$.
We have naturally a stratification of $\partial\ol{\cX}_D$ where each stratum contains Abelian differentials on stable curves with the same topology. 
Theorem~\ref{th:bdry:eigen:form:H22} here below  gives the exhaustive list of strata of $\partial\ol{\cX}_D$. 
These strata will be labeled according to the topology of  the quotient by the Prym involution of the underlying curves 
(the quotient is a stable pointed curve in $\ol{\cM}_{1,5}$).
More precisely, we will label of each stratum by $\cS_{x,y}^\alpha$, where $x$ (resp. $y$) is the number of separating (resp. non-separating) nodes on the quotient, and $\alpha$ is a letter which is added to distinguish different strata whose corresponding curves in $\ol{\cM}_{1,5}$ have the same topology. The letter $\alpha$ is omitted in the case there is only one stratum for which the quotient curve has $x$ separating nodes and $y$ non-separating nodes.

\begin{Theorem}\label{th:bdry:eigen:form:H22}
Assume that $D$ is not a square. 
Let $\pp=(C,p_1,\dots,p_5, p'_5,\tau,[\xi])$ be a point in $\partial\ol{\cX}_D$.
Then $\partial\ol{\cX}_D$ consists of the following strata 

\begin{enumerate}
\item $\cS_{1,0}$ is the stratum containing $\pp$ such that $C$ has two irreducible components, denoted $C'$ and $C''$ meeting at one node  such that
\begin{itemize}
	\item[.] $C'$ is isomorphic to $\Pb^1$ and contains $\{p_5,p'_5\}$ and one point in $\{p_1,\dots,p_4\}$,
	
	\item[.] $C''$ is a Riemann surface of genus three containing three points in $\{p_1,\dots,p_4\}$, 
	
	\item[.] the differential $\xi$ vanishes identically on $C'$ and $\xi'':=\xi_{\left|C''\right.} \in \Omega \cM_3(4)$, the unique zero of $\xi''$ is located at the node between $C''$ and $C'$.
\end{itemize}   

\item $\cS_{2,0}^a$ is the stratum  where $C$ has four irreducible components, denoted $C'_1, C'_2$, $C''_1, C''_2$, such that
\begin{itemize}
\item[.] $C'_1$ is an elliptic curve, $C'_2$ is isomorphic to $\Pb^1$, $C''_1$ and $C''_2$ are two isomorphic elliptic curves,
\item[.] $C'_1$ contains $3$ points in $\{p_1,\dots,p_4\}$, $C'_2$ contains one point in $\{p_1,\dots,p_4\}$ and $\{p'_5, p''_5\}$,
\item[.] $C'_2$ meets each of $C'_1$, $C''_1$, and $C''_2$ at one node,
\item[.] $\xi$ vanishes identically on $C'_2$ and is nowhere vanishing on $C'_1\cup C''_1\cup C''_2$.
\end{itemize}

\item $\cS_{2,0}^b$ is the stratum where $C$ has three irreducible components, denoted by $C'_1, C'_2$, and $C''$, such that
\begin{itemize}
\item[.] $C'_1$ (resp. $C'_2$) is isomorphic to $\Pb^1$ and contains two points in $\{p_1,\dots,p_4\}$,
\item[.] $C''$ is an elliptic curve which contains $\{p'_5, p''_5\}$,
\item[.] $C'_1$ (resp. $C'_2$) intersects $C''$ at two nodes,
\item[.] $\xi$ is non-trivial on all irreducible components, and has simple poles at all of the nodes.
\end{itemize}


\item $\cS_{1,1}$ is the stratum where  $C$ has two irreducible components denoted by $C'$ and $C''$, where $C'$ is isomorphic to $\Pb^1$, $C''$ is a genus two curve with two nodes such that
\begin{itemize}
\item[.] $C'$ contains two points in $\{p_1,\dots,p_4\}$,
\item[.] $C''$ contains $\{p'_5, p''_5\}$ and two points in $\{p_1,\dots,p_4\}$,
\item[.] there are two nodes between $C'$ and $C''$, and
\item[.] $\xi$ has simple poles at all of the nodes of $C$.
\end{itemize}

\item $\cS_{0,2}$ is the stratum where  $C$ has two irreducible components denoted by $C'$ and $C''$, where $C'$ is a Riemann surface of genus 2, $C''$ is isomorphic to $\Pb^1$ such that
 \begin{itemize}
    \item[.] $C'$ contains $\{p_1,\dots,p_4\}$, $C''$ contains $\{p'_5, p''_5\}$,
    \item[.] $C'$ and $C''$ intersect at two nodes both of which are fixed by $\tau$,
    \item[.] $(C',\xi_{\left|C'\right.})\in \Omega\cM_2(2)$, and $\xi_{\left|C''\right.}\equiv 0$.
\end{itemize}

\item $\cS_{2,1}^a$ is the stratum where  $C$ has three irreducible components denoted by $C'_1, C'_2$, and $C''$, such that
\begin{itemize}
\item[.] $C'_1$ and $C'_2$ are both isomorphic to $\Pb^1$,  $C''$ is a genus two curve  with two nodes that are exchanged by $\tau$,

\item[.] $C'_1$ contains $\{p'_5, p''_5\}$ and one point in $\{p_1,\dots,p_4\}$,

\item[.] $C'_2$ contains two points in $\{p_1,\dots,p_4\}$,

\item[.] $C'_1$ intersects $C''$ at one node, $C'_2$ intersects $C''$ at two
nodes

\item[.] $\xi_{\left|C'_1\right.}\equiv 0$, $\xi_{\left|C''\right.}$ has a zero of order $4$ at the node between $C''$ and $C'_1$, and has simple poles at all the other nodes of $C$.
\end{itemize}

\item $\cS_{2,1}^b$ is the stratum where  $C$ has four irreducible components $C'_1, C'_2, C''_1, C''_2$, all of which are isomorphic to $\Pb^1$,  such that
\begin{itemize}
\item[.] each of $C'_1$ and $C'_2$ contains two points in $\{p_1,\dots,p_4\}$.

\item[.] each of $C''_1$ and  $C''_2$ contains one point in $\{p'_5, p''_5\}$,

\item[.] $C'_1$ and $C'_2$ are disjoint, while $C''_1$ and $C''_2$  intersect each other at two nodes,

\item[.] $C'_1$ (resp. $C'_2$) intersects both $C''_1$ and $C''_2$,

\item[.] $\xi$  has simple poles at all the nodes.
\end{itemize}

\item $\cS_{2,1}^c$ is the stratum where  $C$ has four irreducible components $C'_1, C'_2, C''_1, C''_2$, such that
\begin{itemize}
\item[.] $C'_1$ and $C'_2$ are both isomorphic to $\Pb^1$, each of $C''_1, C''_2$ is a genus one curve with one node,

\item[.] $C'_1$ (resp. $C'_2$) contains two points in $\{p_1,\dots,p_4\}$, $C'_1, C'_2$ are disjoint.

\item[.] $C''_1$ (resp. $C''_2$) contains one point in $\{p'_5, p''_5\}$, $C''_1, C''_2$ are disjoint.

\item[.] $C'_1$ (resp. $C'_2$) intersects each of $C''_1$ and $C''_2$ at one node,

\item[.] $\xi$  has simple poles at all the nodes.
\end{itemize}

\item $\cS_{3,1}$ is the stratum where  $C$ has $5$ irreducible components denoted by $C'_i, \; i=1,2,3$, and $C''_j, \, j=1,2$, such that
\begin{itemize}
\item[.] $C'_i, \; i=1,2,3$, is isomorphic to $\Pb^1$, $C''_j, \, j=1,2$, is a genus $1$ curve with one node,
\item[.] $C'_1$ contains two points in $\{p_1,\dots,p_4\}$, $C'_2$ contains one point in $\{p_1,\dots,p_4\}$, $C'_1$ intersects $C'_2$ at two nodes,
\item[.] $C'_3$ contains one point in $\{p_1,\dots,p_4\}$ and the pair $\{p'_5, p''_5\}$, $C'_3$ intersects $C'_2$ at one node,
\item[.] $C''_1$ and $C''_2$ are disjoint, and each of $C''_1, C''_2$ intersects $C'_3$ at one node,
\item[.] the differential $\xi$ vanishes identically on $C'_3$ and has simple poles at the nodes between $C'_1$ and $C'_2$, and at the nodes of $C''_j, \, j=1,2$.
\end{itemize}

\item $\cS_{2,2}$ is the stratum where  $C$ has $4$ irreducible components, denoted by $C'_1, C'_2, C''_1, C''_2$, all of which are isomorphic to $\Pb^1$, such that
\begin{itemize}
\item[.] $C'_1$ and $C'_2$ are disjoint,
\item[.] $C'_1$ (resp. $C'_2$) contains two points in $\{p_1,\dots,p_4\}$, intersects $C''_1$ at two nodes, and is disjoint from $C''_2$.
\item[.] there are two nodes between $C''_1$ and $C''_2$, both of which are fixed by $\tau$,
\item[.] $\{p'_5, p''_5\}\subset C''_2$, and $\xi_{\left|C''_2\right.}\equiv 0$,
\item[.] $\xi_{\left|C''_1\right.}$ has a double zero at a node between $C''_1$ and $C''_2$, and simple poles at all the nodes between $C''_1$ and $C'_1\cup C'_2$.
\end{itemize}

\item $\cS_{1,3}$ is the stratum where  $C$ has $4$ irreducible components denoted by $C'$ and $C''_j, \, j=1,\dots,3$, such that
\begin{itemize}
\item[.] all the irreducible components are isomorphic to $\Pb^1$,
\item[.] $C'$ contains two points in $\{p_1,\dots,p_4\}$, each of $C''_1, C''_3$ contains one point in $\{p_1,\dots,p_4\}$, and $\{p'_5, p''_5\}\subset C''_2$,
\item[.] $C''_1$ intersects $C''_2$ at one node, and intersects each of $C'$ and $C''_3$ at two nodes,

\item[.] $C''_2$ intersects $C''_3$ at one node,

\item[.] $\xi_{\left|C''_2\right.}\equiv 0$, while $\xi_{\left|C''_1\right.}$ has a double zero at the node between $C''_1$ and $C''_2$, and has simple poles at all the nodes between $C''_1$ and $C'\cup C''_3$.
\end{itemize}
\end{enumerate}
\end{Theorem}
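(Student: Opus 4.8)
The plan is to realize each boundary point of $\partial\ol{\cX}_D$ as a limit of eigenforms $(X_n,[\omega_n])\in\cX_D$ and then read off the limiting nodal curve $C$ together with its anti-invariant stable differential $[\xi]$. Because $\Pb\Omega'\ol{\cB}_{4,1}$ is the projectivized Hodge bundle over the admissible-cover compactification, the limit $[\xi]$ is the projectivization of a \emph{stable} differential on $C$: it is holomorphic on every component on which it does not vanish identically, has at most simple poles with opposite residues at the nodes, and is anti-invariant for the limiting involution $\tau$. The quotient $E=C/\langle\tau\rangle$ is a pointed stable curve in $\ol{\cM}_{1,5}$, and since the strata are indexed by the dual graph of $E$, the first task is to enumerate the admissible dual graphs.

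First I would enumerate the boundary types of $\ol{\cM}_{1,5}$, organized by the pair $(x,y)$ counting separating and non-separating nodes. As the arithmetic genus is one, either a single component is elliptic and the dual graph is a tree, so that $y=0$ and all nodes are separating, or every component is rational and the dual graph carries exactly one cycle, whose $y\ge 1$ edges are the non-separating nodes, with $x$ further separating nodes supporting the rational tails; stability (at least three special points on each rational component) together with the presence of only five marked points makes this list finite and short. For each candidate I would lift to the admissible double cover of profile $(4,1)$: the monodromy of $C\to E$ is fixed by the requirement that precisely $q_1,\dots,q_4$ be branch points while $q_5$ splits, and the local behaviour of $\tau$ at each node of $C$---fixing it and preserving both branches, or exchanging it with a companion node---is dictated by whether the node lies over a branch point or over a splitting point. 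This determines $C$ and $\tau$, hence the shape of each candidate stratum.

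The heart of the matter is to decide which candidates are genuinely attained by degenerating eigenforms and, when they are, to pin down the order of the zeros and poles of $\xi$ on each component. Here I would use complete periodicity and the prototype description. Up to the $\GL^+(2,\R)$-action one may take the $(X_n,\omega_n)$ to be horizontally periodic with a fixed stable cylinder diagram among the four of Proposition~\ref{prop:stable:cyl:dec:models:H22}; since $D$ is not a square, $D=e^2+8ad$ admits only finitely many prototype solutions, so along a subsequence the prototype $\frakp=(a,b,d,e)$ is constant and the widths and heights of the four cylinders are the fixed rational multiples of $a/\lambda$ and $d/\lambda$ furnished by Proposition~\ref{prop:cyl:dec:length:ratios:22}. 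A node of $C$ appears exactly where the conformal modulus of an annulus blows up, that is, after projectivizing, where a cylinder width collapses or a cylinder height diverges relative to the normalization; tracking, diagram by diagram, which cylinders and saddle connections pinch and which components are consequently rescaled to zero produces the dual graph of $C$, the surviving meromorphic differential, its zeros (concentrated at the limits of the double zeros $p_5,p'_5$) and its simple poles (at the pinched core curves). Matching these outputs against the enumeration of the second paragraph yields exactly the eleven listed strata.

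I expect the last step to be the main obstacle and the bulk of the appendix. One must control the \emph{conformal} degeneration rather than merely the flat metric, so as to identify precisely which nodes form and to certify the zero and pole orders of $\xi$ on each component---this is a plumbing/rescaling analysis near each node. In parallel, one must check that the real-multiplication structure carried by the finitely many prototypes survives in the limit, so that each candidate genuinely lies in $\ol{\cX}_D$ and not merely in the closure of the ambient locus of anti-invariant differentials, and conversely rule out the combinatorially admissible dual graphs that cannot arise as limits of eigenforms---precisely the configurations excluded by the prototype relations. Finally, each surviving stratum must be shown to be nonempty by exhibiting an explicit degenerating family, which I would construct directly in the prototype coordinates of Proposition~\ref{prop:cyl:dec:length:ratios:22}.
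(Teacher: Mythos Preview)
Your overall framework---enumerate the boundary strata of $\ol{\cM}_{1,5}$ by $(x,y)$, lift to the admissible double cover, and then decide case by case which lifts actually occur as limits of eigenforms---matches the paper's. The implementation of the elimination step, however, is genuinely different. You propose a \emph{forward} analysis: fix one of the four stable cylinder diagrams, let heights or widths degenerate under the prototype constraints of Proposition~\ref{prop:cyl:dec:length:ratios:22}, read off the resulting $(C,[\xi])$, and check that exactly the eleven listed types appear. The paper instead runs a \emph{backward} analysis: it first proves a short list of abstract necessary conditions that any $(C,\ul{p},\tau,[\xi])\in\partial\ol{\cX}_D$ must satisfy (Appendix~\ref{sec:degenerate:eigen:form}), and then checks each topological type of $E$ against that list. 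The key lemmas are: (i) $\xi$ has no simple pole at a node fixed by~$\tau$ (Proposition~\ref{prop:node:fixed:by:invol}); (ii) if $\gamma\in H_1(C^*,\Z)^-$ maps to a nonzero class in $H_1(S,\Z)^-$ then $\int_\gamma\xi\neq 0$, which uses that $D$ is not a square via Lemma~\ref{lm:D:no:square:hol:map:inj} (Proposition~\ref{prop:non:collapse:sub:curve}); (iii) at a pair of $\tau$-exchanged nodes separating $C$ into two connected $\tau$-invariant pieces, $\xi$ must have simple poles (Proposition~\ref{prop:s:poles:at:nodes:exchanged}); (iv) $\xi$ cannot vanish identically on one side of a genus\,$\geq 1$/genus\,$\geq 1$ splitting (Proposition~\ref{prop:bdry:form:dec:not:eigen:form}); and (v) $\xi$ cannot have simple poles at exactly one exchanged pair of nodes (Proposition~\ref{prop:s:poles:at:two:nodes:not:in:XD})---this last one is the only place where the cylinder/prototype argument you describe is actually used. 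In a handful of the trickier cases the paper also invokes the twisted-differential machinery of \cite{BCGGM1} (Theorem~\ref{th:twisted:diff}) to constrain the rescaled differential on components where $\xi\equiv 0$, something your stable-differential description does not capture.

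Your forward approach is viable in principle, but two points are under-specified. First, you need to argue that every boundary point is reached by a sequence that, after acting by $\GL^+(2,\R)$ and passing to a subsequence, is horizontally periodic with a \emph{fixed} stable diagram and a \emph{fixed} prototype; this is true but requires the observation (used in the paper only inside Propositions~\ref{prop:bdry:str:gp:II} and~\ref{prop:s:poles:at:two:nodes:not:in:XD}) that all long cylinders near a given boundary point are parallel. Second, the actual elimination of the many non-occurring dual graphs is where the work lies, and the paper's abstract lemmas---especially the period non-vanishing (ii) above---dispatch most of them in one or two lines, whereas a diagram-by-diagram degeneration analysis would be substantially longer and more error-prone. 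What the paper's route buys is economy and a clear reason \emph{why} each excluded configuration fails; what your route would buy is a more constructive picture of each surviving stratum, which is closer in spirit to how the non-emptiness is eventually established.
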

The proof of Theorem~\ref{th:bdry:eigen:form:H22} consists of a case by case verification following the topology of the quotient curve $E=C/\langle \tau \rangle$. It turns out that an immense majority of the cases will be ruled out by the charaterizing properties of limit Prym eigenforms proven in  Appendix \textsection\ref{sec:degenerate:eigen:form}. Since this proof is rather lengthy and has no significant impact on other parts of the paper, we provide it Appendix \textsection\ref{sec:prf:bdry:egein:form:H22}.   

\section{Geometry of the $\ol{\cX}_D$ near the boundary}\label{sec:geometry:bdry:XD}
In this section we study the geometry of $\ol{\cX}_D$ near its boundary. 
Let $\pp=(C,p_1,\dots,p_5,p'_5,\tau, [\xi])$ be a point in $\partial\ol{\cX}$.
To our purpose, we partition the boundary strata of $\partial\ol{\cX}_D$ into four groups as follows:
\begin{itemize}
\item[$\bullet$] Group I consists of the strata: $\cS_{1,0}, \cS_{2,0}^a, \cS_{0,2}$. The strata in this group contain $\pp$ such that $\xi$ does not have simple pole.

\item[$\bullet$] Group II consists of the strata: $\cS_{2,0}^b, \cS_{1,1}$. The strata in this group  contain $\pp$ such that the curve $C$ has two pairs of nodes that are exchanged by $\tau$, and $\xi$ has simple poles at all the nodes of $C$.

\item[$\bullet$] Group III consists of the strata: $\cS_{2,1}^a, \cS_{3,1}, \cS_{2,2}, \cS_{1,3}$. The strata in this group contain $\pp$ such that $\xi$ vanishes identically on one component of $C$, and is non-trivial on all other components. In particular, $\xi$ has simple poles at all non-separating nodes of $C$. 

\item[$\bullet$] Group IV consists of the strata: $\cS_{2,1}^b, \cS_{2,1}^c$. The strata in this group contain $\pp$ such that all the components of $C$ are isomorphic to $\Pb^1$, and  $\xi$ does not vanishes identically on any component.
\end{itemize}

\subsection{Triple of tori Prym eigenforms}\label{subsec:triple:tori:ef:def}
To investigate the boundary of $\ol{\cX}_D$ we need to generalize the notion of Prym eigenform to disconnected Riemann surfaces. 
A {\em triple of flat tori} is the data of $\{(X_j,x_j,\omega_j), \; j=0,1,2\}$, where for each $j\in \{0,1,2\}$
\begin{itemize}
	\item[$\bullet$] $X_j$ is a an elliptic curve,
	
	\item[$\bullet$] $x_j$ is a marked point on $X_j$,
	
	\item[$\bullet$] $\omega_j$ is a non-trivial holomorphic $1$-form on $X_j$.
\end{itemize}
Let us denote by $X$ the disjoint union of  $X_0,X_1,X_2$. The data of $\{(X_j,\omega_j), \; j=0,1,2\}$ can be viewed as a holomorphic $1$-form on $X$, which will be denoted by $\omega$. Thus the triple of tori $\{(X_j,x_j,\omega_j), \; j=0,1,2\}$ can be represented by the tuple $(X,x_0,x_1,x_2,\omega)$.

We call the triple  $\{(X_j,x_j,\omega_j), \; j=0,1,2\}$ a {\em Prym form} if there exists an isomorphism  $\phi: X_1 \to X_2$ such that $\phi^*\omega_2=-\omega_1$.
Combining with translations on $X_1$ and $X_2$, we can assume that $\phi(x_1)=x_2$. We extends $\phi$ to an involution $\tau$  of $X$ by setting $\tau_{|X_0}$ to be the unique non-trivial involution of $X_0$ fixing $x_0$ of, $\tau_{|X_1}=\phi$ and $\tau_{|X_2}= \phi^{-1}$. 
We will call $\tau$ the Prym involution of $X$.
Note that we have $\tau^*\omega=-\omega$.

Let $\Omega(X)^-$ denote the space of holomorphic $1$-form $\xi$ on $X$ such that $\tau^*\xi=-\xi$. 
We have $\dim_\C\Omega(X)^-=2$ and $\omega \in \Omega(X)^-$.
Define $ H_1(X,\Z)^-:=\{c\in H_1(X,\Z), \; \tau_*c=-c\}$.
We have $H_1(X,\Z)^-\simeq \Z^4$, and  the  intersection form on $H_1(X,\Z)^-$ has signature $(1,2)$. It follows  that $\Prym(X):=(\Omega(X)^-)^*/H_1(X,\Z)^-$ is an Abelian variety of dimension $2$.

Let $\Omega E_{D}(0^3)$ denote the space of triples of flat tori $(X,x_0,x_1,x_2,\omega)$ as above such that $\End(\Prym(X))$ contains a self-adjoint proper subring isomorphic to $\cO_D$ for which $\omega$ is an eigenform. 
We will call elements of $\Omega E_D(0^3)$ {\em triple of tori Prym eigenforms}.  
It is shown in \cite{LN:finite} that $\Omega E_D(0^3)$ is contained in the boundary of $\Omega E_D(2,2)^\odd$. 
We have a natural action of $\C^*$ on the space of triples of tori by simultaneously multiplying the same scalar to the Abelian differentials  on all three components. Let $W_{D}(0^3)$ denote the quotient $\Omega E_{D}(0^3)/\C^*$. 
We will  see that $W_D(0^3)$ consists of finitely many hyperbolic surfaces, each of which is a finite cover of the modular curve $\Hbb/{\SL(2,\Z)}$ (cf. \textsection\ref{sec:triple:tori:comput}). 

\subsection{Strata of group I}\label{subsec:geom:bdry:str:gp:I}
Our goal is to prove the following
\begin{Proposition}\label{prop:bdry:str:gp:I}
The strata $\cS_{1,0}, \cS_{2,0}^a, \cS_{0,2}$ have codimension $1$ in $\ol{\cX}_D$. All the points in $\cS_{1,0}\sqcup\cS_{2,0}^a\sqcup\cS_{0,2}$ are smooth points of $\ol{\cX}_D$ as an orbifold (that is each of those points admits a neigborhood isomorphic to a finite quotient of an open ball in $\C^2$). Moreover
\begin{itemize}
\item[(i)] Each component of $\cS_{1,0}$ is a finite cover of a Teichm\"uller curve in $W_D(4) \subset \Pb\Omega \cM_3^{\odd}(4)$.

\item[(ii)] Each component of $\cS_{2,0}^a$ is a finite cover of a curve in $W_{D}(0^3)$. 

\item[(iii)] Each component of $\cS_{0,2}$ is a finite cover of a curve in $W_{D'}(2)$, with $D'\in \{D, D/4\}$. 
\end{itemize}
\end{Proposition}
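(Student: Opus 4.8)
The plan is to handle the three strata in parallel by a single mechanism: discard every irreducible component on which $\xi$ vanishes identically, and record the pair $(C_0,\xi_0)$ consisting of the surviving components together with the restriction of $\tau$. For $\cS_{1,0}$ this yields the smooth genus three curve $C''$ with $\xi_0=\xi|_{C''}\in\Omega\cM_3(4)$; for $\cS_{2,0}^a$ it yields the disjoint union $C'_1\sqcup C''_1\sqcup C''_2$ of three elliptic curves carrying a nowhere vanishing differential, which, with the marked points located at the former nodes and the involution exchanging $C''_1$ and $C''_2$, is exactly a triple of flat tori in the sense of \textsection\ref{subsec:triple:tori:ef:def}; for $\cS_{0,2}$ it yields the smooth genus two curve $C'$ with $\xi_0=\xi|_{C'}\in\Omega\cM_2(2)$. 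The first thing I would check is that each $(C_0,[\xi_0])$ is a genuine (possibly disconnected) Prym eigenform, so that it represents a point of $W_D(4)$, of $W_D(0^3)$, or of $W_{D'}(2)$ respectively. This is precisely what the characterization of limiting Prym eigenforms in Appendix~\textsection\ref{sec:degenerate:eigen:form} provides: the self-adjoint action of a generator of $\cO_D$ on $\Prym(C,\tau)$ persists in the limit and restricts to a real multiplication on the Prym variety of $C_0$ for which $\xi_0$ is an eigenform.

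Next I would establish that these restriction maps are finite, which, together with the fact that $W_D(4)$ and $W_{D'}(2)$ are Teichm\"uller curves and $W_D(0^3)$ is a finite cover of the modular curve (\textsection\ref{sec:triple:tori:comput}), gives at once that each component of the three strata is one-dimensional, i.e. of codimension one in the surface $\ol{\cX}_D$. The data forgotten by the maps are the bubble components --- the $\Pb^1$'s on which $\xi\equiv0$ --- and the labelling of $p_1,\dots,p_4$. For $\cS_{1,0}$ and $\cS_{0,2}$ the quotient of the relevant bubble by $\tau$ is a three-pointed $\Pb^1$, hence rigid, so the only freedom is the finite choice of which labelled ramification points lie on each component. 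For $\cS_{2,0}^a$ the quotient bubble carries four special points and therefore an \emph{a priori} cross-ratio modulus; here the argument is more delicate, and I would show that this cross-ratio is determined up to finitely many values by the triple-of-tori eigenform, using the period relations of Proposition~\ref{prop:eigen:form:per:eq} (equation~\eqref{eq:eigenform:rel:periods:basis}), which pin down the gluing data once the real multiplication is fixed.

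The only stratum at which the discriminant can change is $\cS_{0,2}$, and the alternative $D'\in\{D,D/4\}$ I would settle by tracking the polarization. Here one passes from the $(1,2)$-polarized lattice $H^1(C,\Z)^-$ of Lemma~\ref{lm:Prym:var:1:2:polarization} to the principally polarized lattice of the genus two eigenform $C'$; the two vanishing cycles of the degeneration are both fixed by $\tau$, so they drop out of the $(-1)$-eigenspace, and I would restrict a generator $T$ of $\cO_D$ in the normal form of Proposition~\ref{prop:eigen:form:per:eq} to the surviving lattice and compute the discriminant of the order it generates there. Whether the image order is saturated or sits with index two in $\cO_{D'}$ --- a question governed by the divisibility by $2$ of the entries of $T$, hence by $D \bmod 8$ --- is exactly what distinguishes the value $D$ from the value $D/4$.

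The heart of the proposition, and the step I expect to be the main obstacle, is the smoothness statement and the accompanying transverse coordinate. The defining feature of Group~I is that $\xi$ has no simple poles; consequently at each node both local branches of $\xi$ are holomorphic, and I would open the nodes by the standard plumbing $zw=t$ to produce nearby honest eigenforms. For $\cS_{1,0}$ this single parameter is literally the relative period $\int_{c_1}\xi$ of the two merging zeros, i.e. the direction of the kernel foliation, so $\ol{\cX}_D$ is locally a product of a disc with a chart of $W_D(4)$ and $\cS_{1,0}=\{\int_{c_1}\xi=0\}$ is a smooth divisor. For $\cS_{2,0}^a$ and $\cS_{0,2}$ there are several nodes, hence \emph{a priori} several plumbing parameters, and the crux is to show that the involution $\tau$ together with the eigenform equation forces them to vary as a single holomorphic parameter, so that $\ol{\cX}_D$ is locally biholomorphic to a finite quotient of $\Delta^2$ with the boundary stratum as a coordinate hyperplane. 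Verifying this synchronization --- equivalently, that the real-multiplication condition cuts out a subvariety that is both smooth and transverse to the boundary near these points --- is the technical core; once it is in place, the smoothness of $\ol{\cX}_D$ at the points of $\cS_{1,0}\sqcup\cS_{2,0}^a\sqcup\cS_{0,2}$ follows.
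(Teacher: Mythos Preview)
Your overall plan matches the paper's: restrict $\xi$ to the non-vanishing subcurve $C_1$, identify $(C_1,[\xi_1])$ as a point of the appropriate eigenform locus (this is the content of Lemma~\ref{lm:gp:I:limit:diff}), and then exhibit a local chart by plumbing. The finite-cover statements (i)--(iii) follow once the bubble is known to be rigid. But the two mechanisms you single out as delicate are the wrong ones.

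\textbf{Bubble rigidity for $\cS_{2,0}^a$.} You propose to fix the cross-ratio on $C'_2$ via the period relations of Proposition~\ref{prop:eigen:form:per:eq}. That cannot work: those relations constrain only the \emph{absolute} periods of $\omega$, and in the limit every absolute cycle lives on the three tori $C'_1\cup C''_1\cup C''_2$; the bubble $C'_2\simeq\Pb^1$ carries none. What actually fixes the cross-ratio is the Global Residue Condition of Theorem~\ref{th:twisted:diff}: the twisted differential $\eta$ on $C'_2$ must have double poles with vanishing residue at all three nodes, and writing $\eta=(x^2-1)^2dx/\bigl(x^2(x^2-b^2)^2\bigr)$ one computes that $\res_{\pm b}\eta=0$ forces $b^2=-3$ (Lemma~\ref{lm:gp:I:diff:on:vanish:comp}). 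The rigidity is a stratum-closure phenomenon, independent of real multiplication.

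\textbf{Synchronization of plumbing for $\cS_{0,2}$.} You appeal to $\tau$ and to the eigenform equation, but neither helps here. Both nodes of $C$ are \emph{fixed} by $\tau$, so the involution imposes no relation between them; and the eigenform equation again constrains only absolute periods, all of which sit on the genus-two component $C'$ and are insensitive to how the $\Pb^1$ is plumbed on. The actual synchronization comes from the twisted differential. On the bubble, $\eta$ has a pole of order $4$ at the node meeting the double zero of $\xi_1$ on $C'$ and a pole of order $2$ at the other; correspondingly $\xi_1$ has local form $\varphi_0^2\,d\varphi_0$ and $d\varphi_1$ near the two nodes. Requiring that a single rescaling $t^3\eta$ matches $\xi_1$ on \emph{both} overlap annuli forces the gluing relations $\varphi_0\phi_0=t$ and $\varphi_1\phi_1=t^3$: the two plumbing parameters are $t$ and $t^3$, and the single free parameter $t$ furnishes the transverse disc. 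The paper then checks that the resulting map $\Psi:U\times\Delta_{\delta^2}\to\ol\cX_D$ is onto a neighbourhood by a flat-geometry argument (collapsing the pair of short saddle connections on nearby eigenforms recovers a point of $U$).
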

\begin{figure}[htbp]
\begin{minipage}[t]{0.2\linewidth}
\centering
\begin{tikzpicture}[scale=0.5]
	\draw (-2,4) arc (180:360:2cm);
	\draw (2,7)  arc (0:180:2cm);
	\draw (-2,4) -- (-2,7)  (2,4) -- (2,7);
	\draw (0,1) circle (1cm);
	
	\draw (0,7) ..controls (-0.5,7.2) and (-0.5,7.8) .. (0,8);
	\draw  (0,8) -- (0.25,8.1) (0,7) -- (0.25,6.9);  
	\draw (0,7) ..controls (0.5,7.2) and (0.5,7.8) .. (0,8); 
	
	\draw (0,5) ..controls (-0.5,5.2) and (-0.5,5.8) .. (0,6);
	\draw  (0,6) -- (0.25,6.1) (0,5) -- (0.25,4.9);  
	\draw (0,5) ..controls (0.5,5.2) and (0.5,5.8) .. (0,6);
	
	\draw (0,3) ..controls (-0.5,3.2) and (-0.5,3.8) .. (0,4);
	\draw  (0,4) -- (0.25,4.1) (0,3) -- (0.25,2.9);  
	\draw (0,3) ..controls (0.5,3.2) and (0.5,3.8) .. (0,4);
	
	\foreach \x in {(0,8.5), (0,6.5), (0,4.5), (0,0.5)} \filldraw \x circle (3pt);
	
	\foreach \x in {(-0.5,1), (0.5,1)} \filldraw[fill=white] \x circle (3pt);
	
	\filldraw[fill=blue!30] (0,2) circle (3pt);
	
	\draw (0,-1) node {$C \in \cS_{1,0}$};
\end{tikzpicture}
\end{minipage}
\begin{minipage}[t]{0.35\linewidth}
	\centering
	\begin{tikzpicture}[scale=0.4]
		\draw (0,4) ellipse (2 and 3);
		\draw (0,3) .. controls (-0.5,3.5) and (-0.5,4.5) .. (0,5);
		\draw (0,5) -- +(0.25,0.25); \draw (0,3) -- +(0.25,-0.25); 
		\draw (0,3) .. controls (0.5,3.5) and (0.5,4.5) .. (0,5);
		
		\draw (-4,0) ellipse ( 3 and 2);
		\draw (-5,0) .. controls (-4.5,-0.5) and (-3.5,-0.5) .. (-3,0);
		\draw (-5,0) -- +(-0.25,0.25); \draw (-3,0) -- +(0.25,0.25); 
		\draw (-5,0) .. controls (-4.5,0.5) and (-3.5,0.5) .. (-3,0);
		
		\draw (4,0) ellipse (3 and 2);
		\draw (3,0) .. controls (3.5,-0.5) and (4.5,-0.5) .. (5,0);
		\draw (3,0) -- +(-0.25,0.25); \draw (5,0) -- +(0.25,0.25); 
		\draw (3,0) .. controls (3.5,0.5) and (4.5,0.5) .. (5,0);
		
		\draw (0,0) circle (1);
		
		\foreach \x in {(0,6.5), (0,5.7), (0,2), (0,-0.5)} \filldraw \x circle (4pt);
		
		\foreach \x in {(-0.5,0), (0.5,0)} \filldraw[fill=white] \x circle (4pt);
		
		\foreach \x in {(0,1), (1,0), (-1,0)} \filldraw[fill=blue!30] \x circle (4pt);
		\draw (0,-4) node {$C\in \cS_{2,0}^a$};
	\end{tikzpicture}
	
\end{minipage}
\begin{minipage}[t]{0.35\linewidth}
	\centering
	\begin{tikzpicture}[scale=0.4]
		\draw (-2,-1) arc (0:180:1);
		\draw (4,-1) arc (0:180:1);
		\draw (-2,-1) arc (180:360:2 and 1);
		\draw (-4,-1) arc (180:360:4 and 3);
		
		\draw (-3,0) -- (3,0) (-3,5) -- (3,5);
		\draw (-3,5) arc (90:270:2.5);
		\draw (3,0) arc (-90:90:2.5);
		
		\draw ( -3,2.5) .. controls (-2.5,2) and (-1.5,2) .. (-1,2.5);
		\draw (-3,2.5) -- +(-0.3,0.3); \draw (-1,2.5) -- + (0.3,0.3);
		\draw ( -3,2.5) .. controls (-2.5,3) and (-1.5,3) .. (-1,2.5);
		
		\draw ( 3,2.5) .. controls (2.5,2) and (1.5,2) .. (1,2.5);
		\draw (3,2.5) -- +(0.3,0.3); \draw (1,2.5) -- + (-0.3,0.3);
		\draw ( 3,2.5) .. controls (2.5,3) and (1.5,3) .. (1,2.5);
		
		\foreach \x in {(-5,2.5), (-4,2.5), (4,2.5), (5,2.5)} \filldraw \x circle (4pt);
		
		\foreach \x in {(-1,-3), (1,-3)} \filldraw[fill=white] \x circle (4pt);
		
		\foreach \x in {(-3,0), (3,0)} \filldraw[fill=blue!30] \x circle (4pt);
		
		\draw  (0,-6) node {$C \in \cS_{0,2}$};
	\end{tikzpicture}
	
\end{minipage}
\label{fig:boundry:curves:gp:I}
\caption{Curves underlying differentials in strata of group I: $\bullet \in \{p_1,\dots,p_4\}, \circ \in \{p_5,p'_5\}$.}

\end{figure}
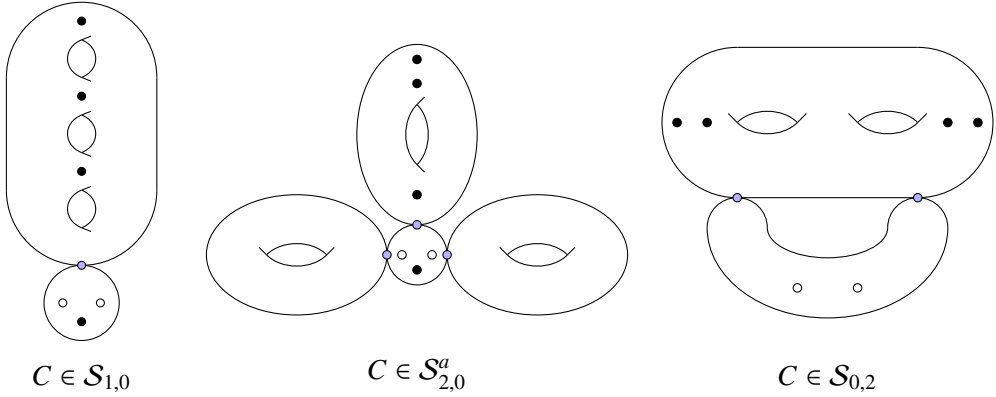
Suppose that $\pp$ is a point in $\cS_{1,0}\cup\cS_{2,0}^a\cup\cS_{0,2}$. By definition, $\xi$ vanishes identically on a unique irreducible component of $C$, which is isomorphic to $\Pb^1$.  
Let us denote this component by $C_0$.
Note that $C_0$ comes equipped with an involution with two fixed points, which is the restriction of $\tau$.
It follows from Theorem~\ref{th:twisted:diff} that $C_0$ carries a meromorphic Abelian differential $\eta$ satisfying $\tau^*\eta=-\eta$  with prescribed orders at its zeros and poles, and zero residues at its poles (which correspond to the  nodes of $C$). It turns out that these conditions  determine $\eta$ up to a constant.

\begin{Lemma}\label{lm:gp:I:diff:on:vanish:comp}
We have
\begin{itemize}
\item[$\bullet$] If $\pp \in \cS_{1,0}$, then we have $C_0=C'$ and up to a scalar $(C_0,\eta)\simeq (\Pb^1, (x^2-1)^2dx)$. 

\item[$\bullet$] If $\pp \in \cS_{2,0}^a$, we have $C_0=C'_2$ and up to s scalar $(C_0,\eta) \simeq (\Pb^1,\frac{(x^2-1)^2dx}{x^2(x^2+3)^2})$.

\item[$\bullet$] If $\pp \in \cS_{0,2}$, then $C_0=C''$ and up to a scalar $(C_0,\eta)\simeq (\Pb^1,\frac{(x^2-1)^2}{x^2}dx)$. 
\end{itemize}
In all cases the restriction of $\tau$ to $C_0$ is given by $x\mapsto -x$.
\end{Lemma}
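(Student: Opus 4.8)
The plan is to treat the three strata uniformly, reducing each to a classification of $\tau$-anti-invariant meromorphic differentials on $\Pb^1$ with prescribed divisor and vanishing residues. The existence of such an $\eta$, unique up to scale, is already provided by Theorem~\ref{th:twisted:diff}, so the only content is to pin down its isomorphism class.

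First I would choose a good coordinate. In each case $C_0$ is the component isomorphic to $\Pb^1$ on which $\xi\equiv 0$, and $\tau|_{C_0}$ is an involution of $\Pb^1$ with two fixed points. Diagonalizing it produces a coordinate $x$ in which $\tau|_{C_0}$ reads $x\mapsto -x$, with fixed points $0,\infty$. Since $\{p_5,p_5'\}\subset C_0$ in all three Group~I strata and this pair is exchanged by $\tau$, it lies at some $\{x_0,-x_0\}$, and the residual freedom $x\mapsto\lambda x$ normalizes it to $\{1,-1\}$. This already establishes the last assertion of the lemma, and it forces $\eta=f(x)\,dx$ with $f$ an even rational function.

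Next I would read off $\div(\eta)$ from the node-matching relation $\ord_p(\eta)+\ord_p(\xi|_{C\smin C_0})=-2$ of Theorem~\ref{th:twisted:diff}, together with $\div(\omega)=2p_5+2p_5'$, which places double zeros of $\eta$ at $x=\pm1$. In $\cS_{1,0}$ the node abuts $(C'',\xi'')\in\Omega\cM_3(4)$ with its order-$4$ zero at the node, so $\eta$ acquires a pole of order $6$ there; placing this $\tau$-fixed node at $\infty$ and using $2+2-6=-2$ gives $\eta=c(x^2-1)^2\,dx$. In $\cS_{0,2}$ both nodes are $\tau$-fixed (at $0,\infty$) and $\xi|_{C'}\in\Omega\cM_2(2)$ has a single double zero, which the constraint $\deg\div(\eta)=-2$ forces to sit at one node; this yields poles of orders $4$ and $2$, hence $\eta=c\frac{(x^2-1)^2}{x^2}\,dx$ (the two possible placements of the order-$4$ pole being interchanged by $x\mapsto 1/x$, which commutes with $\tau$). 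In $\cS_{2,0}^a$ each of the three nodes abuts an elliptic curve carrying a nonvanishing holomorphic differential, so $\eta$ has a double pole at each; the node to $C_1'$ is $\tau$-fixed (at $0$), the nodes to $C_1'',C_2''$ are exchanged (at $\pm a$), and $p_i$ is the remaining $\tau$-fixed point (at $\infty$, where $\eta$ is regular), so $\eta=c\frac{(x^2-1)^2}{x^2(x^2-a^2)^2}\,dx$ with $a$ still undetermined.

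The main obstacle, and the only genuine computation, is the zero-residue condition in $\cS_{2,0}^a$. In the first two strata every pole lies at a $\tau$-fixed point $0$ or $\infty$, where evenness of $f$ makes the Laurent expansion contain only even powers, so all residues vanish automatically and $(C_0,\eta)$ is determined, matching the claim. In $\cS_{2,0}^a$, however, vanishing of the residue at the exchanged poles $x=\pm a$ is a real constraint on $a$. I would evaluate it by writing $\mathrm{Res}_{x=a}\eta=\frac{d}{dx}\big[\frac{(x^2-1)^2}{x^2(x+a)^2}\big]_{x=a}$, computing the logarithmic derivative $\frac{4x}{x^2-1}-\frac2x-\frac{2}{x+a}$ at $x=a$, and setting it to zero; this gives $\frac{4a}{a^2-1}=\frac3a$, i.e. $a^2=-3$. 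The residue at $x=-a$ then vanishes too by the even symmetry $\mathrm{Res}_{-a}=-\mathrm{Res}_{a}$, while the residue at the fixed pole $0$ is zero as before. Hence $x^2-a^2=x^2+3$ and $\eta\simeq\frac{(x^2-1)^2}{x^2(x^2+3)^2}\,dx$, completing the identification in all three cases.
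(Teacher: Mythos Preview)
Your proof is correct and follows essentially the same approach as the paper: normalize the involution to $x\mapsto -x$, place $\{p_5,p_5'\}$ at $\{\pm 1\}$, read off the divisor of $\eta$ from the node-matching condition, and in the $\cS_{2,0}^a$ case impose the Global Residue Condition to determine $a^2=-3$. Your treatment is slightly more explicit than the paper's in two places---you give a self-contained degree argument for why the double zero of $\xi|_{C'}$ in $\cS_{0,2}$ must sit at a node, and you carry out the residue computation via the logarithmic derivative---but the underlying method is identical.
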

\begin{proof}
We can always identify $C_0$ with $\Pb^1$ such that $\tau_{\left|C_0\right.}$ is given by $x\mapsto -x$ (here $x$ is the inhomogeneous coordinate on $\Pb^1$). In all the cases, $C_0$ contains the points $p_5, p'_5$. We can further assume that $p_5=1,  p'_5=-1$.

If $\pp\in \cS_{1,0}$, then there is  one node between $C_0$ and the other component of $C$.  Since this node is fixed by $\tau$, we can assume that it corresponds to the point $\infty$ under the identification $C_0\simeq \Pb^1$. In this case $\eta$ has double zeros at $\pm1$ and a pole of order $6$ at $\infty$. Thus up to a scalar, we have $\eta=(x-1)^2(x+1)^2dx$.

If $\pp\in\cS^a_{2,0}$, then $C$ has $4$ components denoted by $C'_1,C'_2, C''_1,C''_2$, where $C'_1, C''_1, C''_2$ are smooth elliptic curves, while  $C'_2\simeq \Pb^1$. The components $C'_1, C''_1, C''_2$ are pairwise disjoint, and intersect $C'_2$  at three nodes. In this case we have $C_0=C'_2$. 

Let $s_0$ is the node between $C'_2$ and $C'_1$, and $s_1$ (resp. $s_2$) the node between $C'_2$ and $C''_1$ (resp. between $C'_2$ and $C''_2$). Since $s_0$ is fixed by $\tau$, we can assume that $s_0=0$. Let $\pm b$,  $b\in \C\setminus\{0,\pm 1\}$, be the coordinates of $s_1, s_2$ respectively.
In this case, $\eta$ has double poles at $s_0, s_1, s_2$. Thus up to a scalar we have
$$
\eta=\frac{(x-1)^2(x+1)^2dx}{x^2(x-b)^2(x+b)^2}
$$
The global residue condition in Theorem~\ref{th:twisted:diff} implies that $\res_0(\eta)=\res_{b}(\eta)=\res_{-b}(\eta)=0$.
We always have $\res_0(\eta)=0$. The condition $\res_b(\eta)=\res_{-b}(\eta)=0$ implies that $b=\pm \sqrt{3}\imath$, and we get the desired conclusion.

Finally, if $\pp\in \cS_{0,2}$, then $C_0=C''$ intersects the other component of $C$ at two nodes both of which are fixed by $\tau$.
These two nodes correspond to $0$ and $\infty$ under the identification $C_0\simeq \Pb^1$.
In this case $\eta$ has a pole of order $4$ and a pole of order $2$ at the nodes. Using the involution $x\mapsto 1/x$, we can assume that $\infty$ is the pole of order $4$ and $0$ is the pole of order $2$ of $\eta$. Thus, up to a scalar, we have
$$
\eta=\frac{(x^2-1)^2dx}{x^2}.
$$
\end{proof}
The component $C_0$ together with the marked points in $C_0\cap\{p_1,\dots,p_5,p'_5\}$ and the nodes is a pointed genus zero curve. By a slight abuse of notation, we denote this pointed curve again by $C_0$. As a consequence of Lemma~\ref{lm:gp:I:diff:on:vanish:comp}, we have
\begin{Corollary}\label{cor:gp:I:vanish:comp:unique}
For each stratum in group I, the pointed curve $C_0$ is uniquely determined up to isomorphism.
\end{Corollary}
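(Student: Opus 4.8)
The plan is to observe that Corollary~\ref{cor:gp:I:vanish:comp:unique} is essentially a repackaging of Lemma~\ref{lm:gp:I:diff:on:vanish:comp}: once the pair $(C_0,\eta)$ has been normalized as in that lemma, the positions of every marked point and every node on $C_0$ become canonically determined, so the pointed curve $C_0$ carries no continuous modulus. First I would fix, for each stratum, the coordinate $x$ on $C_0\simeq\Pb^1$ in which $\tau_{\left|C_0\right.}$ is $x\mapsto -x$ and $\eta$ has the explicit shape provided by Lemma~\ref{lm:gp:I:diff:on:vanish:comp}. The remaining task is then purely to locate the marked data in this coordinate and check that no cross-ratio is left free.

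Next I would identify each type of marked point with an intrinsic feature of $\eta$, which is therefore coordinate-independent once $\eta$ is fixed up to scalar. The exchanged pair $\{p_5,p'_5\}$ coincides with the two double zeros of $\eta$, located at $x=\pm 1$ in all three cases. The nodes of $C$ lying on $C_0$ coincide with the poles of $\eta$: for $\cS_{1,0}$ the single node is the order-six pole at $x=\infty$; for $\cS_{2,0}^a$ the three nodes are the double poles at $x=0$ and $x=\pm\sqrt{3}\imath$; for $\cS_{0,2}$ the two nodes are the poles at $x=0$ and $x=\infty$. Finally, any ramification point of $\tau$ among $\{p_1,\dots,p_4\}$ lying on $C_0$ must be a fixed point of $\tau_{\left|C_0\right.}$, hence must lie in $\{0,\infty\}$, and it must be distinct from any pole already occupying a fixed point. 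This forces the unique such point to be at $x=0$ in $\cS_{1,0}$ (the pole being at $\infty$) and at $x=\infty$ in $\cS_{2,0}^a$ (the fixed node being at $0$), while in $\cS_{0,2}$ the component $C_0=C''$ carries no point of $\{p_1,\dots,p_4\}$ at all.

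Having placed all of the data, I would conclude rigidity: in each case the full configuration of marked points and nodes on $\Pb^1$ is a fixed finite set of coordinates—$\{1,-1,0,\infty\}$ for $\cS_{1,0}$ and $\cS_{0,2}$, and $\{1,-1,0,\sqrt{3}\imath,-\sqrt{3}\imath,\infty\}$ for $\cS_{2,0}^a$—with no residual cross-ratio. Consequently two instances of $C_0$ arising from the same stratum differ only by a M\"obius transformation matching these configurations, and are therefore isomorphic as pointed curves. The only point demanding a little care, and thus the main (minor) obstacle, is the placement of the ramification points in $\{p_1,\dots,p_4\}$: one must verify that they are forced onto the two-point fixed locus $\{0,\infty\}$ of $\tau_{\left|C_0\right.}$ and that the fixed point already occupied by a pole is excluded, which is precisely what the node/pole and ramification/fixed-point correspondences supply. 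Beyond this bookkeeping no genuine difficulty arises, since Lemma~\ref{lm:gp:I:diff:on:vanish:comp} has already eliminated the sole continuous parameter by pinning down $\eta$ up to scalar.
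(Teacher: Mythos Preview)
Your proof is correct and follows exactly the approach the paper intends: the corollary is stated there as an immediate consequence of Lemma~\ref{lm:gp:I:diff:on:vanish:comp} with no further argument, and you have simply spelled out the bookkeeping that locates the marked points and nodes on $C_0$ via the zeros, poles, and fixed locus of $(\eta,\tau_{|C_0})$. The paper treats this as self-evident from the explicit forms of $\eta$, so your elaboration is a faithful expansion of what is left implicit.
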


\begin{Lemma}\label{lm:gp:I:limit:diff}
Let $C_1$ be the union of all components of $C$ on which $\xi$ does not vanish identically, and $\xi_1:=\xi_{\left|C_1\right.}$.
We have
\begin{itemize}
\item[(i)] If $\pp \in\cS_{1,0}$, then $(C_1,\xi_1) \in \Omega E_D(4)$.

\item[(ii)] If $\pp \in \cS^a_{2,0}$, then $(C_1,\xi_1) \in \Omega E_{D}(0^3)$.

\item[(iii)] If $\pp \in \cS_{0,2}$, then $(C_1,\xi_1) \in \Omega E_{D'}(2)$, with $D'\in \{D, D/4\}$.
\end{itemize}
\end{Lemma}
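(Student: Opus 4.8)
The strategy is to exhibit $(C_1,\xi_1)$ as a limit of genuine Prym eigenforms and to transport the real multiplication to the limit, reading off the discriminant from the way the polarized homology specializes. Since $\ol{\cX}_D$ is the closure of $\cX_D$, choose a sequence $\pp_n=(X_n,\dots,\tau_n,[\omega_n])$ in $\cX_D$ with $\pp_n\to\pp$; after rescaling the representatives $\omega_n$ so that the periods over a fixed cycle supported on the non-vanishing part stay normalized, we may assume $\omega_n\to\xi$, and in particular the restrictions to $C_1$ converge to $\xi_1$. The first step is topological: restrict $\tau$ to $C_1$ and count its fixed points, recalling that every node of $C$ lying on $C_1$ and fixed by $\tau$ is a fixed point of $\tau_{\left|C_1\right.}$. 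In case $\cS_{1,0}$ the genus-three component $C_1=C''$ carries an involution fixing the three marked points on $C''$ and the separating node, so Riemann-Hurwitz gives $g(C''/\langle\tau\rangle)=1$ and $\dim_\C\Prym(C'',\tau)=2$; by Theorem~\ref{th:bdry:eigen:form:H22} the only zero of $\xi_1$ is the order-four zero at the node, so $(C_1,\xi_1)\in\Omega\cM_3(4)$. In case $\cS_{2,0}^a$ the involution fixes $C'_1$ (with its four fixed points: three marked points and the node to $C'_2$) and exchanges $C''_1$ with $C''_2$, which is exactly the structure of a triple of tori Prym form from \textsection\ref{subsec:triple:tori:ef:def}. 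In case $\cS_{0,2}$ the involution $\tau_{\left|C'\right.}$ fixes the four marked points together with the two nodes, i.e.\ six points, so it is the hyperelliptic involution, $\Prym(C',\tau)=\Jac(C')$, and $(C_1,\xi_1)\in\Omega\cM_2(2)$.

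The second step transports the order in the two cases where the specialization of $H_1^-$ is an isometry. For each $n$, Proposition~\ref{prop:eigen:form:per:eq} provides a generator $T_n$ of $\cO_D$, self-adjoint for the intersection form on $H_1(X_n,\Z)^-$, of the form $\left(\begin{smallmatrix} e_n\Id_2 & 2B_n \\ B_n^* & 0_2\end{smallmatrix}\right)$ with $D=e_n^2+8\det B_n$, together with a positive root $\lambda_n=\tfrac{e_n+\sqrt D}{2}$ satisfying $T_n^*\omega_n=\lambda_n\omega_n$. In cases $\cS_{1,0}$ and $\cS_{2,0}^a$ the vanishing component $C_0\cong\Pb^1$ is attached to the rest of $C$ along single nodes, so the dual graph is a tree and no class in $H_1^-$ is a vanishing cycle; hence the Gauss-Manin identification $H_1(X_n,\Z)^-\cong H_1(C_1,\Z)^-$ is an isometry and all periods converge, $\omega_n(c)\to\xi_1(c)$ for every $c\in H_1(C_1,\Z)^-$. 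By Proposition~\ref{prop:eigen:form:per:eq} the value $\lambda_n$ ranges in a finite set, and the entries of $B_n$ are continuous functions of the convergent periods and of $\lambda_n$ through relation~\eqref{eq:eigenform:rel:periods:basis}; integrality then forces $T_n$ to stabilize, so after passing to a subsequence $T_n\equiv T$. The limit $T$ is a self-adjoint generator of $\cO_D$ on $H_1(C_1,\Z)^-$ with $T^*\xi_1=\lambda\xi_1$, $\lambda>0$, so $\xi_1$ is an $\cO_D$-eigenform; this gives $(C_1,\xi_1)\in\Omega E_D(4)$ in case (i) and $(C_1,\xi_1)\in\Omega E_D(0^3)$ in case (ii).

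Case (iii) is the delicate one and is where the main obstacle lies. Now the two nodes joining $C'$ to $C''\cong\Pb^1$ form a loop in the dual graph, so the smoothing carries a genuine vanishing cycle $\delta$; but both nodes are fixed by $\tau$ and $\tau$ preserves each local branch through them, so a local computation at the node gives $\tau_*\delta=+\delta$, placing $\delta$ and its symplectic partner in $H_1(X_n,\Z)^+$ rather than in $H_1^-$ (equivalently, the quotient $E_n=X_n/\langle\tau\rangle$ degenerates to a nodal genus-one curve carrying this loop). Thus $H_1^-$ retains rank four, but by Lemma~\ref{lm:Prym:var:1:2:polarization} its intersection form is of type $(1,2)$, whereas the limiting Jacobian $\Jac(C')$ is principally polarized, of type $(1,1)$. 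The heart of the matter is therefore to compare the type-$(1,2)$ limit of $\Prym(X_n,\tau)$ with the type-$(1,1)$ surface $\Jac(C')$: these are related by an isogeny of degree two, and the real multiplication by $\cO_D$ on the former induces real multiplication on $\Jac(C')$ by an order $\cO_{D'}$ whose conductor differs from that of $\cO_D$ by a factor dividing $2$; since $D=f^2D'$ with $f\in\{1,2\}$ this yields $D'\in\{D,D/4\}$, while the eigenform relation passes to the limit as before. Deciding between $D$ and $D/4$ — that is, pinning down how the self-adjoint generator acts after the change of polarization type — is the substantive computation, and I would carry it out by following the explicit specialization of the symplectic basis $\{a_1,b_1,a_2,b_2\}$ together with the description of degenerate Prym eigenforms from Appendix~\textsection\ref{sec:degenerate:eigen:form} (compare \cite{McM:prym,LN:finite}).
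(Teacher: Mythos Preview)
Your argument follows essentially the same route as the paper's: topological analysis of $\tau_{|C_1}$ (counting fixed points, identifying the hyperelliptic involution in case~(iii)), then transport of the real multiplication via the specialization of $H_1^-$. Cases (i) and (ii) match; your stabilization-of-$T_n$ device is a bit more elaborate than needed (once the Gauss--Manin identification $H_1(X_n,\Z)^-\cong H_1(C_1,\Z)^-$ is an isometry, a single $T$ can be viewed as acting on both lattices simultaneously), but it is correct.

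The imprecision is in case~(iii). You correctly observe that the vanishing cycle lies in $H_1^+$ and that the polarization type jumps from $(1,2)$ to $(1,1)$; this is equivalent to the paper's statement that $f_*H_1(X,\Z)^-$ sits as an index-$2$ sublattice $L\subset L'=H_1(C_1,\Z)$. But your sentence ``the real multiplication by $\cO_D$ on the former induces real multiplication on $\Jac(C')$'' is exactly the point at issue. Under the degree-$2$ isogeny $V/L\to V/L'$, an endomorphism $T$ preserving $L$ need not preserve $L'$; what one gets for free is only that $2T$ preserves $L'$ (since $2L'\subset L$), which yields $\cO_{4D}$, not $\cO_D$. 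A~priori the proper order $\cO_{D'}$ on $\Jac(C_1)$ could therefore be $\cO_{4D}$. The paper makes this step explicit: it notes $\tilde T:=2T$ extends, and then cites \cite[Th.~8.6]{LN:finite} for the non-trivial fact that the proper order is generated by $T$ or by $T/2$, whence $D'\in\{D,D/4\}$. Your write-up jumps directly to ``$D=f^2D'$ with $f\in\{1,2\}$'' without ruling out $D'=4D$; and your final sentence misidentifies the residual work --- the lemma does \emph{not} ask you to decide between $D$ and $D/4$, it asks you to exclude $4D$. That exclusion is precisely what the cited reference supplies.
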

\begin{proof}
Let $\tau_1$ be the restriction of $\tau$ to $C_1$.  If $\pp\in \cS_{1,0}$, then $C_1$ is a Riemann surface of genus $3$, and $\tau_1$ has $4$ fixed point on $C_1$ namely  three points in $\{p_1,\dots,p_4\}$ and the node between $C_0$ and $C_1$. If $\pp \in \cS^a_{2,0}$, then $C_1$ is the dis joint union of three tori $C'_1, C''_1, C''_2$. The involution $\tau_1$ preserves $C'_1$ and exchanges $C''_1$ and $C''_2$.
In the case $\pp \in \cS_{0,2}$, $C_1$ is a genus two Riemann surface, and $\tau_1$ has $6$ fixed points, with the two additional fixed points being the nodes between $C_0$ and $C_1$. This means that $\tau_1$ is the hyperelliptic involution of $C_1$.

Let $\Omega(C_1)$ denote the space of holomorphic Abelian differentials on $C_1$, and
$$
\Omega(C_1)^-=\{\omega \in \Omega(C_1), \; \tau^*\omega=-\omega\}.
$$
We first observe that $\dim_\C\Omega^-(C_1)=2$.
This claim is straightforward in the cases $\pp \in \cS_{0,1}$ and $\pp \in \cS_{0,2}$. In the case $\pp \in  \cS^a_{2,0}$, that is $C_1=C'_1\sqcup C''_1\sqcup C''_2$, the claim follows from the fact that elements of $\Omega^-(C_1)$ are triples of differentials $((C'_1,\omega'_1), (C''_1,\omega''_1), (C''_2, \omega''_2))$ such that $\tau_1^*\omega''_2=-\omega''_1$.
Let
$$
H_1(C_1,\Z)^-:=\{c\in H_1(C_1,\Z), \; \tau_{1*}c=-c\}.
$$
It is not difficult to check that $H_1(C_1)^*\simeq \Z^4$ and the restriction of the intersection form on $H_1(C_1,\Z)$ to $H_1(C_1,\Z)^-$ is non-degenerate. It follows in particular that $\Prym(C_1):=(\Omega(C_1)^-)^*/H_1(C_1,\Z)^-$ is an Abelian variety of dimension $2$.

Let $\xx=(X,x_1,\dots,x_5,x'_5,\tau_X,[\omega])$ be an element of $\cX_D$ close enough to $\pp$. Topologically, the surface $X$ is obtained from $C$ by smoothening the nodes. There is a surjective map $f: X\to C$ that sends a multicurve $\gamma$ (that is a family of pairwise disjoint simple closed curves) on $X$ onto the  nodes of $C$. The restriction of $f$ to  $X\setminus \gamma$ gives a homeomorphism from $X\setminus\gamma$ onto $C\setminus\{{\rm nodes}\}$. We have $f_*H_1(X,\Z)^- \subset H_1(C_1,\Z)^-$
in all cases. In the case $\pp\in \cS_{1,0}\sqcup \cS^a_{2,0}$, since all the components of the multicurve $\gamma \subset X$ are separating, we have $f_*H_1(X,\Z)^-=H_1(C_1)^-$. However, if $\pp \in \cS_{0,2}$, then $f_*H_1(X,\Z)^-$ is a sublattice of index $2$ in $H_1(C_1)^-=H_1(C_1)$.

By assumption, there exists $T\in \End(\Prym(X))$ such that $\Z[T]\simeq \cO_D$ and $\omega$ is an eigenvector of the action of $T^*$ on $\Omega(\Prym(X))=\Omega(X)^-$. In particular, we have $T^*\omega=\lambda\cdot \omega$ for some $\lambda\in\cO_D$.

By definition, $T$ is given by a $\C$-linear map on $(\Omega(X)^-)^*\simeq \C^2$ preserving the lattice $H_1(X,\Z)^-$.
In the case $\pp\in \cS_{1,0}\sqcup \cS^a_{2,0}$, since $H_1(X,\Z)^-$ can be identified with $H_1(C_1,\Z)^-$, we can view $T$ as an endomorphism $T: H_1(C_1,\Z)^- \to H_1(C_1,\Z)^-$. The condition $T^*\omega = \lambda \omega$ then implies that $T^*\xi_1=\lambda\xi_1$, since $\xi_1$ is the limit of  $\omega$ as $\xx$ converges to $\pp$. It follows from the argument of \cite[Th. 3.2]{McM:prym} that $T\in \End(\Prym(C_1))$ and therefore $(C_1,\xi_1)\in \Omega E_D(4)\sqcup \Omega E_{D}(0^3)$.

In the case $\pp\in \cS_{0,2}$, by using $f_*$ we can consider $H_1(X,\Z)^-$ as a sublattice of index $2$ in $H_1(C_1,\Z)^-$.  Thus we have $2\cdot H_1(C_1,\Z)^-\subset H_1(X,\Z)^-$. As a consequence $\tilde{T}:=2T$ can be extended to an endomorphism of $H_1(C_1,\Z)^-$. As we have $\tilde{T}^*\omega=2\lambda\cdot\omega$, it follows that  $\tilde{T}^*\xi_1=2\lambda\cdot\xi_1$. Therefore, $\xi_1$ is an eigenform for some quadratic order $\cO_{D'}$ acting by self-adjoint endomorphisms on $\Prym(C_1)$, that is $(C_1,\xi_1) \in \Omega E_{D'}(2)$. It turns out that $\cO_{D'}$ is generated either by  $T$, or by $T/2$. Thus $D'\in \{D,D/4\}$. For a proof of this fact we refer to \cite[Th. 8.6]{LN:finite}. 
This completes the proof of the lemma.
\end{proof}

\subsubsection*{Proof of Proposition~\ref{prop:bdry:str:gp:I}}
\begin{proof}
The proof of the proposition in the case $\pp \in \cS_{1,0}\sqcup \cS_{2,0}^a$ is rather standard since all the nodes of $C$ are separating.  We will only give the proof for the case $\pp\in \cS_{0,2}$. In this case $C_1$ is a genus two Riemann surface and $\xi_1$ has a double zero at one of the nodes between $C_1$ and $C_0$.
By Lemma~\ref{lm:gp:I:limit:diff}, $(C_1,[\xi_1])\in \Pb\Omega E_{D'}(2)$ for some $D'\in \{D,D/4\}$.
Let $U$ be a neighborhood of $(C_1,[\xi_1])$ in $\Pb\Omega E_{D'}(2)$. Since $\dim\Pb\Omega E_{D'}(2)=1$, we can suppose that $U$ is a neighborhood of $0$ in $\C$.
Taking a local lift in $\Omega E_{D'}(2)$  (and reducing $U$ if necessary), we have a holomorphic family of Abelian differentials $(C_{1,z},\xi_{1,z})_{z\in U}$, where  $(C_{1,0},\xi_{1,0})=(C_1,\xi_1)$ and $(C_{1,z},\xi_{1,z})\in \Omega E_{D'}(2)$.

Let $f : \cC_1 \to U$ be the underlying family of Riemann surfaces, that is $f^{-1}(z) \simeq C_{1,z}$ for all $z \in U$.
Let $w_0$ and $w_1$ be the points in $C_1$ which correspond to the nodes between $C_1$ and $C_0$, where $w_0$ is the unique zero of $\xi_1$.
Let $w_{0,z}$ and $w_{1,z}$ be the corresponding  Weierstrass points on $C_{1,z}$.
There is a neighborhood $W_0$ (resp. $W_1$) of the section  $z\mapsto w_{0,z}$ (resp. $z\mapsto w_{1,z}$) in $\cC_1$ together with a holomorphic map $\varphi_{0}: W_0 \to \C$ (resp. $\varphi_1: W_1 \to \C$) such that for all $z\in U$
\begin{itemize}
\item[$\bullet$] $\varphi_0(w_{0,z})=0$ (resp. $\varphi_1(w_{1,z})=0$).

\item[$\bullet$] Let $W_{0,z}:=W_0\cap C_{1,z}$ (resp. $W_{1,z}:=W_1\cap C_{1,z}$), then the restriction $\varphi_{0,z}:=\varphi_{0\left|W_{0,z}\right.}$ (resp. $\varphi_{1,z}:=\varphi_{1\left|W_{1,z}\right.}$) is a local coordinate on $W_{0,z}$ (resp. $W_{1,z}$).

\item[$\bullet$] $\xi_{0,z}=\varphi^2_{0,z}d\varphi_{0,z}$ on $W_{0,z}$ (resp. $\xi_{1,z}=d\varphi_{1,z}$ on $W_{1,z}$).
\end{itemize}
The last condition means that $\xi_{0,z}$ and $\xi_{1,z}$ are the pullbacks by $\varphi_{0,z}$ and $\varphi_{1,z}$ of the Abelian differentials $x^2dx$  and $dx$ on $\C$ respectively.

We identify $C_0$ with $\Pb^1$ such that the restriction of $\tau$ on $C_0$ corresponds to the involution $x\mapsto -x$. Since $\tau$ fixes $0$ and $\infty$, these two points are mapped to the nodes between $C_0$ and $C_1$. We can suppose that $0\in C_0$ is identified with $w_0\in C_1$, and $\infty \in C_0$ with $w_1\in C_1$.

Let $\eta=\frac{(x^2-1)^2dx}{x^4}$. Note that $\eta$ has a pole of order $2$ at $\infty$. Since $\res_\infty(\eta)=\res_0(\eta)=0$, there exist a neighborhood $V_0\subset \Pb^1$ of $0$ (resp. $V_1\subset\Pb^1$ of $\infty$) and a local coordinate $\phi_0$ on $V_0$ (resp. $\phi_1$ on $V_1$) such that
$\phi_0(0)=0$ and $\eta_{\left|V_0\right.}=\frac{d\phi_0}{\phi_0^4}$ (resp. $\phi_1(\infty)=0$ and $\eta_{\left|V_1\right.}=\frac{d\phi_1}{\phi_1^2}$).
We now choose $\delta \in \R_{>0}$ small enough such that
\begin{itemize}
\item[$\bullet$]  $\Delta_{\delta}\subset \varphi_{0,z}(W_{0,z})$ and $\Delta_{\delta^3} \subset \varphi_{1,z}(W_{1,z})$ for all $z\in U$,

\item[$\bullet$]  $\Delta_\delta\subset \phi_0(V_0)$ and $\Delta_{\delta^3} \subset \phi_1(V_1)$.
\end{itemize}
For all $0 < \delta' < \delta$, denote by $A_{\delta',\delta}$ the annulus $\{x\in \C, \; \delta' < |x| <  \delta\}$.
For all $t\in \Delta_{\delta^2}$ let $C_{z,t}$ denote the curve defined as follows

\begin{itemize}
\item[$\bullet$] For $t=0$, $C_{z,0}$ is the nodal curve obtained from $C_{1,z}$ and $\Pb^1$ by identifying $w_{0,z}\in C_1$ with $\infty\in \Pb^1$, and $w_{1,z}$ with $0\in \Pb^1$.

\item[$\bullet$] For $0  < |t| < \delta^2$,  we remove $\varphi_{0,z}^{-1}(\Delta_{|t|/\delta})$ from $W_{0,z}$ and $\phi_0^{-1}(\Delta_{|t|/\delta})$ from $V_0$. We then glue the annuli $\varphi^{-1}_{0,z}(A_{|t|/\delta, \delta})$ and $\phi^{-1}_0(A_{|t|/\delta,\delta})$ together by the relation $\varphi_{0,z}\phi_0=t$. Similarly, we remove $\varphi_{1,z}^{-1}(\Delta_{(|t|/\delta)^3})$ from $W_{1,z}$ and $\phi_1^{-1}(\Delta_{(|t|/\delta)^3})$ from $V_1$, and glue $\varphi^{-1}_{1,z}(A_{(|t|/\delta)^3, \delta^3})$ and $\phi^{-1}_1(A_{(|t|/\delta)^3, \delta^3})$ together by the relation $\varphi_{1,z}\phi_1=t^3$.
\end{itemize}
We thus obtain a holomorphic family of nodal curves $F: \cC \to U\times\Delta_{\delta^2}$ such that $F^{-1}(z,t)\simeq C_{z,t}$.
By construction, the family $(C_{1,z})_{z\in U}$ comes equipped with the differentials $(\xi_{1,z})_{z\in U}$.
If $t=0$, we define an Abelian differential $\xi_{z,0}$ on $C_{z,0}$ by setting $\xi_{z,0}=\xi_{1,z}$ on $C_{1,z}$ and $\xi_{z,0} \equiv 0$ on $C_0$.
For $t\neq 0$, by construction, $\xi_{1,z}$ and $-t^3\eta$ coincide on the overlap annuli $\varphi^{-1}_{0,z}(A_{|t|/\delta, \delta})\simeq \phi^{-1}_0(A_{|t|/\delta, \delta})$, and   $\varphi^{-1}_{1,z}(A_{(|t|/\delta)^3, \delta^3})\simeq \phi^{-1}_1(A_{(|t|/\delta)^3, \delta^3})$.
Thus we get a differential $\xi_{z,t}$ on $C_{z,t}$ which  coincides with $\xi_{1,z}$ on $C_{1,z}\setminus(W_{0,z}\cup W_{1,z})$, and coincides with $-t^3\eta$ on $C_0\setminus(V_0\cup V_1)$.
It is clear that $(C_{z,t},\xi_{z,t})\in \Omega \ol{\cB}_{4,1}$ for all $(z,t)\in U\times\Delta_{\delta^2}$.
Reversing the arguments of Lemma~\ref{lm:gp:I:limit:diff}, we  conclude that $(C_{z,t},\xi_{z,t})\in \Omega E_D(2,2)^\odd$ if $t\neq 0$.
Taking quotient by $\C^*$ we then get a holomorphic map $\Psi: U\times \Delta_{\delta^2} \to \Pb\Omega \ol{\cB}_{4,1}$ such that $\Psi(U\times\Delta^*_{\delta^2}) \subset \cX_D$.
Thus $\Psi(U\times \Delta_{\delta^2}) \subset \ol{\cX}_D$.
It is a well known fact that the map $(z,t) \mapsto C_{z,t}$ gives an embedding  of $U\times \Delta_{\delta^2}$ into an orbifold local chart of $(C,p_1,\dots,p_5, p'_5)$ in $\ol{\cB}_{4,1}$. 
As a consequence, $\Psi$ is a biholomorphism from $U\times\Delta_{\delta^2}$ onto its image. 

For every $\xx=(X,\ul{x},\tau_X,[\omega]) \in\cX_D$ close enough to $\pp$, let $f_\xx: X \to C$ be an associated degenerating map. The preimage of $C_0$ minus the nodes is an annulus $A$ in $X$ which contains the two zeros of $\omega$. There is a pair of saddle connections $s,s'$ connecting these two zeros whose union forms a core curve of $A$. 
Note that $s$ and $s'$ have the same period.
As $\xx$  converges to $\pp$, the flat metric defined by $\omega$ on $A$ collapses to $0$. Thus there cannot exist others saddle connections connecting the zeros of $\omega$ whose length is smaller than $|s|$. By the arguments of \cite[Th. 8.6]{LN:finite}, one can collapse $s$ and $s'$ to obtain a point $(X_1,[\omega_1]) \in U$. 
It follows that $\xx=\Psi((X_1,[\omega_1]),t)$ for some $t\in \Delta_{\delta^2}$.
We can then conclude that $\Psi(U\times\Delta_{\delta^2})$ is an orbifold local chart of $\pp$ in $\ol{\cX}_D$. 
It is also clear from the construction that $(C_{z,t},[\xi_{z,t}]) \in \cS_{0,2}$ if and only if $t=0$. Finally, the correspondence $(C_{z,0},[\xi_{z,0}]) \mapsto (C_{1,z}, [\xi_{1,z}])$ provides us with locally biholomorphic map from $\cS_{0,2}$ onto $\Pb\Omega E_{D'}(2)$. This completes the proof of the proposition.
\end{proof}

\subsection{Strata of group II}\label{subsec:geom:bdry:str:gp:II}
There are two strata in group II: $\cS^b_{2,0}$ and $\cS_{1,1}$.
We will show
\begin{Proposition}\label{prop:bdry:str:gp:II}
Let $\pp$ be a point in $\cS^b_{2,0}\cup \cS_{1,1}$. Then every irreducible component of the germ of $\ol{\cX}_D$ at $\pp$ is isomorphic to the germ at $0$ of the analytic set
$$
\cA=\{(z,t_1,t_2) \in \C^3, \; t_1^{m_1}=t_2^{m_2}\} \subset \C^3,
$$
where $m_1, m_2 \in \Z_{>0}$ are such that $\gcd(m_1,m_2)=1$. In this identification, the stratum of $\pp$ corresponds to the set $\cA\cap\{t_1=t_2=0\}$. In particular, we have $\dim \cS_{2,0}^b=\dim \cS_{1,1}=1$.
\end{Proposition}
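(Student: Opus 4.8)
The plan is to produce an explicit analytic parametrization of the germ of $\ol{\cX}_D$ at $\pp$ and to read off the equation $t_1^{m_1}=t_2^{m_2}$ directly from the eigenform condition. Since $\pp$ lies in a group~II stratum, by Theorem~\ref{th:bdry:eigen:form:H22} the curve $C$ has exactly two pairs of nodes exchanged by $\tau$, and $\xi$ has a simple pole at each of the four nodes. Working in the moduli of admissible covers, a $\tau$-equivariant smoothing of $C$ is governed by one plumbing parameter per $\tau$-pair, say $t_1,t_2\in\Delta$, together with the moduli of the components and of the marked points. First I would use the divisor condition $\div(\xi)=2p_5+2p_5'$ together with the period relations of the eigenform condition (Proposition~\ref{prop:eigen:form:per:eq} and Lemma~\ref{lm:D:no:square:hol:map:inj}) to eliminate the fibre direction of $\Pb\Omega'\ol{\cB}_{4,1}$ and all but one of the interior moduli, so that the germ embeds in a three-dimensional polydisc with coordinates $(z,t_1,t_2)$, the single surviving modulus $z$ parametrizing the boundary eigen-stratum. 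The identification of $\cS^b_{2,0}$ (resp. $\cS_{1,1}$) with $\{t_1=t_2=0\}$ is built into the construction, so $\dim\cS^b_{2,0}=\dim\cS_{1,1}=1$ will follow once the germ is described.

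Next I would set up period coordinates adapted to the degeneration. Let $\gamma_1,\gamma_2\in H_1(X,\Z)^-$ be the (anti-invariant combinations of the) core curves of the two pinching cylinders, and $\delta_1,\delta_2$ cycles crossing them. Because $\xi$ has simple poles at the corresponding nodes, the residue periods $w_i:=\int_{\gamma_i}\omega$ stay bounded and nonzero as $\xx\to\pp$, while the transverse periods $u_i:=\int_{\delta_i}\omega$ grow logarithmically; the plumbing parameter is, up to a holomorphic unit, the nome $t_i=\exp(2\pi\imath\, u_i/w_i)$, so that $u_i=\frac{w_i}{2\pi\imath}\log t_i+f_i(z,t_1,t_2)$ with $f_i$ holomorphic. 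The self-adjoint generator $T$ of $\cO_D$ preserves the isotropic plane $\langle\gamma_1,\gamma_2\rangle$; evaluating $T^*\omega=\lambda\omega$ on $\gamma_1,\gamma_2$ shows that $(w_1,w_2)$ is a $\lambda$-eigenvector of the integral matrix of $T$ restricted to this plane, which pins the ratio $w_1:w_2$ to an element of $K_D$.

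The core of the argument is the monomial relation. Evaluating $T^*\omega=\lambda\omega$ on $\delta_1,\delta_2$ and using self-adjointness to relate the action of $T$ on the crossing cycles to its action on $\gamma_1,\gamma_2$, one obtains a linear relation that forces the divergent parts of $(u_1,u_2)$ to align with a fixed eigendirection; concretely this yields an identity $m_1\,u_1/w_1-m_2\,u_2/w_2=g(z,t_1,t_2)$ with $g$ holomorphic, where the coprime positive integers $(m_1,m_2)$ are determined by the $T$-action (the $\sqrt{D}$ parts must cancel so that the ratio $\log t_1:\log t_2$ is rational). Exponentiating gives $t_1^{m_1}/t_2^{m_2}=e^{2\pi\imath g}$, a nonvanishing holomorphic unit, which an analytic change of the coordinate $t_1$ absorbs, producing exactly $t_1^{m_1}=t_2^{m_2}$. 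Conversely, reversing the plumbing construction (as in the proof of Proposition~\ref{prop:bdry:str:gp:I}) shows that every $(z,t_1,t_2)$ satisfying this equation is the period point of a genuine eigenform in $\cX_D$, so the germ is exactly $\cA$. Running this for each way of pinching the two cylinders accounts for the several irreducible components, each isomorphic to such an $\cA$.

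The main obstacle is the passage from the necessary \emph{asymptotic} relation between $\log t_1$ and $\log t_2$ to the \emph{exact analytic} equation $t_1^{m_1}=t_2^{m_2}$: this requires controlling the holomorphic correction terms $f_i$ and performing the straightening change of coordinates, and, most delicately, proving that the ratio of logarithms produced by the eigenform condition is genuinely rational (equivalently, that the irrational parts coming from $\lambda\in K_D$ cancel). I expect the latter to rely on an explicit description of the matrix of $T$ on the vanishing cycles, in the spirit of Proposition~\ref{prop:cyl:dec:length:ratios:22}, from which $m_1$ and $m_2$ can in principle be computed.
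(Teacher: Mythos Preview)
Your outline is essentially the paper's proof, and the two obstacles you flag are precisely where the paper does the concrete work. For the reduction to a three-dimensional chart, the key is that the residue ratio $\alpha=\res_{r_2}\xi/\res_{r_1}\xi$ is \emph{locally constant} on $\ol{\cX}_D$: on nearby smooth surfaces the two pinching cylinders are parallel (complete periodicity plus the finiteness of circumference ratios in Proposition~\ref{prop:cyl:dec:length:ratios:22}), so $\alpha$ is a circumference ratio, takes finitely many values, and is therefore constant on each branch. This cuts the four-dimensional $\Pb\Omega'\ol{\cB}_{4,1}(2,2)$ to a smooth hypersurface $\{\varphi=\alpha\}$, which an explicit plumbing map $\Phi:U\times\Delta^2\to\{\varphi=\alpha\}$ parametrizes (Lemma~\ref{lm:neigh:p:in:Sb:20:germs}). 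For the correction terms, the paper carries out the plumbing in residue-normalized flat charts $dz/z$, and then (Claim~\ref{clm:str:Sb:20:per:beta}) the holomorphic remainders $h_1,h_2$ in $\omega(b_1),\omega(b_2)$ depend only on the base coordinate $x\in U$, not on $t_1,t_2$; this makes the coordinate change absorbing the unit $e^{\phi(x)}$ immediate. The rationality is not an issue: in the basis of Proposition~\ref{prop:eigen:form:per:eq} the relation $\omega(b_2)=\tfrac{2b}{\lambda}\omega(a_1)+\tfrac{2d}{\lambda}\omega(b_1)$ together with $\alpha=1+\tfrac{2a}{\lambda}$ and the identity $(1+\tfrac{2a}{\lambda})(\lambda-d)=2a-d+e$ (from $\lambda^2-e\lambda-2ad=0$) eliminates $\lambda$ and gives $d\ln t_1=(2a-d+e)\ln t_2+\phi(x)$ with integer exponents. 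One small correction to your last step: the paper does not reverse the plumbing to prove the converse inclusion; it simply uses that $\dim\ol{\cX}_D=2$, so each irreducible branch of the germ must coincide with a two-dimensional component of the analytic set $\{t_1^{d}=t_2^{2a-d+e}e^{\phi(x)}\}$.
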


If $\pp\in \cS^b_{2,0}$ then $C$ has three components $C'_1,C'_2, C''$, where $C'_1$ and $C'_2$ are two disjoint copies of $\Pb^1$, $C''$ is an elliptic curve which intersects each of $C'_1$ and $C'_2$ at two nodes. The differential $\xi$ has two double zeros in $C''$ and simple poles at all the nodes of $C$.
Let $\xi'_i:=\xi_{\left|C'_i\right.}, \; i=1,2$, and $\xi'':=\xi_{\left|C''\right.}$. We can identify  $C'_i$ with $\Pb^1$ and suppose that the restriction of $\tau$ to $C'_i$ is given by $x\mapsto 1/x$.  By assumption, we have $(C'_i,\xi'_i)\simeq (\Pb^1, \lambda_i\frac{dx}{x})$, for some $\lambda_i\in \C^*$. 
Let $r_i, r'_i$ denote the nodes between $C''$ and $C'_i$.
Note that $r_i$ and $r'_i$ are exchanged by $\tau$. The differential $\xi''$ has simple poles at $r_i, r'_i, \; i=1,2$, and we have
$$
\res_{r_i}(\xi'')=-\res_{r'_i}(\xi'').
$$

Consider now the case $\pp\in \cS_{1,1}$. In this case  $C$ has two irreducible components $C'$ and $C''$ , where $C'$ is  isomorphic to $\Pb^1$, and $C''$ is a curve of genus two with two self-nodes which intersects $C'$ at two other nodes. 
The differential $\xi$ has two double zeros on $C''$ and simple poles at all the nodes of $C$. 

We can identify the normalization $\tilde{C}''$ of $C''$ with $\Pb^1$ and suppose that the restriction of $\tau$ to $C''$ is given by $x \mapsto -x$ on $\tilde{C}''$. We can further suppose that $\{p_5, p'_5\}=\{\pm 1\}$.
Let $\pm r_1$ be the points in $\Pb^1$  that correspond to the nodes between $C''$ and $C'$.
The two self-nodes of $C''$ give rise to two pairs of points on $\Pb^1$ that are permuted by $\tau$.
Let $\pm r_2, \pm r_3$ denote those points, where $r_2$ and $r_3$ (resp. $-r_2$ and $-r_3$) map to the same node on $C''$.
The restriction $\xi''$ of $\xi$ to $C''$ has double zeros at $\pm 1$, and simple poles at the points $\pm r_i, \; i=1,2,3$.  Since $\tau^*\xi''=-\xi''$, we have
$$
\res_{r_1}\xi''=-\res_{-r_1}\xi'', \quad \text{ and } \quad \res_{r_2}\xi''=-\res_{-r_2}\xi''=-\res_{r_3}\xi''=\res_{-r_3}\xi''.
$$
%

\subsubsection{Coordinate system in a neighborhood of $\pp$}\label{subsec:str:gp:II:loc:coord}
In what follows, we will show that there is an analytic subset of $\Pb\Omega'\ol{\cB}_{4,1}(2,2)$ isomorphic to a ball in $\C^3$ that contains the germ of $\ol{\cX}_D$ at $\pp$.
We will only focus on the case $\pp \in \cS^b_{2,0}$, the proof for the case $\pp \in \cS_{1,1}$ follows the same lines.

Let $\tilde{\cQ}(4,-2,-2)$ be  the moduli space of triples $(Z,\rho,\zeta)$, where $Z$ is an elliptic curve, $\rho$ is an involution without fixed points on $Z$, and $\zeta$ is an Abelian differentials on $Z$ which has two double zeros and four simple poles such that $\rho^*\zeta=-\zeta$.
Denote by $\Pb\tilde{\cQ}(4,-2,-2)$ the projectivization of $\tilde{\cQ}(4,-2,-2)$, that is the quotient $\tilde{\cQ}(4,-2,-2)/\C^*$.
The image of $(Z,\rho,\zeta)$ in $\Pb\tilde{\cQ}(4,-2,-2)$ is denoted by $(Z,\rho,[\zeta])$.

Since $\rho$ has no fixed points, $Y:= Z/\langle\rho\rangle$ is an elliptic curve. The quadratic differential $\zeta^{2}$ descends to a meromorphic quadratic differential $\eta$ on $Y$.
By construction, $(Y,\eta)$ is an element of $\cQ(4,-2,-2)$, that is the moduli space of quadratic differentials on elliptic curves with one zero of order $4$ and two double poles, that are not the square of an Abelian differential.
The correspondence $(Z,\rho,\zeta) \mapsto (Y,\eta)$ allows us to identify $\tilde{\cQ}(4,-2,-2)$ with $\cQ(4,-2,-2)$.
It is shown in \cite{BCGGM2} that $\tilde{\cQ}(4,-2,-2)\simeq \cQ(4,-2,-2)$ is a complex orbifold of dimension $3$.

Recall that $C''$ is the elliptic component of $C$.   Let $\tau''$ be the restriction of $\tau$ to $C''$, and $\xi'':=\xi_{\left|C''\right.}$.
We then have $(C'',\tau'',\xi'') \in \tilde{\cQ}(4,-2,-2)$.
Let us fix a path $\gamma$ from $p_5$ to $p'_5$ in $C''$. For any $(Z,\rho, \zeta)$ in  a neighborhood of $(C'',\tau'', \xi'')$ in $\tilde{\cQ}(4,-2,-2)$, one can specify a path in $Z$ joining the zeros of $\zeta$, and a labeling of the poles of $\zeta$ by $z_1, z'_1, z_2, z'_2$ such that $z_i$ (resp. $z'_i$) correspond to $r_i$ (resp. $r'_i$).
A local chart of $\tilde{\cQ}(4,-2,-2)$ in a neighborhood of $(C'',\tau'',\xi'')$ is given by the map (cf. \cite{BCGGM2})
$$
(Z,\rho,\zeta) \mapsto (\zeta(\gamma), \res_{z_1}(\zeta), \res_{z_2}(\zeta)).
$$ 
This implies that the map $(Z,\rho, [\zeta]) \mapsto \left(\zeta(\gamma)/\res_{z_1}(\zeta), \res_{z_2}(\zeta)/\res_{z_1}(\zeta)\right)$  
gives a local chart of $\Pb\tilde{\cQ}(4,-2,-2)$ in a neighborhood of $(C'',\tau'',[\xi''])$.  
Define 
$$
\alpha:=\frac{\res_{r_2}(\xi'')}{\res_{r_1}(\xi'')}.
$$  
Let $\cW$ be a neighborhood of $(C'',\tau'',[\xi''])$ in $\Pb\tilde{\cQ}(4,-2,-2)$.   
The set 
$$
U:=\{(Z,\rho,[\zeta])\in \cW, \; \res_{z_2}\zeta/\res_{z_1}\zeta=\alpha\}
$$ 
can be identified with an open subset of $\C$ via the map
$(Z,\rho,[\zeta]) \mapsto \zeta(\gamma)/\res_{z_1}(\zeta)$.
Let $x_0\in U$ be the image of $(C'',\tau'',[\xi''])$ under this map.
By definition, there is a family of pointed elliptic curves $f: \cC'' \to U$ and a meromorphic section $\Xi''$ of the relative canonical line bundle $K_{\cC''/U}$ such that the for all $x\in U$, the restriction $\Xi''_x$ of $\Xi''$ to the fiber $C''_x:=f^{-1}(x)$ is an element of $\tilde{\cQ}(4,-2,-2)$, and $(C''_{x_0},\Xi''_{x_0})\simeq (C'',\xi'')$.
Note that $\cC''$ comes quipped with an involution $\rho$ whose restriction to each fiber $C_x$ gives an involution $\rho_x$ such that $\rho_x^*\Xi''_x=-\Xi''_x$.

Let $r_{i,x}$ (resp. $r'_{i,x}$) be the pole of $\Xi''_x$ corresponding to $r_i$ (resp. $r'_i$) for $i=1,2$.
Let $R_i$ (resp. $R'_i$) denote the  section of $f$ associated with the marked points $r_{i,x}$ (resp. $r'_{i,x}$).
There is a neighborhood $\cU_1$ (resp. $\cU'_1$)  of $R_1$ (resp. $R'_1$) that can be identified with $U\times V_1$, where $V_1$ is a neighborhood of $0\in \C$,  such that $R_1 \simeq U\times\{0\}$ (resp. $R'_1 \simeq U\times\{0\}$), and the restriction of $\Xi$ to $\cU_1$ (resp. to $\cU'_1$) is given by $\frac{1}{2\pi\imath}\cdot\frac{dz}{z}$ (resp. by $\frac{-1}{2\pi\imath}\cdot\frac{dz}{z}$), where $z$ is the coordinate on $V_1$ (resp. on $V'_1$).
Similarly, there is a neighborhood $\cU_2$ (resp. $\cU'_2$) of $R_2$ (resp. of $R'_2$) that can be identified with $U\times V_2$ (resp. $U\times V'_2$), where $V_2$ (resp. $V'_2$) is another neighborhood of $0\in \C$, such that $R_2\simeq U\times \{0\}$ (resp. $R'_2 \simeq U\times\{0\}$), and the restriction of $\Xi$ to $\cU_2$ (resp. to $\cU'_2$) is given by $\frac{\alpha}{2\pi\imath}\cdot\frac{dz}{z}$ (resp. by $\frac{-\alpha}{2\pi\imath}\cdot\frac{dz}{z}$).
We can furthermore suppose that $\cU_1, \cU'_1,\cU_2,\cU'_2$ are pairwise disjoint, and that  $\cU'_1:=\rho(\cU_1)$ and $\cU'_2=\rho(\cU_2)$.

\medskip

Let $C'_1$ and $C'_2$ be two copies of $\Pb^1$. We endow $C'_1$ with the Abelian differential $\xi'_1=\frac{1}{2\pi\imath}\cdot \frac{dw}{w}$ and $C'_2$ with the differential $\xi'_2=\frac{\alpha}{2\pi\imath}\cdot\frac{dw}{w}$. 
Let $s_1$ and $s'_1$ (resp. $s_2$ and $s'_2$) be the points in $C'_1$ (resp. in $C'_2$) which correspond to $0$ and $\infty$ in $\Pb^1$ respectively. There is a neighborhood $W_1$ of $s_1$ (resp. a neighborhood $W'_1$ of $s'_1$) with local coordinate $w$ such that $\xi'_{1\left|W_1\right.}= \frac{1}{2\pi\imath}\cdot dw/w$ (resp. $\xi'_{1\left|W'_1\right.}=\frac{-1}{2\pi\imath}\cdot dw/w$).
Similarly, there are neighborhoods $W_2$ of $s_2$ and $W'_2$ of $s'_2$ such that $\xi'_{2\left|W_2\right.}=\frac{\alpha}{2\pi\imath} \cdot dw/w$ and $\xi'_{2\left|W'_2\right.}=\frac{-\alpha}{2\pi\imath} \cdot dw/w$.
We can suppose that $W'_1$ (resp. $W'_2$) is the image of $W_1$ (resp. of $W_2$) under the involution $w\mapsto 1/w$.

\medskip 

Let $\delta \in \R_{>0}$ be small enough so that $\Delta_\delta$ is contained in all of $V_1, V_2, W_1, W_2$.
We can now define  a map $\Phi: U\times\Delta_{\delta^2}\times\Delta_{\delta^2} \to \Omega\ol{\cB}_{4,1}$ as follows: for all $(x,t_1,t_2)\in  U\times\Delta_\delta\times\Delta_\delta$,
\begin{itemize}
\item[$\bullet$] if $t_i=0$, we glue $C'_i$ to $C''_x$ by identifying $s_i$ with $r_{i,x}$ and $s'_i$ with $r'_{i,x}$.

\item[$\bullet$] if $t_i\in \Delta^*_{\delta^2}$, we remove the neighborhoods of $r_{i,x}$ and  $s_i$ that correspond to $\Delta_{t/\delta}\subset \Delta_\delta$. We then glue the annuli $A_{t_i/\delta,\delta} \subset V_i$ 	and $A_{t_i/\delta,\delta} \subset W_i$ together using the relation $zw=t_i$.
We carry the same plumbing construction in the neighborhoods of $r'_i$ and $s'_i$.
\end{itemize}
Let $C_{x,t_1,t_2}$ denote the resulting curve.
By construction the differentials $\Xi''_x,  \xi'_1, \xi'_2$  agree on the overlaps of different components of $C_{x,t_1,t_2}$. 
Therefore, we obtain an Abelian differential $\xi_{x,t_1,t_2}$ on the curve $C_{x,t_1,t_2}$.
Note that $\xi_{x,t_1,t_2}$ has two double zeros that are the zeros of $\Xi''_x$  located on $C''_x$.
The involution $\rho_x$ on $C''_x$ extends to an involution on $C_{x,t_1,t_2}$ which has four fixed points and satisfies $\rho^*_x\xi_{x,t_1,t_2}=-\xi_{x,t_1,t_2}$.
Therefore $(C_{x,t_1,t_2}, \xi_{x,t_1,t_2})\in \Omega' \ol{\cB}_{4,1}(2,2)$. The data of $C_{x,t_1,t_2}$, the zeros of $\Xi''_x$, and the fixed points of $\rho_x$ give a point in $\Pb\Omega'\ol{\cB}_{4,1}(2,2)$, which is defined to be $\Phi(x,t_1,t_2)$.

\begin{Lemma}\label{lm:neigh:p:in:Sb:20:germs}
All the components of the germ of $\ol{\cX}_D$ at $\pp$ are contained in $\Phi(U\times \Delta_{\delta^2}\times\Delta_{\delta^2})$.
\end{Lemma}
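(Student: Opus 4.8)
The plan is to argue exactly as in the proof of Proposition~\ref{prop:bdry:str:gp:I}: I will show that every eigenform $\xx=(X,\ul{x},\tau_X,[\omega])\in\cX_D$ lying close enough to $\pp$ is produced by the plumbing construction defining $\Phi$, so that $\xx\in\Phi(U\times\Delta_{\delta^2}\times\Delta_{\delta^2})$, and then take closures to obtain the statement for the whole germ. First I would fix a degenerating map $f_\xx\colon X\to C$ and locate in $X$ the two pairs of pinching annuli that collapse onto the node pairs $\{r_1,r_1'\}$ and $\{r_2,r_2'\}$ exchanged by $\tau$. Since $\xi$ restricts to a differential of the form $\lambda_i\,dx/x$ on each rational component $C'_i$, the preimage $f_\xx^{-1}(C'_i)$ is a flat cylinder (one of the two $\tau$-conjugate cylinders of Case II.A/II.B in Proposition~\ref{prop:cyl:dec:length:ratios:22}), while $f_\xx^{-1}(C'')$ carries an anti-invariant differential converging to $\Xi''$. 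The $\tau$-equivariance of the admissible-cover structure forces the two nodes in each pair to be smoothed by a common plumbing parameter $t_1$, resp. $t_2$, which already reduces the four a priori independent smoothing coordinates to the two that appear in $\Phi$.

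The crux, and the main obstacle, is to prove that the residue ratio is locked to its value at $\pp$, i.e. that $\res_{z_2}(\xi''_\xx)/\res_{z_1}(\xi''_\xx)=\alpha$ identically along $\ol{\cX}_D$ near $\pp$, so that the elliptic-component datum of $\xx$ lands in the slice $U$ rather than in the full two-dimensional neighbourhood $\cW\subset\Pb\tilde{\cQ}(4,-2,-2)$. Nothing topological forces this: the residue theorem on $C''$ is automatically satisfied by the pairing $\res_{r_i}=-\res_{r_i'}$, so the ratio is a genuine modulus of $\tilde{\cQ}(4,-2,-2)$ and must be pinned by the real-multiplication condition. To do this I would identify $2\pi\imath\,\res_{r_i}(\xi'')$ with the period $\omega(\gamma_i)$ of $\omega$ around the vanishing cycle $\gamma_i\in H_1(X,\Z)^-$ attached to the $r_i$-pinch; the classes $\gamma_1,\gamma_2$ are locally constant along $\cX_D$ under the Gauss--Manin connection. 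Using the eigenform relation of Proposition~\ref{prop:eigen:form:per:eq} (equivalently Lemma~\ref{lm:D:no:square:hol:map:inj}) together with the constancy of the cylinder prototype $\frakp=(a,b,d,e)$ governing this degeneration, I expect to express $\omega(\gamma_2)/\omega(\gamma_1)$ as the fixed algebraic ratio dictated by $\frakp$; being holomorphic and equal to $\alpha$ at $\pp$, it is then constant and equal to $\alpha$. The rigidity of the limit object needed here is precisely the characterization of degenerate Prym eigenforms established in the Appendix (via Theorem~\ref{th:twisted:diff}), which shows that $(C'',\tau'',\xi'')$ inherits the real multiplication and hence has its residue ratio determined by discrete data.

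Granting this residue identity, the conclusion is immediate: the elliptic datum of $\xx$ defines a point $x\in U$, the rational components carry the normalized differentials $\xi'_1,\xi'_2$ with residue ratio $\alpha$ (the overall scale being irrelevant after projectivizing and quotienting by $\C^*$), and the two pinching annuli determine smoothing parameters $t_1,t_2\in\Delta_{\delta^2}$; by construction $\xx=\Phi(x,t_1,t_2)$. Since this applies to every eigenform of $\cX_D$ near $\pp$ and $\Phi(U\times\Delta_{\delta^2}\times\Delta_{\delta^2})$ is closed in a neighbourhood of $\pp$, the germ of $\ol{\cX}_D$ at $\pp$ is contained in the image of $\Phi$, which is the assertion. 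The case $\pp\in\cS_{1,1}$ is handled by the same argument, now with $C''$ a two-nodal genus-two curve whose normalization carries the involution $x\mapsto -x$.
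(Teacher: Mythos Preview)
Your proposal is essentially correct and isolates the same key point as the paper: the ratio $\omega(c_2)/\omega(c_1)$ (equivalently the residue ratio) is rigid along $\ol{\cX}_D$ near $\pp$. The paper's execution is more economical, however. Instead of reconstructing the plumbing data $(x,t_1,t_2)$ by hand from a nearby eigenform, the paper observes that $\varphi(\xx):=\omega(c_2)/\omega(c_1)$ extends to a holomorphic function on a neighbourhood $\cV$ of $\pp$ in the ambient $4$-dimensional space $\Pb\Omega'\ol{\cB}_{4,1}(2,2)$; on $\cX_D\cap\cV$ it takes only finitely many values (since $c_1,c_2$ are core curves of parallel cylinders in a stable decomposition, Proposition~\ref{prop:cyl:dec:length:ratios:22}), hence is locally constant there and equal to $\varphi(\pp)=\alpha$. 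Then $d\varphi(\pp)\neq 0$ makes $\varphi^{-1}(\alpha)$ a smooth $3$-manifold, and since $\Phi$ is injective holomorphic from a $3$-dimensional source into $\varphi^{-1}(\alpha)$, its image is open there by invariance of domain, so contains the germ of $\ol{\cX}_D$. This dimension-count argument avoids the need to exhibit the inverse of $\Phi$ explicitly; your direct reconstruction would work too but is more laborious and requires care in matching the plumbing coordinates to the flat geometry.

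Two minor corrections. First, the preimages $f_\xx^{-1}(C'_i)$ are the $\tau$-\emph{invariant} cylinders ($C_1,C_2$ in the notation of Proposition~\ref{prop:stable:cyl:dec:models:H22}), not a $\tau$-conjugate pair: each $C'_i$ contains two of the fixed points $p_1,\dots,p_4$ and is preserved by $\tau$. Second, your phrase ``constancy of the cylinder prototype'' is exactly what needs justification; the clean way is the paper's: the ratio is a holomorphic function taking values in the finite set of ratios arising from $\Pcal_{D,\cyl}$, hence locally constant on each irreducible component of the germ. The appeal to the Appendix and Theorem~\ref{th:twisted:diff} is not needed here.
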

\begin{proof}
We have  $\dim\Pb\Omega'\cB_{4,1}(2,2)=\dim\Pb\cQ(4,-1^4)=4$.
Consider a neighborhood $\cV$ of $\pp$ in $\Pb\Omega'\ol{\cB}_{4,1}(2,2)$.
Denote by $\cV^*$ in the intersection $\cV\cap \Pb\Omega'\cB_{4,1}(2,2)$.
For every $\xx=(X,\ul{x},\tau_X,[\omega]) \in \cV^*$ one can specify two pairs of simple closed curves $\{c_1,c'_1\}, \{c_2,c'_2\}$, where $c_i$ and $c'_i$ are contracted to the nodes $r_i$ and $r'_i$ respectively. 
The map $\varphi: \xx \mapsto \omega(c_2)/\omega(c_1)$ is a well defined holomorphic function on $\cV$ (when $c_i$ degenerates to the node $r_i$, $\omega(c_i) = 2\pi\imath\cdot\res_{r_i}(\omega)$). 

We claim that if $\cV$ is small enough then $\ol{\cX}_D\cap \cV$ is contained in the set $\{\xx\in \cV, \; \varphi(\xx)=\alpha\}$. This is because  if $\xx$ is close enough to $\pp$ then $c_1$ and $c_2$ are core curves of  two parallel cylinders on  $(X,\omega)$. 
By Proposition~\ref{prop:stable:cyl:dec}, we can suppose that corresponding cylinder decomposition is stable. Thus $\omega(c_2)/\omega(c_1)$ belongs to a finite set by Proposition~\ref{prop:cyl:dec:length:ratios:22}.
It follows that $\varphi$ is constant on all irreducible components of $\ol{\cX}_D\cap\cV$. Since  $\varphi(\pp)=\alpha$, the claim follows.

It can be shown that $d\varphi(\pp)\neq 0$. Thus $\varphi^{-1}(\{\alpha\})$ is a complex manifold of dimension $3$. 
By construction the map $\Phi$ is holomorphic, injective, and satisfies $\Phi(U\times\Delta_{\delta^2}\times\Delta_{\delta^2}) \subset \varphi^{-1}(\{\alpha\})$. 
Since $\dim(U\times\Delta_{\delta^2}\times\Delta_{\delta^2})=\dim \varphi^{-1}(\{\alpha\})=3$, we conclude that $\Phi(U\times\Delta_{\delta^2}\times\Delta_{\delta^2})$ is a neighborhood of $\pp$ in $\varphi^{-1}(\{\alpha\})$.
As the germ of $\ol{\cX}_D$ at $\pp$ is contained in $\varphi^{-1}(\{\alpha\})$, the lemma follows.
\end{proof}

\subsubsection{Proof of Proposition~\ref{prop:bdry:str:gp:II}}
\begin{proof}
We now give the proof of Proposition~\ref{prop:bdry:str:gp:II} in the case $\pp  \in \cS^b_{2,0}$.
Let $\cA$ be an irreducible component of the germ of $\ol{\cX}_D$ at $\pp$. By Lemma~\ref{lm:neigh:p:in:Sb:20:germs}, we can identify $\cA$ with a germ of analytic subsets of $U\times \Delta_{\delta^2}\times\Delta_{\delta^2}$.
Let $\cA^*$ denote the intersection $\cA\cap U\times \Delta^*_{\delta^2}\times\Delta^*_{\delta^2}$.
For every $\xx=(X,\ul{x},\tau_X,[\omega]) \in \cA^*$  close enough to $\pp$, the nodes $r_i$ and $r'_i$ correspond to two homotopic simple closed curves on $X$ that are contained in a cylinder $E_i$ invariant by $\tau_X$.
We claim that $E_1$ and $E_2$ are parallel.
Indeed, assume that they are not.
Let $\ell(E_i)$ and $h(E_i)$ be the length and the height of $E_i$.
Since $\ell(E_i)=\omega(c_i)$, as $\xx$ converges to $\pp$, $\ell(E_i)$ is bounded above by some constant $K$, while $h(E_i)$ tends to $+\infty$.
Since $(X,\omega)$ is completely periodic (cf. \textsection~\ref{subsec:commplete:per:n:cyl:dec}), $X$ admits a cylinder decomposition in the direction of $E_2$. The cylinder $E_1$ must intersect some cylinder, say $E$, parallel to $E_2$.  Since $E$ must cross $E_1$ entirely, we have $\ell(E) \geq h(E_1)$. It follows that $\ell(E)/\ell(E_2) \to 0$ as $\xx$ converges to $\pp$. But by  Proposition~\ref{prop:cyl:dec:length:ratios:22}, the ratio $\ell(E_2)/\ell(E)$ belongs to a finite set. We thus get a contradiction which proves the claim.

The complement of $E_1\cup E_2$ in $X$ is a four-holed torus on which  $\tau$ acts by a translation of order $2$.
We can choose a basis $(a_1, b_1, a_2,  b_2)$ of $H_1(X,\Z)^-$ as shown in Figure~\ref{fig:symp:basis:near:str:Sb:20}.
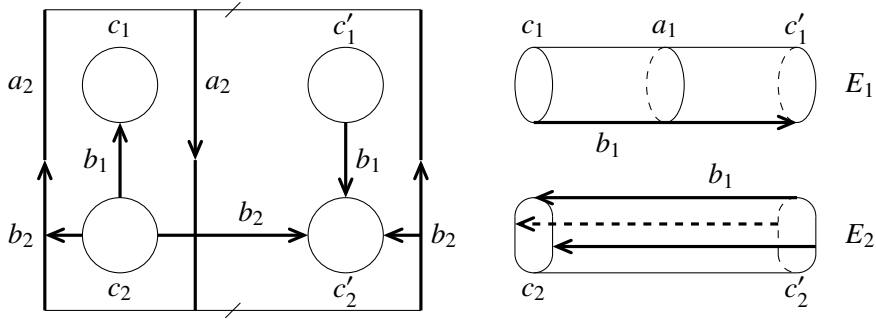
\begin{figure}[htb]
\centering
\begin{tikzpicture}[scale=0.5]
\draw (0,0) -- (10,0) (0,8) -- (10,8);
\foreach \x in {(5,0),(5,8)} \draw \x -- +(0.2,0.2) \x -- +(-0.2,-0.2);

\foreach \x in {(2,2), (2,6), (8,2), (8,6)} \draw \x 	circle (1);
\draw[very thick, ->, >=angle 45] (0,0) -- (0,4);
\draw[very thick, ->, >=angle 45] (10,0) -- (10,4);
\draw[very thick, ->, >=angle 45] (4,8) -- (4,4);
\draw[very thick] (0,4) -- (0,8);
\draw[very thick] (10,4) -- (10,8);
\draw[very thick] (4,4) -- (4,0);
\foreach \x in{(13,1), (13,3), (13,5), (13,7)} \draw \x -- +(7,0);
\foreach \x in{(13,6)} \draw \x ellipse (0.5 and 1);
\foreach \x in {(20,5), (16.5,5)} \draw \x arc (-90:90:0.5 and 1);
\foreach \x in {(20,7), (16.5,7)} \draw[dashed] \x arc (90:270:0.5 and 1);
\draw (13.5,2.5) arc (0:180:0.5);
\draw (12.5,1.5) arc (180:360:0.5);
\foreach \x in {(12.5,2.5), (13.5,2.5)} \draw \x -- +(0,-1);

\draw(20.5, 2.5) arc (0:90:0.5); \draw (20,1) arc (270:360:0.5); \draw (20.5,1.5) -- +(0,1);

\draw[dashed] (20,3) arc (90:180:0.5); \draw[dashed] (19.5,1.5) arc (180:270:0.5); \draw[dashed] (19.5,1.5) -- (19.5,2.5);

\draw[very thick, ->, >= angle 45] (1,2) -- (0,2);
\draw[very thick, ->, >= angle 45] (10,2) -- (9,2);
\draw[very thick, ->, >= angle 45] (3,2) -- (7,2);

\draw[very thick, ->, >=angle 45] (2,3) -- (2,5); \draw (2,4) node[left] {$\tiny b_1$}; 
\draw[very thick, ->, >=angle 45] (8,5) -- (8,3); \draw (8,4) node[right] {$\tiny b_1$};

\draw[very thick, ->, >=angle 45] (20,3) -- (13,3); 
\draw[very thick, ->, >=angle 45] (13,5) -- (20,5);
\draw[very thick, ->, >=angle 45] (20.5,1.7) -- (13.5,1.7);
\draw[very thick, dashed, ->, >=angle 45] (19.5,2.3) -- (12.5,2.3);

\draw (2,7) node[above] {$\tiny c_1$} (8,7.5) node {$\tiny c'_1 $};
\draw (2,1) node[below] {$\tiny c_2$}  (8,0.5) node {$\tiny c'_2$};
\draw (13,7) node[above] {$\tiny c_1$} (20,7.5) node {$\tiny c'_1 $};
\draw (13,1) node[below] {$\tiny c_2$}  (20,0.5) node {$\tiny c'_2$};
\draw (16.5,7) node[above] {$\tiny a_1$} (15,5) node[below] {$\tiny b_1$} (18,3) node[above] {$\tiny b_1$};
\draw (0,6) node[left] {$\tiny a_2$} (4,6) node[right] {$\tiny a_2$}
(0,2) node[left] {$\tiny b_2$} (5.5,2) node[above] {$\tiny b_2$} (10,2) node[right] {$\tiny b_2$};
\draw (21,6) node[right] {$\tiny E_1$} (21,2) node[right] {$\tiny E_2$};
\end{tikzpicture}
\caption{Symplectic basis of $H_1(X,\Z)^-$: $a_1$ and $b_1$ are simple closed curves, $a_2$ and $b_2$ have two components.}
\label{fig:symp:basis:near:str:Sb:20}
\end{figure}
Note that we $a_1=c_1, a_2=c_2-c_1$, and $\langle a_i,b_i\rangle=i, \; i=1,2$.
Since $b_1$ and $b_2$ cross the cylinders $E_1, E_2$, there is no consistent way to specify these elements of $H_1(X,\Z)$ when $\xx$ varies in $\cA^*$. Nevertheless, there is an open dense subset $\cA^*_0$ of $\cA^*$ such that the basis $\{a_1, b_1, a_2, b_2\}$ can be consistently chosen for all $\xx\in \cA^*_0$.
From now on, we will suppose that $\xx$ is a point in $\cA^*_0$.
By Proposition~\ref{prop:eigen:form:per:eq}, there is $T \in \End(\Prym(X,\tau))$ which  is given in the basis $(a_1, b_1,a_2, b_2)$ by an integral matrix of the form  $T=\left(\begin{smallmatrix}
e\cdot I_2 &  2B \\
B^* & 0 \\
\end{smallmatrix} \right)$, where $B=\left(\begin{smallmatrix}
a & b \\ c & d \end{smallmatrix} \right) \in \Mb_2(\Z)$, such that $T^*\omega=\lambda\cdot\omega$, where $\lambda=\frac{e+\sqrt{D}}{2} \in \R_{>0}$.
As a consequence, we have
\begin{equation}\label{eq:explicit:rel:per:symp:basis}
\omega(a_2)= \frac{2a}{\lambda}\omega(a_1)+\frac{2c}{\lambda}\omega(b_1) \quad \text{ and } \quad \omega(b_2)=\frac{2b}{\lambda}\omega(a_1)+\frac{2d}{\lambda}\omega(b_1).
\end{equation}
Since $\omega(a_1)=\omega(c_1), \omega(a_2)=\omega(c_2)-\omega(c_1)$, we get that
$$
\omega(c_2)=(1+\frac{2a}{\lambda})\omega(c_1)+\frac{2c}{\lambda}\omega(b_1).
$$
Since $\omega(c_1)$ and $\omega(c_2)$ (viewed as vectors in $\R^2$) are proportional, and $\omega(a_1)\wedge\omega(b_1) \neq  0$, we must have $c=0$ and $\omega(c_2)=(1+\frac{2d}{\lambda})\omega(c_1)$, which means that
\begin{equation}\label{eq:ratio:res:n:prototype}
\alpha = 1+\frac{2a}{\lambda}.
\end{equation}
Let us now prove
\begin{Claim}\label{clm:str:Sb:20:per:beta}
\begin{equation}\label{eq:str:Sb:20:per:beta}
\omega(b_1)=\frac{\ln(t_1)}{\pi\imath}+\frac{\alpha\ln(t_2)}{\pi\imath}+h_1(x) \quad \text{ and } \quad \omega(b_2)=\frac{2\alpha\ln(t_2)}{\pi\imath}+h_2(x)
\end{equation}
where $h_1$ and $h_2$ are holomorphic functions on $U$.
\end{Claim}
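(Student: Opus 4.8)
The plan is to compute $\omega(b_1)$ and $\omega(b_2)$ directly from the explicit plumbing model that defines $\Phi$ above, separating the logarithmic divergences coming from the degenerating necks from the rest, which will assemble into the holomorphic remainders $h_1,h_2$. Recall that near $\pp$ the surface $X=C_{x,t_1,t_2}$ is obtained by plumbing the sphere $C'_1$ to $C''_x$ along the two necks at $r_{1,x},r'_{1,x}$ (both with parameter $t_1$, forming the cylinder $E_1$) and the sphere $C'_2$ along the two necks at $r_{2,x},r'_{2,x}$ (both with parameter $t_2$, forming $E_2$). On the neck at $r_{1,x}$ the differential $\omega=\xi_{x,t_1,t_2}$ equals $\frac{1}{2\pi\imath}\cdot\frac{dz}{z}$ with gluing relation $zw=t_1$, while at $r'_{1,x}$ it equals $\frac{-1}{2\pi\imath}\cdot\frac{dz}{z}$; at the necks of $E_2$ the corresponding forms are $\pm\frac{\alpha}{2\pi\imath}\cdot\frac{dz}{z}$ with relation $zw=t_2$. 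Thus $\res_{r_1}(\omega)=-\res_{r'_1}(\omega)=\frac{1}{2\pi\imath}$ and $\res_{r_2}(\omega)=-\res_{r'_2}(\omega)=\frac{\alpha}{2\pi\imath}$.

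\textbf{Step 1: explicit cycle representatives.} First I would fix smooth representatives of $b_1,b_2$ following Figure~\ref{fig:symp:basis:near:str:Sb:20}, writing each as a concatenation of (i) arcs lying on the fixed loci $C''_x,C'_1,C'_2$ with endpoints on the fixed circles $\{|z|=\delta\}$, and (ii) arcs crossing the plumbing annuli. The cycle $b_1$ is a single simple closed curve that passes once through the tube $E_1$ (hence through both necks $r_1,r'_1$) and once through $E_2$ (through both necks $r_2,r'_2$), while $b_2=b'_2+b''_2$ has two components, each passing through $E_2$ only. This is exactly the point at which the constraint $b_i\in H_1(X,\Z)^-$ enters: anti-invariance forces each $b$-cycle to traverse the paired necks $r_i,r'_i$ together, and this is what produces the factor $2$ in the coefficients.

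\textbf{Step 2: period computation.} I would then evaluate $\int_{b_i}\omega$ by splitting over these arcs. On a neck with relation $zw=t_j$ and local form $\pm\frac{\rho}{2\pi\imath}\cdot\frac{dz}{z}$, a transverse arc from $\{|z|=\delta\}$ to $\{|w|=\delta\}$ contributes $\pm\frac{\rho}{2\pi\imath}\bigl(\ln t_j+(\text{bounded endpoint terms})\bigr)$, the endpoint terms being holomorphic in $x$ and independent of $t_1,t_2$ once $\delta$ is fixed; the arcs on $C''_x,C'_1,C'_2$ contribute integrals of the fixed meromorphic differentials $\Xi''_x,\xi'_1,\xi'_2$ between $x$-dependent points, again holomorphic in $x$ and independent of the $t_j$. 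Summing the neck contributions — using the opposite residues at $r_i,r'_i$ together with the opposite orientations in which the paired necks are crossed, so that their logarithms reinforce rather than cancel — gives the coefficient $\frac{2}{2\pi\imath}=\frac{1}{\pi\imath}$ of $\ln t_1$ and $\frac{2\alpha}{2\pi\imath}=\frac{\alpha}{\pi\imath}$ of $\ln t_2$ in $\omega(b_1)$, and (from four neck-crossings of $E_2$, two per component) $\frac{4\alpha}{2\pi\imath}=\frac{2\alpha}{\pi\imath}$ of $\ln t_2$ in $\omega(b_2)$, with no $\ln t_1$ term since $b_2$ avoids $E_1$. Collecting all non-logarithmic pieces into $h_1(x),h_2(x)$ yields the two identities.

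\textbf{Main obstacle.} The hard part will be the bookkeeping in Step 2: one must verify that, after fixing the truncation radius $\delta$, every contribution other than the explicit $\ln t_j$ terms is a holomorphic function of $x$ alone, carrying no residual $t$-dependence, and that the signs from the opposite residues at $r_i,r'_i$ combine with the crossing orientations so the logarithms add. Tracking these signs and multiplicities correctly — rather than the analytic estimates, which are the standard plumbing computation — is the delicate point.
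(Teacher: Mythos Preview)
Your proposal is correct and matches the paper's proof essentially line for line. The paper carries out exactly the decomposition you describe: it fixes explicit basepoints $u_{i,x},u'_{i,x},v_i,v'_i$ on the circles $\{|z|=\delta\}$, writes $b_1$ as a concatenation of arcs $\gamma_0,\gamma'_0,\gamma_1,\gamma_2$ on the fixed pieces $C''_x,C'_1,C'_2$ together with transverse arcs $\sigma_i,\sigma'_i$ across the plumbing annuli, observes that the integrals over the $\gamma$-arcs are holomorphic in $x$ and independent of $t_1,t_2$, and computes the $\sigma$-integrals directly from the model forms $\pm\frac{1}{2\pi\imath}\frac{dz}{z}$ and $\pm\frac{\alpha}{2\pi\imath}\frac{dz}{z}$ to obtain the logarithmic terms with the stated coefficients; the argument for $b_2$ is declared to be analogous.
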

\begin{proof}
To see this, for all  $x \in  U$, let $u_{i,x}$ (resp. $u'_{i,x}$) be the point in $\cU_i$ (resp. $\cU'_i$) of coordinates $(x,\delta)$ in the identification $\cU_i\simeq U\times V_i$ (resp. $\cU'_i=U\times V_i$).
For all $\theta\in [0;2\pi]$, let $e^{\imath\theta}u_{i,x}$ (resp. $e^{\imath\theta}u'_{i,x}$) be the point of coordinates $(x,\delta e^{\imath\theta})$ in the same identification.
Note that we have $e^{\imath\theta}u_{i,x}\in C''_x$.

Let $v_i$ (resp. $v'_i$) denote the point of coordinate $\delta$ in $W_i$ (in $W'_i$).
For all $(t_1,t_2) \in \Delta_{\delta^2}\times \Delta_{\delta^2}$, we can choose a representative of $b_1$ which consists of
\begin{itemize}
\item[$\bullet$] a  path $\gamma_0 \subset C''_x$ from $u_{1,x}$ to $-u_{2,x}$, and $\gamma'_0 \subset C''_x$ from $-u'_{2,x}$ to $u'_{1,x}$,

\item[$\bullet$] a path $\sigma_1$ (resp. $\sigma'_1$) from $v_1$ to $u_{1,x}$ (resp. from $u'_{1,x}$ to $v'_{1}$) corresponding to a path from $\delta$ to $t_1$ in the annulus $A_{t_1/\delta, \delta}$,

\item[$\bullet$] a path $\sigma_2 \subset C'_2$ (resp. $\sigma'_2$) from $-u_{2,x}$ to $-v_{2,x}$ (resp. from $-v'_{2,x}$ to $-u'_{2,x}$) corresponding to a path from $-\delta$ to $-t_2$ in the annulus $A_{t_2/\delta,\delta}$,

\item[$\bullet$] a path $\gamma_1 \subset C'_1$ (resp. $\gamma_2\subset C'_2$) from $v'_{1,x}$ to $v_{1,x}$ (resp. from $v_{2,x}$ to $v'_{2,x}$).
\end{itemize}
The paths $\gamma_0, \gamma'_0, \gamma_1, \gamma_2$ can be chosen consistently for all $x \in U$. However, the paths $\sigma_i, \sigma'_i$ can be chosen consistently only on the domain $\{(x,t_1,t_2) \in U\times\Delta^*_{\delta^2}\times\Delta^*_{\delta^2},  \; -\pi < \arg(t_1) < \pi, \; -\pi < \arg(t_2) < \pi\}$. We can fix the homotopy class of $\sigma_i$ (resp. of $\sigma'_i$) by supposing that it does not cross the ray $\R_{\leq 0}\times\{0\}$.
We have
$$
\int_{b_1}\omega=\int_{\gamma_0}\omega+\int_{\gamma'_0}\omega+\int_{\gamma_1}\omega+\int_{\gamma_2}\omega+\sum_{i=1,2}\left(\int_{\sigma_i}\omega+ \int_{\sigma'_i}\omega\right).
$$
By construction, $\int_{\gamma_0}\omega+\int_{\gamma'_0}\omega+\int_{\gamma_1}\omega+\int_{\gamma_2}\omega$ is a holomorphic function on $U$. Since the restriction of $\omega$ to $V_1$ (resp. to $V'_1$) is given by $\frac{1}{2\pi\imath}\cdot dz/z$ (resp. $\frac{-1}{2\pi\imath}\cdot dz/z$), and the restriction of $\omega$ to $V_2$ (resp. $V'_2$) is given by $\frac{\alpha}{2\pi\imath}\cdot dz/z$  (resp. $\frac{-\alpha}{2\pi\imath}\cdot dz/z$), we get
$$
\sum_{i=1,2}\left(\int_{\sigma_i}\omega+ \int_{\sigma'_i}\omega\right)=\frac{1}{\pi\imath}(\ln(t_1)+\alpha \ln(t_2))+ {\rm const.}
$$
This proves the first equality. The second ones follows from similar arguments.
\end{proof}
It follows from \eqref{eq:explicit:rel:per:symp:basis} and \eqref{eq:str:Sb:20:per:beta}  that we have
$$
\frac{2\alpha\ln(t_2)}{\pi\imath}+h_2(x)=\frac{2b}{\lambda}+\frac{2d}{\lambda}\left(\frac{\ln(t_1)+\alpha\ln(t_2)}{\pi\imath}+h_1(x)\right).
$$
which is equivalent to
$$
d\ln(t_1)=\alpha(\lambda-d)\ln(t_2)+\phi(x)=(1+\frac{2a}{\lambda})(\lambda-d)\ln(t_2)+\phi(x)
$$
where $\phi$ is a holomorphic function on $U$. Since $\lambda$ is a root of the polynomial $P(x)=x^2-ex -2ad$, we have
$$
(1+\frac{2a}{\lambda})(\lambda-d)=2a-d+e.
$$
Thus $(x,t_1,t_2)$ satisfies
\begin{equation}\label{eq:XD:near:str:Sb:20}
t_1^d=t_2^{2a-d+e}\exp({\phi(x)}).
\end{equation}
Since every irreducible component of the germ of the analytic set defined by \eqref{eq:XD:near:str:Sb:20} in $\C^3$ is isomorphic to the set $\{(z,t_1,t_2), \; t_1^{m_1}=t_2^{m_2}\}$, with $\gcd(m_1,m_2)=1$, we get the desired conclusion.
\end{proof}

\subsection{Strata of group III}\label{subsec:bdry:str:gp:III}
Recall that strata in group III are $\cS^a_{2,1}, \cS_{3,1}, \cS_{2,2}, \cS_{1,3}$. If $\pp$ belongs to one of those strata then $C$ has a unique irreducible component, denoted by $C_0$, such that $\xi_{\left|C_0\right.}\equiv 0$.
All the nodes  incident to this component are fixed by $\tau$.
Outside of the nodes incident to $C_0$ there are four other nodes at which the differential $\xi$ has simple poles.
These nodes are partitioned into two pairs, the nodes in each pair are permuted by $\tau$.

\begin{Proposition}\label{prop:bdry:str:gp:III}
Let $\pp$ be a point in a stratum $\cS$  in group III. Then every irreducible component of the germ of $\ol{\cX}_D$ at $\pp$ is isomorphic to the germ at $0$ of the analytic set $\cA=\{(t_0,t_1,t_2,)\in \C^3, t_1^{m_1}=t_2^{m_2}\} \subset \C^3$, with $\gcd(m_1,m_2)=1$. 
In this identification, we have $\pp=0$ and $\cA\cap \cS=\{\pp\}$. In particular, the strata in group III consist of finitely many points in $\ol{\cX}_D$.
\end{Proposition}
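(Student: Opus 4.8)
The plan is to adapt the argument for Proposition~\ref{prop:bdry:str:gp:II} to the presence of the vanishing component $C_0$, following the smoothing construction used for the stratum $\cS_{0,2}$ in Proposition~\ref{prop:bdry:str:gp:I}. The first point to record is that in each of $\cS^a_{2,1}, \cS_{3,1}, \cS_{2,2}, \cS_{1,3}$ every irreducible component of $C$ has geometric genus zero (the components called ``genus one'' or ``genus two'' are rational curves with self-nodes). Hence the pointed stable curve $C$ has no continuous moduli, and on each component where $\xi$ is non-trivial the differential is pinned down up to a scalar by its prescribed orders together with the global residue condition of Theorem~\ref{th:twisted:diff}; on $C_0$ the normalized form $\eta$ is likewise unique up to scalar, exactly as in Lemma~\ref{lm:gp:I:diff:on:vanish:comp}. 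Thus $\pp$ is rigid up to the discrete labelling data, which is what will eventually force $\cS$ to be $0$-dimensional. The content of the statement is therefore the transverse structure of $\ol{\cX}_D$ at $\pp$.

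Next I would build a plumbing chart around $\pp$, as in \textsection\ref{subsec:str:gp:II:loc:coord}. The crucial structural observation is that the nodes of $C$ split, under $\tau$, into (i) the nodes incident to $C_0$, at which $\xi$ vanishes on the $C_0$-side and has a high-order zero on the opposite side, and (ii) exactly two $\tau$-exchanged pairs of non-separating nodes, at which $\xi$ has simple poles with residues proportional to $1$ and to $\alpha:=\res_{r_2}(\xi)/\res_{r_1}(\xi)$. As in the $\cS_{0,2}$ construction, the nodes in group (i) must all be smoothed simultaneously by a single parameter $t_0$: the requirement that the degenerating differential converge to a nonzero multiple $t_0^{k}\eta$ on $C_0$ ties the plumbing speeds at these nodes together through fixed powers of $t_0$. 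The two pairs in (ii) are smoothed $\tau$-equivariantly by parameters $t_1,t_2$. This produces a holomorphic injection $\Phi:(\Delta_{\delta^2})^{3}\to\Pb\Omega'\ol{\cB}_{4,1}(2,2)$. Paralleling Lemma~\ref{lm:neigh:p:in:Sb:20:germs}, the function $\varphi(\xx)=\omega(c_2)/\omega(c_1)$ is holomorphic near $\pp$ and, by Proposition~\ref{prop:stable:cyl:dec} and Proposition~\ref{prop:cyl:dec:length:ratios:22}, is locally constant equal to $\alpha$ on $\ol{\cX}_D$; since $d\varphi(\pp)\neq0$, the level set $\varphi^{-1}(\alpha)$ is smooth of dimension $3$ and coincides with the image of $\Phi$, so every irreducible component of the germ of $\ol{\cX}_D$ at $\pp$ lies inside $\Phi((\Delta_{\delta^2})^3)$.

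The defining equation is then read off from the eigenform condition. For a smooth surface $X=\Phi(t_0,t_1,t_2)$ with $t_1t_2\neq0$ I would, on a dense open subset, fix a symplectic basis $(a_1,b_1,a_2,b_2)$ of $H_1(X,\Z)^-$ with $a_1,a_2$ the vanishing cycles of the two pole-node pairs, and invoke Proposition~\ref{prop:eigen:form:per:eq} to obtain $T=\left(\begin{smallmatrix} e\Id_2 & 2B\\ B^* & 0\end{smallmatrix}\right)$ together with the period relations \eqref{eq:explicit:rel:per:symp:basis}. Computing the periods of the transverse cycles $b_1,b_2$ as in Claim~\ref{clm:str:Sb:20:per:beta}, each picks up a term $\tfrac{1}{\pi\imath}\log t_i$ from crossing the plumbed annuli, plus a contribution that is holomorphic in $t_0$; feeding this into $T^*\omega=\lambda\omega$ yields, exactly as in \eqref{eq:XD:near:str:Sb:20}, a relation of the form
\[
t_1^{m_1}=t_2^{m_2}\exp(\phi(t_0)),
\]
where $m_1,m_2$ are the coprime integers produced by the prototype entries and $\phi$ is holomorphic. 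Since $\exp(\phi(t_0))$ is a unit, absorbing an $m_1$-th root into the $t_1$-coordinate identifies each irreducible component of the germ with $\{t_1^{m_1}=t_2^{m_2}\}\subset\C^3$, leaving $t_0$ as the free third coordinate. Finally $\cS$ is cut out by $t_0=t_1=t_2=0$: setting $t_0\neq0$ reattaches $C_0$ to its neighbours and setting some $t_i\neq0$ removes a pair of poles, so in either case the topological type of $(C,\xi)$ changes and one leaves $\cS$; hence $\cA\cap\cS=\{\pp\}$ and $\cS$ is a finite union of points.

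The main obstacle I anticipate is precisely the analysis at the vanishing component: I must verify, case by case in $\cS^a_{2,1}, \cS_{3,1}, \cS_{2,2}, \cS_{1,3}$, that the nodes incident to $C_0$ can only be smoothed through a single parameter $t_0$ (equivalently, that no other one-parameter family of differentials on $C_0$ is admissible), and determine the correct powers of $t_0$ so that the model differentials glue to a genuine section of $\Omega'\ol{\cB}_{4,1}(2,2)$ with the prescribed double zeros at $p_5,p'_5$. The period computation, although formally identical to Claim~\ref{clm:str:Sb:20:per:beta}, also needs care because the transverse cycles now run through the collapsing region near $C_0$, and one has to check that this part of the integral is holomorphic in $t_0$ and contributes no further logarithmic terms in $t_1,t_2$. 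Once this is settled, confirming in each case that there are exactly two pole-node pairs (hence two well-defined exponents $m_1,m_2$) is routine combinatorics on the diagrams of Theorem~\ref{th:bdry:eigen:form:H22}.
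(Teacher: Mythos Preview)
Your proposal is correct and follows essentially the same route as the paper's sketch: build a three-parameter plumbing chart $\Phi$ with $t_0$ synchronizing the nodes incident to $C_0$ and $t_1,t_2$ for the two $\tau$-pairs of simple-pole nodes, trap the germ inside $\Phi(\Bb)$ via the locally constant ratio $\varphi$, and then read off the relation $t_1^{m_1}=t_2^{m_2}\exp(\tilde\phi)$ from the period identities of Proposition~\ref{prop:eigen:form:per:eq}. Two small calibrations against the paper: the symplectic completion $b_1,b_2$ is taken in $H_1(X,\Q)^-$ rather than $H_1(X,\Z)^-$, and the holomorphic remainder $\tilde\phi$ is allowed to depend on all of $(t_0,t_1,t_2)$ (not just $t_0$); neither affects your conclusion, since $t_1\mapsto t_1\exp(-\tilde\phi/m_1)$ is still a legitimate coordinate change near the origin.
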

\begin{proof}[Sketch of proof]
Let us denote by $r_i, r'_i, \, i=1,2$, the nodes at which $\xi$ has simple poles, where $r_i$ and $r'_i$ are permuted by $\tau$.
Set
$$
\alpha:=\frac{\res_{r_2}(\xi)}{\res_{r_1}(\xi)}.
$$
\begin{Claim}\label{clm:str:gp:III:ratio:res:const}
The number $\alpha$ is real and belongs to a finite subset of   $\R$. Moreover, for every $\xx=(X,\ul{x},\tau_X,[\omega])$ in the germ of $\ol{\cX}_D$ at $\pp$, we have
$$
\left(\int_{c_2}\omega\right)/\left(\int_{c_1}\omega\right)=\alpha.
$$
where $c_1$ (resp. $c_2$) is a simple closed curve on $X$ that is mapped to $r_1$ (resp. to $r_2$) by a degenerating map $f: X\to C$.
\end{Claim}
\begin{proof}
We first notice that $\varphi(\xx):=\left(\int_{c_2}\omega\right)/\left(\int_{c_1}\omega\right)$ is a well defined holomorphic function on a neighborhood of $\pp$ in $\Pb\Omega\ol{\cB}_{4,1}$.
For all $\xx\in \cX_D$, the nodes $\{r_i, r'_i\}$ correspond to either an invariant cylinder, or a pair of cylinders on $(X,\omega)$ permuted by $\tau_X$.
Since the moduli of those cylinders are large, they must be parallel, and therefore belong to the same cylinder decomposition of $(X,\omega)$.
By Proposition~\ref{prop:stable:cyl:dec}, we can suppose that the associated cylinder decomposition of $(X,\omega)$ is stable, thus given by one of the models in Proposition~\ref{prop:stable:cyl:dec:models:H22}.
Since $c_i$ is a core curve of the cylinder(s) associated to $\{r_i, r'_i\}$, $\varphi(\xx)$ is actually the ratio of the lengths of the corresponding cylinders. 
By Proposition~\ref{prop:cyl:dec:length:ratios:22}, the restriction of $\varphi$ to an open subset of $\cX_D$ containing $\xx$ takes values in a finite subset of $\R$.
Thus $\varphi$ is constant on each irreducible component of $\ol{\cX}_D$ in a neighborhood of $\pp$.
By definition we have $\varphi(\pp)=\alpha$. Thus $\varphi\equiv \alpha$ on all irreducible components of the germ of $\ol{\cX}_D$ at $\pp$.
\end{proof}

In all cases the component $C_0$ contains the marked points $\{p_5,p'_5\}$.
It follows from Theorem~\ref{th:twisted:diff} that $C_0$ carries a meromorphic Abelian differential $\eta_0$ that vanishes to the order $2$ at $p_5, p'_5$ and has poles with prescribed orders at the nodes incident to $C_0$.
The residues of $\eta_0$ at the nodes incident to $C_0$ are all zero (since all of these nodes are fixed by $\tau$).
Since $C_0$ is isomorphic to $\Pb^1$, these conditions determine $\eta_0$ up to a multiplicative scalar.

Let $C_j,\; j=1,\dots,m$, be the irreducible components of $C$ different from $C_0$. Then $\xi_j:=\xi_{\left|C_j\right.}$ is a non-trivial Abelian differential with at most simple poles on $C_j$.
The nodes between $C_j$ and $C_0$ are either regular points or zeros of $\xi_j$, while the self-nodes of $C_j$ (if any) and the nodes between $C_j$ and the other components of $C$ are simple poles of $\xi_j$. The condition that $\res_{r_2}(\xi)/\res_{r_1}(\xi)=\alpha$ then determines $\xi_j$ up to a multiplicative scalar. 

Let $r$ be a node of $C$.
\begin{itemize}
\item[$\bullet$] If $r$ is a node between $C_0$ and another component $C_j$, we specify a neighborhood $U$ of $r$ in $C_0$ and a neighborhood $V$ of $r$ in $C_j$ together with local coordinates $u$ on $U$, $v$ on  $V$ such that
$\zeta_{0\left|U\right.}=u^{-k(r)-1}du$,  $\xi_{j\left|V\right.}=v^{k(r)-1}dv$. Note that we always have $k(r)\geq 1$.

\item[$\bullet$] If $r$ is not incident to $C_0$, then let $C_{j}$ and $C_{j'}$, with $j,j'\in \{1,\dots,m\}$ (it may happen that $j=j'$), be the components that contain $r$. We choose a neighborhood $W$ of $r$ in $C_j$ and a neighborhood  $W'$ of $r$ in $C_{j'}$ together with local coordinates $w$ on $W$ and $w'$ on $W'$ such that
\begin{itemize}
\item[.] if $r\in \{r_1, r'_1\}$ then $\xi_{j\left|W\right.}=\frac{1}{2\pi\imath}\cdot \frac{dw}{w}$ and $\xi_{j'\left|W'\right.}=\frac{-1}{2\pi\imath}\cdot \frac{dw'}{w'}$, 

\item[.] if $r\in \{r_2, r'_2\}$ then $\xi_{j\left|W\right.}=\frac{\alpha}{2\pi\imath}\cdot \frac{dw}{w}$ and $\xi_{j'\left|W'\right.}=\frac{-\alpha}{2\pi\imath}\cdot\frac{dw'}{w'}$.
\end{itemize}

\end{itemize}

We can now use the data of $\{(C_0,\eta_0),(C_1,\xi_1),\dots,(C_m,\xi_m)\}$ to construct a holomorphic map $\Phi: \Bb \to \Pb\Omega'\ol{\cB}_{4,1}(2,2)$, where $\Bb$ is a small ball about $0$ in $\C^3$, as follows:
for all $t:=(t_0,t_1,t_2)\in \Bb$, the curve $C_t$ underlying $\Phi(t)$ is obtained from $C$ by smoothing its nodes in the following way
\begin{itemize}
\item[$\bullet$] Any node $r$ incident to $C_0$ corresponds to a collar on $C_t$ isomorphic to
\[
\{(u,v) \in \C^2, \; |u| <  \delta, |v| < \delta, uv=t_0^{n(r)}\},
\]
for some $\delta\in \R_{>0}$ and $n(r) \in \Z_{>0}$.  The numbers $n(r)$ 	are chosen so that $n(r)k(r)=n(r')k(r')$ if  $r$ and $r'$ are both incident to $C_0$, and
$$
\gcd\{n(r), \; \hbox{$r$ incident to $C_0$}\}=1. 
$$
\item[$\bullet$] Each of the nodes  $\{r_1, r'_i\}, \; i=1,2$, corresponds to a collar in $C_t$ isomorphic to
$$
\{(w,w')\in \C^2, \; |w| < \delta, |w'|
< \delta, \; ww'=t_i\}.
$$
\end{itemize}
Let $n$ be the common value of the products $n(r)k(r)$ with $r$ incident to $C_0$.  
The Abelian differentials $t^n_0\eta_0$, and $\{\xi_j, \; j=1,\dots,m\}$ induce a family of differentials each of which is defined on an open sub-surface of $C_t$. By construction, the differentials in this family coincide on the overlaps of the sub-surfaces. As a consequence, we obtained a well defined Abelian differential $\omega_t$ on $C_t$.
It also follows from the construction that $C_t$ inherits from $C$ an involution $\tau_t$ with four fixed points such that $\tau^*_t\omega_t=-\omega_t$.
The data of $(C_t,\tau_t,\omega_t)$ thus defines an element of $\Omega' \ol{\cB}_{4,1}$. 
Note that $\omega_t$ has two double zeros if $t_0\neq 0$. Therefore $(C_t,\tau_t,\omega_t)\in \Pb\Omega'\ol{\cB}_{4,1}(2,2)$.
By definition, $\Phi(t)$ is the projection of $(C_t,\rho_t,\omega_t)$ in $\Pb\Omega'\ol{\cB}_{4,1}$.
Clearly, we have $\Phi(0)=\pp$.
It is straightforward to check that $\Phi$ is injective, which means that $\Phi$ is a biholomorphic map onto its image.

\medskip

We now claim that $\Phi(\Bb)$ contains all the germs of $\ol{\cX}_D$ at $\pp$.  To see this, consider the function $\varphi$ defined in the proof of Claim~\ref{clm:str:gp:III:ratio:res:const}.
Recall that $\varphi$ is a well defined holomorphic function on a neighborhood $\cU$ of $\pp$ in $\Pb\Omega'\ol{\cB}_{4,1}(2,2)$.
It is a well known fact that $\pp$ is a regular point for $\varphi$. Thus $\varphi^{-1}(\{\alpha\}) \cap \cU$ is a $3$-dimension complex manifold.
By construction, $\Phi(\Bb)\subset \varphi^{-1}(\{\alpha\})$.
It follows that $\Phi(\Bb)$ is an open neighborhood of $\pp$ in $\varphi^{-1}(\{\alpha\})$ and the claim follows.

\medskip

Let $\cA$ be an irreducible component of the germ of $\ol{\cX}_D$ at $\pp$.
By the above claim, we can assume that $\cA\subset\Phi(\Bb)$.
Consider a point  $\xx=(X,\ul{x},\tau_X,[\omega])$ in $\cA\cap\cX_D$. Let $a_1=c_1-c'_1$ and $a_2=c_2-c'_2$, where $c'_i=\tau_X(c_i)$ is a simple closed curve on $X$ which is mapped to the node $r'_i$ on $C$.
Clearly we have  $a_1,a_2\in H_1(X,\Z)^-$.
We can find $b_1,b_2 \in H_1(X,\Q)^-$ such that $\{a_1,b_1,a_2,b_2\}$ is a symplectic basis of $H_1(X,\Q)^-$. 
By the arguments of  Proposition~\ref{prop:eigen:form:per:eq}, there exists $(a,b,d,e)\in \Q^4$ such that we have
\begin{equation}\label{eq:symp:basis:per:rel}
\omega(a_2)=\frac{a}{\lambda}\omega(a_1), \quad \text{ and } \quad  \omega(b_2)= \frac{b}{\lambda}\omega(a_1)+\frac{d}{\lambda}\omega(b_1) ,
\end{equation}
where $\lambda \in \R_{>0}$ satisfies $\lambda^2-e\lambda-ad=0$.
By assumption, we have
\begin{equation}\label{eq:ratio:res}
\frac{a}{\lambda}=\frac{\omega(a_2)}{\omega(a_1)}=2\alpha.
\end{equation}
By the same arguments as in the proof of Proposition~\ref{prop:bdry:str:gp:II}, we can write
\[
\omega(b_1)=\frac{\ln(t_1)}{2\pi\imath}+\phi_1(t), \quad \omega(b_2)= \frac{\alpha\ln(t_2)}{2\pi\imath}+\phi_2(t)
\]
where $\phi_1, \phi_2$ are holomorphic functions on $\Bb$.
Combine with \eqref{eq:symp:basis:per:rel}, we get that
\[
\alpha\ln(t_2)=\frac{d}{\lambda}\ln(t_1)+\phi(t)
\]
where $\phi$ is holomorphic on $\Bb$. Since $\alpha=\frac{a}{2\lambda}$, we get
$$
a\ln(t_2)=2d\ln(t_1)+2\lambda\phi(t)
$$
and therefore
\begin{equation}\label{eq:str:grp:III:a}
t_2^{m_2}=t_1^{m_1}\exp(\tilde{\phi}(t))
\end{equation}
for some $m_1, m_2 \in \Z_{>0}$ such that $\gcd(m_1,m_2)=1$ and $\tilde{\phi}$ a holomorphic function on $\Bb$.
Up to a change of coordinates, \eqref{eq:str:grp:III:a} is equivalent to $t_2^{m_2}=t_1^{m_1}$. In particular, the analytic subset $\tilde{\cA}$ of $\Bb$ defined by \eqref{eq:str:grp:III:a} has dimension 2. Since $\dim \cA=\dim \tilde{\cA}$, $\tilde{\cA}$ contains an open subset of $\cA$, and both $\tilde{\cA}$ and $\cA$ are irreducible, we conclude that $\tilde{\cA}=\cA$. The proposition is then proved.
\end{proof}

\subsection{Strata of group IV}\label{subsec:bdry:str:gp:IV}
There are two strata in group IV: $\cS_{2,1}^b$ and $\cS^c_{2,1}$. If $\pp$ is a point in one of those strata, then the curve $C$ has four irreducible components and six nodes. The differential $\xi$ has simple poles at all the nodes. In particular, $\xi$ is non-trivial on all components of $C$.

\begin{Proposition}\label{prop:bdry:str:gp:IV}
The strata of group IV consist of finitely many isolated points.
Every irreducible component of the germ of $\ol{\cX}_D$ at each of these points is isomorphic to the germ at $0\in \C^3$ of a surface $\{t_0^{m_0}=t_1^{m_1}t_2^{m_2}, \;  (t_0,t_1,t_2) \in\C^3\}$ with $(m_0,m_1,m_2) \in \Z_{>0}^3$ such that $\gcd(m_0,m_1,m_2)=1$.
\end{Proposition}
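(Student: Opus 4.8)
The plan is to follow the scheme used in the proofs of Propositions~\ref{prop:bdry:str:gp:II} and~\ref{prop:bdry:str:gp:III}: first produce an explicit three-parameter plumbing family containing the germ of $\ol{\cX}_D$ at $\pp$, then read off the equation of $\ol{\cX}_D$ inside this family from the eigenform period relations of Proposition~\ref{prop:eigen:form:per:eq}. Fix $\pp=(C,p_1,\dots,p_5,p'_5,\tau,[\xi])$ in $\cS^b_{2,1}$ or $\cS^c_{2,1}$. In both cases $C$ has six nodes, grouped into three $\tau$-orbits of cardinality two, whose representatives I will denote $r_0,r_1,r_2$ (with $\tau$-images $r'_0,r'_1,r'_2$). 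Since $\xi$ has only simple poles, on each rational component the residue theorem together with $\tau^*\xi=-\xi$ determines the residues: writing $\rho_i:=\res_{r_i}(\xi)$, I would first check that these three numbers obey exactly one linear relation (for $\cS^b_{2,1}$ this is $\rho_0+\rho_1=2\rho_2$ up to labelling, from the residue theorem on the component meeting all three orbits; for $\cS^c_{2,1}$ it is $\rho_1=-\rho_0$ with $\rho_2$ free). Hence, projectively, the residue data depends on a single ratio $\alpha:=\rho_1/\rho_0$. By complete periodicity (\textsection\ref{subsec:commplete:per:n:cyl:dec}) the three orbits correspond to parallel cylinders on nearby surfaces $(X,\omega)\in\cX_D$, so by Propositions~\ref{prop:stable:cyl:dec} and~\ref{prop:cyl:dec:length:ratios:22} the ratio $\alpha$ is real and lies in a finite set; this already forces the data underlying $\pp$ to range over a finite set.

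Next I would build the plumbing map $\Phi\colon\Bb\to\Pb\Omega'\ol{\cB}_{4,1}(2,2)$, with $\Bb$ a small ball around $0$ in $\C^3$, exactly as in \textsection\ref{subsec:str:gp:II:loc:coord}. On each component I normalise $\xi$ so that near $r_i$ (resp. $r'_i$) it reads $\tfrac{\rho_i}{2\pi\imath}\tfrac{dz}{z}$ (resp. $-\tfrac{\rho_i}{2\pi\imath}\tfrac{dz}{z}$), with $\rho_0,\rho_1=\alpha\rho_0,\rho_2$ subject to the relation above, and then smooth the three node-orbits by the relations $zw=t_0,\ zw=t_1,\ zw=t_2$, gluing $\tau$-invariantly. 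This produces $(C_t,\tau_t,\omega_t)\in\Omega'\ol{\cB}_{4,1}(2,2)$ with two double zeros when $t\neq0$, hence a holomorphic embedding $\Phi$ with $\Phi(0)=\pp$. To see that $\Phi(\Bb)$ contains the whole germ of $\ol{\cX}_D$ at $\pp$, I would introduce $\varphi(\xx):=\omega(c_1)/\omega(c_0)$, where $c_i$ is a core curve of the cylinder attached to the orbit of $r_i$ (so $\omega(c_i)=2\pi\imath\,\res_{r_i}\omega$ in the limit). As in the previous two propositions, $\varphi$ is holomorphic near $\pp$, equals $\alpha$ there, and is locally constant on $\ol{\cX}_D$ by Propositions~\ref{prop:stable:cyl:dec} and~\ref{prop:cyl:dec:length:ratios:22}; since $\pp$ is a regular point of $\varphi$, the level set $\varphi^{-1}(\{\alpha\})$ is a smooth threefold, $\Phi(\Bb)$ is an open neighbourhood of $\pp$ in it, and the germ of $\ol{\cX}_D$ at $\pp$ (which has dimension $2$) is contained in $\Phi(\Bb)$.

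It remains to compute the equation of an irreducible component $\cA\subset\Phi(\Bb)$ of the germ. For $\xx=(X,\ul{x},\tau_X,[\omega])\in\cA\cap\cX_D$ I would choose a symplectic basis $\{a_1,b_1,a_2,b_2\}$ of $H_1(X,\Q)^-$ in which $a_1,a_2$ are integral combinations of the three core curves $c_0,c_1,c_2$, so that the $a$-periods are real combinations of $\rho_0,\rho_1,\rho_2$ and are therefore controlled by $\alpha$ alone, while the transverse periods expand as $\omega(b_j)=\sum_{i}\tfrac{n_{ji}}{2\pi\imath}\ln t_i+(\text{holomorphic in }t)$ by the same local integration as in Claim~\ref{clm:str:Sb:20:per:beta}. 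Feeding these expansions into the eigenform relations $(\omega(a_2)\ \omega(b_2))=\tfrac{2}{\lambda}(\omega(a_1)\ \omega(b_1))\,B$ of Proposition~\ref{prop:eigen:form:per:eq}, the first relation fixes $\alpha$ at its expected value and kills the off-diagonal coefficient of $B$ (just as $c=0$ appears in \textsection\ref{subsec:geom:bdry:str:gp:II}), while the second relation, because the orbit $r_0$ contributes a logarithm to both $\omega(b_1)$ and $\omega(b_2)$, collapses to a single logarithmic identity $m_0\ln t_0=m_1\ln t_1+m_2\ln t_2+\psi(t)$ with $\psi$ holomorphic and $m_0,m_1,m_2\in\Z_{>0}$. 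Exponentiating and dividing the exponents by their gcd gives $t_0^{m_0}=t_1^{m_1}t_2^{m_2}\exp(\tilde\psi)$ with $\gcd(m_0,m_1,m_2)=1$, which after a coordinate change becomes $t_0^{m_0}=t_1^{m_1}t_2^{m_2}$; a dimension count then identifies this analytic surface with $\cA$. Finally, the stratum $\cS$ containing $\pp$ meets $\cA$ exactly where all six nodes persist, i.e. where $t_0=t_1=t_2=0$, a single point, proving that each stratum of group IV is a finite set of isolated points.

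The main obstacle I anticipate is the period computation of the last paragraph. Unlike group III, here there is no component on which $\xi$ vanishes, so all three plumbing parameters are ``cylinder'' directions and all three logarithms $\ln t_0,\ln t_1,\ln t_2$ genuinely occur in the transverse periods; the delicate point is to choose the homology basis so that the two scalar eigenform equations do not produce two independent relations (which would wrongly cut the germ down to a curve) but exactly one, the first equation being spent on fixing the residue ratio $\alpha$ and the diagonal form of $B$. Verifying this requires keeping precise track of which cylinder each of $b_1,b_2$ crosses and using the residue relation ($\rho_0+\rho_1=2\rho_2$, resp. $\rho_1=-\rho_0$) to see that the shared orbit $r_0$ is precisely what makes the single surviving relation involve all three variables, yielding the monomial $t_0^{m_0}=t_1^{m_1}t_2^{m_2}$ rather than the group-III shape $t_1^{m_1}=t_2^{m_2}$.
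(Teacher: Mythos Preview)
Your overall strategy matches the paper's almost exactly: build a three-parameter plumbing chart $\Phi:\Bb\to\Pb\Omega'\ol{\cB}_{4,1}(2,2)$, show the germ of $\ol{\cX}_D$ sits inside $\Phi(\Bb)$ via the locally constant residue-ratio function $\varphi$, and then extract a single monomial relation from the eigenform period equations. The paper proceeds just this way, so your outline is sound.

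There are, however, two concrete points where your sketch is vaguer than it needs to be and where the paper's argument is sharper. First, rather than analysing residue relations on the components (your relations ``$\rho_0+\rho_1=2\rho_2$'' and ``$\rho_1=-\rho_0$'' are not quite right as stated), the paper simply observes the homological identity $a_0=s_1a_1+s_2a_2$ in $H_1(X,\Z)^-$ for some $s_1,s_2\in\Z$; this is what forces $\langle a_0,b_i\rangle=s_i$ and hence makes $\ln t_0$ appear in both $\omega(b_1)$ and $\omega(b_2)$ with the explicit weights $s_1(s_1+\alpha s_2)$ and $s_2(s_1+\alpha s_2)$. This homological viewpoint is cleaner than trying to read off residue relations component by component, and it directly explains why exactly one nontrivial relation survives.

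Second, and more seriously, you do not address why the resulting logarithmic relation has \emph{integer} (or even rational) coefficients. A priori the substitution of the $b$-periods into $\omega(b_2)=\tfrac{b}{\lambda}\omega(a_1)+\tfrac{d}{\lambda}\omega(b_1)$ produces a coefficient of $\ln t_0$ of the form $as_2^2-ds_1^2+s_1s_2\bigl(\lambda-\tfrac{ad}{\lambda}\bigr)$, which looks irrational. The paper uses the quadratic relation $\lambda^2-e\lambda-ad=0$ to simplify $\lambda-\tfrac{ad}{\lambda}=e$, yielding the integer $as_2^2-ds_1^2+es_1s_2$. Without this step you cannot exponentiate to a monomial equation $t_0^{m_0}=t_1^{m_1}t_2^{m_2}$ with $m_i\in\Z_{>0}$, so this algebraic cancellation is essential and should be made explicit.
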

\begin{proof}[Sketch of proof]
Assume that $\pp$ is a point in $\cS_{2,1}^b\cup\cS^c_{2,1}$.
The nodes of $C$ are partitioned into $3$ pairs, the nodes in each pair are permuted by $\tau$. Let us denote the nodes of $C$ by $r_i, r'_i$, with $i\in \{0,1,2\}$, where $r'_i=\tau(r_i)$. 
%
For every point $\xx=(X,\ul{x},\tau_X,[\omega]) \in \Pb\Omega'\cB_{4,1}$ close enough to $\pp$, there is a degenerating map $f: X \to C$ such that the preimage of every node of $C$ is a simple close curve on $X$, and the restriction of $f$ to the complement of those curves is a homeomorphism onto the complement of the nodes in $C$.
Let $c_i$ and  $c'_i$  be respectively the preimages of $r_i$ and $r'_i$ in $X$. Note that since $\xi$ has simple poles at all the nodes, $c_i$ is non-separating for all $i=0,1,2$. If $c'_i$ is homologous to $-c_i$ then we set $a_i:=c_i$. Otherwise define $a_i:=c_i-c'_i$. By definition, $a_i \in H_1(X,\Z)^-$. We can always suppose that $a_1,a_2$ are part of a symplectic basis $(a_1,b_1,a_2,b_2)$ of $H_1(X,\Q)^-$, where $\langle a_i,b_i \rangle =1, \; i=1,2$. 
We also have 
\begin{equation}\label{eq:str:IV:rel:of:nodes}
a_0=s_1a_1+s_2a_2	
\end{equation} 
with $s_1,s_2 \in \Z$. Note that we have 
\begin{equation}\label{eq:str:IV:rel:intersections}
\langle a_0, b_i\rangle =s_i, \quad i=1,2.	
\end{equation}

The following claim follows from the same argument as Claim~\ref{clm:str:gp:III:ratio:res:const}

\begin{Claim}\label{clm:str:gp:IV:ratio:res:const}
There is a constant $\alpha\in \C$ such that for all $\xx\in \cX_D$ close enough to $\pp$ we have 
$$
\omega(a_2)/\omega(a_1)=\alpha. 
$$
\end{Claim}
Denote the components of $C$ by $\{C_j, \; j=1,\dots,4\}$.
Let $\xi_j$ be the restriction of $\xi$ to $C_j$.
Using the data $\{(C_1,\xi_1),\dots,(C_4,\xi_4)\}$, we define a holomorphic map $\Phi: \Bb\to \Pb\Omega'\ol{\cB}_{4,1}$, where $\Bb$ is small ball about $0$ in $\C^3$, by the standard plumbing constructions with parameters $t_i$ at the nodes $r_i$ and $r'_i$ for $i=0,1,2$.
It is not difficult to see that $\Phi$ is a biholomorphism onto its image.
By construction, we have $\Phi(0)=\pp$.
If $t_i\neq 0$ for all  $i=1,2,3$, then by construction $\Phi(t_0,t_1,t_2)$ is an element of $\Pb\Omega'\cB_{4,1}(2,2)$. It follows that $\Phi(\Bb) \subset \Pb\Omega'\ol{\cB}_{4,1}(2,2)$.
As a consequence of Claim~\ref{clm:str:gp:IV:ratio:res:const} we get
\begin{Claim}\label{clm:str:gp:IV:Psi:contains:germ}
The germ of $\ol{\cX}_D$ at $\pp$ is contained in $\Phi(\Bb)$.
\end{Claim}

Consider now a point $\xx \in \cX_D$ close to $\pp$.  By Claim~\ref{clm:str:gp:IV:Psi:contains:germ}, we can assume that  $\xx=\Phi(t)$, where $t=(t_0,t_1,t_2) \in \Bb$.
The arguments of Proposition~\ref{prop:eigen:form:per:eq}  imply that there exists $(a,b,d,e)\in \Q^4$ such that
\begin{equation}\label{eq:str:IV:per:rel}
\omega(a_2)=\frac{a}{\lambda}\omega(a_1), \quad \text{ and } \quad \omega(b_2) = \frac{b}{\lambda}\omega(a_1)+ \frac{d}{\lambda}\omega(b_1).
\end{equation}
where $\lambda\in \R_{>0}$ satisfies $\lambda^2-e\lambda-ad=0$. It follows from Claim~\ref{clm:str:gp:IV:ratio:res:const} that
\begin{equation}\label{eq:str:IV:alpha:ratio}
\alpha=\frac{a}{\lambda}.
\end{equation}
We can normalize $\omega$ by setting $\omega(a_1)=1$.
Since  $\langle a_0,b_i\rangle= s_i, \; i=1,2$, we have
\begin{align}
\label{eq:str:IV:per:b:1} \omega(b_1) & =\frac{\ln(t_1)}{2\pi\imath}+\frac{s_1(s_1+\alpha s_2)\ln(t_0)}{2\pi\imath}+\phi_1(t), \\
\label{eq:str:IV:per:b:2} \omega(b_2) & =\frac{\alpha\ln(t_2)}{2\pi\imath}+\frac{s_2(s_1+\alpha s_2)\ln(t_0)}{2\pi\imath}+\phi_2(t)
\end{align}
where $\phi_1, \phi_2$ are holomorphic functions on $\Bb$. Combining \eqref{eq:str:IV:per:b:1} and \eqref{eq:str:IV:per:b:2} with  \eqref{eq:str:IV:per:rel} and \eqref{eq:str:IV:alpha:ratio} we get
$$
\omega(b_2)=\frac{a}{\lambda}\frac{\ln(t_2)}{2\pi\imath} + (s_1s_2+s_2^2\frac{a}{\lambda})\frac{\ln(t_0)}{2\pi\imath} +\phi_2(t)
=\frac{d}{\lambda}\left(\frac{\ln(t_1)}{2\pi\imath} +(s_1^2+s_1s_2\frac{a}{\lambda})\frac{\ln(t_0)}{2\pi\imath}\right) + \phi_3(t) 
$$
which implies
\begin{align}
\label{eq:str:IV:log:rel} d\ln(t_1) & =a\ln(t_2)+(as_2^2-ds_1^2)\ln(t_0) +s_1s_2(\lambda-\frac{ad}{\lambda})\ln(t_0) +\phi(t) \\
\nonumber & = a\ln(t_2)+(as_2^2-ds_1^2+es_1s_2)\ln(t_0) + \phi(t) \quad \hbox{(here we used $\lambda^2-e\lambda-ad=0$)}
\end{align}
where $\phi$ is a holomorphic function on $\Bb$. 
Let $\Bb^*:=\{(t_0,t_1,t_2) \in \Bb, \; t_0t_1t_2 \neq 0\}$.
Then $\cX_D$ is contained in the set of $t \in\Bb^*$ which satisfies \eqref{eq:str:IV:log:rel}.
Up to a change of coordinates of $\Bb$,  every irreducible component of the set of $t\in \Bb$ satisfying  \eqref{eq:str:IV:log:rel} is defined by 
\begin{equation}\label{eq:str:IV:poly:rel}
t_0^{m_1}=t_1^{m_1}t_2^{m_2}
\end{equation}
with $(m_0,m_1,m_2)\in \N^3$ such that $\gcd(m_0,m_1,m_2)=1$.
Let $\cA$ be the irreducible component of the analytic set defined  by \eqref{eq:str:IV:poly:rel} that contains $\xx$.  Since $\xx$ is a regular point of $\cX_D$ by assumption, and $\dim\ol{\cX}_D=\dim \cA=2$, $\cA$  must equal an irreducible component of $\ol{\cX}_D$ in a neighborhood of $\pp$. This completes the proof of the proposition.
\end{proof}

\section{The normalization of $\ol{\cX}_D$ and the universal curve}\label{sec:normal:univ:curve}
Let $\hat{\cX}_D$ be the normalization of the space $\ol{\cX}_D$.
As a consequence of the results of \textsection~\ref{sec:geometry:bdry:XD}, we get
\begin{Proposition}\label{prop:norm:orbifold}
The space $\hat{\cX}_D$ is a complex orbifold.
\end{Proposition}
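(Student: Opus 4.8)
The plan is to check the orbifold condition locally on $\hat{\cX}_D$, exploiting that normalization is a local analytic operation which both commutes with passing to open subsets and separates the irreducible branches of a germ. Concretely, the fibre of the normalization map $\hat{\cX}_D\to\ol{\cX}_D$ over a point $\pp\in\ol{\cX}_D$ is in bijection with the irreducible components of the germ $(\ol{\cX}_D,\pp)$, and a neighbourhood in $\hat{\cX}_D$ of a preimage $\hat{\pp}$ is the normalization of the corresponding branch. Thus it suffices to produce, for each such branch, a chart of the form $\tilde{U}/G$ with $\tilde{U}$ a smooth surface and $G$ finite. Over the interior there is nothing to prove: by Proposition~\ref{prop:adm:cov:e:forms:degree}, $\cX_D$ is a finite cover of the orbifold $\Pb\Omega E_D(2,2)^\odd$, hence is itself a complex orbifold and in particular normal, so $\hat{\cX}_D\to\ol{\cX}_D$ is an isomorphism above $\cX_D$.

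It then remains to examine the boundary points $\pp\in\partial\ol{\cX}_D$ group by group, using the local models obtained in \textsection\ref{sec:geometry:bdry:XD}. First, if $\pp$ belongs to a stratum of group I, Proposition~\ref{prop:bdry:str:gp:I} already states that $\pp$ is a smooth orbifold point of $\ol{\cX}_D$; hence $\ol{\cX}_D$ is normal at $\pp$, the normalization is a local isomorphism, and the orbifold chart is inherited. Next, if $\pp$ belongs to a stratum of group II or III, then by Propositions~\ref{prop:bdry:str:gp:II} and~\ref{prop:bdry:str:gp:III} every branch of $(\ol{\cX}_D,\pp)$ is isomorphic to the germ at $0$ of a product $\C\times\{t_1^{m_1}=t_2^{m_2}\}\subset\C^3$ with $\gcd(m_1,m_2)=1$. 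Since $\gcd(m_1,m_2)=1$ the plane curve $\{t_1^{m_1}=t_2^{m_2}\}$ is unibranch, its normalization being $\C\to\C^2,\ s\mapsto(s^{m_2},s^{m_1})$; consequently the branch is normalized by the smooth map $\C^2\to\C^3,\ (z,s)\mapsto(z,s^{m_2},s^{m_1})$, so $\hat{\pp}$ is a manifold point of $\hat{\cX}_D$.

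Finally, if $\pp$ belongs to a stratum of group IV, then by Proposition~\ref{prop:bdry:str:gp:IV} every branch of $(\ol{\cX}_D,\pp)$ is isomorphic to the germ at $0$ of the binomial hypersurface $\{t_0^{m_0}=t_1^{m_1}t_2^{m_2}\}\subset\C^3$ with $\gcd(m_0,m_1,m_2)=1$. I would first note that the gcd hypothesis forces this hypersurface to be irreducible (by Capelli's criterion over $\C$: a prime dividing $m_0$ and both $m_1,m_2$ would divide the gcd), and that it is an affine toric surface, namely the closure of the subtorus $\{t_0^{m_0}=t_1^{m_1}t_2^{m_2}\}\cap(\C^*)^3$. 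Its normalization is therefore a normal affine toric surface of dimension two; as every two-dimensional rational cone is simplicial, this is a cyclic quotient singularity $\C^2/\mu_n$, which is exactly an orbifold chart around $\hat{\pp}$. Combining the four cases with the interior yields an orbifold chart around every point of $\hat{\cX}_D$.

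The whole weight of the argument rests on the local descriptions of \textsection\ref{sec:geometry:bdry:XD}; granting those, the only step that needs genuine care is the group IV case, where one must identify the normalization of the toric surface $\{t_0^{m_0}=t_1^{m_1}t_2^{m_2}\}$ with a cyclic quotient singularity rather than a smooth point.
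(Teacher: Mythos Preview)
Your proof is correct and follows essentially the same case-by-case strategy as the paper, invoking the same local models from \textsection\ref{sec:geometry:bdry:XD}. The only substantive difference is in the group~IV case: you invoke general toric geometry (normalization of a toric surface is a normal toric surface, and two-dimensional cones are simplicial, hence cyclic quotient singularities), whereas the paper writes down the normalizing map $(s,t)\mapsto\bigl(s^{m_1/\gcd(m_0,m_1)}t^{m_2/\gcd(m_0,m_2)},\,s^{m_0/\gcd(m_0,m_1)},\,t^{m_0/\gcd(m_0,m_2)}\bigr)$ explicitly and identifies the cyclic group as $\Z/m$ with $m=m_0/\bigl(\gcd(m_0,m_1)\gcd(m_0,m_2)\bigr)$, citing \cite[\textsection 8]{Bai:GT} and \cite[\textsection III.6]{BHPVdV}. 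Your abstract argument is cleaner for the purpose at hand; the paper's explicit description, however, is reused later in \textsection\ref{sec:normal:univ:curve} to build the modification $\tilde{\cC}_D$ of the universal curve, so the concrete formulas are not wasted effort.
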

\begin{proof}
Since the local branches of $\ol{\cX}_D$ are separated in $\hat{\cX}_D$, it is enough to show that the normalization of every irreducible component of $\ol{\cX}_D$ at a point $\pp \in \ol{\cX}_D$ has at worst finite quotient singularities.
This is obvious if $\pp$ is a point in $\cX$.
Thus we only need to consider the case $\pp\in \partial\ol{\cX}_D=\ol{\cX}_D\setminus\cX_D$.
If $\pp$ belongs to a stratum of Group I then by Proposition~\ref{prop:bdry:str:gp:I} $\ol{\cX}_D$ is smooth at $\pp$, and we have nothing to prove.
If $\pp$ belongs to a stratum of group II or a stratum of group III, then by Proposition~\ref{prop:bdry:str:gp:II} and Proposition~\ref{prop:bdry:str:gp:III}, any irreducible local component of $\ol{\cX}_D$ at $\pp$ is isomorphic to the germ at $0$ of the set $\cA=\{(t_0,t_1,t_2)\in \C^3, \; t_1^{m_1}=t_2^{m_2}\}$, where $(m_1,m_2)\in \Z_{>0}^2$ satisfies $\gcd(m_1,m_2)=1$. Since the normalization of $\cA$ is $\C^2$ with the normalizing map $(t_0,t) \mapsto (t_0, t^{m_2}, t^{m_1})$, all the preimages of $\pp$ are smooth points in $\hat{\cX}_D$.
Finally, if $\pp$ is a point in a stratum of group IV, then by Proposition~\ref{prop:bdry:str:gp:IV}, any irreducible local component of $\ol{\cX}_D$ at $\pp$ is isomorphic to the germ at $0$ of the set $\cA=\{(t_0,t_1,t_2)\in \C^3, \; t_0^{m_0}=t_1^{m_1}t_2^{m_2}\}$, where $(m_0,m_1,m_2)\in \Z_{>0}^3$ satisfies $\gcd(m_0,m_1,m_2)=1$. It is a well known fact that the normalization $\hat{\cA}$ of $\cA$ is a quotient of $\C^2$ by an action of the cyclic group $\Z/m$, where $m=\frac{m_0}{\gcd(m_0,m_1)\gcd(m_0,m_2)}$, and the normalizing map $\hat{\cA}\to \cA$ is induced by the map
$$
(s,t) \in \C^2 \mapsto (s^{\frac{m_1}{\gcd(m_0,m_1)}}t^{\frac{m_2}{\gcd(m_0,m_2)}}, s^{\frac{m_0}{\gcd(m_0,m_1)}}, t^{\frac{m_0}{\gcd(m_0,m_2)}}) \in \cA.
$$
Note that the action of $\Z/m$ on $\C^2$ is generated by $(s,t) \mapsto (\zeta_m s, \zeta^k_m t)$, where $\zeta_m=\exp(2\pi\imath/m)$, and $k\in \Z$ is such that $\frac{km_2}{\gcd(m_0,m_2)}=-\frac{m_1}{\gcd(m_0,m_1)} \mod m$ (see \cite[\textsection 8]{Bai:GT} or \cite[\textsection III.6]{BHPVdV} for more details). In particular, all the points in the preimage of $\pp$ in $\hat{\cX}_D$ are finite quotient singularities.
Thus, we can  conclude that $\hat{\cX}_D$ is an orbifold.
\end{proof}

Let $\nu: \hat{\cX}_D \to \ol{\cX}_D$ be the normalizing map. 
Since the restriction of $\nu$ to $\nu^{-1}(\cX_D)$ is an isomorphism, we can consider $\cX_D$ as an open dense subset in  $\hat{\cX}_D$. The set $\partial \hat{\cX}_D:=\hat{\cX}_D\setminus \cX_D$ is called the {\em boundaries}  of $\hat{\cX}_D$. 
In what follows, we will label the  strata of $\partial \hat{\cX}_D$ by the same notation as their direct image in $\partial \ol{\cX}_D$.

Let $\hat{\cC}_D$ be the pullback of the universal curve on $\ol{\cX}_D$ to $\hat{\cX}_D$.
For $i=1,\dots,4$, there is a section of the projection $\hat{\pi}: \hat{\cC}_D \to \hXD$ which map associates to each $\pp=(C,p_1,\dots,p_5,p'_5,\tau,[\xi])$  the marked point $p_i$ on the fiber $\hat{\pi}^{-1}(\{\pp\}) \simeq C$. Denote by $\Sigma_i$ the image of this section.
Note that $\Sigma_i$ is a divisor in $\hat{\cC}_D$.
We have another divisor in $\hat{\cC}_D$ which intersects the fiber $\pi^{-1}(\{\pp\})$ at the points $p_5$ and $p'_5$. We denote this divisor by $\Sigma_5$. By a slight abuse of language, we will also call $\Sigma_5$ a section of $\hat{\pi}$.

We will translate the volume of $\cX_D$ into intersections of  cohomology classes on $\hat{\cC}_D$.
To this purpose, it is essential that the complex space underlying $\hat{\cC}_D$ has an orbifold structure.
Unfortunately, this is not the case in general.
For this reason, we need to consider a modification $\tilde{\cC}_D$ of $\hat{\cC}_D$ which is an orbifold with the following properties: let $\tilde{\pi}: \tilde{\cC}_D \to \hat{\cX}_D$ be the composition of the map $\varphi: \tilde{\cC}_D \to \hat{\cC}_D$ and the  projection $\hat{\pi}:\hat{\cC}_D \to \hat{\cX}_D$. Then
\begin{itemize}
\item[$\bullet$] all the fibers of $\tilde{\pi}$ are semistable curves,

\item[$\bullet$] $\tilde{f}$ restricts to an isomorphism from $\tilde{\pi}^{-1}(\cX_D)$ onto $\hat{\pi}^{-1}(\cX_D)$,

\item[$\bullet$] $\Sigma_i, \; i=1,\dots,5$, extends  to $\tilde{\cC}_D$ as section of $\tilde{\pi}$.
\end{itemize}
It is a well known fact that such a modification of $\hat{\cC}_D$ always exists. In what follows, we will give an explicit construction of $\tilde{\cC}_D$ adapted to our situation. The detailed description of $\tilde{\cC}_D$  is useful for the computations in \textsection~\ref{sec:curv:curent:vol}.

We will construct the space $\tilde{\cC}_D$ by gluing together analytic sets arising from neighborhoods of points in $\hat{\cC}_D$ possibly with some modification.
We call a point in $\hat{\cC}_D$ a {\em regular point} if it is either a smooth point or a finite quotient singularity.
Our construction of $\tilde{\cC}_D$ does not modify the analytic structure in a neighborhood of regular points.
In what follows $\qq$ will be a point in $\hat{\cC}_D$ and $B$ a neighborhood of $\qq$.
Let $\pp:=\hat{\pi}(\qq) \in \hat{\cX}_D$, and denote by $C_\pp$ the fiber $\hat{\pi}^{-1}(\{\pp\})$.

If $\qq$ is a smooth point on the curve $C_\pp$ then $\qq$ is a regular point in $\hat{\cC}_D$. In this case, we do not make any change to $B$.
From now on we will only focus on the case where $\qq$ is a node on the fiber $C_\pp$.

\begin{itemize}
\item[(a)] If $\pp$ is a point in a stratum of group I, then $\pp$ is a smooth point of $\hat{\cX}_D$ by  Proposition~\ref{prop:bdry:str:gp:I}.  As a consequence $\qq$ is a regular point of $\hat{\cC}_D$. In this case, we leave the neighborhood $B$ of $\qq$ unchanged.\\

\item[(b)] If $\pp$ is a point in a stratum of group II or of group III, then by Proposition~\ref{prop:bdry:str:gp:II} and Proposition~\ref{prop:bdry:str:gp:III}  $\pp$ is a smooth point in $\hat{\cX}_D$, and the normalizing map from a neighborhood of $\pp$ in $\hat{\cX}_D$ to  $\ol{\cX}_D$ is given by
$$
\begin{array}{cccc}
f:&  U\subset \C^2 & \to & \cA=\{(t_0,t_1,t_2) \in \C^3, \; t_1^{m_1}=t_2^{m_2}\}\\
  & (z,t) & \mapsto & (z,t^{m_2},t^{m_1})
\end{array}
$$
where $(m_1,m_2)\in \Z_{>0}^2$ satisfies $\gcd(m_1,m_2)=1$, and $U$ is a neighborhood of $0\in \C^2$.

By assumption $\qq$ is a node in $C_{\pp}$. Without loss of generality, we can suppose that $t_1$ is the smoothing parameter of this node.
This means that a neighborhood of $\qq$ in $\hat{\cC}_D$ is isomorphic to a neighborhood of $0$ in the analytic set $B=\{(x,y,z,t)\in \C^4, \; xy=t^{m_2}\}$. In this case,  $B$ is isomorphic to  a  quotient $\hat{B}/(\Z/m_2)$,
where $\hat{B}$ is an open subset in $\C^3$ containing $0$, and the action of $\Z/m_2$ on $\C^3$ is given by $\theta\cdot(u,v,z)=(\theta u,\theta^{-1}v,z)$ for all $\theta\in \Ub_{m_2}\simeq \Z/m_2$. The isomorphism between $\hat{B}/(\Z/m_2)$ and $B$ is induced by the map $(u,v,z) \mapsto (u^{m_2},v^{m_2},z,uv)$. In particular, $\qq$ is a regular point of $\hat{\cC}_D$, and we leave $B$ unchanged.\\

\item[(c)] In the case $\pp$ is a point in a stratum of group IV, by Proposition~\ref{prop:bdry:str:gp:IV}, any irreducible component of the germ of $\ol{\cX}_D$  at $\nu(\pp)$ is isomorphic to the germ of the analytic set $\cA=\{(t_0,t_1,t_2) \in \C^3, \; t_0^{m_0}=t_1^{m_1}t_2^{m_2}\}$ at $0 \in \C^3$, where $(m_0,m_1,m_2) \in \Z_{>0}^3$ satisfies $\gcd(m_0,m_1,m_2)=1$.
As a consequence, a neighborhood of $\pp$ in $\hat{\cX}_D$ is isomorphic to $\hat{\cA}=\C^2/(\Z/m)$,
where $m=\frac{m_0}{\gcd(m_0,m_1)\gcd(m_0,m_2)}$ and the action of $\Z/m$ on $\C^2$ is generated by
$(s,t) \mapsto (\zeta_m s, \zeta_m^kt)$, with $\zeta_m=\exp(2\pi\imath/m)$, and $k\in \Z$ such that $\frac{km_2}{\gcd(m_0,m_2)}=-\frac{m_1}{\gcd(m_0,m_1)} \mod m$.  The normalizing map $\hat{\cA} \to \cA$ is induced by the map
$$
\begin{array}{cccc}
\varphi: &  \C^2  & \to & \cA \\
				 & (s,t) & \mapsto & (s^{\frac{m_1}{\gcd(m_0,m_1)}}t^{\frac{m_2}{\gcd(m_0,m_2)}}, s^{\frac{m_0}{\gcd(m_0,m_1)}}, t^{\frac{m_0}{\gcd(m_0,m_2)}})
\end{array}
$$
Let $\Omega$ be a neighborhood of $0\in \C^2$ which is invariant by the action of $\Z/m$. Note that  the map $\varphi$ has degree $m$ and  $\Omega/(\Z/m)$ is isomorphic to a neighborhood of $\pp$ in $\hat{\cX}_D$.
Consider the pullback $\ol{\cC}_{\Omega}$ of the universal curve over $\cA$ to $\Omega$ by $\varphi$.
The preimage of $\qq$ in $\ol{\cC}_{\Omega}$,  which will be denoted by $\qq'$, is a node on the fiber over $0$.
Let us write $\varphi=(\varphi_0,\varphi_1,\varphi_2)$.
A neighborhood of $\qq'$ in $\ol{\cC}_{\Omega}$ is isomorphic to $B'=\{(x,y,s,t) \in \C^4, \; xy=\varphi_i(s,t)\}$, with some $i\in\{0,1,2\}$.
Note that $\Z/m$ acts on $B'$ by $\theta\cdot(x,y,s,t)=(x,y,\theta s, \theta^k t)$, and a neighborhood of $\qq$ in $\hat{\cC}_D$ is isomorphic to $B:=B'/(\Z/m)$.

\medskip

If $i\in \{1,2\}$ then $B'$ is isomorphic to the analytic set defined by the equation $xy=t^a$ for some $a\in \Z_{>0}$.
This implies that $B'$ is  isomorphic to the quotient of a neighborhood of $0\in \C^3$ by a linear action of $\Z/a$.
As a consequence, $B$ is also a finite quotient of an open subset of $\C^3$, which means that $\qq$ is regular. In this case, we leave $B$ unchanged.
It remains to consider the case where $i=0$, that is
$$
B' \simeq \{(x,y,s,t) \in \C^4, \; xy=s^at^b\},
$$
with
$$
a=\frac{m_1}{\gcd(m_0,m_1)}, \; b=\frac{m_2}{\gcd(m_0,m_2)}.
$$
Note that $\gcd(a,m)=\gcd(b,m)=1$.
We will replace $B'$ by a complex orbifold $\hat{B}$ together with a compatible action of $\Z/m$.
Define
$$
U:=\{(x,y,s,t,u)\in \C^5, \; x=us^a,  t^b=uy\} \; \text{ and } \;  V:=\{(x,y,s,t,u)\in \C^5, \; s^a=vx,  y=vt^b \}.
$$
Let $U^*:=\{(x,y,s,t,u) \in U, \; u\neq 0\} \subset U$ and $V^*:=\{(x,y,s,t,v)\in V, \; v\neq 0\} \subset V$.
We identify $U^*$ with $V^*$ by the mapping $(x,y,s,t,u) \leftrightarrow (x,y,s,t,1/v)$. Let $B''$ denote the  complex space obtained from $U\sqcup V$ by identifying $U^*$ with $V^*$ as above.
We define an action of $\Z/m$ on $U$ by
$$
\theta\cdot(x,y,s,t,u)=(x,y,\theta s, \theta^k t, \theta^{-a}u)
$$
and an action of $\Z/m$ on $V$ by
$$
\theta\cdot(x,y,s,t,v)=(x,y,\theta s, \theta^k t, \theta^{a}v),
$$
(recall that $k\in \Z$ satisfies $kb \equiv -a\mod m$).
These actions of $\Z/m$ are compatible with the identification $U^*\simeq V^*$. Thus, we have a well defined $\Z/m$ action on $B''$.

Note that $B''$ is an orbifold since it only has finite quotient singularities.
We have a natural projection $\phi: B'' \to B', \; (x,y,s,t,u) \mapsto (x,y,s,t)$.
Note that $\phi^{-1}(0)$ is isomorphic to $\Pb^1$, and $\phi$ restricts to an isomorphism from $B''\setminus\phi^{-1}(\{0\})$ onto $B'\setminus\{0\}$.
The $\Z/m$-actions on $B''$ and $B'$ are equivariant with respect to $\phi$. Therefore we have a well defined map
$$
\bar{\phi}: B''/(\Z/m) \to B'/(\Z/m)\simeq B
$$
which is an isomorphism outside of the set $\bar{\phi}^{-1}(\{\bar{0}\})$ (here $\bar{0}$ denotes the image of $0\in B'$ in $B'/(\Z/m)$).
We then replace $B$ by $\hat{B}:=B''/(\Z/m)$.
Remark that  $\bar{\phi}^{-1}(\{\bar{0}\})$ is isomorphic to $\Pb^1$.
\end{itemize}
In all cases, by construction $\hat{B}$ contains an open dense subset $\hat{B}^*$ that can be embedded into $\hat{\cC}_D$. In the case $\qq$ is regular, $\hat{B}^*=\hat{B}$. Therefore the analytic sets $\hat{B}$'s defined above patch together to give a complex space $\tilde{\cC}_D$.

\begin{Proposition}\label{prop:univ:curves:orb}
Let $\tilde{\cC}_D$ be the complex space constructed above. Then $\tilde{\cC}_D$ is an orbifold which comes equipped with a surjective map $\varphi: \tilde{\cC}_D \to \hat{\cC}_D$ such that the following diagram is commutative
\begin{center}
 \begin{tikzpicture}[scale=0.3]
    \node (A) at (0,6) {$\hat{\cC}_{D}$};
    \node (B) at (6,6) {$\ol{\cC}_{D}$};
    \node (C) at (0,0) {$\hat{\cX}_D$};
    \node (D) at (6,0) {$\ol{\cX}_D$};
    \node (E) at (-6,6) {$\tilde{\cC}_D$};

    \path[->, font=\scriptsize, >= angle 60] (A) edge node[above]{$\hat{\nu}$} (B);
    \path[->, font=\scriptsize, >= angle 60]
    (A) edge node[left]{$\hat{\pi}$} (C)
    (C) edge node[above]{$\nu$} (D)
    (B) edge node[right]{$\pi$} (D)
    (E) edge node[above] {$\varphi$} (A)
    (E) edge node[left] {$\tilde{\pi}$} (C);
 \end{tikzpicture}
\end{center}
The boundary $\partial \tilde{\cC}_D:=\tilde{\pi}^{-1}(\partial\hat{\cX}_D)$ is a normal crossing divisor in $\tilde{\cC}_D$.
Moreover, there is an $\Z/2$-action preserving the fibers of $\tilde{\pi}$, and $\varphi$ is  equivariant with respect to the $\Z/2$-actions on $\tilde{\cC}_D$ and $\hat{\cC}_D$,
All the fibers of $\tilde{\pi}$ are semistable curves, and their quotient by the $\Z/2$ action is a nodal genus one curve.

For every $\pp\in \hat{\cX}_D$, denote by $\tilde{C}_{\pp}$  and  by $C_\pp$ the fibers  of $\tilde{\pi}$ and $\hat{\pi}$ over $\pp$ respectively.
Then $\tilde{C}_{\pp} \simeq C_\pp$ if $\pp$ is not contained in the strata $\cS_{2,1}^b\cup \cS_{2,1}^c$.
In the case $\pp\in\cS_{2,1}^b\cup \cS_{2,1}^c$, $\tilde{C}_{\pp}$ has two extra $\Pb^1$ components that are mapped to two nodes of $C_\pp$ permuted by the Prym involution, the other components of $\tilde{\cC}_{\pp}$ are mapped isomorphically onto components of $C_\pp$.
\end{Proposition}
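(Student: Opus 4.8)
The plan is to verify the assertions chart by chart on the local models $\hat{B}$ from which $\tilde{\cC}_D$ was assembled, and then to check that they are compatible on overlaps. I would first confirm that $\tilde{\cC}_D$ is an orbifold. For $\qq\in\hat{\cC}_D$ lying over a stratum of Group I, II or III, or over Group IV with $i\in\{1,2\}$, the chart $B$ is left unchanged and was already identified with a finite quotient of a smooth chart, so there is nothing to check. The only case needing argument is (c) with $i=0$, where $B'=\{xy=s^{a}t^{b}\}$ is replaced by $\hat{B}=B''/(\Z/m)$. Here I would note that $U=\{x=us^{a},\ t^{b}=uy\}$ is isomorphic, after eliminating $x$, to $\C\times\{uy=t^{b}\}$, i.e. to the product of a line with the $A_{b-1}$ surface singularity $\C^{2}/(\Z/b)$; symmetrically $V\simeq\C\times(\C^{2}/(\Z/a))$. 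Since $U$ and $V$ are glued along the smooth loci $U^{*}\simeq V^{*}$, the space $B''$ has only finite quotient singularities, and so does the finite quotient $\hat{B}=B''/(\Z/m)$. Hence $\tilde{\cC}_D$ is an orbifold.

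Next I would construct $\varphi$ and check the diagram. Off the Group IV charts of type $i=0$ each $\hat{B}$ embeds into $\hat{\cC}_D$ and $\varphi$ is the identity; on a type (c)-$i=0$ chart $\varphi$ is the contraction $\bar{\phi}\colon B''/(\Z/m)\to B$, an isomorphism away from $\bar{\phi}^{-1}(\bar{0})$ which collapses the exceptional curve $\phi^{-1}(0)\simeq\Pb^{1}$ onto the node. These local maps agree on overlaps, all of which lie in the locus where $\varphi$ is an isomorphism, so they glue to a surjective $\varphi$; the relation $\tilde{\pi}=\hat{\pi}\circ\varphi$ holds by construction, and the lower square is the known relation between $\hat{\cC}_D,\ol{\cC}_D,\hat{\cX}_D,\ol{\cX}_D$. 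For the fibres, the $\Z/m$-action on $\phi^{-1}(0)$ fixes $0$ and $\infty$, so the exceptional curve descends to a $\Pb^{1}$ meeting the two branches of the former node in exactly two points; inserting it turns the node into a chain and keeps every fibre nodal and semistable. As the modification occurs only at nodes over Group IV points, $\tilde{C}_{\pp}\simeq C_{\pp}$ for $\pp\notin\cS_{2,1}^{b}\cup\cS_{2,1}^{c}$, while for $\pp$ in these strata exactly the pair $\{r_0,r'_0\}$ of nodes carrying the dependent smoothing parameter $t_0$ in the relation $t_0^{m_0}=t_1^{m_1}t_2^{m_2}$ acquires an exceptional $\Pb^{1}$. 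The normal-crossing property of $\partial\tilde{\cC}_D=\tilde{\pi}^{-1}(\partial\hat{\cX}_D)$ is read off from the explicit equations: in the charts $\{xy=t^{m_2}\}$ and in $U,V$ the node loci together with $\tilde{\pi}^{-1}(\partial\hat{\cX}_D)$ are coordinate hypersurfaces.

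Finally I would lift the involution. The Prym involution gives $\hat{\tau}$ on $\hat{\cC}_D$; on the locus where $\varphi$ is an isomorphism, $\tilde{\tau}$ is transported from $\hat{\tau}$. Here I would first record that no node carrying a simple pole can be $\tau$-fixed, since the admissible-cover condition forces $\tau$ to preserve each branch, whence $\tau^{*}\xi=+\xi$ there, forcing the residue to vanish and contradicting the simple pole; thus all six nodes over a Group IV point form genuine $\tau$-orbits of size two. In particular $\{r_0,r'_0\}$ is such an orbit, so $\hat{\tau}$ exchanges the two identically modified charts $\hat{B}_{r_0},\hat{B}_{r'_0}$ and lifts to an involution $\tilde{\tau}$ exchanging the two exceptional $\Pb^{1}$'s, with $\varphi$ equivariant by construction. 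Passing to the quotient, $\tilde{C}_{\pp}/\langle\tilde{\tau}\rangle$ is obtained from the genus one nodal curve $C_{\pp}/\langle\tau\rangle$ by inserting a single $\Pb^{1}$ at the image node, hence remains a nodal genus one curve.

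The genuinely delicate point, and the step I expect to be the main obstacle, is the analysis of the type (c)-$i=0$ charts: identifying $B''$ with a line times an $A$-type surface singularity, verifying that the $\Z/m$-action is well defined and compatible with the gluing $U^{*}\simeq V^{*}$, and confirming both that the exceptional curve descends to a $\Pb^{1}$ and that the lifted involution $\tilde{\tau}$ is consistent across the two exchanged charts. Everything else reduces to a bookkeeping check on explicit local models.
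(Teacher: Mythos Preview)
Your proposal is correct and follows the same approach as the paper: the paper presents the explicit local construction of $\tilde{\cC}_D$ in the paragraphs preceding the proposition (cases (a), (b), (c)) and then states the proposition as a summary without a separate proof, leaving the chart-by-chart verification implicit. Your write-up carries out precisely this verification, correctly isolating the only nontrivial chart (Group~IV with $i=0$), identifying $U\simeq\C_s\times\{uy=t^b\}\simeq\C\times(\C^2/(\Z/b))$ and symmetrically for $V$, and tracking the exceptional $\Pb^1$, the $\Z/m$-action, and the lift of the Prym involution; your use of Proposition~\ref{prop:node:fixed:by:invol} to rule out $\tau$-fixed simple-pole nodes is exactly the right ingredient.
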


By a slight abuse of notation, we denote by $\Sigma_k, \; k=1,\dots,4$, the divisor in $\tilde{\cC}_D$ which intersects each fiber of $\tilde{\pi}$ at the $k$-th fixed point of the Prym involution, and by $\Sigma_5$  the divisor which intersects each fiber of $\tilde{\pi}$ at the two marked points that are permuted by the Prym involution.  
\section{Relations of divisors in $\tilde{\cC}_D$}\label{sec:div:relations:in:CD}
In preparation to the proof of Theorem~\ref{th:vol:Omg:E:D:2:2}, in this section we will prove some important relations between the tautological divisors in $\tCD$.
For all strata $\cS_{x,y}^{\bullet} \subset \partial\hXD$, we will denote its closure in $\hXD$ by $\ol{\cS}_{x,y}^{\bullet}$.  The inverse image of $\cS_{x,y}^{\bullet}$ in $\tCD$ will be denoted by $\cT_{x,y}^\bullet$. 

Let $\cT_{1,0}^0$ denote the subset of $\cT_{1,0}$ defined as follows: for all $\xx =(C_\xx,\ul{x},\tau_\xx,[\omega_\xx]) \in \cS_{1,0}$, $\cT_{1,0}^0$ intersects $C_\xx$ (considered as the fiber  $\tilde{\pi}^{-1}(\{\xx\}))$ in the $\Pb^1$ component of $C_\xx$. Note that $\omega_\xx$ vanishes identically on this component. 
Similarly, we define $\cT_{2,0}^{a,0}$ (resp. $\cT_{0,2}^0$) to be the subset of $\cT_{2,0}^a$ (resp. of $\cT_{0,2}$) such that for all $\xx \in \cS_{2,0}^a$ (resp. for all $\xx \in \cS_{0,2}$) $\cT_{2,0}^{a,0}$ (resp. $\cT_{0,2}^0$) intersects the fiber $C_\xx$ in the unique $\Pb^1$ component on which $\omega_\xx$ vanishes identically. 
Denote  by $\ol{\cT}_{1,0}^0, \ol{\cT}_{2,0}^{a,0}, \ol{\cT}_{0,2}^0$ the closures of $\cT_{1,0}^0, \cT_{2,0}^{a,0}, \cT_{0,2}^0$ in $\tCD$. Note that these subsets are divisors in $\tCD$. 

Recall that for all $\xx \in \cS_{2,0}^a$, $C_\xx$ has three irreducible components that are elliptic curves. One of those components is invariant while the other two are permuted by the Prym involution. Let $\cT_{2,0}^{a,1}$ denote the subset of $\cT_{2,0}^a$ such that for all $\xx\in \cS_{2,0}^a$, $\cT_{2,0}^{a,1}$ intersects the fiber $C_\xx$ in the invariant elliptic component of $C_\xx$.
Denote by $\ol{\cT}_{2,0}^{a,1}$ the closure of $\cT_{2,0}^{a,1}$ in $\tCD$.

Let  us denote by $\partial_1\hXD$  the union of all strata in group I, and by $\partial_\infty\hXD$ the union of all strata in the groups II, III, and IV in $\partial \hXD$. The points in $\partial_\infty\hXD$ correspond to Abelian differentials with simple poles at some nodes in $\partial\hXD$. Note that $\partial_\infty\hXD$ is in fact the closure of the strata in group II, and therefore a divisor in $\hXD$.
The inverse image of $\partial_\infty \hXD$ (resp. $\partial_1\hXD$) in $\tCD$ is denoted by $\partial_\infty\tCD$ (resp. $\partial_1\tCD$).
The main result of this section is the following

\begin{Proposition}\label{prop:rel:cotangent:class}
	We have the following relation in $\mathrm{Pic}(\tCD)\otimes\Q$,
	\begin{equation}\label{eq:rel:cotangent:class}
		[\omega_{\tilde{\cC}_D/\hat{\cX}_D}]=\frac{1}{6}\cdot[\ol{\cT}_{0,2}]+\sum_{i=1}^4[\Sigma_i]+2[\ol{\cT}^0_{1,0}]+[\ol{\cT}^{a,0}_{2,0}]+3[\ol{\cT}^{a,1}_{2,0}]+[\cR_1],
	\end{equation}
	where $\cR_1$ is a divisor with support contained in $\partial_\infty\tilde{\cC}_D$.
\end{Proposition}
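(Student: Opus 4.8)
The plan is to obtain \eqref{eq:rel:cotangent:class} by comparing $\omega_{\tCD/\hXD}$ successively with the relative dualizing sheaf of the quotient family under $\tilde{\tau}$ and then with that of the stabilized one-pointed genus-one family $\varpi:\cE_D\to\hXD$, exactly as outlined in the strategy of the introduction. Write $\tilde{q}:\tCD\to\tilde{\cE}_D:=\tCD/\langle\tilde{\tau}\rangle$ for the quotient by the fibrewise involution of Proposition~\ref{prop:univ:curves:orb}. The double cover $\tilde{q}$ is branched along the four fixed-point sections $\Sigma_1,\dots,\Sigma_4$ and along the nodes fixed by $\tilde{\tau}$, so the Riemann--Hurwitz formula for $\tilde{q}$ (carried out fibrewise, then spread over $\hXD$) gives
$$[\omega_{\tCD/\hXD}]=\tilde{q}^*[\omega_{\tilde{\cE}_D/\hXD}]+\sum_{i=1}^4[\Sigma_i]+[\cR'],$$
where $\cR'$ is supported along the $\tilde{\tau}$-fixed nodes; its components lying over $\partial_\infty\hXD$ will be absorbed into $\cR_1$, while those over the group~I strata feed into the explicit boundary terms.

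Next I would compare $\tilde{\cE}_D$, a family of genus-one curves carrying the five marked sections $q_1,\dots,q_5$ (images of $\Sigma_1,\dots,\Sigma_5$), with $\cE_D$, obtained by forgetting $q_1,\dots,q_4$ and contracting the resulting unstable rational components. Concretely, over each group~I stratum the quotient $C/\langle\tau\rangle$ is a chain of rational curves attached to the genus-one part, and forgetting $q_1,\dots,q_4$ destabilizes precisely the rational tails. The unstable components are the images of the $\Pb^1$-components on which $\xi$ vanishes identically, namely the images of $\ol{\cT}^0_{1,0}$, $\ol{\cT}^{a,0}_{2,0}$ and of the genus-two/ $\Pb^1$ components over $\cS_{0,2}$, together with finitely many components over $\partial_\infty$. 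Knudsen's contraction formula expresses $[\omega_{\tilde{\cE}_D/\hXD}]$ as the pullback of $[\omega_{\cE_D/\hXD}]$ plus a $\Q$-combination of these contracted components. Finally, since every fibre of $\cE_D\to\hXD$ is an irreducible (possibly nodal) genus-one curve, the seesaw principle gives $\omega_{\cE_D/\hXD}=\varpi^*\lambda$, where $\lambda$ is the pullback of the Hodge class on $\ol{\cM}_{1,1}$ under the classifying map $\hXD\to\ol{\cM}_{1,1}$; the relation $12\lambda=[\partial\ol{\cM}_{1,1}]$ then concentrates this term over the locus where the elliptic quotient degenerates, which among the group~I strata is only $\cS_{0,2}$.

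Assembling these three relations and pulling everything back to $\tCD$ via $\tilde{q}$ yields \eqref{eq:rel:cotangent:class} up to the determination of the rational coefficients of the group~I divisors, all remaining contributions over $\partial_\infty\hXD$ being collected into $\cR_1$. To pin down the coefficients I would run a local computation at a generic point of each group~I stratum, using the smooth local chart of $\hXD$ from Proposition~\ref{prop:bdry:str:gp:I}, the explicit plumbing description of $\tCD$, and the normalised differentials $\eta$ from Lemma~\ref{lm:gp:I:diff:on:vanish:comp}: the orders of zero and pole of $\eta$ on the vanishing component fix the exponents relating each node-smoothing parameter to the base coordinate, and these exponents convert directly into the multiplicities with which the contracted and ramification divisors enter. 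In this way the coefficient $\tfrac16$ of $\ol{\cT}_{0,2}$ originates from $12\lambda=[\partial\ol{\cM}_{1,1}]$ together with the degree of $\hXD\to\ol{\cM}_{1,1}$ along $\cS_{0,2}$, the coefficients $2$ of $\ol{\cT}^0_{1,0}$ and $1$ of $\ol{\cT}^{a,0}_{2,0}$ come from the Knudsen contraction of the unstable $\Pb^1$-tails, and the coefficient $3$ of $\ol{\cT}^{a,1}_{2,0}$ arises because this $\tilde{\tau}$-invariant elliptic component double-covers the inner contracted rational component of the quotient chain over $\cS^a_{2,0}$.

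I expect the main obstacle to be precisely this bookkeeping of multiplicities, in particular the coefficient $3$ on $\ol{\cT}^{a,1}_{2,0}$ and the coefficient $\tfrac16$ on $\ol{\cT}_{0,2}$. Both require simultaneously tracking (i) the ramification of $\tilde{q}$ at the $\tau$-fixed nodes incident to the vanishing $\Pb^1$, (ii) the chain of unstable rational components contracted by the stabilization and their depth in that chain, and (iii) the ramification of $\hXD\to\ol{\cM}_{1,1}$ along the relevant stratum, and then checking that the normalization/blow-up map of Proposition~\ref{prop:univ:curves:orb} introduces no further orbifold correction. A final routine but necessary verification is that $\cR'$ from the Hurwitz step, together with all the contraction and $\varpi^*\lambda$ contributions coming from groups~II, III and IV, are genuinely supported in $\partial_\infty\tCD$, so that they may legitimately be packaged into the single remainder divisor $\cR_1$.
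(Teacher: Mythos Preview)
Your three-step plan—Riemann--Hurwitz for $Q:\tCD\to\tilde{\cE}_D$, Knudsen contraction from $\tilde{\cE}_D$ to $\cE_D$, then $\omega_{\cE_D/\hXD}=\varpi^*\lambda$ with $12\lambda=[\delta_{\rm irr}]$ on $\ol{\cM}_{1,1}$—is exactly the paper's proof. One correction to your bookkeeping: the Riemann--Hurwitz step contributes \emph{no} extra term $\cR'$ from the $\tilde{\tau}$-fixed nodes (at such a node the action is $(u,v)\mapsto(-u,-v)$, and $du/u$ generates the relative dualizing sheaf both upstairs and downstairs), so the paper has simply $Q^*\omega_{\tilde{\cE}_D/\hXD}\sim\omega_{\tCD/\hXD}-\sum_{i=1}^4[\Sigma_i]$; consequently the group~I coefficients $2,1,3$ come \emph{entirely} from the contraction step, where the multiplicity of each contracted component $\cD_i$ in $f_i^*\Gamma_k-\Gamma_k$ is $2$ or $1$ according as the node of $\tilde{\cE}_D$ adjacent to $\cD_i$ has one or two preimages in $\tCD$. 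For the $\tfrac16$ coefficient the paper verifies $\varphi^*[\delta_{\rm irr}]=2[\ol{\cS}_{0,2}]+[\ol{\cS}_{1,1}]$ (the factor $2$ coming from an involution $t\mapsto -t$ of the plumbing family over $\cS_{0,2}$), and the $\tfrac{1}{12}[\ol{\cT}_{1,1}]$ term is absorbed into $\cR_1$.
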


\subsection{Fundamental relation in $\tilde{\cC}_D$}\label{subsec:fund:rel:tCD}
By definition $\ol{\cX}_D$ is a subvariety of $\Pb\Omega\ol{\cB}_{4,1}$. Let $\OO(-1)_{\ol{\cX}_D}$ denote the restriction of the tautological line bundle over $\Pb\Omega\ol{\cB}_{4,1}$ to $\ol{\cX}_D$. The pullback of $\OO(-1)_{\ol{\cX}_D}$ to $\hat{\cX}_D$ will be denoted by $\OO(-1)_{\hat{\cX}_D}$. For simplicity, when the context is clear we will write $\OO(-1)$ for the restriction of this bundle to various subsets of $\hXD$.

\begin{Proposition}\label{prop:fund:rel:tCD}
We have the following relation in $\mathrm{Pic}(\tCD)$
\begin{equation}\label{eq:fund:rel:tCD}
\tilde{\pi}^*\OO(-1)\sim \omega_{\tCD/\hXD}-2 \cdot[\Sigma_5] -5\cdot[\ol{\cT}_{1,0}^0]-[\ol{\cT}_{2,0}^{a,0}] - 3\cdot[\ol{\cT}_{0,2}^0].
\end{equation}
\end{Proposition}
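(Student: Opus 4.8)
The plan is to exhibit the relation as the divisor of a canonical \emph{tautological section} of the relative dualizing sheaf. By definition, the fibre of $\tilde\pi^*\OO(-1)$ over a point $q$ lying above $\xx=(C,\dots,[\omega])\in\hXD$ is the line $\C\cdot\omega\subset\Omega^-(C)=H^0(C,\omega_C)^-$. Evaluating a differential at $q$ gives a canonical morphism of line bundles
\begin{equation*}
s\colon \tilde\pi^*\OO(-1)\longrightarrow \omega_{\tCD/\hXD},
\end{equation*}
equivalently a global holomorphic section $s\in H^0(\tCD,\omega_{\tCD/\hXD}\otimes\tilde\pi^*\OO(1))$, which over $\cX_D$ is nothing but the universal abelian differential. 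Since a generator of $\OO(-1)$ is a nonzero differential on each fibre, $s$ is not identically zero, so its vanishing locus $\div(s)$ is a well-defined effective divisor and
\begin{equation*}
[\omega_{\tCD/\hXD}]-[\tilde\pi^*\OO(-1)]=[\div(s)]
\end{equation*}
in $\mathrm{Pic}(\tCD)$. The proposition thus reduces to identifying $\div(s)$ with $2[\Sigma_5]+5[\ol\cT_{1,0}^0]+[\ol\cT_{2,0}^{a,0}]+3[\ol\cT_{0,2}^0]$.

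First I would determine which divisors can occur in $\div(s)$. Over $\cX_D$ the differential $\omega$ has exactly two double zeros, at $p_5,p'_5$, and no others; since $p_1,\dots,p_4$ carry $\tau$-invariant local coordinates in which an anti-invariant differential is generically nonvanishing, $\omega$ does not vanish along $\Sigma_1,\dots,\Sigma_4$, so the interior contribution is exactly $2[\Sigma_5]$. On the boundary, $s$ can vanish along a divisor only where $\omega$ degenerates to zero identically on a component. By Theorem~\ref{th:bdry:eigen:form:H22} this occurs precisely on the $\Pb^1$-component $C_0$ of the strata of group~I and of group~III. However, the strata of group~III and of group~IV are finite sets of points (Proposition~\ref{prop:bdry:str:gp:III}, Proposition~\ref{prop:bdry:str:gp:IV}), so the associated vanishing loci are one-dimensional, i.e.\ of codimension two in $\tCD$, and do not affect the divisor class. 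Along the group~II divisor $\partial_\infty\hXD$ and along the exceptional $\Pb^1$'s inserted in the semistable model of Proposition~\ref{prop:univ:curves:orb}, the differential has only simple poles at the nodes; these are \emph{regular}, nonzero sections of the dualizing sheaf (opposite residues on the two branches), so $s$ does not vanish there. Hence only the group~I strata $\cS_{1,0},\cS_{2,0}^a,\cS_{0,2}$, which are divisors in $\hXD$ by Proposition~\ref{prop:bdry:str:gp:I}, contribute, through $\ol\cT_{1,0}^0,\ol\cT_{2,0}^{a,0},\ol\cT_{0,2}^0$.

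It then remains to compute $\ord_{\ol\cT^0}(s)$ for each of these three divisors, and this is where the explicit local models of \textsection\ref{sec:geometry:bdry:XD} are needed. Near such a boundary point $\tCD=\hCD$ locally (case~(a) of the construction preceding Proposition~\ref{prop:univ:curves:orb}), and the family is given by the plumbing of Proposition~\ref{prop:bdry:str:gp:I}; on the vanishing component $C_0$ the universal differential takes the form $-t^{m}\eta$, where $t$ is the transverse coordinate to $\ol\cT^0$ and $\eta$ is the fixed meromorphic differential of Lemma~\ref{lm:gp:I:diff:on:vanish:comp}. The exponent $m$ is read off by matching $\eta$ to the differential on the adjacent component across each plumbing annulus: for $\cS_{1,0}$, $\eta=(x^2-1)^2\,dx$ has a pole of order $6$ at the single node facing a zero of order $4$ of the differential in $\Omega\cM_3(4)$, and the gluing $\varphi\phi=t$ gives $m=5$; for $\cS_{2,0}^a$, $\eta$ has double poles at the three nodes facing holomorphic nonvanishing differentials, giving $m=1$; for $\cS_{0,2}$, matching $\eta=(x^2-1)^2x^{-4}\,dx$ through the gluings $\varphi_0\phi_0=t$, $\varphi_1\phi_1=t^3$ gives $m=3$, consistent with $\omega=-t^3\eta$ recorded in the proof of Proposition~\ref{prop:bdry:str:gp:I}. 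This yields the coefficients $5,1,3$. The main delicate point is the bookkeeping in the semistable model: one must verify that passing from $\hCD$ to $\tCD$ neither creates a new component on which $\omega$ vanishes identically nor alters these multiplicities, which holds because the modification is supported over the zero-dimensional group~IV locus and the exceptional $\Pb^1$'s inherit a differential with nonzero residue.
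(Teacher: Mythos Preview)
Your proof is correct and follows essentially the same strategy as the paper: identify the relation as the divisor of the tautological evaluation map $s:\tilde\pi^*\OO(-1)\to\omega_{\tCD/\hXD}$, rule out contributions away from the group~I loci (using that simple poles with matching residues give nonvanishing sections of the dualizing sheaf for group~II, and that groups~III--IV lie in codimension~$\geq 2$), and then compute the three multiplicities. The only procedural difference is the point at which each multiplicity is read off: the paper works at a node of $C$, writing $\omega = x^{k}\,dx = x^{k+1}\cdot(dx/x)$ in the local model $\{xy=t\}$ with $dx/x$ trivializing $\omega_{\tCD/\hXD}$ and $x$ cutting out $\ol\cT^0$; you instead work at a smooth point of the vanishing component $C_0$ and use that the plumbing forces $\omega|_{C_0}=-t^m\eta$ with $\eta$ nonvanishing there, reading off $m=5,1,3$ from the pole orders of the meromorphic forms of Lemma~\ref{lm:gp:I:diff:on:vanish:comp}. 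Both computations are equivalent and yield the same coefficients.
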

\begin{proof}
Let $U$ be an open neighborhood of a point $\xx\in\hXD$ and suppose that there exists a  trivializing section of $\OO(-1)$ over $U$ given by $\xx \mapsto \omega_\xx$.
If $\xx \in \XD$, then $C_\xx$ is a smooth genus three curve, and $\omega_\xx$ has two double zeros at $x_5$ and $x'_5$. Since the open $U$ can be chosen to be disjoint from $\partial\hXD$,  \eqref{eq:fund:rel:tCD} holds true in $\CD= \tilde{\pi}^{-1}\XD$.

We now consider the case $\xx\in \partial\hXD$. 
Since $\tilde{\cC}_D$ is an orbifold, it is enough to show that \eqref{eq:fund:rel:tCD} holds true for all $\xx$ contained in strata of codimension $1$ in $\partial\hXD$. This means that we only need to consider the case $\xx$ belongs to a stratum in group I or group II. 

Assume first that $\xx$ is contained in a stratum of group II, that is $\xx\in \cS_{2,0}^b\cup \cS_{1,1}$. Then $\omega_\xx$ has double zeros at $\{x_5, x'_5\}$ and simple poles at all the nodes of $C_\xx$. This means that $\omega_\xx$ is a trivializing section of $(\omega_{\tCD/\hXD}-2\Sigma_5)_{\left|C_\xx\right.}$. In particular, \eqref{eq:fund:rel:tCD} holds true since we can choose $U$ to be disjoint from $\cS_{1,0}\cup\cS_{2,0}^a\cup\cS_{0,2}$.

Assume now that $\xx\in \cS_{1,0}$. Then $C_\xx$ has two irreducible components denoted by $C^0_\xx$ and $C^1_{\xx}$ meeting at one node where  $\omega_\xx$ vanishes identically on $C^0_\xx$, and $(C^1_\xx,\omega_\xx)$ is an element of $\Omega\cM_{3}(4)$.
Let $q$ be the unique node of $C_\xx$. There is a neighborhood $V$ of $q$ in $\tCD$ together with a coordinate system $(x,y,z)$, where $q \simeq (0,0,0) \in \C^3$, such that $C_\xx^0 =\{x=z=0\}, \; C_\xx^1=\{y=z=0\}$, and the projection $\tilde{\pi}$ is given by $\tilde{\pi}(x,y,z)=(xy,z)$ (here $\xx \simeq (0,0)$).
In this case, $dx/x$ is a trivializing section of $\omega_{\tCD/\hXD}$.
Up to a non-vanishing holomorphic function on $U$, we have $\omega_\xx= x^4dx=x^5\cdot  dx/x$.
Since $x^5$ can be seen as a trivializing section of the line bundle $-5\cdot[\ol{\cT}_{1,0}^0]$ in $V$, we get the desired conclusion.
The cases $\xx\in \cS_{2,0}^a\cup \cS_{0,2}$ follow from similar arguments.
\end{proof}

\subsection{Quotient and forgetful mappings}\label{subsec:forget:map}
Recall that the Prym involution stabilizes each fiber of $\tilde{\pi}: \tilde{\cC}_D \to \hat{\cX}_D$. Let $\tilde{\cE}_D$ denote the quotient of $\tilde{\cC}_D$ by the Prym involution, and $Q: \tilde{\cC}_D \to \tilde{\cE}_D$ the associated projection.
By definition, $\tilde{\cE}_D$ comes equipped with a projection $\tilde{\varpi}: \tilde{\cE}_D \to \hat{\cX}_D$,  whose fiber over a point $\xx=(C_\xx,x_1,\dots,x_5,x'_5,\tau_\xx, [\omega_\xx])$ is the tuple $(E_\xx, p_1,\dots,p_5)$, where $E_\xx:=C_\xx/\langle\tau_\xx\rangle$, and  $p_i$ is the image of $x_i$.
Note that $(E_\xx, p_1,\dots,p_5)$ is a semi-stable genus one curve with $5$ marked points that is actually stable unless $\xx$ belongs to the strata of group IV (which is a finite set of points) in $\partial\hat{\cX}_D$.

Removing the $4$ first  marked points $p_1,\dots,p_4$ on $E_\xx$, and passing to the  stable model, we obtain a family $\varpi: \cE \to \hat{\cX}_D$ of $1$-pointed genus one curves  over $\hat{\cX}_D$. The  fiber of  $\varpi$ over $\xx$ is the pair $(E'_\xx, p_5)$, which  is the stable model of $(E_\xx, p_5)$.
Recall that $E'_\xx$ is obtained from $E_\xx$ by successively collapsing the $\Pb^1$ components that either have only one node, or have two nodes and do not contain $p_5$. In particular $E'_\xx=E_\xx$ if $\xx\in \cX_D$. For $\xx$ contained in the strata of codimension $1$ in $\partial\hXD$, we have
\begin{itemize}
\item[$\bullet$] If $\xx\in \cS_{1,0}\cup\cS_{2,0}^a\cup\cS_{2,0}^b\cup\cS_{1,1}$ then $E_\xx$ has either one or two  $\Pb^1$ components.  In those cases, $E'_\xx$ is obtained by collapsing all the $\Pb^1$ components of $E_\xx$.


\item[$\bullet$] If $\xx\in \cS_{0,2}$ then $E_\xx$ has two $\Pb^1$ components which intersect at two nodes, and $E'_\xx$ is obtained by collapsing the $\Pb^1$ component that does not contains $p_5$ to a node.
\end{itemize}
We have naturally a map $F: \tilde{\cE}_D \to \cE_D$, and  the following commutative diagram
\begin{figure}[htb]
\begin{tikzpicture}[scale=0.5]
	\node (A) at (-5,5) {$\tilde{\cC}_{D}$};
	\node (B) at (0,5) {$\tilde{\cE}_{D}$};
	\node (C) at (5,5) {$\cE_D$};
	\node (D) at (0,0) {$\hat{\cX}_D$};
		
	\path[->, font=\scriptsize, >= angle 60] (A) edge node[above]{$Q$} node[below]{$[2:1]$} (B);
	\path[->, font=\scriptsize, >= angle 60] (B) edge node[above]{$F$} (C);
	\path[->, font=\scriptsize, >= angle 60]
	(A) edge node[left]{$\tilde{\pi}$} (D)
	(B) edge node[left]{$\tilde{\varpi}$} (D)
	(C) edge node[right]{$\varpi$} (D);
\end{tikzpicture}
\end{figure}

We will be interested in the pullback of the relative dualizing sheaf $\omega_{\cE/\hat{\cX}_D}$ to $\tCD$.
\begin{Proposition}\label{prop:rel:dual:pullback}
We have the following relation in $\mathrm{Pic}(\tCD)$
\begin{equation}\label{eq:rel:dual:pullback}
\omega_{\tilde{\cC}_D/\hat{\cX}_D} \sim Q^*\circ F^*\omega_{\cE_D/\hXD} +\sum_{i=1}^4[\Sigma_i] + 2[\ol{\cT}_{1,0}^0] +  [\ol{\cT}_{2,0}^{a,0}] + 3[\ol{\cT}_{2,0}^{a,1}]+ [\cR],
\end{equation}
where $\cR$ is a divisor with support contained in $\partial_\infty\tilde{\cC}_D$.
\end{Proposition}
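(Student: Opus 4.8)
The plan is to factor the comparison through the tower $\tCD \xrightarrow{\;Q\;} \tilde{\cE}_D \xrightarrow{\;F\;} \cE_D$ and to localize the discrepancy between $\omega_{\tCD/\hXD}$ and $Q^*F^*\omega_{\cE_D/\hXD}$ on the boundary $\partial\tCD=\tilde{\pi}^{-1}(\partial\hXD)$. Over the open part $\cC_D=\tilde{\pi}^{-1}(\XD)$ the fiber $C$ is a smooth genus three curve, $Q$ is the double cover $C\to E=C/\tau$ branched exactly along the four $\tau$-fixed sections $\Sigma_1,\dots,\Sigma_4$, and $F$ is an isomorphism of smooth elliptic curves; hence Riemann--Hurwitz gives $\omega_{\tCD/\hXD}\sim Q^*F^*\omega_{\cE_D/\hXD}+\sum_{i=1}^4[\Sigma_i]$ there. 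First I would upgrade this to a global identity by treating the two maps separately. The Hurwitz formula for the finite degree two map $Q$ reads $\omega_{\tCD/\hXD}\sim Q^*\omega_{\tilde{\cE}_D/\hXD}+R_Q$, where $R_Q$ is the ramification divisor, whose codimension one part is $\sum_{i=1}^4[\Sigma_i]$ together with components lying over $\partial_\infty\hXD$. The stabilization map $F$, which forgets $p_1,\dots,p_4$ and contracts the resulting unstable rational components, satisfies $\omega_{\tilde{\cE}_D/\hXD}\sim F^*\omega_{\cE_D/\hXD}+\mathrm{Corr}_F$, with $\mathrm{Corr}_F$ effective and supported on the contracted components. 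Composing these two honest divisor relations is what will make the assertion an identity in $\mathrm{Pic}(\tCD)$ rather than a mere fiberwise statement.

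Next I would identify the contracted components stratum by stratum. Since $\hXD$ is a surface, only the codimension one boundary strata produce boundary divisors in $\tCD$: the group I strata $\cS_{1,0},\cS^a_{2,0},\cS_{0,2}$ and the group II strata whose closure is $\partial_\infty\hXD$; the groups III and IV are finite sets of points by Propositions~\ref{prop:bdry:str:gp:III} and~\ref{prop:bdry:str:gp:IV}, hence irrelevant in $\mathrm{Pic}$. On the quotient $\tilde{\cE}_D$, forgetting $p_1,\dots,p_4$ makes exactly the images of the $\Pb^1$-component on which $\xi$ vanishes unstable; tracing this through the involution shows that over $\cS_{1,0}$ the image of $\ol{\cT}^0_{1,0}$ is contracted, over $\cS^a_{2,0}$ both the image of $\ol{\cT}^{a,0}_{2,0}$ and the invariant elliptic component $\ol{\cT}^{a,1}_{2,0}$ are contracted, while over $\cS_{0,2}$ the contracted component has vanishing relative dualizing degree and produces no discrepancy. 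This is precisely why only $\ol{\cT}^0_{1,0},\ol{\cT}^{a,0}_{2,0},\ol{\cT}^{a,1}_{2,0}$ occur in the formula.

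To fix the coefficients I would compute the discrepancies and then push them through $Q$. The discrepancy of $F$ on each contracted configuration is pinned down by the linear system coming from $\deg\big(\omega_{\tilde{\cE}_D/\hXD}|_Z\big)=2g(Z)-2+\#(\text{nodes on }Z)$ together with the fiberwise self-intersection $Z\cdot Z=-\#(\text{nodes on }Z)$; for $\cS^a_{2,0}$ this yields multiplicity $2$ on the image of $\ol{\cT}^{a,1}_{2,0}$ and $1$ on the image of $\ol{\cT}^{a,0}_{2,0}$, and for $\cS_{1,0}$ multiplicity $1$. Pulling back by $Q$ doubles the contracted-component multiplicities, because the relevant boundary divisors are non-Cartier Weil divisors at the quotient singularities of $\tilde{\cE}_D$ and $Q$ ramifies along the node loci; the explicit orbifold charts of \textsection\ref{sec:geometry:bdry:XD} and \textsection\ref{sec:normal:univ:curve} give $Q^*[\text{image of }\ol{\cT}]=2[\ol{\cT}]$ for these components, producing the final coefficients $2,\,1,\,3$. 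As an independent check, I would restrict the asserted identity to each fiber component over a group I stratum and use the two degree rules above: on the invariant elliptic component the left side has degree $1$, and the right side gives $3+0+1-3=1$, while on the genus two component of $\cS_{0,2}$ both sides give $4$ and on the $\Pb^1$-component of $\cS_{0,2}$ and on the exchanged elliptic components both sides give $0$, so every remaining component has coefficient $0$.

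The main obstacle is exactly this last, local comparison of $\omega_{\tilde{\cE}_D/\hXD}$ with $F^*\omega_{\cE_D/\hXD}$ across the contraction $F$ in the presence of the quotient singularities, and the careful bookkeeping of $Q^*$ on the resulting non-Cartier divisors; this is the step where the detailed local models of $\tCD$, $\tilde{\cE}_D$ and $\cE_D$ furnished by the boundary analysis are indispensable, and a naive Cartier pullback would give the wrong (halved) multiplicities. A secondary technical point is to confirm that, after subtracting the explicit group I contributions, the residual class is genuinely supported on $\partial_\infty\tCD$. Because the backbone is a chain of honest ramification and discrepancy relations rather than a fiberwise degree match, no spurious $\tilde{\pi}$-pullback from the base can creep in, and the residual $\cR$ is automatically a boundary divisor lying over $\partial_\infty\hXD$, as required.
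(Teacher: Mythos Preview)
Your overall architecture matches the paper's: factor $Q^*F^*\omega_{\cE_D/\hXD}$ through the intermediate quotient $\tilde{\cE}_D$, use Riemann--Hurwitz for $Q$ to produce $\sum_i[\Sigma_i]$, and compute the discrepancy of $F$ as a sum over contracted boundary components. The Hurwitz step and the identification of which components get contracted are correct.

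The gap is in your computation of the discrepancy multiplicities on $\tilde{\cE}_D$ and in your claim about $Q^*$. Your rule $Z\cdot Z=-\#(\text{nodes on }Z)$ is only valid when the total space $\tilde{\cE}_D$ is smooth along $Z$; here it is not. At every node of $E_\xx$ whose preimage in $\tCD$ is a single $\tau$-fixed node, the local model in $\tilde{\cE}_D$ is $\{uv=t^2\}$, an $A_1$ singularity, so the correct orbifold self-intersection is $-\tfrac12$ per such node, not $-1$. Redoing your degree/linear-system computation with these numbers gives discrepancies $(2,1,3)$ for $(\ol{\cE}^0_{1,0},\ol{\cE}^{a,0}_{2,0},\ol{\cE}^{a,1}_{2,0})$ directly on $\tilde{\cE}_D$, not $(1,1,2)$. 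Correspondingly, your claim $Q^*[\text{image of }\ol{\cT}]=2[\ol{\cT}]$ is false: in the same local chart $Q$ is $(x,y)\mapsto(x^2,y^2)$ with $xy=t$, and since $2[\ol{\cE}]$ is Cartier cut out by $u=x^2$, one gets $Q^*[\ol{\cE}]=[\ol{\cT}]$ with no doubling. Your two errors do not cancel (doubling $(1,1,2)$ gives $(2,2,4)$, not $(2,1,3)$), so the derivation as written does not reach the stated coefficients.

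The paper avoids the orbifold intersection bookkeeping by factoring $F=f_4\circ\cdots\circ f_1$ into single-point forgetful maps and using the standard identity $f_i^*\bigl(\omega(\sum_{k>i}\Gamma_k)\bigr)\sim\omega(\sum_{k>i}\Gamma_k)$, then computing each $f_i^*\Gamma_k-\Gamma_k$ from the explicit local model $uv=t^2$ versus $uv=t$. The coefficient $3$ on $\ol{\cE}^{a,1}_{2,0}$ then arises as $2$ (from the contraction of $E'_1$) plus $1$ (from the pullback through $f_2$ of the later contraction of $E'_2$), and $Q^*$ carries everything over with multiplicity one. Your fiberwise degree check is a useful sanity test but, being purely numerical, cannot distinguish correct from incorrect derivations of the same final coefficients.
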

\begin{proof}
We first compute the class of  $F^*\omega_{\cE_D/\hat{\cX}_D}$ in $\mathrm{Pic}(\tilde{\cE}_D)$.
Let $\ol{\cE}_{1,0}^0, \ol{\cE}_{2,0}^{a,0}, \ol{\cE}_{2,0}^{a,1}$ be respectively the images of $\ol{\cT}_{1,0}^0, \ol{\cT}_{2,0}^{a,0}, \ol{\cT}_{2,0}^{a,1}$ in $\tilde{\cE}_D$.
Consider a point $\xx=(C_\xx,x_1,\dots,x_5,x'_5,\tau_\xx, [\omega_\xx]) \in \hat{\cX}_D$. Recall that the fiber $\tilde{\varpi}^{-1}(\{\xx\})$ is the pointed curve $(E_\xx,p_1,\dots,p_5)$ where $E_\xx=C_\xx/\langle\tau_\xx\rangle$.
The map $F$ is defined by successively removing the marked points $p_1, p_2, p_3, p_4$ from the curve $E_\xx$ and passing to the stable model.
Thus, we have a sequence of maps 
\begin{equation}\label{eq:seq:collapse:maps}
\tilde{\cE}_D=\cE^1_D \overset{f_1}{\to} \cE^2_D \overset{f_2}{\to} \dots \overset{f_4}{\to} \cE^5_D=\cE_D,
\end{equation}
where each $f_i$ consists of passing to the stable model after removing the $i$-th marked point, and $F=f_4\circ\dots\circ f_1$.
Let $\varpi_i:\cE^i_D\to \hXD$ be the natural projection.
For $k=1,\dots,5$, let $\Gamma_k\subset \tilde{\cE}_D$ be the section of $\tilde{\varpi}$ that meets the fiber $E_\xx$ at $p_k$. 
By an abuse of notation, the images of $\Gamma_k$ in $\cE^i_D$ (which is a section of $\varpi_i$) will be denoted again by $\Gamma_k$.
It is a well known fact that we have
$$
f_i^*(\omega_{\cE^{i+1}_D/\hXD}(\Gamma_{i+1}+\dots+\Gamma_5)) \sim \omega_{\cE^{i}_D/\hXD}(\Gamma_{i+1}+\dots+\Gamma_5)
$$
(see for instance \cite[Ch. X, Prop.6.7]{ACG11}). Thus
$$
\omega_{\cE^{i}_D/\hat{\cX}_D}-f_i^*\omega_{\cE^{i+1}_D/\hat{\cX}_D}=(f^*_i\Gamma_{i+1}-\Gamma_{i+1})+\dots+(f^*_i\Gamma_5-\Gamma_5).
$$
By construction,  $f^*_i\Gamma_k-\Gamma_k$ (with $k>i$) is a divisor  in $\cE^{i}_D$ whose support meets the fibers of $\varpi_i$ in a $\Pb^1$ component that  contains only the $i$-th and $k$-th marked points together with a node. 

Let $\xx$ be a generic point in the image of the support of  $f^*_i\Gamma_k-\Gamma_k$. Denote by $E^{(i)}_\xx$ the fiber $\varpi^{-1}(\{\xx\})$ and by $P^{(i)}_\xx$ the component of $E^{(i)}_\xx$ that is contained in an irreducible  component $\cD_i$ of $\supp(f^*_i\Gamma_k - \Gamma_k)$. Let $q_\xx$ denote the node of $E^{(i)}_\xx$ contained in $P^{(i)}_\xx$.  The preimage $\tilde{q}_\xx$  of $q_\xx$ in $C_\xx$ consists of either one or two nodes.
\begin{itemize}
	\item[(i)] $\tilde{q}_\xx$ consists of one node. A neighborhood of $\tilde{q}$ in $\tCD$ can be identifies with a neighborhood $\cU$ of $0 \in \C^4$ in the set $\{(x,y,z,t) \in \C^4, \; xy=t\}$, and the projection $\tilde{\pi}$ is given by $\tilde{\pi}(x,y,z,t)=(z,t)$. The action of the Prym involution in $\cU$ corresponds to $(x,y) \mapsto (-x,-y)$. Thus a neighborhood of $q_\xx$ in  $\cE^i_D$ is identified with a neighborhood  $\cV$ of $0\in \C^4$ in the set $\{(u,v,z,t)\in \C^4, \; uv=t^2\}$, and the restriction of the map $f_{i-1}\circ\dots\circ f_1 \circ Q: \tCD \to \cE^i_D$ to $\cU$ is given by $(x,y,z,t) \mapsto (x^2,y^2,z,t)$.
	
	We can suppose that $\cD_i\cap \cV$ is defined by $u=t=0$. The collapsing map $f_i$ is then given by $f(u,v,z,t)=(u,z,t)$, and $\Gamma_k\cap f_i(\cV)$ is defined by the equation $u=0$. It follows that  $f^*_i\Gamma_k$ is the sum of the proper transform of $\Gamma_k$ (which is denoted by $\Gamma_k$ by a slight abuse of notation) and the divisor $\ord_{\cD_i}(u)\cdot\cD_i$, where $\ord_{\cD_i}(u)$ is the order of $u$ along $\cD_i$. 
	Since $\cV$ is defined by $uv=t^2$, in a neighborhood of a smooth point of $\cD_i$, we have $u\sim t^2$, while $\cD_i$ is defined by $t=0$. Thus we have $\ord_{\cD_i}(u)=2$, which implies that
	$$
	f^*_i\Gamma_k -\Gamma_k \sim 2\cD_i.
	$$
	
	\item[(ii)] $\tilde{q}_\xx$ contains two points. A neighborhood of $q_\xx$ in $\cE^i_D$ is isomorphic to a neighborhood of either point in $\tilde{q}_\xx$. One can easily check that in this case 
	$$
	f^*_i\Gamma_k -\Gamma_k \sim \cD_i.
	$$     
\end{itemize}
Analyzing the irreducible components of $\partial\tCD$ that are contracted in $\cE_D$,   we get that
\begin{equation*}\label{eq:rel:can:tED}
F^*\omega_{\cE_D/\hat{\cX}_D}\sim \omega_{\tilde{\cE}_D/\hat{\cX}_D}-2[\ol{\cE}_{1,0}^0]-[\ol{\cE}_{2,0}^{a,0}] -3[\ol{\cE}_{2,0}^{a,1}] + [\cR'],
\end{equation*}
where $\cR'$ is a divisor with support in $\partial_\infty\tilde{\cE}_D:= Q(\partial_\infty\tCD)$.
Finally as $Q^*\omega_{\tilde{\cE}_D/\hat{\cX}_D}\sim \omega_{\tilde{\cC}_D/\hat{\cX}_D} -\sum_{i=1}^4[\Sigma_i]$, we obtain
$$
Q^*\circ F^* \omega_{\cE_D/\hat{\cX}_D} \sim \omega_{\tilde{\cC}_D/\hat{\cX}_D}-\sum_{i=1}^4 [\Sigma_i] -2[\ol{\cT}_{1,0}^0]-[\ol{\cT}_{2,0}^{a,0}] -3[\ol{\cT}_{2,0}^{a,1}] + [\cR],
$$
where $\cR$ is a divisor with support in $\partial_\infty\tilde{\cC}_D$.
\end{proof}

\subsection{Proof of Proposition~\ref{prop:rel:cotangent:class}}
\begin{proof}
Since the restriction of $\omega_{\cE_D/\hat{\cX}_D}$ to the fiber of $\varpi$ is trivial, we have $\omega_{\cE/\hXD} \sim \varpi^*\cL$, where $\cL$ is a line bundle over $\hXD$. By construction, we have a morphism $\varphi: \hXD \to \ol{\cM}_{1,1}$ such that $\cL=\varphi^*\ol{\cH}$, where $\ol{\cH} \to \ol{\cM}_{1,1}$ is the Hodge bundle. It is well known that $\ol{\cH}\sim \frac{1}{12}\cdot[\delta_{\rm irr}]$, where $\delta_{\rm irr}$ is the point in $\ol{\cM}_{1,1}$ which represents the genus one curve with a non-separating node (see for instance \cite{Zvo} or \cite{ACG11}).
Thus we have
$$
\cL \sim \frac{1}{12}\cdot\varphi^*[\delta_{\rm irr}].  
$$
\begin{Claim}\label{clm:coeff:S:0:2:in:delta}
We have	
\begin{equation}\label{eq:pullback:del:irr}
\varphi^*[\delta_{\rm irr}] \sim 2[\ol{\cS}_{0,2}]+[\ol{\cS}_{1,1}].	
\end{equation}	
\end{Claim}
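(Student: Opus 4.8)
The plan is to compute $\varphi^*[\delta_{\rm irr}]$ as an effective divisor by locating, along the boundary $\partial\hXD$, the locus over which the $1$-pointed genus one curve $(E'_\xx,p_5)=\varpi^{-1}(\xx)$ becomes singular, and then reading off the vanishing order of the nodal (Tate) parameter of this family transversally to each such divisor. Since $\hXD$ is normal and a divisor class is determined by its behaviour in codimension one, it suffices to examine the codimension one strata of $\partial\hXD$, that is the strata of groups I and II: $\cS_{1,0},\cS_{2,0}^a,\cS_{0,2}$ and $\cS_{2,0}^b,\cS_{1,1}$. All higher codimension strata are automatically irrelevant because $\varphi^*[\delta_{\rm irr}]$ is a divisor on the orbifold $\hXD$.

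First I would determine where $E'_\xx$ is singular. Using the explicit recipe for $\varphi$ recalled in \textsection\ref{subsec:forget:map}---form $E_\xx=C_\xx/\langle\tau_\xx\rangle$, forget $p_1,\dots,p_4$, and pass to the stable model---one checks case by case. For $\pp\in\cS_{1,0}$, $\cS_{2,0}^a$ and $\cS_{2,0}^b$ every $\Pb^1$ component of $E_\xx$ gets contracted and $E'_\xx$ is a \emph{smooth} genus one curve, so these strata contribute nothing. For $\pp\in\cS_{0,2}$ the two $\Pb^1$ bridges collapse the banana $E_\xx$ to a one-nodal cubic, and for $\pp\in\cS_{1,1}$ the self-node of the component $C''$ descends to a node of the quotient; in both cases $E'_\xx$ is exactly the nodal cubic representing $\delta_{\rm irr}$. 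This already yields $\supp\varphi^*[\delta_{\rm irr}]\subset \ol\cS_{0,2}\cup\ol\cS_{1,1}$.

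The coefficient of each of $\ol\cS_{0,2}$ and $\ol\cS_{1,1}$ is then the order of vanishing, along a transverse slice, of the smoothing parameter of the node of $E'_\xx$. For this I would reuse the plumbing coordinates already set up in \textsection\ref{sec:geometry:bdry:XD}: Proposition~\ref{prop:bdry:str:gp:I} supplies the transverse coordinate and the two gluing parameters of the genus three curve across $\cS_{0,2}$, while Proposition~\ref{prop:bdry:str:gp:II} supplies the parameters $t_1,t_2$ attached to the two $\tau$-orbits of nodes across $\cS_{1,1}$. The governing local principle is that a node fixed by $\tau_\xx$ has its smoothing parameter squared upon passing to $E_\xx=C_\xx/\langle\tau_\xx\rangle$, whereas a pair of nodes exchanged by $\tau_\xx$ descends with its parameter unchanged; combining this with the composition rule for the Tate parameter of a cycle of rational curves (the product of the individual node parameters) and with the effect of contracting the unstable rational components, one extracts the vanishing orders appearing in \eqref{eq:pullback:del:irr}.

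I expect the genuine difficulty to lie entirely in this last multiplicity bookkeeping, and three points will require care. The two nodes of the genus three curve across $\cS_{0,2}$ smooth at \emph{different} rates, a consequence of the real-multiplication constraint forcing a single transverse parameter to control both gluings with exponents dictated by the orders of the zero and pole of $\xi$ at each node; the quotient by $\tau_\xx$ together with the subsequent contraction of the collapsed $\Pb^1$ components must be tracked simultaneously on the total space, not merely fibrewise; and the orbifold structure of $\hXD$ along $\cS_{1,1}$ (recorded in Proposition~\ref{prop:bdry:str:gp:II} through the normalization $(z,s)\mapsto(z,s^{m_2},s^{m_1})$) must be used to convert the order of vanishing in the ambient smoothing parameters into the coefficient of the \emph{reduced} divisor. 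Keeping these conversions consistent---in particular distinguishing the plumbing parameter of a node from the base coordinate transverse to the stratum, and accounting for any residual orbifold factor---is precisely what produces the clean coefficients $2$ and $1$, and is the step where I would spend the bulk of the effort.
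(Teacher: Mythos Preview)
Your identification of the support of $\varphi^*[\delta_{\rm irr}]$ as $\ol\cS_{0,2}\cup\ol\cS_{1,1}$ is correct and matches the paper's first step. However, your method for the multiplicities is genuinely different from the paper's, and at $\cS_{0,2}$ it runs into a difficulty you have not anticipated.

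The paper does \emph{not} track plumbing or Tate parameters. For $\cS_{0,2}$ it produces an explicit automorphism $\iota$ of the central fiber $C_\xx=C^1_\xx\cup C^0_\xx$: the identity on the genus-two component $C^1_\xx$ and $w\mapsto -w$ on the rational component $C^0_\xx$. One checks that $\iota$ commutes with $\tau$, swaps $p_5$ with $p'_5$, and carries the plumbed curve $C_{z,t}$ isomorphically onto $C_{z,-t}$; hence $\varphi(z,t)=\varphi(z,-t)$, and the paper reads off the coefficient $2$ from this parity. For $\cS_{1,1}$ it simply notes that no node of $C_\xx$ is $\tau$-fixed and asserts the coefficient is $1$.

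Your route, taken literally, does not give $2$. With the plumbing data from the proof of Proposition~\ref{prop:bdry:str:gp:I} the two $\tau$-fixed nodes of $C_\xx$ over $\cS_{0,2}$ carry parameters $t$ and $t^{3}$; squaring under the quotient gives $t^{2}$ and $t^{6}$ for the two nodes of the banana $E_\xx$, and the cycle rule yields Tate parameter $q\sim t^{8}$, hence coefficient $8$ in the chart $(z,t)$. The missing ingredient is precisely the automorphism $\iota$: it is an \emph{extra} orbifold $\Z/2$ along $\cS_{0,2}$ (not along $\cS_{1,1}$, which is where you worried about orbifold issues), and it forces $t\sim -t$ in the chart. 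Even granting this, the coarse coordinate $s=t^{2}$ still gives $q\sim s^{4}$, so a further factor must be absorbed by the interaction of the generic $\langle\tau\rangle$-inertia on $\hXD$, the $\Z/2$-gerbe structure of $\ol\cM_{1,1}$, and the precise meaning of the reduced class $[\ol\cS_{0,2}]$ in $\Pic(\hXD)\otimes\Q$. None of this is addressed in your outline, and it is exactly what the paper's one-line symmetry argument is packaging. In short: the paper's approach is shorter and conceptually different; yours is in principle viable but the bookkeeping you flag as ``the bulk of the effort'' is more treacherous at $\cS_{0,2}$ than your proposal suggests, and without the automorphism $\iota$ you are unlikely to land on the correct coefficients.
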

\begin{proof}
We first observe that $\varphi^{-1}(\delta_{\rm irr})=\ol{\cS}_{0,2} \cup \ol{\cS}_{1,1}$. Thus $\varphi^*[\delta_{\rm irr}]$ is a combination of $[\ol{\cS}_{0,2}]$ and $[\ol{\cS}_{1,1}]$.

Consider a point $\xx \in \cS_{0,2}$. The curve $C_\xx$ has two irreducible components: $C^0_\xx$ is isomorphic to $\Pb^1$, and $C^1_\xx$ is a smooth curve of genus two. These two components meet each other at two nodes both are fixed by the Prym involution.
Denote by  $q_1$ and $q_2$ the two nodes of $C_\xx$.

Let $(z,t)$ be a local system of coordinates in of $\hXD$ in a neighborhood $U$ of $\xx$ such that $\xx \simeq (0,0)$ and $\cS_{2,0}$ is defined by $t=0$. Using this coordinate system, we identify $U$ with a neighborhood of $0$ in $\C^2$. For all $\uu=(z,t)\in U$, the fiber of $\tilde{\pi}$ over  $\uu$ will be denoted by $C_{z,t}$. 

Recall from \textsection~\ref{subsec:geom:bdry:str:gp:I} that a neighborhood of one of the nodes of $C_\xx\simeq C_{0,0}$, say  $q_1$, in $\tCD$ is isomorphic to  the set 
$$
\cU_1:= \{(x,y,z,t) \in \Omega, \; xy=t\},
$$ 
while a neighborhood of $q_2$ is isomorphic to 
$$
\cU_2:=\{(x,y,z,t) \in \Omega, \;xy=t^3\},
$$ 
where $\Omega$ is a neighborhood of $0$ in $\C^4$.
We can suppose that in both cases, $y$ is the coordinate on the component $C^0_\xx\simeq\Pb^1$ of $C_\xx$. We now remark that there is  
an automorphism $\iota_\xx: C_\xx \to C_\xx$ that fixes $C^1_\xx$ pointwise and restricts to the involution of $C^0_\xx$ fixing $q_1$ and $q_2$. The automorphism $\iota_\xx$ gives rise to an involution $\iota$ on $\tilde{\pi}^{-1}(U)$ whose restriction to $\cU_i$ is given by  $(x,y,z,t)\mapsto (x,-y,z,-t)$. In particular, we have that $\iota(C_{z,t})=C_{z,-t}$, that is $C_{z,t}$ and $C_{z,-t}$ are isomorphic. Therefore,  $\varphi(z,t)=\varphi(z,-t)\in \ol{\cM}_{1,1}$. 
In a suitable local coordinate of $\ol{\cM}_{1,1}$ such that $\delta_{\rm irr}\simeq 0$, the restriction of $\varphi$ to $U$ is  given by $\varphi(z,t)=t^2$. This implies that the coefficient of $[\ol{\cS}_{0,2}]$ in $\varphi^*[\delta_{\rm irr}]$ is $2$.

In the case $\xx \in \cS_{1,1}$, none of the node of  $C_\xx$ is fixed by $\tau_\xx$. Therefore, the coefficient of $[\ol{\cS}_{1,1}]$ in $\varphi^*[\delta_{\rm irr}]$ is $1$. This completes the proof of the claim. 
\end{proof}

It follows from Claim~\ref{clm:coeff:S:0:2:in:delta} that we have
\begin{equation}\label{eq:pullback:rel:cotangent:M11}
Q^*\circ F^*\omega_{\cE_D/\hXD} \sim \tilde{\pi}^*\cL \sim \frac{1}{12}\cdot\left(2[\ol{\cT}_{0,2}] +[\ol{\cT}_{1,1}]\right).
\end{equation}
Note that  $\ol{\cT}_{1,1}$ is contained in $\partial_\infty\tCD$.
Combining \eqref{eq:pullback:rel:cotangent:M11} with \eqref{eq:rel:dual:pullback} we obtain \eqref{eq:rel:cotangent:class}.
\end{proof}

 \section{Curvature, current, and volume of $\cX_D$}\label{sec:curv:curent:vol}
\subsection{Definition of the $(2,2)$-form $\Theta$}\label{subsec:Theta:def}
We consider $\cX_D$ as an open dense subset of $\hat{\cX}_D$. 
Over $\cX_D$, we have a Hermitian metric on $\OO(-1)$ given by the Hodge norm. 
Let $\xx:=(X,\ul{x},\tau_\xx,[\omega_\xx])$ be an element of $\cX_D$. Then the fiber $\OO(-1)_\xx$ of $\OO(-1)$ over $\xx$ is the precisely the line $\C\cdot\omega_\xx \subset H^{1,0}(X)$.  The Hodge norm of $\omega_\xx$ is given by
$$
||\omega_\xx||^2:=\frac{\imath}{2}\int_X\omega_\xx\wedge\ol{\omega}_\xx.
$$
Let $\vartheta$ denote the curvature form of the Hogde norm. Recall that by definition, $\vartheta$ is given by
$$
\vartheta=-\partial\ol{\partial}\ln(||\omega_\xx||^2).
$$
where $\sigma$ is any local holomorphic section of $\OO(-1)$.
\begin{Lemma}\label{lm:curv:form:loc:express}
Let $\alpha$ be a combination of simple closed curves on $X$ which represents a non-trivial element of $H_1^(X,\Z)^-$. 	For all $\yy=(Y,\ul{y},\tau_\yy,[\omega_\yy])$ in a neighborhood $U$ of $\xx$, we can consider $\alpha$ as an element of $H_ 1(Y,\Z)^-$.
Suppose that there is an assignment $\yy \mapsto \omega_\yy$ such that $\omega_\yy(\alpha)=1$ for all $\yy\in U$. Define 
$$
\Ab(\yy):=||\omega_\yy||^2.
$$
Then we have 
\begin{equation}\label{eq:curv:form:loc:express}
	\vartheta = \frac{\partial \Ab\wedge \bar{\partial}\Ab}{\Ab^2}.
\end{equation}
\end{Lemma}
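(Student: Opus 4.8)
The plan is to exploit that the prescribed normalization makes $\yy\mapsto\omega_\yy$ a \emph{holomorphic} section of $\OO(-1)$ over $U$, so that $\vartheta$ is literally the curvature form computed from this section: $\vartheta=-\partial\bar\partial\ln\Ab$. Expanding, $\vartheta=\frac{\partial\Ab\wedge\bar\partial\Ab}{\Ab^2}-\frac{\partial\bar\partial\Ab}{\Ab}$, so the asserted identity \eqref{eq:curv:form:loc:express} is equivalent to the single statement that $\Ab$ is pluriharmonic, i.e. $\partial\bar\partial\Ab=0$. Everything reduces to proving this vanishing.

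To analyze $\Ab$, I first transport a symplectic basis $(a_1,b_1,a_2,b_2)$ of $H_1(X,\Z)^-$ as in Lemma~\ref{lm:Prym:var:1:2:polarization} to nearby fibers (it is flat for the Gauss--Manin connection), and record the holomorphic period vector $v(\yy)=(\omega_\yy(a_1),\omega_\yy(b_1),\omega_\yy(a_2),\omega_\yy(b_2))\in\C^4$. By the Riemann bilinear relations, $\Ab(\yy)=\frac{\imath}{2}\int_{Y_\yy}\omega_\yy\wedge\bar\omega_\yy=H(v(\yy),v(\yy))$ for a \emph{constant} (topological) Hermitian form $H$ on $\C^4$. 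Next I invoke Proposition~\ref{prop:eigen:form:per:eq}: since each $\yy\in U$ is a Prym eigenform for a fixed generator $T$ of $\cO_D$, the periods satisfy $(\omega_\yy(a_2),\omega_\yy(b_2))=\frac{2}{\lambda}(\omega_\yy(a_1),\omega_\yy(b_1))B$ with $\lambda,B$ locally constant. Hence $v(\yy)$ lies in the fixed $2$-plane $E\subset\C^4$ cut out by this relation, and together with the normalization $\ell(v):=\omega_\yy(\alpha)=1$ its image lies in a single affine line $E\cap\{\ell=1\}=v_0+\C u$, where $\C u=E\cap\ker\ell$. Writing $v(\yy)=v_0+w(\yy)u$ with $w$ holomorphic and $u,v_0$ constant, a direct differentiation (the mixed and purely antiholomorphic second derivatives drop out because $w$ is holomorphic) gives $\partial\bar\partial\Ab=H(u,u)\,\partial w\wedge\overline{\partial w}$.

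It therefore suffices to prove $H(u,u)=0$, which is the crux. The observation is that $E$ is exactly the $\lambda$-eigenspace of $T^*$ on $H^1(Y_\yy,\C)^-$: because $D$ is not a square, $\lambda$ is real, so $T^*\bar\omega_\yy=\lambda\bar\omega_\yy$ as well, and hence $E=\C\omega_\yy\oplus\C\bar\omega_\yy=W_\yy$, on which $H$ restricts to the Hodge form $(.,.)$. In the basis $\{\omega_\yy,\bar\omega_\yy\}$ this form is anti-diagonal: $(\omega_\yy,\omega_\yy)=||\omega_\yy||^2$, $(\bar\omega_\yy,\bar\omega_\yy)=-||\omega_\yy||^2$, and $(\omega_\yy,\bar\omega_\yy)=(\bar\omega_\yy,\omega_\yy)=0$ since the intersection form is alternating. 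Decomposing the constant vector $u=s\,\omega_\yy+t\,\bar\omega_\yy$ and using $u(\alpha)=0$ together with $\omega_\yy(\alpha)=1$ and $\bar\omega_\yy(\alpha)=\overline{\omega_\yy(\alpha)}=1$ (as $\alpha$ is a real cycle), one finds $s+t=0$, so $|s|=|t|$ and $H(u,u)=(|s|^2-|t|^2)||\omega_\yy||^2=0$. This gives $\partial\bar\partial\Ab=0$ and hence \eqref{eq:curv:form:loc:express}.

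The main obstacle is the last step: one must verify that the frozen-period direction $u$ singled out by $\omega_\yy(\alpha)=1$ is genuinely a \emph{null} direction of the Hodge form. This is exactly where the reality of $\lambda$ (non-square $D$), the identification $E=W_\yy$ as the $\lambda$-eigenspace, and the reality of the class $\alpha$ all combine; were $\lambda$ complex, $E$ would not be defined over $\R$, the clean anti-diagonal description of $(.,.)$ on $E$ would break, and the vanishing $H(u,u)=0$ could fail. I expect the remaining computations (Riemann bilinear relations, constancy of $H$, the differentiation producing $H(u,u)\,\partial w\wedge\overline{\partial w}$) to be routine once this structural point is in place.
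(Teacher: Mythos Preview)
Your proof is correct. Both you and the paper reduce the lemma to showing $\partial\bar\partial\Ab=0$; the difference lies in how this vanishing is verified. The paper picks a symplectic basis $(a_1,b_1,a_2,b_2)$ of $H_1(X,\Q)^-$ with $a_1=\alpha$, uses the eigenform relation (with real matrix $M\in\Mb_2(\Q(\sqrt{D}))$) to express $(\omega_\yy(a_2),\omega_\yy(b_2))$ as real affine functions of $\beta:=\omega_\yy(b_1)$, and then checks directly that $\Ab=\tfrac{\imath}{2}R(\bar\beta-\beta)$ for a real constant $R$, which is manifestly pluriharmonic. Your route is more conceptual: after writing $\partial\bar\partial\Ab=H(u,u)\,dw\wedge d\bar w$, you deduce $H(u,u)=0$ from the Hodge decomposition $u=s\,\omega_\yy+t\,\bar\omega_\yy$ together with $s+t=0$ (forced by $u(\alpha)=0$ and the reality of the cycle $\alpha$). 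The two arguments are equivalent at heart---your null-vector condition $|s|=|t|$ is precisely the mechanism that kills the quadratic term in the paper's direct expansion---but your packaging makes the role of the signature-$(1,1)$ Hodge form on $W_\yy$ and the reality of $\alpha$ more transparent, and does not require embedding $\alpha$ in a symplectic basis. One small slip: the Hodge form in the basis $\{\omega_\yy,\bar\omega_\yy\}$ is \emph{diagonal}, not anti-diagonal (the cross terms $(\omega_\yy,\bar\omega_\yy)$ and $(\bar\omega_\yy,\omega_\yy)$ vanish); this does not affect your computation.
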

\begin{proof}
	By definition, the correspondence $\sigma: \yy \to \omega_\yy$ is a local section of $\OO(-1)$ on $U$. Thus 
	$$
	\vartheta= -\partial\bar{\partial}\ln(\Ab)=-\frac{\partial\bar{\partial} \Ab}{\Ab} + \frac{\partial \Ab\wedge \bar{\partial}\Ab}{\Ab^2}.
	$$
	We will show that $\partial\bar{\partial} \Ab=0$.
	There is a symplectic basis $(a_1,b_1,a_2,b_2)$ of $H_1(X,\Q)^-$ with $\alpha=a_1$.  By Proposition~\ref{prop:eigen:form:per:eq},  there is a matrix $M\in \Mb_{2}(\Q(\sqrt{D}))$ such that the following holds
	$$
	(\omega_\yy(a_2) \quad \omega_\yy(b_2))=(\omega_\yy(a_1) \quad \omega_\yy(b_1))\cdot M.
	$$
	for all $\yy \in U$. This means that $\omega_\yy(a_2)$ and $\omega_\yy(b_2)$ are linear functions of $(\omega_\yy(a_1),\omega_\yy(b_1))$.
	Since $\omega_\xx(a_1)\equiv 1$, $\omega_\yy(a_2)$ and $\omega_\yy(b_2)$ are real affine functions of $\omega_\yy(b_1)$. Let $\beta(\yy):=\omega_\yy(b_1)$. We then get
	\begin{align*}
		\Ab(\yy):=||\omega_\yy||^2 & = \frac{\imath}{2}\left(\bar{\beta}(\yy)-\beta(\yy)+ \omega_\yy(a_2)\ol{\omega}_\yy(b_2)-\ol{\omega}_\yy(a_2)\omega_\yy(b_2)\right)\\
		& =\frac{\imath}{2}\cdot R \cdot (\bar{\beta}(\yy)-\beta(\yy))
	\end{align*}
	where $R$ is a real constant. Since $\beta$ is a holomorphic function, we must have $\partial\bar{\partial}\Ab=0$. The lemma is then proved.
\end{proof}

Let $\pi: \cC_D \to \cX_D$ denote the universal curve over $\cX_D$.
By a slight abuse of notation, the pullback of the curvature form of the Hodge norm to $\cC_D$ will be also denoted by $\vartheta$.
Recall that a point $\hxx$ in the fiber $\pi^{-1}(\{\xx\})$,  is a pair $(\xx,x)$, where $x$ is a point in $X$. Consider a path $c(\hxx)$ from $x$ to $\tau_\xx(x)$ on $X$.
For every $\hyy=(\yy,y)\in \cC_D$ close enough to $\hxx$, there is a distinguished homeomorphism $h_\yy: (Y,y) \to (X,x)$, where $Y$ is the Riemann surface underlying $\yy$, determined up to homotopy. We can suppose that $h_\xx\circ \tau_\yy = \tau_\xx\circ h_\xx$. Let $c(\hyy)$ be the image of $c(\hxx)$ by such a map. Then $c(\hyy)$ is a path from $y$ to $\tau_\yy(y)$. 
Define
\begin{equation}\label{eq:normalized:per:funct}
\varphi_c(\hyy):=\frac{\left|\int_{c(\hyy)}\omega_\yy\right|^2}{||\omega_\yy||^2}.
\end{equation}
Observe that $\varphi_c(\hxx)$ does not depend on the choice of the representative $\omega_\xx$ of the line $[\omega_\xx] \subset \Omega(X)^-$. 
\begin{Proposition}\label{prop:Theta:def:22:form}
	The closed $(2,2)$-form
	\begin{equation}\label{eq:Theta:CD:def}
		\Theta:=(\imath\vartheta)\wedge\left(\frac{\imath}{2}\partial\bar{\partial}\varphi_c\right)
	\end{equation}
	does not depend on the choice of the path $c$, and therefore is  well defined on $\cC_D$.
\end{Proposition}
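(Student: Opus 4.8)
The plan is to fix two paths $c,c'$ from $x$ to $\tau_\xx(x)$ on $X$ and to show that $\vartheta\wedge\partial\bar\partial(\varphi_{c'}-\varphi_c)=0$ on $\CD$. Write $\beta_c(\hyy):=\int_{c(\hyy)}\omega_\yy$, and set $\gamma:=c'-c$, which is a closed loop, i.e. an absolute cycle in $H_1(X,\Z)$. The period $g(\yy):=\int_\gamma\omega_\yy$ then depends only on the base point $\yy$; moreover, since $\tau_\yy^*\omega_\yy=-\omega_\yy$, only the anti-invariant part $\gamma^-=\tfrac12(\gamma-\tau_{\yy*}\gamma)\in H_1(X,\Q)^-$ contributes, so $g(\yy)=\omega_\yy(\gamma^-)$. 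Because $\int_{c'(\hyy)}\omega_\yy=\beta_c(\hyy)+g(\yy)$ and $\varphi_c$ is homogeneous of degree $0$ in $\omega$, we obtain $\varphi_{c'}-\varphi_c=\dfrac{\beta_c\bar g+\bar\beta_c g+|g|^2}{\Ab}$, where $\Ab(\yy)=\|\omega_\yy\|^2$.

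Next I would pass to a single normalization. Choose $\alpha=a_1\in H_1(X,\Z)^-$, complete it to a symplectic basis $(a_1,b_1,a_2,b_2)$ of $H_1(X,\Q)^-$, normalize the local section by $\omega_\yy(a_1)=1$ on a neighbourhood $U$, and set $\beta:=\omega_\yy(b_1)$. By Lemma~\ref{lm:curv:form:loc:express} (and the computation in its proof giving $\Ab=R\,\Im\beta$ for a real constant $R$), the curvature form is $\vartheta=\frac{R^2}{4\Ab^2}\,d\beta\wedge d\bar\beta$; in particular $\vartheta\wedge d\beta=\vartheta\wedge d\bar\beta=0$, whence also $\vartheta\wedge\partial\Ab=\vartheta\wedge\bar\partial\Ab=0$, while $\partial\bar\partial\Ab=0$. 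On the other hand, by Lemma~\ref{lm:D:no:square:hol:map:inj} (equivalently Proposition~\ref{prop:eigen:form:per:eq}, using that $D$ is not a square), the period $g=\omega_\yy(\gamma^-)$ lies in $K_D\cdot\omega_\yy(a_1)+K_D\cdot\omega_\yy(b_1)$, so $g=p+q\beta$ with $p,q\in K_D\subset\R$. Thus $g$ is a holomorphic function of $\beta$ with real coefficients: $dg=q\,d\beta$ and $d\bar g=q\,d\bar\beta$.

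The computation then splits into two elementary steps. First, using $\partial\bar\partial\Ab=0$ together with $\vartheta\wedge\partial\Ab=\vartheta\wedge\bar\partial\Ab=0$, a direct Leibniz expansion yields, for any smooth function $h$, the identity $\vartheta\wedge\partial\bar\partial\!\left(\frac{h}{\Ab}\right)=\frac{1}{\Ab}\,\vartheta\wedge\partial\bar\partial h$. Second, I apply this with $h=\beta_c\bar g+\bar\beta_c g+|g|^2$ and compute $\partial\bar\partial h$ term by term: since $\beta_c,g$ are holomorphic and $\partial\bar\partial g=\partial\bar\partial\bar g=0$, one finds $\partial\bar\partial(\beta_c\bar g)=q\,\partial\beta_c\wedge d\bar\beta$, $\partial\bar\partial(\bar\beta_c g)=-q\,d\bar\beta_c\wedge d\beta$, and $\partial\bar\partial|g|^2=q^2\,d\beta\wedge d\bar\beta$. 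Each of these contains $d\beta$ or $d\bar\beta$ as a factor, so wedging with $\vartheta$ annihilates all of them. Hence $\vartheta\wedge\partial\bar\partial(\varphi_{c'}-\varphi_c)=0$, which proves that $\Theta$ is independent of $c$.

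The main obstacle, and the conceptual heart of the statement, is the reduction $g=p+q\beta$ with $q$ real: this is precisely where the Prym eigenform structure (real multiplication by the non-square order $\cO_D$) is indispensable. For a general translation surface the period of an absolute cycle would be an independent complex coordinate and path-independence would fail; it is the rigidity of Lemma~\ref{lm:D:no:square:hol:map:inj} that forces $g$ to be a real-linear combination of the chosen periods, which, combined with the rank-one form $\vartheta\propto d\beta\wedge d\bar\beta$, makes every cross-term vanish. The remaining verifications — the Leibniz identity and the three $\partial\bar\partial$ computations — are routine once this structural input is in place.
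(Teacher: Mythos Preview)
Your proof is correct and follows essentially the same route as the paper: normalize via a symplectic basis so that $\omega_\yy(a_1)=1$, use Lemma~\ref{lm:curv:form:loc:express} to get $\vartheta\propto d\beta\wedge d\bar\beta$, invoke the real-multiplication rigidity (Proposition~\ref{prop:eigen:form:per:eq} / Lemma~\ref{lm:D:no:square:hol:map:inj}) to see that the period of the difference cycle is real-affine in $\beta$, and conclude that every cross-term is annihilated by wedging with $\vartheta$.

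The only difference is organizational. The paper first computes $\partial\bar\partial\varphi_c$ outright and obtains the closed expression
\[
\Theta=-\tfrac{1}{2}\,\frac{d\beta\wedge d\bar\beta}{4\,\Im(\beta)^2}\wedge\frac{dP\wedge d\bar P}{\Ab},
\]
from which path-independence is immediate since $dP'=dP+r\,d\beta$ with $r\in\R$. You instead expand the difference $\varphi_{c'}-\varphi_c$ and kill each piece via the Leibniz identity $\vartheta\wedge\partial\bar\partial(h/\Ab)=\Ab^{-1}\vartheta\wedge\partial\bar\partial h$. Both are equally valid; the paper's version has the side benefit of producing the explicit local formula \eqref{eq:Theta:loc:express}, which is used repeatedly afterwards (e.g.\ in the proof of Theorem~\ref{th:Theta:current}), so you would eventually need to derive it anyway.
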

\begin{proof}
Let $\cU$ be an open neighborhood of $\hxx$ in $\cC_D$ and $U$  the projection of $\cU$ in $\cX_D$.
We can suppose that for all $\yy=(Y,\ul{y},\tau_\yy,[\omega_\yy]) \in U$ there is a distinguished symplectic basis $(a_1,b_1,a_2,b_2)$ of $H_1(Y,\Z)^-$. We can also assume that $\omega_\yy$ satisfies $\omega_\yy(a_1)=1$ for all $\yy\in U$. This means that the correspondence  $\sigma: \yy \to \omega_\yy$ is a section of $\OO(-1)$ defined on $U$. Let $\beta(\yy):=\omega_\yy(b_2)$ and $\Ab(\yy)=||\omega_\yy||^2$. It follows from Lemma~\ref{lm:curv:form:loc:express} that we have $\Ab=\frac{\imath}{2}\cdot R(\bar{\beta}-\beta)$, where $R$ is a real constant, and
$$
\vartheta = \frac{\partial \Ab\wedge \bar{\partial}\Ab}{\Ab^2}=\frac{d\beta\wedge d\bar{\beta}}{4\Im(\beta)^2}.
$$

Let $P(\hyy):=\int_{c(\hyy)}\omega_\yy$.
By definition, $\varphi_c(\hyy)=|P(\hyy)|^2/\Ab(\yy)$. Thus
\begin{equation*}
	\partial\bar{\partial}\varphi_c =\frac{dP\wedge d\bar{P}}{\Ab}+\frac{\imath R}{2}\cdot \frac{P}{\Ab^2}\cdot d\beta \wedge d\bar{P} - \frac{\imath R}{2} \cdot \frac{\bar{P}}{\Ab^2}\cdot dP\wedge d\bar{\beta}+\frac{R^2}{2}\cdot \frac{|P|^2}{\Ab^3}\cdot d\beta\wedge d\bar{\beta},
\end{equation*}
and therefore
\begin{equation}\label{eq:Theta:loc:express}
\left(\imath\vartheta\right)\wedge\left(\frac{\imath}{2}\partial\bar{\partial}\varphi_c\right)=-\frac{1}{2}\cdot\left(\frac{d\beta\wedge d\bar{\beta}}{4\Im(\beta)^2}\right)\wedge \left(\frac{dP\wedge d\bar{P}}{\Ab}\right).
\end{equation}

Let $c'(\hxx)$ be another path on $X$ from $x$ to $\tau_\xx(x)$. Then $\hat{c}:=c'*(-c)$ is an element of $H_1(X,\Z)$. Note that we can identify $H^1(X,\Z)^-$ with $H^1(Y,\Z)^-$ for all $\yy=(Y,\ul{y},\tau_\yy,[\omega_\yy]) \in U$. 
We can write $\hat{c}=\hat{c}^+ + \hat{c}^-$, where $\tau_{\yy*}\hat{c}^+=\hat{c}^+$ and $\tau_{\yy*}\hat{c}^-=-\hat{c}^-$. 
Since $\omega_\yy \in \Omega(Y)^-$, we have $\omega_\yy(\hat{c})=\omega_\yy(\hat{c}^-)$.
By Proposition~\ref{prop:eigen:form:per:eq}, $\omega_\yy(\hat{c}^-)$ is a linear function with real coefficients  in the variables $(\omega_\yy(a_1),\omega_\yy(b_1))$. Since $\omega_\yy(a_1)\equiv 1$, $\omega_\yy(\hat{c}^-)$ is actually a real affine function of $\beta$. 
	
Let $P'(\hyy)$ be the integral of $\omega_\yy$ along the path $c'(\hyy)$.  We then have $P'(\hyy)=P(\hyy)+\omega_\yy(\hat{c}^-)$. Therefore, $dP'=dP + rd\beta$  and $d\bar{P}'=d\bar{P}+rd\bar{\beta}$, 
where $r\in\R$. It follows immediately from \eqref{eq:Theta:loc:express} that
	$$
	\vartheta\wedge\partial\bar{\partial}\varphi_{c'}=\vartheta\wedge\partial\bar{\partial}\varphi_{c}
	$$
and the proposition follows.
\end{proof}

Our goal now to prove the following
\begin{Theorem}\label{th:Theta:current}
	The $(2,2)$-form $\Theta$ defined in Proposition~\ref{prop:Theta:def:22:form} is a closed current on $\tilde{\cC}_D$.
\end{Theorem}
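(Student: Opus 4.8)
The plan is to show that $\Theta$, which is a smooth closed $(2,2)$-form on the open dense part $\cC_D \subset \tilde\cC_D$, extends across the boundary $\partial\tilde{\cC}_D$ as a closed current. Since $\tilde{\cC}_D$ is an orbifold of complex dimension $3$ and $\partial\tilde{\cC}_D$ is a normal crossing divisor by Proposition~\ref{prop:univ:curves:orb}, the statement is local near each boundary point $\qq\in\partial\tilde{\cC}_D$. The strategy is to produce, in a neighborhood of $\qq$, explicit bounds on the coefficients of $\Theta$ showing that they are locally integrable (so that $\Theta$ defines a current by integration), and then to verify that the current $d\Theta$ vanishes, i.e. that no spurious boundary terms appear when integrating against a test form. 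The key analytic input is the local expression \eqref{eq:Theta:loc:express}, namely
$$
\Theta=-\frac{1}{2}\cdot\left(\frac{d\beta\wedge d\bar{\beta}}{4\Im(\beta)^2}\right)\wedge \left(\frac{dP\wedge d\bar{P}}{\Ab}\right),
$$
which reduces everything to understanding the degenerations of the period functions $\beta$, $P$, and the area $\Ab$ near the boundary strata.

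First I would treat the codimension-one strata, which are those in Group~I and Group~II (all other strata have higher codimension in $\hat{\cX}_D$, hence their preimages have codimension $\geq 2$ in $\tilde{\cC}_D$ and can be handled by an extension-across-small-sets argument once integrability is known). For Group~I strata, where $\xi$ has no simple poles, $\Omega E_D$ extends holomorphically and $\Ab$ stays bounded away from $0$, so the only issue is the behavior of $\beta$ and $P$; here the relevant periods remain finite and $\Theta$ extends smoothly or with at worst mild singularities. The genuinely delicate case is Group~II (the strata $\cS_{2,0}^b$ and $\cS_{1,1}$), where $\xi$ acquires simple poles and the plumbing coordinates of \textsection\ref{subsec:str:gp:II:loc:coord} come into play: there the period $\omega(b_1)$ grows logarithmically, as recorded in Claim~\ref{clm:str:Sb:20:per:beta}, so $\Im(\beta)\sim c\log(1/|t|)$ blows up. I would substitute these logarithmic asymptotics into \eqref{eq:Theta:loc:express} and check that the growth of $dP\wedge d\bar P/\Ab$ is compensated by the decay of $d\beta\wedge d\bar\beta/\Im(\beta)^2$, so that the integrand is $L^1_{\mathrm{loc}}$ in the plumbing parameters. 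This uses that $\Ab$ is comparable to $\Im(\beta)$ up to bounded factors (again from Lemma~\ref{lm:curv:form:loc:express}) and that $dt/t$ has an integrable singularity.

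Once local integrability is established, I would prove closedness. On $\cC_D$ the form $\Theta$ is already closed since $\imath\vartheta$ and $\tfrac{\imath}{2}\partial\bar\partial\varphi_c$ are both closed (being $\partial\bar\partial$ of functions, up to the curvature identity). To upgrade this to closedness as a current, I would apply a standard cutoff argument: integrate $\Theta\wedge d\eta$ against a test form $\eta$, introduce a family of cutoff functions $\chi_\veps$ that vanish near $\partial\tilde{\cC}_D$ and equal $1$ away from it, use Stokes on $\chi_\veps\cdot(\text{primitive of }\Theta)$, and show the boundary contribution $\int (d\chi_\veps)\wedge(\cdots)$ tends to $0$ as $\veps\to 0$. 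The decisive estimate is that the local primitive of $\Theta$ (obtained by stripping one $\bar\partial$) grows no faster than the logarithmic rate of the periods, while $d\chi_\veps$ is supported on shrinking tubular neighborhoods whose volume decays fast enough to kill the product.

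I expect the main obstacle to be the precise matching of logarithmic growth rates in the Group~II computation: one must verify that the factor $1/\Im(\beta)^2\sim 1/(\log|t|)^2$ decays quickly enough to offset both the $1/\Ab\sim 1/\log(1/|t|)$ factor and the $dt\wedge d\bar t/|t|^2$ singularity coming from $dP$ and $d\beta$ along the plumbing collar, in such a way that the total integrand is integrable and the vanishing of the boundary terms in Stokes genuinely holds. This requires careful bookkeeping of which differentials $d\beta$, $dP$ carry the $dt/t$ singularity and which are bounded, drawing on the explicit local charts and period formulas constructed in \textsection\ref{sec:geometry:bdry:XD}; in particular the detailed description of the neighborhoods of boundary points and the explicit local sections of the relative dualizing sheaf developed there are exactly what makes the estimate tractable.
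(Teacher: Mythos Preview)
Your overall framework matches the paper's: verify locally that the coefficients of $\Theta$ are $L^1_{\rm loc}$ (the paper's condition~(A)) and that the boundary contributions in a cutoff/Stokes argument vanish (condition~(B)). Your treatment of Group~I is also right; the paper confirms $\Theta$ extends smoothly there (Corollary~\ref{cor:Theta:extend:str:gp:I}).

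There are, however, two genuine gaps. First, the ``extension across small sets'' shortcut for Groups~III and~IV does not work: even if those base strata have codimension~$2$ in $\hat\cX_D$, you must still establish $L^1_{\rm loc}$ of $\Theta$ \emph{near those points} before any current exists, and the local models there are different from Group~II. For Group~III one has an extra holomorphic parameter~$t_0$ (with $\Ab\sim -\ln|t|$ in a single variable~$t$), while for Group~IV the normalization is a cyclic quotient $\C^2/(\Z/m)$ and Proposition~\ref{prop:Area:loc:express} gives $\Ab\sim -a_1\ln|s|-a_2\ln|t|$ with \emph{two} logarithmic directions. The paper treats each group separately and these cases are not subsumed by the Group~II estimate; nor does a support argument for the degree-$5$ current $d\Theta$ on a real-$2$-dimensional set follow without additional control. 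Second, your claim that ``$dt/t$ has an integrable singularity'' is wrong in the sense needed here: the $(1,1)$-form $\frac{dt\wedge d\bar t}{|t|^2}$ is \emph{not} locally integrable. What rescues $\Theta$ is the factor $\Im(\beta)^{-2}\Ab^{-1}\sim(\ln|t|)^{-3}$, and the paper's key inequalities are precisely
\[
\int_{\Delta}\frac{|dt|^2}{|t|^2(-\ln|t|+K)^3}<\infty,\qquad
\int_{\Delta^2}\frac{|du|^2\,|dv|^2}{|u|^2|v|^2(-\ln|u|-\ln|v|+K)^3}<\infty,
\]
which require the cubic logarithmic decay explicitly. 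The second integral arises because you have not distinguished whether $\hat\pp$ is a smooth point or a node of the fiber: at a node the plumbing $uv=t^m$ introduces a second variable, $dP$ picks up a $du/u$ term (Proposition~\ref{prop:int:s:arc:express}(b.2)), and the estimate becomes genuinely two-variable. The paper handles the smooth/node dichotomy separately within each group, and this case distinction is essential to the argument.
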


Recall that $\partial\tilde{\cC}_D$ is a divisor with normal crossings (in the orbifold sense) in $\tilde{\cC}_D$. Since $\Theta$ is a smooth closed $(2,2)$-form in $\tilde{\cC}_D\setminus\partial\tilde{\cC}_D$, to show that $\Theta$ defines a closed current on $\tilde{\cC}_D$ is amount to prove the following: for all $\hpp=(\pp,p)\in \partial\tilde{\cC}_D$, that is $\pp\in \partial\hXD$ and $p$ is a point in the fiber $\tilde{\pi}^{-1}(\{\pp\})$, let $(x_1,x_2,x_3)$ be a local coordinate system in a neighborhood of $\hpp$ such that $\partial\tilde{\cC}_D$ is defined by $x_1\dots x_r=0, \; r\in\{1,2,3\}$. Then we have

\begin{itemize}
\item[(A)] For all $I=\{i_1,i_2\}\subset \{1,2,3\}$, and $J=\{j_1,j_2\}\subset \{1,2,3\}$, the function
$$
a_{I,J}:=\Theta(\partial x_{i_1},\partial x_{i_2}, \partial \bar{x}_{j_1}, \partial \bar{x}_{j_2})
$$
is $L^1_{\rm loc}$, and

\item[(B)] For all $\eps>0$, denote by $\cU_\eps$  the $\eps$-neighborhood of $\partial \tilde{\cC}_D$, then we have
$$
\lim_{\eps\to 0}\int_{\partial \cU_\eps}\Theta\wedge dx_i= \lim_{\eps\to 0}\int_{\partial \cU_\eps}\Theta\wedge d\bar{x}_i=0
$$
for all $i\in \{1,2,3\}$.
\end{itemize}
To prove those properties of $\Theta$ it is essential to have a convenient expression of the $1$-forms $dP$ and $d\bar{P}$ in \eqref{eq:Theta:loc:express}. 

Let us consider a family of nodal curves $\varrho: \cY \to U$, where $U$ is an open neighborhood of $0\in \C^N$. 
For all $\xx \in U$, denote the fiber $\varrho^{-1}(\{\xx\})$ by $Y_\xx$. We assume that 
\begin{itemize} 
\item[(i)] There is an involution $\tau_\cY$ on $\cY$ which restricts to an admissible involution  on each fiber $Y_\xx$. This implies in particular that if $q$ is a node of $Y_\xx$  fixed by $\tau_\cY$, then the two local branches of $Y_\xx$ at $q$ are invariant by $\tau_\cY$.  

\item[(ii)] There is a system of coordinates  $(z_1,\dots,z_{N-n},t_1,\dots,t_{n})$ on $U$ such that $Y_\xx$ is smooth if and only if $\xx\in U^*:=\{(z_1,\dots,z_{N-n},t_1,\dots,t_n)\in U, \; t_1\cdots t_n\neq 0\}$.

\item[(iii)] Let $\{q_j, \; j\in J\}$ be the set of nodes of $Y_0$. 
For every  $j\in J$, there exist  $i=i(j) \in \{1,\dots,n\}$ and a positive integer $r=r(j)$ such that a neighborhood  of $q_j$ in $\cY$ is isomorphic to the  analytic set 
$$
\cA_j:=\{(u,v,z_1,\dots,z_m,t_1,\dots,t_n) \in \C^2\times U, \; |u| < \delta, |v| < \delta, \; uv=t_{i}^{r}\},
$$
with $\delta\in \R_{>1}$. We suppose moreover that the sets $\cA_j$'s are pairwise disjoint, and for each $j\in J$, either  $\cA_j$ is invariant by $\tau_{\cY}$ in which case the restriction of $\tau_\cY$ to $\cA_j$ is given by $(u,v,z_1,\dots,z_{N-n},t_1,\dots,t_n) \mapsto (-u,-v,z_1,\dots,z_{N-n},t_1,\dots,t_n)$, or  there exists $j'\in J\setminus\{j\}$ such that $\cA_j$ and $\cA_{j'}$ are permuted by $\tau_\cY$.
\end{itemize}
For simplicity, in what follows we will write $z=(z_1,\dots,z_{N-m})$,  $t=(t_1,\dots,t_n)$, and for any subset $V\subset U$, $\cY_{|V}=\varrho^{-1}(V)$.
For all $\xx\in U^*$ and $j\in J$, let $a_j(\xx)$ denote a core curve of the annulus $\cA_j\cap Y_\xx$. The monodromy of the family $\cY_{|U^*}$ is generated by products of simultaneous Dehn twists about the curves $a_j(\xx)$.
The set $U^*$ can be covered by a finite family of open subsets $\{U^*_k, \; k=1,\dots,m\}$ such that for each $k$ the fiberation $\varrho: \cY_{|U^*_k} \to U^*_k$ is trivial. This means that we have an isomorphism of fiberations $\cY_{|U^*_k} \simeq U^* _k\times Y_{\xx_k}$, where $\xx_k$ is an arbitrary point in $U^*_k$. 

\medskip 
 
Let $y_0$ be a point in $Y_0$ and consider a neighborhood $\cU$ of $y$ in $\cY$. We wish to specify for each $y \in \cU\cap \varrho^{-1}(U^*)$ a path from $y$ to $\tau_{\cY}(y)$ in the smooth curve $Y_{\varrho(y)}$ in a coherent manner. We distinguish two cases:
\begin{itemize}
	\item[(i)] $y_0$ is fixed by $\tau_\cY$. We have two subcases:
	 \begin{itemize}
	 	\item[(i.a)] $y_0$ is a smooth point in $Y_0$.  We choose $\cU$ to be a neighborhood of $y$ such that  $(\cU, y_0) \simeq (\Delta(\rho)\times V, 0)$, where $\rho$ is a small positive real number, and $V$ is an open neighborhood of $0$ in $U$, and the restriction of $\tau_\cY$ to $\cU$  is given by $(w,z,t)\mapsto (-w,z,t)$. In this case, for all $y \simeq (w,z,t)\in \cU$ we denote by $c(y)$ the segment $[w,-w]\times\{(z,t)\} \subset \cU\cap Y_{(z,t)}$. 
	 	
	 	\item[(i.b)] $y_0=q_j$ is a node of $Y_0$. In this case we take $\cU=\cA_j$. For all $y\simeq (u,v,z,t)$, denote by $c(y)$ the path $\theta \mapsto (e^{\imath\theta}u, e^{-\imath\theta}v, z,t)$, with $\theta\in [0;\pi]$. One readily checks that $c(y)$ joins $y$ to $\tau_{\cY}(y)$ and is contained in $\cU$. 
	 \end{itemize}

	\item[(ii)] $y_0$ is not invariant by $\tau$. Again, we have two subcases:
	\begin{itemize}
		\item[(ii.a)] $y_0$ is a smooth point of $Y_0$. Let $y'_0:=\tau_\cY(y_0)$. We choose a neighborhood $\cU$ of $y_0$ such that $(\cU,y_0) \simeq (\Delta(\rho)\times V,0)$, with $\rho$ being a small positive real number, and $V$  an open neighborhood of $0$ in $U$. Let $\cU':=\tau_{\cY}(\cU)$.
		We identify $(\cU',y'_0)$ with $\Delta(\rho)\times V$ so that the restriction of $\tau_\cY$ to $\cU$ is given by $(w,z,t) \mapsto (-w,z,t)$.
		We can suppose that $\cU\cup \cU'$ is disjoint from $\cA_j$ for all $j\in J$.
		
		For each $k\in \{1,\dots,m\}$ pick a point $\xx_k$ in $V^*_k:= V\cap U^*_k$. The trivializing $\cY_{|V^*_k}  \simeq Y_{\xx_k}\times V^*_k$  provides us with homeomorphisms $h_\xx: Y_\xx\to Y_{\xx_k}$,  for all   $\xx \in V^*_k$. We can assume that the restrictions of $h_\xx$ to $Y_\xx\cap \cU$ and to $Y_\xx\cap\cU'$  are given by $(w,z(\xx),t(\xx)) \mapsto (w,z(\xx_k),t(\xx_k))$. Let $f_k: Y_{\xx_k} \to Y_0$ be a degenerating map, that is $f_{k}(a_j(\xx_k))=q_j$ for all $j\in J$, and the restriction of $f_k$ to the complement of $\bigcup_{j\in J}a_j(\xx_k)$, denoted by $Y^0_{\xx_k}$, is a homeomorphism from $Y^0_{\xx_k}$ onto $Y_0\setminus\{q_j, \; j\in J\}$. 
		We can assume that the restrictions of $f_k$ to $\cU\cap Y_{\xx_k}$ and to $\cU'\cap Y_{\xx_k}$ satisfy $f_k(w,z(\xx_k),t(\xx_k))=(w,0,0)$.
		We can also suppose that the $\Z/2$-action generated by $\tau_\cY$ is equivariant with respect to $h_\xx$ and $f_k$. 
		
		Let us  pick a simple path $c(y_0)$ from $y_0$ to $\tau_\cY(y_0)$ in $Y_0$.    Let $y_k \in \cU\cap Y_{\xx_k}$ be the point  of coordinate $(0,z(\xx_k), t(\xx_k))$, and $y'_k:=\tau_\cY(y_k)$. Note that we have $f_k(y_k)=y_0$ and $f_k(y'_k)=y'_0$. 
		Let  $c(y_k)$ is a path in $Y_{\xx_k}$ joining $y_k$ to $y'_k$ such that $f_k(c(y_k))$ is homotopic to $c(y_0)$ by a homotopy with fixed endpoints in $Y_0$. For all $y =(w,z,t)\in \cU\cap \cY_{|V^*_k}$, let $c(y)$ be the path from $y$ to $y':=\tau_\cY(y)$ in $Y_{\xx}$, where $\xx=(z,t)$, which is the concatenation of 
		\begin{itemize}
			\item[$\bullet$] a path in $\cU\cap Y_\xx \simeq \Delta(\rho)$ from $y=(w,z,t)$ to $(0,z,t)=h_\xx^{-1}(y_k)$,
			
			\item[$\bullet$] the path $h^{-1}_\xx(c(y_k))$ from $h^{-1}_\xx(y_k)$ to $h^{-1}_\xx(y'_k)$, 
			
			\item[$\bullet$] a path in $\cU'\cap Y_\xx \simeq \Delta(\rho)$ from $h^{-1}_\xx(y'_k)$ to $y'$.  
		\end{itemize}

    \item[(ii.b)] $y_0=q_j$ is a node of $Y_0$. We have $\tau_\cY(q_j)=q_{j'}$ for some $j'\in J, j' \neq j$.  
    In this  case, we choose $\cU$ to be $\cA_j$. 
    Let $i=i(j)=i(j')$ and $r=r(j)=r(j')$. We can assume that the restriction $\tau_{\cY|\cA_j}: \cA_j \to \cA_{j'}$ is given by  $(u,v,z,t) \mapsto (-u,-v,z,t)$, where $(u,v,z,t)$ is the coordinate system in the definition of $\cA_j$ and $\cA_{j'}$.
    For all $\xx\in U$, let $y_1(\xx)$ denote the point in $\cA_j$ of coordinate $(1,t_i^r(\xx),z(\xx),t(\xx))$, and $y'_1(\xx):=\tau_\cY(y_1(\xx)) \simeq (-1,-t^r_i(\xx), z(\xx), t(\xx)) \in \cA_{j'}$. 
    We can suppose that the maps $h_\xx: Y_\xx \to Y_{\xx_k}$ and $f_k: Y_{\xx_k} \to Y_0$ satisfy $h_{\xx}(y_1(\xx))=y_1(\xx_k), h_\xx(y'_1(\xx))=y'_1(\xx_k)$, and $f_k(y_1(\xx_k))=y_1(0), f_k(y'_1(\xx_k))=y'_1(0)$. 
    
    Consider a simple path $c(q_j)$ in $Y_0$ joining $q_j$ and $q_{j'}$. Without loss of generality we can assume that $c(q_j)\cap \cA_j$ (resp. $c(q_j)\cap\cA_{j'}$) is contained in the local branch $\{v=0\}$ of $Y_0$, and that $c(q_j)$ contains the segments $c_0(q_j):=[q_j,y_1(0)] \simeq [0,1]\times \{0\} \subset \cA_j$ and $c'_0(q_j):= [y'_1(0), q_{j'}]\subset \cA_{j'}$.
    Let $c_1(q_j)$ denote the path from $y_1(0)$ to $y'_1(0)$ that is contained in $c(q_j)$.

    Consider now a point $y=(u,v,z,t) \in \cA_j\cap \cY_{|U^*_k}$. We wish to specify a path $c(y)$ from $y$ to $y':=\tau_\cY(y)$ on $Y_\xx$, where $\xx=(z,t)$ in a coherent manner. To this purpose, let us pick a simple path $c_1(\xx_k)$ in $Y_{\xx_k}$ from $y_1(\xx_k)$ to $y'_1(\xx_k)$  such that $f_k(c_1(\xx_k))$ is homotopic to $c_1(q_j)$ in $Y_0$ (note that $f_k(c_1(\xx_k))$ and $c_1(q_j)$ have the same endpoints). For all $\xx \in U^*_k$, let $c_1(\xx):=h^{-1}_\xx(c_1(\xx_k))$. A convenient way to construct a path from $y$ to $\tau_\cY(y)$ is to concatenate $c_1(\xx)$, where $\xx=\varrho(y)\in U^*_k$, with a path from $y$ to $y_1(\xx)$ and a path from $y'_1(\xx)$ to $y'$. Unfortunately, since $Y_\xx\cap \cA_j$ is an annulus, there does not exist any distinguished path from $y$ to $y_1(\xx)$ up to homotopy.  
    To remedy this issue we consider  
     $\cA^{0*}_{j,k}:=\{(u,v,z,t) \in \cA_j\cap\cY_{|U^*_k}, \, \arg(u)\neq \pi/2\}$, and  $\cA^{1*}_{j,k}:=\{(u,v,z,t) \in \cA_j \cap \cY_{|U^*_k}, \, \arg(u)\neq -\pi/2\}$. 
    If $y \in \cA^{0*}_{j,k}$,  there is a unique path from $y$ to $y_1(\xx)=(1,t_i^r,z,t)$ which is contained in $\cA^{0*}_{j,k}\cap Y_{\xx}$ up to homotopy. We denote this path by $c_0(y)$ and its image by $\tau_\cY$ by $c'_0(y)$. The concatenation $c_0(y)*c_1(\xx)*c'_0(y)$ is denoted by $c(y)$. We have a similar construction for all $y \in \cA^{1*}_{j,k}$.     
    \end{itemize}
\end{itemize} 
We summarize the construction above in the following
\begin{Lemma}\label{lm:construct:path}
Let $y_0$ be a point in the central fiber $Y_0$. 
\begin{itemize}
	\item[$\bullet$] If $y_0$ is fixed by $\tau_\cY$, then there exists a neighborhood $\cU$ of  $y_0$ such that one can specify for all $y\in \cU$ a distinguished path $c(y)$ in $\cU\cap Y_{\varrho(y)}$ joining  $y$ to $\tau_\cY(y)$, where $c(y)$ is constant if $y$ is fixed by $\tau_\cY$. 
	
	\item[$\bullet$] If $y_0$ is not fixed by $\tau_\cY$, then there exists a neighborhood $\cU$ of $y_0$ such that $\cU^*:=\cU\cap\cY_{|U^*}$  can be covered by a finite family $\{\cU^*_k, \; k=1,\dots,\ell\}$   of open subsets such that for each $k\in \{1,\dots,\ell\}$, for all $y\in \cU^*_k$, one can specify a distinguished path $c(y)\subset Y_{\varrho(y)}$ from $y$ to $\tau_\cY(y)$. Note that the choice of the path $c(y)$ depends on $\cU_k$.
\end{itemize}	
\end{Lemma}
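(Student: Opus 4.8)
The plan is to establish the lemma by verifying directly the four local models described above, organized according to whether $y_0$ is fixed by $\tau_\cY$ and whether it is a smooth point or a node of $Y_0$.

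\textbf{Fixed points.} Suppose first $\tau_\cY(y_0)=y_0$. If $y_0$ is a smooth point, I would invoke the linearizability of the involution near a fixed point to obtain coordinates $(w,z,t)$ on a neighborhood $\cU\simeq \Delta(\rho)\times V$ in which $\tau_\cY$ acts by $(w,z,t)\mapsto(-w,z,t)$; then for $y\simeq(w,z,t)$ the segment $c(y):=[w,-w]\times\{(z,t)\}$ lies in the single fiber $Y_{(z,t)}$ (the base coordinates $(z,t)$ being constant along it), joins $y$ to $\tau_\cY(y)$, and collapses to a point exactly when $w=0$, i.e.\ when $y$ is fixed. If $y_0=q_j$ is a node, I take $\cU=\cA_j$ and use that $\tau_\cY$ acts there by $(u,v,z,t)\mapsto(-u,-v,z,t)$; the rotation $c(y)\colon\theta\mapsto(e^{\imath\theta}u,e^{-\imath\theta}v,z,t)$, $\theta\in[0;\pi]$, preserves the product $uv=t_i^r$ and hence stays inside $\cA_j\cap Y_{\varrho(y)}$, joins $y=(u,v,z,t)$ to $\tau_\cY(y)=(-u,-v,z,t)$, and is constant precisely when $u=v=0$. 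This settles the first bullet.

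\textbf{Non-fixed points.} Suppose now $\tau_\cY(y_0)\neq y_0$. Here the obstruction is that the family $\cY_{|U^*}$ is not trivial: its monodromy is generated by Dehn twists about the vanishing cycles $a_j(\xx)$, so no single distinguished path can be transported coherently over all of $U^*$. The remedy is to fix a reference path in the central fiber and transport it fiberwise over each member of a trivializing cover $\{U^*_k\}$ of $U^*$, using the trivialization homeomorphisms $h_\xx$ and a degenerating map $f_k\colon Y_{\xx_k}\to Y_0$ chosen $\Z/2$-equivariantly. For $y_0$ a smooth point one fixes a simple path $c(y_0)$ from $y_0$ to $\tau_\cY(y_0)$ in $Y_0$, lifts it to $c(y_k)$ in $Y_{\xx_k}$, and produces $c(y)$ as the concatenation of a short path inside $\cU\cap Y_\xx$, the transported path $h_\xx^{-1}(c(y_k))$, and a short path inside $\cU'\cap Y_\xx$; by construction this depends only on the index $k$ of the chart containing $\varrho(y)$.

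\textbf{The annular case and the main difficulty.} The delicate point, and what I expect to be the main obstacle, is the case $y_0=q_j$ a node exchanged with $q_{j'}=\tau_\cY(q_j)$: then $Y_\xx\cap\cA_j$ is an annulus, and there is no distinguished homotopy class of path from a point $y\in\cA_j$ to the reference point $y_1(\xx)$, precisely because a loop around this annulus is a vanishing cycle of the family. I would resolve this exactly as above, by covering $\cA_j\cap\cY_{|U^*_k}$ by the two simply-connected pieces $\cA^{0*}_{j,k}$ and $\cA^{1*}_{j,k}$ obtained by deleting the ray $\{\arg(u)=\pi/2\}$, resp.\ $\{\arg(u)=-\pi/2\}$; on each piece the path from $y$ to $y_1(\xx)$ is unique up to homotopy, and concatenating it with the transported path $c_1(\xx)=h_\xx^{-1}(c_1(\xx_k))$ and its $\tau_\cY$-image yields a well-defined $c(y)$. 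Refining the trivializing cover $\{U^*_k\}$ by these pieces produces the finite cover $\{\cU^*_k\}$ demanded in the statement, which completes the verification.
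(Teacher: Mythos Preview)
Your proposal is correct and follows essentially the same approach as the paper: the lemma is stated there as a summary of the very construction you have outlined, treating the four local models (fixed/non-fixed, smooth/node) in the same way, including the resolution of the annular ambiguity at an exchanged node by the cover $\cA^{0*}_{j,k}\cup\cA^{1*}_{j,k}$.
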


We now prove 
\begin{Proposition}\label{prop:int:s:arc:express} 
Suppose that there exists a holomorphic section $\Omega$ of the relative dualizing sheave  $\omega_\varrho$ on $\cY$ such that $\tau_\cY^*\Omega=-\Omega$. For all $\xx\in U^*$, denote by $\Omega_\xx$ the restriction of $\Omega$ to the smooth curve $Y_ \xx$. We assume that  for every $j\in J$, the restriction of $\Omega$ to $\cA_j$ is either $\lambda_j u^{m_j}du$, or $\lambda_j v^{m_j}dv$, where $\lambda_j \in \C$, and $m_j\in \Z_{\geq -1}$. 
Let $y_0$ be a point in the central fiber $Y_0$, and $\cU$ a neighborhood of $y_0$ as described in Lemma~\ref{lm:construct:path}.

\begin{itemize}
	\item[(a)] Assume that $y_0$ is fixed by $\tau_\cY$. Define 
	$$
	P(y):=\int_{c(y)}\Omega_{\varrho(y)}
	$$
	for all $y\in \cU\cap \cY_{|U^*}$. Then $P$ is the restriction to $\cU^*$ of a holomorphic function on $\cU$.
	
	\item[(b)] Assume that $y_0$ is not fixed by $\tau$. Let $\cU$, and $\cU^*_k, \, k=1\dots,\ell$, be as in Lemma~\ref{lm:construct:path}. Fix a $k \in \{1,\dots,\ell\}$. For all $y\in \cU^*_k$  define
	$$
	P_k(y):=\int_{c(y)} \Omega_{\xx}
	$$
	where $\xx:=\varrho(y)$.
	\begin{itemize} 
		\item[(b.1)] If $y_0$ is a smooth point of $Y_0$ then  we have
		\begin{equation}\label{eq:P:expr:p:smooth}
			P_k(y) =  \phi + \sum_{i=1}^n \mu_i\cdot \ln(t_i(\xx))
		\end{equation}
		where $\phi$ is the restriction to $\cU^*_k$ of a  holomorphic function on $\cU$, and the $\mu_i$'s are complex constants satisfying $\mu_i\neq 0$ only if there exists $j\in J$ such that $\Omega_0$ has a simple pole at $q_j$, $i=i(j)$, and $q_j$ is contained in the interior of $c(y_0)$.  		 
		\item[(b.2)] If $y_0$ is a node $q_{j}$ of $Y_0$ then up to a permutation  of the coordinates $(u,v)$ on  $\cA_{j}$, for all $y=(u,v,z,t) \in \cU^*_k$,  we have
		\begin{equation}\label{eq:P:expr:p:node:s:pole}
			P_k(y)=\phi+\mu_0\cdot \ln(u)+ \sum_{i=1}^n \mu_i\cdot \ln(t_i)
		\end{equation}
		where $\phi$ is the restriction to $\cU^*_k$ of a  holomorphic  function on $\cU$, $\mu_0 \in \C$ is non-zero only if $\Omega_0$ has simple pole at $q_{j_0}$,  and the numbers  $\{\mu_i, \; 1 \leq i \leq n\}$ satisfy the same properties as in \eqref{eq:P:expr:p:smooth}.
	\end{itemize}
\end{itemize}
\end{Proposition}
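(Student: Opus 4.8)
The plan is to compute $P$ (resp. $P_k$) by decomposing each path $c(y)$ into three kinds of arcs and integrating $\Omega$ over each kind separately: arcs lying in the \emph{thick part} of the fiber (the complement of the collars $\cA_j$), arcs that cross a collar $\cA_j$ from one local branch to the other, and the short arcs near $y_0$ supplied by the construction in Lemma~\ref{lm:construct:path}. The guiding principle is that the only source of non-holomorphic behaviour is the integral of a \emph{simple pole} of $\Omega$ across a thin annulus $\cA_j\cap Y_\xx$, which produces a logarithm of the corresponding plumbing parameter; everything else will be shown to extend holomorphically across $\{t_1\cdots t_n=0\}$.

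First I would treat the thick-part arcs. Using the trivializations $\cY_{|U^*_k}\simeq Y_{\xx_k}\times U^*_k$ and the degenerating maps $f_k$, any sub-arc of the fixed core path $c(y_k)$ that lies in $Y^0_{\xx_k}$ can be transported by $h_\xx^{-1}$ to a family of arcs contained in a fixed compact subset of the fibers disjoint from every collar; its endpoints lie on the persistent boundary circles $\{|u|=\delta\}$, $\{|v|=\delta\}$ of the collars and depend holomorphically on $\xx$. On such a compact set $\Omega$ is a holomorphic $1$-form varying holomorphically in all of $\xx$ (including $t=0$), so the integral over the arc is the restriction to $\cU^*$ of a function holomorphic on $\cU$. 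The terminal arcs near a \emph{smooth} point $y_0$ (cases (i.a) and (ii.a)) lie inside $\cU,\cU'\simeq\Delta(\rho)\times V$, where $\Omega$ is holomorphic and the arcs are segments varying holomorphically, so they contribute holomorphic terms as well.

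Next comes the explicit local computation on a collar $\cA_j=\{uv=t_i^{r}\}$ with $\Omega|_{\cA_j}=\lambda_j u^{m_j}\,du$ and $m_j\ge -1$. A crossing arc runs from $u\approx\delta$ to $u\approx t_i^{r}/\delta$; if $m_j\ge 0$ then $\int\lambda_j u^{m_j}\,du$ is an explicit power of the endpoints, hence holomorphic in $t_i$, whereas if $m_j=-1$ then $\int\lambda_j\,du/u=\lambda_j r\ln(t_i)+(\text{holomorphic})$, producing a term $\mu_i\ln(t_i)$ whose coefficient is a nonzero multiple of $\lambda_j$ precisely when $\Omega_0$ has a simple pole at the node $q_j$ crossed by $c(y_0)$, with $i=i(j)$. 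Summing over all crossings yields \eqref{eq:P:expr:p:smooth}. For the $\tau_\cY$-fixed cases I would invoke $\tau_\cY^*\Omega=-\Omega$: on an invariant collar the relation $\tau_\cY^*(\lambda_j u^{m_j}\,du)=\lambda_j(-1)^{m_j+1}u^{m_j}\,du=-\Omega$ forces $m_j$ even, so no simple pole occurs at a fixed node and the semicircular arc of (i.b) integrates to a holomorphic power; with (i.a) this gives conclusion (a). In the node case (b.2) the initial arc of $c(y)$ runs inside $\cA_{j_0}$ from $u$ to $1$, contributing $-\lambda_{j_0}\ln(u)$ when $m_{j_0}=-1$, and the symmetric terminal arc inside $\cA_{j'_0}$ contributes an analogous logarithm by anti-invariance (which relates $\lambda_{j'_0}$ to $\lambda_{j_0}$), producing the $\mu_0\ln(u)$ summand of \eqref{eq:P:expr:p:node:s:pole}; the middle path is handled exactly as in (b.1).

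The main obstacle I expect is not any single computation but the bookkeeping needed to guarantee that, after transporting the fixed core path by $h_\xx^{-1}$, the collar-crossing endpoints depend holomorphically on $\xx$ and isolate each logarithm cleanly — in particular that all branch ambiguities and the passage between the charts $\{\cU^*_k\}$ affect only the holomorphic remainder $\phi$ and never the coefficients $\mu_0,\mu_i$. Establishing that these remainders genuinely \emph{extend holomorphically} across $t=0$ (rather than merely remaining bounded) is the delicate point, and it is where the holomorphic dependence of $\Omega$ and of the gluing data, together with the persistence of the collar boundary circles in the central fiber, must be used carefully.
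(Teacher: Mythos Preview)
Your proposal is correct and follows essentially the same route as the paper: decompose $c(y)$ into collar-crossing arcs and thick-part arcs, compute the former explicitly (yielding $\lambda_j r(j)\ln t_{i(j)}$ when $m_j=-1$ and a holomorphic term otherwise), and use $\tau_\cY^*\Omega=-\Omega$ to force $m_j$ even at fixed nodes. One simplification worth noting: the paper takes the collar-crossing arcs to run between the explicit points $(1,t_i^r,z,t)$ and $(t_i^r,1,z,t)$ on the fixed circle $|u|=1$ (recall $\delta>1$), so the thick-part endpoints are literally constant in $t$, which dissolves the ``main obstacle'' you anticipate---the holomorphic extension of the thick-part integral is then immediate since those arcs stay in a region where $\Omega$ is holomorphic in all variables including $t=0$.
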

\begin{proof}
Suppose first that $y_0$ is fixed by $\tau_\cY$. If $y_0$ is a smooth point of $Y_0$ then we can choose the neighborhood $\cU$ of $y_0$ such that $(\cU,y_0) \simeq (\Delta\times V, 0)$, where $V$ is a an open neighborhood of $0$ in $U$. In this case
$\Omega_{|\cU}= \varphi(w,z,t) dw$, where $w$ is the coordinate on $\Delta$, and $\varphi$ is a holomorphic function. By construction, all the paths $c(y)$ are contained in $\cU$. Thus $P(.)$ is the restriction to $\cU^*$ of the function 
$$
(w,z,t) \mapsto \int_{w}^{-w}\varphi(s,z,t)ds
$$
which is a holomorphic function on $\cU$, and the conclusion follows.

If $y_0$ is a node $q_j$ of $Y_0$ which is fixed by $\tau_\cY$, then we have $\cU=\cA_j$ and $c(y)\subset\cA_j$ for all $y\in\cA_j$.
Without loss of generality, we can assume that $\Omega=\lambda_j u^{m_j}du$ in $\cA_j$.  Recall that the restriction of $\tau_\cY$ to $\cA_j$ is given by $(u,v,z,t)=(-u,-v,z,t)$. It follows from the assumption  $\tau^*_\cY\Omega=-\Omega$ that we have $m_j$ is an even number, which implies that $m_j\geq 0$ (since we must have $m_j \geq -1$). Since for all $y \in \cA_j\cap \cY_{|U^*}$ the path $c(y)$ is entirely contained in $\cA_j$, and the conclusion follows.

\medskip 

We now turn to the case $y_0$ is not fixed by $\tau_\cY$. Consider a point $y \in \cU^*_k$. Let $\xx:= \varrho(y)\in U^*$.
As $y$ varies in $\cU^*_k$, for all $j\in J$, one can specify  
 a simple arc $\delta_j(\xx)$ in $\cA_j(\xx):=\cA_j\cap Y_\xx$ joining $(1,t_i^{r},z,t)$ to $(t_{i}^{r},1,z,t)$, where $i=i(j), r=r(j)$.
 Without loss of generality, we can assume that the restriction of $\Omega$ to $\cA_j$ is given by $\lambda_j u^{m_j}du$. 
 We then have
\begin{equation}\label{eq:int:along:cross:arc}
 \int_{\delta_j(\xx)}\Omega_\xx =\left\{
 \begin{array}{cl}
 	\lambda_j r\cdot\ln(t_{i}) & \hbox{ if $m_j=-1$}\\
 	\frac{\lambda_j}{m_j+1}\cdot (t_i^{r(m_j+1)}-1) & \hbox{ if $m_j\geq 0$.}
 \end{array}
 \right.
 \end{equation}
Note that $m_j=-1$ if and only if $\Omega_0$ has simple poles at $q_j$. 

\medskip

Assume that $y_0$ is a smooth point in $Y_0$. We can suppose that $y_0$ and $y'_0:=\tau_\cY(y_0)$ are not contained in any $\cA_j, \; j \in J$.  Let $J_c:=\{j \in J, \; q_j\in  c(y_0)\} \subset J$. 
For all $y\in \cU^*_k$, up to  homotopy (with fixed endpoints), we can assume that for all $j\in J_c$, the path $c(y)$ contains the arc 
$\delta_j(\xx)$. 
Let $\hat{c}_0(y)$ denote the complement of $\bigcup_{j\in J_c}\delta_j(\xx)$ in $c(y)$. Then $\hat{c}_0(y)$ is a finite union of simple arcs in $Y_\xx$ whose image by the degenerating map $f_\xx:=f_k\circ h_\xx: Y_\xx \to Y_0$ is  contained in the smooth part of $Y_0$.   Therefore
$$
\phi_0(y):=\int_{\hat{c}_0(y)}\Omega_\xx 
$$
is the restriction of a holomorphic function on $\cU$ to $\cU^*_k$.
Let $J^*_c$ denote the set of $j\in J_c$ such that $\Omega_0$ has simple poles at the node $q_j$. 
As a consequence of \eqref{eq:int:along:cross:arc} we get
$$
P_k(y)=\phi_0(y)+\sum_{j\in J_c}\int_{\delta_j(\xx)}\Omega_\xx = \sum_{j\in J^*_c}\lambda_j\cdot r(j)\cdot \ln(t_{i(j)})+\phi(y)
$$
where $\phi$ is a holomorphic function on $\cU$. We get the desired conclusion by setting  
$$
\mu_i:= \sum_{j\in J^*_c, \, i(j)=i} \lambda_{j}\cdot r(j).
$$	

\medskip 

Finally, let us assume that $p$ is a node $q_{j_0}$ of $Y_0$ not fixed by $\tau_\cY$. In this case we can take $\cU=\cA_{j_0}$. Without loss of generality, we can assume that the arc $c(y_0)\cap \cA_{j_0}$ is contained in the local branch $\{v=0\}$ of $Y_0$. Recall that for all $y=(u,v,z,t)\in \cU^*_k$, $c(y)$ is the concatenation $c_0(y)*c_1(y)*c'_0(y)$, where
\begin{itemize}
	\item[$\bullet$] $c_0(y)$ is a path in $Y_{(z,t)}\cap \cA_{j_0}$ from $y$ to $y_1(z,t):=(1,uv,z,t)$,
	\item[$\bullet$] $c_1(y)$ is a path in $Y_{(z,t)}$ from $y_1(z,t)$ to $y'_1(z,t):=\tau_\cY(y_1(z,t))$,
	\item[$\bullet$] $c'_0(y)= -\tau_\cY(c_0(y))$.	
\end{itemize}  
Using the fact that $\tau_\cY^*\Omega=-\Omega$, we get 
$$
\int_{c_0(y)}\Omega_{(z,t)} +\int_{c'_0(y)}\Omega_{(z,t)}=2\int_{c_0(y)}\Omega_{(z,t)}. 
$$ 
If $m_{j_0}=-1$ then we have
$$
\int_{c_0(y)}\Omega_{(z,t)}=-\lambda_{j_0}\ln(u).
$$
If $m_{j_0} \geq 0$ then
$$
\int_{c_0(y)}\Omega_{(z,t)} =\left\{
\begin{array}{cl}
		\frac{\lambda_{j_0}}{m_{j_0}+1}\cdot(1-u^{m_{j_0}+1}) & \hbox{ if $\Omega=\lambda_{j_0}u^{m_{j_0}}du$}\\
		\frac{\lambda_{j_0}}{m_{j_0}+1}\cdot v^{m_{j_0}+1}\cdot(u^{m_{j_0}+1}-1) & \hbox{ if $\Omega=\lambda_{j_0}v^{m_{j_0}}dv$}\\
\end{array} 
\right. 
$$
Since $m_{j_0} = -1$ if and only if $\Omega_0$ has simple poles at $q_{j_0}$, the same argument of the previous case allows us to conclude. 
\end{proof}

\begin{Proposition}\label{prop:Area:loc:express}
Let $\cY, U, U^*, \Omega$ as in Proposition~\ref{prop:int:s:arc:express}. Let $J^*$ denote the set of $j\in J$ such that $\Omega_0$ has simple poles at the node $q_j$ of $Y_0$. Then for all $\xx\in U^*$, we have
\begin{equation}\label{eq:Area:loc:express}
||\Omega_\xx||^2=\Ab(Y_\xx,\Omega_\xx)= -\sum_{i=1}^n a_i\ln|t_i| +\psi,
\end{equation}
where the $a_i$'s are real constants in $\R_{\geq 0}$ satisfying $a_i >0$ in and only if there exists $j\in J^*$ such that $i(j)=i$, and $\psi$ is a smooth positive function on $U$. 	
\end{Proposition}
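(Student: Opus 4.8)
The plan is to compute $||\Omega_\xx||^2=\frac{\imath}{2}\int_{Y_\xx}\Omega_\xx\wedge\bar\Omega_\xx$ for $\xx\in U^*$ by splitting $Y_\xx$ into a \emph{thick part} $Y_\xx^{\mathrm{thick}}:=Y_\xx\setminus\bigcup_{j\in J}\cA_j(\xx)$ and the \emph{thin annuli} $\cA_j(\xx):=\cA_j\cap Y_\xx$. The critical points of $\varrho$ are exactly the nodes, all of which have been removed, so $\varrho$ restricts to a submersion on the complement of the nodes; since the bounding circles $\{|u|=\delta\}$ and $\{|v|=\delta\}$ of the annuli have fixed radii independent of $\xx$, Ehresmann's theorem trivializes the family of thick parts over $U$ with fixed boundary. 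On $Y_\xx^{\mathrm{thick}}$ one stays at $|u|\geq\delta$ (resp. $|v|\geq\delta$) near each node, where $\Omega_\xx$ is bounded and depends holomorphically on $\xx$; hence $\xx\mapsto\frac{\imath}{2}\int_{Y_\xx^{\mathrm{thick}}}\Omega_\xx\wedge\bar\Omega_\xx$ extends to a smooth, non-negative function on all of $U$, which will form the bulk of $\psi$.

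Next I would evaluate the annular contributions explicitly. On $\cA_j$ we may assume $\Omega=\lambda_j u^{m_j}du$ (the case $\lambda_j v^{m_j}dv$ being symmetric), and $\cA_j(\xx)$ is the annulus $\{|t_{i(j)}|^{r(j)}/\delta<|u|<\delta\}$. In polar coordinates $u=\rho e^{\imath\phi}$ one has $\frac{\imath}{2}|u|^{2m_j}du\wedge d\bar u=\rho^{2m_j+1}\,d\rho\wedge d\phi$, whence
\begin{equation*}
\frac{\imath}{2}\int_{\cA_j(\xx)}\Omega\wedge\bar\Omega=2\pi|\lambda_j|^2\int_{|t_{i(j)}|^{r(j)}/\delta}^{\delta}\rho^{2m_j+1}\,d\rho.
\end{equation*}
When $m_j=-1$ this equals $-2\pi r(j)|\lambda_j|^2\ln|t_{i(j)}|+4\pi|\lambda_j|^2\ln\delta$, producing one logarithm plus a constant; when $m_j\geq 0$ it equals $2\pi|\lambda_j|^2\frac{\delta^{2m_j+2}-(|t_{i(j)}|^{r(j)}/\delta)^{2m_j+2}}{2m_j+2}$, a bounded, non-negative function that is smooth across $t=0$ since $|t|^{2k}=(t\bar t)^k$. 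As $m_j=-1$ holds exactly when $\Omega_0$ has a simple pole at $q_j$, i.e. $j\in J^*$ (and then $\lambda_j\neq 0$, $r(j)>0$), I set $a_i:=2\pi\sum_{j\in J^*,\,i(j)=i}r(j)|\lambda_j|^2$, so that $a_i\in\R_{\geq 0}$ and $a_i>0$ precisely when some $j\in J^*$ has $i(j)=i$.

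Collecting the logarithmic terms then gives $||\Omega_\xx||^2=-\sum_{i=1}^n a_i\ln|t_i|+\psi$, where $\psi$ is the sum of the thick-part integral, the $m_j\geq 0$ annular integrals, and the constants $4\pi|\lambda_j|^2\ln\delta$ over $j\in J^*$; each summand is smooth on $U$, so $\psi$ is smooth. For positivity I would note that the thick-part integral and the $m_j\geq 0$ annuli are non-negative, while each constant $4\pi|\lambda_j|^2\ln\delta$ is strictly positive because $\delta>1$ by hypothesis; thus $\psi>0$ as soon as $J^*\neq\varnothing$. If $J^*=\varnothing$ then all $a_i=0$ and $\psi=||\Omega_\xx||^2>0$ since $\Omega_\xx\neq 0$; equivalently, $\Omega_0\not\equiv 0$ forces the thick-part integral to be strictly positive already at $t=0$. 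This yields \eqref{eq:Area:loc:express}.

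The anti-invariance $\tau_\cY^*\Omega=-\Omega$ enters only to keep the setup consistent: at a node fixed by $\tau_\cY$ the local involution $(u,v)\mapsto(-u,-v)$ forces $m_j$ to be even, hence $m_j\geq 0$, so fixed nodes never lie in $J^*$ and never produce a logarithmic divergence — matching the vanishing of the residues there. The step I expect to be the main obstacle is the rigorous proof that the thick-part integral extends \emph{smoothly} across the central fiber $t=0$: one must verify that the Ehresmann trivialization can be chosen smoothly up to the fixed boundary circles and that differentiation under the integral sign is legitimate, which amounts to controlling $\Omega_\xx$ together with its $\xx$-derivatives uniformly on $Y_\xx^{\mathrm{thick}}$ away from the nodes. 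Once this regularity is secured, the explicit annular computations above finish the proof.
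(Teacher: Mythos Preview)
Your proposal is correct and follows essentially the same approach as the paper's proof: decompose $Y_\xx$ into the thick part $Y_\xx\setminus\bigcup_j\cA_j(\xx)$ and the annular collars $\cA_j(\xx)$, compute the annular integrals explicitly to isolate the logarithmic terms from the $m_j=-1$ nodes, and observe that the thick-part integral varies smoothly. Your treatment is in fact more careful than the paper's sketch: the paper writes the annulus as $|t_i|^r<|u|<1$ (tacitly setting $\delta=1$), defines $\psi$ as only the thick-part integral, and asserts smoothness and positivity without further comment, whereas you keep track of the $\delta$-dependent constants, fold the bounded annular pieces into $\psi$, and give a separate argument for $\psi>0$ in the two cases $J^*\neq\varnothing$ and $J^*=\varnothing$.
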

\begin{proof}
We first observe that $\cY^{0}:=\cY\setminus\left(\bigcup_{j\in J}\cA_j\right)$ is a fibration over $U$ with fiber being a surface with boundary diffeomorphic to the complement in $Y_0$ of a neighborhood of its nodes. For all $\xx in U$, let $Y^{0}_\xx$ denote the fiber of $\cY^{(0)}$ over $\xx$. Define
$$
\psi(\xx):=\Aa(Y^{0}_\xx,\Omega_\xx)=\frac{\imath}{2}\int_{Y^{0}_\xx}\Omega_\xx\wedge\ol{\Omega}_\xx.
$$
Then $\psi$ is a smooth positive function on $U$.
For each $j\in J$, let $A_j(\xx)$ denote the annulus $\cA_j\cap Y_\xx$. We have  	
$$
\frac{\imath}{2}\int_{A_j(\xx)}\Omega_\xx\wedge\ol{\Omega}_\xx = \frac{\imath}{2}|\lambda_j|^2\int_{|t_i|^{r}< |u| < 1}|u|^{2m_j}du d\bar{u} =\left\{
\begin{array}{cl}
	\frac{2\pi|\lambda_j|^2}{2(m_j+1)}(1-|t_i|^{2r(m_j+1)}) & \hbox{ if $m_j\geq 0$} \\
	-2\pi|\lambda_j|^2r\ln|t_i| & \hbox{ if $m_j=-1$}
\end{array}
\right.
$$
where $i=i(j)$ and $r=r(j)$.	Since 
$$
\Aa(Y_\xx,\Omega_\xx)=\Aa(Y^{0}_\xx,\Omega_\xx)+\sum_{j\in J}\Aa(A_j(\xx),\Omega_\xx)
$$
we get the desired conclusion.
\end{proof}

As a consequence we obtain

\begin{Corollary}\label{cor:Theta:extend:str:gp:I}
The $(2,2)$-form $\Theta$ extends smoothly across strata of group I in $\partial\tCD$. 	
\end{Corollary}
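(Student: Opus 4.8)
The plan is to reduce the assertion to the explicit local expression \eqref{eq:Theta:loc:express} for $\Theta$ and to observe that, on strata of group I, every quantity occurring there extends smoothly precisely because the limit differential carries no simple pole. Fix $\hpp=(\pp,p)\in\partial_1\tCD$ with $\pp$ lying on one of $\cS_{1,0},\cS_{2,0}^a,\cS_{0,2}$. By Proposition~\ref{prop:bdry:str:gp:I} the point $\pp$ is a smooth point of $\hXD$ sitting on the smooth divisor $\partial_1\hXD$, and by case (a) of the construction preceding Proposition~\ref{prop:univ:curves:orb} the point $\hpp$ is a regular point of $\tCD$ whose neighbourhood in the universal curve is left unmodified. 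Hence it suffices to work in a single chart $(x_1,x_2,x_3)$ about $\hpp$ in which $\partial_1\tCD$ is a coordinate hyperplane, and to show that the coefficients of $\Theta$ in this chart are smooth.

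The decisive feature is that, by the very definition of group I in \textsection\ref{sec:geometry:bdry:XD}, the differential $\xi$ on the central fibre has no simple pole at any node; equivalently, after the normalisation $\omega(a_1)=1$ the family of eigenforms defines a holomorphic section $\Omega$ of the relative dualizing sheaf whose restriction to each plumbing collar $\cA_j$ has the form $\lambda_j u^{m_j}\D u$ with $m_j\ge 0$, so that the set $J^{*}$ of nodes carrying a simple pole is empty. Granting this, Proposition~\ref{prop:Area:loc:express} yields $\Ab=\|\Omega_\xx\|^2=\psi$ with $\psi$ a smooth strictly positive function, so $1/\Ab$ is smooth; moreover the absolute period $\beta$ extends holomorphically across the boundary (again because all logarithmic terms vanish in the absence of residues, exactly as in Proposition~\ref{prop:int:s:arc:express}), and by Lemma~\ref{lm:curv:form:loc:express} one has $\Ab=R\,\Im(\beta)$ for a real constant $R\ne 0$, whence $\Im(\beta)$ is smooth and nowhere zero and $\imath\vartheta=\tfrac{\D\beta\wedge\D\bar\beta}{4\Im(\beta)^2}$ is smooth. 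Finally Proposition~\ref{prop:int:s:arc:express} applies verbatim to the arc integral $P=\int_{c}\Omega$: since $J^{*}=\varnothing$, all coefficients $\mu_i$ — and the coefficient $\mu_0$ in the nodal case (b.2) — vanish, so $P$ is the restriction of a holomorphic function near $\hpp$, whether $p$ is a smooth point fixed or not fixed by $\tau$, or a node. Substituting into \eqref{eq:Theta:loc:express} presents $\Theta$ as a wedge of the smooth forms $\D\beta,\D\bar\beta,\D P,\D\bar P$ multiplied by the smooth functions $1/\Im(\beta)^2$ and $1/\Ab$, so $\Theta$ extends as a smooth $(2,2)$-form across $\partial_1\tCD$.

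The one step requiring genuine verification, and the main obstacle I anticipate, is the claim that $\Omega=\omega$ is a bona fide holomorphic section of the relative dualizing sheaf with $m_j\ge 0$ at every node, rather than a section of some twist; this is what makes $J^{*}=\varnothing$ hold. It is settled by the explicit plumbing computation underlying Proposition~\ref{prop:bdry:str:gp:I}. For instance, at the node of $\cS_{0,2}$ where $\xi_{|C_1}$ has a double zero while the form on the $\Pb^1$ component is $-t^3\eta$ with $\eta=(x^2-1)^2\D x/x^2$ having a pole of order four, the gluing relation $\varphi_{0,z}\phi_0=t$ makes the two local expressions agree and produces $\Omega=\varphi_{0,z}^2\,\D\varphi_{0,z}$, that is $m_j=2$. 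The analogous computations at the nodes of $\cS_{1,0}$ and $\cS_{2,0}^a$, using the differentials on the vanishing component determined in Lemma~\ref{lm:gp:I:diff:on:vanish:comp}, likewise show that the scaling by the appropriate power of the smoothing parameter cancels the high-order poles of $\eta$ and leaves a holomorphic differential with a zero of nonnegative order at each node. Thus no simple pole ever appears on group I, $J^{*}=\varnothing$ holds uniformly, and the smoothness of $\Theta$ follows.
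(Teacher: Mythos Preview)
Your proof is correct and follows essentially the same approach as the paper: both arguments hinge on the fact that for group~I strata the limiting differential has no simple poles ($J^*=\varnothing$), so Proposition~\ref{prop:Area:loc:express} gives a smooth positive $\Ab$ and Proposition~\ref{prop:int:s:arc:express} gives a holomorphic $P$, whence $\Theta$ is smooth. The only cosmetic difference is that you invoke the explicit expression~\eqref{eq:Theta:loc:express} (requiring you to also argue that $\beta$ extends), whereas the paper works directly from the definition $\Theta=(-\imath\partial\bar\partial\ln\Ab)\wedge(\tfrac{\imath}{2}\partial\bar\partial\varphi_c)$, which avoids discussing $\beta$ altogether; your closing paragraph verifying $m_j\ge 0$ makes explicit what the paper leaves as ``one readily checks''.
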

\begin{proof}
Consider a point $\hpp$ in a stratum of group I in $\partial \tCD$. Let $\pp$ be the projection of $\hpp$ in $\hXD$. By definition, $\pp$ is contained in one of the strata $\cS_{1,0}, \cS_{2,0}^a$, $\cS_{0,2}$. Let $\omega_\pp$ be an Abelian differential on $\tilde{C}_\pp:=\tilde{\pi}^{-1}(\{\xx\})$ which generates the line $\OO(-1)_\pp$. Note that $\omega_\pp$ is holomorphic at all the nodes of $\tilde{C}_\pp$.

We know that $\pp$ is a smooth point of $\ol{\cX}_D$, hence a smooth point of $\hXD$ (see Proposition~\ref{prop:bdry:str:gp:I} and Proposition~\ref{prop:norm:orbifold}).
In \textsection~\ref{subsec:geom:bdry:str:gp:I}, we showed that a neighborhood of $\pp$ (in $\hXD$) is isomorphic to an open subset $U \subset \C^2$ with coordinates $(z,t)$, where $t$ is the smoothing parameter of the nodes of $\tilde{C}_\pp$. From our construction, we obtain actually the universal curve $\tilde{\cC}_{D|U}$ over $U$ and for each $\xx$ in $U$ an Abelian differential $\omega_\xx$ generating the line $\OO(-1)_\xx$. The differential $\omega_\xx$ is in fact the restriction to $\tilde{C}_\xx$ of a section  $\Omega$ of the  relative dualzing sheaf $\omega_{\tilde{\pi}}$ over $\tilde{\cC}_{D|U}$.

One readily checks that the family $\tilde{\pi}: \tilde{\cC}_{D|U} \to U$ and the section $\Omega$ satisfy all the conditions of Proposition~\ref{prop:int:s:arc:express}. It follows from Proposition~\ref{prop:Area:loc:express} that the function $\Ab(\xx):= \Aa(\tilde{C}_\xx, \Omega_\xx)$ defined on $U^*:=\{(z,t)\in U, \, t \neq 0\}$ extends smoothly to $U$. 

By Proposition~\ref{prop:int:s:arc:express}, there is a holomorphic function $P$ defined on neighborhood $\cU$ of $\hpp$ such that  the function $\varphi_c(.)$ in \eqref{eq:normalized:per:funct} satisfies
$$
\varphi_c(\hxx)=\frac{|P(\hxx)|^2}{\Ab(\xx)}
$$
for all $\hxx\in \cU^*:=\cU\cap \tilde{\cC}_{D|U^*}$ and $\xx:=\tilde{\pi}(\hxx)$.  Since $\Theta= (\imath\vartheta)\wedge\left(\frac{\imath}{2}\partial\bar{\partial}\varphi_c\right) = (-\imath\partial\bar{\partial}\ln\Ab)\wedge \left(\frac{\imath}{2}\partial\bar{\partial}\varphi_c\right)$ the corollary follows.
\end{proof}

\subsection{Proof that $\Theta$ is a closed current on $\tilde{\cC}_D$}\label{subsec:prf:Theta:closed:current}
We now proceed to the proof of Theorem~\ref{th:Theta:current}. 

In what follows $\hpp$ will be a point in $\partial\tCD$ whose projection in $\partial \hXD$ is denoted by $\pp$. 
The fiber $\tilde{\pi}^{-1}(\{\pp\})$ is denoted by $\tilde{C}_\pp$, and the Prym involution on $\tilde{C}_\pp$ is denoted by $\tau_\pp$. Let $\omega_\pp$ be an Abelian differential on $\tilde{C}_\pp$ generating the line $\OO(-1)_\pp$.
We will denote by $C_\pp$ the fiber $\hat{\pi}^{-1}(\{\pp\})$ which is isomorphic to the curve underlying $\nu(\pp) \in \ol{\cX}_D$. Note that $\tilde{C}_\pp$ and $C_\pp$ are isomorphic unless $\pp$ is contained in a stratum of group IV.  

By Corollary~\ref{cor:Theta:extend:str:gp:I} we already know that $\Theta$ extends smoothly across the strata of group I in $\partial \tCD$. Therefore, we will only focus on the case $\pp$ is contained in a stratum of group II, III, or IV.
For all of those cases, in \textsection~\ref{subsec:geom:bdry:str:gp:II}, \ref{subsec:bdry:str:gp:III}, \ref{subsec:bdry:str:gp:IV}, we constructed a holomorphic embedding $\Phi: \Bb \to \Pb\Omega'\ol{\cB}_{4,1}(2,2)$, where $\Bb$ is an open neighborhood of $0$ in $\C^3$ with the following properties
\begin{itemize}
	\item[$\bullet$] $\Phi(0) = \nu(\pp)$.
	
		\item[$\bullet$] $\Phi(\Bb)$ contains a neighborhood of $\nu(\pp)$ in $\ol{\cX}_D$, that is the germ of $\ol{\cX}_D$ at $\nu(\pp)$ is isomorphic to the germ of an analytic subset of $\Bb$ at $0$. 
	
	\item[$\bullet$] A neighborhood of $\pp$ in $\hat{\cX}_D$ is the normalization of an irreducible analytic subset of $\Bb$. 
\end{itemize}
Let  $\pi: \ol{\cC}_{|\Bb} \to \Bb$ be the family of curves  which is the pullback of the universal curve over $\Pb\Omega'\ol{\cB}_{4,1}$ by $\Phi$. There is by construction a section of the tautological line bundle $\Phi^*\OO(-1)$ on $\Bb$. This section corresponds to a section $\Omega$ of the relative dualizing sheaf $\omega_{\pi}$ on $\ol{\cC}_{|\Bb}$. One readily checks that $\ol{\cC}_{|\Bb}$ and $\Omega$ satisfy all the conditions of Lemma~\ref{lm:construct:path} and Proposition~\ref{prop:int:s:arc:express}.

\subsubsection{Case $\pp$ contained in a boundary stratum of group II}\label{subsec:prf:Theta:current:str:gp:II}
\begin{proof}
In this case $\Bb$ is endowed with a system of coordinates $(x,t_1,t_2)$ where $t_1$ and $t_2$ are the smoothing parameters of the  nodes of $C_\pp \simeq \tilde{C}_\pp$. Note also that $\omega_\pp$ has simple poles at all the nodes of $C_\pp$.
By Proposition~\ref{prop:bdry:str:gp:II}, any irreducible component of the germ of $\ol{\cX}_D$ at $\nu(\pp)$ is isomorphic to the germ of $\cA:=\{(x,t_1,t_2)\in \C^3, \, t_1^{m_1}=t_2^{m_2}\}$ at $0$, where $m_1,m_2\in \Z_{>0}$ and $\gcd(m_1,m_2)=1$. Therefore, a neighborhood of $\pp$ in $\hXD$ can be identified with a neighborhood $U$ of $0\in \C^2$, and the restriction of the normalizing map $\nu: \hXD \to \ol{\cX}_D$ to $U$ is given by $\nu: (z,t) \to \Phi(z,t^{m_2},t^{m_1})$, where $(z,t)$ are the coordinates on $U$.
Define $U^*:=\{(z,t) \in U, \, t\neq 0\}$.

Let $\tilde{\pi}: \tilde{\cC}_{D|U} \to U$ denote the family of curves which is the pullback of the universal curve on $\Pb\Omega'\ol{\cB}_{4,1}$ by $\Phi\circ\nu$. The pullback the section of $\Phi^*\OO(-1)$ on $\Bb$ corresponds to a section of the relative dualizing sheaf $\omega_{\tilde{\pi}}$ that we will denote again by $\Omega$.  
One readily checks that  $\tilde{\pi}, U, U^*,\Omega$ satisfy all the conditions of Proposition~\ref{prop:int:s:arc:express}. Thus it follows from Proposition~\ref{prop:Area:loc:express} that up to a multiplicative constant we have
$$
\Ab(\xx):=||\Omega_\xx||^2= -2\ln(|t|)+\phi,
$$
for all $\xx=(z,t)\in U^*$, where $\phi$ is a smooth positive function  on $U$. It follows from Lemma~\ref{lm:curv:form:loc:express} that
$$
\vartheta=-\partial\bar{\partial}\ln(\Ab)=\frac{\partial \Ab\wedge \bar{\partial}\Ab}{\Ab^2} =\frac{(dt/t-\partial\phi)\wedge(d\bar{t}/\bar{t}-\bar{\partial}\phi)}{(-2\ln|t|+\phi)^2}.
$$

We now  have two cases:

\begin{itemize}
\item[(i)] $\hpp$ is a smooth point in $\tilde{C}_\pp$. 
Since $\pp$ is either contained in $\cS_{2,0}^b$ or in $\cS_{1,1}$,  each component of $\tilde{C}_\pp$ is invariant by the Prym involution. Therefore, there exists a path $c$ in $\tilde{C}_\pp$ joining $\hpp$ to $\tau(\hpp)$ which does not cross any node of $\tilde{C}_\pp$. For all $\hxx$ in a neighborhood of $\hpp$, $c$ gives rise to a distinguished homotopy class $c(\hxx)$ of path from $\hxx$ to $\tau(\hxx)$ on $\tilde{C}_\xx$, where $\xx=\tilde{\pi}(\hxx)$. This implies that $P: \hxx \mapsto \int_{c(\hxx)}\Omega_\xx$ is a holomorphic function on a neighborhood of $\hpp$. From \eqref{eq:Theta:loc:express}, we get that
\begin{align*}
\Theta & =-\frac{1}{2}\cdot\left(\frac{\partial \Ab\wedge \bar{\partial}\Ab}{\Ab^2}\right)\wedge \left( \frac{dP\wedge d\bar{P}}{\Ab}\right) \\ &=R_2\cdot\frac{(dt/t-\partial\phi)\wedge(d\bar{t}/\bar{t}-\bar{\partial}\phi)\wedge  dP\wedge d\bar{P}}{(-2\ln|t|+\phi)^3}
\end{align*}
where $R_2$ is a constant.
Since the functions $\frac{1}{|t|^2(-2\ln|t|+\phi)^3}$, and  $\frac{1}{|t|(-2\ln|t|+\phi)^3}$ are integrable over a neighborhood of $0$ in $\C^3$, (A) follows.

A neighborhood  $\cU$ of $\hpp$ can be identified with  $\Delta^3\subset\C^3$.  Let $(x,z,t)$ be a coordinate system on $\Delta^3$ such that the projection $\tilde{\pi}: \cU \to \hXD$ is given by $\tilde{\pi}(x,z,t) =(z,t)$. 
In these coordinates, $\partial\tilde{\cC}_D$ is defined by $\{t=0\}$. The boundary of the $\eps$-neighborhood of $\partial\tilde{\cC}_D\cap\cU$ corresponds to the set $\Delta^2\times\{|t|=\eps\}$. For all $1$-form $\eta$ with compact support in $\cU$, we have
$$
\left| \int_{\Delta\times\{|t|=\eps\}\times\Delta}\Theta\wedge\eta\right| \leq \frac{K}{-(\ln|\eps|)^3}\cdot\int_{\{|t|=\eps\}}\frac{|dt|}{|t|}=\frac{2\pi K}{-(\ln|\eps|)^3}
$$
where $K$ is a constant, from which (B) follows.\\

\item[(ii)] Case $\hpp$ is a node of $\tilde{C}_\pp$. A neighborhood $\cU$ of $\hpp$ is isomorphic to a quotient $\Delta^3/(\Z/m)$, were the action of $\Z/m$ on $\C^3$ is given by $k\cdot(z,u,v) \mapsto (z,e^{2\pi\imath k/m}u, e^{-2\pi\imath k/m}v)$. 
In this local chart, the projection $\tilde{\pi}$  reads $\tilde{\pi}(z,u,v)=(z,uv)$.
Thus the pullback of $\vartheta$ to $\cU$ is given by 
$$
\vartheta=  \frac{(du/u+dv/v-\partial\phi)\wedge(d\bar{u}/\bar{u}+d\bar{v}/\bar{v}-\bar{\partial}\phi)}{(-2\ln|u|-2\ln|v|+\phi)^2}.
$$
Since $\omega_\pp$ has simple pole at all the nodes of $\tilde{C}_\pp$, $\hpp$ is exchanged by $\tau$ with another node. Note that $\hpp$ and $\tau(\hpp)$ are contained in the same component of $\tilde{C}_\pp$. In particular, there is a path $c$ in $\tilde{C}_\pp$ joining $\hpp$ and $\tau(\hpp)$ which does not contain any node in the interior. 
By Proposition~\ref{prop:int:s:arc:express}, $\cU^*$ can be covered by a finite family of open subsets $\{\cU^*_k, \, k=1,\dots,\ell\}$ such that for each $k\in \{1,\dots,\ell\}$, and for all $\hxx\in \cU^*_k$, one can construct a distinguished path $c(\hxx)$ from $\hxx$ to $\tau(\hxx)$ in $\tilde{C}_\xx$. The integral of $\Omega_\xx$ along $c(\hxx)$ provides us with a function $P_k(.)$ on $\cU^*_k$ which satisfies 
$$
P_k(z,u,v)=\mu\ln(u)+Q 
$$
where $\mu$ is a constant and $Q$ is the restriction to $\cU^*_k$ of a holomorphic function on $\cU$. 
Note that the constant $\mu$ is determined by the residue of $\Omega_0$ at the node $\hpp$. Therefore, the $1$-forms $dP_k$'s  (resp. $d\bar{P}_k$'s) give rise to a well defined $1$-form on $\cU^*$ that we will denote by $dP$ (resp. $d\bar{P}$), and we have 
$$
dP=\mu\cdot \frac{du}{u} + dQ, \quad d\bar{P}=\bar{\mu}\cdot \frac{d\bar{u}}{\bar{u}} + d\bar{Q}.
$$ 
It follows
$$
\Theta=R_3\cdot\frac{(du/u+dv/v-\partial\phi)\wedge(d\bar{u}/\bar{u}+d\bar{v}/\bar{v}-\bar{\partial}\phi)\wedge(\mu du/u+ dQ)\wedge(\bar{\mu}d\bar{u}/\bar{u}+d\bar{Q})}{(-2\ln|u|-2\ln|v|+\phi)^3}
$$
where $R_3$ is a real constant. We now remark that
\begin{align*}
\left|\int_{\Delta^3}\frac{du d\bar{u}dv d\bar{v}dz d\bar{z}}{|u|^2|v|^2(-\ln|u|-\ln|v|+\phi)^3}\right| & \leq K\cdot\int_0^1\int_0^1\frac{dr ds}{rs(-\ln(r)-\ln(s)+K')^3}\\
& \leq  \frac{K}{2}\cdot \int_0^1\frac{dr}{r(-\ln(r) +K')^2} = \frac{K}{K'}
\end{align*}
where $K$ and $K'$ are some positive real constants, from which 
(A) follows.

We have $\cU\cap\partial\tilde{\cC}_D=\{uv=0\}$. Hence the boundary of the $\eps$-neighborhood of $\partial\tilde{\cC}_D\cap \cU$ is the union of  $\Delta\times\{|u|=\eps\}\times\{\eps \leq |v| < 1\}$ and $\Delta\times\{\eps \leq |u|<1\}\times\{|v|=\eps\}$. For any $C^\infty$  $1$-form $\eta$ with compact support in $\cU$, we have
\begin{align*}
\left|\int_{|z|<1}\int_{|u|=\eps}\int_{\eps \leq |v| < 1} \Theta\wedge\eta\right| &\leq  K\cdot\int_{|u|=\eps} \left(\int_{\eps \leq |v| < 1} \frac{dv d\bar{v}}{|v|^2(-\ln(\eps)-\ln(|v|))^3}\right) \frac{|du|}{|u|}  \leq \frac{K'}{\ln(\eps)^2}\\
\end{align*}
which implies that
$$
\lim_{\eps \to 0}\int_{\Delta}\int_{|u|=\eps}\int_{\eps \leq |v| < 1}\Theta\wedge\eta=0.
$$
A similar computation shows
$$
\lim_{\eps \to 0}\int_{\Delta}\int_{\eps \leq |u| < 1} \int_{|v|=\eps}\Theta\wedge\eta=0,
$$
and (B) follows. This completes the proof of Theorem~\ref{th:Theta:current} in the case $\pp$ is contained in a stratum of group II in $\partial\hXD$.
\end{itemize}
\end{proof}
	
\subsubsection{Proof of Theorem~\ref{th:Theta:current}, case $\pp$ is contained in a  stratum of group III}
\begin{proof}
Recall that  group III consists of the following strata $\cS^a_{2,1}, \cS_{3,1}, \cS_{2,2}$, and $\cS_{1,3}$, which have dimension $0$ by Proposition~\ref{prop:bdry:str:gp:III}.  A neighborhood $U$ of $\pp$ in $\hat{\cX}_D$ is the normalization of the germ at $0$ of the analytic set $\cA=\{(t_0,t_1,t_2)\in \C^3, \; t_1^{m_1}=t_2^{m_2}\}$, with $m_1,m_2 \in \Z_{>0}$ satisfying $\gcd(m_1,m_2)=1$. Note that $t_0$ is the smoothing parameter of the nodes on $\tilde{C}_\pp$ at which $\omega_\pp$ is holomorphic, and $t_1,t_2$ are the smoothing parameters of the nodes at which $\omega_\pp$ has simple poles.
It is well known that $U$ is isomorphic to an open neighborhood of $0\in \C^2$, and the normalization map $\nu: U \to \cA$ is given by $\nu: (t_0,t) \mapsto (t_0,t^{m_2},t^{m_1})$. 
Let $U^*:=\{(t_0,t) \in U, \; t_0t \neq 0\}$.
By Proposition~\ref{prop:Area:loc:express}, we get that
$$
\vartheta= \frac{(dt/t-\partial\phi)\wedge(d\bar{t}/\bar{t}-\bar{\partial}\phi)}{(-2\ln|t|+\phi)^2}
$$
up to a constant, where $\phi$ is a real positive $C^\infty$ function on $U$. 

\begin{itemize}
	\item[(a)] Case $\hpp$ is fixed by $\tau$. By Proposition~\ref{prop:int:s:arc:express}, there is a neighborhood $\cU$ of $\hpp$ such that for all $\hxx\in \cU^*:=\cU\cap \tilde{\cC}_{D|U^*}$ one can specify a path $c(\hxx)$ from $\hxx$ to $\tau(\hxx)$ which is contained in $\cU$. It follows that the function $P: \hxx \mapsto \int_{c(\hxx)}\Omega_\xx$ is the restriction to $\cU^*$ of a holomorphic function on $\cU$.
	By Proposition~\ref{prop:Theta:def:22:form} (cf. \eqref{eq:Theta:loc:express}), we have
	$$
	\Theta=R\cdot \frac{(dt/t-\partial\phi)\wedge(d\bar{t}/\bar{t}-\bar{\partial}\phi)\wedge dP\wedge d\bar{P}}{(-2\ln|t|+\phi)^3}
	$$
	where $R$ is some real constant. We have two subcases:
	\begin{itemize}
		\item[(a.1)] $\hpp$ is a smooth point of $\tilde{C}_\pp$. In this case, we can suppose $\cU \simeq \Delta^3$ with coordinates  $(x,t_0,t)$.  Since the function $\frac{1}{|t|^2(-2\ln|t|+\phi)^3}$ is integrable in $U$, (A) follows.
		
		We have $\cU\cap\partial\tilde{\cC}_D=\{t_0t=0\}$. Therefore the boundary of the $\eps$-neighborhood of $\partial\tilde{\cC}_D\cap\cU$ consists of $\cV_1(\eps):=\{(x,t_0,t)\in \Delta^3, \; |t_0|=\eps, \eps \leq |t| < 1\}$, and $\cV_2(\eps):=\{(x,t_0,t) \in \Delta^3, \; \eps \leq |t_0| < 1, |t|=\eps\}$. For all $C^\infty$ $1$-form $\eta$ with support in $\cU$, we have
		$$
		\left|\int_{\cV_1(\eps)}\Theta\wedge\eta\right| \leq K\cdot\eps\int_\eps^1\frac{dr}{r(-\ln(r)+K')^3} =O(\eps),
		$$
		while
		$$
		\left|\int_{\cV_2(\eps)}\Theta\wedge\eta\right| \leq \frac{K}{(-\ln(\eps)+K')^3}\cdot\int_{|t|=\eps}\frac{|dt|}{|t|} = O\left(\frac{-1}{\ln(\eps)^3}\right)
		$$
		(here $K$ and $K'$ are some real positive constants).
		Thus we have
		$$
		\lim_{\eps \to 0}\int_{\cV_1(\eps)}\Theta\wedge\eta= \lim_{\eps \to 0}\int_{\cV_2(\eps)}\Theta\wedge\eta=0,
		$$
		and (B) follows.
		
		\item[(a.2)] $\hpp$ is a node $q_j$ of $\tilde{C}_\pp$ fixed by $\tau$. An orbifold neighborhood of $\hpp$ is isomorphic to $\Delta^3$ with coordinates $(u,v,t)$ and the projection  $\tilde{\pi}$ given by $\tilde{\pi}(u,v,t) =(uv,t)$. It follows immediately that (A) is satisfied.  The boundary $\partial\tilde{\cC}_D$ is defined by $uvt=0$ in this case. Thus the boundary of the $\eps$-neighborhood of $\partial\tilde{\cC}_D\cap \cU$ consists of $\cV_1=\partial\Delta(\eps)\times A(\eps,1)\times A(\eps,1), 
		 \cV_2= A(\eps,1)\times \partial\Delta(\eps)\times A(\eps,1),
		 \cV_3= A(\eps,1)\times A(\eps,1) \times \partial\Delta(\eps)$.
		 One readily checks that (B) is also satisfied in this case.
	\end{itemize}
	
\item[(b)] $\hpp$ is not fixed by $\tau$. By Proposition~\ref{prop:int:s:arc:express}, there is a  neighborhood $\cU$ of $\hpp$ such that $\cU^*$ can be covered by a finite family $\{\cU^*_k, \; k=1,\dots,\ell\}$ of open subset such that for all $k\in \{1,\dots,\ell\}$, for all $\hxx\in \cU^*_k$, one can specify a distinguished path $c(\hxx)$ from $\hxx$ to $\tau(\hxx)$ in $\tilde{C}_\xx$.  Let $P_k(\hxx):=\int_{c(\hxx)}\Omega_\xx$. Then $dP_k$'s coincide on the overlaps  of different $\cU^*_k$'s. Thus we have  well defined $1$-forms $dP$ and $d\bar{P}$ on $\cU^*$.
We have two subcases
\begin{itemize}
	\item[(b.1)] $\hpp$ is a smooth point of $\tilde{C}_\pp$ or a node at which $\omega_\pp$ is holomorphic. It follows from Proposition~\ref{prop:int:s:arc:express} (b) that either $dP$ and $d\bar{P}$ are restrictions to $\cU^*$ of smooth $1$-forms on $\cU$, or $dP=\alpha\left(\frac{dt}{t} + dQ\right), d\bar{P}=\bar{\alpha}\left( \frac{d\bar{t}}{\bar{t}}+ d\bar{Q}\right)$, where $\alpha\in \C$ and  $Q$ is a holomorphic function on $\cU$. In both cases, the same calculations as in the previous case allow us to conclude.  
	
	\item[(b.2)] $\hpp$ is a node of $\tilde{C}_\pp$ at which $\omega_\pp$ has simple poles. 
	In an orbifold local chart of $\tilde{\cC}_D$, a neighborhood    of $\hpp$  can be identified with $\Delta^3$ with coordinates $(t_0,u,v)$ and the  projection $\tilde{\pi}$  is  given by $\tilde{\pi}: (t_0,u,v) \mapsto (t_0,uv)$. In these coordinates
	$$
	\vartheta=\frac{(du/u+dv/v-\partial \phi) \wedge(d\bar{u}/\bar{u}+d\bar{v}/\bar{v}-\bar{\partial}\phi)}{(-2\ln|u|-2\ln|v|+\phi)^2}
	$$
	It follows from Proposition~\ref{prop:int:s:arc:express} (b.2) that  $dP=\alpha\frac{du}{u}+ \beta\frac{dv}{v}+ dQ$, and $d \bar{P}=\bar{\alpha}\frac{d\bar{u}}{\bar{u}}+\bar{\beta}\frac{d\bar{v}}{\bar{v}}+ d\bar{Q}$, where $\alpha$ and $\beta$ are complex constants and $Q$ is a holomorphic function on $\cU$. Thus  we have
	$$
	\Theta = \frac{\left(\frac{du}{u}+\frac{dv}{v}-\partial\phi\right)\wedge\left(\frac{d\bar{u}}{\bar{u}}+ \frac{d\bar{v}}{\bar{v}} -\bar{\partial}\phi\right)\wedge \left(\alpha \frac{du}{u}+ \beta \frac{dv}{v}+ dQ\right)\wedge \left(\bar{\alpha}\frac{d\bar{u}}{\bar{u}} + \bar{\beta} \frac{d\bar{v}}{\bar{v}} +d\bar{Q}\right)}{(-2\ln|u|-2\ln|v|+\phi)^3}.
	$$
	Since for all $K\in \R_{>0}$, we have
	$$
	\left|\int_{\Delta^3}\frac{du d\bar{u} dv d\bar{v} dt_0 d\bar{t}_0}{|u|^2|v|^2(-\ln|u|-\ln|v|+K)^3} \right| \leq K'\cdot \int_{0}^1\int_0^1\frac{dr ds}{rs(-\ln(r)-\ln(s)+K)^3} = \frac{K'}{2K}
	$$
	for some  $K' \in \R_{>0}$, condition (A) is verified.
	For condition (B), notice that the boundary of the $\eps$-neighborhood of $\partial\tilde{\cC}_D\cap \cU$ consists of $\cV_1(\eps)=\{|t_0|=\eps, \; \eps \leq |u|, \; \eps \leq |v|\}, \cV_2(\eps) = \{\eps \leq |t_0|, \; |u|=\eps, \; \eps \leq |v|\}$, and $\cV_3(\eps)=\{\eps \leq |t_0|, \; \eps \leq |u|, \; |v|=\eps\}$. For all $C^\infty$ $1$-form $\eta$ with compact support in $\cU$, we have
	$$
	\left|\int_{\cV_1(\eps)}\Theta\wedge\eta\right| \leq K_1\cdot\eps\cdot\int_\eps^1\int_{\eps}^1\frac{dr ds}{rs(-\ln(r)-\ln(s)+K)^3} =  O(\eps),
	$$
	$$
	\left|\int_{\cV_2(\eps)}\Theta\wedge\eta\right| \leq K_2\cdot \int_{\eps}^1\frac{ds}{s(-\ln\eps-\ln(s)+K)^3} =O\left(\frac{1}{\ln^2(\eps)}\right),
	$$
	$$
	\left|\int_{\cV_3(\eps)}\Theta\wedge\eta\right| \leq K_3\cdot \int_{\eps}^1\frac{dr}{r(-\ln(r)-\ln(\eps)+K)^3} = O\left(\frac{1}{\ln^2(\eps)}\right).
	$$
	Therefore, condition (B) is also verified. This completes the proof of Theorem~\ref{th:Theta:current} in the case $\pp$ is contained in a stratum of $\partial\hXD$ in group III.	
\end{itemize} 	
\end{itemize}

\end{proof}

\subsubsection*{Proof of Theorem~\ref{th:Theta:current}, case $\pp$ is contained in a stratum of group IV}
\begin{proof}
In this case the holomorphic embedding $\Phi: \Bb \to \Pb\Omega'\ol{\cB}_{4,1}(2,2)$ constructed in \textsection \ref{subsec:bdry:str:gp:IV} satisfies the following 
\begin{itemize} 
	\item[$\bullet$] There is a system of coordinates  $(t_0,t_1,t_2)$ on $\Bb$ such that each $t_i$ is the smoothing parameter of a pair of nodes in $C_\pp$.
	
	\item[$\bullet$] Via $\Phi$  any irreducible component of the germ  $(\ol{\cX}_D,\nu(\pp))$ is isomorphic to the germ at $0\in \C^3$ of an analytic set $\cA= \{(t_0,t_1,t_2) \in \C^3, \; t_0^{m_0}=t_1^{m_1}t_2^{m_2}\}$, where $m_0,m_1,m_2 \in \Z_{>0}$ satisfy $\gcd(m_0,m_1,m_2)=1$.
\end{itemize}

A neighborhood of $\pp$ in $\hXD$ is the normalization $\hat{\cA}$ of $\cA$.  It is a well known fact that $\hat{\cA}$ is isomorphic to a quotient  $U/(\Z/m)$, where $U$ is an open neighborhood of $0\in \C^2$, $m=\frac{m_0}{\gcd(m_0,m_1)\gcd(m_0,m_2)}$.
The normalizing map $\nu: \hat{\cA} \to \cA$ is given by
$$
\nu: (s,t) \mapsto (s^{\frac{m_1}{\gcd(m_0,m_1)}}t^{\frac{m_2}{\gcd(m_0,m_2)}}, s^{\frac{m_0}{\gcd(m_0,m_1)}}, t^{\frac{m_0}{\gcd(m_0,m_2)}}).
$$
Let $\hat{\cC}_{D|U}$ denote the pullback of the universal curve on $\Bb$ to $U$ by $\Phi\circ\nu$, and $\tilde{\cC}_{D|U}$ the family of curves constructed in \textsection~\ref{sec:normal:univ:curve}.  Remark that $\tilde{\cC}_{D|U}$ satisfies all the conditions preceding Lemma~\ref{lm:construct:path} with $U^*=\{(s,t)\in U, \, st \neq 0\}$. By the construction of $\Phi$, we get a section $\sigma$ of $\OO(-1)$ on $\Phi(\Bb)$. The pullback of this section to $U$ corresponds to a section $\Omega$ of the relative dualizing sheaf $\omega_{\tilde{\pi}}$ on $\tilde{\cC}_{D|U}$. 
One readily checks that $\Omega$ satisfies the hypotheses of Proposition~\ref{prop:int:s:arc:express}, and that the restriction of $\Omega$ to the fiber $\tilde{C}_\pp$ has simple poles at all the nodes of $\tilde{C}_\pp$.  
%
%
%
It follows from  \eqref{eq:Area:loc:express} that  we have
$$
\vartheta=\frac{(\lambda dt/t+\mu ds/s-\partial\phi)\wedge(\lambda d\bar{t}/t+\mu d\bar{s}/\bar{s} -\bar{\partial}\phi)}{(-\lambda\ln|t|^2-\mu\ln|s|^2+\phi)^2}
$$
for all $(s,t)\in U^*$, where $\lambda,\mu \in \R$, and  $\phi$ is a $C^\infty$ real positive function on $U$. 

If $\hpp$ is a smooth point in $\tilde{C}_\pp$, then it follows from Proposition~\ref{prop:int:s:arc:express} that there is a neighborhood $\cU$ of  $\hpp$ such that on $\cU^*:=\cU \cap \tilde{\cC}_{D|U^*}$ we can write
\begin{align*}
	\Theta 	& = \frac{\left(\lambda\frac{dt}{t}+\mu\frac{ds}{s}- \partial\phi\right)\wedge\left(\lambda \frac{d\bar{t}}{\bar{t}}+\mu\frac{d\bar{s}}{\bar{s}} -\bar{\partial}\phi\right) \wedge\left(\alpha\frac{dt}{t}+\beta\frac{ds}{s}+d\varphi\right) \wedge \left(\bar{\alpha}\frac{d\bar{t}}{\bar{t}} +\bar{\beta}\frac{d\bar{s}}{\bar{s}}+d\bar{\varphi}\right)}{(-2\lambda\ln|t|-2\mu\ln|s|+\phi)^3}
\end{align*}
where $\alpha$ and $\beta$ are some complex constants which are both zero if $\hpp$ is fixed by $\tau$, and $\varphi$ is a holomorphic function on $\cU$.

If $\hpp$ is a node of $\tilde{C}_\pp$ then a neighborhood $\cU$ of $p$ in $\tilde{\cC}_D $ is isomorphic to a neighborhood of $0$ in the set $\{(u,v,s,t)\in \Delta^2\times U, \; uv=t^a\}$. It follows from Proposition~\ref{prop:int:s:arc:express} that on $\cU^*:= \cU\cap \tilde{\cC}_{D|U^*}$ we have
$$
\Theta=-\frac{1}{2}\cdot\vartheta\wedge\frac{dP\wedge d\bar{P}}{\Ab}
$$  
where $dP=\alpha\frac{du}{u}+\beta\frac{dt}{t}+\gamma\frac{ds}{s}+\varphi$ with $\alpha,\beta,\gamma\in\C$ and $\varphi$ a holomorphic function on $\cU$.   We now remark that
$$
\frac{dt}{t}=\frac{dt^a}{at^a}=\frac{1}{a}\cdot\left(\frac{du}{u}+ \frac{dv}{v} \right)
$$
Therefore, up to  a multiplicative constant we have
\begin{equation*}
	\Theta  =  \frac{\left(\frac{du}{u}+\frac{dv}{v}+\mu_1\frac{ds}{s}-\partial\phi\right)\wedge\left(\frac{d\bar{u}}{\bar{u}}+\frac{d\bar{v}}{\bar{v}}+ \mu_1\frac{d\bar{s}}{\bar{s}}-\bar{\partial}\phi\right)\wedge \left(\alpha_1\frac{du}{u}+\alpha_1\frac{dv}{v}+ \beta\frac{ds}{s}+d\varphi\right)\wedge\left(\bar{\alpha}_1\frac{d\bar{u}}{\bar{u}}+\bar{\alpha}_1\frac{d\bar{v}}{\bar{v}}+ \bar{\beta}\frac{d\bar{s}}{\bar{s}}+d\bar{\varphi}\right)}{(-2\ln|u|-2\ln|v|-2\mu_1\ln|s|+\phi)^3}
\end{equation*}
with $m_1=a\mu/\lambda$ and $\alpha_1=\alpha/a$. One can now readily check that in both cases $\Theta$ satisfies the conditions (A) and (B). The details are left to the reader.
\end{proof}

\section{Properties of $\Theta$}\label{sec:prop:Theta}
Our goal now is to prove some characteristics of $\Theta$. 
By Theorem~\ref{th:Theta:current}, we know that the trivial extension of $\Theta$ to $\tilde{\cC}_D$ defines a closed current. 
We denote by $[\Theta]$ its cohomology class in $H^{2,2}(\tilde{\cC}_D)$. 
One of the fundamental properties of $[\Theta]$ is the following
\begin{Theorem}\label{th:Theta:pushforward}
	We have
	\begin{equation}\label{eq:class:Theta:pushforward}
	\tilde{\pi}_*[\Theta]=4\cdot(\imath\vartheta)=8\pi\cdot c_1(\OO(-1)).	
	\end{equation}
\end{Theorem}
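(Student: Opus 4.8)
The plan is to compute $\tilde{\pi}_*[\Theta]$ as a fiberwise integral and to isolate the constant that integration produces. Since $\vartheta$ is pulled back from the base (as noted in the construction of $\Theta$), we may write $\Theta=\tilde{\pi}^*(\imath\vartheta)\wedge\left(\tfrac{\imath}{2}\partial\bar{\partial}\varphi_c\right)$, so by the projection formula for the proper map $\tilde{\pi}$,
\[
\tilde{\pi}_*[\Theta]=(\imath\vartheta)\cdot\tilde{\pi}_*\!\left(\tfrac{\imath}{2}\partial\bar{\partial}\varphi_c\right),
\]
where $\tilde{\pi}_*\!\left(\tfrac{\imath}{2}\partial\bar{\partial}\varphi_c\right)$ is the function on the base obtained by integrating the $(1,1)$-form $\tfrac{\imath}{2}\partial\bar{\partial}\varphi_c$ over the one-dimensional fibers of $\tilde\pi$. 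Thus everything reduces to showing that for every $\xx\in\cX_D$ one has $\int_{C_\xx}\tfrac{\imath}{2}\partial\bar{\partial}\varphi_c=4$, and then to identifying $\imath\vartheta$ with $2\pi c_1(\OO(-1))$.

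The key fiber computation goes as follows. Fix $\xx\in\cX_D$ and a representative $\omega$ of the line $[\omega_\xx]$; by Lemma~\ref{lm:curv:form:loc:express} the area $\Ab:=\|\omega\|^2$ is constant along the fiber $C_\xx\simeq X$. For a point $\hxx=(\xx,x)$ set $P(x):=\int_{c(x)}\omega$, where $c(x)$ is the distinguished path from $x$ to $\tau_\xx(x)$. Writing $g$ for a local primitive of $\omega$ (so $dg=\omega$) and using $\tau_\xx^*\omega=-\omega$, we have $P=g\circ\tau_\xx-g$ up to a locally constant period, hence
\[
dP=\tau_\xx^*\omega-\omega=-2\omega .
\]
In particular $\bar{\partial}P=0$ (so $P$ is locally holomorphic on $X$), and although $P$ is globally multivalued, the combination $\partial\bar{\partial}|P|^2=\partial P\wedge\overline{\partial P}=4\,\omega\wedge\bar{\omega}$ is single-valued. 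Since $\varphi_c=|P|^2/\Ab$, this gives $\tfrac{\imath}{2}\partial\bar{\partial}\varphi_c=\tfrac{2\imath}{\Ab}\,\omega\wedge\bar{\omega}$, and therefore
\[
\int_{C_\xx}\tfrac{\imath}{2}\partial\bar{\partial}\varphi_c=\frac{2}{\Ab}\int_X\imath\,\omega\wedge\bar{\omega}=\frac{2}{\Ab}\cdot 2\|\omega\|^2=4 .
\]
Combined with the first paragraph this yields $\tilde{\pi}_*[\Theta]=4(\imath\vartheta)$ over $\cX_D$. Finally, $\imath\vartheta=2\pi c_1(\OO(-1))$ is just the statement that $\vartheta=-\partial\bar{\partial}\ln\Ab$ is the curvature of the Hodge metric on $\OO(-1)$, whence $4(\imath\vartheta)=8\pi c_1(\OO(-1))$.

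The main obstacle will be to upgrade this pointwise identity on the dense open set $\cX_D$ to an equality of closed currents (equivalently, cohomology classes) on all of $\hXD$, i.e. to rule out any contribution concentrated on $\partial\hXD$. Here I would argue directly from the definition of the pushforward current: for a test $(1,1)$-form $\eta$ on $\hXD$ one has $\langle\tilde{\pi}_*[\Theta],\eta\rangle=\int_{\tCD}\Theta\wedge\tilde{\pi}^*\eta$, and the $L^1_{\mathrm{loc}}$-property (A) together with the boundary-vanishing (B) established in Theorem~\ref{th:Theta:current} justify integrating along the fibers, reducing the pairing to $\int_{\hXD}4(\imath\vartheta)\wedge\eta$. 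It then remains to check that the singular curvature current $\imath\vartheta=-\imath\partial\bar{\partial}\ln\Ab$ carries no divisorial (Lelong) mass along $\partial\hXD$, and hence still represents $2\pi c_1(\OO(-1))$. This is exactly where Proposition~\ref{prop:Area:loc:express} enters: near a boundary point $\Ab$ degenerates like $-\sum_i a_i\ln|t_i|+\psi$, so $\ln\Ab$ has only $\ln\ln$-type singularities, whose $\partial\bar{\partial}$ has vanishing mass at the boundary divisor. A short Stokes estimate on the circles $\{|t_i|=\eps\}$ indeed gives boundary mass of order $O(1/|\ln\eps|)\to 0$, so no Dirac mass on $\partial\hXD$ survives. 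This removes the only genuine difficulty and establishes $\tilde{\pi}_*[\Theta]=8\pi c_1(\OO(-1))$ on $\hXD$.
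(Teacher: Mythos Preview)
Your proof is correct and follows essentially the same approach as the paper. The paper also reduces to the fiberwise computation $dP_c|_{C_\xx}=-2\omega_\xx$ (this is exactly Lemma~\ref{lm:int:Theta:pullback}), obtaining $\int_{C_\xx}\tfrac{\imath}{2}\partial\bar\partial\varphi_c=4$, and then invokes the identification $[\imath\vartheta]=2\pi c_1(\OO(-1))$ as a known fact from \cite{Bai:GT, Ng25}; your final paragraph supplies the $\ln\ln$-singularity justification for this last step that the paper simply cites.
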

Theorem~\ref{th:Theta:pushforward} will follows from
\begin{Lemma}\label{lm:int:Theta:pullback}
Let $\varphi$ be a smooth $(1,1)$-form on $\hat{\cX}_D$. Then
\begin{equation}\label{eq:int:Theta:pullback}
\int_{\tilde{\cC}_D}\Theta\wedge\tilde{\pi}^*\varphi=4\cdot\int_{\hat{\cX}_D}(\imath\vartheta)\wedge\varphi.
\end{equation}
\end{Lemma}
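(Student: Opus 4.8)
The plan is to reduce the global integral over $\tilde{\cC}_D$ to a fiberwise integration, using the fact that $\tilde{\pi}^*\varphi$ is constant along the fibers of $\tilde{\pi}$. Since $\Theta$ is a closed current on $\tilde{\cC}_D$ by Theorem~\ref{th:Theta:current}, and $\varphi$ is smooth on $\hat{\cX}_D$, the left-hand side of \eqref{eq:int:Theta:pullback} is well defined. First I would work over the open dense locus $\cX_D \subset \hat{\cX}_D$: because $\partial_\infty\hat{\cX}_D$ and the strata of group~I have measure zero, and because $\Theta$ has been shown to define a closed current with the integrability property~(A), the integral over $\tilde{\cC}_D$ equals the integral over $\cC_D = \tilde{\pi}^{-1}(\cX_D)$. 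On $\cC_D$ I would apply Fubini with respect to the fibration $\pi: \cC_D \to \cX_D$, writing
\begin{equation*}
\int_{\cC_D}\Theta\wedge\tilde{\pi}^*\varphi = \int_{\cX_D}\left(\int_{\pi^{-1}(\xx)}(\imath\vartheta)\wedge\left(\tfrac{\imath}{2}\partial\bar{\partial}\varphi_c\right)_{\!\mid\pi^{-1}(\xx)}\right)\wedge\varphi.
\end{equation*}
The crucial point is that $\imath\vartheta = \tilde{\pi}^*(\imath\vartheta)$ is pulled back from the base, so it factors out of the inner integral; what remains on each fiber $X = \pi^{-1}(\xx)$ is $\int_X \tfrac{\imath}{2}\partial\bar{\partial}\varphi_c$, which I expect to evaluate to the constant $4$ independently of $\xx$.

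The heart of the argument is therefore the fiberwise computation $\int_X \tfrac{\imath}{2}\partial\bar{\partial}\varphi_c = 4$. Here $X$ is a fixed smooth Prym eigenform, $\varphi_c$ restricts to a function on $X$ (away from the zeros of $\omega_\xx$), and $\partial\bar\partial$ is taken in the fiber direction. To carry this out I would fix the local trivializing section with $\omega_\xx(a_1)=1$ as in the proof of Proposition~\ref{prop:Theta:def:22:form}, so that on the fiber the relevant second derivative is governed by the holonomy function $P(\hyy)=\int_{c(\hyy)}\omega_\yy$. Restricting to the fiber, $\varphi_c = |P|^2/\Ab$ with $\Ab$ constant along $X$, so $\tfrac{\imath}{2}\partial\bar\partial\varphi_c$ reduces on $X$ to $\tfrac{1}{\Ab}\cdot\tfrac{\imath}{2}dP\wedge d\bar P$. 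Since $dP = \omega_\xx$ as a $1$-form on $X$ (the differential of the period integral of $\omega_\xx$ along a path ending at the moving point is $\omega_\xx$ itself, doubled because the endpoint moves via $\tau$), I expect $\tfrac{\imath}{2}dP\wedge d\bar P$ to be a positive multiple of the area form, whose total integral is $\Ab=\|\omega_\xx\|^2$ up to the multiplicity coming from the two endpoints of $c$. Tracking the factor of $2$ arising from differentiating both endpoints $x$ and $\tau_\xx(x)$ gives the constant $4$; this bookkeeping of constants is precisely where I must be careful.

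The main obstacle will be justifying the reduction to $\cC_D$ rigorously as a statement about currents, rather than merely as an integral over a full-measure set. Because $\Theta$ is only a current (not smooth) across $\partial\tilde{\cC}_D$, and $\tilde{\pi}^*\varphi$ is smooth but its wedge with $\Theta$ must be integrated against the singular locus, I would invoke property~(B) of Theorem~\ref{th:Theta:current}: the boundary terms $\int_{\partial\cU_\eps}\Theta\wedge dx_i$ vanish in the limit, which is exactly what licenses discarding the contribution of the boundary divisors and integrating by parts without picking up residual terms. Concretely, I would exhaust $\tilde{\cC}_D$ by the complements $\tilde{\cC}_D\setminus\cU_\eps$ of $\eps$-neighborhoods of $\partial\tilde{\cC}_D$, apply Fubini and the fiberwise identity on each, and then let $\eps\to 0$, controlling the error by~(B). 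Assembling these pieces yields $\int_{\tilde{\cC}_D}\Theta\wedge\tilde{\pi}^*\varphi = \int_{\hat{\cX}_D} 4\,(\imath\vartheta)\wedge\varphi$, which is \eqref{eq:int:Theta:pullback}; Theorem~\ref{th:Theta:pushforward} then follows since $\varphi$ is arbitrary and $\imath\vartheta = 2\pi c_1(\OO(-1))$ as the curvature of the Hodge metric.
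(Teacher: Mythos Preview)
Your proposal is correct and follows essentially the same approach as the paper: reduce the integral to $\cC_D$, apply Fubini to separate the base and fiber directions, and compute the fiberwise integral via $dP_c|_{C_\xx} = -2\omega_\xx$ (since $\tau^*\omega_\xx = -\omega_\xx$), yielding $\frac{\imath}{2}\int_{C_\xx}\frac{dP_c\wedge d\bar P_c}{\Ab} = 4$. The only difference is that the paper treats the reduction to $\cC_D$ as immediate---it is literally the definition of the pairing of the trivially-extended current $\Theta$ with a smooth test form, guaranteed to converge by property~(A)---so your appeal to property~(B) and the $\eps$-exhaustion is unnecessary, though not incorrect.
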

\begin{proof}
We have
$$
\int_{\tilde{\cC}_D}\Theta\wedge\tilde{\pi}^*\varphi=\int_{\cC_D}\Theta\wedge\tilde{\pi}^*\varphi.
$$	
Locally, open subsets of  $\cC_D$ are diffeomorphic to $U\times S$, where $U$ is an open subset of $\hat{\cX}_D$ and $S$ is a reference Riemann surface, with the map $\tilde{\pi}$ being the projection onto the first factor.
Shrinking $U$ if necessary, we can assume that there is a trivializing holomorphic section $\sigma$ of $\OO(-1)$ over $U$. This section  assigns a holomorphic $1$-form $\omega_\xx$ on the fiber $C_\xx$ for all $\xx\in U$.
By construction, we have
$$
\int_{U\times S}\Theta\wedge\tilde{\pi}^*\varphi=\int_U\left(\frac{\imath}{2}\int_{C_\xx} \frac{d P_c\wedge d\bar{P}_c}{\Ab(\xx)}\right)\cdot(\imath\vartheta(\xx))\wedge \varphi(\xx).
$$
On  the fiber $C_\xx$,   $P_c$ is locally defined by
$$
P_c(x)= \int_x^{\tau(x)}\omega_\xx,
$$
where the integral is taken along a chosen path  $c$. Since $\tau^*\omega_\xx=-\omega_\xx$, it follows that we have $dP_c(x)_{\left| C_\xx\right.}=-2\omega_\xx(x)$, for all $x\in C_\xx$ (independently of the choice of the path $c$). Thus
$$
\frac{\imath}{2}\int_{C_\xx}\frac{dP_c\wedge d \bar{P}_c}{\Ab(\xx)}=4\cdot \frac{\frac{\imath}{2}\int_{C_\xx}\omega_\xx\wedge\ol{\omega}_\xx}{\Ab(\xx)}=4,
$$
and \eqref{eq:int:Theta:pullback} follows.
\end{proof}

\begin{proof}[Proof of Theorem~\ref{th:Theta:pushforward}]
It is a well known fact that $\imath\vartheta$ defines a closed $(1,1)$-current on $\hat{\cX}_D$ whose cohomology class in $H^{1,1}(\hat{\cX}_D)$ equals $2\pi\cdot c_1(\OO(-1))$ (see for instance~\cite{Bai:GT, Ng25}). Thus \eqref{eq:class:Theta:pushforward}  follows from Lemma~\ref{lm:int:Theta:pullback}.
\end{proof}

\begin{Corollary}\label{cor:int:Theta:divisor}
	Let $\cD$ be a divisor in $\hat{\cX}_D$, such that the support $|\cD|$ of $D$ is not contained in the closure of the union of strata of group II in $\partial\hat{\cX}_D$.
	Denote by $\cD_\reg$ the set of regular points of $\cD$, and $\cD_0$ the set $\cD_{\reg}\setminus\partial_\infty\hat{\cX}_D$.   Then we have
	\begin{equation}\label{eq:inters:Theta:divisor}
		\langle [\Theta],[\tilde{\pi}^*\cD]\rangle= 8\pi c_1(\OO(-1))\cdot[\cD] = 4\int_{\cD_0}\imath\vartheta.
	\end{equation}
\end{Corollary}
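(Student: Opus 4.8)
The plan is to deduce the statement from Theorem~\ref{th:Theta:pushforward} by the projection formula, and then to identify the resulting Chern number with the convergent integral of the Hodge curvature over $\cD_0$.

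\emph{First equality.} First I would invoke properness of $\tilde{\pi}\colon\tCD\to\hXD$: since all fibres of $\tilde{\pi}$ are semistable curves, $\tilde{\pi}$ is proper, so the push-forward $\tilde{\pi}_*$ on closed currents is defined and adjoint to the pull-back $\tilde{\pi}^*$ on cohomology. In particular $[\tilde{\pi}^*\cD]=\tilde{\pi}^*[\cD]$ and the projection formula gives
$$
\langle[\Theta],[\tilde{\pi}^*\cD]\rangle=\langle\tilde{\pi}_*[\Theta],[\cD]\rangle.
$$
Substituting $\tilde{\pi}_*[\Theta]=8\pi\,c_1(\OO(-1))$ from Theorem~\ref{th:Theta:pushforward} yields $\langle[\Theta],[\tilde{\pi}^*\cD]\rangle=8\pi\,c_1(\OO(-1))\cdot[\cD]$, which is the first equality.

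\emph{Second equality.} Next I would use, exactly as in the proof of Theorem~\ref{th:Theta:pushforward} (cf.\ \cite{Bai:GT,Ng25}), that $\imath\vartheta$ defines a closed $(1,1)$-current on $\hXD$ with $[\imath\vartheta]=2\pi\,c_1(\OO(-1))$. Hence
$$
8\pi\,c_1(\OO(-1))\cdot[\cD]=4\,\langle\imath\vartheta,[\cD]\rangle,
$$
and it remains to prove $\langle\imath\vartheta,[\cD]\rangle=\int_{\cD_0}\imath\vartheta$. Passing to the normalization of $\cD$, the Hodge metric on $\OO(-1)$ restricts to a singular Hermitian metric on $\OO(-1)|_{\cD}$ whose curvature is the smooth form $(\imath\vartheta)|_{\cD_0}$ away from $\cD_{\reg}\cap\partial_\infty\hXD$. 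The hypothesis that $|\cD|$ is not contained in the closure of the strata of group II — that is, $|\cD|\not\subset\partial_\infty\hXD$ — guarantees that $\cD\cap\partial_\infty\hXD$ is a proper analytic subset of $\cD$, hence of measure zero in $\cD$. Combined with the growth estimate of Proposition~\ref{prop:Area:loc:express} (namely $\Ab=-\sum_i a_i\ln|t_i|+\psi$ with $\psi$ smooth), which shows that $\imath\vartheta=-\partial\bar{\partial}\ln\Ab$ has only integrable, logarithmic-type singularities along $\partial_\infty\hXD$, this gives both the convergence of $\int_{\cD_0}\imath\vartheta$ and its identification with the cohomological degree $\langle\imath\vartheta,[\cD]\rangle$.

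\emph{Main obstacle.} The hard part is precisely this last identification: one must rule out any concentration of curvature mass along $\cD\cap\partial_\infty\hXD$ and along the singular points of $\cD$, i.e.\ show that excising $\eps$-neighbourhoods of these loci and letting $\eps\to0$ produces no residual boundary term. I would carry this out as in \S\ref{subsec:prf:Theta:closed:current}: using the local normal-crossing coordinates furnished by Proposition~\ref{prop:Area:loc:express}, one writes $\imath\vartheta$ explicitly, checks the $L^1_{\mathrm{loc}}$ bound, and estimates the integrals over the boundary spheres $\{|t_i|=\eps\}$, which are $O\!\big(1/|\ln\eps|^{2}\big)$ or smaller and thus vanish in the limit. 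Since the singular locus of $\cD$ has complex codimension at least one in $\cD$, the same Stokes-type argument absorbs it, completing the proof.
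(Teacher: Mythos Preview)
Your approach is essentially the same as the paper's: both use Theorem~\ref{th:Theta:pushforward} together with the projection formula for the first equality, and both reduce the second equality to $\langle\imath\vartheta,[\cD]\rangle=\int_{\cD_0}\imath\vartheta$. The only difference is that the paper simply cites the main result of \cite{Ng25} for this last identification, whereas you sketch its proof via the local logarithmic growth of the area function and the resulting $L^1_{\mathrm{loc}}$ and boundary-term estimates; your sketch is correct in outline, but in the context of the paper it is redundant.
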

\begin{proof}
	By Theorem~\ref{th:Theta:pushforward}, we have
	$$
	\langle [\Theta],\tilde{\pi}^*\cD\rangle=4\langle[\imath\vartheta],[\cD]\rangle.
	$$
	By the main result of \cite{Ng25}, we have that
	$$
	\langle [\imath\vartheta], [\cD]\rangle =2\pi c_1(\OO(-1))\cdot[\cD]=\int_{\cD_0}\imath\vartheta,
	$$
	and \eqref{eq:inters:Theta:divisor} follows.
\end{proof}

\begin{Proposition}\label{prop:Theta:inters:str:infty}
Let $\cS$ be an irreducible component of  $\partial_\infty\hat{\cX}_D$.  Then we have
\begin{equation}\label{eq:inters:Theta:str:infty}
\langle[\Theta],[\tilde{\pi}^*(\cS)]\rangle=0.
\end{equation}
\end{Proposition}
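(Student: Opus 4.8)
The plan is to reduce the intersection pairing to a Chern number on the curve $\cS$ and then to show that this Chern number vanishes. Since $\partial_\infty\hXD$ is the closure of the union of the group II strata $\cS_{2,0}^b$ and $\cS_{1,1}$, which are one-dimensional by Proposition~\ref{prop:bdry:str:gp:II}, each irreducible component $\cS$ is a compact curve in the surface $\hXD$, and the strata of groups III and IV meet $\ol{\cS}$ in only finitely many points. First I would choose a smooth closed $(1,1)$-form $\eta$ on $\hXD$ representing the Poincar\'e dual $c_1(\OO_{\hXD}(\cS))$, so that $[\tilde{\pi}^*\cS]=\tilde{\pi}^*[\eta]$. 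Because $\eta$ is smooth, Lemma~\ref{lm:int:Theta:pullback} applies and yields
\[
\langle[\Theta],[\tilde{\pi}^*\cS]\rangle=\int_{\tCD}\Theta\wedge\tilde{\pi}^*\eta=4\int_{\hXD}(\imath\vartheta)\wedge\eta .
\]
Using $[\imath\vartheta]=2\pi c_1(\OO(-1))$ (recalled in the proof of Theorem~\ref{th:Theta:pushforward}) and the fact that pairing the current $\imath\vartheta$ with a \emph{smooth} closed form computes the cohomological evaluation, this equals $8\pi\,c_1(\OO(-1))\cdot[\cS]=8\pi\deg(\OO(-1)_{|\ol{\cS}})$ (the same conclusion also follows from Theorem~\ref{th:Theta:pushforward} and the projection formula). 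I emphasize that pairing against a smooth representative is exactly the step that Corollary~\ref{cor:int:Theta:divisor} cannot supply here: its second equality $\langle[\imath\vartheta],[\cD]\rangle=\int_{\cD_0}\imath\vartheta$ holds only when the cycle avoids the metric-degeneration locus $\partial_\infty\hXD$, whereas now the whole cycle lies inside it.

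The heart of the argument is then to show $\deg(\OO(-1)_{|\ol{\cS}})=0$, i.e.\ that $\OO(1)_{|\ol{\cS}}=\OO(-1)^{\vee}_{|\ol{\cS}}$ is trivial. The fiber of $\OO(-1)$ over $\pp\in\cS$ is the line $\C\cdot\xi_\pp$, where $\xi_\pp$ is anti-invariant on either group II stratum and has simple poles with $\res_{r_1}(\xi_\pp)=-\res_{r_1'}(\xi_\pp)\neq0$ at a $\tau$-exchanged node pair $\{r_1,r_1'\}$ (here $r_1'=\tau(r_1)$). I would use the residue to trivialize a positive power of $\OO(1)$: the assignment $\xi\mapsto\res_{r_1}(\xi)^2$ is independent of the labelling within $\{r_1,r_1'\}$ and defines a holomorphic, nowhere-vanishing section of $\OO(2)_{|\cS}$ over the open stratum. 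Taking a symmetric combination in the two simple-pole node pairs removes the remaining monodromy between the pairs and produces a nowhere-vanishing section of some positive power $\OO(m)_{|\cS}$; the reasoning below is unchanged.

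The final, and hardest, step will be to extend this residue section across the finitely many points of $\ol{\cS}$ lying in strata of groups III and IV, verifying that it acquires neither a zero nor a pole there. I expect to control this through the classification of those strata (Theorem~\ref{th:bdry:eigen:form:H22}, together with Propositions~\ref{prop:bdry:str:gp:III} and~\ref{prop:bdry:str:gp:IV}): the node $r_1$ persists as a non-separating node at which the limiting differential still has a simple pole, while the constancy of the residue ratio $\alpha=\res_{r_2}(\xi)/\res_{r_1}(\xi)$ along $\ol{\cS}$ (see \eqref{eq:ratio:res:n:prototype} in group II and Claims~\ref{clm:str:gp:III:ratio:res:const} and~\ref{clm:str:gp:IV:ratio:res:const} in groups III and IV) forces $\res_{r_1}(\xi)$ to remain finite and nonzero in the limit. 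Granting this, the section extends to a nowhere-vanishing holomorphic section of $\OO(m)_{|\ol{\cS}}$, so $\OO(m)_{|\ol{\cS}}$, and hence $\OO(-1)_{|\ol{\cS}}$, have degree zero; combined with the first paragraph this gives \eqref{eq:inters:Theta:str:infty}. The delicate point is precisely this last verification at the deeper degenerations, where one must rule out the creation of a zero or pole of the residue section.
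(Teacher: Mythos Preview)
Your approach is correct and coincides with the paper's proof: both reduce via Theorem~\ref{th:Theta:pushforward} to showing $c_1(\OO(-1))\cdot[\cS]=0$, and both establish this by using the residue at a simple-pole node to trivialize (a power of) the tautological line bundle. The paper's argument is a single short paragraph: it simply notes that normalizing the residue at one chosen node to equal $1$ gives a trivializing section of $\OO(-1)_{|\cS}$, without discussing the node-labeling monodromy or the extension to the group III/IV points that you worry about. Your squaring trick $\xi\mapsto\res_{r_1}(\xi)^2$ handles the first issue cleanly. For the second, the extension is in fact simpler than you anticipate: the map $\xi\mapsto\res_{r_1}(\xi)$ is a linear functional on each fiber of $\Omega'\ol{\cB}_{4,1}$, varying holomorphically wherever the node $r_1$ persists; by the classification in Theorem~\ref{th:bdry:eigen:form:H22}, at every point of $\ol{\cS}$ (including the deeper degenerations) the limiting differential still has a simple pole at the node corresponding to $r_1$, so this section of $\OO(1)$ is a priori defined and nowhere vanishing on all of $\ol{\cS}$ --- no delicate limiting argument with the ratio $\alpha$ is required.
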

\begin{proof}
  By Theorem~\ref{th:Theta:pushforward}, we have
  $$
  \langle [\Theta],[\tilde{\pi}^*\cS]\rangle=\langle\tilde{\pi}_*[\Theta],[\cS]\rangle=8\pi\cdot \left( c_1(\OO(-1))\cdot[\cS]\right).
  $$
  By definition, a generic point of $\cS$  parametrizes an Abelian differentials on nodal curves having simple poles at all the nodes. One can pick out one of the nodes, and define a trivializing section of $\OO(-1)_{\left|\cS\right.}$ by setting the residue of the Abelian differentials at this node to be $1$. This means that the tautological line bundle $\OO(-1)$ is trivial on $\cS$. We thus have $c_1(\OO(-1))\cdot[\cS]=0$ and the proposition follows.
\end{proof}

Another important property of $\Theta$ is the following
\begin{Proposition}\label{prop:int:Theta:section}
Let $\iota: \hat{\cX}_D \to \tilde{\cC}_D$ be a section of $\tilde{\pi}$ whose image is denoted by $\Sigma$. Suppose that for all $\xx \in \hat{\cX}_D$, $\iota(\xx)$ is a smooth point in $C_\xx$. Then we have
\begin{equation}\label{eq:int:Theta:section}
\langle [\Theta],[\Sigma]\rangle = \int_{\Sigma\setminus\partial_\infty\tCD}\Theta.
\end{equation}
where $\partial_\infty\tCD$ is the preimage of $\partial_\infty\hXD$ in $\tCD$.
\end{Proposition}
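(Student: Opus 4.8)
The plan is to evaluate the cohomological pairing $\langle[\Theta],[\Sigma]\rangle$ by representing the Poincaré dual of $\Sigma$ by a Thom form concentrated near $\Sigma$ and then integrating along the fibres of a tubular neighbourhood. Since $\Theta$ is a closed current (Theorem~\ref{th:Theta:current}), for any smooth closed $(1,1)$-form $\eta$ Poincaré dual to the $2$-cycle $[\Sigma]$ the number $\Theta(\eta)=\int_{\tCD}\Theta\wedge\eta$ — the current $\Theta$ evaluated on the test form $\eta$, which makes sense because the coefficients of $\Theta$ are $L^1_{\mathrm{loc}}$ by property (A) — depends only on the class $[\eta]$: if $\eta'=\eta+d\gamma$ then $\Theta(d\gamma)=\pm(d\Theta)(\gamma)=0$. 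Thus $\langle[\Theta],[\Sigma]\rangle=\Theta(\eta)$ for every such $\eta$, and I am free to choose $\eta$ conveniently.

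Because $\iota(\xx)$ is a smooth point of the fibre $\tilde{C}_\xx$ for every $\xx\in\hXD$, the section $\Sigma$ avoids every node of every fibre of $\tilde{\pi}$. Hence its normal bundle is the vertical tangent line bundle of $\tilde{\pi}$ along $\Sigma$, and one may take a tubular neighbourhood $N_r$ of radius $r$ disjoint from the nodal locus of $\tilde{\pi}$, carrying coordinates $(\xx,w)$ with $\xx\in\hXD$, $w$ the fibre coordinate, and $\Sigma=\{w=0\}$. I would choose $\eta=\eta_r$ a Thom form of this disc bundle, supported in $N_r$ with fibrewise integral $1$. Writing $p\colon N_r\to\Sigma\simeq\hXD$ for the projection, the closedness argument above gives, for every $r$,
\[
\langle[\Theta],[\Sigma]\rangle=\int_{\tCD}\Theta\wedge\eta_r=\int_{\hXD}p_*(\Theta\wedge\eta_r),
\]
the last equality being Fubini for the $L^1$ form $\Theta\wedge\eta_r$ supported in $N_r$.

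The crucial structural observation, already implicit in the local descriptions of $\Theta$ obtained in \textsection\ref{subsec:prf:Theta:closed:current}, is that in the coordinates $(\xx,w)$ the entire singular behaviour of $\Theta$ along $\partial_\infty\tCD$ is carried by the base variable $\xx$: the curvature factor $\imath\vartheta$ and the area $\Ab$ are functions of $\xx$ alone, while the period $P$ entering $\varphi_c$ is holomorphic in $w$ near a smooth fibre point. Consequently $p_*(\Theta\wedge\eta_r)$ converges, as $r\to0$, pointwise on $\hXD\setminus\partial_\infty\hXD$ to $\iota^*\Theta$, and on this open set $\iota^*\Theta=(\imath\vartheta)\wedge\frac{\imath}{2}\partial\bar{\partial}(\iota^*\varphi_c)$ is exactly the top form whose integral is $\int_{\Sigma\setminus\partial_\infty\tCD}\Theta$. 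Restricting property (A) to $\Sigma$ (legitimate precisely because $\Sigma$ meets the fibres at smooth points, so the node singularities never occur along $\Sigma$) shows $\iota^*\Theta$ is integrable on $\hXD$; together with a uniform $L^1$-domination of the family $\{p_*(\Theta\wedge\eta_r)\}_r$, dominated convergence yields
\[
\langle[\Theta],[\Sigma]\rangle=\lim_{r\to0}\int_{\hXD}p_*(\Theta\wedge\eta_r)=\int_{\hXD}\iota^*\Theta=\int_{\Sigma\setminus\partial_\infty\tCD}\Theta.
\]

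I expect the main obstacle to be this uniform $L^1$-domination near $\Sigma\cap\partial_\infty\tCD$: one must bound $p_*(\Theta\wedge\eta_r)$ independently of $r$ by a fixed integrable function on $\hXD$ in a neighbourhood of $\partial_\infty\hXD$. This is where the explicit estimates from \textsection\ref{subsec:prf:Theta:closed:current} — bounds of the type $|t|^{-2}(-\ln|t|+\phi)^{-3}$ and their integrability, established case by case for the strata of groups II, III and IV — have to be reused, now integrated against $\eta_r$ in the normal (fibre) direction. The simplification that makes the argument work is that the fibre direction is a smooth direction for $\Theta$ along $\Sigma$, so the fibre integration against the Thom form commutes with the base singularity and creates no new divergence.
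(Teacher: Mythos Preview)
Your approach is sound in principle but genuinely different from the paper's, and you have correctly identified the point where it is incomplete.

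The paper does \emph{not} shrink the Thom form. It fixes a Thom representative $\Phi=d(h\cdot\psi)$ supported in a tubular neighbourhood of fixed radius $\eps_0$, where $\psi$ is the global angular form of the normal bundle and $h$ is a radial cutoff. It then excises both an $\eps$-tube $\cU_\eps$ around $\Sigma$ and a $\delta$-tube $\cV_\delta$ around $\partial_\infty\tCD$, applies Stokes to $h\psi\wedge\Theta$ on the complement, and takes the two limits in order: as $\eps\to 0$ the integral over $\partial\cU_\eps$ picks up $\int_{\Sigma\setminus\cV_\delta}\Theta$ via the angular integration $\int_{|w|=\eps}\frac{d\theta}{2\pi}=1$; then as $\delta\to 0$ one must show $\int_{\partial\cV_\delta}h\psi\wedge\Theta\to 0$, which is done by the same explicit case-by-case estimates from \textsection\ref{subsec:prf:Theta:closed:current} that you cite. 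So the analytic input is the same as what you need, but packaged as vanishing boundary integrals rather than as a dominating function. The Stokes route has the mild advantage that one never has to construct an explicit $L^1$ majorant uniform in $r$; one only has to bound a codimension-one integral, which is a bit more direct given the local formulas already in hand.

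One point to tighten in your write-up: you cannot deduce integrability of $\iota^*\Theta$ on $\Sigma$ from property~(A) alone. Property~(A) asserts $L^1_{\mathrm{loc}}$ on the three-fold $\tCD$, and restriction of an $L^1$ function to a surface is meaningless in general. What actually gives you integrability of $\iota^*\Theta$ (and, for your argument, the uniform domination of $p_*(\Theta\wedge\eta_r)$) is the explicit local shape of $\Theta$ near $\Sigma\cap\partial_\infty\tCD$ established in \textsection\ref{subsec:prf:Theta:closed:current}: since $\Sigma$ meets fibres at smooth points, in local coordinates $(x,z,t)$ or $(x,s,t)$ with $x$ the fibre coordinate, $\Theta$ is smooth and bounded in $x$ while its singularity in $(z,t)$ or $(s,t)$ is of the integrable type $|t|^{-2}(-\ln|t|+\phi)^{-3}$, etc. You say as much in your last two sentences, but the earlier appeal to ``restricting property (A)'' should be replaced by a direct appeal to those local formulas.
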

\begin{proof}
Since $\Sigma$ is the image of a section, it is a suborbifold of $\tilde{\cC}_D$. By definition,	
$$
\langle [\Theta],[\Sigma]\rangle = \int_{\tilde{\cC}_D}\Theta\wedge\Phi_\Sigma,
$$
where $\Phi_\Sigma \in H^{1,1}(\tilde{\cC}_D)$ is the Poincaré dual of $\Sigma$.
The $(1,1)$-form $\Phi_\Sigma$ is in fact a  representative  of the Thom class of the normal bundle $\cN_\Sigma$ of $\Sigma$. By assumption, $\tilde{\pi}$ is a submersion in a neighborhood of $\Sigma$. Therefore, one can identify $\cN_\Sigma$ with the vertical tangent bundle of $\Sigma$ whose fiber at a point $(\xx,x)\in \Sigma$ is identified with $T_xC_\xx$. In particular, we can view $\cN_\Sigma$ as a holomorphic complex line bundle over $\Sigma$.
We now briefly recall the construction of $\Phi_\Sigma$, details of this construction can be found in \cite[Ch. 1,\textsection 6]{BT82}.
Denote by $p: \cN_\Sigma \to \Sigma\simeq \hat{\cX}_D$ the natural projection.
Let $\cN^*_\Sigma$ denote the complement in $\cN_\Sigma$ of the zero section.
There exists a smooth $1$-form $\psi$ on $\cN^*_\Sigma$ known as the {\em global angular form} which is defined as follows: 
let $\{U_\alpha, \alpha\in A\}$ be an open cover of $\Sigma$ such that $\cN_\Sigma$ is trivial on each $U_\alpha$. 
Let $d\theta$ denote the angular form on $\C^*$. On each $U_\alpha$  the restriction of $\psi$ to  $\cN^*_{\Sigma\left|U_\alpha\right.} \simeq U_\alpha\times \C^*$ is given by
\begin{equation}\label{eq:glob:ang:form:loc}
\psi=\frac{d\theta}{2\pi}- p^*\xi_\alpha,
\end{equation}
where $\xi_\alpha$ is a smooth $1$-form on $U_\alpha$. 
Note that $\psi$ is not necessarily closed. In fact, we have
$d\psi= - p^*\eta$, where $\eta$ is a smooth closed $2$-form on $\Sigma$ representing the Euler class of $\cN_\Sig$.

Chose some small $\eps_0\in \R_{>0}$. Let $\rho: \R^+ \to \R$ be a smooth function  such that $-1 \leq \rho(t)\leq 0$ for all $t\in \R^+$, $\rho \equiv -1$ on $[0;\eps_0/2]$, and $\rho\equiv 0$ on $[\eps_0;+\infty)$.
Fix a $C^\infty$ Hermitian metric $|.|$ on $\cN_\Sig$ and define $h: \cN_\Sigma \to \R$ by   $h(\hxx,v)=\rho(|v|)$, for all $\hxx\in \Sigma$ and $v\in  p^{-1}(\{\hxx\})$.
For all $0< \eps' < \eps $, let
$$
\cN_\Sigma(\eps):=\{(\hxx,v) \in \cN_\Sigma, \; |v| < \eps\} \quad \text{ and } \quad \cN_{\Sig}(\eps,\eps')=\{(\hxx,v)\in \cN_\Sig, \, \eps' < |v| < \eps \}.
$$
Define
$$
\Phi:=d(h\cdot\psi) = dh\wedge \psi -h\cdot p^*\eta.
$$
Then $\Phi$ is a closed $2$-form on $\cN_\Sigma$, with support contained in  $\cN_\Sigma(\eps_0)$. Note that the support of $\dh\cdot\psi$ is contained in $\cN_\Sigma(\eps_0,\eps_0/2)$. 
If $\eps_0$ is small enough, $\cN_\Sigma(\eps_0)$ can be embedded into $\tilde{\cC}_D$ by a smooth embedding.
Thus we can consider $\Phi$ as a closed $2$-form on $\tilde{\cC}_D$.
By construction, $\Phi$ is a representative of the Poincaré dual of $[\Sigma]$.
As a consequence,
$$
\langle[\Theta],[\Sigma]\rangle =\int_{\tilde{\cC}_D}\Phi\wedge\Theta.
$$
Let us now fix a $C^\infty$ Riemannian metric on $\tilde{\cC}_D$ whose restriction to $\cN_\Sigma$  coincides with the metric $|.|$.
Given $0<\eps < \eps_0/2$ and $\delta>0$, let  $\cU_\eps$ and $\cV_\delta$ be respectively the $\eps$-neighborhood of $\Sigma$ and the $\delta$-neighborhood of $\partial_\infty\tilde{\cC}_D$ with respect to this metric.
Since $\Theta$ extends smoothly across the strata of group I in $\partial \tCD$, we have
$$
\langle[\Theta],[\Sigma]\rangle =\lim_{\delta\to 0}\lim_{\eps \to 0}\int_{\tilde{\cC}_D\setminus(\cU_\eps\cup\cV_\delta)}\Phi\wedge\Theta.
$$
Since $\Phi\wedge\Theta=d(h\cdot\psi)\wedge\Theta=d(h\cdot\psi\wedge\Theta)$ on $\tilde{\cC}_D\setminus(\cU_\eps\cup\cV_\delta)$, Stokes' formula gives
$$
\int_{\tilde{\cC}_D\setminus(\cU_\eps\cup\cV_\delta)}\Phi\wedge\Theta=-\int_{\partial(\cU_\eps\cup\cV_\delta)}h\cdot\psi\wedge\Theta =-\int_{\partial\cU_\eps \setminus\cV_\delta}h\cdot\psi\wedge \Theta -\int_{\partial\cV_\delta\setminus \cU_\eps}h\cdot \psi\wedge\Theta.
$$

By compactness, modulo a negligible subset, we can decompose  $\partial\cU_\eps\setminus \cV_\delta$ into a finite union of subsets $\{\tilde{U}'_i, \; i\in I\}$ where for each $i\in I$, $\tilde{U}'_i\simeq U'_i\times \partial \Delta_\eps$ with $U_i \subset \Sigma$ being a relatively compact subset contained in one of the open subsets $\{U_\alpha, \, \alpha\in A\}$. Since $h\equiv -1$ on $U'_i\times\partial\Delta_\eps$, we have
\begin{align*}
-\int_{\tilde{U}'_i}h\cdot\psi\wedge\Theta & = \int_{U'_i\times\partial\Delta_\eps}\psi\wedge\Theta = \int_{U'_i\times\partial\Delta_\eps} \frac{d\theta}{2\pi}\wedge\Theta - \int_{U'_i\times\partial\Delta_\eps} p^*\xi_{\alpha}\wedge\Theta.
\end{align*}
Since 
$$
\lim_{\eps\to 0}\int_{U'_i\times\partial\Delta_\eps}\frac{d\theta}{2\pi}\wedge\Theta=\int_{U'_i}\Theta, \quad \text{ and } \quad
\int_{U'_i\times\partial\Delta_\eps}p^*\xi_{\alpha}\wedge\Theta=O(\eps),
$$
it follows that
$$
\lim_{\eps\to 0}\int_{\partial\cU_\eps\setminus\cV_\delta}\psi\wedge\Theta=\int_{\Sigma\setminus \cV_\delta}\Theta,
$$
and therefore,
$$
\int_{\tilde{\cC}_D\setminus\cV_\delta}\Phi\wedge\Theta=\lim_{\eps\to 0}\int_{\tilde{\cC}_D\setminus(\cU_\eps\cup\cV_\delta)}\Phi\wedge\Theta =\int_{\partial\cV_\delta}h\cdot\psi\wedge\Theta +\int_{\Sigma\setminus\cV_\delta}\Theta.
$$
Recall that by construction, ${\rm supp}(h) \subset \cU_{\eps_0}$.
By compactness, for $\eps_0>0$ small enough, we can cover $\cV_{\delta}\cap\cU_{\eps_0}$ by a finite family $\{W_j, \; j\in J \}$ of open subsets of $\tilde{\cC}_D$, where for each $j\in J$, $W_j$ is biholomorphic to $\Delta_r^3$ for some $r>\eps_0$ with a coordinate system $(s,t,x)$ such that 
\begin{itemize}
	\item[$\bullet$] $W_j\cap \Sigma=\{x=0\}$,
	
	\item[$\bullet$] $W_j\cap\cU_{\eps_0}=\{|x| < \eps_0\}$, and
	
	\item[$\bullet$] either (a) $W_j\cap\partial_\infty\tilde{\cC}_D=\{t=0\}$, or (b)  $W_j\cap\partial_\infty\tilde{\cC}_D=\{st=0\}$.
\end{itemize}
Case (a) occurs when $W_j$ is a neighborhood of a point $(\xx,x) \in \Sigma$, where $\xx$ is contained in stratum of group II or group III in $\partial\hat{\cX}_D$, and case (b) occurs when $\xx$ is contained in a stratum of group IV. In both cases $x$ is a smooth point in $\tilde{C}_\xx$.
We have in  case (a)
$$
V_j:=W_j\cap(\partial\cV_\delta\cap\cU_{\eps_0})\simeq \Delta_r\times\partial\Delta_\delta\times \Delta_{\eps_0},
$$
and in  case (b)
$$
V_j:=W_j\cap(\partial\cV_\delta\cap\cU_{\eps_0})\simeq A(r,\delta)\times\partial\Delta_\delta\times \Delta_{\eps_0}\cup \partial\Delta_\delta\times A(r,\delta)\times \Delta_{\eps_0},
$$
where $A(r,\delta)=\Delta_r\setminus\Delta_\delta$.
In these local coordinates, we have 
$$
d\theta=\frac{\imath}{2}(d\bar{x}/\bar{x}-dx/x)\quad \text{ and } \quad  h(s,t,x) = \rho(|x|).
$$
It follows from the proof of Theorem~\ref{th:Theta:current} that up to a multiplicative constant in case (a)
$$
\Theta=\frac{(dt/t-\partial\phi)\wedge(d\bar{t}/\bar{t}-\bar{\partial}\phi)\wedge (\mu_1 dt/t +d\varphi)\wedge (\bar{\mu}_1d\bar{t}/\bar{t} +d\bar{\varphi})}{(-2\ln|t|+\phi)^3}
$$
while in case (b)
$$
\Theta = \frac{\left(\lambda\frac{dt}{t}+\mu\frac{ds}{s}-\partial\phi\right)\wedge\left(\lambda \frac{d\bar{t}}{\bar{t}}+\mu\frac{d\bar{s}}{\bar{s}} -\bar{\partial}\phi\right) \wedge\left(\lambda_1\frac{dt}{t}+\mu_1\frac{ds}{s}+d\varphi\right)\wedge \left(\bar{\lambda}_1\frac{d\bar{t}}{\bar{t}}+\bar{\mu}_1\frac{d\bar{s}}{\bar{s}}+d\bar{\varphi}\right)}{(-2\lambda\ln|t|-2\mu\ln|s|+\phi)^3}
$$
where $\lambda,\mu \in \R_{>0}, \lambda_1, \mu_1 \in \C$, $\phi$ is a smooth function, and $\varphi$ a holomorphic function  on $W_j$.
It follows that in case (a)
$$
\int_{V_j}h\cdot\psi\wedge\Theta=\int_{V_j}h\cdot(\frac{d\theta}{2\pi} - p^*\xi_\alpha)\wedge\Theta = O(\frac{1}{-(\ln|\delta|)^3})
$$
while in case (b)
$$
\int_{V_j}h\cdot\psi\wedge\Theta= \int_{V_j}h\cdot(\frac{d\theta}{2\pi}-p^*\xi_\alpha)\wedge\Theta = O(\frac{1}{(\ln|\delta|)^2}).
$$
As a consequence, we get
$$
\lim_{\delta\to 0}\int_{\partial\cV_\delta}h\psi\wedge\Theta =0,
$$
and therefore
\begin{align*}
\langle[\Theta], [\Sigma]\rangle &= \int_{\tilde{\cC}_D}\Phi\wedge\Theta = \lim_{\delta\to 0}\left(\lim_{\eps\to 0}\int_{\tilde{\cC}_D\setminus(\cU_\eps\cup\cV_\delta)}\Phi\wedge\Theta\right)\\
&= \lim_{\delta\to 0}\left( \int_{\partial\cV_\delta} h\cdot \psi\wedge\Theta + \int_{\Sigma\setminus\cV_\delta}\Theta\right)\\
&=\int_\Sigma\Theta.
\end{align*}
\end{proof}

To our purpose, we will need the following result which strengthens  Proposition~\ref{prop:Theta:inters:str:infty}.
\begin{Proposition}\label{prop:Theta:inters:infty:str:enhanced}
  Let $\cE$ be an irreducible component of $\partial_\infty\tilde{\cC}_D:=\tilde{\pi}^{-1}(\partial_\infty\hXD)$. Then we have
  \begin{equation}\label{eq:Theta:inters:infty:str:enhanced}
  \langle[\Theta],[\cE]\rangle=0.
  \end{equation}
\end{Proposition}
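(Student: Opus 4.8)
The plan is to compute $\langle[\Theta],[\cE]\rangle$ as the limit of the flux of $\Theta$ through a shrinking tube around $\cE$, exactly along the lines of the proof of Proposition~\ref{prop:int:Theta:section}, and then to show that, in contrast to the case of a section, this flux tends to $0$ because $\Theta$ decays like an inverse cube of a logarithm in the direction transverse to $\cE$.

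First I would record the geometry of $\cE$. Since $\partial\tilde{\cC}_D$ is a normal crossing divisor (Proposition~\ref{prop:univ:curves:orb}), each irreducible component of $\partial_\infty\tilde{\cC}_D$ is a smooth orbifold divisor. Such a component $\cE$ is two-dimensional, and $\tilde{\pi}|_\cE$ maps it onto a one-dimensional component $\cS$ of $\partial_\infty\hXD$, i.e. onto $\ol{\cS}^b_{2,0}$ or $\ol{\cS}_{1,1}$; the generic fibre of $\tilde{\pi}|_\cE$ is an irreducible component $Z_\xx$ of the group~II fibre $\tilde{C}_\xx$, and every such component is invariant under the Prym involution. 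Representing the Poincaré dual $\Phi_\cE$ by $d(h\psi)$, with $\psi$ a global angular form of the normal line bundle $\cN_\cE$ and $h$ a radial cut-off, Stokes' theorem gives
\[
\langle[\Theta],[\cE]\rangle=\lim_{\eps\to 0}\int_{\partial\cU_\eps}\psi\wedge\Theta,
\]
where $\cU_\eps$ is the $\eps$-tube around $\cE$. The loci of $\cE$ lying over the nodes of the fibres, over $\cE\cap(\text{other components of }\partial_\infty\tilde{\cC}_D)$, and over the finitely many group~III and group~IV points of $\cS$ are of real codimension $\ge 2$ in $\cE$, so by the estimates (A) and (B) already established in the proof of Theorem~\ref{th:Theta:current} their contribution to the flux is negligible, and it suffices to estimate the flux above a generic point of $\cE$.

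Near such a generic point — a smooth, non-$\tau$-fixed point $p$ of an invariant component $Z_\pp$ — I would use the local coordinates $(x,z,t)$ of the group~II computation in Theorem~\ref{th:Theta:current}, in which $\cE=\{t=0\}$, $\tilde{\pi}(x,z,t)=(z,t)$, and $\Theta=R_2\,(dt/t-\partial\phi)\wedge(d\bar{t}/\bar{t}-\bar{\partial}\phi)\wedge dP\wedge d\bar{P}\,/\,(-2\ln|t|+\phi)^3$. Here $P$ is holomorphic: choosing the path $c$ joining $p$ to $\tau(p)$ inside the invariant component $Z_\pp$ and avoiding the nodes, Proposition~\ref{prop:int:s:arc:express}(b.1) shows that no logarithmic term appears, since the differential has simple poles only at the nodes and none of them lies on $c$. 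On the tube $|t|=\eps$ one has $dt/t=\imath\,d\theta$, whence $(dt/t)\wedge(d\bar{t}/\bar{t})=0$; the surviving part of the numerator of $\Theta$ is then bounded uniformly in $\eps$, while the denominator is $(-2\ln\eps+\phi)^3\sim(\ln(1/\eps))^3$. Wedging with $\psi=\tfrac{d\theta}{2\pi}-p^*\xi$ and integrating over the circle fibre, each resulting term is $O\big((\ln(1/\eps))^{-3}\big)$, so the flux tends to $0$ and $\langle[\Theta],[\cE]\rangle=0$.

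The step I expect to be the main obstacle is the bookkeeping near the special lower-dimensional loci of $\cE$ — the nodes of the fibres and the deeper group~III/IV strata — where one must make sure that the flux there truly does not contribute; this is where the uniform logarithmic decay recorded in conditions (A) and (B) of Theorem~\ref{th:Theta:current} is essential. Conceptually, the vanishing is the refined, component-wise version of the phenomenon already exploited in Proposition~\ref{prop:Theta:inters:str:infty}: the Hodge-norm metric on $\OO(-1)$, and with it the curvature form $\vartheta$ and the current $\Theta$, degenerate to the flat (trivial) metric along $\partial_\infty\hXD$, so that no intersection mass can concentrate on any boundary component.
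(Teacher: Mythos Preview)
Your approach is the paper's: represent the Poincaré dual by a Thom form $d(h\psi)$, apply Stokes, and use the $(\ln(1/\eps))^{-3}$ decay of $\Theta$ transverse to $\partial_\infty\tCD$ to kill the flux. Your generic-point computation is correct. There is, however, a genuine gap: you assert that every irreducible component of $\partial_\infty\tCD$ is a smooth orbifold divisor, but the component $\cE$ whose generic fibre is the genus-two nodal curve $C''$ over $\cS_{1,1}$ \emph{self-intersects} along the self-nodes of $C''$ (in the plumbing chart $\{uv=t^m\}$ one has $\cE=\{u=0\}\cup\{v=0\}$ locally, the two branches being globally connected through the rest of $C''$). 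Hence $\cN_\cE$ is not a line bundle and the Thom-form construction is unavailable. The paper does not attempt the direct argument for this $\cE$: it first proves the vanishing for the other component $\cE'$ over the same $\cS$ (the $\Pb^1$ part, whose fibres are smooth, so $\cE'$ \emph{is} a suborbifold), and then uses $[\tilde{\pi}^*\cS]=[\cE]+[\cE']$ together with $\langle[\Theta],[\tilde{\pi}^*\cS]\rangle=0$ from Proposition~\ref{prop:Theta:inters:str:infty} to conclude.

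A second, smaller point: since $\Theta$ is singular on all of $\partial_\infty\tCD$, the Stokes step must remove the tube $\cV_\eps$ around $\partial_\infty\tCD$ (not just around $\cE$), and near intersections of $\cE$ with other components the boundary $\partial\cV_\eps\cap\cU_{\eps_0}$ has several pieces. The estimates needed there are exactly the local forms of $\Theta$ recorded in the proof of Theorem~\ref{th:Theta:current}, but they do not follow formally from conditions~(A) and~(B); the paper redoes them explicitly for each type of deeper point (node of the fibre, group~III point, group~IV point), obtaining $O((\ln\eps)^{-2})$ or $O((\ln\eps)^{-1})$ in the worst cases.
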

\begin{proof}
Let $\cS:=\tilde{\pi}(\cE)$. Then $\cS$ is an irreducible component of $\partial_\infty\hat{\cX}_D$, that is $\cS$ is the closure of a component $\cS^*$ of a stratum in group II.   For  every $\pp \in \cS^*$, $\cE$ intersects the fiber  $\tilde{C}_\pp=\tilde{\pi}^{-1}(\{\pp\})$ in an irreducible component $E_\pp$ of $\tilde{C}_\pp$. 
We fist consider the case where $E_\pp$ is  smooth. This case occurs when $\cS^*$ is a component of $\cS^b_{2,0}$, or $\cS^*$ is a component of $\cS_{1,1}$ and $E_\pp$ is  the $\Pb^1$ component of $\tilde{C}_\pp$. Note that in all of these cases, $E_\pp$ is invariant by the Prym involution.

By assumption, $\cE$ is a suborbifold of $\tilde{\cC}_D$, and the Poincaré dual of $[\cE]$ is represented by a $2$-form $\Phi$ supported in a tubular neighborhood of $\cE$ ($\Phi$ also represents the Thom class of the normal bundle $\cN_\cE$ of $\cE$).
Recall that $\Phi=d(h\cdot\psi)$, where
\begin{itemize}

\item[$\bullet$]  $\psi$ is the global angular form  defined on the complement of the zero section in the normal bundle $\cN_\cE$,

\item[$\bullet$] With a choice of smooth Hermitian metric on $\cN_\cE$, $h$ is a function with support  contained in the $\eps_0$-neighborhood of $\cE$ which satisfies $h\equiv -1$ in the  $\eps_0/2$-neighborhood  of $\cE$ (here $\cE$ is identified with the zero section of $\cN_\cE$).

\end{itemize}
Let us fix a  Riemannian metric on $\tilde{\cC}_D$ whose restriction to $\cN_ \cE$ coincides with the Hermitian metric used to define $h$. For all $\eps>0$ denote by $\cU_\eps$  the $\eps$-neighborhood of $\cE$, and by  $\cV_\eps$ the $\eps$-neighborhood of $\partial_\infty\tilde{\cC}_D$  with respect to this metric.
By assumption, $\cU_{\eps_0}$ is isometric to $\cE\times\Delta_{\eps_0}$.
Since $\Theta$ is a well  defined smooth $(2,2)$-form outside of $\partial_\infty\tilde{\cC}_D$, for all $0 < \eps < \eps_0/2$, we have
$$
(\Phi\wedge\Theta)_{\left|\tilde{\cC}_D\setminus\cV_\eps\right.}=d(h\cdot\psi\wedge\Theta)_{\left|\tilde{\cC}_D\setminus\cV_\eps\right.}.
$$
It follows from Stokes' formula that
$$
\int_{\tilde{\cC}_D\setminus\cV_\eps}\Phi\wedge\Theta=-\int_{\partial\cV_\eps\cap\cU_{\eps_0}}h\cdot\psi\wedge\Theta.
$$
Let $\partial'_\infty\tilde{\cC}_D$ be the union of all the irreducible components of $\partial_\infty\tilde{\cC}_D$ except $\cE$.
Note that $\cE$ intersects $\partial'_\infty\tilde{\cC}_D$  transversely.

For all $\hpp \in \cE$, $\hpp$ has a neighborhood $U$ in $\cU_{\eps_0}$ which is isometric to $\Delta_{\eps_0}\times\Delta_\delta\times\Delta_{\delta'}$, for some $\delta,\delta'\in\R >0$, with coordinates $(x,y,z)$ such that $\cE\cap U \simeq \{0\}\times\Delta_{\delta}\times\Delta_{\delta'}$.
We will give an estimate for the integral of $h\cdot\psi\wedge\Theta$ on $\partial\cV_\eps\cap U$. This estimate depends on the  geometry of $\partial\cV_\eps$ as well as the expression of $\Theta$ in the neigborhood of $\hpp$. Recall that
$$
\psi=\frac{d\theta}{2\pi} - p^*\xi = \frac{\imath}{2}\cdot\left(\frac{d\bar{x}}{\bar{x}} - \frac{dx}{x} \right) - p^*\xi,
$$
where $p: U \to \Delta_{\delta}\times\Delta_{\delta'}$ is the natural projection, and $\xi$ is a smooth $1$-form on $\Delta_{\delta}\times\Delta_{\delta'} \subset \cE$.
By convention, in what follows $\phi$ (resp. $\varphi$) is be a real positive smooth function (resp. holomorphic function) on $U$,  $\lambda, \mu$ are positive real numbers, and $\alpha,\beta,\gamma$ are some complex numbers.

Let $\pp$ is the image of $\hpp$ in $\partial_\infty\hXD$. We have the following cases:
\begin{itemize}
	\item[(a)] Case $\pp \in \cS^*$.  
	We have two subcases
	\begin{itemize}
		\item[(a.1)] Case $\hpp$ is a smooth point in $\tilde{C}_\pp$. We have $\partial_\infty\tCD\cap U=\cE\cap U=\{x=0\}$. 
		From  the proof of Theorem~\ref{th:Theta:current} we get that
		$$
		\Theta=\frac{(\frac{dx}{x}-\partial\phi)\wedge(\frac{d\bar{x}}{\bar{x}}-\bar{\partial}\phi)\wedge (\alpha\cdot\frac{dx}{x}+d\varphi)\wedge (\bar{\alpha}\cdot\frac{d\bar{x}}{\bar{x}} + d\bar{\varphi})}{(-2\ln|x|+\phi)^3}.
		$$
		We thus have
		$$
		-\int_{U\cap \partial \cV_\eps} h\cdot\psi\wedge \Theta = \int_{\Delta_{\delta}}\int_{\Delta_{\delta'}}\int_{|x|=\eps}\left(\frac{\imath}{2}\cdot\left( \frac{d\bar{x}}{\bar{x}}-\frac{dx}{x}\right)-p^*\xi\right)\wedge\Theta =O\left(\frac{1}{-(\ln\eps)^3}\right).
		$$
		
		\item[(a.2)] Case $\hpp$ is a node of $\tilde{C}_\pp$. In this case $\hpp$ in an intersection point of $\cE$ and $\partial'_\infty\tilde{\cC}_D$ (recall that by assumption the fiber $E_\pp$ does not have self-node). 
		We can choose the labeling of the coordinates on $U$ such that $\partial_\infty\tilde{\cC}_D\cap U \simeq \{xy=0\}$. 
		From the proof of Theorem~\ref{th:Theta:current}, the restriction of $\Theta$ to $U$ can be written as
		$$
		\Theta=\frac{\left(\frac{dx}{x} + \frac{dy}{y} - \partial\phi\right)\wedge \left(\frac{d\bar{x}}{\bar{x}} + \frac{d\bar{y}}{\bar{y}} - \bar{\partial}\phi \right)\wedge \left(\beta\frac{dy}{y} + d\varphi \right) \wedge \left(\bar{\beta}\frac{d\bar{y}}{\bar{y}} + d\bar{\varphi} \right)}{(-2\ln|x| -2\ln|y| +\phi)^3}.
		$$
		Note that $\cV_\eps\cap U$ is the union $\Delta_\eps\times\Delta_{\delta}\times\Delta_{\delta'}\cup \Delta_{\eps_0}\times\Delta_{\eps}\times\Delta_{\delta'}$. Thus
		$$
		\partial \cV_\eps\cap U= \partial\Delta_\eps\times A(\delta,\eps)\times\Delta_{\delta'}\cup A(\eps_0,\eps)\times\partial\Delta_\eps\times \Delta_{\delta'}
		$$
		One readily checks that
		\begin{equation*}
		-\int_{\partial\Delta_\eps\times A(\delta,\eps)\times\Delta_{\delta'}} h\cdot\psi\wedge\Theta = \int_{|z|<\delta'}\int_{\eps < |y| <\delta} \int_{|x|=\eps} \left( \frac{\imath}{2}\left(\frac{d\bar{x}}{\bar{x}}-\frac{dx}{x}\right)-p^*\xi\right)\wedge\Theta = O\left( \frac{1}{(\ln\eps)^2}\right)
		\end{equation*}
		and
		\begin{equation*}
		-\int_{A(\eps_0,\eps)\times\partial\Delta_\eps\times \Delta_{\delta'}} h\cdot\psi\wedge\Theta = -\int_{|z|<\delta'}\int_{\eps < |x| <\eps_0} \int_{|y|=\eps} h\cdot\left( \frac{\imath}{2}\left(\frac{d\bar{x}}{\bar{x}}-\frac{dx}{x}\right)-p^*\xi\right)\wedge\Theta = O\left( \frac{1}{(\ln\eps)^2}\right).
		\end{equation*}
		Hence
		\begin{equation*}
		-\int_{U\cap \partial\cV_\eps}h\cdot\psi\wedge\Theta  =  O\left( \frac{1}{(\ln\eps)^2}\right).
		\end{equation*}
	\end{itemize}
	
	\item[(b)] Case $\pp$ is contained in a stratum of group III. Again, we have two subcases: either $\hpp$ is a smooth point of $\tilde{C}_\pp$ or $\hpp$ is a node of $\tilde{C}_\pp$. In the former case,  $\partial_\infty\tilde{\cC}_D\cap U = \cE\cap U \simeq \{x=0\}$, and the restriction of $\Theta$ to $U$ is given by
	$$
	\Theta=\frac{(\frac{dx}{x}-\partial\phi)\wedge(\frac{d\bar{x}}{\bar{x}}-\bar{\partial}\phi)\wedge (\alpha\cdot\frac{dx}{x}+d\varphi)\wedge (\bar{\alpha}\cdot\frac{d\bar{x}}{\bar{x}} + d\bar{\varphi})}{(-2\ln|x|+\phi)^3}
	$$
	In the latter case, $\cS\cap U\simeq \{x=0\}$, while $\partial_\infty\tilde{\cC}_D\cap U \simeq \{xy=0\}$, and the restriction of $\Theta$ is given by
	$$
	\Theta=\frac{\left(\frac{dx}{x} + \frac{dy}{y} - \partial\phi\right)\wedge \left(\frac{d\bar{x}}{\bar{x}} + \frac{d\bar{y}}{\bar{y}} - \bar{\partial}\phi \right)\wedge \left(\beta\frac{dy}{y} + d\varphi \right) \wedge \left(\bar{\beta}\frac{d\bar{y}}{\bar{y}} + d\bar{\varphi} \right)}{(-2\ln|x| -2\ln|y| +\phi)^3}.
	$$
	We  can then conclude by the same arguments as Case (a).
	
	\item[(c)] Case $\pp$ is contained in a stratum of  group IV. We have two subcases:
	\begin{itemize}
		\item[(c1)]  $\hpp$ is a smooth point of $\tilde{C}_\pp$.
		In this case $\partial_\infty\tilde{\cC}_D\cap U\simeq \{xy=0\}$.
		From Theorem~\ref{th:Theta:current}, the restriction of $\Theta$ to $U$ is given by 
		\begin{equation*}
			\Theta 	 = \frac{\left(\lambda\frac{dx}{x}+\mu\frac{dy}{y}- \partial\phi\right)\wedge\left(\lambda \frac{d\bar{x}}{\bar{x}}+\mu\frac{d\bar{y}}{\bar{y}} -\bar{\partial}\phi\right) \wedge\left(\alpha\frac{dx}{x}+\beta\frac{dy}{y}+d\varphi\right)\wedge \left(\bar{\alpha}\frac{d\bar{x}}{\bar{x}}+\bar{\beta}\frac{d\bar{y}}{\bar{y}}+d\bar{\varphi}\right)}{(-2\lambda\ln|x|-2\mu\ln|y|+\phi)^3}
		\end{equation*}
		It follows that
		$$
		-\int_{\cV_\eps\cap U}h\cdot\psi\wedge \Theta =O\left(\frac{1}{(\ln\eps)^2}\right).
		$$
		
		\item[(c2)] $\hpp$ is a node of $\tilde{C}_\pp$. In this case $\partial_\infty\tilde{\cC}_D\cap U \simeq \{xyz=0\}$.
		From Theorem~\ref{th:Theta:current}, up to a multiplicative constant, the restriction of $\Theta$ to $U$ is given by 
		\begin{equation*}
			\Theta  =  \frac{\left(\frac{dx}{x}+\frac{dy}{y}+\mu\frac{dz}{z}-\partial\phi\right)\wedge\left(\frac{d\bar{x}}{\bar{x}}+\frac{d\bar{y}}{\bar{y}}+ \mu\frac{d\bar{z}}{\bar{z}}-\bar{\partial}\phi\right)\wedge \left(\alpha\frac{dx}{x}+\beta\frac{dy}{y}+ \gamma\frac{dz}{z}+d\varphi\right)\wedge\left(\bar{\alpha}\frac{d\bar{x}}{\bar{x}}+\bar{\beta}\frac{d\bar{y}}{\bar{y}}+ \bar{\gamma}\frac{d\bar{z}}{\bar{z}}+d\bar{\varphi}\right)}{(-2\ln|x|-2\ln|y|-2\mu\ln|z|+\phi)^3},
		\end{equation*}
		It follows that
		$$
		-\int_{\cV_\eps\cap U}h\cdot\psi\wedge \Theta =O\left(\frac{1}{-\ln\eps}\right).
		$$
	\end{itemize}		
\end{itemize}
In all cases, we have 
$$
\lim_{\eps \to 0} \int_{\cV_\eps\cap U}h\cdot\psi\wedge\Theta =0.
$$		
Since we can cover the $U_{\eps_0}$ by a finite family of open subsets of $\tilde{\cC}_D$ of the form $\Delta_{\eps_0}\times\Delta_\delta\times\Delta_{\delta'}$, we obtain 
$$
	\langle[\Theta],[\cE]\rangle  = \lim_{\eps\to 0}\int_{\tilde{\cC}_D\setminus\cV_\eps}\Phi\wedge\Theta = -\lim_{\eps\to 0}\int_{\partial \cV_\eps\cap \cU_{\eps_0}}h\cdot\psi\wedge \Theta =0.
$$
We now turn to the case the fiber $E_\pp$ is not smooth for $\pp\in \cS^*$. This case only occurs when $\cS^*$ is a component of  $\cS_{1,1}$, and $E_\pp$ is the component of $\tilde{C}_\pp$ which is a nodal curve of genus two. The other component of $\tilde{C}_\pp$ is isomorphic to $\Pb^1$. We denote this component by $E'_\pp$ and the corresponding component of $\partial_\infty\tilde{\cC}_D$ by $\cE'$. By the first part of the proof, we have
$$
\langle[\Theta],[\cE']\rangle=0.
$$
By construction we have $[\tilde{\pi}^*\cS]=[\cE]+[\cE']$. By Proposition~\ref{prop:Theta:inters:str:infty} we know that $\langle[\Theta], [\tilde{\pi}^{-1}\cS]\rangle=0$. As a consequence, we get $\langle[\Theta],[\cE]\rangle=0$ as well.
\end{proof}

\section{Volume of $\cX_D$ and intersections in $\tCD$} \label{sec:vol:XD:n:Theta}
In this section, we will prove 
\begin{Theorem}\label{th:vol:XD}
	We have
	\begin{equation}\label{eq:vol:XD:form}
		\mu(\cX_D)=-\frac{\pi}{144}\langle [\Theta],[\ol{\cT}_{0,2}]\rangle - \frac{\pi}{8} \langle[\Theta],[\ol{\cT}^{a,1}_{2,0}]\rangle.
	\end{equation}
	where $[\Theta]$ is the cohomology class of $\Theta$ in $H^{2,2}(\tCD)$.
\end{Theorem}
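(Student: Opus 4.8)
The plan is to start from the volume--to--intersection identity \eqref{eq:vol:n:Theta}, namely $\mu(\cX_D)=\tfrac{-\pi}{24}\langle[\Theta],[\omega_{\tilde{\cC}_D/\hat{\cX}_D}]\rangle$, and to substitute the expression for $[\omega_{\tilde{\cC}_D/\hat{\cX}_D}]$ provided by Proposition~\ref{prop:rel:cotangent:class}. Expanding by bilinearity of the intersection pairing yields
\begin{align*}
\mu(\cX_D)=\frac{-\pi}{24}\Big(&\tfrac16\langle[\Theta],[\ol{\cT}_{0,2}]\rangle+\sum_{i=1}^4\langle[\Theta],[\Sigma_i]\rangle+2\langle[\Theta],[\ol{\cT}^{0}_{1,0}]\rangle\\
&+\langle[\Theta],[\ol{\cT}^{a,0}_{2,0}]\rangle+3\langle[\Theta],[\ol{\cT}^{a,1}_{2,0}]\rangle+\langle[\Theta],[\cR_1]\rangle\Big).
\end{align*}
Thus the theorem reduces to showing that the four ``intermediate'' terms $\langle[\Theta],[\Sigma_i]\rangle$ (for $i=1,\dots,4$), $\langle[\Theta],[\ol{\cT}^{0}_{1,0}]\rangle$, $\langle[\Theta],[\ol{\cT}^{a,0}_{2,0}]\rangle$ and $\langle[\Theta],[\cR_1]\rangle$ all vanish. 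Once this is established, the surviving coefficients are $\tfrac{-\pi}{24}\cdot\tfrac16=\tfrac{-\pi}{144}$ in front of $\langle[\Theta],[\ol{\cT}_{0,2}]\rangle$ and $\tfrac{-\pi}{24}\cdot 3=\tfrac{-\pi}{8}$ in front of $\langle[\Theta],[\ol{\cT}^{a,1}_{2,0}]\rangle$, which is exactly \eqref{eq:vol:XD:form}.

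Two of the vanishings are immediate. Since the divisor $\cR_1$ is supported in $\partial_\infty\tilde{\cC}_D$, it is a $\Q$-combination of irreducible components of $\partial_\infty\tilde{\cC}_D$; each such component pairs trivially with $[\Theta]$ by Proposition~\ref{prop:Theta:inters:infty:str:enhanced}, hence $\langle[\Theta],[\cR_1]\rangle=0$. For the sections $\Sigma_i$, each $\Sigma_i$ meets every fiber at the $i$-th fixed point of the Prym involution, which is a smooth point of the fiber, so Proposition~\ref{prop:int:Theta:section} applies and $\langle[\Theta],[\Sigma_i]\rangle=\int_{\Sigma_i\setminus\partial_\infty\tilde{\cC}_D}\Theta$. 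Recall from Proposition~\ref{prop:Theta:def:22:form} that $\Theta=(\imath\vartheta)\wedge(\tfrac{\imath}{2}\partial\bar{\partial}\varphi_c)$, where $\varphi_c$ is the normalized period of a path from $x$ to $\tau(x)$. Along $\Sigma_i$ the endpoint $x$ is fixed by $\tau$, so the path may be taken constant and $\varphi_c\equiv 0$ on $\Sigma_i$. Writing $\iota\colon\Sigma_i\hookrightarrow\tilde{\cC}_D$ for the inclusion and using that pullback commutes with $\partial\bar{\partial}$, we obtain $\iota^*\Theta=(\iota^*\imath\vartheta)\wedge\tfrac{\imath}{2}\partial\bar{\partial}(\iota^*\varphi_c)=0$, whence the integral vanishes.

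The remaining two divisors, $\ol{\cT}^{0}_{1,0}$ and $\ol{\cT}^{a,0}_{2,0}$, parametrize over the one--dimensional strata $\cS_{1,0}$ and $\cS_{2,0}^a$ respectively the $\Pb^1$--component of the fiber on which the differential $\omega$ vanishes identically. Both lie in $\partial_1\tilde{\cC}_D$, so $\Theta$ extends smoothly across them by Corollary~\ref{cor:Theta:extend:str:gp:I} and $\langle[\Theta],[\ol{\cT}^{0}_{1,0}]\rangle=\int_{\ol{\cT}^{0}_{1,0}}\Theta$, and similarly for $\ol{\cT}^{a,0}_{2,0}$. The same mechanism as for the $\Sigma_i$ applies: for a point of the vanishing component $C_0$, the endpoints $x$ and $\tau(x)$ both lie on $C_0$ and $\omega|_{C_0}\equiv 0$, so the period of a path joining them inside $C_0$ is zero; consequently $\varphi_c$ extends continuously by $0$ along these divisors and $\iota^*\Theta=0$ as before. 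Equivalently, since the factor $\imath\vartheta$ in $\Theta$ is pulled back from the base, one integrates out the compact genus--zero fiber, reducing the computation to $\int_{\Pb^1}\tfrac{\imath}{2}\partial\bar{\partial}(\varphi_c|_{\Pb^1})$, which is zero by Stokes' theorem. Either way $\langle[\Theta],[\ol{\cT}^{0}_{1,0}]\rangle=\langle[\Theta],[\ol{\cT}^{a,0}_{2,0}]\rangle=0$.

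The main obstacle is the divisor vanishing of the previous paragraph: unlike for the sections $\Sigma_i$, the period functions $\varphi_c$ are a priori defined only over $\cX_D$, so I must justify carefully that their boundary extension is identically zero on the vanishing $\Pb^1$--components and that $\iota^*\Theta$ genuinely agrees with the smooth extension of Corollary~\ref{cor:Theta:extend:str:gp:I}. This relies on the local plumbing description near strata of group~I, precisely on Proposition~\ref{prop:int:s:arc:express} and Proposition~\ref{prop:Area:loc:express}, which control $\int_c\omega$ and $\|\omega\|^2$ as the thin annulus around $C_0$ collapses and show that $|\int_c\omega|^2/\|\omega\|^2\to 0$. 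Granting this input, collecting the coefficients exactly as in the first paragraph completes the proof of \eqref{eq:vol:XD:form}.
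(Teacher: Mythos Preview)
Your proof is correct and follows essentially the same route as the paper: start from \eqref{eq:vol:n:Theta}, substitute Proposition~\ref{prop:rel:cotangent:class}, then kill the terms $\langle[\Theta],[\Sigma_i]\rangle$, $\langle[\Theta],[\ol{\cT}^{0}_{1,0}]\rangle$, $\langle[\Theta],[\ol{\cT}^{a,0}_{2,0}]\rangle$, $\langle[\Theta],[\cR_1]\rangle$ using Proposition~\ref{prop:Theta:inters:infty:str:enhanced}, Proposition~\ref{prop:int:Theta:section}, and the vanishing of $\varphi_c$ on fixed points and on the $\Pb^1$-components where $\omega\equiv 0$. The only cosmetic difference is that the paper establishes the vanishings $\langle[\Theta],[\ol{\cT}^{0}_{1,0}]\rangle=\langle[\Theta],[\ol{\cT}^{a,0}_{2,0}]\rangle=0$ inside the proof of Theorem~\ref{th:vol:XD:n:rel:dual:bdl} (writing the pairing as $\int_{\ol{\cT}^{0}_{1,0}\setminus\partial_\infty\tCD}\Theta$ via the argument of Proposition~\ref{prop:int:Theta:section}) and then quotes them, whereas you redo them in place.
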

Theorem~\ref{th:vol:XD} will follows from the results of \textsection\ref{sec:div:relations:in:CD} and Theorem~\ref{th:vol:XD:n:rel:dual:bdl} here below. 

\begin{Theorem}\label{th:vol:XD:n:rel:dual:bdl}
	We have
	\begin{equation}\label{eq:vol:XD:n:rel:dual:dbl}
		\mu(\cX_D)=\frac{-\pi}{24}\langle[\Theta],[\omega_{\tCD/\hXD}]\rangle.
	\end{equation}
\end{Theorem}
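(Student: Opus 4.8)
The plan is to express the volume $\mu(\cX_D)$ as an integral over the section $\Sigma_5$ and then reinterpret this integral as an intersection number involving $[\Theta]$ and $[\omega_{\tCD/\hXD}]$. First I would recall from Theorem~\ref{th:vol:form:proj:diff:express} that on $\cX_D$ the volume form $d\mu$ can be written as
\begin{equation*}
	d\mu=\frac{\pi}{2(s+2)}\cdot(-\imath\partial\bar{\partial}\ln h)\wedge\Big(\tfrac{\imath}{2}\partial\bar{\partial}h_1\Big)\wedge\dots\wedge\Big(\tfrac{\imath}{2}\partial\bar{\partial}h_s\Big),
\end{equation*}
where in the case $\kappa=(2,2)$ we have $s=1$, so $d=s+2=3$. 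The single function $h_1$ is $|\int_{c_1}\omega|^2/\|\omega\|^2$ for a path $c_1$ joining the two zeros permuted by the Prym involution, which is precisely the function $\varphi_c$ entering the definition of $\Theta=(\imath\vartheta)\wedge(\tfrac{\imath}{2}\partial\bar{\partial}\varphi_c)$. Thus $d\mu=\frac{\pi}{6}\cdot\Theta$ pulled back along the tautological section corresponding to the zero locus, once one matches $-\imath\partial\bar\partial\ln h$ with $\imath\vartheta$. This identifies $\mu(\cX_D)$ with $\frac{\pi}{6}\int_{\Sigma_5\cap\cC_D}\Theta$, up to a factor of $\tfrac12$ accounting for the two points of $\Sigma_5$ in each fiber, giving $\mu(\cX_D)=\frac{\pi}{12}\int_{\Sigma_5\cap\cC_D}\Theta$.

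Next I would apply Proposition~\ref{prop:int:Theta:section} to convert the integral over $\Sigma_5$ into the intersection number $\langle[\Theta],[\Sigma_5]\rangle$. Here one must check the hypothesis that $\Sigma_5$ meets the fibers of $\tilde\pi$ at smooth points; this holds because the marked points $p_5,p_5'$ are always located on smooth loci of the curves (the differential has double zeros there, never at nodes, by the boundary classification in Theorem~\ref{th:bdry:eigen:form:H22}). This yields
\begin{equation*}
	\mu(\cX_D)=\frac{\pi}{12}\langle[\Theta],[\Sigma_5]\rangle.
\end{equation*}
The remaining work is to relate $[\Sigma_5]$ to $[\omega_{\tCD/\hXD}]$. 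The key tool is the fundamental relation of Proposition~\ref{prop:fund:rel:tCD},
\begin{equation*}
	\tilde{\pi}^*\OO(-1)\sim\omega_{\tCD/\hXD}-2[\Sigma_5]-5[\ol{\cT}_{1,0}^0]-[\ol{\cT}_{2,0}^{a,0}]-3[\ol{\cT}_{0,2}^0],
\end{equation*}
which I would pair with $[\Theta]$. By Corollary~\ref{cor:int:Theta:divisor} the term $\tilde{\pi}^*\OO(-1)$ contributes $8\pi\,c_1(\OO(-1))^2$, and I would need to argue that this self-intersection of $c_1(\OO(-1))$ vanishes (or is otherwise computed); more precisely, pairing $[\Theta]$ with $\tilde\pi^*$ of a divisor reduces to a computation on $\hXD$, and since $\OO(-1)$ restricted to the relevant boundary components is trivial, the pullback classes $[\ol{\cT}_{1,0}^0],[\ol{\cT}_{2,0}^{a,0}],[\ol{\cT}_{0,2}^0]$ are fiber components lying over codimension-one strata.

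Solving the fundamental relation for $2[\Sigma_5]$ and pairing with $[\Theta]$, the term $\langle[\Theta],\tilde\pi^*\OO(-1)\rangle$ and the three boundary-divisor terms must all be evaluated. I expect the main obstacle to be handling these divisor contributions correctly: the divisors $\ol{\cT}_{1,0}^0,\ol{\cT}_{2,0}^{a,0},\ol{\cT}_{0,2}^0$ are vertical components (supported on single fibers' $\Pb^1$ subcurves over group I strata), so their intersection with $[\Theta]$ is governed by whether these components lie in $\partial_\infty\tCD$ and by Proposition~\ref{prop:Theta:inters:infty:str:enhanced}, which forces $\langle[\Theta],[\cE]\rangle=0$ for components of $\partial_\infty\tCD$. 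The group I divisors are \emph{not} in $\partial_\infty$, so their contributions are genuine and I would compute them using Corollary~\ref{cor:int:Theta:divisor} after reducing to intersections of $\imath\vartheta$ with the base strata. After collecting all terms, the relation should rearrange into an expression of $\langle[\Theta],[\Sigma_5]\rangle$ purely in terms of $\langle[\Theta],[\omega_{\tCD/\hXD}]\rangle$, delivering the stated coefficient $-\frac{\pi}{24}$ in \eqref{eq:vol:XD:n:rel:dual:dbl}; the numerical bookkeeping of the coefficients is the place where errors are most likely to creep in and must be carried out with care.
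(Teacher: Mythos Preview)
Your overall architecture matches the paper's proof: express $d\mu$ via Theorem~\ref{th:vol:form:proj:diff:express} as the pullback of $\Theta$ along $\Sigma_5$, convert the integral to $\langle[\Theta],[\Sigma_5]\rangle$ via Proposition~\ref{prop:int:Theta:section}, and then use the fundamental relation of Proposition~\ref{prop:fund:rel:tCD} to trade $[\Sigma_5]$ for $[\omega_{\tCD/\hXD}]$. There is a sign slip in your first step (the paper obtains $d\mu=-\tfrac{\pi}{6}\Theta$ restricted to the section, hence $\mu(\cX_D)=-\tfrac{\pi}{12}\langle[\Theta],[\Sigma_5]\rangle$), but that is bookkeeping.

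The substantive gap is in your handling of the three boundary terms $[\ol{\cT}_{1,0}^0],[\ol{\cT}_{2,0}^{a,0}],[\ol{\cT}_{0,2}^0]$. You write that these ``are \emph{not} in $\partial_\infty$, so their contributions are genuine'' and propose to compute them via Corollary~\ref{cor:int:Theta:divisor}. Two things go wrong here. First, Corollary~\ref{cor:int:Theta:divisor} applies only to divisors of the form $\tilde\pi^*\cD$, and these three are \emph{not} full pullbacks: each is only one irreducible component of the fiber over the corresponding group~I stratum (the $\Pb^1$ piece on which the differential vanishes), not the whole fiber. Second, and more importantly, the contributions are not genuine---they are zero. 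The paper's argument is that $\Theta$ extends smoothly across group~I strata (Corollary~\ref{cor:Theta:extend:str:gp:I}), and on each of these $\Pb^1$ components the differential $\omega_\xx$ vanishes identically, so the function $\varphi_c$ in \eqref{eq:normalized:per:funct} is identically zero there; hence $\Theta$ restricts to zero on $\ol{\cT}_{1,0}^0$ etc., and the pairing with $[\Theta]$ vanishes directly by an argument parallel to Proposition~\ref{prop:int:Theta:section}. For the remaining term $\langle[\Theta],[\tilde\pi^*\OO(-1)]\rangle$, your instinct is right: by Theorem~\ref{th:Theta:pushforward} this equals $8\pi\,c_1(\OO(-1))^2\cdot[\hXD]$, and the paper observes that $(\tfrac{\imath}{2\pi}\vartheta)^2$ represents $c_1(\OO(-1))^2$ as a current while $\vartheta^2\equiv 0$ pointwise (a direct consequence of the rank-one structure), so this term vanishes as well. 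Once all four extra terms are shown to be zero, $2\langle[\Theta],[\Sigma_5]\rangle=\langle[\Theta],[\omega_{\tCD/\hXD}]\rangle$ and the coefficient $-\tfrac{\pi}{24}$ drops out.
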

\begin{proof}
Let $\xx=(C_\xx,\ul{x},\tau_\xx,[\omega_\xx])$, where $\ul{x}=(x_1,\dots,x_5,x'_5)$, be a point in $\XD$.
Fix a homotopy class $c$ of continuous paths from $x_5$ to  $x'_5$ in $C_\xx$.
Let $\omega: \xx \mapsto \omega_\xx$ be a local holomorphic section of the tautological line bundle in a neighborhood of $\xx$.
Then by Proposition~\ref{prop:push:meas:proj:sp}, we have
$$
d\mu(\xx)=-\frac{\pi}{6}\cdot\imath\vartheta(\xx)\wedge\left(\frac{\imath}{2}\partial\bar{\partial}\left(\frac{\left|\int_c\omega\right|^2}{||\omega||^2}\right)(\xx)\right)
$$
Recall that $\Sigma_5$ is the divisor in $\tCD$ which intersects $C_\xx$ at the points $\{x_5,x'_5\}$. 
In particular, $\Sigma_5$ corresponds to two  local sections  of  $\tilde{\pi}$.  The local expression of the volume form $d\mu$ on $\cX_D$ is clearly the pullback of $-\frac{\pi}{6}\cdot\Theta$ by those local sections. It follows that we have
$$
\mu(\XD)=\int_{\XD}d\mu=-\frac{\pi}{12}\cdot\int_{\Sigma_5\cap\CD}\Theta =\frac{-\pi}{12}\cdot\int_{\Sigma_5\setminus\partial_\infty\tCD}\Theta.
$$
By Proposition~\ref{prop:fund:rel:tCD} and Proposition~\ref{prop:int:Theta:section}, we get that
\begin{equation}\label{eq:vol:XD:n:inters:Theta:a}
\mu(\XD) =\frac{-\pi}{12}\cdot\langle[\Theta],[\Sigma_5]\rangle  = \frac{-\pi}{24}\cdot\langle[\Theta],[\omega_{\tCD/\hXD}] - [\tilde{\pi}^*\OO(-1)]-5[\ol{\cT}^0_{1,0}]-[\ol{\cT}^{a,0}_{2,0}] -3[\ol{\cT}^0_{0,2}]\rangle
\end{equation}
We claim that
$$
\langle[\Theta],[\tilde{\pi}^*\OO(-1)]\rangle = \langle[\Theta],[\ol{\cT}^0_{1,0}]\rangle = \langle[\Theta],[\ol{\cT}^{a,0}_{2,0}]\rangle = \langle[\Theta],[\ol{\cT}^0_{0,2}]\rangle =0.
$$
Indeed, by Theorem~\ref{th:Theta:pushforward}, we have
$$
  \langle[\Theta],[\tilde{\pi}^*\OO(-1)]\rangle = 8\pi\cdot  c_1^2(\OO(-1))\cdot[\hXD].
$$
It follows from the main result of \cite{Ng25} that $\left(\frac{\imath}{2\pi}\cdot\vartheta\right)^2$ is a representative in the sense of current of $c_1^2(\OO(-1))$ on $\hXD$. Since $\vartheta^2$ vanishes identically, we conclude that $\langle[\Theta],[\tilde{\pi}^*\OO(-1)]\rangle=0$.

\medskip 

For $\langle[\Theta],[\ol{\cT}^0_{1,0}]\rangle$, we observe that $\ol{\cT}^0_{1,0}$ is a smooth divisor in $\tCD$  (the intersection $\ol{\cT}^0_{1,0}\cap\partial_\infty\tCD$ consists of some $\Pb^1$ components in the fiber of $\tilde{\pi}$ over points in the strata of group III).
By similar arguments as in Proposition~\ref{prop:int:Theta:section}, we get that
$$
\langle[\Theta],[\ol{\cT}^0_{1,0}]\rangle = \int_{\ol{\cT}^0_{1,0}\setminus\partial_\infty\tCD}\Theta
$$
Note that $\tilde{\pi}(\ol{\cT}^0_{1,0}\setminus\partial_\infty\tCD)=\cS_{1,0}$.
For any $\xx\in \cS_{1,0}$, let $C_\xx^0$ be the component of $C_\xx$ that is contained in $\ol{\cT}^0_{1,0}$.
Remark that $C_\xx^0$ is invariant by the Prym involution.
By definition,   $\omega_\xx$ vanishes identically on $C_\xx^0$. Therefore, the function $\varphi_c$ defined in \eqref{eq:normalized:per:funct} is identically zero on $C_\xx^0$. Consequently, $\Theta$ vanishes identically on $\ol{\cT}^0_{1,0}\setminus\partial_\infty\tCD$, and we have
$$
\langle[\Theta],[\ol{\cT}^0_{1,0}]\rangle = \int_{\ol{\cT}^0_{1,0}\setminus\partial_\infty\tCD}\Theta=0.
$$
The proofs of $\langle[\Theta],[\ol{\cT}^{a,0}_{2,0}]\rangle = \langle[\Theta],[\ol{\cT}^0_{0,2}]\rangle =0$ follow the same lines.
As a direct consequence, we obtain \eqref{eq:vol:XD:n:rel:dual:dbl} from \eqref{eq:vol:XD:n:inters:Theta:a}.
\end{proof}

\subsection*{Proof of Theorem~\ref{th:vol:XD}}\label{subsec:prf:th:vol:XD}
\begin{proof}
It follows from Theorem~\ref{th:vol:XD:n:rel:dual:bdl} and Proposition~\ref{prop:rel:cotangent:class}  that we have
\begin{align*}
\mu(\XD) & = \frac{-\pi}{24}\cdot\langle [\Theta],[\omega_{\tCD/\hXD}] \rangle \\
          & = \frac{-\pi}{24}\cdot\langle [\Theta], \frac{1}{6} [\ol{\cT}_{0,2}]+2[\ol{\cT}^0_{1,0}] +[\ol{\cT}^{a,0}_{2,0}]+3[\ol{\cT}^{a,1}_{2,0}]+\sum_{i=1}^4[\Sigma_i]+[\cR_1]\rangle
\end{align*}
where $\cR_1$ is a divisor with support contained in $\partial_\infty\tCD$.
By Proposition~\ref{prop:Theta:inters:infty:str:enhanced} and Proposition~\ref{th:vol:XD:n:rel:dual:bdl}
$$
\langle [\Theta],[\ol{\cT}^0_{1,0}]\rangle = \langle [\Theta],[\ol{\cT}^{a,0}_{2,0}]\rangle = \langle [\Theta],[\cR_1]\rangle=0.
$$
Since the function $\varphi_c$ in \eqref{eq:normalized:per:funct} vanishes identically on $\Sigma_i, \; i=1,\dots,4$, Proposition~\ref{prop:int:Theta:section} implies that $\langle[\Theta],[\Sigma_i]\rangle=0$ for all $i=1,\dots,4$. As a consequence, we obtain \eqref{eq:vol:XD:form}.
\end{proof}

\section{Triples of tori and modular curves in $\hat{\cX}_D$} \label{sec:triple:tori:comput}
Our goal in this section  is to calculate $\langle[\Theta],[\ol{\cT}_{2,0}^{a,1}]\rangle$.
Recall that $\Omega E_D(0^3)$ is the space of triples of tori Prym eigenforms (cf. \textsection\ref{subsec:triple:tori:ef:def}). 
Since the space $\Omega E_{D}(0^3)$ consists of finitely many $\GL^+(2,\R)$-orbits, $W_D(0^3):=\Pb\Omega E_{D}(0^3)$ is a finite union of hyperbolic surfaces (orbifolds) with finite area. Each component of $\Pb\Omega E_{D}(0^3)$ is actually a finite cover of the modular curve $\Hbb/\SL(2,\Z)$.
We will prove
\begin{Theorem}\label{th:int:Theta:T:2:0:a:1}
For all discriminant $D > 4$, $D$ is not a square,  we have
\begin{equation}\label{eq:int:Theta:tS:20:a1}
\langle[\Theta],[\ol{\cT}_{2,0}^{a,1}]\rangle= -48\pi\cdot\chi(W_{D}(0^3)).
\end{equation}
\end{Theorem}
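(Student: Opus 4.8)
The divisor $\ol{\cT}_{2,0}^{a,1}$ is, by construction, the closure in $\tCD$ of the locus obtained by sweeping out the $\tau$-invariant elliptic component $C'_1$ of $C_\xx$ as $\xx$ ranges over $\cS_{2,0}^a$. Thus $\ol{\cT}_{2,0}^{a,1}$ carries the structure of an elliptic fibration $q\colon \ol{\cT}_{2,0}^{a,1}\to \ol{\cS}_{2,0}^a$ whose generic fibre is the invariant torus $X_0=C'_1$. Since $\cS_{2,0}^a$ is a stratum of group~I, that is $\cS_{2,0}^a\subset\partial_1\hXD$, Corollary~\ref{cor:Theta:extend:str:gp:I} ensures that $\Theta$ extends smoothly across $\ol{\cT}_{2,0}^{a,1}$ away from $\partial_\infty\tCD$. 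My first step is to show, by the global angular form and Stokes argument of Proposition~\ref{prop:int:Theta:section} (here the normal bundle of the divisor is a line bundle), together with the boundary estimates of Proposition~\ref{prop:Theta:inters:infty:str:enhanced} to discard the contributions over the cusps of $\cS_{2,0}^a$ (which lie in $\partial_\infty\hXD$), that
\[
\langle[\Theta],[\ol{\cT}_{2,0}^{a,1}]\rangle=\int_{\ol{\cT}_{2,0}^{a,1}\setminus\partial_\infty\tCD}\Theta.
\]

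Next I would carry out the fibrewise integration as in Lemma~\ref{lm:int:Theta:pullback}. Writing $\Theta=(\imath\vartheta)\wedge(\tfrac{\imath}{2}\partial\bar\partial\varphi_c)$, where $\imath\vartheta$ is pulled back from the base curve $\cS_{2,0}^a$, only the pure fibre part of $\tfrac{\imath}{2}\partial\bar\partial\varphi_c$ survives upon integration along $X_0$. Choosing the path $c$ inside the invariant torus, on which $\tau$ acts as the elliptic involution, the identity $dP_c|_{X_0}=-2\omega_0$ with $\omega_0:=\xi|_{X_0}$ gives the fibre integral $\tfrac{\imath}{2}\int_{X_0}\frac{dP_c\wedge d\bar P_c}{\Ab}=\tfrac{4}{\Ab}\,\Aa(X_0,|\omega_0|)=\tfrac{4\Ab_0}{\Ab}$, where $\Ab_0$ is the area of $X_0$ and $\Ab$ the total area. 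Hence
\[
\langle[\Theta],[\ol{\cT}_{2,0}^{a,1}]\rangle=\int_{\cS_{2,0}^a}\frac{4\Ab_0}{\Ab}\,(\imath\vartheta).
\]

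It then remains to evaluate both factors on each connected component of $\cS_{2,0}^a$. The eigenform period relations of Proposition~\ref{prop:eigen:form:per:eq}, applied to the triple of tori $(C'_1,C''_1,C''_2,\xi)\in\Omega E_D(0^3)$, give $\Ab_1+\Ab_2=\tfrac{2\det(B)}{\lambda^2}\,\Ab_0$; using $\lambda^2=e\lambda+2\det(B)$ and $2\lambda-e=\sqrt D$ one finds $\Ab_0/\Ab=\lambda/\sqrt D$, a constant on each component. For the curvature, Lemma~\ref{lm:curv:form:loc:express} gives $\imath\vartheta=-\imath\partial\bar\partial\ln\Ab=-\imath\partial\bar\partial\ln\Ab_0$, the factor $\ln(\Ab/\Ab_0)$ being locally constant, and the one-torus computation $-\imath\partial\bar\partial\ln\Im\tau_0=\tfrac12\omega_{\hyp}$ identifies $\imath\vartheta$ with half the hyperbolic area form in the modulus $\tau_0$ of $X_0$; since the $\SL(2,\R)$-action is by isometries in this coordinate, $\omega_{\hyp}$ is exactly the metric realising each component of $\cS_{2,0}^a$ as a finite cover of $\Hbb/\SL(2,\Z)$. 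Gauss--Bonnet then yields
\[
\langle[\Theta],[\ol{\cT}_{2,0}^{a,1}]\rangle=-4\pi\sum_{c}\frac{\lambda_c}{\sqrt D}\,\chi(\cS^{a,c}_{2,0}),
\]
the sum being over the components of $\cS_{2,0}^a$.

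The final and hardest step is to show that this weighted sum equals $-48\pi\,\chi(W_D(0^3))$, that is $\sum_c\tfrac{\lambda_c}{\sqrt D}\chi(\cS^{a,c}_{2,0})=12\,\chi(W_D(0^3))$. Since the individual weights $\lambda_c/\sqrt D$ are irrational while the left-hand side must be rational, the argument has to exploit the Galois symmetry $\sqrt D\mapsto-\sqrt D$: the prototypes $(e,B)$ and $(-e,-B)$ correspond to the two conjugate eigenforms on the same Prym variety, their eigenvalues satisfy $\lambda_c+\lambda_{\bar c}=\sqrt D$, so $\tfrac{\lambda_c}{\sqrt D}+\tfrac{\lambda_{\bar c}}{\sqrt D}=1$, and conjugate components carry equal Euler characteristics; pairing them collapses the irrational weights to $1$. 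I expect the genuine difficulty to lie precisely in this bookkeeping: computing the degree of the covering $\cS_{2,0}^a\to W_D(0^3)$ (a count of the labellings of the fixed points of $\tau$ distributed between $C'_1$ and $C'_2$) and of $W_D(0^3)\to\Hbb/\SL(2,\Z)$, and verifying that after the Galois pairing the total is exactly $12\,\chi(W_D(0^3))$ --- this is the computation to be developed in the remainder of \textsection\ref{sec:triple:tori:comput}.
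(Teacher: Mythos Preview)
Your proposal is correct and follows essentially the same route as the paper: reduce to $\int_{\cT_{2,0}^{a,1}}\Theta$, integrate fibrewise to get $\int_{\cS_{2,0}^a}\tfrac{4\Ab_0}{\Ab}\,\imath\vartheta$, use the constancy $\Ab_0/\Ab=\lambda/\sqrt{D}$ on each component together with $\int\imath\vartheta=-\pi\chi$, then collapse the irrational weights by pairing components with opposite $e$ and use the degree-$24$ cover $\cS_{2,0}^a\to W_D(0^3)$. One terminological correction: the pairing $e\leftrightarrow -e$ is not Galois conjugation (which sends $\lambda=\tfrac{e+\sqrt D}{2}$ to $\tfrac{e-\sqrt D}{2}$, not to $\tfrac{-e+\sqrt D}{2}$), nor do the two prototypes live on the same Prym variety; the paper simply observes that $m_D(e)=m_D(-e)$ directly from the definition of $\Pcal_{D,e}(0^3)$, so $\chi(W_{D,e}(0^3))=\chi(W_{D,-e}(0^3))$ and the odd part $\sum_e e\,\chi(W_{D,e}(0^3))$ vanishes.
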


In the case $D\equiv 1 \, [8]$, $\Omega E_D(2,2)^{\odd}$ has two connected components denoted by $\Omega E_{D+}(2,2)^{\odd}$ and $\Omega E_{D-}(2,2)^{\odd}$ (see \textsection~\ref{subsec:triple:tori:proj:1} for more details).
Recall that $\hat{\cX}_{D\pm}$ are the closures of the preimages  of $\Pb\Omega E_{D\pm}(2,2)^\odd$ in $\hXD$.
Denote by $\cS_{2,0}^{a\pm}$ the intersection of $\cS_{2,0}^a$ with $\hat{\cX}_{D\pm}$ respectively.
Finally, let $\cT_{2,0}^{a\pm,1}$ be the preimages of $\cS_{2,0}^{a\pm}$ in $\cT_{2,0}^{a,1}$.
We will prove a more precise version of Theorem~\ref{th:int:Theta:T:2:0:a:1} for this case

\begin{Theorem}\label{th:int:Theta:T:2:0:a:1:D:odd}
For all discriminant $D>9, \; D \equiv 1 \, [8]$, $D$ is not a square, we have
\begin{equation}\label{eq:int:Theta:on:tS:a1:20:D:odd}
\langle [\Theta],[\ol{\cT}_{2,0}^{a+,1}]\rangle = \langle [\Theta],[\ol{\cT}_{2,0}^{a-,1}]\rangle  = -24\pi\cdot \chi(W_D(0^3)).
\end{equation}
\end{Theorem}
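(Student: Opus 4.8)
The plan is to deduce Theorem~\ref{th:int:Theta:T:2:0:a:1:D:odd} from the already-established Theorem~\ref{th:int:Theta:T:2:0:a:1}, so that the entire content becomes a symmetry statement. Since $\cS_{2,0}^a=\cS_{2,0}^{a+}\sqcup\cS_{2,0}^{a-}$ is the partition induced by $\hat{\cX}_D=\hat{\cX}_{D+}\cup\hat{\cX}_{D-}$, and correspondingly $\ol{\cT}_{2,0}^{a,1}=\ol{\cT}_{2,0}^{a+,1}\sqcup\ol{\cT}_{2,0}^{a-,1}$, the general theorem gives $\langle[\Theta],[\ol{\cT}_{2,0}^{a+,1}]\rangle+\langle[\Theta],[\ol{\cT}_{2,0}^{a-,1}]\rangle=-48\pi\,\chi(W_D(0^3))$. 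Thus it suffices to prove the equality $\langle[\Theta],[\ol{\cT}_{2,0}^{a+,1}]\rangle=\langle[\Theta],[\ol{\cT}_{2,0}^{a-,1}]\rangle$, after which each side is forced to equal $-24\pi\,\chi(W_D(0^3))$.

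First I would record the fiber-integration formula underlying both quantities. Each divisor $\ol{\cT}_{2,0}^{a\pm,1}$ fibers over $\cS_{2,0}^{a\pm}$ with fiber the $\tau$-invariant elliptic component $X_0$ of the triple of tori, a component on which $\xi$ is nowhere vanishing. Restricting $\Theta=(\imath\vartheta)\wedge(\tfrac{\imath}{2}\partial\bar\partial\varphi_c)$ to this surface, the curvature $\imath\vartheta$ is pulled back from the base $\hat{\cX}_D$, so only the fiber part of $\tfrac{\imath}{2}\partial\bar\partial\varphi_c$ survives. Exactly as in the proof of Lemma~\ref{lm:int:Theta:pullback}, the path $c$ may be chosen inside $X_0$, whence $dP\mid_{X_0}=-2\omega_0$ and the fiber integral of $\tfrac{\imath}{2}\partial\bar\partial\varphi_c$ equals $4\,\Aa(X_0)/\Ab$. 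By Proposition~\ref{prop:eigen:form:per:eq} applied to the triple of tori, the ratio $\Aa(X_0)/\Ab$ is constant on each component of $\cS_{2,0}^a$ and depends only on the underlying prototype, not on the sign label; call it $\kappa_W$ on the component mapping to a component $W$ of $W_D(0^3)$. Moreover $\imath\vartheta$ restricted to $\cS_{2,0}^{a\pm}$ is the pullback under the natural covering $\cS_{2,0}^{a\pm}\to W_D(0^3)$ of the Hodge-metric curvature on $W_D(0^3)$, since both carry the same metric on $\OO(-1)$. Consequently
\[
\langle[\Theta],[\ol{\cT}_{2,0}^{a\pm,1}]\rangle=\sum_{W}4\kappa_W\cdot\deg\!\big(\cS_{2,0}^{a\pm}\!\mid_W\to W\big)\int_{W}\imath\vartheta,
\]
the sum running over the components $W$ of $W_D(0^3)$. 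Because $\kappa_W$ and $\int_W\imath\vartheta$ are independent of the sign, the desired equality reduces to the purely combinatorial claim that for every prototype $W$ the covering degrees of $\cS_{2,0}^{a+}\!\mid_W$ and $\cS_{2,0}^{a-}\!\mid_W$ over $W$ coincide.

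The main obstacle is precisely this degree equality. Here I would use the rigidity of the boundary configuration: by Proposition~\ref{prop:bdry:str:gp:I} and Lemma~\ref{lm:gp:I:diff:on:vanish:comp}, a point of $\cS_{2,0}^a$ lying over a fixed triple of tori is determined by the rigid $\Pb^1$-bridge $(\Pb^1,\tfrac{(x^2-1)^2dx}{x^2(x^2+3)^2})$ together with the discrete data of how it is glued and how the four $\tau$-fixed points are labelled, while the nearby smooth surfaces in $\Omega E_D(2,2)^\odd$ are produced by the plumbing construction of \textsection\ref{subsec:geom:bdry:str:gp:I}. Whether such a surface lies in $\Omega E_{D+}(2,2)^\odd$ or $\Omega E_{D-}(2,2)^\odd$ is dictated by the component invariant of \cite{LN:components}, and the goal is to show this invariant takes the values $+$ and $-$ on equally many of the reconstruction choices over each triple of tori. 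I would establish this by exhibiting a fixed-point-free involution on the set of these choices that flips the invariant — the natural candidate being the relabelling $X_1\leftrightarrow X_2$ of the two swapped tori combined with a half-period translation on the invariant torus $X_0$ — and verifying, through the explicit formula for the spin/component invariant, that it interchanges the two components while preserving the triple of tori. This balancing is the boundary manifestation of the same symmetry that forces $\mu(\Pb\Omega E_{D+}(2,2)^\odd)=\mu(\Pb\Omega E_{D-}(2,2)^\odd)$ in Theorem~\ref{th:vol:Omg:E:D:2:2} and the coincidences $c_k^{SV}(D+)=c_k^{SV}(D-)$ in Theorem~\ref{th:Siegel:Veech}; it is the only step where the fine classification of the two $D\equiv 1\,[8]$ components genuinely enters, and it is where I expect the technical heart of the argument to lie.
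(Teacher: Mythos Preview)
Your reduction to a degree equality is where the argument breaks. By Corollary~\ref{cor:triple:tor:in:Omg:E:D:2:2:D:odd}, each component $W$ of $W_D(0^3)$ carries a well-defined invariant $e\bmod 4$, and $\cS_{2,0}^{a+}$ covers only the components with $e\equiv 1\,[4]$ while $\cS_{2,0}^{a-}$ covers only those with $e\equiv -1\,[4]$. So for every fixed $W$, exactly one of your two covering degrees is $4!$ and the other is $0$; they are never equal, and no involution on the set of gluing choices over a single triple of tori can swap the sign. The natural symmetry is instead $e\leftrightarrow -e$, which pairs a component $W_{D,e}(0^3)$ sitting in $\partial\hat{\cX}_{D+}$ with the \emph{different} component $W_{D,-e}(0^3)$ sitting in $\partial\hat{\cX}_{D-}$; these have the same Euler characteristic since $m_D(e)=m_D(-e)$.

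The second gap is that the fiber factor $\kappa_W=\Aa(X_0)/\Ab=(e+\sqrt{D})/(2\sqrt{D})$ is \emph{not} invariant under this pairing: $\kappa_e\neq\kappa_{-e}$. Following Proposition~\ref{prop:int:Theta:on:T:a:1:2:0:e} one finds
\[
\langle[\Theta],[\ol{\cT}_{2,0}^{a\pm,1}]\rangle
=-48\pi\!\!\sum_{\substack{-\sqrt{D}<e<\sqrt{D}\\ e\equiv\pm1\,[4]}}\!\!\chi(W_{D,e}(0^3))
\ \pm\ \frac{8\pi}{\sqrt{D}}\sum_{\substack{0<e<\sqrt{D}\\ e\ \odd}}(-1)^{\frac{e-1}{2}}\,e\,m_D(e),
\]
so the difference of the two pairings is controlled by the alternating sum $\sum (-1)^{(e-1)/2}e\,m_D(e)$. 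This does not vanish for formal or combinatorial reasons; the paper proves it as Theorem~\ref{th:iden:triple:tor:e:times:deg:proj:1:e} via a genuine modular-forms identity (the function $G_2(8z)\theta_\psi(z)+\tfrac{1}{48\pi\imath}\theta'_\psi(z)$ is a weight-$7/2$ form for $\Gamma_0(64)$ that is forced to vanish identically). Your proposed involution does not see this arithmetic obstruction, which is the actual heart of the argument.
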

The Euler characteristic of $W_D(0^3)$ can be computed explicitly.
For all  $m\in \N,  \; m \geq 2$, define
\[
c(m):=m\prod_{\substack{p \, | \, m \\ p \, {\rm prime}}}\left(1+\frac{1}{p} \right).
\]	
For all integer $e$ such that $e^2< D$ and $D \equiv e^2 \, [8]$, we can write $\frac{D-e^2}{8}= f^2q$, where $f,q \in \N$, and $q$ is  square-free.  Define
\[
m_D(e):=\sum_{\substack{r \, | \, f \\ \gcd(r,e)=1}}c(\frac{D-e^2}{8r^2}).
\]
We will prove
\begin{Proposition}\label{prop:Euler:char:W:D:0}
For all discriminant $D\equiv 0,1,4 \, [8], D>9$, which is not a square, we have
\begin{equation}\label{eq:Euler:char:W:D:4}
\chi(W_D(0^3))= \frac{-1}{6}\cdot\sum_{\substack{-\sqrt{D} < e < \sqrt{D}\\ e^2 \equiv D\; [8]}} m_D(e).	
\end{equation} 
\end{Proposition}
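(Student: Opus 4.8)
The plan is to compute $\chi(W_D(0^3))$ by reducing the Euler characteristic of the orbifold $W_D(0^3)$ to a sum of degrees of finite covers over the modular curve $\Hbb/\SL(2,\Z)$, then enumerating the covers combinatorially via the prototype data $(a,b,d,e)$. Recall from \textsection\ref{subsec:triple:tori:ef:def} that $W_D(0^3)=\Pb\Omega E_D(0^3)$ is a finite disjoint union of finite covers of $\Hbb/\SL(2,\Z)$, so that $\chi(W_D(0^3)) = \chi(\Hbb/\SL(2,\Z))\cdot \sum_\sbt \deg_\sbt$, where the sum runs over the connected components and $\deg_\sbt$ is the degree of the corresponding cover. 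Since $\chi(\Hbb/\SL(2,\Z)) = -\tfrac{1}{6}$, the factor $-\tfrac16$ in \eqref{eq:Euler:char:W:D:4} is accounted for, and the task becomes identifying the total degree of the covering map $W_D(0^3)\to \Hbb/\SL(2,\Z)$ with the arithmetic quantity $\sum_{e} m_D(e)$.

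First I would set up the parametrization of triples of tori Prym eigenforms. A triple $(X,x_0,x_1,x_2,\omega)\in \Omega E_D(0^3)$ consists of an invariant elliptic component $(X_0,\omega_0)$ together with a pair $(X_1,\omega_1),(X_2,\omega_2)$ exchanged by the Prym involution, with $\phi^*\omega_2=-\omega_1$; the isomorphism $\phi$ identifies $X_2$ with $X_1$, so the datum is effectively a pair of elliptic curves $X_0$ and $X_1$ together with the real-multiplication structure. As in Proposition~\ref{prop:eigen:form:per:eq} and Lemma~\ref{lm:matrix:generator:O:D}, the self-adjoint generator $T$ of $\cO_D$ acting on $H_1(X,\Z)^-\simeq \Z^4$ (with intersection form of type $(1,2)$) is given by a matrix $\left(\begin{smallmatrix} e\Id_2 & 2B \\ B^* & 0 \end{smallmatrix}\right)$ with $B=\left(\begin{smallmatrix} a & b \\ c & d\end{smallmatrix}\right)$, $D=e^2+8\det B$, and $\gcd(a,b,c,d,e)=1$. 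For a triple of tori the relevant period relations decouple the two elliptic factors; I would show that after fixing the eigenvalue $\lambda=\tfrac{e+\sqrt D}{2}$, the $\C^*$-quotient sends the invariant factor $X_0$ to a point of $\Hbb/\SL(2,\Z)$ and the exchanged pair to a second point, with the real-multiplication constraint linking the two lattices. The upshot is that each component of $W_D(0^3)$ fibers over $\Hbb/\SL(2,\Z)$ by recording the isomorphism class of $X_0$ (or equivalently of the $\cO_D$-module), and the fiber degree is governed by the number of admissible $\cO_D$-submodule configurations.

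The combinatorial core is the count $\sum_{e} m_D(e)$. For each $e$ with $-\sqrt D < e < \sqrt D$ and $e^2\equiv D\;[8]$, write $\tfrac{D-e^2}{8}=\det B = f^2 q$ with $q$ squarefree; then $m_D(e)=\sum_{r\mid f,\ \gcd(r,e)=1} c\!\left(\tfrac{D-e^2}{8r^2}\right)$, where $c(m)=m\prod_{p\mid m}(1+\tfrac1p)$ is the index $[\SL(2,\Z):\Gamma_0(m)]$ (the number of index-$m$ sublattices, equivalently the degree of the cyclic-isogeny cover of the modular curve). I would interpret $c(\det B / r^2)$ as counting the cyclic subgroups realizing the isogeny between $X_0$ and $X_1$ determined by $B$, and the summation over $r\mid f$ with $\gcd(r,e)=1$ as accounting for the primitivity condition $\gcd(a,b,c,d,e)=1$ (the values $r$ encode the possible common divisors $\gcd(a,b,c,d)$ compatible with properness of the order); each choice of $e$ and such $r$ yields a component of $W_D(0^3)$ with degree $c(\det B/r^2)$ over $\Hbb/\SL(2,\Z)$. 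Summing over all admissible $e$ and dividing by $6$ (with the correct sign) gives \eqref{eq:Euler:char:W:D:4}.

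The main obstacle I anticipate is the precise bookkeeping of the primitivity condition and the orbifold/multiplicity count. Two subtleties must be handled carefully: first, translating the gcd condition $\gcd(a,b,c,d,e)=1$ into the divisor constraint $\gcd(r,e)=1$ with $r\mid f$ requires a careful analysis of which index sublattices arise from a given $B$ up to $\SL(2,\Z)$-equivalence, essentially a count of $\Gamma_0$-type coset configurations modulo the simultaneous action of the unit group of $\cO_D$. Second, one must verify that no extra automorphisms of the triples of tori (beyond the generic $\Z/2$ coming from the Prym involution, already quotiented out) contribute stacky corrections that would alter the naive degree count; this is where the exclusion $D>9$ enters, paralleling the uniqueness-of-involution input used in Proposition~\ref{prop:adm:cov:e:forms:degree}. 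I would resolve these by working with the Hermitian/lattice description of the $\cO_D$-action and matching the count to the classical formula for sublattices, deferring the delicate equivalence-class enumeration to an explicit orbit computation analogous to the prototype analysis in \textsection\ref{subsec:cyl:dec:prototype}.
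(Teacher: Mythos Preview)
Your proposal is correct and follows essentially the same route as the paper: decompose $W_D(0^3)$ by the invariant $e$, project each piece to $\cM_{1,1}\simeq\Hbb/\SL(2,\Z)$ via $(X_0,X_1,X_2)\mapsto X_0$, and identify the total degree with $\sum_e m_D(e)$. The paper makes the fiber count completely concrete by showing that the fiber of $\pi_0^{(e)}$ over a fixed lattice $\Lambda_0$ is in bijection with the set $\Pcal_{D,e}(0^3)$ of prototypes $(a,b,d)$ (Lemma~\ref{lm:m:D:e:equals:deg:proj:1:e}), and that each component $\Pb\Omega E_{D,(e,\ell,m)}(0^3)$ is isomorphic to $\Hbb/\Gamma_0(m)$ (Lemma~\ref{lm:deg:triple:tor:proj:1:e:l:m}); this direct identification bypasses the automorphism concerns you raise, so the $D>9$ hypothesis is not actually needed in this particular argument.
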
 
The proof of Proposition~\ref{prop:Euler:char:W:D:0} is given in \textsection\ref{subsec:prf:euler:char:W:D:0}.

\subsection{Integration of the curvature form on Teichmüller curves}\label{subsec:int:Hodge:curv:on:Teich:curves}
We start by the following important observation.
\begin{Proposition}\label{prop:int:Hodge:curv:on:Teich:curves}
	Let $\cS$ be a connected component of $\cS_{1,0}\sqcup \cS_{2,0}^a\sqcup\cS_{0,2}$. Then we have
	\begin{equation}\label{eq:int:Hodge:curv:on:Teich:curves}
		\int_{\cS}\imath\vartheta = 2\pi\cdot c_1(\OO(-1))\cdot[\ol{\cS}]=-\pi\chi(\cS).
	\end{equation}
\end{Proposition}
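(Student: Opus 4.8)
The plan is to reduce the statement to two independent facts. The first equality, $\int_{\cS}\imath\vartheta = 2\pi\, c_1(\OO(-1))\cdot[\ol{\cS}]$, is essentially a citation: since $\dim_\C\hat{\cX}_D=2$ and $\cS$ has codimension one in $\hat{\cX}_D$ by Proposition~\ref{prop:bdry:str:gp:I}, its closure $\ol{\cS}$ is a divisor, and the statement is exactly the result of \cite{Ng25} already used in Corollary~\ref{cor:int:Theta:divisor}, namely that $\tfrac{\imath}{2\pi}\vartheta$ represents $c_1(\OO(-1))$ as a closed current and that its integral over the smooth locus $\cS=\ol{\cS}\setminus\partial_\infty\hat{\cX}_D$ computes the Chern number $c_1(\OO(-1))\cdot[\ol{\cS}]$, the finitely many boundary (cusp) points contributing no mass. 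Thus the substance of the proposition is the second equality $\int_{\cS}\imath\vartheta=-\pi\chi(\cS)$, which I would establish by identifying $\imath\vartheta$ with half the hyperbolic area form of $\cS$ and applying Gauss--Bonnet.

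For the local computation I use that, by Proposition~\ref{prop:bdry:str:gp:I}, every point of $\cS$ carries a limit eigenform $(C_1,\xi_1)$ in one of $\Omega E_D(4)$, $\Omega E_{D'}(2)$, $\Omega E_D(0^3)$, and that the Hodge norm of the tautological section $\xi$ (which vanishes on the contracted $\Pb^1$) equals that of $\xi_1$; hence $\OO(-1)|_{\cS}$ with its Hodge metric is pulled back from the tautological bundle over the corresponding Teichm\"uller curve. I then invoke Lemma~\ref{lm:curv:form:loc:express}: fixing a symplectic basis $(a_1,b_1,a_2,b_2)$ of $H_1(C_1,\Z)^-$ and normalizing the local section so that $\omega(a_1)\equiv 1$, set $\beta:=\omega(b_1)$. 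The proof of that lemma gives $\Ab=||\omega||^2=R\,\Im\beta$ with $R\in\R_{>0}$ and $\vartheta=\partial\Ab\wedge\bar{\partial}\Ab/\Ab^2$, and a short computation yields $\vartheta=\tfrac{|\beta'|^2}{4(\Im\beta)^2}\,d\tau\wedge d\bar{\tau}$ in any holomorphic coordinate $\tau$. By Proposition~\ref{prop:eigen:form:per:eq} and Lemma~\ref{lm:D:no:square:hol:map:inj} the periods $\omega(a_2),\omega(b_2)$ are $K_D$-affine functions of $(\omega(a_1),\omega(b_1))$, so the eigenform locus is one-dimensional and $\tau\mapsto\beta(\tau)$ is a local biholomorphism onto a domain of $\Hbb$ --- it is the developing map of the hyperbolic structure of $\cS$. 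Taking $\beta$ itself as coordinate gives $\imath\vartheta=\tfrac{\imath}{4(\Im\beta)^2}\,d\beta\wedge d\bar{\beta}=\tfrac12\,\omega_{\mathrm{hyp}}$, where $\omega_{\mathrm{hyp}}=\tfrac{dx\wedge dy}{y^2}$ (with $\beta=x+\imath y$) is the area form of the constant curvature $-1$ metric.

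To conclude, $\cS$ is a complete finite-area hyperbolic orbifold (a finite cover of $\Hbb/\SL(2,\Z)$ or of a Teichm\"uller curve by Proposition~\ref{prop:bdry:str:gp:I}), so Gauss--Bonnet gives $\int_{\cS}\omega_{\mathrm{hyp}}=-2\pi\chi(\cS)$, whence $\int_{\cS}\imath\vartheta=\tfrac12\int_{\cS}\omega_{\mathrm{hyp}}=-\pi\chi(\cS)$. Combined with the first equality this proves \eqref{eq:int:Hodge:curv:on:Teich:curves}.

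The genuinely delicate step --- and the main obstacle --- is the passage from the open curve $\cS$ to its compactification $\ol{\cS}$: one must verify that the finite hyperbolic area (which computes $\chi$) equals exactly $2\pi$ times the degree of $\OO(-1)$ on $\ol{\cS}$, with no mass lost at the cusps where the Hodge metric degenerates logarithmically. This is precisely the nontrivial analytic input of \cite{Ng25}, which I would invoke rather than reprove. A secondary point to check is the $\cS_{0,2}$ case, where the limit eigenform lies in $\Omega E_{D'}(2)$ with $D'\in\{D,D/4\}$ and the relevant lattice is an index-two sublattice of $H_1(C_1,\Z)^-$ (cf.\ Lemma~\ref{lm:gp:I:limit:diff}); one verifies that rescaling the lattice changes neither the normalization nor the identity $\imath\vartheta=\tfrac12\,\omega_{\mathrm{hyp}}$, so the conclusion is unaffected.
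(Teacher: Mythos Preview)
Your proposal is correct and follows essentially the same approach as the paper: cite \cite{Ng25} (the paper actually cites \cite{Ng22} and \cite{Bai:GT}) for the first equality, then identify $\imath\vartheta$ with half the hyperbolic area form via a period coordinate and apply Gauss--Bonnet. The only cosmetic difference is that the paper chooses as local coordinate the period of a crossing saddle connection of a cylinder (with the core curve normalized to period $1$), while you use the absolute period $\beta=\omega(b_1)$ from a symplectic basis of $H_1^-$ and invoke Lemma~\ref{lm:curv:form:loc:express}; both yield $\Ab=R\cdot\Im\beta$ and the identical computation $\imath\vartheta=\tfrac{1}{2}\,dx\wedge dy/y^2$.
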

\begin{proof}
	That $\int_{\cS}\imath\vartheta = 2\pi \cdot c_1(\OO(-1))\cdot[\ol{\cS}]$ is a consequence of the main result of \cite{Ng22} (see also \cite{Bai:GT}). Thus we will only give the proof of the equality
	\begin{equation}\label{eq:int:Hodge:curve:n:Euler:char}
		\int_{\cS}\imath\vartheta = -\pi\chi(\cS)
	\end{equation}
	To see this, we first remark that since $\cS$ is the projectivization of a closed $\GL^+(2,\R)$-orbit (that is $\cS$ is a Teichm\"uller curve), $\cS$ is isomorphic to a quotient $\Hbb/\Gamma$, where $\Gamma$ is Fuchsian group. Locally, a neighborhood of any point $\xx \in \cS$ can be identified with an open subset of $\Hbb=\{z\in \C, \, \Im(z) >0\}$ as follows: let $(C_\xx,[\omega_\xx])$ be the projectivized Abelian differential corresponding to $\xx$. Let $\gamma$ be a simple closed geodesic on a component of $C$ where $\omega_\xx$ does not vanish identically and $E$ the cylinder that contains $\gamma$.  Let $\sigma$ be a saddle connection contained in the closure of $E$ that crosses $\gamma$ once. We will call $\sigma$ a {\em crossing saddle connection} of  $E$.  For all $\xx' \simeq (C_{\xx'},[\omega_{\xx'}])$ in $\cS$ close to $\xx$, we can identify $\gamma$ with a closed geodesic and $\sigma$ with a saddle connection on $C_{\xx'}$.
	We can also normalize such that $\omega_{\xx'}(\gamma)=1$ for all $\xx'$ in a neighborhood of $\xx$.
	This means that the assignment $\xx' \mapsto \omega_{\xx'}$ is a holomorphic section of the tautological line bundle $\OO(-1)$.
	The mapping $\xx \mapsto z(\xx):=\omega_\xx(\sigma)$ then gives a local coordinate for $\cS$ in a neighborhood of $\xx$.
	With an appropriate orientation of $\sigma$, we have that $\Im(z) >0$, that is $z(\xx)\in \Hbb$.
	Note that if $(\gamma',\delta')$ is a is a different pair of (closed geodesic, crossing saddle connection) then the periods of  $\gamma'$ and $\delta'$ are related to those of $\gamma$ and $\delta$ by some matrix $A$ in $\GL^+(2,\R)$. Thus if $z'$ is the local coordinate associated to  $(\gamma',\delta')$, then $z'=A\cdot z$, where $A$ acts on $\Hbb$ by homography.
	
	Let us write $z(\xx)=x+\imath y$.
	Since the ratios of the widths and the ratios of the heights of parallel cylinders on Veech surfaces are constant, we get that
	$$
	\Aa(C_\xx,\omega_\xx)=R\cdot y,
	$$
	where $R$ is a positive real constant. Now, a direct calculation shows that
	\begin{align*}
		\imath\vartheta(\xx) = -\imath\partial\bar{\partial}\ln(||\omega_\xx||^2) = -\imath\partial\bar{\partial}\ln(R\cdot y) & = -\imath\partial\bar{\partial}\ln(\frac{\imath}{2}\cdot(\bar{z}-z)) =-\imath\cdot\frac{dz\wedge d\bar{z}}{(\bar{z}-z)^2} =\frac{dx\wedge dy}{2y^2}.
	\end{align*}
	Since the volume form $\nu$ of the hyperbolic metric on $\Hbb$ is given by $dx\wedge dy/y^2$, we get that
	\begin{align*}
		\int_{\cS}\imath\vartheta &= \frac{1}{2}\int_{\cS}\nu =\frac{-2\pi}{2}\cdot\chi(\cS)=-\pi\cdot\chi(\cS).
	\end{align*}
\end{proof}

\subsection{Forgetting the marked points}\label{subsec:triple:tori:forget:map}
Consider a point $\pp \sim (C,p_1,\dots,p_5,p'_5,\tau, [\xi]) \in \cS_{2,0}^a$. Recall from Theorem~\ref{th:bdry:eigen:form:H22} that $C$ has four irreducible components denoted by $C'_1,C'_2, C''_1,C''_2$, where  $C'_1, C''_1,C''_2$ are  (smooth) elliptic curves, $C'_2$ is  isomorphic to $\Pb^1$ and adjacent to all the other components. The differential $\xi$ vanishes identically on $C'_2$ and is nowhere vanishing on $C'_1, C''_1,C''_2$. Let $C_1$ denote the union of $C'_1,C''_1,C''_2$, and $\xi_1:=\xi_{\left|C_1\right.}$. Then $(C_1,\xi_1)$ is a triple of tori in $\Omega E_{D}(0^3)$ (see Lemma~\ref{lm:gp:I:limit:diff}).
The correspondence $\pp \mapsto (C_1,[\xi_1])$ defines a  map $\Psi_D: \cS_{2,0}^a \to \Pb\Omega E_{D}(0^3)=W_D(0^3)$.
\begin{Lemma}\label{lm:S20:a:cover:triple:tori}
The map $\Psi_D$ is a covering of degree $4!$.
\end{Lemma}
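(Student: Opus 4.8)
The plan is to exhibit $\Psi_D$ as the composition of the restriction to $\cS_{2,0}^a$ of the forgetful/projection map $\hat\rho_2$ (or rather its boundary analogue) with a suitable marking-forgetting operation, and to compute its degree by counting the fibers exactly as in Proposition~\ref{prop:adm:cov:e:forms:degree}. First I would record the structure of a point $\pp\sim(C,p_1,\dots,p_5,p'_5,\tau,[\xi])\in\cS_{2,0}^a$: by Theorem~\ref{th:bdry:eigen:form:H22}(2) the four fixed points $p_1,\dots,p_4$ of $\tau$ are distributed with three of them lying on the invariant elliptic component $C'_1$ and exactly one on the $\Pb^1$ component $C'_2$, while the pair $\{p_5,p'_5\}$ also lies on $C'_2$. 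The triple of tori $(C_1,\xi_1)$ together with its Prym involution carries an \emph{intrinsic} marked point on each component (the nodes where $C'_1,C''_1,C''_2$ meet $C'_2$, which under the correspondence of Lemma~\ref{lm:gp:I:limit:diff} become the marked points $x_0,x_1,x_2$ of the triple of tori as defined in \textsection\ref{subsec:triple:tori:ef:def}). Crucially, the data $(C_1,[\xi_1])$ together with its Prym involution $\tau_1$ determines, and is determined by, nothing more than the unmarked triple of tori Prym eigenform; the numbering $p_1,\dots,p_4$ of the fixed points is extra combinatorial data.

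The key step is then to show that the fiber $\Psi_D^{-1}((C_1,[\xi_1]))$ is in bijection with the set of numberings $(p_1,\dots,p_4)$ of the four points fixed by $\tau$, and that there are exactly $4!=24$ such numberings giving distinct points of $\cS_{2,0}^a$. To do this I would argue, in parallel with the proof of Proposition~\ref{prop:adm:cov:e:forms:degree}, that the reconstruction of $\pp$ from $(C_1,[\xi_1])$ is canonical once a numbering is fixed: Lemma~\ref{lm:gp:I:diff:on:vanish:comp} pins down the pointed curve $(C_0,\eta)=(C'_2,\eta)$ up to isomorphism (with $\tau|_{C_0}$ being $x\mapsto -x$, the pair $\{p_5,p'_5\}=\{\pm1\}$, the invariant node $s_0=0$, and the two exchanged nodes $s_1,s_2=\pm\sqrt3\,\imath$), and Corollary~\ref{cor:gp:I:vanish:comp:unique} says this vanishing component is unique up to isomorphism. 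Hence the gluing of $C_0$ to $C_1$ along the three nodes is forced, and the only freedom is the labelling of the fixed points $p_1,\dots,p_4$ of $\tau$ — which is exactly a numbering of a $4$-element set. I would then verify that distinct numberings yield genuinely distinct points of $\cS_{2,0}^a$: since $p_i$ are globally numbered in the definition of $\ol\cB_{4,1}$, two numberings give isomorphic pointed admissible covers only if there is an automorphism of $(C_1,[\xi_1])$ (respecting $\tau_1$ and $\xi_1$) realizing the corresponding permutation, and for $D>4$ non-square the generic triple of tori Prym eigenform has no such nontrivial automorphisms, so all $24$ numberings are distinct over a generic point; this suffices to compute the degree.

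The main obstacle I anticipate is the automorphism bookkeeping in the last verification, and in particular handling the orbifold/stacky subtleties: one must check that the degree computation is not spoiled by extra automorphisms of the triple of tori (for instance the elliptic involutions fixing the marked points $x_j$, or the isomorphism $\phi:X_1\to X_2$ exchanging the two non-invariant tori that is built into the definition of a triple-of-tori Prym form). I would address this by noting that such automorphisms act trivially on the \emph{numbering} of the fixed points $p_1,\dots,p_4$ — they permute the components $C''_1,C''_2$ and move points within components, but the four points fixed by $\tau$ are exactly the ramification points of $C_1\to C_1/\tau_1$, and a generic triple has no automorphism permuting these four points nontrivially — so the fiber count remains $24$ as an orbifold degree. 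Thus $\Psi_D$ is a degree-$4!$ covering, as claimed. The remaining cases ($D=9$, square $D$) are excluded by hypothesis, mirroring the restriction $D\geq 9$ in Proposition~\ref{prop:adm:cov:e:forms:degree}.
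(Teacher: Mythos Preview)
Your approach is essentially the same as the paper's: both rely on Lemma~\ref{lm:gp:I:diff:on:vanish:comp} and Corollary~\ref{cor:gp:I:vanish:comp:unique} to pin down the vanishing $\Pb^1$ component uniquely, and then argue that the only remaining freedom in reconstructing $\pp$ from $(C_1,[\xi_1])$ is the numbering of the four fixed points of $\tau$.

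There is, however, a genuine gap: you never establish that $\Psi_D$ is \emph{surjective} onto $W_D(0^3)$. Your reconstruction argument shows that, given $(C_1,[\xi_1])$ and a numbering, one obtains a unique candidate tuple $(C,p_1,\dots,p_5,p'_5,\tau,[\xi])$ in $\Pb\Omega'\ol{\cB}_{4,1}$ --- but you do not check that this tuple actually lies in $\ol{\cX}_D$, i.e.\ that it is a limit of genuine Prym eigenforms in $\cX_D$. Without this, you only know that each fiber over the \emph{image} of $\Psi_D$ has $4!$ points, not that every point of $W_D(0^3)$ is hit. The paper fills this gap explicitly: given any $(X,[\omega])\in W_D(0^3)$ it builds the stable curve $C$ by attaching the canonical $\Pb^1$ component, and then uses a one-parameter plumbing family $\varphi:\Delta_\eps\to\Omega\ol{\cX}_D$ (with $\varphi(\Delta^*_\eps)\subset\Omega\cX_D$) to exhibit the constructed point as an honest limit of points in $\cX_D$, hence in $\cS_{2,0}^a$. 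This step is not automatic from the structure of $\ol{\cB}_{4,1}$ alone, and something like the plumbing argument (or an appeal to the local model in the proof of Proposition~\ref{prop:bdry:str:gp:I}) is needed. Your attention to automorphisms is more careful than the paper's treatment, but the surjectivity step is the missing ingredient.
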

\begin{proof}
We first show that the projectivized Abelian differential $(C,[\xi])$ is uniquely determined by $(C_1,[\xi_1])$.
To see this recall that by assumption, $C'_2$ contains $p_5,p'_5$ and one of the points $\{p_1,\dots,p_4\}$. Let us assume that $p_4\in C'_2$.
Let $r_0$ be the node between $C'_2$ and $C'_1$, and $r_i, \; i=1,2$, the node between $C'_2$ and $C''_i$.
By definition, the Prym involution $\tau$ fixes $r_0, p_4$, and permutes $p_5$ and  $p'_5$ (resp. $r_1$ and $r_2$).
We can identify $C'_2$ with $\Pb^1$ such that the restriction of $\tau$ is given by $z\mapsto -z$.
We then have $(C'_2,r_0,p_4,p_5,p'_5,r_1,r_2) \simeq (\Pb^1,0,\infty,1,-1,b,-b)$, where $b\in \C\setminus\{0,\pm1\}$.
By Theorem~\ref{th:twisted:diff}, there exists a meromorphic Abelian differential $\eta$ on $C'_2$ such that 
$$
\div(\eta)=2p_5+ 2p'_5 -2r_0 -2 r_1 -2r_2
$$
and  residues  of $\eta$ at  the poles $r_0,r_1,r_2$ are all zero. Up to a scalar, there is a unique Abelian differential on $\Pb^1$ with the prescribed orders at the marked points, namely $\eta=\frac{(z^2-1)^2dz}{z^2(z^2-b^2)^2}$. The condition on the residues of $\eta$ at the poles implies that $b^2=-3$. Thus, we have
$$
\eta=\frac{(z^2-1)^2dz}{z^2(z^2+3)^2}.
$$
In particular, the pointed curve $(C'_2,r_0,p_4,p_5,p'_5,r_1,r_2)$ is uniquely determined and independent  of $C_1$. This proves our claim.

Since $(C,[\xi])$ is uniquely determined by $(C_1,[\xi_1])$, $\Psi_D$ is a covering onto its image.
Let $(X,\omega):=\{(X_j,x_j,\omega_j), \; j=0,1,2\}$ be  a triple of tori in $\Omega E_{D}(0^3)$.
Denote by $(X,[\omega])$ the corresponding point in $\Pb\Omega E_{D}(0^3)$.
We will show that $\#\Psi^{-1}_D((X,[\omega]))=4!$.

Let $C$ be the stable curve obtained as the union of $X_0,X_1,X_2$ and a copy of $\Pb^1$, denoted by $C_0$, where for all $j=0,1,2$, $x_j$ is identified with a point in $C_0$. We can assume that $x_0$ is identified with $0$, $x_1$ with $\sqrt{3}\imath$ and $x_2$ with $-\sqrt{3}\imath$.
Let $\xi\in H^0(C,\omega_C)$ be the differential on $C$ which vanishes identically on $C_0$ and equals $\omega_j$ on $X_j$.
Since $(X_1,\omega_1)$ and $(X_2,\omega_2)$ are isomorphic,  there is an involution $\tau$ of $C$ that exchanges $X_1$ and $X_j$ and leaves $X_0$ and $C_0$ invariant. 
By construction, $\tau$ has four regular fixed point in $C$, three of them are contained in $X_0$ and the forth one is contained in $C_0$.
Let $p_1,\dots,p_4$ denote the regular fixed points of $\tau$, and  $p_5$ and $p'_5$ the points in $C_0$ that correspond to $1$ and $-1$ respectively. Then $(C,p_1,\dots,p_5,p'_5,\tau,\xi)$ is an element of $\Omega'\ol{\cB}_{4,1}$.
We claim that $(C,p_1,\dots,p_5,p'_5,\tau, \xi)\in \Omega\ol{\cX}_D$.
To see this, let $\eta$ be the meromorphic differential on $C_0\simeq \Pb^1$ which is equal to $\frac{(z^2-1)^2dz}{(z^2+3)^2z^2}$.
Given $t\in \C^*$, $|t|$ small enough, the smoothing construction by plumbing simultaneously the three nodes of $C$ with parameter $t$ yields a smooth genus three curve $C_t$ together with a holomorphic Abelian differentials  $\xi_t$ such that 
\begin{itemize}
	\item[$\bullet$] the restriction of $\xi_t$ to the complement of a neighborhood of $x_j$ in $X_j$ is equal to $\omega_j,$ for $j=0,1,2$,
	
	\item[$\bullet$] the restriction of $\xi_t$ to the complement of a neighborhood of $\{0,\pm\imath \sqrt{3}\}$ in $C_0$ is equal to $t\eta$.
\end{itemize}
In particular, we have $(C_t,\xi_t) \in \Omega\cM_3(2,2)$. The involution $\tau$ of $C$ induces an involution on $C_t$ with four fixed points, we denote this involution again  by $\tau$. By construction, we have $\tau^*\xi_t=-\xi_t$.
Since $(X,\omega)\in \Omega E_{D}(0^3)$, it is straightforward to check that $(C_t,\xi_t) \in \Omega E_D(2,2)^\odd$.
The numbering of the fixed points of $\tau$ on $C$ induces naturally a numbering of the fixed points of $\tau$ on $C_t$. Thus we obtain a map $\varphi: \Delta_\eps \to \Omega\ol{\cX}_D$, for some $\eps>0$ small, such that $\varphi(0)=(C,p_1,\dots,p_5,p'_5,\xi)$ and $\varphi(\Delta^*_\eps)\subset \Omega\XD$.
It follows that $\pp:=(C,p_1,\dots,p_5,p'_5,\tau,[\xi]) \in \ol{\cX}_D$. Clearly we have $\pp \in \cS_{0,2}^a$, and $\Psi_D(\pp)=(X,[\omega])$. We can then conclude that $\Psi_D(\cS_{2,0}^a)=\Pb\Omega E_D(0^3)$.

We have seen that if we forget the numbering of fixed points of $\tau$, then the differential $(C,[\xi])$ is uniquely determined by $(X,[\omega])$. Thus $\Psi_D^{-1}(\{(X,[\omega])\})$ consists of the same projectivized differential $(C,[\xi])$ with different numberings of the fixed points of $\tau$. Since we are free to choose the numbering, we have $\#\Psi_D^{-1}(\{(X,[\omega])\})=4!$.
\end{proof}

\begin{Remark}\label{rk:triple:tori:plumb}
The map $\varphi$ in the proof of Lemma~\ref{lm:S20:a:cover:triple:tori} can also be defined using flat metric argument as follows:
given $t\in \C^*$ with $|t|$ small enough, on  each flat torus $(X_j,\omega_j)$ there is a unique geodesic segment $s_j$ centered at the marked point $x_j$ with period $t$. Slit open $X_j$ along $s_j$, we obtain three flat surfaces whose boundary consists of a pair of geodesic segments with period $t$. Gluing those surfaces together cyclically by identifying a segment in the boundary of $X_j$ with a segment in the boundary of $X_{j+1}$ (with the convention $X_3=X_0$), we obtain a surface $(Z_t,\eta_t)$ in $\Omega E_D(2,2)^{\rm odd}$. Remark that $(Z_t,\eta_t)$ has three homologous saddle connections with period $t$. This yields a holomorphic map $\phi: \Delta_\eps \to \Omega \ol{\cX}_D$ with the same properties as $\varphi$ (see \cite[\textsection 5]{LN:finite} for more details).
\end{Remark}

\subsection{Components of $\Omega E_D(0^3)$}\label{subsec:triples:tori:eigenform:comp}
Let $\{(X_j,x_j,\omega_j), \; j=0,1,2\}$ be a triple of flat tori. Then for each $j\in \{0,1,2\}$, there is a lattice $\Lambda_j$ in $\C$ such that $(X_j,\omega_j,x_j)\simeq (\C/\Lambda_j, dz, \bar{0})$, where $\bar{0}$ is the projection of $0\in \C$ in $\C/\Lambda_j$.

Let $\Pcal_D(0^3)$ denote the set of quadruples of integers $(a,b,d,e)$ satisfying the following conditions
$$
(\Pcal_{D}(0^3)) \quad \left\{
\begin{array}{l}
	a>0,  \; d > 0, \;  0 \leq b < a,\\
	D=e^2+8ad,\\
	\gcd(a,b,d,e)=1.
\end{array}
\right.
$$
Elements of $\Pcal_D(0^3)$ will  be called {\em prototypes for triple of tori}.
For every prototype $\frakp=(a,b,d,e)\in \Pcal_{D}(0^3)$, define $\lambda(\frakp):=\frac{e+\sqrt{D}}{2}$. We will call the {\em prototypical triple tori} associated to $\frakp$ the Abelian differential $(X,\omega)=\{(X_j,x_j,\omega_j), j=0,1,2\}$ defined as follows
\begin{itemize}
	\item[$\bullet$] $(X_0,\omega_0) \simeq (\C/(\lambda\cdot\Z +\imath\lambda\cdot\Z), dz)$,
	
	\item[$\bullet$] $(X_1,\omega_1) \simeq (X_2,\omega_2)\simeq (\C/(a\cdot\Z+(b+\imath d)\cdot\Z), dz)$.
\end{itemize}
The following result follows from the arguments of Proposition~\ref{prop:eigen:form:per:eq} (see also \cite[Prop. 8.2]{LN:finite} and \cite[App.]{LN:components}).
\begin{Proposition}\label{prop:triple:tori:eigen:form}
	All prototypical triples of tori are contained in $\Omega E_D(0^3)$. 
	A triple of flat  tori $\{(X_j,x_j,\omega_j), \, j=0,1,2\}$ belongs to $\Omega E_{D}(0^3)$ if and only if there is a matrix $A\in \GL^+(2,\R)$ such that $A\cdot(X,\omega)$ is a prototypical triple of tori.
\end{Proposition}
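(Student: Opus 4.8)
The plan is to prove Proposition~\ref{prop:triple:tori:eigen:form} by establishing the two implications separately, relying on the matrix calculus already developed for connected Prym eigenforms in Proposition~\ref{prop:eigen:form:per:eq}. First I would verify that every prototypical triple of tori lies in $\Omega E_D(0^3)$ by exhibiting an explicit generator of $\cO_D$ acting self-adjointly on $\Prym(X)$ for which $\omega$ is an eigenform. The key computation is to write down a symplectic basis $\{a_1,b_1,a_2,b_2\}$ of $H_1(X,\Z)^-$ adapted to the three tori: $a_1,b_1$ come from the cycles of the invariant torus $X_0$ (with $\langle a_1,b_1\rangle=1$), while $a_2,b_2$ are the $\tau$-anti-invariant combinations of cycles coming from the pair $X_1,X_2$ (with $\langle a_2,b_2\rangle=2$). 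In the chosen normalization $(X_0,\omega_0)\simeq(\C/(\lambda\Z+\imath\lambda\Z),dz)$ and $(X_1,\omega_1)\simeq(\C/(a\Z+(b+\imath d)\Z),dz)$ one reads off $\omega(a_1)=\lambda$, $\omega(b_1)=\imath\lambda$, $\omega(a_2)=2a$, $\omega(b_2)=2(b+\imath d)$. I would then take $T=\left(\begin{smallmatrix} e\Id_2 & 2B \\ B^* & 0_2\end{smallmatrix}\right)$ with $B=\left(\begin{smallmatrix} a & b \\ 0 & d\end{smallmatrix}\right)$ and check directly, exactly as in the proof of Proposition~\ref{prop:eigen:form:per:eq}, that $T^*\omega=\lambda\cdot\omega$ with $\lambda=\lambda(\frakp)=\frac{e+\sqrt{D}}{2}$ the positive root of $X^2-eX-2ad$; the identity $D=e^2+8ad$ guarantees $\Z[T]\cong\cO_D$, and the coprimality condition $\gcd(a,b,d,e)=1$ is precisely what makes this subring proper.

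Next I would treat the converse. Given an arbitrary triple $\{(X_j,x_j,\omega_j)\}$ in $\Omega E_D(0^3)$, the definition supplies a self-adjoint proper $T\in\End(\Prym(X))$ with $\Z[T]\cong\cO_D$ and $T^*\omega=\lambda\omega$ for some $\lambda\in\cO_D$. Applying Lemma~\ref{lm:matrix:generator:O:D} to the intersection form of type $(1,2)$ on $H_1(X,\Z)^-$ (the signature computation in \textsection\ref{subsec:triple:tori:ef:def} shows this form has signature $(1,2)$, hence the Prym is two-dimensional and the same normal form applies), and subtracting a multiple of the identity, I can assume $T$ has the shape in part (a) of Proposition~\ref{prop:eigen:form:per:eq}. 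The period relation $(\omega(a_2)\ \omega(b_2))=\tfrac{2}{\lambda}(\omega(a_1)\ \omega(b_1))\cdot B$ then constrains the periods; the point is that by acting with a suitable $A\in\GL^+(2,\R)$ I may normalize $\omega(a_1),\omega(b_1)$ to be proportional to $\lambda,\imath\lambda$ (i.e. make $X_0$ square), after which $B\in\Mb_2(\Z)$ forces $X_1\simeq X_2$ to have the prototypical lattice $a\Z+(b+\imath d)\Z$. A change of the basis element $b_2$ by a multiple of $a_2$ absorbs $b$ into the range $0\leq b<a$, exactly as in the normalization step of Proposition~\ref{prop:cyl:dec:length:ratios:22}, and positivity of the areas of the anti-invariant torus pair gives $a,d>0$. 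This yields a prototype $\frakp\in\Pcal_D(0^3)$ with $A\cdot(X,\omega)$ the associated prototypical triple.

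The main obstacle I anticipate is the bookkeeping for the disconnected surface: I must be careful that the anti-invariant homology $H_1(X,\Z)^-$ of the disjoint union $X_0\sqcup X_1\sqcup X_2$ really does carry the $(1,2)$-polarized lattice structure that makes $\Prym(X)$ an honest abelian surface, so that Lemma~\ref{lm:matrix:generator:O:D} is applicable verbatim. Concretely, the subtlety is that $a_2,b_2$ are genuinely anti-invariant cycles built from $X_1$ and $X_2$ together (not cycles on a single torus), and one must confirm the self-adjointness of $T$ with respect to the intersection form translates correctly into the block form with the factor $2$ in the off-diagonal. Once this is in place, both directions reduce to the linear-algebraic identities already verified in Proposition~\ref{prop:eigen:form:per:eq}, and I would simply cite that proof rather than reproduce the matrix multiplication. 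I would close by noting that the reduction to $0\le b<a$ together with the discriminant equation shows $\Pcal_D(0^3)$ is finite, consistent with the statement in \textsection\ref{subsec:triple:tori:ef:def} that $W_D(0^3)$ consists of finitely many components.
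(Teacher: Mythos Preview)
Your approach is correct and is essentially the same as the paper's, which simply records that the result ``follows from the arguments of Proposition~\ref{prop:eigen:form:per:eq}'' and cites \cite{LN:finite, LN:components}; your outline is in fact more detailed than what the paper provides.

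There is one small gap in your converse direction. After you normalize $(\omega(a_1),\omega(b_1))=(\lambda,\imath\lambda)$, the relation $(\omega(a_2)\ \omega(b_2))=\tfrac{2}{\lambda}(\omega(a_1)\ \omega(b_1))\cdot B$ only tells you that $\Lambda_1\subset\Z+\imath\Z$ has a basis of the form $a+\imath c,\ b+\imath d$; there is no reason a priori for $c=0$. In Proposition~\ref{prop:cyl:dec:length:ratios:22} the vanishing of $c$ came from the geometric constraint that $a_1$ and $a_2$ were core curves of \emph{parallel} cylinders, and no such constraint is available for a general triple of tori. The remedy is to use the full $\SL(2,\Z)$ freedom in the choice of $(a_2,b_2)$---equivalently, in the basis of $\Lambda_1$---to put the sublattice $\Lambda_1\subset\Z^2$ into Hermite normal form, producing generators $a$ and $b+\imath d$ with $0\le b<a$ in a single step; the shear $b_2\mapsto b_2+na_2$ alone does not suffice. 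Compare the Hermite normal form argument that appears later in the proof of Lemma~\ref{lm:m:D:e:equals:deg:proj:1:e}.
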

\begin{Remark}\label{rk:triple:tori:prot:no:unique}\hfill
The matrix $A$ and the prototypical triple of tori in the conclusion of Proposition~\ref{prop:triple:tori:eigen:form} are by no means unique. 
\end{Remark}

Given a lattice $\Lambda \subset \C$, for any sublattice $\Lambda' \subset \Lambda$ we define $\rho(\Lambda,\Lambda')$ to be the largest positive integer $r$ such that $\frac{1}{r}\cdot\Lambda' \subset \Lambda$.
The following lemma provides us with a characterization of the prototypical triples of tori contained in the same $\GL^+(2,\R)$-orbit. 

\begin{Lemma}\label{lm:triples:tori:eigenform:char}
Let $(X,\omega)=\{(X_j,x_j,\omega_j), \; j=0,1,2\}$ be a triple of tori in $\Omega E_D(0^3)$. Let $\Lambda_j, \; j=0,1,2$, be the lattices in $\C$ such that $(X_j,\omega_j) \simeq (\C/\Lambda_j, dz)$. Then there exists a unique integer $e =:\ee(X,\omega)$ such that for $\lambda=\frac{e+\sqrt{D}}{2}$ we have
\begin{itemize}
  \item[(i)] $\Lambda'_1:=\lambda\cdot\Lambda_1 \subset \Lambda_0$.

  \item[(ii)] Let $K:=[\Lambda_0:\Lambda'_1]$ and $r:=\rho(\Lambda_0,\Lambda'_1)$. Then $D=e^2+8K$ and $\gcd(r,e)=1$.
\end{itemize}
\end{Lemma}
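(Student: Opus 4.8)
The plan is to reduce to a prototypical triple, where both conditions can be read off directly, and then to argue that the two lattices $\Lambda_0,\Lambda_1$ determine $e$ on their own.

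\textbf{Existence.} First I would observe that conditions (i) and (ii) are invariant under the $\GL^+(2,\R)$-action. Indeed, multiplication by the real scalar $\lambda$ commutes with every $\R$-linear automorphism $A$ of $\C\simeq\R^2$, so $A(\lambda\Lambda_1)=\lambda(A\Lambda_1)$ and the inclusion $\lambda\Lambda_1\subseteq\Lambda_0$ is equivalent to $\lambda(A\Lambda_1)\subseteq A\Lambda_0$; moreover both the index and the invariant $\rho(\cdot,\cdot)$ are preserved by the $\R$-linear isomorphism $A$. By Proposition~\ref{prop:triple:tori:eigen:form} the triple $(X,\omega)$ is $\GL^+(2,\R)$-equivalent to a prototypical triple attached to some $(a,b,d,e)\in\Pcal_D(0^3)$, for which $\Lambda_0=\lambda\Z+\imath\lambda\Z$ and $\Lambda_1=a\Z+(b+\imath d)\Z$ with $\lambda=\tfrac{e+\sqrt D}{2}$. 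Writing the generators $\lambda a$ and $\lambda b+\imath\lambda d$ of $\lambda\Lambda_1$ in the basis $(\lambda,\imath\lambda)$ of $\Lambda_0$ produces the integer vectors $(a,0)$ and $(b,d)$; hence $\lambda\Lambda_1\subseteq\Lambda_0$, the index is $K=ad$, so $D=e^2+8K$, and $\rho(\Lambda_0,\lambda\Lambda_1)=\gcd(a,b,d)$, which is coprime to $e$ since $\gcd(a,b,d,e)=1$. Thus the prototypical representative, and therefore $(X,\omega)$ itself, satisfies (i) and (ii) with this $e$. The same computation can be carried out intrinsically via the analogue of Proposition~\ref{prop:eigen:form:per:eq}: using $\phi^*\omega_2=-\omega_1$ one finds $\omega(a_2)=2\omega_1(\alpha_1)$ for a diagonal cycle $a_2=(\alpha_1,-\phi_*\alpha_1)$, so the period relation $(\omega(a_2)\ \omega(b_2))=\tfrac{2}{\lambda}(\omega(a_1)\ \omega(b_1))B$ translates into $\lambda\Lambda_1=(\text{period lattice }\Lambda_0)\cdot B\subseteq\Lambda_0$, whence $K=\det B$ and $\rho$ equals the gcd of the entries of $B$.

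\textbf{Uniqueness.} Next I would show that already condition (i), together with the prescribed shape $\lambda=\tfrac{e+\sqrt D}{2}$, pins down $e$. Consider $I=\{\mu\in\C:\mu\Lambda_1\subseteq\Lambda_0\}$. Fixing generators $p',q'$ of $\Lambda_1$, we have $I=\tfrac{1}{p'}\Lambda_0\cap\tfrac{1}{q'}\Lambda_0$, which is a discrete subset of $\C$ (being contained in the lattice $\tfrac{1}{p'}\Lambda_0$). Hence $I\cap\R$ is a discrete subgroup of $\R$, say $I\cap\R=\theta\Z$. Any admissible value gives $\lambda=\tfrac{e+\sqrt D}{2}\in I\cap\R$, so $\lambda=k\theta$ for some $k\in\Z$, with $k\neq 0$ because $\lambda\neq 0$ ($\sqrt D$ being irrational). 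If $e,e'$ were two admissible integers, with $\lambda=k\theta$ and $\lambda'=k'\theta$, then from $2k\theta-\sqrt D=e\in\Z$ I would solve $\theta=\tfrac{e+\sqrt D}{2k}$ and substitute into $2k'\theta-\sqrt D=e'\in\Z$ to get $e'=\tfrac{k'e+(k'-k)\sqrt D}{k}$; rationality of $e'$ forces the coefficient of $\sqrt D$ to vanish, i.e. $k=k'$, whence $\lambda=\lambda'$ and $e=e'$.

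I expect the uniqueness step to be the crux: it is where the non-squareness of $D$ (equivalently the irrationality of $\sqrt D$) and the rigidity of the discrete module $I$ enter, ruling out a second admissible $e$. By contrast, the existence part is essentially bookkeeping once one reduces to the prototype, so the only care needed there is to verify the $\GL^+(2,\R)$-invariance of (i) and (ii) and to keep track of the identification of the diagonal periods via $\phi^*\omega_2=-\omega_1$.
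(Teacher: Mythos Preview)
Your argument is correct. The existence half mirrors the paper's proof: reduce to a prototypical triple via Proposition~\ref{prop:triple:tori:eigen:form}, check (i) and (ii) there, and transport back using the $\GL^+(2,\R)$-invariance of both conditions (which the paper states but you verify more explicitly).

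The uniqueness half is where you diverge. The paper observes that the area ratio
\[
\frac{\Aa(X_0,\omega_0)}{\Aa(X,\omega)}=\frac{\lambda^2}{\lambda^2+2ad}=\frac{e+\sqrt D}{2\sqrt D}
\]
is a $\GL^+(2,\R)$-invariant of the triple, and since this expression is strictly monotone in $e$ it determines $e$ outright. Your route is purely lattice-theoretic: the set $I=\{\mu\in\C:\mu\Lambda_1\subset\Lambda_0\}$ is a discrete subgroup of $\C$, so $I\cap\R=\theta\Z$, and two values $(e+\sqrt D)/2,\ (e'+\sqrt D)/2\in\theta\Z$ force $k=k'$ by the irrationality of $\sqrt D$. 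Both arguments hinge on $D$ being non-square; the paper's has the advantage of producing an explicit formula for $e$ in terms of areas (used later in Proposition~\ref{prop:int:Theta:on:T:a:1:2:0:e}), while yours isolates exactly which data---the pair $(\Lambda_0,\Lambda_1)$ alone, without any area normalization---already pins $e$ down.
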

\begin{proof}
From Proposition~\ref{prop:triple:tori:eigen:form} we know that the $\GL^+(2,\R)$-orbit of $(X,\omega)$ contains the prototypical triple of tori $(Y,\eta)=\{(Y_j,y_j,\eta_j), \, j=0,1,2\}$ associated with a prototype $\frakp=(a,b,d,e) \in \Pcal_D(0^3)$.
We claim that $e$ is uniquely determined by $(X,\omega)$. Indeed, with $\lambda:=\lambda(\frakp)$ we  have
\begin{align*}
\frac{\Aa(X_0,\omega_0)}{\Aa(X,\omega)}= \frac{\Aa(Y_0,\eta_0)}{\Aa(Y,\eta)} =\frac{\lambda^2}{\lambda^2+2ad} &= \frac{\lambda^2}{2\lambda^2-e\lambda} = \frac{e+\sqrt{D}}{2\sqrt{D}}
\end{align*}
which implies that $e$ is uniquely determined.

Since the properties (i) and (ii) are invariant under the simultaneous action of $\GL^+(2,\R)$ the pair $(\Lambda_0,\Lambda_1)$, we can suppose from now on that $(X,\omega)$ is the triple of tori associated to $\frakp$. In this case, we have $r=\gcd(a,b,d)$ and $K=\det\left(\begin{smallmatrix} a & b \\ 0 & d \end{smallmatrix} \right) =ad$. Since $(a,b,d,e)\in \Pcal_D(0^3)$, we have $D=e^2+8ad$ and $\gcd(r,e)=\gcd(a,b,d,e)=1$. The lemma is then proved.
\end{proof}

The following lemma was known to McMullen (cf. \cite[\textsection 2]{McM:spin}). We will provide here an alternative proof of this fact using Lemma~\ref{lm:triples:tori:eigenform:char}.
\begin{Lemma}\label{lm:triple:tori:prot:same:orbit}
	Let $(X,\omega)=\{(X_j,x_j,\omega_j), \; j=0,1,2\}$ and $(X',\omega')=\{(X'_j,x'_j,\omega'_j), \; j=0,1,2\}$ be the prototypical triples of tori associated respectively to the elements $\frakp=(a,b,d,e)$ and $\frakp'=(a',b',d',e')$ of $\Pcal_{D}(0^3)$.  Then $(X,\omega)$ and $(X',\omega')$ belong to the same $\GL^+(2,\R)$-orbit if and only if $e=e'$ and $\gcd(a,b,d)=\gcd(a',b',d')$.
\end{Lemma}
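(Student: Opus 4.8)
The plan is to read off from Lemma~\ref{lm:triples:tori:eigenform:char} two complete $\GL^+(2,\R)$-orbit invariants of a triple of tori and to show that together they pin down the orbit. Write $\Lambda_0,\Lambda_1=\Lambda_2$ for the lattices of the prototypical triple attached to $\frakp=(a,b,d,e)$, so that $\Lambda_0=\lambda(\Z+\imath\Z)$ with $\lambda=\lambda(\frakp)$ and $\Lambda_1=a\Z+(b+\imath d)\Z$. First I would check that the quantities $e=\ee(X,\omega)$, $K=[\Lambda_0:\lambda\Lambda_1]$ and $r=\rho(\Lambda_0,\lambda\Lambda_1)$ attached to a triple of tori in $\Omega E_D(0^3)$ are invariant under $\GL^+(2,\R)$: each of the conditions in Lemma~\ref{lm:triples:tori:eigenform:char} (the inclusion $\lambda\Lambda_1\subset\Lambda_0$, the index $K$, the divisibility $r$) is unchanged when $\Lambda_0,\Lambda_1$ are replaced by $A\Lambda_0,A\Lambda_1$ for $A\in\GL^+(2,\R)$, since $A$ is injective and $\R$-linear and $\lambda$ is a real scalar; uniqueness of $e$ in that lemma then forces $\ee,K,r$ to be constant along orbits. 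For the prototype one has (as computed in the proof of Lemma~\ref{lm:triples:tori:eigenform:char}) $K=ad=(D-e^2)/8$ and $r=\gcd(a,b,d)$. With these invariants the ``only if'' direction is immediate: if $(X,\omega)$ and $(X',\omega')$ share an orbit then their values of $\ee$ and $r$ agree, i.e. $e=e'$ and $\gcd(a,b,d)=\gcd(a',b',d')$.

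For the converse, assume $e=e'$ and $\gcd(a,b,d)=\gcd(a',b',d')=:r$. Then $\lambda=\lambda'$, so $\Lambda_0=\Lambda'_0=\lambda\Z[\imath]$, and $ad=(D-e^2)/8=a'd'=:K$. Since $a,b,d,a',b',d'$ are integers, $\Lambda_1=a\Z+(b+\imath d)\Z$ and $\Lambda'_1=a'\Z+(b'+\imath d')\Z$ are sublattices of $\Z[\imath]\cong\Z^2$ of index $K$, and their elementary divisors are both $(r,K/r)$, because the matrix $\left(\begin{smallmatrix} a & b \\ 0 & d\end{smallmatrix}\right)$ has content $\gcd(a,b,d)=r$ and determinant $ad=K$ (likewise for the primed data). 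By the elementary divisor theorem, two sublattices of $\Z^2$ with equal elementary divisors lie in a single $\GL(2,\Z)$-orbit. Producing the required linear map then amounts to upgrading this to an $\SL(2,\Z)$-statement, since the orientation-preserving $\R$-linear stabilizer of $\Z[\imath]$ is exactly $\SL(2,\Z)$. I would resolve this by noting that the $\GL(2,\Z)$-stabilizer of any sublattice contains an element of determinant $-1$: in a basis adapted to the elementary divisors the map $\mathrm{diag}(1,-1)$ preserves the diagonal sublattice, and conjugation transports this to an arbitrary sublattice; hence the $\GL(2,\Z)$- and $\SL(2,\Z)$-orbits coincide. Thus some $M\in\SL(2,\Z)$ satisfies $M\Lambda_1=\Lambda'_1$; viewed in $\GL^+(2,\R)$, $M$ fixes $\Lambda_0=\lambda\Z[\imath]$ and carries $\Lambda_1$ onto $\Lambda'_1$, so it maps the prototypical triple attached to $\frakp$ onto the one attached to $\frakp'$ (the marked origins and the Prym involution exchanging the two equal copies $X_1=X_2$ being preserved automatically by a linear map respecting the lattices). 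This places the two triples in the same $\GL^+(2,\R)$-orbit.

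The main obstacle is precisely the $\SL(2,\Z)$-versus-$\GL(2,\Z)$ distinction isolated above; once one observes that every sublattice of $\Z^2$ has an orientation-reversing lattice automorphism in its $\GL(2,\Z)$-stabilizer, the two orbit notions agree and the argument closes. Everything else is packaged either in Lemma~\ref{lm:triples:tori:eigenform:char} (invariance of $e$ and $r$, and their prototype values) and Proposition~\ref{prop:triple:tori:eigen:form} (reduction to prototypes), or in the classical classification of sublattices of $\Z^2$ by elementary divisors.
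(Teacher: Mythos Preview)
Your proof is correct and follows essentially the same approach as the paper. The paper's argument is more terse: for the converse it divides through by $\ell=\gcd(a,b,d)$ to obtain \emph{primitive} sublattices of $\Z[\imath]$ of equal index and then cites as a ``well known fact'' that $\SL(2,\Z)$ acts transitively on such sublattices, whereas you keep the full lattices, identify their elementary divisors as $(r,K/r)$, and explicitly resolve the $\GL(2,\Z)$-versus-$\SL(2,\Z)$ issue via the orientation-reversing stabilizer element. Your treatment of the ``only if'' direction via the $\GL^+(2,\R)$-invariance of $(\ee,r)$ is also slightly cleaner than the paper's, which instead observes that any $A\in\GL^+(2,\R)$ carrying one prototypical triple to the other must preserve $\lambda\Z[\imath]$ and hence lie in $\SL(2,\Z)$, then reads off the gcd equality from the resulting matrix identity.
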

\begin{proof}
	Assume first that $(X,\omega)$ and $(X',\omega')$ belong to the same $\GL^+(2,\R)$-orbit. Then it follows from Lemma~\ref{lm:triples:tori:eigenform:char} that we must have $e=e'$ which implies that $\lambda=\lambda'$.
	Since $(X_0,\omega_0)$ and $(X'_0,\omega'_0)$ are both isomorphic to $(\C/\lambda\cdot(\Z+\imath \Z), dz)$, we must have $(X',\omega')=A\cdot (X,\omega)$ for some $A\in \SL(2,\Z)$. In particular, we have
	$$
	\left(\begin{array}{cc}
		a' & b' \\
		0 &  d'
	\end{array}
	\right)= A\cdot \left(\begin{array}{cc}
		a & b \\
		0 & d
	\end{array}
	\right)
	$$
	which implies that $\gcd(a,b,d)=\gcd(a',b',d')$.
	
	Conversely, assume that we have $e=e'$ and $\gcd(a,b,d)= \gcd(a',b',d')=\ell$. Let
	$$
	(a_1,b_1,d_1):=\frac{1}{\ell}(a,b,d) \quad \text{ and} \quad (a'_1,b'_1,d'_1):=\frac{1}{\ell}(a',b',d').
	$$
	Note that we have
	$$
	D=e^2+8ad=e^2+8\ell^2a_1d_1=e'{}^2+8\ell^2a'_1d'_1,
	$$
	which implies that  $a_1d_1=a'_1d'_1$ (since $e=e'$). Therefore, the lattices $\Lambda:=a_1\cdot\Z+(b_1+\imath d_1)\cdot \Z$ and $\Lambda':=a'_1\cdot\Z+(b'_1+\imath d'_1)\cdot \Z$ are both primitive and have same index in $\Z+\imath\cdot\Z$.
	It is a well known fact that there is a matrix $A\in \SL(2,\Z)$ such that $A(\Lambda) =\Lambda'$.
	As a consequence, we get that $(X',\omega')=A\cdot(X,\omega)$.
\end{proof}

Let $\Pcal^*_{D}(0^3)$ denote the set of triples of integers $(e,\ell,m)$ satisfying
$$
(\Pcal^*_D(0^3)): \quad \ell > 0, \, m >0, \, D=e^2+8\ell^2m, \; \gcd(e,\ell)=1.
$$
If $(e,\ell,m)\in \Pcal^*_{D}(0^3)$ then $(\ell,0,\ell m,e) \in \Pcal_{D}(0^3)$.
We denote by $\Omega E_{D,(e,\ell,m)}(0^3)$ the $\GL^+(2,\R)$-orbit of the prototypical triple of tori associated to the prototype $(\ell,0,\ell m,e)$.
As an immediate consequence of Lemma~\ref{lm:triple:tori:prot:same:orbit}, we get the following
\begin{Corollary}\label{cor:triple:tori:components}
	We have
	$$
	\Omega E_{D}(0^3) =\bigsqcup_{(e,\ell,m)\in \Pcal^*_D(0)} \Omega E_{D,(e,\ell,m)}(0^3).
	$$
\end{Corollary}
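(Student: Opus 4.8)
The final statement to prove is Corollary~\ref{cor:triple:tori:components}, which asserts the disjoint union decomposition
\[
\Omega E_{D}(0^3) =\bigsqcup_{(e,\ell,m)\in \Pcal^*_D(0^3)} \Omega E_{D,(e,\ell,m)}(0^3).
\]

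The plan is to read this off directly from the two facts established immediately before it, namely Proposition~\ref{prop:triple:tori:eigen:form} and Lemma~\ref{lm:triple:tori:prot:same:orbit}. First I would establish that the right-hand side covers the left-hand side. By Proposition~\ref{prop:triple:tori:eigen:form}, every triple of tori in $\Omega E_D(0^3)$ lies in the $\GL^+(2,\R)$-orbit of some prototypical triple associated to a prototype $\frakp=(a,b,d,e)\in \Pcal_D(0^3)$. Setting $\ell:=\gcd(a,b,d)$ and $m:=ad/\ell^2$, the relation $D=e^2+8ad=e^2+8\ell^2 m$ together with $\gcd(a,b,d,e)=1$ (which forces $\gcd(e,\ell)=1$) shows $(e,\ell,m)\in\Pcal^*_D(0^3)$. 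Then Lemma~\ref{lm:triple:tori:prot:same:orbit} applies: the prototype $\frakp$ and the distinguished prototype $(\ell,0,\ell m,e)$ share the same value of $e$ and the same value of $\gcd(a,b,d)=\ell=\gcd(\ell,0,\ell m)$, hence their prototypical triples lie in the same $\GL^+(2,\R)$-orbit, so that $(X,\omega)\in \Omega E_{D,(e,\ell,m)}(0^3)$. This gives $\Omega E_D(0^3)\subseteq \bigcup_{(e,\ell,m)}\Omega E_{D,(e,\ell,m)}(0^3)$, and the reverse inclusion is immediate since each $\Omega E_{D,(e,\ell,m)}(0^3)$ is by definition a $\GL^+(2,\R)$-orbit inside $\Omega E_D(0^3)$.

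Next I would verify that the union is disjoint, which is where the content of Lemma~\ref{lm:triple:tori:prot:same:orbit} is genuinely used. Suppose $(e,\ell,m)$ and $(e',\ell',m')$ are distinct elements of $\Pcal^*_D(0^3)$ with $\Omega E_{D,(e,\ell,m)}(0^3)\cap \Omega E_{D,(e',\ell',m')}(0^3)\neq \varnothing$. Since each set is a single $\GL^+(2,\R)$-orbit, a nonempty intersection means the two orbits coincide, so the prototypical triples associated to $(\ell,0,\ell m,e)$ and $(\ell',0,\ell'm',e')$ lie in the same orbit. By the ``only if'' direction of Lemma~\ref{lm:triple:tori:prot:same:orbit} we then get $e=e'$ and $\gcd(\ell,0,\ell m)=\gcd(\ell',0,\ell'm')$, i.e. $\ell=\ell'$; finally the equation $D=e^2+8\ell^2 m$ determines $m$ uniquely from $e$ and $\ell$, so $m=m'$. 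This contradicts distinctness, proving the orbits are pairwise disjoint.

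The argument is essentially bookkeeping, so there is no deep obstacle; the one point requiring a little care is checking well-definedness of the indexing, namely that the reduction $\frakp\mapsto (e,\gcd(a,b,d),ad/\gcd(a,b,d)^2)$ lands in $\Pcal^*_D(0^3)$ and that the resulting orbit $\Omega E_{D,(e,\ell,m)}(0^3)$ does not depend on which representative prototype $\frakp$ one started from. Both follow from Lemma~\ref{lm:triple:tori:prot:same:orbit}: any two prototypes giving rise to the same pair $(e,\gcd(a,b,d))$ produce prototypical triples in a common orbit, so the orbit attached to a triple $(e,\ell,m)\in\Pcal^*_D(0^3)$ is unambiguous. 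Once this is noted, the surjectivity and disjointness steps combine to give exactly the claimed partition.
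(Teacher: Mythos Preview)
Your proof is correct and follows essentially the same approach as the paper: reduce an arbitrary prototype $(a,b,d,e)$ to the triple $(e,\ell,m)$ with $\ell=\gcd(a,b,d)$, $m=ad/\ell^2$, and then invoke both directions of Lemma~\ref{lm:triple:tori:prot:same:orbit} for surjectivity and disjointness. Your write-up is somewhat more detailed than the paper's (you spell out the reverse inclusion, the verification that $\gcd(\ell,0,\ell m)=\ell$, and the recovery of $m$ from $e,\ell$), but the underlying argument is the same.
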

\begin{proof}
	For all $(a,b,d,e) \in \Pcal_{D}(0^3)$. Let $\ell:=\gcd(a,b,d)$ and $m:=ad/\ell^2$. We then have $\gcd(e,\ell)=\gcd(a,b,d,e)=1$, and
	$D=e^2+8ad=e^2+8\ell^2m$, which means that $(e,\ell,m) \in \Pcal^*_D(0^3)$.
	It follows from Lemma~\ref{lm:triple:tori:prot:same:orbit} that every prototypical triple of tori is contained in some $\Omega E_{D,(e,\ell,m)}(0^3)$, and if $(e',\ell',m')$ and $(e,\ell,m)$ are different then  $\Omega E_{D,(e',\ell',m')}(0^3)$ and $\Omega E_{D,(e,\ell,m)}(0^3)$ are disjoint.
	This proves the corollary.
\end{proof}

\subsection{Projection onto $\cM_{1,1}$} \label{subsec:triple:tori:proj:1}
Let $\pi_0: \Omega E_{D}(0^3) \to \Omega\cM_{1,1}$ denote the map that associates to a triple $\{(X_j,x_j,\omega_j), \; j=0,1,2\} \in \Omega E_D(0^3)$ the element $(X_0,x_0,\omega_0) \in \Omega\cM_{1,1}$. 
Let $e$ be an integer such that $e^2 < D$ and $e^2 \equiv D \, [8]$. Denote by $\Omega E_{D,e}(0^3)$ the set of all $(X,\omega) \in \Omega E_D(0^3)$ such that $\ee(X,\omega)=e$.
Let $\pi_0^{(e)}$ be the restriction of $\pi_0$ to $\Omega E_{D,e}(0^3)$.
The maps $\pi_0, \pi_0^{(e)}$ descend to  maps from $\Pb\Omega E_{D}(0^3)$ and $\Pb\Omega E_{D,e}(0^3)$ onto $\cM_{1,1} \simeq \Hbb/\SL(2,\Z)$ that we abusively denote again by $\pi_0, \pi_0^{(e)}$ respectively.
Let us define
$$
\Pcal_{D,e}(0^3):=\{(a,b,d)\in \Z^3, \; (a,b,d,e) \in \Pcal_D(0^3)\}. 
$$

\begin{Lemma}\label{lm:m:D:e:equals:deg:proj:1:e}
We have
\begin{equation}\label{eq:m:D:e:equals:deg:proj:1:e}
\deg \pi_0^{(e)}= \#\Pcal_{D,e}(0^3).
\end{equation}
\end{Lemma}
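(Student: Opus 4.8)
The plan is to compute $\deg \pi_0^{(e)}$ by counting the fiber of $\pi_0^{(e)}$ over a generic point $(X_0,x_0)\in\cM_{1,1}$ and matching it, term by term, with the set $\Pcal_{D,e}(0^3)$. Fix a lattice $\Lambda_0\subset\C$ with $(X_0,x_0,\omega_0)\simeq(\C/\Lambda_0,\bar 0,dz)$, chosen generic so that $\Aut(X_0,x_0)=\{\pm 1\}$, and set $\lambda:=\frac{e+\sqrt D}{2}$ and $K:=\frac{D-e^2}{8}$. Before counting I would note that the sum over fibers is legitimate: by Corollary~\ref{cor:triple:tori:components} the components of $\Omega E_{D,e}(0^3)$ are exactly the orbits $\Omega E_{D,(e,\ell,m)}(0^3)$ with the prescribed $e$, so $\Pb\Omega E_{D,e}(0^3)$ is a finite disjoint union of modular curves, each a finite cover of $\cM_{1,1}\simeq\Hbb/\SL(2,\Z)$, and $\pi_0^{(e)}$ is an orbifold covering whose degree is the cardinality of a generic fiber.

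First I would describe the fiber set-theoretically. A point of $(\pi_0^{(e)})^{-1}(X_0,x_0)$ is the $\C^*$-class of a triple $\{(X_j,x_j,\omega_j)\}$ in $\Omega E_{D,e}(0^3)$ whose first component is isomorphic to $(X_0,x_0,\omega_0)$; after rescaling I may assume $X_0=\C/\Lambda_0$ and $\omega_0=dz$ on the nose, an identification unique up to $\Aut(X_0,x_0)=\{\pm 1\}$ combined with a sign on $\C^*$. Writing $\Lambda_1$ for the lattice of $X_1\simeq X_2$, Lemma~\ref{lm:triples:tori:eigenform:char} shows that $\Lambda_1':=\lambda\Lambda_1$ is a sublattice of $\Lambda_0$ of index $[\Lambda_0:\Lambda_1']=K$ whose divisibility $\rho(\Lambda_0,\Lambda_1')$ is coprime to $e$; since the isomorphism $\phi:X_1\to X_2$ with $\phi^*\omega_2=-\omega_1$ is part of the data, the whole triple is recovered from $\Lambda_1$, equivalently from $\Lambda_1'$. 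Conversely, any sublattice $\Lambda'\subset\Lambda_0$ of index $K$ with $\gcd(\rho(\Lambda_0,\Lambda'),e)=1$ yields, via $\Lambda_1:=\tfrac1\lambda\Lambda'$ and a mirror copy $X_2$, a triple lying over $(X_0,x_0)$ which is $\GL^+(2,\R)$-equivalent to the prototype $(a,b,d,e)$ attached to $\Lambda'$ and hence lies in $\Omega E_{D,e}(0^3)$ by Proposition~\ref{prop:triple:tori:eigen:form} (the backward construction being the one used in Lemma~\ref{lm:S20:a:cover:triple:tori}). Because $-1\in\Aut(X_0,x_0)$ acts trivially on every sublattice of $\Lambda_0$, distinct sublattices give distinct points; thus the fiber is in bijection with
$$
\{\,\Lambda'\subset\Lambda_0 \;:\; [\Lambda_0:\Lambda']=K,\ \gcd(\rho(\Lambda_0,\Lambda'),e)=1\,\}.
$$

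Next I would match this set with $\Pcal_{D,e}(0^3)$ by Hermite normal form. For a fixed basis $(e_1,e_2)$ of $\Lambda_0$, every index-$K$ sublattice $\Lambda'$ has a unique basis $(a\,e_1,\ b\,e_1+d\,e_2)$ with $a,d>0$, $ad=K$ and $0\le b<a$, and its divisibility is $\rho(\Lambda_0,\Lambda')=\gcd(a,b,d)$. The constraint $\gcd(\rho,e)=1$ then reads $\gcd(a,b,d,e)=1$, and since $ad=K=\frac{D-e^2}{8}$ is equivalent to $D=e^2+8ad$, the triple $(a,b,d)$ satisfies exactly the conditions defining $\Pcal_{D,e}(0^3)$. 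This gives a bijection between the generic fiber and $\Pcal_{D,e}(0^3)$, whence $\deg\pi_0^{(e)}=\#\Pcal_{D,e}(0^3)$.

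The main obstacle I anticipate is the bookkeeping in the fiber-counting step rather than the (routine) Hermite-form computation: one must pin down the normalization $X_0=\C/\Lambda_0$ modulo exactly $\{\pm 1\}$ on a generic fiber, check that the elliptic involution collapses none of the counted points, and verify that the correspondence $\Lambda'\mapsto$ triple is genuinely inverse to $\text{triple}\mapsto\lambda\Lambda_1$, so that neither the choice of mirror copy $X_2$ nor the residual $\C^*$-symmetry introduces over- or undercounting. Once this is settled the identity follows immediately.
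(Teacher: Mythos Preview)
Your proposal is correct and follows essentially the same approach as the paper: identify the fiber of $\pi_0^{(e)}$ over a point with the set of sublattices $\Lambda'\subset\Lambda_0$ of index $(D-e^2)/8$ satisfying $\gcd(\rho(\Lambda_0,\Lambda'),e)=1$ via Lemma~\ref{lm:triples:tori:eigenform:char}, then count these sublattices by Hermite normal form. The paper's proof is terser and does not linger on the genericity and automorphism issues you flag, but the core argument is identical.
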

\begin{proof}
Since $\lambda=\frac{e+\sqrt{D}}{2}$ is fixed, by Lemma~\ref{lm:triples:tori:eigenform:char} we can identify $\Omega E_{D,e}(0^3)$ with the space of pairs $(\Lambda_0,\Lambda_1)$ where $\Lambda_0$ is a lattice in $\C$, and $\Lambda_1$ is a sublattice of $\Lambda_0$ which satisfies
\begin{itemize}
\item[(i)] $[\Lambda_0:\Lambda_1]=\frac{D-e^2}{8}$,

\item[(ii)] $\gcd(\rho(\Lambda_0,\Lambda_1),e)=1$.
\end{itemize}
Using this identification, the map $\pi_0$ is simply given by $\pi_0: (\Lambda_0,\Lambda_1) \mapsto \Lambda_0$.
The preimage of $\Lambda_0$ by  $\pi_0^{(e)}$ is the set of sublattices $\Lambda_1 \subset \Lambda_0$ satisfying (i)  and (ii).
We can suppose that $\Lambda_0=\Z^2$. For any $\Lambda_1$, there exists a unique positive integer $a$ such that $a\cdot\Z \times\{0\}=\Lambda_1\cap\Z\times\{0\}$. There also exists a unique vector $(b,d) \in \Lambda_1$ such that $d>0$, $0\leq b < a-1$, and for all $(x,y) \in \Lambda_1\setminus\Z\times\{0\}$, we have
$$
ad=\det\left(
\begin{array}{cc}
  a & b \\
  0 & d
\end{array}
\right) \leq \left|\det \left(
\begin{array}{cc}
  a & x \\
  0 & y
\end{array}
\right)\right|.
$$
It is elementary to show that $(a,0)$ and $(b,d)$ form a basis of $\Lambda_1$. Condition (i) then implies that $ad=(D-e^2)/8$. Since $\rho(\Lambda_0,\Lambda_1)=\gcd(a,b,d)$, condition (ii) implies that $\gcd(a,b,d,e)=1$. We can then conclude that $(a,b,d,e) \in \Pcal_D(0^3)$. We thus have shown that there is a bijection between the preimage of $\Lambda_0$ by $\pi_0^{(e)}$ and the set $\Pcal_{D,e}(0^3)$ from which the lemma follows.
\end{proof}

We will say that a discriminant $D$ is $(1,2)$-primitive if $D \equiv 0,1,4 \; [8]$, and there does not exist $f \in \Z_{>1}$ such that $D = f^2 D'$ with $D' \equiv 0,1,4 \; [8]$. Recall that for all $n\in \N$,
$$
\sigma_1(n)= \sum_{d \, | \, n, d\geq 1} d.
$$
\begin{Corollary}\label{cor:triple:tor:deg:proj:e:D:prim}
If $D$ is $(1,2)$-primitive then for all $e\in \Z$ such that $e^2 < D, \; e^2 \equiv D \; [8]$ we have
\begin{equation}\label{eq:triple:tor:deg:proj:e:D:prim}
\deg \pi_0^{(e)} =\sigma_1(\frac{D-e^2}{8}).
\end{equation}
\end{Corollary}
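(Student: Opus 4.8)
The plan is to reduce the statement to a purely arithmetic count using Lemma~\ref{lm:m:D:e:equals:deg:proj:1:e}, which already gives $\deg \pi_0^{(e)} = \#\Pcal_{D,e}(0^3)$. Thus it suffices to enumerate the triples $(a,b,d)\in\Z^3$ with $(a,b,d,e)\in\Pcal_D(0^3)$, that is, $a>0$, $d>0$, $0\leq b < a$, $ad=\frac{D-e^2}{8}$, and $\gcd(a,b,d,e)=1$. Writing $N:=\frac{D-e^2}{8}$ (a positive integer, since $e^2<D$ and $e^2\equiv D\;[8]$), I would first observe that, were the coprimality condition absent, the count is immediate: every ordered factorization $N=ad$ with $a,d>0$ contributes exactly $a$ admissible values of $b$, namely $b\in\{0,1,\dots,a-1\}$, so the total would be $\sum_{a\,|\,N}a=\sigma_1(N)$.

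The crux of the argument, and the only place where the hypothesis that $D$ is $(1,2)$-primitive is used, is to show that the condition $\gcd(a,b,d,e)=1$ is automatic under this hypothesis, hence imposes no further constraint. Here I would argue by contradiction: suppose $r:=\gcd(a,b,d,e)>1$. Since $r\mid a$ and $r\mid d$, we get $r^2\mid ad=N$; combined with $r\mid e$ this yields $r^2\mid e^2$ and $r^2\mid 8N$, hence $r^2\mid D=e^2+8N$. Setting $D':=D/r^2$, one computes
$$
D'=(e/r)^2+8(N/r^2),
$$
and since $(e/r)^2$ is a perfect square it is congruent to $0,1$, or $4$ modulo $8$, while $8(N/r^2)\equiv 0\;[8]$; therefore $D'\equiv 0,1,4\;[8]$. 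This exhibits $D=r^2D'$ with $r>1$ and $D'\equiv 0,1,4\;[8]$, contradicting the $(1,2)$-primitivity of $D$.

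Once this is established, the coprimality condition may be dropped and the elementary count from the first paragraph applies verbatim, giving $\#\Pcal_{D,e}(0^3)=\sum_{a\,|\,N}a=\sigma_1(N)=\sigma_1\bigl(\frac{D-e^2}{8}\bigr)$, which is precisely the asserted formula. I expect the only delicate point to be the passage $r^2\mid N$ (rather than merely $r\mid N$): it is exactly this divisibility by the square, coming from the fact that $r$ divides both $a$ and $d$ and not merely their product, that lets the factor of $8$ be absorbed cleanly and forces $D'\equiv 0,1,4\;[8]$. Everything else is routine bookkeeping of divisors.
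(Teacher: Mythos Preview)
Your proposal is correct and follows essentially the same approach as the paper: both reduce to Lemma~\ref{lm:m:D:e:equals:deg:proj:1:e}, then show the coprimality condition is vacuous under $(1,2)$-primitivity by producing a factorization $D = r^2 D'$ with $D' \equiv 0,1,4\;[8]$, and finally perform the divisor count $\sum_{a\mid N} a = \sigma_1(N)$. The only cosmetic difference is that the paper proves the slightly stronger claim $\gcd(a,d,e)=1$ (without $b$), which immediately forces $\gcd(a,b,d,e)=1$ for every $b$; your version with $r=\gcd(a,b,d,e)$ works just as well since $r\mid a$ and $r\mid d$ still give $r^2\mid ad$.
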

\begin{proof}
Given $e$ and $D$, if $(a,b,d,e) \in \Pcal_D(0)$ then we have $ad=(D-e^2)/8$. Thus $a \, | \, (D-e^2)/8$ and $d$ is uniquely determined by $a$. We claim that $\gcd(a,d,e)=1$. Indeed, let $k=\gcd(a,d,e)$. Assume that $k>1$. Let $(a_1,d_1,e_1):=(a/k,d/k,e/k)$. We then have
$$
D=e^2+8ad=k^2(e_1^2+8a_1d_1)
$$
which contradicts the hypothesis that $D$ is $(1,2)$-primitive. Therefore we must have $\gcd(a,d,e)=1$. As a consequence, for  all $b \in \{0,1,\dots,a-1\}$, we have $(a,b,d,e) \in \Pcal_D(0^3)$. It follows from Lemma~\ref{lm:m:D:e:equals:deg:proj:1:e} that we have
$$
\deg \pi_0^{(e)} = \#\Pcal_{D,e}(0^3)=\sum_{a \, |  \, (D-e^2)/8} a =\sigma_1(\frac{D-e^2}{8}),
$$
which proves the corollary.
\end{proof}
Our goal now is to provide a closed formula to compute $\deg \pi_0^{(e)}$ in the general case. 
For all $(e,\ell,m) \in \Pcal^*_{D}(0^3)$ denote by $\pi_0^{(e,\ell,m)}: \Omega E_{D,(e,\ell,m)}(0^3) \to \Omega\cM_{1,1}$   the restriction of $\pi_0$ to $\Omega E_{D,(e,\ell,m)}(0^3)$. We will also denote by $\pi_0^{(e,\ell,m)}$ the induced projection from $\Pb\Omega_{D,(e,\ell,m)}(0^3)$ onto $\cM_{1,1}$.
It follows from the argument of \cite[Th. 2.1]{McM:spin} that $\Pb\Omega E_{D,(e,\ell,m)}(0^3)$ is isomorphic to $\Hbb/\Gamma_0(m)$, where
$$
\Gamma_0(m)=\{\left(\begin{smallmatrix} a & b \\ c & d \end{smallmatrix} \right)\in \SL(2,\Z), \; c\equiv 0 \; [m] \}.
$$
It is a well known fact that
$$
[\SL(2,\Z):\Gamma_0(m)]=m\prod_{\substack{ p \, | \, m \\ p \, {\rm prime}}}\left(1+\frac{1}{p}\right)
$$
(see for instance \cite[\textsection 4]{Miyake}). We thus have the following
\begin{Lemma}\label{lm:deg:triple:tor:proj:1:e:l:m}
	We have
	$$
	\deg \pi_0^{(e,\ell,m)}= m\prod_{\substack{ p \, | \, m \\ p \, {\rm prime}}}\left(1+\frac{1}{p}\right) =: c(m).
    $$
\end{Lemma}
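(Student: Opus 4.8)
The plan is to identify the space $\Pb\Omega E_{D,(e,\ell,m)}(0^3)$ explicitly with a modular curve and then invoke the standard index formula. By Lemma~\ref{lm:triple:tori:prot:same:orbit} and Corollary~\ref{cor:triple:tori:components}, the locus $\Omega E_{D,(e,\ell,m)}(0^3)$ is a single $\GL^+(2,\R)$-orbit, namely the orbit of the prototypical triple of tori associated with the prototype $(\ell,0,\ell m,e) \in \Pcal_D(0^3)$. First I would describe a point of $\Pb\Omega E_{D,(e,\ell,m)}(0^3)$ via the pair of lattices $(\Lambda_0,\Lambda_1)$ as in Lemma~\ref{lm:triples:tori:eigenform:char}: here $\lambda=\frac{e+\sqrt{D}}{2}$ is fixed, so $\Lambda_1':=\lambda\cdot\Lambda_1$ is a sublattice of $\Lambda_0$ of index $K=\ell^2 m$ with $\rho(\Lambda_0,\Lambda_1')=\ell$. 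Projectivizing removes the scaling action of $\C^*$, so the data reduces to the $\SL(2,\Z)$-orbit of the inclusion $\Lambda_1'\hookrightarrow\Lambda_0$.

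The key computational step is to interpret this moduli problem as $\Hbb/\Gamma_0(m)$. Since $\rho(\Lambda_0,\Lambda_1')=\ell$, I can write $\Lambda_1' = \ell\cdot\Lambda_1''$ where $\Lambda_1''\subset\Lambda_0$ is a \emph{primitive} sublattice of index $m$ (that is, $\Lambda_0/\Lambda_1''$ is cyclic of order $m$). The stabilizer condition $\gcd(e,\ell)=1$ from $(\Pcal^*_D(0^3))$ guarantees that this factorization is canonical and compatible with the $\SL(2,\Z)$-action, so that $\Pb\Omega E_{D,(e,\ell,m)}(0^3)$ is identified with the space of pairs (elliptic curve, cyclic subgroup of order $m$) up to isomorphism. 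This is precisely the moduli interpretation of the classical modular curve, so the space is isomorphic to $\Hbb/\Gamma_0(m)$; this is exactly the identification established by McMullen's argument in \cite[Th. 2.1]{McM:spin}, which I would cite. Under this identification the projection $\pi_0^{(e,\ell,m)}$ onto $\cM_{1,1}\simeq\Hbb/\SL(2,\Z)$, which simply forgets the sublattice $\Lambda_1''$ and remembers only $\Lambda_0$, becomes the natural covering $\Hbb/\Gamma_0(m)\to\Hbb/\SL(2,\Z)$.

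The degree of this covering is the index $[\SL(2,\Z):\Gamma_0(m)]$, since the covering $\Hbb/\Gamma_0(m)\to\Hbb/\SL(2,\Z)$ has degree equal to this index as orbifold covers. The index is given by the well-known formula
$$
[\SL(2,\Z):\Gamma_0(m)]=m\prod_{\substack{ p \, | \, m \\ p \, {\rm prime}}}\left(1+\frac{1}{p}\right),
$$
for which I would cite \cite[\textsection 4]{Miyake}. Combining this with the identification above yields $\deg \pi_0^{(e,\ell,m)}=c(m)$, as claimed.

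The main obstacle is the careful verification that the projectivized eigenform locus $\Pb\Omega E_{D,(e,\ell,m)}(0^3)$ is genuinely isomorphic (as an orbifold, not merely as a set) to $\Hbb/\Gamma_0(m)$, with the $\Gamma_0(m)$-structure arising correctly from the condition $\rho(\Lambda_0,\Lambda_1)=\ell$ together with $\gcd(e,\ell)=1$. In particular, one must check that the orbifold points of the two spaces match, so that the covering degree equals the group index rather than being corrected by ramification or stabilizer discrepancies; this is where the coprimality hypothesis $\gcd(e,\ell)=1$ does the essential work of ensuring the action is free of unexpected extra automorphisms. Since this identification is precisely what McMullen's argument provides, the remainder of the proof is the routine invocation of the index formula.
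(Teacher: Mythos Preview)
Your proposal is correct and follows essentially the same approach as the paper: identify $\Pb\Omega E_{D,(e,\ell,m)}(0^3)$ with $\Hbb/\Gamma_0(m)$ via McMullen's argument \cite[Th.~2.1]{McM:spin}, and then read off the degree as the index $[\SL(2,\Z):\Gamma_0(m)]$ using the standard formula from \cite[\textsection 4]{Miyake}. You supply somewhat more detail on the lattice-theoretic reduction (factoring out $\ell$ to pass to a primitive index-$m$ sublattice), but the underlying strategy is identical.
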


Corollary~\ref{cor:triple:tori:components} then implies
\begin{Corollary}\label{cor:triple:tori:proj:1:deg:e}
For all $e \in \Z$ such that $e^2 < D$, $e^2\equiv D \; [8]$, let us write $(D-e^2)/8= f^2m$, where $f,m\in \N$, $m$  square-free.
We then have
\begin{equation}\label{eq:trip:tori:proj:1:deg:e}
\deg \pi_0^{(e)}=\sum_{\substack{r \, | \, f\\ \gcd(r,e)=1}}c(\frac{D-e^2}{8r^2}) = m_D(e).
\end{equation}
\end{Corollary}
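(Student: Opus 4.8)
The plan is to combine the connected-component decomposition of Corollary~\ref{cor:triple:tori:components} with the component-wise degree computation of Lemma~\ref{lm:deg:triple:tor:proj:1:e:l:m}, and then to carry out a purely arithmetic reindexing of the resulting sum. The second equality in the statement is the definition of $m_D(e)$, so the whole content lies in the first.

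First I would fix $e$ in Corollary~\ref{cor:triple:tori:components} to get
\[
\Pb\Omega E_{D,e}(0^3) = \bigsqcup_{\substack{(\ell,m) \\ (e,\ell,m) \in \Pcal^*_D(0^3)}} \Pb\Omega E_{D,(e,\ell,m)}(0^3),
\]
which is legitimate because, by Lemma~\ref{lm:triples:tori:eigenform:char}, the integer $\ee(X,\omega)$ is a well-defined $\GL^+(2,\R)$-orbit invariant, so $\Omega E_{D,e}(0^3)$ is exactly the union of those orbits with invariant $e$. Each piece is a finite cover of $\cM_{1,1}\simeq\Hbb/\SL(2,\Z)$ and $\pi_0^{(e)}$ restricts to $\pi_0^{(e,\ell,m)}$ on it; since the preimage of a generic point of $\cM_{1,1}$ under $\pi_0^{(e)}$ is the disjoint union of its preimages in the separate components, the degree is additive, whence
\[
\deg \pi_0^{(e)} = \sum_{\substack{(\ell,m) \\ (e,\ell,m) \in \Pcal^*_D(0^3)}} \deg \pi_0^{(e,\ell,m)} = \sum_{\substack{(\ell,m) \\ (e,\ell,m) \in \Pcal^*_D(0^3)}} c(m),
\]
the last equality being Lemma~\ref{lm:deg:triple:tor:proj:1:e:l:m}.

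Next I would reindex. Writing $N:=(D-e^2)/8$, the conditions defining $\Pcal^*_D(0^3)$ at fixed $e$ reduce to $\ell>0$, $\gcd(e,\ell)=1$, and $\ell^2 m = N$ with $m>0$; thus $m=N/\ell^2$ is determined by $\ell$, and the admissible $\ell$ are exactly those with $\ell^2\mid N$ and $\gcd(e,\ell)=1$. The arithmetic crux is to rewrite $\ell^2\mid N$ via the factorization $N=f^2 q$ with $q$ square-free: for each prime $p$ the inequality $2v_p(\ell)\le 2v_p(f)+v_p(q)$ with $v_p(q)\in\{0,1\}$ forces $v_p(\ell)\le v_p(f)$, so $\ell^2\mid N$ is equivalent to $\ell\mid f$. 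Substituting $m=N/\ell^2=(D-e^2)/(8\ell^2)$ and renaming $\ell$ to $r$ turns the sum into
\[
\deg \pi_0^{(e)} = \sum_{\substack{r \mid f \\ \gcd(r,e) = 1}} c\!\left(\frac{D-e^2}{8r^2}\right) = m_D(e).
\]

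Since this is essentially bookkeeping layered on the two cited results, I do not expect any analytic or geometric obstacle. The two points requiring genuine care are the elementary equivalence $\ell^2\mid N \iff \ell\mid f$, which relies squarely on $q$ being square-free, and the verification that $\deg\pi_0^{(e)}$ truly splits as a sum over connected components — that is, that distinct triples $(e,\ell,m)$ index disjoint orbits. This last point is exactly the injectivity content of Lemma~\ref{lm:triple:tori:prot:same:orbit} already invoked in the proof of Corollary~\ref{cor:triple:tori:components}, so I would simply cite it rather than reprove it.
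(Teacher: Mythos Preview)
Your proof is correct and follows precisely the approach the paper has in mind: the paper introduces this corollary with the phrase ``Corollary~\ref{cor:triple:tori:components} then implies,'' and you have simply spelled out the bookkeeping that combines the component decomposition with Lemma~\ref{lm:deg:triple:tor:proj:1:e:l:m}. The only nontrivial arithmetic step is the equivalence $\ell^2\mid N \Leftrightarrow \ell\mid f$, which you handle correctly via $p$-adic valuations.
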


\subsection{Proof of Proposition~\ref{prop:Euler:char:W:D:0}}\label{subsec:prf:euler:char:W:D:0}
\begin{proof}
From Lemma~\ref{lm:triple:tori:prot:same:orbit} and Corollary~\ref{cor:triple:tori:proj:1:deg:e}, we have	
\begin{align*}
\chi(W_D(0^3)) & =\chi(\Pb\Omega E_D(0^3)) = \sum_{\substack{-\sqrt{D} < e < \sqrt{D} \\ e^2 \equiv D \; [8]} }\chi(\Pb \Omega E_{D,e}(0^3)) \\
& = -\frac{1}{6}\cdot \sum_{\substack{-\sqrt{D} < e < \sqrt{D} \\ e^2 \equiv D \; [8]} }\deg \pi_0^{(e)} = \frac{-1}{6}\cdot \sum_{\substack{-\sqrt{D} < e < \sqrt{D} \\ e^2 \equiv D \; [8]} } m_D(e) \quad \hbox{(since $\chi(\cM_{1,1})=-1/6$)}.
\end{align*}
\end{proof}

\subsection{Integration of $\Theta$ over the spaces of triples of tori eigenforms}\label{subsec:int:Theta:on:T:2:0:a:1}
Let $\cS_{2,0}^{a}(e)$ denote the preimages of $\Pb\Omega E_{D,e}(0^3)$ in $\cS_{2,0}^a  \subset \partial\hat{\cX}_D$.
Let $\cT_{2,0}^{a,1}(e)$ be the  preimage of  $\cS_{2,0}^a(e)$ in $\cT_{2,0}^{a,1}$.
Note that $\ol{\cT}_{2,0}^{a,1}(e)$ is a divisor in $\tilde{\cC}_D$.

\begin{Proposition}\label{prop:int:Theta:on:T:a:1:2:0:e}
We have
\begin{equation}\label{eq:int:Theta:T:a:1:2:0:e}
\langle [\Theta],[\ol{\cT}_{2,0}^{a,1}(e)]\rangle = -2\pi\cdot4!\cdot\frac{e+\sqrt{D}}{\sqrt{D}}\cdot \chi(\Pb\Omega E_{D,e}(0^3)) = 8\pi\cdot\frac{e+\sqrt{D}}{\sqrt{D}}\cdot \deg\pi_0^{(e)}.
\end{equation}
\end{Proposition}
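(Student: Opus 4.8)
<br>

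The goal is to compute $\langle[\Theta],[\ol{\cT}_{2,0}^{a,1}(e)]\rangle$, the intersection of the current $[\Theta]$ with the closure of the divisor $\cT_{2,0}^{a,1}(e)$, which sits over the stratum $\cS_{2,0}^a(e)$ in $\partial_1\hXD$. Let me think about what ingredients are available and how they fit together.

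The divisor $\ol{\cT}_{2,0}^{a,1}$ was defined as the locus intersecting each fiber $C_\xx$ (for $\xx \in \cS_{2,0}^a$) in the invariant elliptic component. By Proposition~\ref{prop:bdry:str:gp:I}, the points of $\cS_{2,0}^a$ are smooth points of $\hXD$ and the stratum is a codimension-one finite cover of $W_D(0^3)$. The relevant section-type structure is that $\cT_{2,0}^{a,1}$ maps to $\cS_{2,0}^a$ with fibers being invariant elliptic curves — but crucially, $\ol{\cT}_{2,0}^{a,1}$ is a divisor, not a section, so I cannot directly apply Proposition~\ref{prop:int:Theta:section}. Instead I expect to apply Corollary~\ref{cor:int:Theta:divisor}, which gives $\langle[\Theta],[\tilde\pi^*\cD]\rangle = 4\int_{\cD_0}\imath\vartheta$ for a divisor $\cD$ in $\hXD$. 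So the first step is to recognize that $\ol{\cT}_{2,0}^{a,1}$ is (a component of) the pullback $\tilde\pi^*(\ol{\cS}_{2,0}^a)$, or at least that its intersection with $[\Theta]$ can be reduced to an integration of the Hodge curvature over the base stratum $\cS_{2,0}^a(e)$.

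So let me lay out the plan. First I would note that the restriction of $\cT_{2,0}^{a,1}(e)$ to a generic fiber is the invariant elliptic component $C'_1$, on which $\omega_\xx$ is \emph{nowhere vanishing}. This is important: unlike $\ol{\cT}^0_{1,0}$, the differential does not vanish identically here, so $\Theta$ does not automatically vanish on this divisor. I expect that the correct reduction is via the pushforward formula of Theorem~\ref{th:Theta:pushforward}, giving $\langle[\Theta],[\ol{\cT}_{2,0}^{a,1}(e)]\rangle = 4\int_{\cS_{2,0}^a(e)}\imath\vartheta$ once one verifies (using the local description from \textsection\ref{subsec:geom:bdry:str:gp:I}) that $\ol{\cT}_{2,0}^{a,1}(e)$ behaves like a section over $\cS_{2,0}^a(e)$ meeting fibers at smooth points and that the contribution from $\partial_\infty$ vanishes by Proposition~\ref{prop:Theta:inters:infty:str:enhanced}. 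Second, by Proposition~\ref{prop:int:Hodge:curv:on:Teich:curves} applied to the component $\cS_{2,0}^a(e)$ (a finite cover of $\Pb\Omega E_{D,e}(0^3)$), the integral $\int_{\cS_{2,0}^a(e)}\imath\vartheta$ equals $-\pi\chi(\cS_{2,0}^a(e))$. However, the curvature here is the Hodge norm of $\omega_\xx$ on the \emph{full} curve $C_\xx$, whereas the natural hyperbolic structure on $W_D(0^3)$ comes from the Hodge norm on the triple of tori alone; by Lemma~\ref{lm:S20:a:cover:triple:tori} the covering $\Psi_D$ has degree $4!=24$, so $\chi(\cS_{2,0}^a(e)) = 24\,\chi(\Pb\Omega E_{D,e}(0^3))$.

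The main obstacle, and the step requiring the most care, is the appearance of the factor $\frac{e+\sqrt{D}}{\sqrt{D}}$. This is \emph{not} a topological quantity: it records the ratio of areas between the curvature form computed from the Hodge norm on the invariant component $C'_1 \simeq X_0$ versus the full triple of tori. Indeed, the area ratio $\Aa(X_0,\omega_0)/\Aa(X,\omega) = \frac{e+\sqrt{D}}{2\sqrt{D}}$ was computed precisely in the proof of Lemma~\ref{lm:triples:tori:eigenform:char}. When I integrate $\imath\vartheta$ over $\cS_{2,0}^a(e)$, the curvature $-\imath\partial\bar\partial\ln\Ab$ with $\Ab = \Aa(C_\xx,\omega_\xx)$ differs from the one giving the standard hyperbolic volume on $W_{D,e}(0^3)$ — parametrized via the component $X_0$ — precisely by this area-rescaling constant, which does not affect $\partial\bar\partial\ln$ up to the constant factor that emerges in the local coordinate computation. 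Carefully redoing the local computation of Proposition~\ref{prop:int:Hodge:curv:on:Teich:curves} with the coordinate adapted to $C'_1$ rather than the full surface, I expect $\int_{\cS_{2,0}^a(e)}\imath\vartheta = -\pi\cdot\frac{e+\sqrt{D}}{\sqrt{D}}\cdot 24\cdot\chi(\Pb\Omega E_{D,e}(0^3))$. Combining with the factor $4$ from Theorem~\ref{th:Theta:pushforward}, the factor $2$ discrepancy between $-2\pi\cdot 4!$ and $4\cdot(-\pi)\cdot 24$ must be reconciled by tracking whether the curvature integral over the triple-of-tori locus uses $\Ab$ versus $2\Ab$ (the area-two issue from the Prym polarization of type $(1,2)$), which is exactly where I would need to verify the normalization of $\vartheta$ most carefully. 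Once the constants are matched, formula~\eqref{eq:int:Theta:T:a:1:2:0:e} follows, and the second equality is immediate from $\chi(\Pb\Omega E_{D,e}(0^3)) = -\frac16\deg\pi_0^{(e)}$ (since $\chi(\cM_{1,1})=-1/6$) established in \textsection\ref{subsec:triple:tori:proj:1}.
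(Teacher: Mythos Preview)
Your approach has a genuine gap at the step where you try to produce the factor $\frac{e+\sqrt{D}}{\sqrt{D}}$. You attempt to apply the pushforward formula $\tilde\pi_*[\Theta]=4\imath\vartheta$ (equivalently Corollary~\ref{cor:int:Theta:divisor}) to $\ol{\cT}_{2,0}^{a,1}(e)$, but that corollary applies only to divisors of the form $\tilde\pi^*\cD$, and $\ol{\cT}_{2,0}^{a,1}(e)$ is \emph{not} a pullback: it is one component of the fiber over $\ol{\cS}_{2,0}^a(e)$, the invariant elliptic curve $X_0$, whereas the full pullback also contains the $\Pb^1$ component and the two exchanged tori. Nor is it a section, since it meets each fiber in a whole elliptic curve rather than a point, so Proposition~\ref{prop:int:Theta:section} does not apply either. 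Your subsequent idea---to recover the area ratio by ``redoing the local computation of Proposition~\ref{prop:int:Hodge:curv:on:Teich:curves} with the coordinate adapted to $C'_1$''---cannot work: the curvature $\vartheta=-\partial\bar\partial\ln\Ab$ is unchanged when $\Ab$ is multiplied by a positive constant, so no such rescaling produces an extra factor. You in fact notice this (``does not affect $\partial\bar\partial\ln$''), which leaves the origin of the factor unaccounted for and forces the vague appeal to polarization at the end.

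The paper's argument is more direct. Since $\Theta$ extends smoothly across the group~I strata (Corollary~\ref{cor:Theta:extend:str:gp:I}), one simply integrates $\Theta$ over $\cT_{2,0}^{a,1}(e)$ and applies Fubini using the product structure $\Theta=(\imath\vartheta)\wedge\bigl(\tfrac{\imath}{2}\partial\bar\partial\varphi_c\bigr)$. The \emph{fiber} integral of the second factor over $X_0$ is the key step: on $X_0$ one has $dP_{|X_0}=-2\omega_0$, so
\[
\frac{\imath}{2}\int_{X_0}\frac{dP\wedge d\bar P}{\|\xi_\pp\|^2}
=\frac{4\,\Aa(X_0,\omega_0)}{\Aa(X,\omega)}
=\frac{2(e+\sqrt{D})}{\sqrt{D}},
\]
using the area ratio from Lemma~\ref{lm:triples:tori:eigenform:char}. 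This is exactly where the factor lives---in the fiber integral, not in a modification of $\vartheta$. What remains is $\frac{2(e+\sqrt{D})}{\sqrt{D}}\int_{\cS_{2,0}^a(e)}\imath\vartheta$, and now Proposition~\ref{prop:int:Hodge:curv:on:Teich:curves} and Lemma~\ref{lm:S20:a:cover:triple:tori} give $-\pi\cdot 4!\cdot\chi(\Pb\Omega E_{D,e}(0^3))$ with no spurious factor of $2$ to explain away. Your final step, converting to $\deg\pi_0^{(e)}$ via $\chi(\cM_{1,1})=-1/6$, is correct.
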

\begin{proof}
Consider a point $\pp \simeq (C_\pp,p_1,\dots,p_5,p'_5,\tau, [\xi_\pp])$ in  $\cS_{2,0}^a$.  Let $(X,[\omega]) =\{(X_j,x_j,[\omega_j]), \; j=0,1,2\}\in \Pb\Omega E_{D}(0)$ be the image of $\pp$ by $\Psi_D$. By definition, we have
\begin{itemize}
\item[$\bullet$] $C_\pp$ is the stable curve formed by $X_0,X_1,X_2$ and an additional component  $C_0\simeq \Pb^1$ where each $x_j$ is a node between $X_j$ and $C_0$,
\item[$\bullet$] $\xi_\pp$ is the Abelian differential on $C_\pp$ that vanishes identically on $C_0$ and equals $\omega_j$ on $X_j$.
%
\end{itemize}
The fiber $\tilde{\pi}^{-1}(\{\pp\}) \subset \tilde{\cC}_D$ can be identified with the curve $C_\pp$, and its intersection with the divisor $\cT_{2,0}^{a,1}$ is precisely  the elliptic curve $X_0$ considered as an irreducible component of $C_\pp$.
Since $\Theta$ is smooth on $\cT_{2,0}^{a,1}$, we have
\begin{align*}
\langle [\Theta],[\ol{\cT}_{2,0}^{a,1}(e)]\rangle &= \int_{\cT_{2,0}^{a,1}(e)}\Theta =\int_{\cS_{2,0}^a(e)}\left(\frac{\imath}{2}\int_{C_\pp\cap\cT_{2,0}^{a,1}}\frac{dP\wedge d\bar{P}}{||\xi_\pp||^2} \right)(\imath\vartheta(\pp)),
\end{align*}
where $P$ is a function whose restriction to $X_0=C_\pp\cap\cT_{2,0}^{a,1}$ is given by $x \mapsto \int_x^{\tau(x)}\omega_0$. One readily checks that $\left(dP\wedge d\bar{P}\right)_{\left|X_0\right.}=4\omega_0\wedge\ol{\omega}_0$.
Hence
\begin{align*}
\frac{\imath}{2}\int_{X_0}\frac{dP\wedge d\bar{P}}{||\xi_\pp||^2} & = \frac{4}{\Aa(X,\omega)}\cdot\frac{\imath}{2}\cdot\int_{X_0}\omega_0\wedge\ol{\omega}_0 = 4\cdot\frac{\Aa(X_0,\omega_0)}{\Aa(X,\omega)}\\
& =4\cdot \frac{e+\sqrt{D}}{2\sqrt{D}}=\frac{2(e+\sqrt{D})}{\sqrt{D}}.
\end{align*}
It follows
\begin{align*}
\langle [\Theta],[\ol{\cT}_{2,0}^{a,1}(e)]\rangle & = \int_{\cS_{2,0}^a(e)}\left(\frac{\imath}{2}\int_{C_\pp\cap \cT_{2,0}^{a,1}} \frac{dP\wedge d\bar{P}}{||\xi_\pp||^2} \right)(\imath\vartheta(\pp))=\frac{2(e+\sqrt{D})}{\sqrt{D}}\int_{\cS_{2,0}^a(e)}\imath\vartheta.
\end{align*}
By Proposition~\ref{prop:int:Hodge:curv:on:Teich:curves}, we have that
$$
\int_{\cS_{2,0}^a(e)}\imath\vartheta = -\pi\cdot\chi(\cS_{2,0}^a(e)).
$$
Therefore,
\begin{align*}
\langle [\Theta],[\ol{\cT}_{2,0}^{a,1}(e)]\rangle &=-2\pi\cdot \frac{e+\sqrt{D}}{\sqrt{D}}\cdot \chi(\cS_{2,0}^a(e))\\
&= -2\pi\cdot\frac{e+\sqrt{D}}{\sqrt{D}}\cdot 4!\cdot \chi(\Pb\Omega E_{D,e}(0^3)) \quad \hbox{(by Lemma~\ref{lm:S20:a:cover:triple:tori} )} \\
&= -2\pi\cdot 4!\cdot\frac{e+\sqrt{D}}{\sqrt{D}}\cdot\deg \pi_0^{(e)}\cdot \chi(\cM_{1,1})\\
&= 8\pi\cdot\frac{e+\sqrt{D}}{\sqrt{D}}\cdot \deg\pi_0^{(e)} \quad  \hbox{(since $\chi(\cM_{1,1})=\chi(\Hbb/\SL(2,\Z))=-1/6$)}.
\end{align*}
and the proposition is proved.
\end{proof}

\subsection*{Proof of Theorem~\ref{th:int:Theta:T:2:0:a:1}}
\begin{proof}
It follows from \eqref{eq:trip:tori:proj:1:deg:e} that $\deg\pi_0^{(e)}=\deg \pi_0^{(-e)}$, for all integers $e$ such that $-\sqrt{D} < e < \sqrt{D}$ and $e^2 \equiv D \, [8]$. Therefore, $\chi(\Pb\Omega E_{D,e}(0^3)) = \chi(\Pb\Omega E_{D,-e}(0^3))$. Proposition~\ref{prop:int:Theta:on:T:a:1:2:0:e} then  implies that

\begin{align*}
\langle [\Theta],[\ol{\cT}_{2,0}^{a,1}]\rangle & = -2\pi\cdot 4!\cdot \sum_{\substack{-\sqrt{D} < e < \sqrt{D}\\ e^2 \equiv D \, [8] }} \frac{e+\sqrt{D}}{\sqrt{D}}\cdot\chi(\Pb\Omega E_{D,e}(0^3)) \\
& = -2\pi\cdot 4!\cdot \sum_{\substack{-\sqrt{D} < e < \sqrt{D}\\ e^2 \equiv D \, [8] }}\chi(\Pb\Omega E_{D,e}(0^3))\\
& = -48\pi\cdot\chi(W_{D}(0^3)).
\end{align*}
The theorem is then proved.
\end{proof}

\subsection{Case $D\equiv 1 \; [8]$}\label{subsec:triple:tori:D:1:mod:8}
In the case $D\equiv 1 \, [8]$, it was shown  in \cite{LN:components} that $\Omega E_D(2,2)^\odd$ has two components that we will denote by $\Pb \Omega E_{D+}(2,2)^\odd$ and $\Pb \Omega E_{D-}(2,2)^\odd$.  By convention the closure of $\Pb\Omega E_{D+}(2,2)^\odd$ (resp. of $\Pb\Omega E_{D-}(2,2)^\odd$) contains the triple of tori associated with the  prototype $(1,0,(D-1)/8,1)$ (resp. with the prototype $(1,0,(D-1)/8,-1)$) in $\Pcal_{D}(0^3)$ (cf. \textsection~\ref{subsec:triples:tori:eigenform:comp}).
Let $\hat{\cX}_{D\pm}$ be the closures of the preimages of $\Pb\Omega E_{D\pm}(2,2)^\odd$ in $\hat{\cX}_D$ respectively. Denote by  $\cS_{2,0}^{a+}$ (resp. $\cS_{2,0}^{a-}$) the intersection of $\cS_{2,0}^a$ with $\hat{\cX}_{D+}$ (resp. with $\hat{\cX}_{D-}$).
We start by
\begin{Lemma}\label{lm:triple:tori:same:comp:D:1:8}
Let $D \equiv 1 \; [8], \; D>9$, be a non-square discriminant. Let $(X,\omega)$ and $(X',\omega')$ be two triples of tori  with prototypes $\frakp:=(a,b,d,e)$ and $\frakp':=(a',b',d',e')$ in $\Pcal_{D}(0^3)$ respectively.
If $(X,\omega)$ and $(X',\omega')$ are contained  in the closure of same component of $\Omega E_D(2,2)^\odd$, then   $e'\equiv e \, [4]$.
\end{Lemma}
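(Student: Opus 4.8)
The plan is to exhibit $e \bmod 4$ as the value of the locally constant invariant distinguishing the two components, transported to the boundary stratum $\cS_{2,0}^a$ by the plumbing family of Remark~\ref{rk:triple:tori:plumb}.

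First, I would recall from \cite{LN:components} that for $D \equiv 1 \; [8]$ non-square the two components $\Omega E_{D+}(2,2)^\odd$ and $\Omega E_{D-}(2,2)^\odd$ are separated by a $\Z/2\Z$-valued spin (Arf-type) invariant $\mathrm{sp}$, which is locally constant and hence constant on each component. Since the integer $\ee(X,\omega)$ is canonically attached to a triple of tori by Lemma~\ref{lm:triples:tori:eigenform:char}, and the relation $D = e^2 + 8ad$ with $D \equiv 1 \; [8]$ forces $e$ to be odd, the residue $e \bmod 4 \in \{1,3\}$ is a well-defined invariant of $(X,\omega)$; the goal is to match it with $\mathrm{sp}$.

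Next, given a triple of tori $(X,\omega)$ with prototype $\frakp = (a,b,d,e) \in \Pcal_D(0^3)$, I would invoke the family $\varphi\colon \Delta_\eps \to \Omega\ol{\cX}_D$ of Lemma~\ref{lm:S20:a:cover:triple:tori} (equivalently the slit-and-reglue construction of Remark~\ref{rk:triple:tori:plumb}): for $t \in \Delta_\eps^*$ it produces a surface $(Z_t,\eta_t) \in \Omega E_D(2,2)^\odd$ degenerating to $(X,\omega)$ and carrying three homologous saddle connections of holonomy $t$ which cut it into the three slit tori. As $\Delta_\eps^*$ is connected and $\mathrm{sp}$ is locally constant, $\mathrm{sp}(Z_t,\eta_t)$ is independent of $t$; call it $\mathrm{sp}(\frakp)$. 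Moreover, if $(X,\omega)$ lies in the closure of $\Omega E_{D\pm}(2,2)^\odd$, then $(Z_t,\eta_t)$ lies in that same component for small $t$, so $\mathrm{sp}(\frakp)$ equals the spin of the corresponding component.

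The main step, and the principal obstacle, is the explicit evaluation of $\mathrm{sp}(\frakp)$. Here I would take the symplectic basis $(a_1,b_1,a_2,b_2)$ of $H_1(Z_t,\Z)^-$ adapted to the three tori (with $a_1,a_2$ built from core curves and $b_1,b_2$ from crossing arcs, as in the proof of Proposition~\ref{prop:cyl:dec:length:ratios:22}) and compute the Arf invariant of the $\Z/2\Z$ quadratic form given by the index function of $\eta_t$ along these curves. The expected outcome is that the contributions of the two off-central tori cancel, being exchanged by the Prym involution, and that the remaining expression reduces, via the relation $D = e^2 + 8ad$ and the reduction to orbit normal form afforded by Lemma~\ref{lm:triple:tori:prot:same:orbit} (the spin being a $\GL^+(2,\R)$-orbit invariant), to a function of $e \bmod 4$ alone which takes distinct values on $e \equiv 1$ and $e \equiv 3 \; [4]$. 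Granting this, the lemma is immediate: if $(X,\omega)$ and $(X',\omega')$ lie in the closure of the same component, then $\mathrm{sp}(\frakp) = \mathrm{sp}(\frakp')$, whence $e \equiv e' \; [4]$. The delicate point throughout is tracking how the winding numbers of the reglued slits enter the Arf computation, and it is precisely here that I would most likely borrow the spin formula of \cite{LN:components} rather than recompute it from scratch.
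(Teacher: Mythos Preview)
Your proposal outlines a plausible strategy, but the route the paper takes is markedly different and avoids the Arf computation entirely.

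The paper does \emph{not} invoke the spin invariant of \cite{LN:components}. Instead it works directly with the real-multiplication structure. For each prototype it writes the generator $T$ of $\cO_D$ as an explicit $4\times 4$ integer matrix in the symplectic basis $(\alpha_0,\beta_0,\alpha,\beta)$ of $H_1(X,\Z)^-$ adapted to the three tori, and observes from that matrix (together with $e$ being odd) that
\[
\langle Tu,v\rangle \equiv \langle u,v\rangle \pmod 2 \quad \text{for all } u,v \in H_1(Y,\Z)^-.
\]
It then passes to plumbed surfaces $(Y,\eta)$, $(Y',\eta')$ in $\Omega E_D(2,2)^\odd$ as you do. If these lie in the same component, a path built from $\GL^+(2,\R)$-segments and isoperiodic segments yields a symplectic isomorphism $\phi\colon H_1(Y,\Z)^-\to H_1(Y',\Z)^-$, and $S:=\phi^{-1}\circ T'\circ\phi$ lands in $\Z[T]$. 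Comparing eigenvalues gives $S-T=\tfrac{e'-e}{2}\cdot\Id$. Since both $T$ and $S$ satisfy the mod-$2$ congruence above, their difference annihilates the intersection form mod $2$, forcing $\tfrac{e'-e}{2}$ to be even.

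Your approach could in principle succeed, but as it stands it is a sketch rather than a proof: the ``main step'' --- the Arf computation on the plumbed surface --- is precisely the point you leave open, and appealing to \cite{LN:components} for the spin formula risks circularity, since the classification there already uses the prototype data. The paper's argument sidesteps this by replacing the $\Z/2$-valued topological invariant with an intrinsic mod-$2$ property of the endomorphism matrix, which is a one-line check once the matrix is written down.
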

\begin{proof}
Recall that by definition,
$(X_0,\omega_0)\simeq (\C/\Lambda_0, dz), \quad (X_1,\omega_1) \simeq (X_2,\omega_2) \simeq (\C/\Lambda, dz)$,
where $\Lambda_0=\lambda\cdot\Z+\imath\lambda\cdot\Z$, and $\Lambda=a\cdot\Z+ (b+\imath d)\cdot\Z$.
Let $\alpha_0$ and $\beta_0$ denote the elements of $H_1(X_0,\Z)$ that  correspond to  $\lambda$ and $\imath\lambda$ (as elements of  $\Lambda_0$) respectively.
For $j=1,2$, let $\alpha_j$ (resp. $\beta_j$) denote the element of $H_1(X_j,\Z)$ corresponding to $a\in \Lambda$ (resp. to $b+\imath d \in \Lambda$). 
Let $\alpha:=\alpha_1+\alpha_2, \; \beta:=\beta_1+\beta_2$.
Since the Prym involution $\tau$ satisfies $\tau_*\alpha_0=-\alpha_0, \; \tau_*\beta_0=-\beta_0$, and $\tau_*\alpha_1=-\alpha_2, \tau_*\beta_1=-\beta_2$, it follows that $\cB:=(\alpha_0,\beta_0,\alpha,\beta)$ is a symplectic basis of $H_1(X,\Z)^-$.
Let $T$ be the element of $\End(\Prym(X))$ which is given in the basis $\cB$ by the matrix
$$
T=\left(
\begin{array}{rrcc}
e & 0 & 2a & 2b\\
0 & e & 0 & 2d\\
d & -b & 0 & 0 \\
0  & a & 0 & 0
\end{array}
\right).
$$
Then $T$ is self-adjoint with respect to the intersection form on $H_1(X,\Z)^-$ and satisfies $\Z[T] \simeq \Ocal_D$ and $T^*=\lambda(\frakp)\cdot\omega$.
We construct the symplectic basis $\cB'=\{\alpha'_0,\beta'_0,\alpha',\beta'\}$ of $H_1(X',\Z)^-$ and $T'\in \End(\Prym(X'))$ in the same manner.	

Let $(Y,\eta)$ (resp. $(Y',\eta')$) be an element of $\Omega E_D(2,2)^{\odd}$ which is obtained from $(X,\omega)$ (resp. from $(X',\omega')$) by the construction described in Remark~\ref{rk:triple:tori:plumb} (see also \cite[\textsection 8A]{LN:finite}).
We can identify $\cB$ (resp. $\cB'$)  with a symplectic basis of $H_1(Y,\Z)^-$ (resp. of $H_1(Y',\Z)^-$), and $T$ (resp. $T'$) with a self-adjoint endomorphism of $\Prym(Y)$ (resp. of $\Prym(Y')$) satisfying $T^*\eta=\lambda(\frakp)\cdot\eta$ (resp. $T'{}^*\eta'=\lambda(\frakp')\cdot\eta'$).

By assumption, $(Y,\eta)$ and $(Y',\eta')$ belong to the same component of $\Omega E_{D}(2,2)^\odd$.
Since $\Omega E_D(2,2)^\odd$ is a rank one invariant subvarieties, there is a continuous path $\gamma$ from $(Y,\eta)$ to $(Y',\eta')$  in $\Omega E_D(2,2)^\odd$ which is a concatenation of finitely many  paths $\gamma=\gamma_1*\dots*\gamma_k$, where each of the $\gamma_i$'s is either contained in a $\GL^+(2,\R)$-orbit, or in an isoperiodic leaf (equivalently, a leaf of the kernel foliation). As a consequence, there is an isomorphism $\phi: H_1(Y,\Z)^- \to H_1(Y',\Z)^-$ such that $\phi^*$ maps $\Span(\Re(\eta'), \Im(\eta'))$ on to $\Span(\Re(\eta),\Im(\eta))$ (see \cite[Th. 4.1]{LN:components} for more details).
It follows that $S:=\phi^{-1}\circ T' \circ\phi$ satisfies $S^*\eta=\lambda(\frakp')\cdot\eta$, and  we have $S \in \Z[T]$.

Recall that the map that associates to $R\in\Z[T]$ the eigenvalue $\lambda(R) \in \R$ of $R$ on the line $\C\cdot\eta$, that is $R^*\eta=\lambda(R)\cdot\eta$, is an isomorphism from $\Z[T]$ onto $\Ocal_D$.
Since
$$
(S-T)^*\eta=(\lambda(\frakp')-\lambda(\frakp))\cdot \eta =\frac{e'-e}{2}\cdot\eta
$$
we must have $S-T=\frac{e'-e}{2}\cdot \Id_4$ (note that both $e$ and $e'$ are odd numbers).

We now claim that $\frac{e'-e}{2}$ is even. To see this we notice that the endomorphisms $T$ and $T'$ satisfy the following property
$$
\langle Tu,v\rangle \equiv \langle u,v\rangle \mod 2, \quad \forall u,v \in H_1(Y,\Z)^-
$$
and
$$
\langle T'u',v'\rangle \equiv \langle u',v'\rangle \mod 2, \quad \forall u',v' \in H_1(Y',\Z)^-.
$$
As a consequence
$$
\langle(S-T)u,v\rangle = \frac{e'-e}{2}\cdot\langle u,v\rangle \equiv 0 \mod 2, \quad \forall u,v \in H_1(Y,\Z)^-.
$$
Thus $\frac{e'-e}{2}$ must be an even number. This completes the proof of the lemma.
\end{proof}

\begin{Corollary}\label{cor:triple:tor:in:Omg:E:D:2:2:D:odd}
If $e\equiv 1\, [4]$ then  $\cS_{2,0}^a(e)\subset \cS_{2,0}^{a+}$, and if $e \equiv -1 \, [4]$ then  $\cS_{2,0}^a(e) \subset \cS_{2,0}^{a-}$.
\end{Corollary}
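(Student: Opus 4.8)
The plan is to deduce the corollary directly from Lemma~\ref{lm:triple:tori:same:comp:D:1:8}, combined with the sign convention of \textsection\ref{subsec:triple:tori:D:1:mod:8} that fixes which prototype sits in the closure of which component. First I would record the arithmetic input. Any triple of tori eigenform lying over $\Pb\Omega E_{D,e}(0^3)$ comes, after acting by $\GL^+(2,\R)$ (Proposition~\ref{prop:triple:tori:eigen:form}), from a prototype $(a,b,d,e)\in\Pcal_D(0^3)$ with $D=e^2+8ad$. Since $D\equiv 1\,[8]$, this forces $e$ to be odd, so its residue modulo $4$ is either $1$ or $3\equiv -1$. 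Thus the conditions $e\equiv 1\,[4]$ and $e\equiv -1\,[4]$ partition the admissible values of $e$, and they are genuinely distinct because $1\not\equiv -1\,[4]$.

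Next I would make the translation between boundary strata and triples of tori precise. By definition $\hat{\cX}_{D\pm}$ is the closure in $\hXD$ of the preimage of $\Pb\Omega E_{D\pm}(2,2)^\odd$, and $\cS_{2,0}^{a\pm}=\cS_{2,0}^a\cap\hat{\cX}_{D\pm}$. Because $\Pb\Omega E_D(2,2)^\odd$ is the disjoint union of its two components and closure commutes with finite unions, one has $\hat{\cX}_D=\hat{\cX}_{D+}\cup\hat{\cX}_{D-}$; in particular every $\pp\in\cS_{2,0}^a$ lies in $\hat{\cX}_{D+}\cup\hat{\cX}_{D-}$. Moreover, by \textsection\ref{subsec:triple:tori:ef:def} the space $\Omega E_D(0^3)$ sits inside $\partial\Omega E_D(2,2)^\odd$, and via $\Psi_D$ (Lemma~\ref{lm:S20:a:cover:triple:tori}) a point $\pp\in\cS_{2,0}^a(e)$ corresponds to a triple of tori whose invariant $\ee$ equals $e$; saying $\pp\in\cS_{2,0}^{a\pm}$ is exactly saying that this triple is a degeneration of eigenforms in $\Omega E_{D\pm}(2,2)^\odd$, the setting in which Lemma~\ref{lm:triple:tori:same:comp:D:1:8} is phrased.

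Now fix $\pp\in\cS_{2,0}^a(e)$ with $e\equiv 1\,[4]$, and suppose for contradiction that $\pp\in\cS_{2,0}^{a-}$, i.e. the associated triple lies in the closure of $\Omega E_{D-}(2,2)^\odd$. By the convention of \textsection\ref{subsec:triple:tori:D:1:mod:8}, the prototypical triple attached to $(1,0,(D-1)/8,-1)$ also lies in that closure and has $e$-invariant $-1$. Applying Lemma~\ref{lm:triple:tori:same:comp:D:1:8} to these two triples forces $e\equiv -1\,[4]$, contradicting $e\equiv 1\,[4]$. Hence $\pp\notin\cS_{2,0}^{a-}$, and by the dichotomy above $\pp\in\cS_{2,0}^{a+}$, so $\cS_{2,0}^a(e)\subset\cS_{2,0}^{a+}$. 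The case $e\equiv -1\,[4]$ is symmetric, using the prototype $(1,0,(D-1)/8,1)$ in the closure of $\Omega E_{D+}(2,2)^\odd$.

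The formal deduction is short once Lemma~\ref{lm:triple:tori:same:comp:D:1:8} is granted, so the only point deserving care — the main (mild) obstacle — is the translation step: confirming that a boundary point $\pp$ of $\cS_{2,0}^a$ belonging to $\hat{\cX}_{D\pm}$ carries exactly the same information as its associated triple of tori being a degeneration of eigenforms in the corresponding component, so that the lemma applies verbatim. This rests on the explicit plumbing/flat-geometry construction of Remark~\ref{rk:triple:tori:plumb} and Lemma~\ref{lm:S20:a:cover:triple:tori}, which realize every triple over $\Pb\Omega E_{D,e}(0^3)$ as a genuine limit of surfaces in $\Omega E_D(2,2)^\odd$; once this identification is in hand, the corollary follows immediately.
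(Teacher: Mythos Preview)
Your proof is correct and follows essentially the same approach as the paper: both argue by contradiction using Lemma~\ref{lm:triple:tori:same:comp:D:1:8} together with the convention fixing which reference prototype $(1,0,(D-1)/8,\pm 1)$ lies in the closure of which component, then conclude via the dichotomy $\cS_{2,0}^a=\cS_{2,0}^{a+}\cup\cS_{2,0}^{a-}$. Your version is simply more explicit about the translation between boundary points of $\hat{\cX}_D$ and degenerating triples of tori, and about passing to a prototypical representative via $\GL^+(2,\R)$ before invoking the lemma.
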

\begin{proof}
Assume first that $e \equiv 1 \, [4]$. By  Lemma~\ref{lm:triple:tori:same:comp:D:1:8}, the triples of tori in $\cS_{2,0}^a(e)$ cannot be contained in the closure of $\Pb\Omega E_{D-}(2,2)^\odd$. Thus those triples of tori must be contained in the closure of $\Pb\Omega E_{D+}(2,2)^\odd$. This means that $\cS_{2,0}^a(e)\subset \hat{\cX}_{D+}$. The proof for the case $e\equiv -1 \, [4]$ follows the same lines.
\end{proof}

Lemma~\ref{lm:triple:tori:same:comp:D:1:8}
implies that $\Pb\Omega E_{D,e}(0^3)$ and $\Pb \Omega E_{D,-e}(0^3)$ are not contained in the same component of $\Pb\Omega\ol{E}_D(2,2)^\odd$ for all $e$ odd such that $e^2 <D$. 
Let us write $W_{D,e}(0^3)=\Pb \Omega E_{D,e}(0^3)$ and 
\[
W_{D+}(0^3):=\bigcup_{\substack{e^2 < D, \\ e \equiv 1\, [4]}}W_{D,e}(0^3), \quad W_{D-}(0^3):=\bigcup_{\substack{e^2 < D, \\ e \equiv -1\, [4]}}W_{D,e}(0^3)
\]
Note that $W_{D+}(0^3)$ (resp. $W_{D-}(0^3)$) is the union of the components of $W_D(0^3)$ which are contained in the boundary of $\Pb\Omega\ol{E}_{D+}(2,2)^\odd$ (resp. $\Pb\Omega\ol{E}_{D-}(2,2)^\odd$).
Since $m_D(e)=m_D(-e)$, we get
\begin{Corollary}\label{cor:euler:char:W:Dpm:0:equal}
We have	
\begin{equation}\label{eq:euler:char:W:Dpm:0:equal}
\chi(W_{D+}(0^3))=\chi(W_{D-}(0^3))=\frac{\chi(W_D(0^3))}{2}.
\end{equation}
\end{Corollary}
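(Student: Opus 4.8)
The plan is to reduce the statement to the elementary symmetry $m_D(e)=m_D(-e)$ together with the Euler-characteristic computation already carried out for the individual components $W_{D,e}(0^3)$. First I would record the Euler characteristic of each component: since every $\Pb\Omega E_{D,e}(0^3)$ is a cover of $\cM_{1,1}\simeq\Hbb/\SL(2,\Z)$ of degree $\deg\pi_0^{(e)}=m_D(e)$ by Corollary~\ref{cor:triple:tori:proj:1:deg:e}, and $\chi(\cM_{1,1})=-1/6$, multiplicativity of the Euler characteristic under finite coverings gives
\[
\chi(W_{D,e}(0^3))=-\tfrac{1}{6}\,m_D(e).
\]

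Next I would invoke the component structure. For $D\equiv 1\, [8]$ the congruence $e^2\equiv D\, [8]$ is equivalent to $e$ being odd, so by Corollary~\ref{cor:triple:tori:components} the space $W_D(0^3)$ is the disjoint union of the $W_{D,e}(0^3)$ over odd $e$ with $-\sqrt{D}<e<\sqrt{D}$. Each such $e$ satisfies $e\equiv 1\, [4]$ or $e\equiv -1\, [4]$, and Corollary~\ref{cor:triple:tor:in:Omg:E:D:2:2:D:odd} (which rests on Lemma~\ref{lm:triple:tori:same:comp:D:1:8}) assigns $W_{D,e}(0^3)$ to $W_{D+}(0^3)$ in the first case and to $W_{D-}(0^3)$ in the second. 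Hence by the definitions of $W_{D\pm}(0^3)$,
\[
\chi(W_{D\pm}(0^3))=-\tfrac{1}{6}\sum_{\substack{-\sqrt{D}<e<\sqrt{D}\\ e\equiv \pm 1\, [4]}}m_D(e),
\]
and these two sums add up to $\chi(W_D(0^3))$ by Proposition~\ref{prop:Euler:char:W:D:0}.

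Finally I would use the involution $e\mapsto -e$. This map is a bijection of $\{e:\ -\sqrt{D}<e<\sqrt{D},\ e\equiv 1\, [4]\}$ onto $\{e:\ -\sqrt{D}<e<\sqrt{D},\ e\equiv -1\, [4]\}$ (here $0$ is automatically excluded since $e$ is odd), and $m_D(e)=m_D(-e)$. Therefore the two partial sums above coincide, which gives $\chi(W_{D+}(0^3))=\chi(W_{D-}(0^3))$; since they sum to $\chi(W_D(0^3))$, each equals half of it, as claimed.

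There is essentially no serious obstacle here: once the three earlier inputs — the degree formula $\deg\pi_0^{(e)}=m_D(e)$, the component decomposition distinguishing $W_{D\pm}(0^3)$ by $e\bmod 4$, and the symmetry $m_D(e)=m_D(-e)$ — are in hand, the content of the corollary is purely a bookkeeping argument. The only point requiring genuine care is confirming that $W_D(0^3)=W_{D+}(0^3)\sqcup W_{D-}(0^3)$ is an honest partition, with no component shared and none omitted; this follows from the fact that every odd $e$ lies in exactly one of the two residue classes modulo $4$, together with Lemma~\ref{lm:triple:tori:same:comp:D:1:8} guaranteeing that $W_{D,e}(0^3)$ and $W_{D,-e}(0^3)$ never lie in the same component of $\Pb\Omega\ol{E}_D(2,2)^\odd$.
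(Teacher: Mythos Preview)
Your proof is correct and follows exactly the paper's approach: the paper's argument is the one-line observation ``since $m_D(e)=m_D(-e)$'' applied to the decomposition of $W_{D\pm}(0^3)$ by residue classes $e\bmod 4$, and you have simply unpacked this into explicit detail.
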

For the proof of Theorem~\ref{th:int:Theta:T:2:0:a:1:D:odd}, we will need the following result, whose proof is given in Appendix \textsection~\ref{subsec:prf:iden:triple:tor:D:odd}.
\begin{Theorem}\label{th:iden:triple:tor:e:times:deg:proj:1:e}
	For any $D >9, \, D\equiv 1 \; [8]$ not a square, we have
	\begin{equation}\label{eq:iden:triple:tor:e:times:deg:proj:1:e}
		\sum_{\substack{0 < e < \sqrt{D} \\ e \; {\rm odd}}}(-1)^{\frac{e-1}{2}}\cdot e \cdot m_D(e)=0.
	\end{equation}
\end{Theorem}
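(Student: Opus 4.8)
The statement to prove is a purely arithmetic identity, so I work directly with the counting description of $m_D(e)$. By Lemma~\ref{lm:m:D:e:equals:deg:proj:1:e} we have $m_D(e)=\#\Pcal_{D,e}(0^3)$, the number of triples $(a,b,d)$ with $a,d\geq 1$, $0\leq b<a$, $ad=N_e:=(D-e^2)/8$ and $\gcd(a,b,d,e)=1$. The first step is to linearize the coprimality condition. Interpreting such triples as index-$N_e$ sublattices of $\Z^2$ in Hermite normal form (with primitivity index $\gcd(a,b,d)$), the count becomes a sum over primitivity indices, and a Möbius inversion of the condition $\gcd(\gcd(a,b,d),e)=1$ yields the clean formula
\[
m_D(e)=\sum_{\substack{t\mid e,\ \mu(t)\neq 0\\ t^2\mid N_e}}\mu(t)\,\sigma_1\!\left(\frac{N_e}{t^2}\right).
\]
Writing $\chi_{-4}(e):=(-1)^{(e-1)/2}$ for odd $e$ (the nontrivial character mod $4$, completely multiplicative, with $\chi_{-4}(-1)=-1$), the target sum is $\Sigma:=\sum_{0<e<\sqrt D,\ e\text{ odd}}\chi_{-4}(e)\,e\,m_D(e)$.

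Next I would substitute the Möbius formula and interchange the order of summation, writing $e=tu$. Since $t\mid e$ forces $t$ odd, and $t^2\mid N_e$ together with $t\mid e$ forces $t^2\mid D$ (because $t^2\mid 8N_e=D-e^2$ and $t^2\mid e^2$), only squarefree $t$ with $t^2\mid D$ contribute. Setting $D_t:=D/t^2\equiv 1\pmod 8$ and using $\chi_{-4}(tu)=\chi_{-4}(t)\chi_{-4}(u)$, this gives the factorization
\[
\Sigma=\sum_{\substack{t\text{ odd squarefree}\\ t^2\mid D}}\mu(t)\,\chi_{-4}(t)\,t\;G(D/t^2),
\qquad
G(D'):=\sum_{\substack{u>0\text{ odd}\\ u^2<D'}}\chi_{-4}(u)\,u\,\sigma_1\!\left(\frac{D'-u^2}{8}\right).
\]
Because $D$ is not a square, every $D/t^2$ is again a non-square $\equiv 1\pmod 8$. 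Hence the whole argument reduces to proving the single assertion
\[
G(D')=0\qquad\text{for every non-square }D'\equiv 1 \pmod 8.
\]
(One checks numerically that $G$ does \emph{not} vanish at squares, e.g. $G(9)=1$ and $G(25)=-5$, which is exactly why the hypotheses $D$ non-square and $D>9$ are needed.)

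To establish the vanishing of $G(D')$ I would pass to generating functions. Since for fixed odd $u>0$ one has $D'=u^2+8N$ with $N\geq 1$, a direct expansion gives
\[
\sum_{D'\geq 1}G(D')\,q^{D'}
=\Bigl(\sum_{u>0\text{ odd}}\chi_{-4}(u)\,u\,q^{u^2}\Bigr)\Bigl(\sum_{N\geq 1}\sigma_1(N)\,q^{8N}\Bigr)
=\eta(8\tau)^3\cdot\frac{1-E_2(8\tau)}{24},
\]
using Jacobi's identity $\eta(\tau)^3=\sum_{n\geq 0}(-1)^n(2n+1)q^{(2n+1)^2/8}$ (so that $\eta(8\tau)^3=\sum_{u>0\text{ odd}}\chi_{-4}(u)u\,q^{u^2}$, supported on squares) together with $E_2(\tau)=1-24\sum_{n\geq 1}\sigma_1(n)q^n$. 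Here $\eta$ denotes the Dedekind eta function and $E_2$ the quasimodular Eisenstein series. The factor $\eta(8\tau)^3$ is a genuine cusp form of weight $3/2$, but $E_2(8\tau)$ is only quasimodular, so I would remove the anomaly with the Serre derivative: for $g:=\eta(8\tau)^3$ the combination $S:=\tfrac18\,q\tfrac{dg}{dq}-\tfrac18\,E_2(8\tau)\,g$ is a holomorphic modular form of weight $7/2$ (on a congruence group $\Gamma_0(N)$ with a quadratic character). Since $q\tfrac{dg}{dq}=\sum_{u>0\text{ odd}}\chi_{-4}(u)u^3q^{u^2}$ is again supported on squares, rearranging yields
\[
\sum_{D'\geq 1}G(D')\,q^{D'}
=\frac{1}{24}\sum_{u>0\text{ odd}}\chi_{-4}(u)\,(u-u^3)\,q^{u^2}\;+\;\frac13\,S,
\]
so that for non-square $D'$ the coefficient $G(D')$ equals $\tfrac13$ times the $q^{D'}$-coefficient of $S$.

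The main obstacle is therefore the final claim: the weight-$7/2$ holomorphic modular form $S$ is supported entirely on perfect squares. This is not visible from the expression $S=\tfrac18 q\tfrac{dg}{dq}-\tfrac18 E_2(8\tau)g$ (the product $E_2(8\tau)g$ has plenty of non-square terms) and must be forced by the modular structure. I would settle it by invoking the Serre–Stark description of half-integral weight forms: identify the exact level and character of $S$, compute the corresponding space of weight-$7/2$ modular forms, and show that it is spanned by unary theta series $\sum_u \chi(u)u\,q^{\nu u^2}$, all of which are supported on squares; membership of $S$ in this theta subspace then follows from matching finitely many coefficients up to the Sturm bound. Equivalently, one may exhibit $S$ directly as an explicit unary theta series via an eta-product identity. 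Either way the non-square coefficients of $S$ vanish, giving $G(D')=0$ for non-square $D'\equiv 1\pmod 8$, and hence $\Sigma=0$, as claimed. Feeding this back through Proposition~\ref{prop:int:Theta:on:T:a:1:2:0:e} and Corollary~\ref{cor:triple:tor:in:Omg:E:D:2:2:D:odd} then also yields the geometric equality in Theorem~\ref{th:int:Theta:T:2:0:a:1:D:odd}.
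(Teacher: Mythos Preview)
Your approach is essentially the paper's, with two presentational differences and one soft spot at the end.

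For the reduction step, you M\"obius-invert to write $m_D(e)=\sum_{t\mid e,\,t^2\mid N_e}\mu(t)\,\sigma_1(N_e/t^2)$ and then factor $\Sigma$ through the quantities $G(D/t^2)$; the paper runs this in the opposite direction, writing $\sigma_1((D-e^2)/8)=\sum_{r\mid\gcd(e,f)} m_{D/r^2}(e/r)$ and inducting on the conductor. These are formally dual and both reduce the theorem to the single claim $G(D')=0$ for non-square $D'\equiv 1\pmod 8$ (the paper's Proposition~\ref{prop:iden:D:1:mod:8:sigma:1}).

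For that claim, your weight-$7/2$ form $S$ is exactly three times the paper's
\[
f(z)=G_2(8z)\,\theta_\psi(z)+\tfrac{1}{48\pi i}\,\theta'_\psi(z),
\]
since $\theta_\psi=\eta(8\tau)^3$ by Jacobi's identity and $G_2=-E_2/24$. The paper then shows directly that $f\equiv 0$: one checks $f$ is a genuine weight-$7/2$ form on $\Gamma_0(64)$ with a quadratic character and vanishes to order $\geq 30$ at $\infty$; hence $f^4$, an integral weight-$14$ form on $\Gamma_0(64)$ (genus $3$, twelve cusps, no elliptic points), vanishes to order $\geq 120>112$ and is therefore identically zero. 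So your $S$ is in fact identically zero, not merely supported on squares; this also explains your values $G(9)=1$ and $G(25)=-5$, which equal $-\psi(d)(d^3-d)/24$ for $d=3,5$.

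The soft spot is your appeal to Serre--Stark: that theorem classifies modular forms of weight $1/2$, not $7/2$, and the relevant weight-$7/2$ space on $\Gamma_0(64)$ is certainly not spanned by unary theta series. Your Sturm-bound alternative does work, but applying it directly in half-integral weight requires some care with the level and character; the paper sidesteps this by passing to $f^4$ and invoking the standard integral-weight valence bound. With that replacement your argument is complete and coincides with the paper's.
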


\subsection*{Proof of Theorem~\ref{th:int:Theta:T:2:0:a:1:D:odd}}
\begin{proof}
As a consequence of Corollary~\ref{cor:triple:tor:in:Omg:E:D:2:2:D:odd}, we get
\begin{align*}
\langle[\Theta],[\ol{\cT}_{2,0}^{a\pm,1}] \rangle &=\sum_{\substack{-\sqrt{D} < e < \sqrt{D} \\ e \equiv \pm1 \, [4]}}\langle[\Theta],[\ol{\cT}_{2,0}^{a,1}(e)]\rangle \\
&= -48\pi\sum_{\substack{-\sqrt{D} < e < \sqrt{D} \\ e \equiv \pm1 \, [4]}}\frac{e+\sqrt{D}}{\sqrt{D}}\cdot \chi(W_{D,e}(0^3)) \quad \hbox{(by Proposition~\ref{prop:int:Theta:on:T:a:1:2:0:e})} \\
&= -48\pi \sum_{\substack{-\sqrt{D} < e < \sqrt{D} \\ e \equiv \pm1 \, [4]}} \chi(W_{D,e}(0^3)) \pm \frac{8\pi}{\sqrt{D}}\sum_{\substack{ 0 < e < \sqrt{D}\\ e \, \odd }}(-1)^{\frac{e-1}{2}}\cdot e \cdot m_D(e) \\
&= -48\pi \sum_{\substack{-\sqrt{D} < e < \sqrt{D} \\ e \equiv \pm1 \, [4]}} \chi(W_{D,e}(0^3)) \quad \hbox{(by Theorem~\ref{th:iden:triple:tor:e:times:deg:proj:1:e})}.
\end{align*}
Since $\chi(W_{D,-e}(0^3)) = \chi(W_{D,e}(0^3))$ for all $e$ odd, $-\sqrt{D} < e < \sqrt{D}$, we get
$$
\langle[\Theta],[\ol{\cT}_{2,0}^{a\pm,1}] \rangle = -48\pi \sum_{\substack{-\sqrt{D} < e < \sqrt{D} \\ e \equiv \pm1 \, [4]}} \chi(W_{D,e}(0^3)) =-24\pi \sum_{\substack{-\sqrt{D} < e < \sqrt{D} \\ e \, \odd}} \chi(W_{D,e}(0^3)) =-24\pi\chi(W_D(0^3)).
$$
The theorem is then proved.
\end{proof}


\section{Weierstrass Teichmüller curves in the boundary of $\hat{\cX}_D$}\label{sec:W:Teich:curves}
In this section we compute the intersection number $\langle [\Theta],[\ol{\cT}_{0,2}]\rangle$.
Since $[\ol{\cT}_{0,2}] \sim \tilde{\pi}^*[\ol{\cS}_{0,2}]$, it follows from Theorem~\ref{th:Theta:pushforward} that we have
\begin{equation}\label{eq:inters:Theta:T:0:2}
\langle [\Theta], [\ol{\cT}_{0,2}] \rangle =8\pi c_1(\OO(-1))\cdot[\ol{\cS}_{0,2}].
\end{equation} 
Thus, it is enough to compute the degree of the tautological line bundle over the curve $\ol{\cS}_{0,2}$.
Recall that for all  $D'\in \N, \, D' >4, \, D' \equiv 0,1 \, [4]$, $W_{D'}(2):=\Pb\Omega E_{D'}(2)$ is a Teichmüller curve (not necessarily connected) which is the projectivization of closed $\GL^+(2)$-orbit(s) in $\Omega E_{D'}(2)$. By the result of \cite{McM:spin}, if $D' \equiv 0 \, [4]$ or $D'\equiv 5 \, [8]$ then $W_{D'}(2)$ is connected, and if $D' \equiv 1 \, [8]$, then $W_{D'}(2)$ has two components.
We will prove
\begin{Theorem}\label{th:eval:c1:taut:on:S02:D:even}
	Let $D> 4, \, D \equiv 0 \, [4]$ be an even discriminant which is not a square. Then we have
	\begin{equation}\label{eq:eval:c1:taut:on:S02:D:even}
		c_1(\OO(-1))\cdot[\ol{\cS}_{0,2}] = -12\cdot(\chi(W_D(2)) + b_D\cdot\chi(W_{D/4}(2))),
	\end{equation}
	where
	$$
	b_D= \left\{
	\begin{array}{ll}
		0 & \text{ if } D/4 \equiv 2,3 \, [4] \\
		4 & \text{ if } D/4 \equiv 0 \, [4] \\
		3 & \text{ if } D/4 \equiv 1 \, [8] \\
		5 & \text{ if } D/4 \equiv 5 \, [8] \\
	\end{array}
	\right.
	$$
	(here $\chi(.)$ designates the Euler characteristic).
\end{Theorem}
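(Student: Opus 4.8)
The starting point is the identity \eqref{eq:inters:Theta:T:0:2}, which reduces the computation of $\langle[\Theta],[\ol{\cT}_{0,2}]\rangle$ to the tautological degree $c_1(\OO(-1))\cdot[\ol{\cS}_{0,2}]$ over the boundary curve $\ol{\cS}_{0,2}$. Since every connected component $\cS$ of $\cS_{0,2}$ is a Teichm\"uller curve (Proposition~\ref{prop:bdry:str:gp:I}(iii)), Proposition~\ref{prop:int:Hodge:curv:on:Teich:curves} gives $c_1(\OO(-1))\cdot[\ol{\cS}]=-\tfrac12\chi(\cS)$, so it suffices to compute the orbifold Euler characteristic $\chi(\cS_{0,2})$. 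As the components of $\cS_{0,2}$ are finite covers of the curves $W_D(2)$ and $W_{D/4}(2)$, multiplicativity of $\chi$ under orbifold coverings reduces the whole problem to determining the total covering degree of $\cS_{0,2}$ onto each of $W_D(2)$ and $W_{D/4}(2)$.

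To read off these degrees I would describe the fibres explicitly. A point of $\cS_{0,2}$ consists of a genus two eigenform $(C_1,[\xi_1])\in\Pb\Omega E_{D'}(2)$, with $D'\in\{D,D/4\}$ by Lemma~\ref{lm:gp:I:limit:diff}(iii), together with a rational component $C_0\simeq\Pb^1$ attached to $C_1$ at two Weierstrass points $w_0,w_1$, and a numbering of the four remaining Weierstrass points as $p_1,\dots,p_4$. Here $w_0$ is forced to be the double zero of $\xi_1$, while $(C_0,\eta)$ is rigid by Lemma~\ref{lm:gp:I:diff:on:vanish:comp}; thus over a fixed $(C_1,[\xi_1])$ the fibre is parametrized by the choice of the second node $w_1$ among the five remaining Weierstrass points, together with the $4!=24$ numberings of $p_1,\dots,p_4$. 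Consequently each admissible choice of $w_1$ contributes exactly $24$ to the covering degree.

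The crux is to decide, for each of the five choices of $w_1$, the discriminant $D'$ of the genus two limit, equivalently which of $\{D,D/4\}$ the corresponding component covers. Smoothing the two nodes realizes $H_1(X,\Z)^-$ as the index two sublattice $\ker\chi_{w_0,w_1}\subset H_1(C_1,\Z)=H_1(C_1,\Z)^-$ cut out by the $2$-torsion class $[w_0]-[w_1]$; and, as in the proof of Lemma~\ref{lm:gp:I:limit:diff} and the lattice computations of Proposition~\ref{prop:eigen:form:per:eq}, the generator $T$ of the real multiplication order preserves this sublattice compatibly precisely when $D'$ equals the genus three discriminant, and otherwise forces a passage between $T$ and $T/2$ that multiplies the discriminant by four. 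The plan is therefore to classify the five points $w_1$ by the type of $[w_0]-[w_1]$ with respect to the $\cO_{D'}$-action---exactly the Weierstrass point analysis underlying McMullen's spin invariant~\cite{McM:spin}---and to show that the resulting split $(k_{\mathrm{same}},k_{\mathrm{up}})$, i.e.\ the numbers of $w_1$ keeping the discriminant fixed versus multiplying it by four, depends only on the discriminant modulo $8$, being $(1,4)$ for $\Delta\equiv0\,[4]$, $(2,3)$ for $\Delta\equiv1\,[8]$, and $(0,5)$ for $\Delta\equiv5\,[8]$.

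Granting this, the components of $\cS_{0,2}$ lying over $W_D(2)$ are the degenerations of genus two discriminant $D$ eigenforms that keep the discriminant, of total degree $24\,k_{\mathrm{same}}(D)=24$; those over $W_{D/4}(2)$ are the discriminant $D/4$ eigenforms whose degeneration multiplies the discriminant by four, of total degree $24\,k_{\mathrm{up}}(D/4)=24\,b_D$; and the vanishing case $b_D=0$ corresponds to $D/4\equiv2,3\,[4]$, where $W_{D/4}(2)=\varnothing$ and no such component exists. Multiplicativity of $\chi$ then gives $\chi(\cS_{0,2})=24\,\chi(W_D(2))+24\,b_D\,\chi(W_{D/4}(2))$, and combining with $c_1(\OO(-1))\cdot[\ol{\cS}_{0,2}]=-\tfrac12\chi(\cS_{0,2})$ yields \eqref{eq:eval:c1:taut:on:S02:D:even}. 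I expect the main difficulty to lie in the third step: translating the lattice-theoretic criterion for $D'=D$ versus $D'=D/4$ into the clean count $(k_{\mathrm{same}},k_{\mathrm{up}})$ governed by $\Delta\bmod 8$ is precisely where the arithmetic of genus two eigenforms must be deployed in full.
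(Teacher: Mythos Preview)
Your reduction is exactly the one the paper uses: apply Proposition~\ref{prop:int:Hodge:curv:on:Teich:curves} to get $c_1(\OO(-1))\cdot[\ol{\cS}_{0,2}]=-\tfrac12\chi(\cS_{0,2})$, split $\cS_{0,2}=\cS'_{0,2}\sqcup\cS''_{0,2}$ according to whether the genus two limit lies in $W_D(2)$ or $W_{D/4}(2)$, observe that the fibre over a fixed $(C_1,[\xi_1])$ consists of a choice of regular Weierstrass point $w_1$ together with a labeling of the remaining four (factor $4!$), and then count, for each residue class of the genus two discriminant, how many of the five choices of $w_1$ produce a genus three eigenform of discriminant equal to versus four times the genus two one. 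Your claimed splits $(k_{\mathrm{same}},k_{\mathrm{up}})=(1,4),(2,3),(0,5)$ for $\Delta\equiv 0\,[4]$, $1\,[8]$, $5\,[8]$ are correct and match the output of the paper's Proposition~\ref{prop:degree:forget:map:S02}.

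The difference lies in how this count is established. The paper does not argue through the $2$-torsion class $[w_0]-[w_1]$ or McMullen's spin classification; instead it fixes one explicit prototypical surface in $\Omega E_{D'}(2)$ for each residue class (Proposition~\ref{prop:split:prot:surf:H2}), writes down for each of the five Weierstrass points $w_i$ an explicit symplectic basis of $H_1(M_i,\Z)^-$ on the surgered genus three surface, and exhibits a concrete integral matrix $T$ generating $\cO_{4D'}$; the question of whether the genus three discriminant is $D'$ or $4D'$ then reduces to the elementary check of whether $T/2$ is still integral, governed by parity conditions on $(a,d,e)$. Your lattice-theoretic framing via $\ker\chi_{w_0,w_1}$ is a reasonable reformulation of the same phenomenon, and would likely yield a cleaner uniform argument if carried through, but as written it remains a plan rather than a proof: you have not verified that the index two sublattice is indeed cut out by that character, nor derived the parity conditions from it. The paper's prototype-by-prototype computation, while less conceptual, is complete and requires no appeal to~\cite{McM:spin} beyond the connectedness of $W_{D}(2)$.
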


In the case $D\equiv 1 \, [8]$, let $\cS_{0,2}^{\pm}$ be respectively the intersection of $\cS_{0,2}$ with $\hat{\cX}_{D\pm}$.
We will show

\begin{Theorem}\label{th:eval:c1:taut:on:S02:D:odd}
	For all $D\in \N, \, D>9$ not a square, and $D\equiv 1 \; [8]$, we have
	\begin{equation}\label{eq:eval:c1:taut:on:S02:D:odd}
		c_1(\OO(-1))\cdot[\ol{\cS}_{0,2}^+] = c_1(\OO(-1))\cdot[\ol{\cS}_{0,2}^-] = -12\cdot\chi(W_{D}(2)).
	\end{equation}
\end{Theorem}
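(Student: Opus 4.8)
The plan is to reduce the statement to a computation of the degree of the tautological line bundle $\OO(-1)$ over the curve $\ol{\cS}_{0,2}^{\pm}$, exactly as in the even case. By \eqref{eq:inters:Theta:T:0:2}, computing $c_1(\OO(-1))\cdot[\ol{\cS}_{0,2}^{\pm}]$ is the same problem as computing $\langle[\Theta],[\ol{\cT}_{0,2}^{\pm}]\rangle$, so I would focus on the line bundle computation directly. First I would recall from Proposition~\ref{prop:bdry:str:gp:I}(iii) that each component of $\cS_{0,2}$ is a finite cover of a curve in $W_{D'}(2)$ with $D'\in\{D,D/4\}$. The key arithmetic simplification in the case $D\equiv 1\,[8]$ is that $D/4$ is not an integer discriminant of the right shape, so the stratum $\cS_{0,2}$ can only cover $W_D(2)$ and never $W_{D/4}(2)$; this is why the formula \eqref{eq:eval:c1:taut:on:S02:D:odd} has no $b_D$ term, in contrast to \eqref{eq:eval:c1:taut:on:S02:D:even}.

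The main structural step is to determine the degree of the covering $\cS_{0,2}^{\pm}\to W_D(2)$ and to show that $\OO(-1)$ over $\cS_{0,2}$ is, up to this covering degree, the pullback of the tautological bundle on $W_D(2)$. Here I would invoke Proposition~\ref{prop:int:Hodge:curv:on:Teich:curves}, which gives $\int_{\cS}\imath\vartheta=2\pi c_1(\OO(-1))\cdot[\ol{\cS}]=-\pi\chi(\cS)$ for any component $\cS$ of $\cS_{0,2}$, together with the multiplicativity of Euler characteristic under finite covers. Thus $c_1(\OO(-1))\cdot[\ol{\cS}_{0,2}^{\pm}]=-\tfrac{1}{2}\chi(\cS_{0,2}^{\pm})=-\tfrac{1}{2}\deg(\cS_{0,2}^{\pm}\to W_D(2))\cdot\chi(W_D(2))$. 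The remaining task is to compute the degree, which amounts to counting, for a generic $(X,\omega)\in\Omega E_D(2)$, the number of ways to degenerate it into a point of $\cS_{0,2}$ lying over a fixed component of $\Pb\Omega E_{D\pm}(2,2)^\odd$. This counting is controlled by the combinatorics of Weierstrass points of the genus-two curve $C_1$ (the two nodes between $C_0$ and $C_1$ are Weierstrass points, one carrying the double zero of $\xi_1$), analogous to the argument in Lemma~\ref{lm:S20:a:cover:triple:tori}. I expect the factor $24$ to reappear and the degree to be $24$, yielding $-\tfrac{1}{2}\cdot 24=-12$.

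The hard part will be the symmetry argument showing that the two components $\cS_{0,2}^+$ and $\cS_{0,2}^-$ cover $W_D(2)$ with the \emph{same} degree, so that both intersection numbers equal $-12\chi(W_D(2))$. In the $D\equiv 1\,[8]$ case the ambient space $\Omega E_D(2,2)^\odd$ splits into $\Omega E_{D+}$ and $\Omega E_{D-}$, and a priori the boundary curve $\cS_{0,2}$ could distribute unevenly between $\hat{\cX}_{D+}$ and $\hat{\cX}_{D-}$. I would resolve this by exhibiting an explicit involution, in the spirit of Lemma~\ref{lm:triple:tori:same:comp:D:1:8}, relating prototypes $(a,b,d,e)$ with $e\equiv 1\,[4]$ to those with $e\equiv -1\,[4]$: the generator $T$ of $\Ocal_D$ can be replaced by $-T$ (which sends $e$ to $-e$), and this operation induces a bijection between degenerations landing in $\hat{\cX}_{D+}$ and those landing in $\hat{\cX}_{D-}$ that preserves the image in $W_D(2)$. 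Since $W_D(2)$ itself is defined over the full discriminant $D$ (its connectedness or two-component structure, given by McMullen in \cite{McM:spin}, is respected by this sign change because $m_D(e)=m_D(-e)$), the two covering degrees coincide and I obtain \eqref{eq:eval:c1:taut:on:S02:D:odd}. The verification that no portion of $\cS_{0,2}$ can cover $W_{D/4}(2)$ when $D\equiv 1\,[8]$—which follows because $D/4\not\equiv 0,1\,[4]$—should be recorded carefully, as it is precisely what distinguishes this theorem from its even-discriminant counterpart.
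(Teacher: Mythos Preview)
Your overall strategy is right—use Proposition~\ref{prop:int:Hodge:curv:on:Teich:curves} to get $c_1(\OO(-1))\cdot[\ol{\cS}_{0,2}^{\pm}]=-\tfrac{1}{2}\chi(\cS_{0,2}^{\pm})$, then compute $\chi(\cS_{0,2}^{\pm})$ by multiplicativity over a finite cover—but both the target of the covering and the degree are wrong in your picture, and the symmetry mechanism is different from what you propose.

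When $D\equiv 1\,[8]$ the Teichm\"uller curve $W_D(2)$ itself has \emph{two} components $W_{D+}(2)$ and $W_{D-}(2)$ (McMullen~\cite{McM:spin}), and the forgetful map $F$ sends $\cS_{0,2}^+$ onto $W_{D+}(2)$ only and $\cS_{0,2}^-$ onto $W_{D-}(2)$ only (Proposition~\ref{prop:degree:FD:odd}); so $\cS_{0,2}^{\pm}\to W_D(2)$ is not a covering. The factor $4!$ you cite by analogy with Lemma~\ref{lm:S20:a:cover:triple:tori} is only the degree of the map to the space $W^*$ of eigenforms with a \emph{marked} regular Weierstrass point (Lemma~\ref{lm:deg:proj:Teich:curve:unnumber}); to reach $W_{D\pm}(2)$ one must also forget the marked point, and the degree of that step is the number of regular Weierstrass points $w_i$ for which $(M,w_i)$ lies in the boundary of the correct component $\Omega E_{D\pm}(2,2)^\odd$. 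Determining that number requires the explicit prototype analysis of Proposition~\ref{prop:split:prot:surf:H2}: for the prototypical surface $M^+\in\Omega E_{D+}(2)$ with prototype $((D-1)/4,0,1,-1)$, exactly two of the five $w_i$ yield pairs in $\Omega\ol{E}_D(2,2)^\odd$ (the other three land in $\Omega\ol{E}_{4D}(2,2)^\odd$), and one checks by collapsing to a triple of tori that those two pairs land in the $+$ component. Hence $\deg(F_{|\cS_{0,2}^+})=2\cdot 4!=48$, not $24$.

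The equality of the $+$ and $-$ intersection numbers is then not obtained via a $T\mapsto -T$ involution but from two independent inputs: the parallel Weierstrass analysis for $M^-$ gives the same degree $48$ over $W_{D-}(2)$, and Bainbridge~\cite{Bai:GT} proved $\chi(W_{D+}(2))=\chi(W_{D-}(2))=\tfrac{1}{2}\chi(W_D(2))$. One then gets $-\tfrac{1}{2}\cdot 48\cdot\tfrac{1}{2}\chi(W_D(2))=-12\chi(W_D(2))$ for both signs. Your expected degree $24$ paired with the full $\chi(W_D(2))$ happens to give the same number, but this reflects two compensating errors rather than a correct argument.
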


\subsection{ Weierstrass eigenforms in genus two with a marked  point}\label{subsec:eigen:fom:H2:w:reg:Wei:pt}\hfill\\
Let $\pp=(C,p_1,\dots,p_5,p'_5,\tau,[\xi])$ be a point in $\cS_{0,2}$.
By Lemma~\ref{lm:gp:I:diff:on:vanish:comp} and Lemma~\ref{lm:gp:I:limit:diff}, we know that $C$ has two irreducible components denoted by $C_0$ and $C_1$ where
\begin{itemize}
	\item[$\bullet$]  $C_0$ is isomorphic to $\Pb^1$,
	
	\item[$\bullet$] $C_1$ is a compact Riemann surface of genus $2$,
	
	\item[$\bullet$] $C_0$ and $C_1$ meet  at two nodes, both are fixed by the Prym involution,
	
	\item[$\bullet$] $\xi_{\left|C_0\right.}\equiv 0$ and $(C_1,\xi_{\left|C_1\right.})\in \Omega E_{D'}(2)$ for some $D'\in \{D,D'/4\}$.
\end{itemize}
Let $\xi_1:=\xi_{\left|C_1\right.}$. Then the nodes between $C_0$ and $C_1$ are the unique zero of $\xi_1$ and a Weierstrass point of $C_1$.
Denote by $q$ and $q'$ the nodes of $C$, where $q$ is the double zero of $\xi_1$.

\medskip 

Let $\Omega E^*_{D'}(2)$ denote the space of eigenforms in $\Omega E_{D'}(2)$ together with a marked Weierstrass point which is not the zero of the Abelian differential. Denote by $W^*_D(2)$ the projectivization of $\Omega E^*_{D'}(2)$, that is $W^*_{D'}(2)=\Pb\Omega E^*_{D'}(2)$. 
There is a natural finite covering $\cR_{D'}: W^*_{D'}(2) \to W_{D'}(2)$ consisting of forgetting the marked regular Weierstrass point. The problem of determining the number of connected components of $W^*_{D'}(2)$ and the degree of the map $\cR_{D'}$ on each components of $W^*_{D'}(2)$ has been resolved in \cite{GP24}.

Let $W^*$ denote the component of $W^*_{D'}(2)$ that contains $(C_1,q',[\xi_1])$.
Since the Prym involution fixes $q$ and $q'$, the pointed curve $(C_0,q,q',p_5,p'_5)$ is isomorphic to $(\Pb^1, 0,\infty,1,-1)$ with the Prym involution given by $z\mapsto -z$. In particular, $(C_0,q,q',p_5,p'_5)$ is independent of $\pp$.
As a consequence, we get
\begin{Lemma}\label{lm:deg:proj:Teich:curve:unnumber}
	Let $\cS$ be the component of $\cS_{0,2}$ which contains $\pp$. Then the map $F:\cS \to W^*$ which associates to $\pp$ the projectivized differential  with a marked regular Weierstrass point $(C_1,q',[\xi_1])$ is a covering of degree $4!$.
\end{Lemma}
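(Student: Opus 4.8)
The plan is to establish Lemma~\ref{lm:deg:proj:Teich:curve:unnumber} by showing that the map $F$ is a well-defined covering and then computing its degree by a fiber-counting argument analogous to the proof of Lemma~\ref{lm:S20:a:cover:triple:tori}. First I would check that $F$ is well-defined and holomorphic: for each $\pp=(C,p_1,\dots,p_5,p'_5,\tau,[\xi]) \in \cS$, the restriction $(C_1,\xi_1) := (C_1,\xi_{\left|C_1\right.})$ lies in $\Omega E_{D'}(2)$ by Lemma~\ref{lm:gp:I:limit:diff}, and $q'$ is the image of the second node of $C$, which is a regular Weierstrass point of $C_1$ (i.e.\ not the zero of $\xi_1$, since $q$ carries the double zero). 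Thus the assignment $\pp \mapsto (C_1,q',[\xi_1])$ indeed lands in $W^*_{D'}(2)$, and by the local description of $\cS_{0,2}$ in Proposition~\ref{prop:bdry:str:gp:I} (where $\cS_{0,2}$ is locally biholomorphic to $\Pb\Omega E_{D'}(2)$ via exactly this correspondence) the map is holomorphic.

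Next I would verify that $F$ is a covering onto its image $W^*$ by showing that $(C,[\xi])$, together with $\tau$ but \emph{forgetting the numbering} of the fixed points, is uniquely determined by $(C_1,q',[\xi_1])$. The key point, already exploited in Lemma~\ref{lm:gp:I:diff:on:vanish:comp}, is that the vanishing component $C_0 \simeq \Pb^1$ is rigid: the Prym involution fixes both nodes $q,q'$ and swaps $p_5,p'_5$, so $(C_0,q,q',p_5,p'_5) \simeq (\Pb^1,0,\infty,1,-1)$ with $\tau_{\left|C_0\right.}\colon z\mapsto -z$, independent of $\pp$. Hence, once we know how $C_1$ is glued to $C_0$ (via $q$ as the double zero and $q'$ as the marked Weierstrass point), the entire pointed curve with involution is reconstructed up to the labeling of $p_1,\dots,p_4$. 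This reconstruction shows $F$ is a local biholomorphism, hence a covering onto $W^*$.

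Then I would count the fiber. Given a point $(C_1,q',[\xi_1]) \in W^*$, its preimage under $F$ consists of all the points of $\cS$ mapping to it. The underlying projectivized differential $(C,[\xi])$ and the involution $\tau$ are determined as above up to the choice of numbering of the four fixed points $p_1,\dots,p_4$ of $\tau$. By construction $\tau$ has exactly four regular fixed points on $C$ — two in the genus-two component $C_1$ (the Weierstrass points other than $q,q'$; more precisely, the fixed points coming from $C_1$ together with the marked point $q'$ and the relevant configuration) — and since we are free to assign the labels $1,2,3,4$ arbitrarily, there are $4! = 24$ distinct labelings, each giving a distinct point of $\cS$. This yields $\#F^{-1}(\{(C_1,q',[\xi_1])\}) = 4!$, so $F$ has degree $4!$.

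The main obstacle I anticipate is the careful bookkeeping of the four fixed points of $\tau$: one must confirm that forgetting the \emph{numbering} of $p_1,\dots,p_4$ is exactly the ambiguity collapsed by passing to $W^*$, and that no additional automorphism of $(C_1,q',[\xi_1])$ (respecting the structure) permutes these fixed points and thereby reduces the fiber size below $24$. This is the analogue of the subtlety handled at the end of the proof of Proposition~\ref{prop:adm:cov:e:forms:degree} and Lemma~\ref{lm:S20:a:cover:triple:tori}; I would invoke the uniqueness of the Prym involution from \cite[Th.~3.1]{LN:components} to rule out such extra symmetries (noting $D>9$ excludes the exceptional locus), ensuring that the $4!$ labelings are genuinely distinct in $\cS$.
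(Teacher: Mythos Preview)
Your approach is essentially the same as the paper's: the rigidity of the pointed curve $(C_0,q,q',p_5,p'_5)\simeq(\Pb^1,0,\infty,1,-1)$ forces $(C,[\xi],\tau)$ to be uniquely determined by $(C_1,q',[\xi_1])$ up to the numbering of $p_1,\dots,p_4$, so $F$ is a covering and $\deg F = 4!$.

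There is one point of confusion in your fiber-counting paragraph that you should clean up. You write that $\tau$ has ``two [regular fixed points] in the genus-two component $C_1$'', but in fact \emph{all four} of $p_1,\dots,p_4$ lie in $C_1$: the restriction $\tau_{|C_1}$ has six fixed points (the two nodes $q,q'$ plus $p_1,\dots,p_4$), so it is the hyperelliptic involution of $C_1$, and $p_1,\dots,p_4$ are precisely the four Weierstrass points of $C_1$ other than $q$ and $q'$. This is exactly what the paper states, and it makes the $4!$ count transparent: the fiber over $(C_1,q',[\xi_1])$ consists of the $4!$ ways to label these four Weierstrass points. Your invocation of \cite[Th.~3.1]{LN:components} for uniqueness of the Prym involution is unnecessary here (and not quite applicable, since we are on the boundary); the rigidity of $C_0$ already pins down $\tau$.
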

\begin{proof}
	Since the differential without marked points $(C,[\xi])$  is uniquely determined by $(C_1,[\xi_1])$, $F$ is a covering. By construction, all the marked points $p_1,\dots,p_4$  of $C$ are  contained in $C_1$ and correspond actually to the regular Weierstrass points of $C_1$. Since the map $F$ consists of forgetting the numbering of those points, we get that $\deg F=4!$.
\end{proof}

Let $\cS'_{0,2}$ (resp. $\cS''_{0,2}$) denote the  set of $\pp \in \cS_{0,2}$ such that $(C_1,\xi_1) \in \Omega E_{D}(2)$ (resp. ($C_1,\xi_1)\in \Omega E_{D/4}(4)$ in the case $4 \, | \, D$).
Let $F': \cS'_{0,2} \to W_D(2)$ and $F'': \cS''_{0,2} \to W_{D/4}(2)$ denote the projections which associate to $\pp$ the projectivized Abelian differential (without marked points) $(C_1,[\xi_1])$.
Our goal now is  to compute the degrees of $F'$ and $F''$.

\medskip

Fix  $D'\in \{D, D/4\}$ and consider a surface $(X,\omega) \in \Omega E_{D'}(2)$. Let $w_0$ be the zero of $\omega$, which is a Weierstrass point of $X$.
Denote by $w_1,\dots,w_5$ the other Weierstrass points of $X$. For each $i=1,\dots,5$, the triple $(X,w_i,\omega)$ (resp. $(X,w_i,[\omega])$) is an element of $\Omega E^*_{D'}(2)$ (resp. of $\Pb\Omega E^*_{D'}(2)$).
If $(X,w_i,\omega)$ is contained in the closure of $\Omega E_D(2,2)^\odd$, then by the plumbing construction described in \textsection\ref{subsec:geom:bdry:str:gp:I} (c.f. the proof of Proposition~\ref{prop:bdry:str:gp:I}), one obtains a holomorphic map
$\varphi_i: \Delta_{\delta^2} \to \Omega\ol{E}_D(2,2)^\odd$ such that $\varphi_i(0)=(X,w_i,\omega)$, and $\varphi_i(\Delta^*_{\delta^2})\subset \Omega E_D(2,2)^\odd$.


\medskip

There is an alternative way to construct the family $\varphi_i(\Delta_{\delta^2})$ using techniques from flat metrics  that we now describe. Given $t\in \Delta^*_{\delta^2}$, by a standard construction known as ``breaking up a zero" (see for instance \cite{KZ03,Bai:GAFA,LN:finite}), we can modify the flat metric  in a small disc about the double zero $w_0$ to create two simple zeros connected by a saddle connection $\sig_0$ of period $t^3$.  Let $\sig_1$ be the unique geodesic segment centered at $w_i$ with period $t^3$. Slitting open the segments $\sig_0$ and $\sig_1$, we obtain a flat surface with two boundary components each of which is composed by two geodesic segments.
We can  glue together two pairs of segments in the boundary of this surface to obtain a translation surface $M^i_t$ of genus three with two singularities. One readily checks that this flat surface belongs to the stratum $\Omega\cM_3(2,2)$. Moreover, the hyperelliptic involution on $X$ induces an involution on the new surface with four fixed points, and the segments $\sig_0,\sig_1$ on $X$ give rise to a pair of saddle connections $\sig, \sig'$ on $M^i_t$ that are exchanged by this involution.
It is shown in \cite[\textsection 8C]{LN:finite} that the surfaces obtained from this construction belongs to $\Omega E_D(2,2)^\odd$ with $D \in \{D', 4D'\}$. 

Recall from \cite{McM:spin} that a splitting prototype for eigenform in $\Omega E_{D'}(2)$  is a quadruple of integers $(a,b,d,e)$ which satisfies
$$
(\Pcal_{D'}(2)) \quad \left\{
\begin{array}{lll}
	D'=e^2+4ad, & a,d > 0, & \gcd(a,b,d,e)=1 \\
	0\leq b < \gcd(a,d), &  a > d+e. &
\end{array}
\right.
$$
Note that the condition $a > d+e$ is equivalent to $\lambda':=\frac{e+\sqrt{D'}}{2} < a$. The prototype $(a,b,d,e)$ is called {\em reduced} if we have $d=1$ and hence $b=0$.

Denote by $\Pcal_{D'}(2)$ the set of prototypes for $\Omega E_{D'}(2)$. Associated to each prototype $(a,b,d,e) \in \Pcal_{D'}(2)$, we have a {\em prototypical surface} constructed from a square of size $\lambda'$ and a parallelogram whose sides correspond to the vectors $(a,0)$ and $(b,d)$ (see Figure~\ref{fig:prototype:surface:H2}).
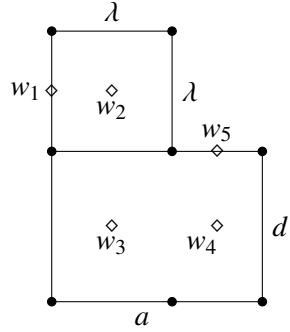
\begin{figure}[htbp]
\centering
\begin{tikzpicture}[scale=0.4]
	\draw (4,5) -- (4,9) -- (0,9) -- (0,0) -- (7,0) -- (7,5) -- (0,5);
	\foreach \x in {(0,9), (0,5), (0,0), (4,9), (4,5), (4,0), (7,0), (7,5)} \filldraw \x circle (4pt);
	
	\foreach \x in {(0,7), (2,7), (2,2.5), (5.5,5), (5.5,2.5)} \draw \x node {$\diamond$};
	
	\draw (2,9) node[above] {$\tiny \lambda$} (4,7) node[right] {$\tiny \lambda$} (3,0) node[below] {$\tiny a$} (7,2.5) node[right] {$\tiny d$};
	
	\draw (0,7) node[left] {$\tiny w_1$};
	\draw (2,7) node[below] {$\tiny w_2$};
	\draw (2,2.5) node[below] {$\tiny w_3$};
	\draw (5,2.5) node[below] {$\tiny w_4$};
	\draw (5.5,5) node[above] {$\tiny w_5$};
\end{tikzpicture}	
	\caption{Prototypical surface where $b=0$, the $w_i$'s are regular Weierstrass points.}
	\label{fig:prototype:surface:H2}
\end{figure}

\begin{Proposition}\label{prop:split:prot:surf:H2}
	Let  $M:=(X,\omega)$ be the prototypical surface associated to a prototype $(a,b,d,e)\in \Pcal_{D'}(2)$, where $b=0$. Denote by $w_0$ be the unique zero of $\omega$ and label the remaining Weierstrass points of $X$ by $w_1,\dots,w_5$ as in Figure~\ref{fig:prototype:surface:H2}.
	We then have
	\begin{itemize}
		\item[(i)] $(M,w_1) \in \Omega\ol{E}_{D'}(2,2)^\odd$ if $a$ is even, and   $(M,w_1) \in \Omega\ol{E}_{4D'}(2,2)^\odd$ if $a$ is odd.
		
		\item[(ii)] $(M,w_2)\in \Omega\ol{E}_{D'}(2,2)^\odd$ if both $a$ and $d$ are even, $(M,w_2)\in \Omega\ol{E}_{4D'}(2,2)^\odd$ otherwise.
		
		\item[(iii)] $(M,w_3) \in \Omega\ol{E}_{D'}(2,2)^\odd$ if both $d$ and $e$ are even, and $(M,w_3) \in \Omega\ol{E}_{4D'}(2,2)^\odd$ otherwise.
		
		\item[(iv)] $(M,w_4) \in \Omega\ol{E}_{D'}(2,2)^\odd$ if both $a-e$ and $d$ are even, and $(M,w_4) \in \Omega\ol{E}_{4D'}(2,2)^\odd$ otherwise.
		
		\item[(v)] $(M,w_5)\in \Omega \ol{E}_{D'}(2,2)^\odd$ if $a-d-e$ is even, $(M, w_5) \in \Omega \ol{E}_{4D'}(2,2)^\odd$ $a-d-e$ is odd.
	\end{itemize}
%
%
%
%
\end{Proposition}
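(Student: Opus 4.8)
The plan is to reduce each of the five cases to a homological parity computation comparing the real multiplication lattice of the genus two eigenform with that of the genus three eigenform produced by the surgery. The starting data is the prototypical surface $(X,\omega)\in\Omega E_{D'}(2)$ with $\omega(A_1)=\lambda=\frac{e+\sqrt{D'}}{2}$, $\omega(B_1)=\lambda\imath$, $\omega(A_2)=a$, $\omega(B_2)=d\imath$ (recall $b=0$), where $(A_1,B_1,A_2,B_2)$ is the McMullen symplectic basis of $H_1(X,\Z)$ in which the generator $T'$ of $\cO_{D'}$ is given by the $2\times 2$ blocks $\left(\begin{smallmatrix} e & a \\ d & 0\end{smallmatrix}\right)$. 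The surgery at $w_i$ produces $M^i_t\in\Omega E_D(2,2)^\odd$ with $D\in\{D',4D'\}$ by the results of \cite{LN:finite} (recalled in the proof of Lemma~\ref{lm:gp:I:limit:diff}), so the only thing to decide is whether the discriminant doubles, and this is controlled by the half-period $v_i:=\int_{w_0}^{w_i}\omega\in\frac12 H_1(X,\Z)/H_1(X,\Z)$.

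First I would make the homology of $M^i_t$ explicit. As $t\to0$ the surface $M^i_t$ degenerates to a point of $\cS_{0,2}$ and recovers $(X,\omega)$; by Lemma~\ref{lm:gp:I:limit:diff}(iii) the degenerating map identifies $f_*H_1(M^i_t,\Z)^-$ with an index-two sublattice $L\subset H_1(X,\Z)$ containing $2H_1(X,\Z)$. The two saddle connections $\sigma,\sigma'=\tau(\sigma)$ joining the two zeros of $M^i_t$ sit over $w_0$ and $w_i$, so the distinguished cycle $a_2\in H_1(M^i_t,\Z)^-$ built from $\sigma\cup(-\sigma')$ has holonomy congruent to $2v_i$ modulo $O(t^3)$; in particular $\omega_{M^i_t}(a_2)$ is, in the limit, an integer combination of the genus two periods determined by the doubled coordinates of $v_i$. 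I would then choose a symplectic basis $(a_1,b_1,a_2,b_2)$ of $H_1(M^i_t,\Z)^-$ with $\langle a_1,b_1\rangle=1$, $\langle a_2,b_2\rangle=2$, with $a_1,b_1$ coming from the $\tau$-invariant cylinder and $a_2,b_2$ from the $\tau$-exchanged pair carrying the handle, and record the precise index-two sublattice $L=\langle f_*a_1,f_*b_1,f_*a_2,f_*b_2\rangle$ that arises for each $w_i$.

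With these periods in hand I would apply Proposition~\ref{prop:eigen:form:per:eq}: the generator of the order acting on $H_1(M^i_t,\Z)^-$ has matrix $\left(\begin{smallmatrix}\hat e I_2 & 2\hat B\\ \hat B^* & 0\end{smallmatrix}\right)$ with $D=\hat e^2+8\det\hat B$, and the entries of $(\hat e,\hat B)$ are forced by the period relation $(\omega(a_2),\omega(b_2))=\frac{2}{\lambda}(\omega(a_1),\omega(b_1))\hat B$ together with the known action of $T'$. Doubling of the discriminant ($D=4D'$) occurs exactly when the resulting candidate $(\hat e,\hat B)$ fails to be primitive, i.e. when the $\gcd$ of its entries is even, forcing the primitive generator to be $\hat T/2$ and the order to be $\cO_{4D'}$; and this in turn is equivalent to an explicit congruence on the doubled half-period coordinates of $v_i$. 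Computing $v_i$ for the five Weierstrass points from Figure~\ref{fig:prototype:surface:H2} (for instance $v_1=\frac12\omega(B_1)$, $v_2=\frac12(\omega(A_1)+\omega(B_1))$, and so on), and feeding each into the congruence while using the action of $T'$ (which brings in $e$ and $a$, hence the combinations $a-e$ and $a-d-e$), yields the five stated parity conditions (i)--(v).

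The main obstacle is the bookkeeping in the second step: one must pin down, separately for each of the five attaching points, exactly which index-two sublattice $L\subset H_1(X,\Z)$ the surgery produces and how the non-principal polarization $\langle a_2,b_2\rangle=2$ distributes among $f_*a_2,f_*b_2$, since it is precisely this choice that decides whether $T'$ restricts to a primitive generator on $L$ or only to twice one. A secondary point is to confirm that the surgery genuinely lands in the \emph{odd} component and, when $D\equiv 1\,[8]$, to track the component decomposition via the prototype parities as in \cite{McM:spin, GP24}; but once the index-two lattice is correctly identified, each case is a short congruence computation and the five assertions follow uniformly.
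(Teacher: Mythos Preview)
Your approach and the paper's both come down to checking primitivity of a generator, but the routes differ. The paper works directly on the genus-three surface $M_i$: for each $i$ it exhibits an explicit cylinder decomposition of $M_i$ (horizontal for (i)--(ii), vertical for (iii)), reads off a symplectic basis of $H_1(M_i,\Z)^-$ from the core curves and crossing curves, writes down the matrix of a self-adjoint endomorphism $T$ in that basis, and decides between $\cO_{D'}$ and $\cO_{4D'}$ from the $\gcd$ of the entries of $T$; cases (iv) and (v) are then dispatched by the observation that rotating the prototype by $\pi/2$ sends $(a,0,d,e)$ to $(a-d-e,0,d,-e-2d)$, reducing them to (i) and (ii). Your route through the degenerating map and the index-two sublattice $L\subset H_1(X,\Z)$ is a legitimate and more uniform alternative, but the step you flag as the main obstacle---identifying $L$ for each $w_i$ and transporting the $(1,2)$-polarization through $f_*$---really is the entire content, and carrying it out is at least as much work as the paper's five short matrix computations. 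One point to watch: $f_*$ is not an isometry (the restricted form on $L$ and the intrinsic form on $H_1(M_i,\Z)^-$ differ on one symplectic block), so the passage from ``$T'$ preserves $L$'' to ``$\cO_{D'}$ acts self-adjointly on $\Prym(M_i)$'' needs its own justification. The paper's explicit-matrix route sidesteps all of this, and the $\pi/2$-rotation trick for (iv)--(v) is a shortcut your half-period bookkeeping does not exploit.
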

\begin{proof}
	For $i=1,\dots,5$, let $M_i$ be a surface constructed from
	$(M,w_i)$ by the surgery described above.
	For (i), we can suppose that $M_1$ is constructed from horizontal slits on $M$ (that is with a parameter $t\in \R$). Then $M_1$ is decomposed into three cylinders in the horizontal direction, one of which is fixed while the other two are permuted by the Prym involution $\tau$ (see Figure~\ref{fig:prot:surf:splitting:1}).
	One can pick out a symplectic basis $(\alpha_i,\beta_i,\; i=1,2)$ of $H_1(M_1,\Z)^-$ as follows
	\begin{itemize}
		\item[$\bullet$] $\alpha_1=\alpha'_1+\alpha''_1$, where $\alpha'_1$ and $\alpha''_1$ are the core curves of the horizontal cylinders permuted by $\tau$,
		
		\item[$\bullet$] $\beta_1=\beta'_1+\beta''_1$, where $\beta'_1$ (resp. $\beta''_1$) is contained in the closure of the cylinder with core curve $\alpha'_1$ (resp. $\alpha''_1$) such that $(\alpha'_1,\beta'_1)=1$ (resp. $(\alpha''_1,\beta''_1)=1$).
		
		\item[$\bullet$] $\alpha_2$ is the core curve of the horizontal cylinder fixed by $\tau$,
		
		\item[$\bullet$] $\beta_2$ is a simple closed curve contained in the closure of the cylinder with core curve  $\alpha_2$ such that $(\alpha_2,\beta_2)=1$.
	\end{itemize}
	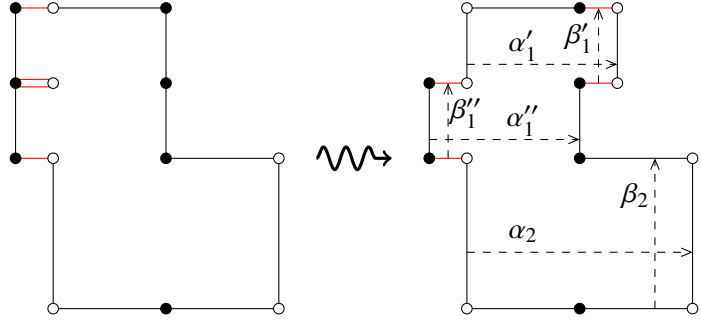
\begin{figure}[htbp]
		\centering 
		\begin{tikzpicture}[scale=0.5]
			\draw (0,8) -- (0,4) (1,4) -- (1,0) -- (7,0) -- (7,4) -- (4,4) -- (4,8) -- (1,8);
			
			\foreach \x in {(0,8), (0,6.1), (0,5.9), (0,4)} \draw[red] \x -- +(1,0);
			
			\foreach \x in {(0,8), (0,6), (0,4), (4,8), (4,6), (4,4), (4,0)} \filldraw \x circle (4pt);
			
			\foreach \x in {(1,8),(1,6), (1,4), (1,0), (7,4), (7,0)} \filldraw[fill=white] \x circle (4pt);
			
			\draw (11,6) -- (11,4) (12,4) -- (12,0) -- (18,0) -- (18,4) -- (15,4) -- (15,6) (16,6) -- (16,8) (15,8)-- (12,8) -- (12,6);
			
			\foreach \x in {(11,6),(11,4), (15,8), (15,6)} \draw[red] \x -- +(1,0);
			
			\foreach \x in {(11,6), (11,4), (15,8), (15,6), (15,4), (15,0)} \filldraw \x circle (4pt);
			
			\foreach \x in {(12,8), (12,6), (12,4), (12,0), (16,8), (16,6), (18,4), (18,0)} \filldraw[fill=white] \x circle (4pt);
			
			\foreach \x in {(11,4.5), (12,6.5)} \draw[->, >=angle 45, dashed] \x -- +(4,0);
			\draw[->, >=angle 45, dashed] (12,1.5) -- (18,1.5);
			
			\foreach \x in {(11.5,4), (15.5,6)} \draw[->,>=angle 45, dashed] \x -- +(0,2);
			
			\draw[->, >= angle 45, dashed] (17,0) -- (17,4);
			
			\draw (13.5,7) node {$\tiny \alpha'_1$} (13.5,5) node {$\tiny \alpha''_1$} (15,7.2) node {$\tiny \beta'_1$} (12,5.2) node {$\tiny \beta''_1$} (13.5,2) node {$\tiny \alpha_2$} (16.5,3) node {$\tiny \beta_2$};
			
			\draw[->, very thick, snake=snake, segment amplitude=1.5mm, segment length=3mm, line after snake=1mm] (8,4) -- (10,4);
		\end{tikzpicture}
		
		\caption{Construction of $M_1$}
		\label{fig:prot:surf:splitting:1}
	\end{figure}
	Let $v=(2\lambda',2\imath\lambda', a, \imath d) \in \C^4$, with $\lambda'=\frac{e+\sqrt{D'}}{2}$, be the vector recording the periods of $(\alpha_1,\beta_1,\alpha_2,\beta_2)$.
	Let $T$ be the endomorphism of $H_1(M_1, \Z)^-$ given in the basis $(\alpha_1,\beta_1,\alpha_2,\beta_2)$ by the matrix
	$T=\left(
	\begin{smallmatrix}
		2e & 0 & a & 0\\
		0 & 2e & 0 & 2d\\
		4d & 0 & 0 & 0 \\
		0 & 2a & 0 & 0
	\end{smallmatrix}
	\right).$
	One readily checks that $T$ is self-adjoint with respect to the intersection form and satisfies $T^2=2eT+4ad$. Moreover, we have
	$$
	{}^tv\cdot T =2\lambda'\cdot {}^tv.
	$$
	Let $(X_1,\omega_1)$ be the Abelian differential corresponding to $M_1$.
	By the arguments of \cite[Th. 3.5]{McM:prym} (see also \cite[\textsection 4]{LN:H4}), $T$ generates a subring isomorphic to $\Ocal_{4D'}$ in $\End(\Prym(X_1))$, for which $\omega_1$ is an eigenform.
	If $\langle T \rangle$ is the maximal self-adjoint subring of $\End(\Prym(X_1))$ that preserves the line $\C\cdot\omega_1$, then by definition we have $M_1\in \Omega E_{4D'}(2,2)^\odd$.
	This is the case if and only if $\gcd(a,2d,2e)=1$. Since $\gcd(a,d,e)=1$, this occurs when $a$ is odd. 
	If $a$ is even then $T/2\in \End(\Prym(X_1))$, and $\langle T/2 \rangle \simeq \Ocal_{D'}$ which means that $M_1\in \Omega E_{D'}(2,2)^\odd$. This completes the proof of (i).
	
	\medskip
	
	For (ii), we also consider a surface $M_2:=(X_2,\omega_2)$ obtained from $M$ by some horizontal slitting. In particular, $M_2$ is horizontally periodic with the same cylinder diagram as $M_1$. We choose a symplectic basis $(\alpha_1,\beta_1,\alpha_2,\beta_2)$ of $H_1(M_2,\Z)^-$ in the same way as for $M_1$.
	We consider the endomorphism of $H_1(M_2, \Z)^-$ given in the basis $(\alpha_1,\beta_1,\alpha_2,\beta_2)$ by the matrix
	$T=\left(
	\begin{smallmatrix}
		2e & 0 & a & -d\\
		0 & 2e & 0 & 2d\\
		4d & 2d & 0 & 0 \\
		0 & 2a & 0 & 0
	\end{smallmatrix}
	\right).
	$ One readily checks that $T\in \End(\Prym(X_2))$ is self-adjoint and generates a subring isomorphic to $\Ocal_{4D'}$ in $\End(\Prym(X_2))$ for which $\omega_2$ is an eigenform. We conclude by similar arguments as case (i).
	
	\medskip
	
	For (iii), we consider a surface $M_3=(X_3,\omega_3)$ obtained from $M$ by a small vertical slitting (see Figure~\ref{fig:split:vert:eigen:form:H2}). In this case $M_3$ is decomposed into $4$ horizontal cylinders with the diagram I.A (see \textsection\ref{sec:eigen:form:g3}). We can pick out a basis $(\alpha_1,\beta_1,\alpha_2,\beta_2)$ such that $\langle \alpha_i,\beta_i\rangle =i, \; i=1,2$, and  whose periods are given by the vector $v=(\lambda',a+\imath\lambda',2a,\lambda'+\imath d)$. By considering the endomorphism of $H_1(M_3, \Z)^-$ given by the matrix
	$T=\left(
	\begin{smallmatrix}
		2e & 0 & 4a & 2e\\
		0 & 2e & 0 & 2d\\
		d & -e & 0 & 0 \\
		0 & 2a & 0 & 0
	\end{smallmatrix}
	\right)
	$ we get the desired conclusion.
	
	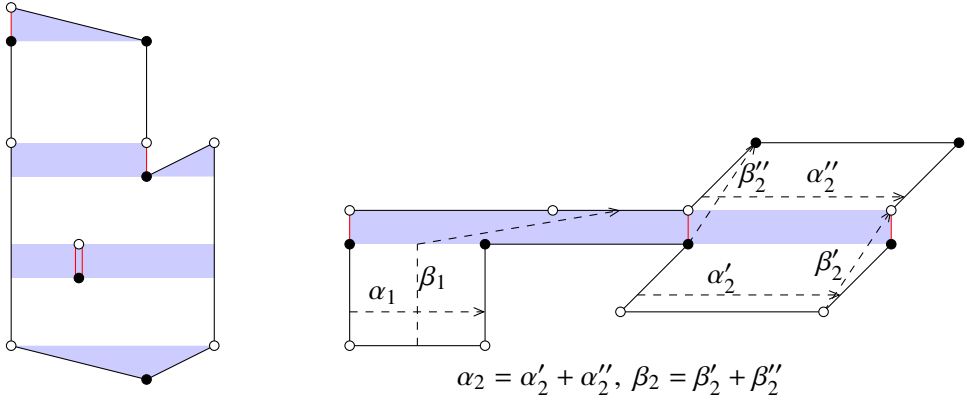
\begin{figure}[htbp]
		\centering
		\begin{tikzpicture}[scale=0.45]
			\fill[blue!20] (0,10) --( 0,9) -- (4,9) -- cycle;
			\fill[blue!20] (0,6) -- (0,5) -- (4,5) -- (4,6) --cycle;
			\fill[blue!20] (4,5) -- (6,5) -- (6,6) -- cycle;
			\fill[blue!20] (0,3) -- (0,2) -- (6,2) -- (6,3) -- cycle;
			\fill[blue!20] (0,0) -- (4,-1) -- (6,0) -- cycle;
			\fill[blue!20] (10,4) -- (10,3) -- (26,3) -- (26,4) -- cycle;
			
			\draw  (0,9) -- (0,0) -- (4,-1) -- (6,0) -- (6,6)  -- (4,5) (4,6) -- (4,9) -- (0,10);
			\foreach \x in {(0,9), (4,5), (1.9,2), (2.1,2)} \draw[red] \x -- +(0,1);
          \draw (10,3) -- (10,0) -- (14,0) -- (14,3) -- (20,3) -- (18,1) -- (24,1) -- (26,3) (26,4) -- (28,6) -- (22,6) -- (20,4) --(10,4);
          
          \foreach \x in {(10,3), (20,3), (26,3)} \draw[red] \x -- +(0,1);
          
          \draw[->, >=angle 45, dashed] (20,3) -- (22,6);
          \draw[->, >=angle 45, dashed] (24,1) -- (26,4);
          
          \foreach \x in {(0,10), (0,6), (0,0), (2,3), (4,6), (6,6), (6,0), (10,4), (10,0), (14,0), (16,4), (18,1), (20,4), (24,1), (26,4)} \filldraw[fill=white] \x circle (4pt);
          
          \foreach \x in {(0,9), (2,2), (4,9), (4,5), (4,-1), (10,3), (14,3), (20,3), (22,6), (26,3), (28,6)} \filldraw \x circle (4pt);
          
          \draw[dashed] (12,0) -- (12,3); 
          \draw[->, >=angle 45, dashed] (12,3) -- (18,4); 
          \draw[->, >=angle 45, dashed] (10,1)  -- (14,1);
          \draw (11,1.5) node {$\tiny \alpha_1$} (12.5,2) node {$\tiny \beta_1$};  
          \draw (22,5) node {$\tiny \beta''_2$};
          \draw (24.2,2.4) node {$\tiny \beta'_2$};
          \draw[->, >=angle 45, dashed] (20.4, 4.4) -- +(6,0);
          \draw (24,5) node {$\tiny \alpha''_2$};
          \draw[->, >=angle 45, dashed] (18.5, 1.5) -- +(6,0);
          \draw (21,2) node {$\tiny \alpha'_2$};
          \draw (18,-1) node {$\tiny \alpha_2=\alpha'_2+\alpha''_2, \; \beta_2=\beta'_2+\beta''_2$};
		\end{tikzpicture}
		
		\caption{Construction of $M_3$ by a vertical splitting}
		\label{fig:split:vert:eigen:form:H2}
	\end{figure}

	Finally, for (iv) and (v), by rotating $M$ by the angle $\pi/2$, then rescaling by a diagonal  matrix, one can transform $M$ into the prototypical surface associated with the prototype $(a^*,b^*,d^*,e^*)$, where
	$a^*=a-d-e, b^*=0, d^*=d$, and $e^*=-e-2d$. We can then conclude by the arguments of cases (i) and (ii).
\end{proof}

Let us now prove 
\begin{Proposition}\label{prop:degree:forget:map:S02}
	Let $D\equiv 0 \; [4], \; D\geq 8$ be an even discriminant which is not a square. Recall that  $F':\cS'_{0,2} \to  W_{D}(2)$ and $F'': \cS''_{0,2} \to  W_{D/4}(2)$ are the maps consisting of forgetting the marked regular Weierstrass points on the genus two components of the underlying stable curves.
	We have
	\begin{itemize}
		\item[(i)] If $D/4\equiv 2,3 \; [4]$, then $\deg F'=4!$ and $\deg F''=0$.
		
		\item[(ii)] If $D/4\equiv 0 \; [4]$, $D/4 \geq 8$, then $\deg F'=4!$ and $\deg F''=4\cdot 4!$.
		
		\item[(iii)] If $D/4\equiv 1 \; [8]$, $D/4 \geq 17$, then $\deg F'=4!$ and $\deg F''=3\cdot 4!$.
		
		\item[(iv)] If $D/4\equiv 5 \; [8]$, $D/4 \geq 13$,  then $\deg F'=4!$ and $\deg F''=5!$.
	\end{itemize}
\end{Proposition}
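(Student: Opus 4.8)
The strategy is to compute each degree by counting the fiber of the forgetful map over a conveniently chosen point of the target Teichmüller curve, reducing everything to the splitting data recorded in Proposition~\ref{prop:split:prot:surf:H2}. First I would invoke Lemma~\ref{lm:deg:proj:Teich:curve:unnumber}, which says that the map forgetting \emph{only} the numbering of the four fixed points of the Prym involution is a covering of degree $4!$. Consequently, for a point $(X,[\omega])\in W_D(2)$ whose fiber is unramified, one has
\[
\deg F' = 4!\cdot \#\{\, i\in\{1,\dots,5\} : (X,w_i)\in\Omega\ol{E}_D(2,2)^\odd\,\},
\]
and the analogous formula holds for $F''$ over $W_{D/4}(2)$, where $w_1,\dots,w_5$ are the regular Weierstrass points of $X$ and $(X,w_i)$ denotes the limit eigenform produced by the ``breaking up a zero'' construction of \textsection\ref{subsec:eigen:fom:H2:w:reg:Wei:pt} with $w_i$ playing the role of the second node of attachment. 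Thus the whole proposition reduces to determining, for an eigenform in $\Omega E_{D'}(2)$, how many of its regular Weierstrass points split into discriminant $D$ rather than staying in $D'$.

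To evaluate these counts I would work on a prototypical surface $M=(X,\omega)$ attached to a reduced prototype $(a,0,d,e)\in\Pcal_{D'}(2)$, on which the five regular Weierstrass points $w_1,\dots,w_5$ are explicitly located as in Figure~\ref{fig:prototype:surface:H2}. Proposition~\ref{prop:split:prot:surf:H2} then tells me, purely in terms of the parities of $a,d,e$, whether each $(M,w_i)$ lies in $\Omega\ol{E}_{D'}(2,2)^\odd$ or in $\Omega\ol{E}_{4D'}(2,2)^\odd$. It remains to run the parity bookkeeping using the prototype relation $D'=e^2+4ad$ together with $\gcd(a,d,e)=1$, which pins down the admissible parities of $(a,d,e)$ from the residue of $D'$ modulo $8$.

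The case analysis then proceeds as follows. For $F'$ we have $D'=D\equiv 0\,[4]$, so $e$ is even and $a,d$ are not both even; checking the five conditions of Proposition~\ref{prop:split:prot:surf:H2} shows that \emph{exactly one} of $w_1,\dots,w_5$ splits into $\Omega\ol{E}_D(2,2)^\odd$ for every admissible parity of $(a,d)$, giving $\deg F'=4!$ uniformly. For $F''$ I take $D'=D/4$ and count the $w_i$ splitting into $\Omega\ol{E}_{4D'}(2,2)^\odd=\Omega\ol{E}_D(2,2)^\odd$, i.e. $5$ minus the number staying in $\Omega\ol{E}_{D'}(2,2)^\odd$. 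If $D/4\equiv 2,3\,[4]$ then $D/4$ is not a discriminant, $W_{D/4}(2)=\varnothing$, and $\deg F''=0$; if $D/4\equiv 0\,[4]$ then $e$ is even and again exactly one $w_i$ stays, so four split and $\deg F''=4\cdot 4!$; if $D/4\equiv 1\,[8]$ then $e$ is odd and precisely two $w_i$ stay, so three split and $\deg F''=3\cdot 4!$; and if $D/4\equiv 5\,[8]$ then $a,d,e$ are all odd, no $w_i$ stays, all five split, and $\deg F''=5!$. These are exactly the asserted values.

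The main obstacle, and the point needing the most care, is the passage from the count on a single prototypical surface to the \emph{generic} fiber count over all of $W_{D'}(2)$, including across its several components. I would argue that the splitting discriminant attached to a marked triple $(X,w_i,[\omega])$ is an invariant of the connected component of $W^*_{D'}(2)$ containing it, hence locally constant along $W_{D'}(2)$, which justifies replacing the generic fiber by the fiber over a prototypical cusp. Here I would rely on the description of the components of $W^*_{D'}(2)$ and of the degrees of $\cR_{D'}\colon W^*_{D'}(2)\to W_{D'}(2)$ from \cite{GP24}, together with the fact from \cite{McM:spin} that every component of $W_{D'}(2)$ (both spin components when $D'\equiv 1\,[8]$) is represented by a reduced prototype, so that the parity count above is valid on each component. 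Once this constancy is in place, the four cases of the bookkeeping give the stated degrees.
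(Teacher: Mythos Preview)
Your proposal is correct and follows essentially the same approach as the paper: reduce via Lemma~\ref{lm:deg:proj:Teich:curve:unnumber} to counting, for a prototypical surface in $W_{D'}(2)$, how many of the five regular Weierstrass points $w_i$ yield a limit in $\Omega\ol{E}_D(2,2)^\odd$ according to the parity criteria of Proposition~\ref{prop:split:prot:surf:H2}.

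The only methodological difference is in how this count is carried out. The paper simply picks one explicit prototype per connected component of the target (e.g.\ $(4k,0,1,0)$ for $W_D(2)$ when $D=16k$, or $(2k,0,1,\pm1)$ for the two spin components of $W_{D/4}(2)$ when $D/4\equiv 1\,[8]$) and checks the five conditions directly. You instead run a uniform parity bookkeeping over all admissible $(a,d,e)$-parities compatible with $D'\bmod 8$, verifying that the count is independent of which prototype is chosen. Your route is slightly more systematic and makes the constancy across components transparent; the paper's route is more concrete and avoids the case enumeration by relying on McMullen's connectedness results (and checking both spin components separately in case~(iii)). Both are valid, and your concern about passing from a single prototype to the generic fiber is adequately handled in either approach since $F',F''$ are covering maps with locally constant degree.
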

\begin{proof}\hfill 
	\begin{itemize}
		\item[(i)] Since $D/4\equiv 2,3 \; [4]$, $D/4$ is not a discriminant. Therefore $\cS''_{0,2}=\vide$, and $\deg F''=0$. 
		We have either $D=8k$, or $D=8k+4$, where $k$ is an odd number. 
		In the former case, let $M$ be the surface constructed from the prototype $(2k,0,1,0) \in \Pcal_D(2)$.
		Let $w_1,\dots,w_5$ be the regular Weierstrass points of $M$ as in Proposition~\ref{prop:split:prot:surf:H2}.  Then only $(M,w_1)$ belongs to $\Omega \ol{E}_{D}(2,2)^\odd$. This means that the preimage of $M$ in $\cS_{0,2}$ consists of one point up to a numbering of the fixed points of the Prym involution. Therefore we have $\deg F= \deg F'=4!$ in this case.
		
		In the latter case, that is $D=8k+4$, $k$ odd, let $M$ be the surface  associated to the prototype $(2k+1,0,1,0) \in \Pcal_D(2)$.
		and $w_1,\dots,w_5$  the regular Weierstrass points of $M$. From Proposition~\ref{prop:split:prot:surf:H2}, only $(M,w_5)$ is contained in $\Omega  \ol{E}_D(2,2)^\odd$. Thus we also have $\deg F'=4!$.
		
		\medskip 
		
		\item[(ii)] In this case, we can write $D=16k, \; k\in \N$, $k \geq 2$. Let $M$ be the surface constructed from the prototype $(4k,0,1,0) \in \Pcal_D(2)$ and $w_1,\dots,w_5$ be the regular Weierstrass points of $M$. 
		By Proposition~\ref{prop:split:prot:surf:H2}, only $(M,w_1)\in \Omega \ol{E}_D(2,2)^\odd$. Since $W_D(2)$ is connected, we conclude that $\deg F'=4!$.
		
		Consider now the surface $M$ constructed from the prototype $(k,0,1,0) \in \Pcal_{D/4}(2)$. Note that $k$ can be odd or even. In both cases, it follows from Proposition~\ref{prop:split:prot:surf:H2} that four pairs among $\{(M,w_i),\; i=1,\dots,5\}$ belong to $\Omega \ol{E}_D(2,2)^\odd$. Thus $\deg F''=4\cdot 4!$.
		
		\medskip

		\item[(iii)] Let us write $D/4=8k+1$. Then $D=32k+4$. Note that $W_D(2)$ is connected. By considering the surface associated with the prototype $(8k+1,0,1,0) \in \Pcal_D(2)$, we get that $\deg F'=4!$.
		
		By \cite{McM:spin}, we know that $W_{D/4}(2)$ has two components. We will denote those components by $W_{D/4\pm}(2)$ where  $W_{D/4+}(2)$ contains the surface $M^+$ constructed from the prototype $(2k,0,1,-1)$, and  $W_{D/4-}(2)$ contains the surface $M^-$ constructed from the prototype $(2k,0,1,1)$.
		Let $w^+_1,\dots,w^+_5$ (resp. $w^-_1,\dots, w^-_5$) be the regular Weierstrass points of $M^+$ (resp. of $M^-$). From Proposition~\ref{prop:split:prot:surf:H2}, $(M^\pm,w_i^{\pm})$ belongs to $\Omega \ol{E}_D(2,2)^\odd$ if and only if $i\in\{2,3,4\}$. Thus we have $\deg F''=3\cdot 4!$.
		
		\medskip
		
		\item[(iv)] Let us write $D/4=8k+5$ or equivalently $D=32k+20$.
		By considering the surface constructed from the prototype $(8k+5,0,1,0) \in \Pcal_D(2)$ we get that $\deg F'=4!$.
		Consider now the surface $M$ constructed from the prototype $(2k+1,0,1,1) \in \Pcal_{D/4}(2)$. Let $w_1,\dots,w_5$ be the regular Weierstrass points of $M$. It follows from Proposition~\ref{prop:split:prot:surf:H2} that $(M,w_i)\in \Omega \ol{E}_D(2,2)^\odd$ for all $i=1,\dots,5$.
		Thus, we have $\deg F''=5\cdot 4!=5!$.
	\end{itemize}
\end{proof}

\subsection{Proof of Theorem~\ref{th:eval:c1:taut:on:S02:D:even}}
\begin{proof}
	By Proposition~\ref{prop:int:Hodge:curv:on:Teich:curves}, we have
	\begin{align*}
		c_1(\OO(-1))\cdot[\ol{\cS}_{0,2}] & = -\frac{1}{2}\cdot\chi(\cS_{0,2}) = -\frac{1}{2}\cdot\left(\chi(\cS'_{0,2}) + \cdot\chi(\cS''_{0,2})\right)\\
		& = -\frac{1}{2}\cdot\big(\deg F'\cdot\chi(W_D(2))+\deg F''\cdot\chi(W_{D/4}(2))\big)
	\end{align*}
	and we conclude by Proposition~\ref{prop:degree:forget:map:S02}.
\end{proof}

\subsection{Case $D\equiv 1 \, [8]$}
In this case $W_D(2)$ has two connected components (cf. \cite{McM:spin}).
Let $W_{D+}(2)$ (resp. $W_{D-}(2)$) be the component of $W_D(2)$ that contains the surface constructed from the prototype $((D-1)/4), 0, 1, -1)$ (resp. $((D-1)/4,0,1,1)$) in $\Pcal_D(2)$.

Since $4 \nmid D$,  we have $\cS_{0,2}=\cS'_{0,2}$.
Let $\cS_{0,2}^\pm$  denote respectively the intersections of $\hat{\cX}_{D\pm}$  with $\cS_{0,2}$. As a consequence of Proposition~\ref{prop:split:prot:surf:H2}, we get

\begin{Proposition}\label{prop:degree:FD:odd}
	For $D\equiv 1 \, [8], D>9$, $F(\cS_{0,2}^+)=W_{D+}(2), \;  F(\cS_{0,2}^-)=W_{D-}(2)$, and we have
	$$
	\deg(F_{\left|\cS_{0,2}^+\right.})=\deg(F_{\left|\cS_{0,2}^-\right.})=2\cdot4!
	$$
\end{Proposition}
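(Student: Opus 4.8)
The plan is to compute each degree by counting, over a generic eigenform $(X,[\omega])\in W_D(2)$, the points of $\cS_{0,2}$ lying above it, organizing the count around the two pieces of data that determine such a point. First I would note that, since $4\nmid D$, we have $\cS_{0,2}=\cS'_{0,2}$, so every boundary point degenerates an eigenform of discriminant exactly $D$: the genus two component $C_1$ is $X$, its double zero $w_0$ is the node $q$, and the second node $q'$ is one of the five regular Weierstrass points of $X$. Moreover the four fixed points $p_1,\dots,p_4$ of the Prym involution are exactly the four regular Weierstrass points other than the split-off one, so a fibre of $F$ is specified by (a) the choice of admissible split-off Weierstrass point $w$, together with (b) a numbering of $p_1,\dots,p_4$. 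By Lemma~\ref{lm:deg:proj:Teich:curve:unnumber} the numbering contributes a factor $4!$, so the whole computation reduces to counting the regular Weierstrass points $w$ for which $(X,w)\in\Omega\ol{E}_D(2,2)^\odd$.

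Next I would carry out this count at a reduced prototype. Working at the prototypical surface $M$ attached to $(a,0,1,e)\in\Pcal_D(2)$, the congruence $D\equiv 1\,[8]$ forces $e$ odd (as $D$ is odd) and then $4a=D-e^2\equiv 0\,[8]$, so $a$ is even while $d=1$ is odd. Feeding these parities into the five criteria of Proposition~\ref{prop:split:prot:surf:H2} shows that cases $w_2,w_3,w_4$ each require $d$ even and so land in $\Omega\ol{E}_{4D}(2,2)^\odd$, whereas $w_1$ is admissible because $a$ is even and $w_5$ is admissible because $a-d-e$ is even (odd$+$odd is even). Thus exactly $w_1,w_5$ are admissible. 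Since every surface in a component of $W_D(2)$ is $\GL^+(2,\R)$-equivalent to a prototypical surface and this equivalence merely permutes the Weierstrass points, the number of admissible points equals the $\GL^+(2,\R)$-invariant value $2$, independent of the chosen prototype; hence each fibre of $F$ over $W_D(2)$ has cardinality $2\cdot 4!$ before separating the two signs.

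It then remains to show that the two admissible degenerations attached to a fixed $(X,[\omega])$ lie in one and the same component of $\Omega E_D(2,2)^\odd$, matching the one named in the statement. For this I would track the invariant $e\bmod 4$ of the genus three generator, which by the classification of \cite{LN:components} (the invariant used in Lemma~\ref{lm:triple:tori:same:comp:D:1:8}) distinguishes $\Omega E_{D+}(2,2)^\odd$ from $\Omega E_{D-}(2,2)^\odd$. Using the explicit generators $T$ of Proposition~\ref{prop:split:prot:surf:H2}, the admissible generator for $w_1$ is $T/2$ (since $a$ is even); after reordering the symplectic basis to the convention $\langle a_1,b_1\rangle=1,\ \langle a_2,b_2\rangle=2$ and normalizing to the form $\bigl(\begin{smallmatrix} e_3\Id_2 & 2B \\ B^* & 0\end{smallmatrix}\bigr)$ of Proposition~\ref{prop:eigen:form:per:eq}, a short computation gives $e_3=-e$. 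For $w_5$ the surface is carried by a rotation and a diagonal element of $\GL^+(2,\R)$ to the prototype $(a-d-e,0,d,-e-2d)=(a-1-e,0,1,-e-2)$, whence its invariant is $-(-e-2)=e+2$; and because $e$ is odd one has $e+2\equiv -e\pmod 4$. So both admissible degenerations share the residue $-e\bmod 4$ and lie in the same component. Comparing with the reference prototypes $((D-1)/4,0,1,\mp 1)$ defining $W_{D\pm}(2)$ (for which $e_3=-e\equiv \pm1\pmod 4$) then yields $F(\cS_{0,2}^{\pm})=W_{D\pm}(2)$, and the fibre count gives $\deg(F_{|\cS_{0,2}^+})=\deg(F_{|\cS_{0,2}^-})=2\cdot 4!$.

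The main obstacle I expect is precisely this final bookkeeping of components: keeping the three sign conventions mutually consistent (the residue $e_3\bmod 4$ of the genus three generator, the $\pm$ labelling of $W_D(2)$ via reduced prototypes, and the $\pm$ labelling of $\Omega E_D(2,2)^\odd$), and verifying that $w_1$ and $w_5$ genuinely fall into a single component rather than being split between the two—were they to split, each restricted degree would collapse to $4!$ and the statement would fail. The computation $e_3=-e$ hinges on the correct reordering of the symplectic basis used in Proposition~\ref{prop:split:prot:surf:H2}, and the congruence $e+2\equiv -e\pmod 4$ for odd $e$ is exactly what forces the two admissible splittings together; confirming these, together with the $\GL^+(2,\R)$-invariance of the admissible count (for which \cite{GP24} provides the relevant structure of $W^*_D(2)$), is the delicate part of the argument.
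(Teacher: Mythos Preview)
Your counting argument (exactly $w_1$ and $w_5$ admissible, giving $2\cdot4!$ per fibre via Lemma~\ref{lm:deg:proj:Teich:curve:unnumber}) matches the paper's. The approaches diverge at the component identification. The paper works only at the specific prototype $((D-1)/4,0,1,-1)$ for $M^+$, builds the genus-three surface $M_1^+$ from $(M^+,w_1)$, and then degenerates $M_1^+$ \emph{further} to a triple of tori by collapsing a triple of homologous horizontal saddle connections; a diagonal rescaling identifies this triple with the prototypical one attached to $(1,0,(D-1)/8,1)\in\Pcal_D(0^3)$, which by convention lies in the closure of $\Omega E_{D+}(2,2)^\odd$. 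For $w_5$ the paper does not compute any invariant but quotes \cite{GP24} to say that $(M^+,w_1)$ and $(M^+,w_5)$ lie in the same $\GL^+(2,\R)$-orbit in $\Omega E^*_D(2)$, hence in the same component of $\cS_{0,2}$. Your route instead reads $e_3\bmod 4$ directly off the generator matrices of Proposition~\ref{prop:split:prot:surf:H2} for both $w_1$ and $w_5$, thereby avoiding \cite{GP24} entirely, at the cost of leaning on \cite{LN:components} for the fact that $e_3\bmod 4$ (normalized as in Proposition~\ref{prop:eigen:form:per:eq}) really is the component invariant. To make that step self-contained you would have to check that your normalized generator $T/2-e\,\Id$ satisfies the congruence $\langle Tu,v\rangle\equiv\langle u,v\rangle\pmod 2$ of Lemma~\ref{lm:triple:tori:same:comp:D:1:8}; with $a$ even and $e$ odd this reduces to ${}^t(T/2)J\equiv 0\pmod 2$, which is immediate from the explicit matrix. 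Both routes are valid; the paper's is more geometric and uses less of the fine structure of \cite{LN:components}, yours is more algebraic and sidesteps the orbit result of \cite{GP24}.
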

\begin{proof}
	Let $M^+$ be the surface associated with the prototype $((D-1)/4,0,1,-1) \in \Pcal_D(2)$. Let $w_0, w_1,\dots,w_5$ be as in Proposition~\ref{prop:split:prot:surf:H2}. Note that in this case $a=(D-1)/4$ is even. It follows from Proposition~\ref{prop:split:prot:surf:H2} that $(M^+,w_i) \in \Omega \ol{E}_{D}(2,2)^\odd$ if and only if $i=1$ or $i=5$.
	We claim that $(M^+,w_1)\in \Omega \ol{E}_{D+}(2,2)^\odd$.
	To see this we  consider a surface $M_1^+$ obtained from $(M^+,w_1)$ by some small horizontal slits.
	By construction, there are a triple of homologous horizontal saddle connections that decompose $M^+_1$ into a connected sum of three tori.
	We can collapse this triple of saddle connections to  obtain a triple of tori $\hat{M}^+_1$. Rescaling $\hat{M}^{+}_1$ by the matrix $\left(\begin{smallmatrix} 1/\lambda & 0 \\ 0 & (D-1)/(4\lambda) \end{smallmatrix}\right)$, where $\lambda=\frac{-1+\sqrt{D}}{2}$, we obtain the triple of tori associated with the prototype $(1,0,(D-1)/8,1)\in \Pcal_D(0^3)$ (cf. \textsection \ref{subsec:triples:tori:eigenform:comp}). This means that $M^+_1\in \Omega E_{D+}(2,2)^\odd$. Therefore $(M^+,w_1) \in \Omega \ol{E}_{D+}(2,2)^\odd$.
	By the results of \cite{GP24}, $(M^+,w_1)$ and $(M^+,w_5)$ belong to the same $\GL^+(2,\R)$-orbit. Therefore, we also have $(M^+,w_5)\in \Omega \ol{E}_{D+}(2,2)^\odd$.
	
	Let $M^-$ be the surface in  $\Omega E_{D-}(2)$ associated with the prototype $((D-1)/4,0,1,1) \in \Pcal_D(2)$, and $w_1,\dots,w_5$ be the regular Weierstrass points on $M^-$.
	By similar arguments as above $(M^-,w_i)\subset \Omega \ol{E}_{D-}(2,2)^\odd$ if and only if $i=1$ and $i=5$.
	
	Since $4 \, \nmid \, D$, we must have $F(\cS_{0,2})\subset W_D(2)=W_{D+}(2)\sqcup W_{D-}(2)$. The arguments above show that $F(\cS_{0,2}^+)=W_{D+}(2)$,  $F(\cS_{0,2}^-)=W_{D-}(2)$, and we have
	$$
	\#F^{-1}(\{M^+\})= \#F^{-1}(\{M^-\})=2.4!
	$$
	This completes the proof of the proposition.
\end{proof}

\subsection*{Proof of Theorem~\ref{th:eval:c1:taut:on:S02:D:odd}}
\begin{proof}
	It follows from Proposition~\ref{prop:int:Hodge:curv:on:Teich:curves} that
	$$
	c_1(\OO(-1))\cdot[\ol{\cS}_{0,2}^{\pm}]=-\frac{1}{2}\cdot\chi(\cS_{0,2}^{\pm}) =\frac{-1}{2}\cdot\deg F_{\left|\cS_{0,2}^{\pm}\right.}\cdot\chi(W_{D\pm}(2)).
	$$
	In \cite{Bai:GT}, it was shown that $\chi(W_{D+}(2))=\chi(W_{D-}(2))=1/2\cdot\chi(W_D(2))$. We can then conclude by Proposition~\ref{prop:degree:FD:odd}.
\end{proof}

\section{Volume of $\Pb\Omega E_D(2,2)^\odd$} \label{sec:vol:Omg:E:D:2:2}
\begin{proof}[Proof of Theorem~\ref{th:vol:Omg:E:D:2:2}]
	If $ 4 \, | \, D$, combining Theorem~\ref{th:vol:XD},
	Theorem~\ref{th:int:Theta:T:2:0:a:1}, and Theorem~\ref{th:eval:c1:taut:on:S02:D:even}, we get
	
	\begin{equation}\label{eq:eval:vol:XD:D:even}
		\mu(\XD)= \frac{2\pi^2}{3}\Big(\chi(W_D(2))+ b_D\cdot\chi(W_{D/4}(2)) \Big) +6\pi^2\cdot\chi(W_{D}(0^3)).
	\end{equation}
	Since the map $\cX_D \to \Pb\Omega E_D(2,2)^\odd$ has degree $4!=24$, \eqref{eq:vol:Omg:E:D:2:2:formula} follows.
	
	\medskip
	
	In the case $D\equiv 1 \; [8]$,
	Theorem~\ref{th:int:Theta:T:2:0:a:1:D:odd} and Theorem~\ref{th:eval:c1:taut:on:S02:D:odd}  imply
	\begin{equation}\label{eq:eval:vol:XD:pm:D:odd}
		\mu(\cX_{D+})= \mu(\cX_{D-})=\frac{2\pi^2}{3}\cdot\chi(W_D(2)) + 3\pi^2\cdot \chi(W_D(0^3)).
	\end{equation}
	Since $\mu(\Pb\Omega E_{D\pm}(2,2)^\odd)=\frac{1}{4!}\cdot\mu(\cX_{D\pm})$, \eqref{eq:vol:Omg:E:D:2:2:D:odd} follows.
\end{proof}

\section{Siegel-Veech constants}\label{sec:Siegel:Veech}
\subsection{Degenerating by collapsing saddle connections}\label{subsec:collapse:sc}
Let $(X,\omega)$ be an eigenform in  $\Omega E_D(2,2)^\odd$. Denote the zeros of $\omega$ by $x_1, x_2$. By convention, any saddle connection $\s$ on $X$ connecting $x_1$ and $x_2$ is endowed with the orientation from $x_1$ to $x_2$.
We say that $\s$ has multiplicity $k$, $k=1,2,\dots$, if there are exactly $k$ saddle connections on $X$ with the same endpoints and the same period as $\s$.
Since the zeros of $\omega$ have order $2$, the multiplicity of any saddle connection cannot be greater than $3$.
The following proposition generalizes \cite[Prop. 5.5]{LN:components}, its proof is left to the reader.

\begin{Proposition}\label{prop:collapse:fam:s:c}
	Let $\tilde{\sigma}:=\{\sigma_1,\dots,\sigma_k\}, \; k \in \{1,2,3\}$, be a maximal family of saddle connections with the same period joining the two zeros of $\omega$. Assume that any saddle connection $\sigma'$ parallel to $\sigma_1$ not in $\tilde{\sigma}$ (if exists) satisfies $|\sigma'| >|\sigma_1|$.
	Then the family $\tilde{\sigma}$ can be collapsed simultaneously along the isoperiodic leaf of $(X,\omega)$ and the resulting surface belongs to $\Omega E_D(4)$ if $k=1$, to $\Omega E^*_{D'}(2)$ for some $D'\in \{D, D/4\}$ if $k=2$, and to $\Omega E_D(0^3)$ if $k=3$.
\end{Proposition}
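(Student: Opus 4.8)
The plan is to realize the simultaneous collapse as a motion along the one–dimensional isoperiodic leaf and then to read off the limit from the boundary classification of group~I already carried out in \textsection\ref{subsec:geom:bdry:str:gp:I}; this follows and extends the argument of \cite[Prop.~5.5]{LN:components}. First I would record the $\tau$-equivariance of the configuration. Since $\tau^*\omega=-\omega$, for any saddle connection $\sigma$ from $x_1$ to $x_2$ the path $-\tau_*\sigma$ is again a saddle connection from $x_1$ to $x_2$ with $\omega(-\tau_*\sigma)=\omega(\sigma)$; hence the maximal family $\tilde\sigma$ is invariant under $\sigma\mapsto-\tau_*\sigma$. For $k=1$ this forces $-\tau_*\sigma_1=\sigma_1$, so $\sigma_1$ is $\tau$-invariant with its midpoint at one of the four fixed points of $\tau$; for $k=2$ the two members are exchanged (matching the $\tau$-invariant annulus $A$ and its two $\tau$-fixed ends in the $\cS_{0,2}$ picture), and for $k=3$ exactly one member is $\tau$-invariant while the remaining two are exchanged.

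Next I would set up the collapse itself. By Lemma~\ref{lm:dual:basis:ker:abs:proj} applied with $s=1$, the locus $\Omega E_D(2,2)^\odd$ has rel one, so its kernel foliation is one-dimensional and is parametrized by the relative period $\omega(c_1)$ of a path $c_1$ from $x_1$ to $x_2$. Since each $\sigma_i$ differs from $c_1$ by an absolute cycle, all the periods $\omega(\sigma_i)$ change by the same increment as one moves along the leaf; because the members of $\tilde\sigma$ share a common period, the whole family shrinks simultaneously, and the hypothesis $|\sigma'|>|\sigma_1|$ for every parallel $\sigma'\notin\tilde\sigma$ guarantees that no shorter saddle connection appears before $\omega(\sigma_1)$ reaches $0$, so the collapse is well defined in the sense of \cite{EMZ03,KZ03,LN:finite}. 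The resulting stable differential has nodes exactly at the images of the $k$ collapsed saddle connections, so by Theorem~\ref{th:bdry:eigen:form:H22} it lies in $\cS_{1,0}$, $\cS_{0,2}$ or $\cS_{2,0}^a$ according as $k=1,2,3$.

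I would then identify the eigenform type by invoking Lemma~\ref{lm:gp:I:limit:diff}. For $k=1$ the two double zeros merge into a single zero of order $2+2=4$ at the fixed midpoint of $\sigma_1$, the genus is unchanged, and $H_1(X,\Z)^-$ is preserved; the real multiplication endomorphism $T$ of Proposition~\ref{prop:eigen:form:per:eq} therefore descends and the limit lies in $\Omega E_D(4)$. For $k=3$ the three homologous saddle connections disconnect $X$ into three tori, reversing the cyclic slit–and–glue construction of Remark~\ref{rk:triple:tori:plumb}; again $H_1(X,\Z)^-$ maps isomorphically onto the anti-invariant lattice of the limit, so $T$ persists and the result is a triple of tori in $\Omega E_D(0^3)$.

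The delicate case, which I expect to be the main obstacle, is $k=2$. Collapsing the two homologous saddle connections lowers the genus to two and, by the surgery in the proof of Proposition~\ref{prop:bdry:str:gp:I} for $\cS_{0,2}$, produces a genus-two eigenform $(C_1,\xi_1)$ with a double zero at one node and a regular (non-zero) Weierstrass point at the second node; remembering the latter gives a point of $\Omega E^*_{D'}(2)$. The subtlety is the discriminant: the two collapsed saddle connections may be nonseparating, so $H_1(X,\Z)^-$ embeds in $H_1(C_1,\Z)^-$ only as a sublattice of index at most $2$, whence either $T$ or $T/2$ generates the order acting by self-adjoint endomorphisms on $\Prym(C_1)$, giving $D'\in\{D,D/4\}$. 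Establishing this index computation and the regularity of the marked Weierstrass point is exactly the content of Lemma~\ref{lm:gp:I:limit:diff}(iii) together with \cite[Th.~8.6]{LN:finite}, to which I would appeal to complete the proof.
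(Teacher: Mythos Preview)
Your proposal is correct and follows the route the paper intends: the paper gives no proof at all, merely pointing to \cite[Prop.~5.5]{LN:components} and leaving the generalization to the reader, and your argument is precisely a filled-in version of that generalization. The $\tau$-equivariance observation, the use of the one-dimensional kernel foliation from Lemma~\ref{lm:dual:basis:ker:abs:proj}, and the shortest-saddle-connection hypothesis to ensure the collapse reaches the boundary are exactly what is needed; your identification of the discriminant in the $k=2$ case via the index-two sublattice and the appeal to \cite[Th.~8.6]{LN:finite} is also the right mechanism.

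One minor remark on strategy rather than correctness: you route the identification of the limit through the boundary classification Theorem~\ref{th:bdry:eigen:form:H22} and Lemma~\ref{lm:gp:I:limit:diff}, whereas the argument in \cite{LN:components,LN:finite} works directly on the flat surface (break-up-a-zero and slit constructions run in reverse). Both are valid and there is no circularity, but your sentence ``has nodes exactly at the images of the $k$ collapsed saddle connections'' is imprecise---for $k=1$ the flat limit is smooth in $\Omega E_D(4)$ and the node in $\cS_{1,0}$ is only an artifact of tracking the marked points $p_5,p'_5$; likewise for $k=2,3$ the nodes are the attaching points of the collapsed $\Pb^1$ bubble, not the saddle connections themselves. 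Your subsequent case-by-case analysis gets this right, so the imprecision is only in the summary sentence. Also, the claim that for $k=2$ the two members are necessarily exchanged by $-\tau_*$ (rather than each being $\tau$-invariant) deserves a line of justification; it follows from the fact that in the $\cS_{0,2}$ model the regular fixed points of $\tau$ all lie on the genus-two component, not on the $\Pb^1$ bubble carrying the saddle connections.
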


As a byproduct of Proposition~\ref{prop:collapse:fam:s:c}, we get

\begin{Corollary}\label{cor:collapse:s:c:non:par:abs:per}
	Let $(X,\omega)\in\Omega E_D(2,2)^\odd$ with $D$ not a square, and $\tilde{\sigma}:=\{\sigma_1,\dots,\sigma_k\}$ be a maximal family of saddle connections with the same period joining the two zeros of $\omega$.
	Assume that $\sigma_1$ is  not parallel to any vector in the set
	$$
	\mathrm{Per}(\omega):=\{\omega(c), \; c \in H_1(X,\Z)\}-\{0\} \subset \R^2.
	$$
	Then $\tilde{\sigma}$ can be collapsed simultaneously along the isoperiodic leaf of $(X,\omega)$.
\end{Corollary}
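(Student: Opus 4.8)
The plan is to deduce the corollary directly from Proposition~\ref{prop:collapse:fam:s:c} by verifying that the latter's hypothesis is automatically satisfied — in fact vacuously — once we assume that $\sigma_1$ is not parallel to any vector of $\mathrm{Per}(\omega)$. Concretely, I would show that \emph{no} saddle connection parallel to $\sigma_1$ exists outside the maximal family $\tilde{\sigma}$, so that the requirement ``every parallel $\sigma'\notin\tilde{\sigma}$ satisfies $|\sigma'|>|\sigma_1|$'' holds for the trivial reason that there is no such $\sigma'$, and the collapse along the isoperiodic leaf follows.

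The key step is a homological dichotomy applied to a hypothetical saddle connection $\sigma'$ parallel to $\sigma_1$. First I would treat the case where $\sigma'$ is a loop based at a single zero (joining $x_i$ to $x_i$): then $\sigma'$ determines a class in $H_1(X,\Z)$ whose holonomy $\omega(\sigma')$ is nonzero, being a saddle connection, and parallel to $\omega(\sigma_1)$; hence $\omega(\sigma')\in\mathrm{Per}(\omega)$ is parallel to $\sigma_1$, contradicting the hypothesis. Second I would treat the case where $\sigma'$ joins the two distinct zeros $x_1,x_2$: orienting it from $x_1$ to $x_2$, the difference $\sigma'-\sigma_1$ is an absolute cycle $c\in H_1(X,\Z)$ with $\omega(c)=\omega(\sigma')-\omega(\sigma_1)$. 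Since both holonomies point in the common direction of $\tilde{\sigma}$, so does $\omega(c)$. If $\sigma'\notin\tilde{\sigma}$, then maximality of $\tilde{\sigma}$ forces $\omega(\sigma')\neq\omega(\sigma_1)$, whence $\omega(c)\neq 0$, giving once more a nonzero element of $\mathrm{Per}(\omega)$ parallel to $\sigma_1$ — the same contradiction.

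Combining the two cases shows that $\tilde{\sigma}$ already contains every saddle connection parallel to $\sigma_1$, so the hypothesis of Proposition~\ref{prop:collapse:fam:s:c} is vacuously met and the family $\tilde{\sigma}$ can be collapsed simultaneously along the isoperiodic leaf. The assumption that $D$ is not a square is inherited from Proposition~\ref{prop:collapse:fam:s:c}, where it ensures the collapsed surface lands in the expected Prym eigenform locus ($\Omega E_D(4)$, $\Omega E^*_{D'}(2)$, or $\Omega E_D(0^3)$ according to $k$).

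I do not anticipate any serious obstacle: the proof is an elementary consequence of the fact that two saddle connections with the same endpoints differ by an absolute homology class. The only point requiring care is the bookkeeping of orientations when passing from a geometric arc $\sigma'$ to its relative class, together with the correct use of maximality of $\tilde{\sigma}$ — namely that a parallel arc between the two zeros sharing the \emph{exact} period of $\sigma_1$ already lies in $\tilde{\sigma}$, so the only candidates $\sigma'$ outside $\tilde{\sigma}$ are those with a \emph{different} period, which is precisely what manufactures the forbidden nonzero parallel absolute period.
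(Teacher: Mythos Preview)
Your proposal is correct and follows essentially the same approach as the paper: both verify that the hypothesis of Proposition~\ref{prop:collapse:fam:s:c} holds vacuously by showing there is no saddle connection parallel to $\sigma_1$ outside $\tilde{\sigma}$, via the same two-case homological argument (closed saddle connections give absolute periods directly, while arcs between the two zeros differ from $\sigma_1$ by an absolute cycle whose period would be a forbidden parallel element of $\mathrm{Per}(\omega)$).
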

\begin{proof}
	It is enough to show that there is no saddle connection parallel to $\sigma_1$ but not in $\tilde{\sigma}$. Let $\sigma'$ be such a saddle connection.  If $\sigma'$ joins a zero of $\omega$ to itself then it represents an element of $H_1(X,\Z)$, and we have a contradiction to the hypothesis. Therefore, $\sigma'$ must join the two zeros of $\omega$.
	As a consequence $c:=(-\sigma')*\sigma_1$ is an element of $H_1(X,\Z)$ satisfying $\omega(c)=\lambda\cdot\omega(\sigma_1)$ for some $\lambda\in\R$. Again, by the hypothesis we must have $\lambda=0$. It follows that
	$\omega(\sigma')=\omega(\sigma_1)$, which means that $\sigma'\in \tilde{\sigma}$ and we have again a contradiction.  We can now conclude by Proposition~\ref{prop:collapse:fam:s:c}.
\end{proof}

It follows from Proposition~\ref{prop:collapse:fam:s:c},  that $\Omega E_D(4)$ and $\Omega E_D(0^3)$ are contained in the boundary of $\Omega E_D(2,2)^\odd$. 
Denote by $\Omega E^*_{[D]}(2)$ denote the union of the components of $\Omega E^*_{D}(2)$ and $\Omega E^*_{D/4}(2)$ that are contained in the boundary of $\Omega E_D(2,2)^\odd$.

\medskip

In the case $D\equiv \pm 1 \; [8]$, $\Omega E_D(2,2)^\odd$ is a disjoint union of two connected components $\Omega E_{D+}(2,2)^\odd$ and $\Omega E_{D-}(2,2)^\odd$, where $\Omega E_{D+}(2,2)^\odd$ (resp. $\Omega E_{D-}(2,2)^\odd$) contains the closed orbit $\Omega E_{D-}(4)$
(resp. $\Omega E_{D+}(4)$) in its closure. Let $\Omega E^*_{D\pm}(2)$ denote respectively the union of the components of $\Omega E^*_D(2)$ that are contained in the boundary of $\Omega E_{D\pm}(2,2)^\odd$. Finally, let $\Omega E_{D\pm}(0^3)$ denote the union of the components of $\Omega E_D(0^3)$ that are contained in the boundary of  $\Omega E_{D\pm}(2,2)^\odd$ respectively.

To simplify the notation, we will denote the projectivization spaces $\Pb\Omega E_D(4), \Pb\Omega E^*_{[D]}(2), \Pb\Omega E_D(0^3)$ by $W_D(4), W^*_{[D]}(2)$ and  $W_D(0^3)$ respectively.  Similarly, if $D\equiv 1 \, [8]$, we will write $W_{D\pm}(\kappa)=\Pb\Omega E_{D\pm}(\kappa)$ for $\kappa\in \{4, 2, 0^3\}$.


\subsection{Prym eigenforms with a marked saddle connection}\label{subsec:marked:s:connect}
To prove Theorem~\ref{th:Siegel:Veech}, we will consider the Siegel-Veech transforms of the indicator function of a small disc in $\C$. The supports of the Siegel-Veech transforms are tubular neighborhoods of some components of the boundary  of $\Omega_1 E_D(2,2)^\odd$. The corresponding Siegel-Veech constants are obtained from the ratio of the volumes of those neighborhoods and the volume of $\Omega_1 E_D(2,2)^\odd$.   Even though this method is already well known since the pioneer works \cite{EMZ03, MZ08}, the calculation of the Siegel-Veech constants in our situation is however not straightforward because of different normalizations  of the volume forms on different spaces of eigenforms. 
In the sequel, we will focus on the case of saddle connection of multiplicity one. The proofs for the other cases follows the same lines.

For $k=1,2,3$, let $\Omega\tilde{E}^{(k)}_D(2,2)^\odd$ denote the space of triples $(X,\omega,\tilde{\sigma})$, where $(X,\omega) \in \Omega E_D(2,2)^\odd$ and $\tilde{\sigma}=\{\sigma_1,\dots,\sigma_k\}$ is a maximal family of saddle connections connecting the two zeros of $\omega$ having the same period.  Let $\Upsilon_k: \Omega\tilde{E}^{(k)}_D(2,2)^\odd \to \Omega E_D(2,2)^\odd$ be the forgetting map.  Note that $\Upsilon_k$ is a local diffeomorphism. 
The pullback of the volume form on $\Omega E_{D}(2,2)^\odd$ to $\Omega\tilde{E}^{(k)}_{D}(2,2)^\odd$ will be denoted again by $d\Vol$.

Let $\Omega_1 \tilde{E}^{(k)}_D(2,2)^\odd$ denote  the set of surfaces in $\Omega \tilde{E}^{(k)}_D(2,2)^\odd$ which have area one.
As in the case of $\Omega E_D(2,2)^\odd$, we have a volume form $d\vol_1$ on $\Omega_1 \tilde{E}_D^{(k)}(2,2)^\odd$ defined  as follows:  for any $U$ open subset of $\Omega_1 \tilde{E}^{(k)}_D(2,2)^\odd$, $\vol_1(U):=\Vol(C_1(U))$, where $C_1(U):=\bigcup_{t\in(0,1]}t\cdot U$ is the cone over $U$.

\medskip
Consider a surface $(X_0,\omega_0)\in \Omega_1 E_D(4)$. Let $v$ be a vector in $\R^2\setminus\{0\}\simeq \C^*$ such that all the saddle connections of $(X_0,\omega_0)$ in the direction of $\pm v$ (if exist) have length at least $2|v|$. Then  one can ``break up" the unique zero of order $4$ of $\omega_0$ into two double zeros that are connected by a saddle connection $\sigma_v$ with period $v$ (see \cite{KZ03,EMZ03}). Let $(X_v,\omega_v)$ denote the resulting translation surface. Then $(X_v,\omega_v,\sigma_v)$ is an element of $\Omega_1 \tilde{E}^{(1)}_D(2,2)^\odd$.
We will call this construction the ``zero splitting'' with parameter $v$.

Since the zero of $\omega_0$ has order $4$,  there are $5$ pairs of symmetric rays in directions $\pm v$ issued from this zero. As a consequence we obtain $5$ distinct elements in $\Omega_1 \tilde{E}^{(1)}_D(2,2)^\odd$ from $(X_0,\omega_0)$ and $v$  (see for instance \cite[\textsection 5.3]{LN:components} for more details).  Note also that since the zeros of $\omega_v$ are not numbered, the surfaces obtained from $v$ and $-v$ are actually the same.

Let us now fix a small positive real number $\eps_0>0$. The set of $v\in \Delta^*_{\eps_0}$ such that one can break up the zero of $\omega_0$ into two zeros connected by a saddle connection with period $v$ is an open dense subset of $\Delta^*_{\eps_0}$. Therefore, there is  an open dense subset  $\cU_{\eps_0}$ of $\Omega_1 E_D(4)\times \Delta_{\sqrt[5]{\eps_0}}$ and a map $F_1: \cU_{\eps_0} \to \Omega_1 \tilde{E}^{(1)}_D(2,2)^\odd$, which associates to $((X_0,\omega_0),t)$ an element $(X_t,\omega_t,\sigma_t) \in \Omega_1 \tilde{E}^{(1)}_D(2,2)^\odd$ such that all the absolute periods of $\omega_t$ equal the corresponding absolute periods of $\omega_0$, and  $\omega_t(\sigma_t)=t^5$.
The condition $\omega(\sigma_t)=t^5$ reflects the fact that for all $v\in \Delta^*_{\eps_0}$,  the zero splitting with parameter $v$ produces five elements of $\Omega_1 \tilde{E}^{(1)}_D(2,2)^\odd$.

\begin{Lemma}\label{lm:breakup:zero:covering}
The map $F_1$ is a two to one covering onto its image.
\end{Lemma}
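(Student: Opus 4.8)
The plan is to show that $F_1$ is a local biholomorphism all of whose fibers have cardinality $2$. That $F_1$ is a local biholomorphism onto its image is built into the construction: the absolute periods of $\omega_t$ coincide with those of $\omega_0$ while the single relative period is $\omega_t(\sigma_t)=t^5$, and since period coordinates form local charts on both $\Omega E_D(4)$ and $\Omega E_D(2,2)^\odd$, the assignment $((X_0,\omega_0),t)\mapsto (X_t,\omega_t,\sigma_t)$ depends biholomorphically on its arguments. Hence it suffices to count, for a generic target $(X,\omega,\sigma)\in\Omega_1\tilde{E}^{(1)}_D(2,2)^\odd$, the preimages under $F_1$.

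First I would recover the first coordinate. Since $\sigma$ is a single saddle connection ($k=1$) joining the two double zeros, Proposition~\ref{prop:collapse:fam:s:c} shows that collapsing $\sigma$ along the isoperiodic leaf produces a well-defined surface $(X_0,\omega_0)\in\Omega_1 E_D(4)$, and this is the only surface in $\Omega_1 E_D(4)$ from which $(X,\omega,\sigma)$ can arise by a zero splitting. Thus every preimage of the target has the same first coordinate $(X_0,\omega_0)$, and the fiber is governed entirely by the admissible splitting parameters $t$. Writing $v:=\omega(\sigma)$ for the oriented period, I note that, because the two zeros carry no numbering, $v$ is only well defined up to sign, so the relation $t^5=\omega_t(\sigma_t)$ forces $t^5\in\{v,-v\}$.

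Next I would analyze these ten values of $t$. The zero of $\omega_0$ has order $4$, so locally $\omega_0$ is modelled on $z^4\,dz$, the cone angle equals $10\pi$, and there are exactly five prongs in each of the two opposite directions $\pm v$. By the zero-splitting construction of \cite{KZ03,EMZ03} (see also \cite[\S5.3]{LN:components}), the five symmetric pairs of rays in directions $\pm v$ yield five pairwise distinct surfaces in $\Omega_1\tilde{E}^{(1)}_D(2,2)^\odd$; the normalization $\omega_t(\sigma_t)=t^5$ is chosen precisely so that the five fifth-roots $t,\zeta t,\dots,\zeta^4 t$ of a fixed value $v$ (with $\zeta:=e^{2\pi\imath/5}$) realise these five distinct splittings. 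Consequently, among the five roots of $t^5=v$, only $t$ itself produces the given target, the four others giving the four remaining, genuinely different, split surfaces; the same statement holds verbatim for the five roots of $t^5=-v$.

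The heart of the argument, and the step I expect to be the main obstacle, is to pin down how the roots of $v$ and of $-v$ interact, which is exactly where the absence of a numbering of the zeros enters. Swapping the two zeros is an involution $\iota$ of the space of marked split surfaces which reverses the orientation of $\sigma$, hence sends the oriented period $v$ to $-v$ while leaving the underlying flat surface unchanged; since the zeros are unnumbered, $\iota(X,\omega,\sigma)=(X,\omega,\sigma)$ as a point of $\Omega_1\tilde{E}^{(1)}_D(2,2)^\odd$. Therefore the target, viewed now as a split surface with oriented period $-v$, is one of the five distinct surfaces produced by the roots of $t^5=-v$, so exactly one such root $t'$ (necessarily $t'\neq t$, as $(t')^5=-v\neq v$) also maps to the target. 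Combining the two cases, the fiber of $F_1$ over $(X,\omega,\sigma)$ is precisely $\{((X_0,\omega_0),t),\,((X_0,\omega_0),t')\}$, of cardinality $2$; moreover $t'\in\Delta_{\sqrt[5]{\eps_0}}$ whenever $t$ is, so both preimages lie in $\cU_{\eps_0}$. As $F_1$ is a local biholomorphism with all fibers of cardinality $2$, it is a two-to-one covering onto its image.
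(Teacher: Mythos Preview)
Your proof is correct and follows the same approach as the paper: recover $(X_0,\omega_0)$ by collapsing $\sigma$, then count the parameters $t$ with $t^5=\pm v$ that yield the given target. The paper is terser, simply asserting that the preimage is $\{((X_0,\omega_0),\pm t)\}$; you spell out why the remaining eight fifth roots produce the other four split surfaces, which is a welcome clarification but not a different method.
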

\begin{proof}
Given $(X,\omega,\sigma) \in F_1(\cU_{\eps_0}) \subset \Omega_1 \tilde{E}^{(1)}_D(2,2)^\odd$ collapsing the marked saddle connection allows us to recover the surface  $(X_0,\omega_0) \in \Omega_1 E_D(4)$. It follows that $F_1$ is a local diffeomorphism. Moreover, since the surface $(X_0,\omega_0)$ is uniquely determined by $(X,\omega,\sigma)$, and the period of $\sigma$ depends on the labelling of the zeros of $\omega$ (recall that $\sigma$ is endowed with the orientation from $x_1$ to $x_2$ by convention),  the preimage of $(X,\omega,\sigma)$ consists of two elements $((X_0,\omega_0),\pm t)$ with $\omega(\sigma)=\pm t^5$. Therefore, we have $\deg F_1= 2$.
\end{proof}

Theorem~\ref{th:Siegel:Veech} will follow from
\begin{Proposition}\label{prop:vol:neigh:Omg:E:D:4}
	We have
	\begin{equation}\label{eq:vol:neigh:Omg:E:D:4}
	\int_{\cU_{\eps_0}}F^*_1 d\vol_1 = \frac{5\pi^3\eps^2_0}{6}\chi(W_D(4)).
    \end{equation}	
\end{Proposition}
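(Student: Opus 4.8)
The plan is to strip off the unit-area constraint, reduce to an integral over the full cone, and then disintegrate that integral into an ``absolute'' part controlled by the Teichmüller curve $W_D(4)$ and a ``relative'' part controlled by the splitting parameter $t$. Write $G\colon\cU_{\eps_0}\times(0,1]\to\Omega_{\le1}\tilde{E}^{(1)}_D(2,2)^\odd$ for the map $((X_0,\omega_0),t,c)\mapsto c\cdot F_1((X_0,\omega_0),t)$. Since the scaling map $y\mapsto c\cdot y$ identifies $\Omega_1\tilde{E}^{(1)}_D(2,2)^\odd\times(0,1]$ with $\Omega_{\le1}\tilde{E}^{(1)}_D(2,2)^\odd$, and since by Lemma~\ref{lm:breakup:zero:covering} both $F_1$ and $G$ are $2$-to-$1$ onto their images, the definition of the cone measure $d\vol_1$ gives
$$
\int_{\cU_{\eps_0}}F_1^* d\vol_1 = 2\,\Vol\big(C_1(F_1(\cU_{\eps_0}))\big) = \int_{\cU_{\eps_0}\times(0,1]}G^* d\Vol .
$$
The degrees cancel, and it remains to evaluate the right-hand integral using the explicit formula $d\Vol=2\lambda_6$ of Remark~\ref{rk:vol:form:Omega:E:D:express}, where $\lambda_k$ denotes Lebesgue measure on $\R^k$.

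Next I would pass to period coordinates $(z_1,z_2,w_1)=(\omega(a_1),\omega(b_1),\omega(c_1))$, in which $\lambda_6=\lambda_4(z_1,z_2)\,\lambda_2(w_1)$. Because breaking up the zero preserves all absolute periods, $G$ sends $((X_0,\omega_0),t,c)$ to the point with $(z_1,z_2)=c\cdot(\text{periods of }\omega_0)$ and $w_1=c\,t^5$, where $c=A(z_1,z_2)^{1/2}$ is the scaling restoring unit area. Holding the absolute surface fixed and varying $t$, the holomorphic relation $w_1=c\,t^5$ has Jacobian $\lambda_2(w_1)=25\,c^2|t|^8\,\lambda_2(t)$, and
$$
\int_{\Delta_{\sqrt[5]{\eps_0}}}|t|^8\,\lambda_2(t) = 2\pi\int_0^{\eps_0^{1/5}}r^9\,dr = \frac{\pi\eps_0^2}{5}.
$$
The fifth power is exactly what records the five symmetric pairs of rays emanating from the order-four zero, and the factor $25=5^2$ combines with the $\tfrac15$ above to leave the clean factor $5$. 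After this step everything is reduced to the purely absolute integral $\int_{\{A\le1\}}A\,\lambda_4$ over the period image of $\Omega E_D(4)$, together with the constant $2\cdot5\pi\eps_0^2$.

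Finally I would evaluate the absolute integral in terms of $\chi(W_D(4))$. Using the scaling coordinate $c=A^{1/2}$ one gets $\int_{\{A\le1\}}A\,\lambda_4=\tfrac23\,\lambda_4(\Omega_{\le1}E_D(4))$, and $\lambda_4(\Omega_{\le1}E_D(4))$ is computed component by component of the Teichmüller curve $W_D(4)\cong\Hbb/\Gamma$ by slicing into the $\C^*$-fibre and the hyperbolic coordinate $\tau$, exactly as in the proof of Proposition~\ref{prop:int:Hodge:curv:on:Teich:curves}. The inner $\C^*$-integral produces a factor $(\Im\tau)^{-2}$ which, against the identity $\imath\vartheta=\tfrac12(\Im\tau)^{-2}\lambda_2(\tau)$ of that proposition, turns the remaining integral into $\int_{W_D(4)}\imath\vartheta=-\pi\chi(W_D(4))$. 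Collecting the constants $2$ (from $d\Vol$), $5\pi\eps_0^2$ (relative integral), $\tfrac23$ (radial reduction), and $-\pi\chi(W_D(4))$ then produces the asserted value $\tfrac{5\pi^3\eps_0^2}{6}\chi(W_D(4))$.

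The main obstacle lies in the bookkeeping of normalizations in this last step, and in fixing the overall sign. The area function has the form $A=\rho_D\,(z_1\wedge z_2)$ for a prototype-dependent constant $\rho_D=2-e/\lambda$ coming from the eigenform relation of Proposition~\ref{prop:eigen:form:per:eq}, and a naïve slicing makes $\lambda_4(\Omega_{\le1}E_D(4))$ carry a factor $\rho_D^{-2}$ that varies from one component of $W_D(4)$ to another. The delicate point is to show this factor is compensated, so that only $\chi(W_D(4))$ survives: the natural mechanism is to route the whole absolute computation through the curvature form $\imath\vartheta=-\partial\bar\partial\log\|\omega\|^2$, which is insensitive to the multiplicative constant $\rho_D$ (this is precisely why Proposition~\ref{prop:int:Hodge:curv:on:Teich:curves} gives a clean $-\pi\chi$), matching it against the eigenform scaling implicit in the normalization $w_1=t^5$. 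Verifying this cancellation, together with the orientation conventions governing the sign of $d\vol_1$ relative to $\chi(W_D(4))$, is where the genuine work of the proof is concentrated.
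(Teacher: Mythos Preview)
Your approach is essentially the same as the paper's: disintegrate $F_1^*d\vol_1$ into an absolute part living on $\Omega_1 E_D(4)$ and a relative part coming from the splitting parameter~$t$, integrate out the $t$--variable (producing the factors $25$ and $\pi\eps_0^2/5$), and reduce the absolute integral to $\chi(W_D(4))$ via the curvature form. The paper organizes the bookkeeping a little differently, and in doing so dissolves the $\rho_D$--cancellation you single out as the main obstacle.

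The point is that the paper never descends to the Lebesgue measure $\lambda_4$ in a particular basis of $H_1(X,\Z)^-$. Instead it works throughout with the intrinsic volume form $d\Vol'=\Omega^2/2$ on $\Omega E_D(4)$, where $\Omega$ is the imaginary part of the Hodge Hermitian form restricted to the two--dimensional absolute period space $\Vb$. In $(z_1,w_1)$--coordinates this is $\tfrac{K^2}{16}\,dz_1d\bar z_1dw_1d\bar w_1$, so the prototype constant $K$ (your $\rho_D$) is absorbed into the very definition of $d\Vol'$. After passing to the unit--area hypersurface one gets the clean basis--independent expression $d\vol'_1=-\tfrac{1}{8y^2}\,d\theta\,dx\,dy$ (Lemma~\ref{lm:express:dvol:1:OmED:4}). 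The Jacobian computation then produces $F_1^*d\vol_1=\tfrac{50}{3}\,s^9\,d\vol'_1\wedge ds\wedge d\phi$ (Lemma~\ref{lm:express:dvol:1:in:tub:neigh:OmED:4}); integrating $s^9\,ds\,d\phi$ over $\Delta_{\sqrt[5]{\eps_0}}$ gives $\tfrac{\pi\eps_0^2}{5}$, exactly your relative factor. For the remaining absolute volume the paper simply quotes \cite{Ng25} and Proposition~\ref{prop:int:Hodge:curv:on:Teich:curves} to obtain $\vol'_1(\Omega_1 E_D(4))=\tfrac{\pi^2}{4}\chi(W_D(4))$, which is precisely the content of your proposed ``route through $\imath\vartheta$''. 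So your worry about component--dependent constants is real if one insists on naked Lebesgue measure, but it evaporates once you use $\Omega^2/2$ as the reference volume; no further cancellation argument is needed.
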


\subsection{Volume form on $\Omega E_D(4)$}\label{subsec:vol:form:Omg:E:D:4}
Recall that $\Omega E_D(4)$ is endowed with a natural volume form $d\Vol'$ locally defined as follows: a neighborhood of $(X_0,\omega_0)$ in $\Omega E_D(4)$ can be identified with an open subset of the subspace $\Vb:=\Span(\Re(\omega_0), \Im(\omega_0)) \subset H^1(X_0,\C)$.
The restriction $(.,.)_{|\Vb}$ of the intersection form on $H^1(X_0,\C)$ to $V$ has signature $(1,1)$. In particular $(.,.)_{|\Vb}$ is non-degenerate.
Therefore the imaginary part $\Omega$ of $(.,.)_{|\Vb}$ is a symplectic form on $\Vb$. We define $d\Vol' = \frac{\Omega^2}{2!}$.
As usual, the volume form $d\Vol'$ induces a volume form $d\vol'_1$ on $\Omega_1 E_D(4)$ by the formula $\vol'_1(B)=\Vol'(C_1(B))$, for all  $B \subset \Omega_1 E_D(4)$. 
We endow $\Omega_1E_D(4)\times\Delta_{\sqrt[5]{\eps_0}}$ with the product measure $d\vol'_1\times \lambda_{\rm Leb}$, where $\lambda_{\rm Leb}$ is the Lebesgue measure on $\Delta_{\sqrt[5]{\eps_0}}$. Our goal now is to compare this measure and $F_1^*d\vol_1$.

Let $U \subset \Omega E_D(4)$ be an open subset which can be equipped with a system of coordinates by period mappings.
Consider a surface $(X_0,\omega_0)\in \Omega_1 E_D(4)^\odd\cap U$. Let $(\alpha_1,\beta_1,\alpha_2,\beta_2)$ be a symplectic basis of $H_1(X_0,\Z)^-$ such that $\langle \alpha_i,\beta_i\rangle=i$, and the cycles $\alpha_i$ are represented by the core curves of some parallel cylinders in $X_0$. By Proposition~\ref{prop:eigen:form:per:eq}, there is a matrix $A\in \Mb_2(\R)$ such that $(\omega_0(\alpha_2),\omega(\beta_2))= (\omega_0(\alpha_1),\omega(\beta_1))\cdot A$.
Since $\omega(\alpha_1)$ and $\omega(\alpha_2)$ are parallel, we must have
$A=\left(\begin{smallmatrix} a & b \\ 0  & d \end{smallmatrix}\right)$.
As a consequence, we get that
\begin{align*} 
\Aa(X,\omega_0)  & = \frac{\imath}{2}\int_{X_0}\omega_0\wedge\ol{\omega}_0 = \frac{\imath}{2}\cdot \sum_{k=1}^2\frac{1}{k}(\omega_0(\alpha_k)\ol{\omega}_0(\beta_k)-\omega_0(\beta_k)\ol{\omega}_0(\alpha_k))\\
& = K\cdot \Im(\ol{\omega}_0(\alpha_1)\omega_0(\beta_1))
\end{align*}
where $K$ is a  positive real number.

We can parametrize the neighborhood $B:=U\cap\Omega_1E_D(4)$ of $(X_0,\omega_0)$ by the parameters $(\theta, w) \in \S^1\times\Hbb$ (here $\S^1\simeq \R/(2\pi\Z)$) where
$$
\theta(X_0,\omega_0)= \arg(\omega_0(\alpha_1)) \quad \text{ and } \quad w(X_0,\omega_0)=e^{-\imath \theta(X_0,\omega_0)}\omega_0(\beta_1).
$$
Let us write $w=x+\imath y$, with $x,y\in \R, y >0$.
Then the condition $\Aa(X_0,\omega_0)=1$ implies that $\omega_0(\alpha_1)=e^{\imath\theta}/(K y)$.

\begin{Lemma}\label{lm:express:dvol:1:OmED:4}
In the system of coordinates $(\theta,w)$, we have
\begin{equation}\label{eq:express:dvol:1:OmED:4}
d\vol'_1=\frac{-d\theta dx dy}{8y^2}.
\end{equation}
\end{Lemma}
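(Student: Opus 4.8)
The plan is to compute the $4$-form $d\Vol'$ in holomorphic period coordinates on $W$, then push it through the cone construction and integrate out the scaling parameter. Since $\Omega E_D(4)$ lies in the minimal stratum there are no relative periods, so a neighbourhood of $(X_0,\omega_0)$ is identified with an open subset of $W=\Span(\Re\omega_0,\Im\omega_0)\subset H^1(X_0,\C)^-$, and $(z_1,z_2):=(\omega(\alpha_1),\omega(\beta_1))$ is a holomorphic coordinate system on $W$ by Proposition~\ref{prop:eigen:form:per:eq}. First I would record the Hermitian form $(.,.)$ in these coordinates. The eigenform relations of Proposition~\ref{prop:eigen:form:per:eq}, being given by a real matrix and a real eigenvalue $\lambda$, hold for $\omega_0$ and, after conjugation, for $\bar\omega_0$, hence for every $\zeta\in W=\C\omega_0+\C\bar\omega_0$; substituting them into $(\zeta,\zeta)=\tfrac{\imath}{2}\langle\zeta,\bar\zeta\rangle$ reproduces the identity $\Aa=K\,\Im(\bar z_1 z_2)$, $K>0$, displayed before the statement, now as an identity of the Hermitian form on all of $W$. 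By polarization this shows $(.,.)$ has matrix $\left(\begin{smallmatrix} 0 & \imath K/2 \\ -\imath K/2 & 0\end{smallmatrix}\right)$ in the basis $(z_1,z_2)$.

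Next, using the sign convention $\Omega=\tfrac{\imath}{2}\sum_{j,k}H_{jk}\,dz_j\wedge d\bar z_k$ fixed in the proof of Proposition~\ref{prop:push:meas:proj:sp}, a direct expansion gives $\Omega=-\tfrac{K}{4}(dz_1\wedge d\bar z_2-dz_2\wedge d\bar z_1)$, whence
$$
d\Vol'=\frac{\Omega^2}{2}=\frac{K^2}{16}\,dz_1\wedge d\bar z_1\wedge dz_2\wedge d\bar z_2.
$$
The crucial feature is that $K$ will ultimately cancel, so no explicit value of $K$ is needed.

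Then I would pass to the coordinates adapted to the cone. Writing $t$ for the scaling parameter (so that the area equals $t^2$) and $(\theta,w=x+\imath y)$ for the coordinates on the unit-area slice, the parametrization is $z_1=\tfrac{t}{Ky}e^{\imath\theta}$ and $z_2=t\,e^{\imath\theta}(x+\imath y)$, which specializes at $t=1$ to the normalization $\omega_0(\alpha_1)=e^{\imath\theta}/(Ky)$ recorded before the lemma. Factoring the common phase $e^{\imath\theta}$ and using that $e^{-\imath\theta}z_1$ is real, the wedge $dz_1\wedge d\bar z_1\wedge dz_2\wedge d\bar z_2$ pulls back to $-\tfrac{8t^3}{K^2 y^2}\,dt\wedge d\theta\wedge dx\wedge dy$, so that $d\Vol'=-\tfrac{t^3}{2y^2}\,dt\,d\theta\,dx\,dy$. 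Finally, by the definition $\vol'_1(U)=\Vol'(C_1(U))$ with $C_1(U)=\bigcup_{t\in(0,1]}t\cdot U$, integrating the coefficient over $t\in(0,1]$ yields
$$
d\vol'_1=\Big(\int_0^1 -\frac{t^3}{2y^2}\,dt\Big)\,d\theta\,dx\,dy=-\frac{d\theta\,dx\,dy}{8y^2},
$$
which is the asserted formula.

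The main obstacle will be bookkeeping: getting the orientation and the factors of $\imath$ in the two wedge computations exactly consistent with the paper's conventions for $\Omega=\Im(.,.)$ and $d\Vol'=\Omega^2/2$, since a stray sign there would flip the final answer. The cancellation of the constant $K$ and the appearance of the weight $t^3$ (reflecting that the four real period directions scale linearly while the area scales as $t^2$) provide reassuring consistency checks on the calculation.
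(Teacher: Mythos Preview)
Your proposal is correct and follows essentially the same route as the paper: compute $\Omega^2/2$ in the period coordinates $(\omega(\alpha_1),\omega(\beta_1))$, change to the cone variables $(t,\theta,x,y)$ via $z_1=\tfrac{t}{Ky}e^{\imath\theta}$, $z_2=te^{\imath\theta}w$, and integrate out $t\in(0,1]$. The paper uses the notation $(z_1,w_1)$ and the scaling letter $r$ instead of your $(z_1,z_2)$ and $t$, and records $\Omega=\tfrac{K}{4}(dz_1\wedge d\bar w_1-dw_1\wedge d\bar z_1)$ with the opposite sign from yours, but since only $\Omega^2$ enters the volume form this is immaterial; both computations yield $d\Vol'=-\tfrac{t^3}{2y^2}\,dt\,d\theta\,dx\,dy$ and hence the stated result.
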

\begin{proof}
Since $C_1(B)$ is an open subset of $\Vb$, we have a system of local coordinates on $C_1(B)$ given by  $(z_1,w_1)$, where $z_1$ is the period of $\alpha_1$ and $w_1$ is the period of $\beta_1$. In these coordinates, the intersection form is given by
$$
\hh=\frac{\imath K}{2}\cdot( dz_1\otimes d\bar{w_1} - dw_1\otimes d\bar{z}_1).
$$
Therefore
$$
\Omega=\frac{K}{4}\cdot(dz_1\wedge d\bar{w}_1 -dw_1\wedge d\bar{z}_1).
$$
and
$$
\frac{\Omega^2}{2} =\frac{K^2}{16}dz_1d\bar{z}_1dw_1d\bar{w}_1.
$$
Since $C_1(B) \simeq (0;1]\times B$, we can also parametrize $C_1(B)$ by the parameters $(r,\theta,w) \in (0;1]\times\S^1\times \Hbb$  such that
$z_1=\frac{re^{\imath\theta}}{Ky}$ an $w_1=re^{\imath\theta}w_1$.
Let $\zeta=re^{\imath\theta}$, a quick calculation shows
$$
\frac{\Omega^2}{2}=\frac{|\zeta|^2d\zeta d\bar{\zeta}dw d\bar{w}}{8y^2} = \frac{-r^3drd\theta dx dy}{2y^2}
$$
By definition
$$
\vol'_1(B)=-\int_0^1 r^3dr\int_B\frac{d\theta dx dy}{2y^2}=-\int_B\frac{d\theta dx dy}{8y^2}.
$$
which means that
$$
d\vol'_1=-\frac{d\theta dx dy}{8y^2}.
$$
\end{proof}

\begin{Lemma}\label{lm:express:dvol:1:in:tub:neigh:OmED:4}
Let $(s,\phi)\in \R_{>0}\times\S^1$ be the polar coordinates on $\Delta^*_{\sqrt[5]{\eps_0}}$. Then we have
\begin{equation}\label{eq:express:dvol:1:in:tub:neigh:OmED:4}
F_1^*d\vol_1=\frac{50}{3} \cdot s^9 \cdot d\vol'_1\wedge(ds\wedge d\phi)
\end{equation}
on  $B\times\Delta^*_{\sqrt[5]{\eps_0}}\cap \cU_{\eps_0}$.
\end{Lemma}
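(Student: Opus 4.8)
The plan is to compute $F_1^{*}d\vol_1$ through the cone construction that defines $d\vol_1$, exploiting a product structure of the volume form $d\Vol$ on $\Omega\tilde{E}^{(1)}_D(2,2)^\odd$. The first step is a structural decomposition of $d\Vol$. Take the marked saddle connection $\sigma$ (oriented from $x_1$ to $x_2$) as the path $c_1$ of Lemma~\ref{lm:dual:basis:ker:abs:proj}, and write $w_1=\omega(\sigma)$ for the associated relative period coordinate. Since $s=1$ here, the expression $\Xi=\tfrac{\vartheta^2}{2}\wedge\tfrac{\imath}{2}\,c_1\wedge\bar{c}_1$ of Proposition~\ref{prop:vol:def} reads, in period coordinates, as
$$
d\Vol=d\Vol'\wedge\tfrac{\imath}{2}\,dw_1\wedge d\bar{w}_1,
$$
where $d\Vol'$ is pulled back from $\Omega E_D(4)$ along the saddle-connection-collapsing map. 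Indeed, the absolute factor $\tfrac{\vartheta^2}{2}$ restricted to $W=\Span(\Re\omega,\Im\omega)$ is built from the same Hermitian form $(.,.)$ on the same plane that defines $d\Vol'=\tfrac{\Omega^2}{2}$ on $\Omega E_D(4)$, so the two coincide with no extra constant; moreover the area $\Aa$ is a function of the absolute periods alone. Since $F_1$ is a local diffeomorphism (Lemma~\ref{lm:breakup:zero:covering}), it suffices to identify the two $5$-forms, which I will do by computing $\Vol\big(C_1(F_1(R))\big)=\vol_1(F_1(R))$ for small regions $R$.

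Next I set up the cone parametrization. A point of $\cU_{\eps_0}$ is $\big((X_0,\omega_0),t\big)$ with $(X_0,\omega_0)$ of unit area, parametrized by $(\theta,w)$ as in Lemma~\ref{lm:express:dvol:1:OmED:4} (so $\omega_0(\alpha_1)=e^{\imath\theta}/(Ky)$, $\omega_0(\beta_1)=e^{\imath\theta}w$, $w=x+\imath y$) and $t=se^{\imath\phi}$. By definition of $F_1$, the image surface has the same absolute periods and $\omega(\sigma)=t^5$. The cone $C_1(F_1(R))$ is then swept out by the real scaling parameter $\rho\in(0,1]$, giving the period coordinates
$$
\omega(\alpha_1)=\frac{\rho e^{\imath\theta}}{Ky},\qquad \omega(\beta_1)=\rho e^{\imath\theta}w,\qquad w_1=\omega(\sigma)=\rho s^5 e^{5\imath\phi}.
$$
On the absolute side, the proof of Lemma~\ref{lm:express:dvol:1:OmED:4} gives $d\Vol'=\tfrac{-\rho^3}{2y^2}\,d\rho\,d\theta\,dx\,dy$ in these coordinates. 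For the relative side, holding $\rho$ fixed and writing $w_1=\rho s^5 e^{5\imath\phi}$ in polar form, a direct computation of $d(\Re w_1)\wedge d(\Im w_1)$ yields $\tfrac{\imath}{2}\,dw_1\wedge d\bar{w}_1=25\,\rho^2 s^9\,ds\wedge d\phi$ (the factors of $5$ coming from the fifth power in the definition of $F_1$).

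Finally I wedge the two factors. Because $d\Vol'$ already contains $d\rho$, the $d\rho$-component of $\tfrac{\imath}{2}dw_1\wedge d\bar w_1$ is annihilated and only its $ds\wedge d\phi$ part survives, so
$$
d\Vol=\frac{-25\,\rho^5 s^9}{2y^2}\,d\rho\,d\theta\,dx\,dy\,ds\,d\phi.
$$
Integrating $\rho$ over $(0,1]$ (with $\int_0^1\rho^5\,d\rho=\tfrac16$) produces
$$
F_1^{*}d\vol_1=\frac{-25\,s^9}{12\,y^2}\,d\theta\,dx\,dy\,ds\,d\phi,
$$
which matches $\tfrac{50}{3}s^9\,d\vol'_1\wedge(ds\wedge d\phi)$ upon inserting $d\vol'_1=-\tfrac{d\theta\,dx\,dy}{8y^2}$ from Lemma~\ref{lm:express:dvol:1:OmED:4}. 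The main obstacle, and the conceptual heart of the argument, is the product decomposition $d\Vol=d\Vol'\wedge\tfrac{\imath}{2}dw_1\wedge d\bar w_1$ together with the delicate bookkeeping of the single scaling variable $\rho$, which enters \emph{both} factors: the absolute form contributes $\rho^3$ and the surviving relative term contributes $\rho^2$, combining to the $\rho^5$ that integrates to $1/6$, while the fifth-power normalization $\omega(\sigma)=t^5$ is what turns the naive $s$ into $s^9$.
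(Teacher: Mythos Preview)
Your proof is correct and follows essentially the same approach as the paper's own proof: both decompose $d\Vol$ as the absolute-period volume form wedged with $\tfrac{\imath}{2}\,dz\wedge d\bar z$ for the saddle-connection period $z=\omega(\sigma)=rt^5$, parametrize the cone by the scaling variable $r\in(0,1]$ together with $(\theta,w,t)$, extract the $r^5s^9$ Jacobian factor, and integrate out $r$ to obtain the stated formula. The only differences are cosmetic (you write $\rho,w_1$ where the paper writes $r,z$, and you make the product decomposition $d\Vol=d\Vol'\wedge\tfrac{\imath}{2}dw_1\wedge d\bar w_1$ explicit where the paper works directly in coordinates).
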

\begin{proof}
Let $t = se^{\imath\phi}\in \Delta^*_{\sqrt[5]{\eps_0}}$ be a number such that $((X_0,\omega_0),t) \in \cU_{\eps_0}$ and  $(X_t,\omega_t,\sigma_t):=F_1(X_0,\omega_0,t)$.
By construction, we can consider $\alpha_1,\beta_1$ as elements of $H_1(X_t,\Z)$. 
We have $(\omega_t(\alpha_1),\omega_t(\beta_1))=(\omega_0(\alpha_1),\omega_0(\beta_1)$, and $\omega_t(\sigma_t)=t^5$.
We have a  local system coordinates $(z_1,w_1,z)$  in a neighborhood of $(X_t,\omega_t,\sigma_t)$ in $\Omega\tilde{E}^{(1)}_D(2,2)^\odd$, where for all $(X,\omega,\sigma)$, $z_1=\omega(\alpha_1), w_1=\omega(\beta_1), z=\omega(\sigma)$. In this system of coordinates, we have
$$
d\Vol = \frac{\Omega^2}{2}\wedge\left(\frac{\imath}{2}dz\wedge d\bar{z}\right) = \frac{K^2}{16}dz_1d\bar{z}_1dw_1d\bar{w}_1 \wedge\left(\frac{\imath}{2}dz\wedge d\bar{z}\right).
$$
Let $\tilde{B}$ be an open neighborhood of $(X_t,\omega_t,\sigma_t)$ in $\Omega_1 \tilde{E}^{(1)}_D(2,2)^\odd$.
By definition, $\vol_1(\tilde{B})=\Vol(C_1(\tilde{B}))$.
Since $F_1$ is a covering, we can use $(r,\theta,w,t)\in (0;1]\times\S^1\times\Hbb\times\Delta^*_{\sqrt[5]{\eps_0}}$ as a   local system of coordinates on $C_1(\tilde{B})$.
By the same calculations as in Lemma~\ref{lm:express:dvol:1:OmED:4} we get
\begin{align*}
d\Vol & = \frac{K^2}{16} dz_1d\bar{z}_1 dw_1d\bar{w}_1 \wedge\left(\frac{\imath}{2}dz\wedge d\bar{z}\right)\\
& = \frac{-r^3}{2y^2} dr d\theta dxdy \wedge \left(25r^2|t|^{8}\cdot\frac{\imath}{2}\cdot dt\wedge d\bar{t}\right)\\
& =\frac{-25r^5s^9}{2y^2}  dr d\theta dx dy ds d\phi.
\end{align*}
Therefore
\[
\Vol(C_1(\tilde{B}))  = \frac{-25}{2}\int_0^1 r^5dr \cdot \int_{\tilde{B}} \frac{s^9d\theta dx dy ds d\phi}{y^2}  = \frac{-25}{12}\int_{\tilde{B}} \frac{s^9 d\theta dx dy ds d\phi}{y^2}
\]
which means that
\[
d\vol_1=\frac{-25}{12}\cdot \frac{s^9 d\theta dx dy ds d\phi}{y^2}= \frac{50}{3}\cdot s^9 \cdot d\vol'_1 \wedge (ds\wedge d\phi).
\]
\end{proof}

\subsection*{Proof of Proposition~\ref{prop:vol:neigh:Omg:E:D:4}}
\begin{proof}
From Lemma~\ref{lm:express:dvol:1:in:tub:neigh:OmED:4}, we have
\[
\int_{\cU_{\eps_0}} F^*_1d\vol_1 =  \frac{50}{3}\int_{0}^{2\pi}d\phi \int_0^{\sqrt[5]{\eps_0}} s^9ds \int_{\Omega_1 E_D(4)}d\vol_1' = \frac{10\pi\eps_0^2}{3}\int_{\Omega_1 E_D(4)}d\vol'_1. 
\]	
By \cite[Th. 1.4]{Ng25} and Proposition~\ref{prop:int:Hodge:curv:on:Teich:curves}
$$
\int_{\Omega_1 E_D(4)}d\vol'_1 = -\frac{\pi^2}{2}\cdot c_1(\OO(-1))\cdot [W_D(4)] =\frac{\pi^2}{4}\chi(W_D(4)).
$$
Thus we have
$$
\int_{\cU_{\eps_0}} F^*_1d\vol_1 = \frac{5\pi^3\eps_0^2}{6}\chi(W_D(4))
$$	
as desired.
\end{proof}

\subsection{Proof of Theorem~\ref{th:Siegel:Veech}}
\begin{proof}
Assume that $4 \, | \, D$. For each $(X,\omega) \in \Omega_1 E_D(2,2)^\odd$, let $\Lambda_\omega^{(k)} \subset \C, \; k=1,2,3$, denote the set of periods of saddle connections in $X$ connecting the two zeros of  $\omega$ with multiplicity $k$.
Let $f_{\eps_0}: \C \to \R$ be the indicator function of the disc $\Delta(\eps_0)$, and $\hat{f}^{(k)}_{\eps_0}$ its Siegel-Veech transform with respect to the sets $\Lambda^{(k)}_\omega$.
By definition for all $(X,\omega) \in \Omega_1 E_D(2,2)^\odd$, $\hat{f}^{(k)}_{\eps_0}(X,\omega)$ counts the number of saddle connections with multiplicity $k$ of length at most $\eps_0$. We have
$$
\frac{1}{\vol_1(\Omega_1 E_D(2,2)^\odd)}\cdot\int_{\Omega_1 E_D(2,2)^\odd}\hat{f}^{(k)}_{\eps_0}d\vol_1 = c^{SV}_k(D)\pi\eps_0^2.
$$

Let $\sigma$ be a saddle connection of multiplicity one on $(X,\omega)$ such that $|\sigma| < \eps_0$. By Corollary~\ref{cor:collapse:s:c:non:par:abs:per}, if $\omega(\sigma)$ is not parallel to any vector in ${\rm Per}(\omega)$ 
then $\sigma$ can be collapsed and we  get a surface in $\Omega_1 E_D(4)$.  This means that $(X,\omega,\sigma) \in \Upsilon_1\circ F_1(\cU_{\eps_0})$, where $\Upsilon_1: \Omega_1 \tilde{E}^{(1)}_D(2,2)^\odd \to \Omega_1 E_D(2,2)^\odd$ is the map consisting of forgetting the marked saddle connection.
Thus $F_1(\cU_{\eps_0})$ contains a full measure subset of ${\rm supp}(\hat{f}^{(1)}_{\eps_0})$.   For all $(X,\omega)$ in this subset  $\hat{f}^{(1)}_{\eps_0}(X,\omega)$ counts the preimages of $(X,\omega)$  by $\Upsilon_k$ in $F_1(\cU_{\eps_0})$. Since $\deg F_1=2$, it follows
$$
\int_{\Omega_1E_D(2,2)^\odd}\hat{f}_{\eps_0}^{(1)}d\vol_1=\int_{F_1(\cU_{\eps_0})}d\vol_1=\frac{1}{2}\int_{\cU_{\eps_0}} F_1^*d\vol_1.
$$
It follows from Proposition~\ref{prop:vol:neigh:Omg:E:D:4} that
\begin{align*}
\int_{\Omega_1E_D(2,2)^\odd}\hat{f}_{\eps_0}^{(1)}d\vol_1 
&= \frac{5\pi^3\eps_0^2}{12}\chi(W_D(4)).
\end{align*}
As a consequence, we get
$$
c^{SV}_1(D)=\frac{5\pi^2\chi(W_D(4))}{12\vol_1(\Omega_1E_D(2,2)^\odd)}= \frac{5\pi^2\chi(W_D(4))}{12\mu(\Pb\Omega E_D(2,2)^\odd)}.
$$
By Theorem~\ref{th:vol:Omg:E:D:2:2}, we know that
$$
\mu(\Pb\Omega E_D(2,2)^\odd) =\frac{\pi^2}{36}(\chi(W_D(2))+b_D\chi(W_{D/4}(2))+9\chi(W_D(0^3))).
$$
Therefore
$$
c^{SV}_1(D)=\frac{15\chi(W_D(4))}{\chi(W_D(2))+b_D\chi(W_{D/4}(2))+9\chi(W_D(0^3))}.
$$
The proofs for $c^{SV}_2(D)$ and $c^{SV}_3(D)$ are similar. 

\medskip 

In the case $D \equiv 1 \; [8]$, one needs to distinguish the components $\Omega E_{D+}(2,2)^\odd$ and $\Omega E_{D-}(2,2)^\odd$.
By definition the closure of $\Pb\Omega E_{D+}(2,2)^\odd$ contains the curves $W_{D-}(4)$, $W^*_{D+}(2)$ and $W_{D+}(0^3)$. It is shown in \textsection \ref{sec:W:Teich:curves} that $W^*_{D+}(2)$ is a double cover of $W_{D+}(2)$. Therefore we have $\chi(W^*_{D+}(2))=2\chi(W_{D+}(2))$.
Similarly, the closure of $\Pb\Omega E_{D-}(2,2)^\odd$ contains the curves $W_{D+}(4)$, $W^*_{D-}(2)$ and $W_{D-}(0^3)$, and we have $\chi(W^*_{D-}(2))=2\chi(W_{D-}(2))$.
By the results of Bainbridge \cite{Bai:GT}, M\"oller \cite{Mo14}, and Corollary~\ref{cor:euler:char:W:Dpm:0:equal}, we know that
\[
\chi(W_{D+}(\kappa))=\chi(W_{D-}(\kappa))=\frac{\chi(W_D(\kappa))}{2}
\]
for all $\kappa \in \{4,2,0^3\}$.
Thus the desired conclusions follow from Theorem~\ref{th:vol:Omg:E:D:2:2}. The cases $k\in\{2,3\}$ follow from similar arguments.
\end{proof}
\appendix

\section{Degenerate Prym eigenforms}\label{sec:degenerate:eigen:form}
\subsection{Level structure and twisted differentials}\label{subsec:twisted:diff}
By definition, $\Omega\cX_D$ is contained in the stratum $\Omega'\cB_{4,1}(2,2) \subset \Omega'\cB_{4,1}$ which consists of tuples $(C,p_1,\dots,p_5,p'_5,\tau,\xi)$ such that $\div(\xi)=2 p_5+2p'_5$. Therefore, $\partial\ol{\cX}_D$ is contained in the closure  $\Pb \Omega'\ol{\cB}_{4,1}(2,2)$ in $\Pb\Omega'\ol{\cB}_{4,1}$. 
An important tool for our classification of the points in $\partial \ol{\cX}_D$ is the following  result
\begin{Theorem}[Bainbridge-Chen-Gendron-Grushevsky-M\"oller \cite{BCGGM1, BCGGM2}]\label{th:twisted:diff}
	Let $(C,p_1,\dots,p_5,p'_5,\tau,\xi)$ be an element of $\Omega' \ol{\cB}_{4,1}(2,2)$.  
	Denote the irreducible components of $C$ by $C_j, \; j \in J$.
	Then there exists on each  $C_j$ a meromorphic Abelian differential $\xi_j$, and there is a level structure on the set of components of $C$, that is an assignment to each $C_j$ a level $\ell_j \in \Z_{\leq 0}$,  such that
	\begin{itemize}
		\item[(a)] $\xi$ vanishes identically on all components of level $\leq -1$, and if $C_j$ is a component of level $0$ then $\xi_j=\xi_{\left| C_j\right.}$
		
		\item[(b)] For all $j\in J$, if $p_5 \in C_j$ (resp. $p'_5 \in C_j$) then $p_5$ (resp. $p'_5$) is a double zero of $\xi_j$, all the other zeros and poles of $\xi_j$ are located at the nodes incident to $C_j$.
		
		\item[(c)] If $\tau(C_j)=C_{j'}$ a then $C_j$ and $C_{j'}$ have the same level and we have $\tau^*\xi_{j'}=-\xi_j$.
		
		\item[(d)] The family $\{(C_j,\xi_j), \; j\in J\}$, which is called a {\em twisted differential}, is compatible with the level structure $\{\ell_j, \, j\in J\}$ which means the following: let $q$ be a node of $C$ which is incident to the irreducible components $C_j$ and $C_{j'}$ (it is possible that $j=j'$). Let $k_j$ (resp. $k_{j'}$) be the order of $\xi_j$ (resp. of $\xi_{j'}$) at $q$.
		Then we must have $k_j+k_{j'}=-2$,  $\ell_j >  \ell_{j'}$ implies $k_j > k_{j'}$, and if $\ell_j=\ell_{j'}$ then $k_j=k_{j'}=-1$ and
		$$
		\res_{q}(\xi_j)+\res_{q}(\xi_{j'})=0.
		$$
		
		\item[(e)] For any negative integer $L$, let $C^0_{>L}$ be  a connected component of  the union of all irreducible components with level $>L$. Let $q_1,\dots,q_r$  the nodes between $C^0_{>L}$ and the components of level $L$.
		For each $q_i$, let $C_{\sigma(i)}$ be the component of level $L$ that contains $q_i$. Note that by (d) $q_i$ is a pole of order at least two of $\xi_{\sigma(i)}$. Then  we must have
		\begin{equation}\label{eq:GRC}
			\sum_{i=1}^r\res_{q_i}\xi_{\sigma(i)}=0.
		\end{equation}
	\end{itemize}
\end{Theorem}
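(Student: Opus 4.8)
The plan is to obtain this statement as the $\tau$-equivariant specialization of the incidence-variety (twisted-differential) compactification of strata of abelian differentials due to Bainbridge--Chen--Gendron--Grushevsky--M\"oller. Since a tuple in $\Omega'\ol{\cB}_{4,1}(2,2)$ carries a differential $\xi$ with $\div(\xi)=2p_5+2p'_5$, forgetting the involution $\tau$ and applying $\rho_2$ places the underlying data in the closure $\Omega\ol{\cM}_3(2,2)$. First I would invoke the BCGGM characterization for this stratum, which directly yields the level function $\ell_j$, the meromorphic differentials $\xi_j$, the local order relation $k_j+k_{j'}=-2$ together with the sign and equal-level residue rules of (d), and the global residue condition (e). This produces items (a), (b), (d), (e) verbatim; only the equivariance (c) remains to be extracted.

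The two directions of the BCGGM statement are proved by different mechanisms, and I would recall both in order to insert the involution. For necessity, one starts from a one-parameter degenerating family $(C_s,\xi_s)$ with $C_0=C$ and analyzes the flat metric $|\xi_s|^2$ in plumbing coordinates about each node. The level of a component is read off from the exponential rate at which $\xi_s$ scales on it; normalizing by a suitable power of $s$ and passing to the limit produces $\xi_j$. The order relation at a node and the equal-level residue-cancellation follow from the local model $\xi_s \sim (u^{k}+\dots)du$ on $uv=s^{r}$, and the GRC (e) is the statement that the total residue escaping into level $L$ must vanish, which one derives from the residue theorem applied to the limiting differential on the subsurface of level $>L$ as $s\to 0$. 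For sufficiency, one runs this in reverse: given the twisted differential obeying (d) and (e), the GRC is precisely the obstruction-vanishing needed to glue the $\xi_j$ across nodes by plumbing, with the various levels rescaled by suitable powers of $s$, into a genuine family of holomorphic differentials degenerating to $(C,\xi)$.

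To install the equivariance (c) I would run the whole degeneration analysis $\tau$-equivariantly. A boundary point of $\ol{\cX}_D$ is approached within $\ol{\cB}_{4,1}$, so the degenerating family may be taken of the form $(C_s,\tau_s,\xi_s)$ with $\tau_s^*\xi_s=-\xi_s$ and $\tau_s\to\tau$. Because $\tau_s$ is an automorphism preserving the flat metric $|\xi_s|$, it preserves the scaling rates, hence $\ell_{\tau(j)}=\ell_j$; and the normalizing powers of $s$ can be chosen constant on each $\tau$-orbit of components. Passing to the limit in $\tau_s^*\xi_s=-\xi_s$ then gives $\tau^*\xi_{j'}=-\xi_j$ whenever $\tau(C_j)=C_{j'}$, which is exactly (c).

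The main obstacle I expect is the compatibility of the equivariant rescaling with the global residue condition: one must check that the GRC produced by the unadorned BCGGM theorem is consistent with, and not overdetermined by, the sign constraint $\tau^*\xi_{j'}=-\xi_j$. Here the residues occurring in a given GRC relation come in $\tau$-paired groups whose contributions are negatives of one another, so the equivariance neither adds independent relations nor contradicts (e); verifying this bookkeeping carefully---especially at nodes fixed by $\tau$, where the two local branches are $\tau$-invariant and the residue is forced to vanish---is the technical heart of the argument.
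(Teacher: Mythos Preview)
The paper does not supply a proof of this theorem: it is stated as a result of Bainbridge--Chen--Gendron--Grushevsky--M\"oller, cited to \cite{BCGGM1, BCGGM2}, and used throughout the appendix as a black box. So there is no proof in the paper to compare your proposal against.

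That said, your outline is the correct way to derive the statement from the cited references. Items (a), (b), (d), (e) are precisely the twisted-differential characterization of the incidence-variety closure of a stratum, specialized to the signature $(2,2)$ with the zeros marked; the only addition is the equivariance (c), and your argument for it---that one may approach the boundary inside $\ol{\cB}_{4,1}$, where the involution $\tau_s$ persists along the degenerating family, hence preserves the scaling rates and descends to the limiting twisted differential---is exactly right. One small correction: you should not literally pass through $\rho_2:\ol{\cB}_{4,1}\to\ol{\cM}_3$, since $\rho_2$ forgets the marked points and may contract components of $C$; the BCGGM result should be applied to the pointed stable curve $(C,p_5,p'_5)$ in the closure of the stratum over $\ol{\cM}_{3,2}$ (or equivalently work directly with the multi-scale/IVC compactification with marked zeros), so that the component structure of $C$ is retained. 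Your concern about GRC versus equivariance is not really an obstacle: the twisted differential $\{\xi_j\}$ exists by the unadorned BCGGM theorem, and since $\tau$ is an automorphism of $(C,\xi)$ the pullback $\{-\tau^*\xi_{\tau(j)}\}$ is another twisted differential compatible with the same limit $\xi$; uniqueness (up to the level-wise rescalings) forces them to agree, which gives (c) and automatically makes the GRC relations $\tau$-consistent.
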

\begin{Remark}\label{rk:GRC}
	The data of $\{(C_j, \xi_j), \; j \in J\}$  is called a {\em twisted Abelian differential} and  property (e) is called the {\em Global Residue Condition}.
\end{Remark}

\subsection{Characterizing differentials in the boundary of Prym eigenform loci}\label{subsec:char:bdry:form}\hfill \\
We now prove a series of results providing characterizing properties of Abelian differentials in the boundary of $\ol{\cX}_D$. These characterizations will be used in the proof of Theorem~\ref{th:bdry:eigen:form:H22}.

Let $\pp:=(C,\underline{p},\tau, [\xi])$ be a point in $\partial\Pb\Omega'\ol{\cB}_{4,1}(2,2)$, where $\ul{p}=\{p_1,\dots,p_5,p'_5\}$. Recall that by definition
\begin{itemize}
	\item[$\bullet$] $(C,\ul{p})$ is a pointed nodal stable curve,
	
	\item[$\bullet$] $\tau$ is an  involution of $C$ that fixes each of the points in $\{p_1,\dots,p_4\}$, and exchanges $p_5$ and $p'_5$,
	
	\item[$\bullet$] $\xi$ is a non-trivial holomorphic section of the dualizing sheaf $\omega_C$ satisfying  $\tau^*\xi=-\xi$
\end{itemize}
Denote by $C_j, \; j\in J$ the irreducible components of $C$.
In what follows, by a subcurve of $C$ we mean a union of some of its irreducible components. 
Let $\{(C_j, \xi_j), \, j \in J\}$ be a twisted differential on $C$ (cf. Theorem~\ref{th:twisted:diff}). Consider a node $q$ of $C$. If $q$ is a self-node of an irreducible component $C_j$, then $\xi_j$ must have simple pole at $q$.
In particular, if $C_j$ has level zero, since $\xi_j=\xi_{\left|C_j\right.}$, $q$ must be a pair of simple poles with opposite residues of $\xi$.
In the case $q$ is incident to two distinct irreducible components, condition (d) of Theorem~\ref{th:twisted:diff} implies that either $\xi$ has simple poles at $q$, or at least one of the two components has negative level.

\begin{Proposition}\label{prop:node:fixed:by:invol}
	Let $p$ be a node of $C$ which is fixed by the Prym involution. Then $\xi$ cannot have simple pole at $p$. As a consequence, $\xi$ must vanish identically on at least one of the two irreducible components meeting at this node.
\end{Proposition}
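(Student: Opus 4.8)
The plan is to reduce the first assertion to the vanishing of a residue, which anti-invariance makes automatic. Recall that a holomorphic section $\xi$ of the dualizing sheaf $\omega_C$ has, at a node $p$, at most simple poles on the two local branches, with opposite residues; thus ``$\xi$ has a simple pole at $p$'' is equivalent to saying that the restriction $\xi_j$ of $\xi$ to (at least) one branch $C_j$ through $p$ has a nonzero residue at $p$. So it suffices to show that $\res_p(\xi_j)=0$ on each branch through $p$.

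First I would use the defining property of admissible double covers recalled in \textsection\ref{sec:adm:covers}: at a node fixed by $\tau$, each local component through the node is mapped to itself. Hence $\tau$ restricts to a biholomorphism of a neighborhood of $p$ in $C_j$ fixing $p$ (for a self-node one passes to the normalization, where the lift $\tilde\tau$ fixes both preimages $p^+,p^-$ of $p$ and preserves each branch). Since residues are invariant under pullback by a biholomorphism fixing the point, we get $\res_p(\tau^*\xi_j)=\res_p(\xi_j)$. On the other hand $\tau^*\xi=-\xi$ gives $\tau^*\xi_j=-\xi_j$, so $\res_p(\tau^*\xi_j)=-\res_p(\xi_j)$. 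Combining, $\res_p(\xi_j)=-\res_p(\xi_j)$, whence $\res_p(\xi_j)=0$. As this holds on every branch through $p$, the form $\xi$ has no simple pole at $p$.

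For the consequence I would invoke the twisted differential $\{(C_j,\xi_j),\,j\in J\}$ and its level structure furnished by Theorem~\ref{th:twisted:diff}. Suppose, for contradiction, that both irreducible components $C_j,C_{j'}$ meeting at $p$ have level $0$; then $\xi_j=\xi_{\left|C_j\right.}$ and $\xi_{j'}=\xi_{\left|C_{j'}\right.}$, and condition (d) of Theorem~\ref{th:twisted:diff} applied to the node $p$ with $\ell_j=\ell_{j'}$ forces $k_j=k_{j'}=-1$, i.e. $\xi$ has simple poles on both branches at $p$. This contradicts the first part. Therefore at least one of the two components has negative level, and by condition (a) of Theorem~\ref{th:twisted:diff} the form $\xi$ vanishes identically on that component. (In the self-node case $j=j'$ the same argument shows $\xi$ vanishes on the single component incident to $p$.)

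The main obstacle is not analytic but bookkeeping: I must make sure the branch-preserving property of $\tau$ is applied correctly in the self-node case (working on the normalization, checking that $\tilde\tau$ fixes $p^+$ and $p^-$ separately rather than swapping them), and that the equivalence between ``simple pole of $\xi$'' and ``nonzero residue on a branch'' is used with the dualizing-sheaf description rather than with the twisted differential $\xi_j$, which may differ from $\xi_{\left|C_j\right.}$ on components of negative level. Once these identifications are pinned down, both the residue computation and the level-structure argument are immediate.
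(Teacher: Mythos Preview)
Your proof is correct and follows essentially the same approach as the paper: the paper works in explicit local coordinates $(x,y)$ with $\tau(x,y)=(-x,-y)$ and computes that $f(-x)=-f(x)$ forces $f(0)=0$, which is precisely your residue-invariance argument written out in a chart; and for the consequence the paper invokes the level structure from Theorem~\ref{th:twisted:diff} just as you do. Your version is slightly more explicit about why both components cannot be at level $0$, but the logic is the same.
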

\begin{proof}
	A neighborhood of this node in $C$ is isomorphic to $\{xy=0, \, (x,y) \in \C^2, \, |x| < \eps, \, |y| < \eps\}$, for some real positive number $\eps$. Since the Prym involution preserves this node, its action is given by $\tau: (x,y) \mapsto (-x,-y)$. Now, $\xi$ is given by $f(x)dx/x$ in the disc $\Delta_\eps\times\{0\}$, where $f$ is a holomorphic function. By assumption,  $\tau^*\xi=-\xi$. Thus we must have $f(-x)=-f(x)$, which implies that $f(0)=0$.
	Hence $\xi$ does not have a simple pole at $p$.
	It follows that at least  one of the components of $C$ containing $p$ has negative level. By Theorem~\ref{th:twisted:diff} (a), $\xi$ vanishes identically on this component.
\end{proof}

Let $S$ be a reference smooth curve  in $\cB_{4,1}$. Denote by $C^*$ the complement of the nodes in $C$. 
Note that $C^*$ is $\tau$-invariant.
There is an embedding $\varphi: C^* \hookrightarrow S$  conjugating the actions of the Prym involutions. The complement of $\varphi(C^*)$ in $S$ is a disjoint union of simple closed curves that correspond to the nodes of $C$. By Meyer-Vietoris, the induced morphism $\varphi_*: H_1(C,\Z)\to H_1(S,\Z)$ is surjective.  
Define $H_1(C^*,\Z)^-=\{c\in H_1(C^*,\Z), \; \tau_*c =-c\}$.
We have $\varphi_*(H_1(C^*,\Z)^-)=H_1(S,\Z)^-$.

\begin{Proposition}\label{prop:non:collapse:sub:curve}
	Let $\gamma$ be a cycle representing an element of $H_1(C^*,\Z)^-$ such that $\varphi_* \gamma \neq 0 \in H_1(X,\Z)^-$. If $\pp\in \ol{\cX}_D$ then $\int_\gamma\xi\neq 0$.
\end{Proposition}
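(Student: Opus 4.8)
The plan is to argue by contradiction using a one-parameter degeneration together with the $K_D$-linear rigidity of eigenform periods. Since $\pp \in \ol{\cX}_D$, I would choose a holomorphic family $(C_t,\xi_t)_{t\in\Delta}$ with $(C_0,\xi_0)=(C,\xi)$ and $(C_t,\xi_t)\in\Omega\cX_D$ for $t\ne 0$. As $\gamma$ avoids the nodes of $C=C_0$, it deforms to a family of cycles $\gamma_t\subset C_t$, and $P(t):=\int_{\gamma_t}\xi_t$ is holomorphic on $\Delta$ with $P(0)=\int_\gamma\xi$; more generally, every class in $H_1(S,\Z)^-$, being of the form $\varphi_*\delta$ for some $\delta\in H_1(C^*,\Z)^-$ by the surjectivity $\varphi_*(H_1(C^*,\Z)^-)=H_1(S,\Z)^-$ recorded above, is carried by a cycle avoiding the nodes and hence has a convergent period along the family. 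The goal is to show $P(0)\ne 0$.

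First I would record the rigidity of the holonomy. For $t\ne 0$ the generator $T\in\End(\Prym(C_t))$ of $\cO_D$ is represented, in the fixed identification $H_1(S,\Z)^-\cong H_1(C_t,\Z)^-$, by an integral self-adjoint matrix; being integral and varying continuously on the connected set $\Delta^*$, it is a constant matrix $T_0$, with fixed eigenvalue $\lambda>0$ and $\omega_t(T_0c)=\lambda\,\omega_t(c)$ where $\omega_t:=\xi_t$. Because $D$ is not a square the minimal polynomial $X^2-eX-2\det B$ of $T_0$ (Proposition~\ref{prop:eigen:form:per:eq}) is irreducible over $\Q$, so $T_0$ has no rational eigenvector and the eigenspace decomposition $H_1(S,\R)^-=E_\lambda\oplus E_{\lambda'}$ meets $H_1(S,\Q)^-$ trivially in each summand. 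Writing $\hat{\gamma}:=\varphi_*\gamma=\hat{\gamma}_\lambda+\hat{\gamma}_{\lambda'}$, the hypothesis $\hat{\gamma}\ne 0$ forces $\hat{\gamma}_\lambda\ne 0$; since $\omega_t$ vanishes on $E_{\lambda'}$ and restricts to an $\R$-linear isomorphism $E_\lambda\to\C$ for every $t\ne 0$ (this is the injectivity content of Lemma~\ref{lm:D:no:square:hol:map:inj}), we already get $P(t)=\omega_t(\hat{\gamma}_\lambda)\ne 0$ for all $t\ne 0$.

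It remains to exclude $P(0)=0$, and this is the crux. Using the surjectivity above, I would choose surviving cycles $\delta_1=\gamma,\delta_2,\delta_3,\delta_4$ whose classes form a $\Q$-basis of $H_1(S,\Q)^-$, with $\delta_2$ chosen so that $\hat{\gamma}$ and $\varphi_*\delta_2$ have independent $\lambda$-components; then all limits $P_i:=\int_{\delta_i}\xi$ are finite. By Proposition~\ref{prop:eigen:form:per:eq} the functional $\omega_t$ is determined over $K_D$ by its values on two cycles with independent $\lambda$-components, so there are fixed $p_i,q_i\in K_D$ with $\omega_t(\varphi_*\delta_i)=p_i\,\omega_t(\hat{\gamma})+q_i\,w_t$, where $w_t:=\omega_t(\varphi_*\delta_2)$. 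If $P(0)=\lim_t\omega_t(\hat{\gamma})=0$, passing to the limit gives $P_i=q_i\,w_0$ with $q_i\in K_D\subset\R$, so all anti-invariant periods $P_i$ of $\xi$ are collinear over $\R$. Then $P_i\wedge P_j=\Im(\overline{P_i}P_j)=0$ for all $i,j$, and the intersection-form computation of $\Aa$ carried out in Lemma~\ref{lm:D:no:square:hol:map:inj} would force the area carried by the non-degenerate part of $\xi$ to vanish, contradicting $\xi\not\equiv 0$. I expect the main obstacle to be making this final step fully rigorous: one must verify that a symplectic basis computing the area of the level-zero (holomorphic) part of $\xi$ can be selected among the surviving cycles $\delta_i$, and separately check that the vanishing cycles encircling simple-pole nodes — which lie in $\ker\varphi_*$ and may carry nonzero residue periods — do not obstruct the passage from collinearity of the $P_i$ to vanishing of the relevant area.
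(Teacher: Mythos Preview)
Your approach shares the paper's core idea—exploit the $K_D$-linearity of eigenform periods along a degeneration and use the area to force a contradiction—but the step you flag as the crux has a genuine gap. The claim that every class in $H_1(S,\Z)^-$ lies in $\varphi_*H_1(C^*,\Z)^-$ fails whenever $C$ has a non-separating node: the vanishing cycle $v$ at such a node is nonzero in $H_1(S,\Z)$, and any $b$ with $\langle v,b\rangle\neq 0$ cannot be represented by a cycle disjoint from $v$, hence is not in the image of $\varphi_*$. (Your parenthetical is inverted: small loops around nodes map \emph{to} the vanishing cycles, so they lie in the image; it is their symplectic duals that do not.) Consequently you cannot in general choose surviving $\delta_1,\dots,\delta_4$ spanning $H_1(S,\Q)^-$, the area $\|\omega_t\|^2$ is not a bilinear form in the $P_i(t)$ alone, and collinearity of the finite limits $P_i$ yields no contradiction—indeed when $\xi$ has simple poles the area diverges while all surviving periods stay bounded.

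The paper's proof avoids this by reversing your choice of base cycle. Rather than taking $\gamma$ as a generator, it first picks a surviving $\alpha$ with $\int_\alpha\xi\neq 0$, sets $a_1=\varphi_*\alpha$, completes to a symplectic basis $(a_1,b_1,a_2,b_2)$ of $H_1(S,\Q)^-$ with $b_1$ \emph{allowed to be non-surviving}, and normalizes so that $\omega_n(a_1)=1$. Lemma~\ref{lm:D:no:square:hol:map:inj} gives $\omega_n(\varphi_*\gamma)=x+y\,\omega_n(b_1)$ with $x,y\in K_D\subset\R$, and the area identity $\|\omega_n\|^2=M\,\Im\omega_n(b_1)$ yields $\Im\omega_n(\varphi_*\gamma)=\tfrac{y}{M}\|\omega_n\|^2$. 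Since $\|\omega_n\|^2$ is bounded away from zero while $\omega_n(\varphi_*\gamma)\to 0$, this forces $y=0$; then $\omega_n(\varphi_*\gamma)\equiv x=0$, contradicting the injectivity in Lemma~\ref{lm:D:no:square:hol:map:inj}. Only one surviving cycle and the area formula are needed; no spanning hypothesis enters.
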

\begin{proof}
	Since $\xi\neq 0$, there exists an element $\alpha\in H_1(C^*,\Z)^-$ such that $\int_{\alpha}\xi \neq 0$. 
	Note that we must have $\varphi_*\alpha\neq 0\in H_1(X,\Z)^-$.
	There is a symplectic basis $\{a_1,b_1,a_2,b_2\}$ of $H_1(X,\Q)^-$, where $a_1=\varphi_*\alpha$, and $\langle a_i,b_i\rangle=1$. For all $\xx=(X,\ul{x},\tau_X,[\omega]) \in \Pb\Omega'\ol{\cB}_{4,1}(2,2)$ close enough to $\pp$, there is a collapsing map $\phi: X\to C$ which contracts some simple closed  curves on $X$ to the nodes in $C$ such that $\phi$ restricts to a homemorphism from $\phi^{-1}(C^*)$ onto $C^*$.
	There is a homeomorphism $f: X \to S$ whose restriction to $\phi^{-1}(C^*)$ equals $\varphi\circ\phi_{|\varphi^{-1}(C^*)}$. 
	Note that the homotopy equivalence class of $f$ is only defined up to Dehn twists about curves that are contracted to the nodes of $C$.   
	
	Assume that $\pp \in \ol{\cX}_D$. Then we can find a sequence $\{\xx_n\}_{n\in \N} \subset \cX_D$, where $\xx_n=(X_n,\ul{x}_n, \tau_{X_n}, [\omega_n])$, converging to $\pp$ such that for all $n\in \N$, there is a distinguished homeomorphism $f_n: X_n \to S$ as above. 
	We can identify $H_1(X_n,\Q)^-$ with $H_1(S,\Q)^-$ using $f_n$.
	In particular, we can consider $\varphi_*\gamma$ as an element of  $H_1(X_n,\Z)^-$. 
	By Lemma~\ref{lm:D:no:square:hol:map:inj} there exists $(x,y) \in K_D^2$, where $K_D=\Q(\sqrt{D})$ such that
	\begin{equation}\label{eq:period:vanishing:cycle:1}
		\omega_n(\varphi_*\gamma)=x\cdot \omega_n(a_1)+y\cdot\omega_n(b_1), \quad \hbox{for all $n\in \N$.}
	\end{equation}
	Note that we also have
	$$
	\Aa(X,|\omega_n|) =M\cdot\omega_n(a_1) \wedge \omega_n(b_1)
	$$
	for some constant $M \in \R^*$ independent of $n$. 
	
	One can define a local section for the tautological line bundle $\OO(-1)$ in a neighborhood of $\pp$ by the condition $\omega(a_1)=1$ for all $\xx=(X,\ul{x},\tau_X,\omega)$ close enough to $\pp$. Thus we can suppose that $\omega_n(a_1)=1$ for all $n\in\N$.
	
	If $\int_\gamma\xi=0$, then as $\xx_n$ converges to $\pp$, we get that $\omega_n(\varphi_*\gamma) \overset{n \to \infty}{\rightarrow} 0$.   
	It follows from \eqref{eq:period:vanishing:cycle:1} that we have
	\begin{equation*}
		\Im(\omega_n(\varphi_*\gamma)) = \omega_n(a_1)\wedge \omega_n(\varphi_*\gamma) = y\omega_n(a_1)\wedge \omega_n(b_1) = \frac{y}{M}\Aa(X_n,|\omega_n|)=\frac{y}{M}\cdot||\omega_n||^2
	\end{equation*}
	where $||\omega_n||$ is the Hodge norm of $\omega_n$.
	Since $||\omega_n|| \overset{n\to \infty}{\to} ||\xi|| >0$ while $\omega_n(\varphi_*\gamma)\overset{n\to \infty}{\to} 0$, we must have $y=0$, which means that
	$$
	\omega_n(\varphi_*\gamma)= x\cdot\omega_n(a_1).
	$$
	Again, since $\omega_n(a_1)=1$, we also have $x=0$, which means that $\omega_n(\varphi_*\gamma)=0$ for all $n$. But by Lemma~\ref{lm:D:no:square:hol:map:inj}  we must have $\omega_n(\varphi_*\gamma)\neq 0$. Thus we have a contradiction which proves the proposition. 
\end{proof}

\begin{Proposition}\label{prop:s:poles:at:nodes:exchanged}
	Assume that $C$ is the union of two {\em connected} subcurves $C'$ and $C''$ invariant by $\tau$,  which intersect each other at a pair of  permuted nodes. If $\pp$ is  contained in $\ol{\cX}_D$, then $\xi$ must have simple poles at these two nodes.
\end{Proposition}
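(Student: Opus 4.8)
The plan is to argue by contradiction. Suppose that $\pp=(C,\ul{p},\tau,[\xi]) \in \ol{\cX}_D$ but that $\xi$ fails to have a simple pole at one of the two permuted nodes, say at $q$; write $q'=\tau(q)$ for the other one. Since $C'$ and $C''$ are $\tau$-invariant and meet only at $q$ and $q'$, each of these nodes is incident to one irreducible component of $C'$ and one of $C''$. Denote by $C'_0, C''_0$ the two branches at $q$ and by $C'_1, C''_1$ the two branches at $q'$, with $C'_i\subseteq C'$ and $C''_i\subseteq C''$. First I would localize the vanishing. By condition (d) of Theorem~\ref{th:twisted:diff} and the dichotomy recorded just after it (at a node joining two distinct components, either $\xi$ has simple poles there, or at least one branch has negative level and $\xi$ vanishes identically on it, exactly as used in Proposition~\ref{prop:node:fixed:by:invol}), the failure of a simple pole at $q$ forces $\xi\equiv 0$ on one of $C'_0, C''_0$. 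After possibly interchanging the names $C'$ and $C''$ (the statement is symmetric in them), assume $\xi|_{C''_0}\equiv 0$. Applying $\tau$, and using both $\tau^*\xi=-\xi$ and the $\tau$-invariance of the level function (Theorem~\ref{th:twisted:diff}(c)), the component $\tau(C''_0)$ also carries the zero differential; since $\tau(C'')=C''$ and $\tau(q)=q'$, this component is exactly $C''_1$. Hence $\xi$ vanishes identically on both $C''$-side branches $C''_0$ and $C''_1$.

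Next I would produce the contradicting cycle. In $C^*$ each node yields two punctures, one on each branch; let $c_{C''}$ and $c'_{C''}$ be small positively oriented loops around the $C''$-side punctures coming from $q$ and $q'$ respectively. Both lie in $C''\smin\{\text{nodes}\}$, so they define classes in $H_1(C^*,\Z)$, and since $\tau$ is a biholomorphism carrying the $C''$-puncture over $q$ to the one over $q'$ we have $\tau_* c_{C''}=c'_{C''}$; thus $\delta:=c_{C''}-c'_{C''}$ satisfies $\tau_*\delta=-\delta$, i.e. $\delta\in H_1(C^*,\Z)^-$. Because $\xi$ vanishes identically on $C''_0$ and $C''_1$, the residues of $\xi$ along the $C''$ branches at $q$ and $q'$ vanish, so $\int_\delta\xi=0$.

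Finally I would show that $\varphi_*\delta\neq 0$, which is what makes $\delta$ an illegal class for $\int_\delta\xi=0$ under Proposition~\ref{prop:non:collapse:sub:curve}. Since $C'$ and $C''$ are connected and share exactly the two nodes $q,q'$, there is a crossing cycle $\gamma$ on the reference smooth surface $S$, obtained by concatenating an arc in the region $S'$ corresponding to $C'$ that joins the two pinching annuli with an arc in the region $S''$ that joins them back; thus $\gamma$ passes once through each node. The arc of $\gamma$ inside $S''$ runs outward from the puncture over $q$ and inward toward the puncture over $q'$, so it meets $c_{C''}$ and $c'_{C''}$ with opposite signs, whence $\langle \gamma,\varphi_*\delta\rangle=\langle\gamma,c_{C''}\rangle-\langle\gamma,c'_{C''}\rangle=\pm 2\neq 0$. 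By non-degeneracy of the intersection form on $H_1(S,\Z)$ this forces $\varphi_*\delta\neq 0$. Then $\delta\in H_1(C^*,\Z)^-$ has $\varphi_*\delta\neq 0$ yet $\int_\delta\xi=0$, contradicting Proposition~\ref{prop:non:collapse:sub:curve}. Therefore $\xi$ must have a simple pole at $q$, and by $\tau$-equivariance at $q'$ as well.

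The main obstacle I expect is the sign and orientation bookkeeping in the last step: one must check that the two $C''$-side puncture loops meet the crossing cycle $\gamma$ with \emph{opposite} signs, so that the $\tau$-anti-invariant combination $\delta=c_{C''}-c'_{C''}$ stays homologically nontrivial instead of cancelling. This is precisely where the hypotheses that $C'$ and $C''$ are connected and share exactly the two permuted nodes are used, since they guarantee that $\gamma$ enters the $C''$-region through one node and exits through the other.
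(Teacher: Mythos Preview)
Your proof is correct and follows essentially the same idea as the paper's, but with unnecessary elaboration. The paper simply takes the vanishing cycle $\gamma$ on the nearby smooth surface $X$ that is pinched to $q$, orients $\gamma,\gamma'$ so that $\tau_{X*}\gamma=\gamma'$, observes that $\gamma+\gamma'=0$ in $H_1(X,\Z)$ (they together bound the region coming from $C'$), hence $\gamma\in H_1(X,\Z)^-$; since $q$ alone is non-separating, $\gamma\neq 0$. If $\xi$ has no simple pole at $q$ then $\xi(\gamma)=0$, contradicting Proposition~\ref{prop:non:collapse:sub:curve}.

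Two simplifications compared to your route: first, the whole detour through the twisted-differential level structure to force $\xi\equiv 0$ on a branch is not needed. ``No simple pole at $q$'' already means the residue vanishes, so $\int_{c_{C''}}\xi=0$ directly; and since $\tau^*\xi=-\xi$ and $\tau(q)=q'$, the residue at $q'$ vanishes too. Second, your cycle $\delta=c_{C''}-c'_{C''}$ is not genuinely different from the paper's $\gamma$: because $\varphi_*c_{C''}+\varphi_*c'_{C''}=0$ in $H_1(S,\Z)$ (they bound the $C''$-region), you have $\varphi_*\delta=2\varphi_*c_{C''}=2\gamma$. So your intersection computation with the crossing curve is really just verifying that the single vanishing cycle $\gamma$ is nontrivial, which follows immediately from the fact that $q$ by itself is non-separating (the hypothesis that $C'$ and $C''$ are each connected and meet at \emph{two} nodes).
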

\begin{proof}
	Let $q$ and $q'$ be the nodes between $C'$ and $C''$. 
	Consider a point $\xx=(X,\ul{x},\tau_X,[\omega]) \in \cX_D$ close enough to $\pp$. Let $\gamma$ and $\gamma'$  be the simple closed curves on $X$ that are contracted to the nodes $q$  and $q'$ respectively. We choose the orientation of $\gamma$ and $\gamma'$ such that $\tau_{X*}\gamma=\gamma'$.
	Note that we have $\gamma+\gamma'=0\in H_1(X,\Z)$, therefore   $\gamma\in H_1(X,\Z)^-$. 
	
	If $\xi$ does not have simple poles at $q$, then $\xi(\gamma)=0$. We then get a contradiction by   Proposition~\ref{prop:non:collapse:sub:curve}. Therefore, $\xi$ must have simple poles at $q$ and $q'$.
\end{proof}

\begin{Proposition}\label{prop:bdry:form:dec:not:eigen:form}
	Assume that $C$ is the union of two subcurves $C',C''$ (not necessarily connected) both of which have (arithmetic) genus $\geq 1$ and are invariant under $\tau$. If $\xi$ vanishes identically  on either $C'$ or $C''$ then  $\pp$ is not contained in $\partial\ol{\cX}_D$.
\end{Proposition}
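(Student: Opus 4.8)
The plan is to argue by contradiction using Proposition~\ref{prop:non:collapse:sub:curve}. Suppose $\pp=(C,\ul{p},\tau,[\xi])\in\partial\ol{\cX}_D$ and say $\xi$ vanishes identically on $C'$. Write $C'^*=C'\cap C^*$ and $C''^*=C''\cap C^*$ for the smooth loci; since the nodes joining $C'$ to $C''$ are removed in $C^*$, there is a $\tau$-equivariant splitting $H_1(C^*,\Z)=H_1(C'^*,\Z)\oplus H_1(C''^*,\Z)$, and in particular $H_1(C^*,\Z)^-=H_1(C'^*,\Z)^-\oplus H_1(C''^*,\Z)^-$. The whole proposition then reduces to the purely topological claim that
$$\varphi_*\big(H_1(C'^*,\Z)^-\big)\neq 0\ \subset\ H_1(S,\Z)^-.$$
Indeed, any $\gamma\in H_1(C'^*,\Z)^-$ has $\int_\gamma\xi=0$ because $\xi_{|C'}\equiv 0$; if moreover $\varphi_*\gamma\neq 0$, this directly contradicts Proposition~\ref{prop:non:collapse:sub:curve}.

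To produce such a $\gamma$ I would exploit the intersection form. Recall that $\varphi_*$ is surjective onto $H_1(S,\Z)^-$ and that the intersection form on $H_1(S,\Q)^-$ is non-degenerate of type $(1,2)$ by Lemma~\ref{lm:Prym:var:1:2:polarization}. Since $\varphi(C'^*)$ and $\varphi(C''^*)$ are subsurfaces of $S$ separated by the vanishing cycles of the connecting nodes, the subspaces $\varphi_*H_1(C'^*,\Q)$ and $\varphi_*H_1(C''^*,\Q)$ are orthogonal for this form. Hence if $\varphi_*H_1(C'^*,\Q)^-=0$, then by the splitting and surjectivity $\varphi_*H_1(C''^*,\Q)^-$ would be all of $H_1(S,\Q)^-$, a non-degenerate space of rank $4$; this forces the anti-invariant genus contribution of $C''$ to be maximal, namely $p_a(C'')-p_a(C''/\tau)=2$. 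Combined with $p_a(C')+p_a(C'')\le p_a(C)=3$ and $p_a(C')\ge1$, this pins down $p_a(C')=1$, $p_a(C'')=2$, with $C'$ carrying no ``honest'' anti-invariant homology, and I would then analyze this borderline configuration directly.

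For the borderline analysis I would use the admissible-cover structure: $E:=C/\tau$ has arithmetic genus $1$, $\tau$ has exactly four fixed smooth points, and by Proposition~\ref{prop:node:fixed:by:invol} a $\tau$-fixed node preserves each local branch. If no class of $H_1(C'^*)^-$ came from genuine cycles of $C'$, then $C'\to C'/\tau$ would be genus-preserving, so $\tau_{|C'}$ would be fixed-point free; this forces every connecting node to be exchanged by $\tau$ (a fixed connecting node would produce a fixed point on $C'$), hence there are at least two connecting nodes, necessarily non-separating in $C$ by connectedness. The corresponding vanishing cycles $\delta_1,\delta_2\subset C'^*$ satisfy $\delta_1+\delta_2=0$ in $H_1(C'^*,\Z)$ and $\tau_*\delta_1=-\delta_2$, so $\delta_1\in H_1(C'^*,\Z)^-$ is nonzero with $\varphi_*\delta_1\neq0$, because a vanishing cycle of a non-separating node is homologically non-trivial in the smoothing $S$. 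Taking $\gamma=\delta_1$ again contradicts Proposition~\ref{prop:non:collapse:sub:curve}.

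The main obstacle is the uniform bookkeeping in the last step: one must rule out, in every configuration, that all anti-invariant cycles of $C'^*$ die in $H_1(S,\Z)^-$. The generic cases (where $C'$ has a nontrivial Prym part) yield $\gamma$ at once, but the degenerate cases — $\tau_{|C'}$ free, or $C'$ disconnected with components exchanged by $\tau$ — require tracking the $\tau$-action on the connecting and internal vanishing cycles and invoking the non-separating property forced by connectedness of $C$. Controlling how the four fixed points and the dual graph of $C$ distribute between $C'$ and $C''$, under the constraints $p_a(C'),p_a(C'')\ge1$ and $p_a(C)=3$, is what makes the argument close.
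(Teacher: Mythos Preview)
Your reduction to Proposition~\ref{prop:non:collapse:sub:curve} is the right idea and is essentially what underlies the paper's argument, just packaged differently. The paper first enumerates the configurations: since $p_a(C')+p_a(C'')+m-1=p_a(C)=3$ with $p_a(C'),p_a(C'')\ge 1$ and $m\ge 1$ connecting nodes, there are only four cases (i)--(iv) up to relabeling. For (i)--(iii) the paper writes out the period/area argument directly --- choosing a symplectic basis $(\alpha',\beta',\alpha'',\beta'')$ of $H_1(X,\Q)^-$ split between the two sides and using the eigenform relation $(\omega(\alpha''),\omega(\beta''))=(\omega(\alpha'),\omega(\beta'))\cdot B$ with $\det B\neq 0$ to show neither side's periods can degenerate --- but this is exactly Proposition~\ref{prop:non:collapse:sub:curve} applied to $\alpha'$ or $\alpha''$. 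For case (iv) the paper invokes Proposition~\ref{prop:s:poles:at:nodes:exchanged}, whose own proof applies Proposition~\ref{prop:non:collapse:sub:curve} to the vanishing cycle of a connecting node --- precisely your $\delta_1$.

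Where your write-up is shakier is the borderline step. The inference ``$\varphi_*H_1(C''^*,\Q)^-$ has rank $4$ $\Rightarrow$ $p_a(C'')-p_a(C''/\tau)=2$'' is not justified: puncture cycles (vanishing cycles of connecting nodes) can contribute to the anti-invariant homology of $C''^*$, so the rank of $\varphi_*H_1(C''^*,\Q)^-$ is not controlled by the Prym dimension of $C''$ alone. Likewise, ``$\varphi_*H_1(C'^*,\Q)^-=0 \Rightarrow \tau_{|C'}$ fixed-point free'' conflates the vanishing of the image under $\varphi_*$ with the vanishing of $H_1(C'^*,\Q)^-$ itself.

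The good news is that the borderline case is vacuous. Once you list the four configurations, you can check directly that $\varphi_*H_1(C'^*,\Q)^-\neq 0$ in each. In (i)--(iii) there is a connecting node fixed by $\tau$, so $\tau_{|C'}$ (on a genus-one piece) has a fixed point, hence four fixed points, hence acts as $-1$ on $H_1(C',\Q)$; this gives rank-two anti-invariant homology that injects into $H_1(S,\Q)^-$. In (iv) either the same holds, or $\tau_{|C'}$ is free and your puncture-cycle $\delta_1$ does the job. So your plan works, but it is the short enumeration that makes it close, not the abstract rank bookkeeping.
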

\begin{proof}
	Assume that $\pp \in \ol{\cX}_D$.
	The assumption that both $C'$ and $C''$ have genus at least one implies that there are at most two nodes between $C'$ and $C''$. As a consequence, up to a relabeling we have the following configurations
	\begin{itemize}
		\item[(i)]  $C'$ is a genus $1$ curve,  $C''$ is a genus $2$ curve,  and there is a unique node between $C'$ and $C''$.
		
		\item[(ii)] $C'$ is a genus $1$ curve, $C''$ is a disjoint union of two isomorphic genus $1$ curves, $C'$ and $C''$ intersect at two nodes that are     permuted by $\tau$.
		
		\item[(iii)] Both $C'$ and $C''$ are genus one curves, and $C'$ intersects $C''$ at two nodes, both of which are fixed by $\tau$.
		
		\item[(iv)]  Both $C'$ and $C''$ are genus one curves, and $C'$ intersects $C''$ at two nodes that are exchanged by $\tau$.

	\end{itemize}
	Let $\xi':=\xi_{\left|C'\right.}$ and $\xi'':=\xi_{\left|C''\right.}$. By assumption either $\xi'\equiv 0$ or $\xi'' \equiv 0$.
	Suppose that we are in cases (i), (ii), or (iii).
	Consider a point $\xx=(X,\ul{x},\tau_X, [\omega]) \in \cX_D$ close enough to $\pp$.
	Let $c$ denote the union of the simple closed curves on $X$  that are contracted to the nodes between $C'$ and $C''$.
	Let $X'$ (resp. $X''$) be the component of $X-c$ that corresponds to $C'$ (resp. to $C''$).
	One can specify a symplectic basis $\{\alpha',\beta',\alpha'',\beta''\}$ of $H_1(X,\Z)^-$ with $\alpha', \beta'$ represented by cycles on $X'$ and $\alpha'', \beta''$  represented by cycles in $X''$.
	By Proposition~\ref{prop:eigen:form:per:eq} there is an invertible matrix $B\in \Mb_2(\Q(\sqrt{D}))$ such that
	$$
	(\omega(\alpha''), \; \omega(\beta''))=(\omega(\alpha'), \; \omega(\beta'))\cdot B
	$$
	We have
	$$
	\Aa(X,\omega)=||\omega||^2=\frac{\omega(\alpha')\wedge\omega(\beta')}{m'}+\frac{\omega(\alpha'')\wedge\omega(\beta'')}{m''},
	$$
	where $m'=\langle \alpha',\beta'\rangle, \; m''= \langle \alpha'',\beta''\rangle$. Note that we have $|\det(\omega(\alpha''),\omega(\beta''))|=|\det(\omega(\alpha'),\omega(\beta'))|\cdot|\det B|$. Therefore
	$$
	||\omega||^2=K\cdot |\omega(\alpha')\wedge\omega(\beta')|,
	$$
	where $K$ is a positive real constant.	
	If $\xi''\equiv 0$ then  as $\xx$ converges to $\pp$, $(\omega(\alpha''),\omega(\beta''))$ converges to $(0,0)$, while
	$$
	|\omega(\alpha') \wedge\omega(\beta')| \overset{\xx\to \pp}{\longrightarrow} \frac{||\xi||^2}{K} >0.
	$$
	Therefore we get a contradiction. By the same argument, we also get a contradiction if $\xi'\equiv 0$. Thus the proposition is proved for the first three cases.
	
	\medskip
	
	In the case (iv), by Proposition~\ref{prop:s:poles:at:nodes:exchanged}, if $\pp\in \ol{\cX}_D$ then $\xi$ must have simple poles at the nodes between $C'$ and $C''$, which means that  $\xi'\not\equiv 0$ and $\xi''\not\equiv 0$. We thus have a contradiction and the proposition follows.
\end{proof}

\begin{Corollary}\label{cor:dec:two:tori:no:eigenform}
	Assume that $C$ has two connected subcurves of genus $1$ intersecting at two nodes both are fixed  by the Prym involution. Then $\pp \not\in\ol{\cX}_D$.
\end{Corollary}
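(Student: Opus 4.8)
The plan is to reduce the statement to the already-established Proposition~\ref{prop:bdry:form:dec:not:eigen:form} by showing that in this configuration $\xi$ is forced to vanish identically on one of the two tori. Denote the two connected genus-one subcurves by $C'$ and $C''$, and let $q_1,q_2$ be the two nodes at which they meet, both fixed by $\tau$. First I would record the basic topology. Since $C'$ and $C''$ are connected of arithmetic genus one and meet at exactly two nodes, the arithmetic genus of $C'\cup C''$ is $1+1+2-1=3$, which equals the genus of $C$; hence $C=C'\cup C''$. Moreover, by the defining property of admissible double covers, at a node fixed by $\tau$ each of the two local branches is preserved by $\tau$; applying this at $q_1$ shows that $\tau(C')=C'$ and $\tau(C'')=C''$, so both tori are $\tau$-invariant. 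Since each of $C'$ and $C''$ is an irreducible smooth genus-one curve, the two irreducible components meeting at $q_1$ are precisely $C'$ and $C''$.

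Next I would apply Proposition~\ref{prop:node:fixed:by:invol} to the node $q_1$. As $q_1$ is fixed by the Prym involution, $\xi$ cannot have a simple pole there, and consequently $\xi$ vanishes identically on at least one of the two irreducible components meeting at $q_1$, i.e. on $C'$ or on $C''$. This is the crucial step: it converts the purely combinatorial hypothesis (two $\tau$-fixed nodes between the tori) into the analytic conclusion needed to invoke the earlier proposition.

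Finally, the configuration now matches exactly case (iii) in the hypotheses of Proposition~\ref{prop:bdry:form:dec:not:eigen:form}: the curves $C'$ and $C''$ are $\tau$-invariant genus-one subcurves meeting at two nodes both fixed by $\tau$, and $\xi$ vanishes identically on one of them. That proposition then gives $\pp\notin\partial\ol{\cX}_D$. Since $\pp$ lies on a nodal curve it cannot belong to $\cX_D$, so $\pp\in\ol{\cX}_D$ would force $\pp\in\partial\ol{\cX}_D$; therefore $\pp\notin\ol{\cX}_D$, as claimed. I do not expect a serious obstacle: the argument is a short assembly of two prior results, and the only points needing care are verifying the $\tau$-invariance of the two components (which uses the admissible-cover definition, not anything about eigenforms) and checking that the stated configuration is literally case (iii) of Proposition~\ref{prop:bdry:form:dec:not:eigen:form}.
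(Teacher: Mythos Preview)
Your argument is essentially the paper's: invoke Proposition~\ref{prop:node:fixed:by:invol} at a $\tau$-fixed node to force $\xi\equiv 0$ on one of the two pieces, then conclude via Proposition~\ref{prop:bdry:form:dec:not:eigen:form}. One minor point: you assert that $C'$ and $C''$ are \emph{irreducible smooth} elliptic curves, which is not part of the stated hypothesis (only ``connected subcurves of genus~$1$''); the paper's own one-line proof makes the same tacit assumption (it speaks of ``the two irreducible components''), so you match its level of rigor, though be aware that the corollary is later applied in situations where the two genus-one pieces are reducible nodal curves.
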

\begin{proof}
	By Proposition~\ref{prop:node:fixed:by:invol}, $\xi$ must vanish identically in one of the two irreducible components. We then conclude by Proposition~\ref{prop:bdry:form:dec:not:eigen:form}.
\end{proof}

\begin{Proposition}\label{prop:s:poles:at:two:nodes:not:in:XD}
	If $\xi$ has simple poles at one pair of nodes that are exchanged by the involution and is holomorphic at all the other nodes, then $\pp\not\in\ol{\cX}_D$.
\end{Proposition}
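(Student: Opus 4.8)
The plan is to argue by contradiction: assume $\pp=(C,\underline{p},\tau,[\xi])\in\ol{\cX}_D$ and produce a simple pole of $\xi$ at a node fixed by $\tau$, which is forbidden by Proposition~\ref{prop:node:fixed:by:invol}. First I would fix the local picture at the pole pair. Let $q,q'$ be the exchanged nodes at which $\xi$ has simple poles and set $r:=\res_q(\xi)$; since $\tau^*\xi=-\xi$ and $\tau(q)=q'$ one gets $\res_{q'}(\xi)=-r\neq0$. Choosing a sequence $\xx_n=(X_n,\omega_n)\in\cX_D$ with $\xx_n\to\pp$, the nodes $q,q'$ correspond to a pair of cylinders $E_n,E'_n$ exchanged by the Prym involution whose core curves have periods converging to $2\pi\imath r$ and $-2\pi\imath r$. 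Thus the widths of $E_n,E'_n$ stay bounded away from $0$ while their heights tend to $+\infty$; this is precisely the analytic meaning of $\xi$ acquiring a simple pole at these two nodes.

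Next I would invoke complete periodicity. Since $E_n,E'_n$ are cylinders, $X_n$ admits a cylinder decomposition in their common direction; by Proposition~\ref{prop:stable:cyl:dec} together with Remark~\ref{rem:stable:cyl:dec} we may assume, after replacing $\xx_n$ by nearby points still converging to $\pp$, that this decomposition is stable, hence given by one of the four diagrams of Proposition~\ref{prop:stable:cyl:dec:models:H22}. In every model exactly two cylinders, $C_3$ and $C_4$, are exchanged by $\tau$, so necessarily $\{E_n,E'_n\}=\{C_3,C_4\}$. After normalising by $\GL^+(2,\R)$ so that $\ell_3=\ell_4$ stays equal to a fixed positive constant, Proposition~\ref{prop:cyl:dec:length:ratios:22} supplies fixed ratios among the widths and heights in terms of a prototype $(a,b,d,e)$ and $\lambda=\tfrac{e+\sqrt D}{2}$. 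In particular $\ell_1=(\lambda/a)\ell_3$ is bounded away from $0$, while the height relation (for instance $\tfrac{h_2+h_3}{h_1+h_2}=\tfrac{d}{\lambda}$ in Case~I.A, and the analogous identities in Cases I.B, II.A, II.B) forces $h_1+h_2\to+\infty$ as $h_3=h_4\to+\infty$.

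Consequently at least one of the $\tau$-invariant cylinders $C_1,C_2$ has height tending to $+\infty$ while its width remains bounded away from $0$. Its core curve therefore pinches, as $\xx_n\to\pp$, to a node of $C$ at which $\xi$ has nonzero residue, that is, a simple pole; and since this cylinder is $\tau$-invariant the resulting node is fixed by $\tau$. This contradicts Proposition~\ref{prop:node:fixed:by:invol}, as well as the standing hypothesis that $\xi$ is holomorphic at every node other than $q,q'$. Hence $\pp\notin\ol{\cX}_D$.

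I expect the main obstacle to be the uniform bookkeeping in the middle step: one must verify, across all four stable models of Proposition~\ref{prop:stable:cyl:dec:models:H22}, that the degeneration of the exchanged pair $C_3,C_4$ genuinely forces a $\tau$-fixed cylinder $C_1$ or $C_2$ to degenerate, by reading off in each case of Proposition~\ref{prop:cyl:dec:length:ratios:22} which height combination blows up. One must also check carefully that the widths of the fixed cylinders stay bounded below, so that the limiting object is a true simple pole rather than a holomorphic node arising from the collapse of a short neck; this is exactly where the fixed width ratios and the normalisation $\ell_3=\ell_4=\mathrm{const}$ are essential.
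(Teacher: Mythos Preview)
Your overall strategy is sound and close to the paper's, but there are two issues.

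First, the identification $\{E_n,E'_n\}=\{C_3,C_4\}$ is unjustified. The two exchanged necks $E_n,E'_n$ need not be maximal cylinders of the stable decomposition: they may both lie inside a single $\tau$-invariant maximal cylinder $C_1$ or $C_2$. This occurs precisely when the two simple-pole nodes $q,q'$ bound a $\tau$-invariant $\Pb^1$ component of $C$ carrying two of the marked fixed points $p_1,\dots,p_4$; the paper explicitly flags this possibility (``It may happen that there is a cylinder $A$ that contains both $A'$ and $A''$''). So you must also treat the cases $h_1\to\infty$ and $h_2\to\infty$, not only $h_3=h_4\to\infty$.

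Second, the claim that pinching a $\tau$-invariant cylinder yields a node fixed by $\tau$ is false. Since $\tau^*\omega=-\omega$, the involution exchanges the two boundary circles of any invariant cylinder, and its two fixed points---two of the $p_i$---sit on the centre circle. When such a cylinder degenerates, those marked points bubble off on a $\tau$-invariant $\Pb^1$, and the two resulting nodes are \emph{exchanged} by $\tau$, not fixed. Hence Proposition~\ref{prop:node:fixed:by:invol} does not apply here. Your alternative appeal to the standing hypothesis (holomorphy at all nodes other than $q,q'$) does give a valid contradiction, once the missing cases above are covered.

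The paper sidesteps this bookkeeping by arguing numerically: after fixing a stable model and prototype, the hypothesis forces exactly one of $h_1,h_2,h_3(=h_4)$ to tend to $\infty$ while the other two remain bounded; then in Case~I.A the constant ratio $(h_{2}+h_{3})/(h_{1}+h_{2})=d/\lambda$ from Proposition~\ref{prop:cyl:dec:length:ratios:22} would have to converge to $0$, $1$, or $\infty$, impossible since $\lambda=(e+\sqrt D)/2\notin\Q$. This handles all three degeneration types at once without analysing the limiting curve.
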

\begin{proof}
	Consider a point $\xx:=(X,\ul{x}, \tau_X, [\omega])$ in $\cX_D$ close enough to $\pp$.
	Assume that $\xi$ has simple poles at the pair of nodes $p',p''$ permuted by $\tau_X$.
	It is a well known fact (see for instance \cite[Th. 5.5]{Bai:GT}) that for each node of $C$, we have a corresponding cylinder with large height on $(X,\omega)$. 
	Denote by $A'$ (resp.  $A''$)  the cylinder that corresponds to $p'$ (resp. to $p''$) in $X$.
	Since these two cylinders are permuted by the Prym involution $\tau_X$, they are parallel and have the same height.
	It may happen that there is a cylinder $A$ that contains both $A'$ and $A''$.  This happens when $p'$ and $p''$ are contained in an irreducible component isomorphic to $\Pb^1$ invariant by $\tau_X$.  
	
	We can suppose that $A'$ and $A''$ are both horizontal.
	Since $(X,\omega)$ is completely periodic, it is decomposed into a union of horizontal cylinders. Using Proposition~\ref{prop:stable:cyl:dec}, one can assume that the corresponding cylinder decomposition is stable.
	Thus, the associated cylinder diagram of $(X,\omega)$ is given by one of the four cases in Proposition~\ref{prop:stable:cyl:dec:models:H22}.
	Recall that $h_1,\dots,h_4$ are respectively the heights of $C_1,\dots,C_4$ in all the diagrams. By convention $C_3$ and $C_4$ are permuted by $\tau_X$, while $C_1$ are $C_2$ are invariant. In particular, we have $h_3=h_4$. 
	
	Since set of cylinder diagrams and the set of prototypes $\Pcal_{D,\cyl}$ is finite, one can find a sequence $\{\xx_n\}_{n\in \N} \subset \cX_D$, where $\xx_n=(X_n,\ul{x_n},\tau_{X_n},[\omega_n])$, converging to $\pp$  such that for all $n\in \N$, the surface $(X_n,\omega_n)$ is horizontally periodic with a fixed stable cylinder diagram and the same associated prototypes $\frakp =(a,b,d,e)\in \Pcal_{D,\cyl}$.   
	
	For concreteness, let us suppose that the cylinder decomposition of $(X_n,\omega_n)$ in the horizontal direction is given by Case I.A. 
	Denote by $C_{i,n}$, $i=1,\dots,4$, the horizontal cylinders in $X_n$, where $C_{i,n}$ corresponds to $C_i$ in Proposition~\ref{prop:cyl:dec:length:ratios:22}. Let $h_{i,n}$ be the height of $C_{i,n}$. It follows from Proposition~\ref{prop:cyl:dec:length:ratios:22} (i), that we have
	$$
	\frac{h_{2,n}+h_{3,n}}{h_{1,n}+h_{2,n}} =\frac{h_{2,n}+h_{4,n}}{h_{1,n}+h_{2,n}} =\frac{a}{\lambda}  
	$$
	In particular, the ratio $(h_{2,n}+h_{3,n})/(h_{1,n}+h_{2,n})$ is independent of $n$. 
	The assumption implies that one of the sequences $\{h_{1,n}\}, \{h_{2,n}\}, \{h_{3,n}\}$ tends to $+\infty$, while the other two are bounded. In all cases we have 
	$$
	\lim_{n\to \infty}\frac{h_{2,n}+h_{3,n}}{h_{1,n}+h_{2,n}} \in \{0,1,\infty\}
	$$
	Thus we must have $a/\lambda \in \{0,1,\infty\}$. But since $D$ is not a square $\lambda \not\in \Q$. Thus $a/\lambda \not\in \{0,1,\infty\}$, and we have a contradiction, which proves the proposition in this case.
	
	The proof of the proposition for the other cylinder diagrams follows the same line.
\end{proof}

We will also need the following 

\begin{Proposition}\label{prop:ab:diff:dble:zero:Prym:inv}
	Let $(X,\omega)$ be a holomorphic Abelian differential where $X$ is a Riemann surface of genus two. Assume that $X$ admits an involution $\tau$ with $2$ fixed points such that $\tau^*\omega=-\omega$. Then $\omega$ must have two simple zeros.
\end{Proposition}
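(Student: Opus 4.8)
The plan is to exploit the fact that every genus two curve is hyperelliptic and that on such a curve the canonical linear system coincides with the hyperelliptic pencil $g^1_2$. Write $\sigma$ for the hyperelliptic involution of $X$; recall that $\sigma$ acts as $-1$ on $H^0(X,K_X)$, is central in $\Aut(X)$, and has exactly six fixed points (the Weierstrass points). Since $\tau$ has only two fixed points we have $\tau\neq\sigma$, and because $\sigma$ is central, $\sigma$ and $\tau$ commute. As $\div(\omega)$ has degree $2g-2=2$, it is an effective canonical divisor, hence a fibre of the hyperelliptic map, so $\div(\omega)=x+\sigma(x)$ for some $x\in X$. Thus $\omega$ has a double zero if and only if $x=\sigma(x)$, i.e. if and only if $x$ is a Weierstrass point. (For context, Riemann--Hurwitz applied to $X\to X/\langle\tau\rangle$ shows the quotient has genus one, in accordance with $\Omega(X)^-$ being one-dimensional and spanned by $\omega$; this is not needed below.)

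First I would record that $\div(\omega)$ is $\tau$-invariant: from $\tau^*\omega=-\omega$ one gets $\div(\tau^*\omega)=\div(\omega)$, so $\tau$ permutes the zeros of $\omega$ with multiplicity. Suppose, for contradiction, that $\omega$ has a double zero, say $\div(\omega)=2x$. Then $\tau$-invariance forces $\tau(x)=x$, while the preceding paragraph forces $\sigma(x)=x$; hence $x$ is fixed simultaneously by $\sigma$ and by $\tau$. The heart of the argument is then to show that two distinct commuting involutions of a smooth curve cannot share a fixed point. I would argue as follows: the group $G:=\langle\sigma,\tau\rangle$ is isomorphic to $(\Z/2)^2$, since its four elements $\id,\sigma,\tau,\sigma\tau$ are pairwise distinct (using $\tau\neq\sigma$, whence also $\sigma\tau\neq\id$), and $G$ fixes $x$. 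Taking derivatives at $x$ gives a homomorphism from $G$ to the group of the tangent line $T_xX$, i.e. to $\C^*$. Any $g\in G$ in the kernel is a finite-order automorphism fixing $x$ with trivial differential there, so it is the identity. Therefore $G$ embeds into $\C^*$, whose only element of order two is $-1$, so the image of $G$ has order at most two, contradicting $|G|=4$.

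This contradiction shows $x$ cannot be a common fixed point, so $\omega$ has no double zero; consequently $\div(\omega)=x+\sigma(x)$ with $x\neq\sigma(x)$, i.e. $\omega$ has two simple zeros, which is the assertion. The main obstacle is the tangent-space step: one must justify that a finite-order holomorphic automorphism fixing a point and having trivial derivative there is the identity. I would settle this by the standard linearization of finite-order germs (averaging a local coordinate over the cyclic group the automorphism generates to produce a coordinate in which the map equals its differential). The two enabling classical inputs---that every genus two curve is hyperelliptic with $|K_X|$ equal to the $g^1_2$, and that the hyperelliptic involution is central and acts by $-1$ on $H^0(X,K_X)$---I would simply cite rather than reprove.
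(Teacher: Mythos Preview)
Your proof is correct and follows essentially the same approach as the paper's first argument: both show that a double zero $x$ of $\omega$ would be fixed by $\tau$ and by the hyperelliptic involution $\sigma$, and then derive a contradiction from the fact that two distinct commuting involutions cannot share a fixed point on a Riemann surface. The paper phrases this last step by observing directly that $\sigma\circ\tau$ has trivial differential at $x$ and hence equals the identity (forcing $\tau=\sigma$, contradicting the fixed-point counts), whereas you express it via the faithful tangent representation $\langle\sigma,\tau\rangle\hookrightarrow\C^*$ and the nonexistence of a copy of $(\Z/2)^2$ in $\C^*$; the content is the same. The paper also gives a second, independent argument which you might find worth noting: pushing $\omega^2$ down to the genus one quotient $Y=X/\langle\tau\rangle$ would yield a quadratic differential with exactly one simple zero and one simple pole, which is impossible since $K_Y$ is trivial.
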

\begin{proof}
	If $\omega$ has a double zero, denoted by $x_0$, then this zero must be a fixed point of $\tau$. Note that $x_0$ is also a Weierstrass point of $X$. Therefore, the hyperelliptic involution $\iota$ of $X$ also fixes $x_0$. It follows that $\iota\circ\tau$ is identity in a neighborhood of $x_0$. As a consequence $\iota\circ\tau=\id_X$, and hence $\tau=\iota$. But $\iota$ has $6$ fixed points, while $\tau$ only has two. Therefore we get a contradiction.
	
	Here is an alternative argument. Let $Y:=X/\langle\tau\rangle$. Then $Y$ is a torus. Since $\tau^*\omega^2=\omega^2$, $\omega^2$ is the pullback of a quadratic differential $\eta$ on $Y$ which has one simple pole and one simple zero.
	Since the canonical line bundle of $Y$ is trivial, this means that there is a holomorphic map of degree $1 $ from $Y$ onto $\Pb^1$, which is impossible.
\end{proof}

%

\section{Proof of Theorem~\ref{th:bdry:eigen:form:H22}}
\label{sec:prf:bdry:egein:form:H22}
Our goal in this section is to prove Theorem~\ref{th:bdry:eigen:form:H22} which classifies the strata of $\partial\ol{\cX}_D$.
Throughout this section, $\pp:=(C,p_1,\dots,p_5, p'_5, \tau, [\xi])$ will be an element of $\Pb\Omega'\ol{\cB}_{4,1}(2,2)$.
Let $(E,q_1,\dots,q_5)$ be the image of $\pp$  in $\ol{\cM}_{1,5}$, that is $E:=C/\langle\tau\rangle$,  $q_i$ is the image of $p_i$ for $i=1,\dots,4$, and $q_5$ is the image of $\{p_5, p'_5\}$ under the natural projection $C \to E$.
We will analyze the properties of $\pp$ following the stratum of $\partial\ol{\cM}_{1,5}$ to which $(E,q_1,\dots,q_5)$ belongs.

\subsection{Generalities on topology of the stable curves in the boundary of $\ol{\cX}_D$}\label{subsec:generalities:bdry}
By definition, every point in $\partial\ol{\cX}_D$ is mapped to a point in the boundary $\partial \ol{\cM}_{1,5}:=\ol{\cM}_{1,5}-\cM_{1,5}$ of $\ol{\cM}_{1,5}$.  We have a stratification of $\partial\ol{\cM}_{1,5}$, where each stratum parametrizes the set of stable curves having the same topological characteristics ({\em i.e} the same dual graph).

The topological properties of a stable curve $(E,q_1,\dots,q_5)\in \ol{\cM}_{1,5}$ are however not enough to determine the topology of its admissible double cover. The reason is that the preimage of a node of $E$ may contains one or two nodes of the double cover. To determine the numbers of nodes in the preimages of the nodes of $E$, one needs extra data coming from a realization of $E$ as a degeneration of a reference torus $E_0$  with five  marked points denoted by $e_1,\dots,e_5$.

Fix a group morphism $\varrho: \pi_1(E_0-\{e_1,\dots,e_4\})\to \Z/2\Z$ that maps a loop homotopic to the boundary of a small disc about $e_i$ to $1\in \Z/2\Z$, for all $i=1,\dots,4$.
Let $C_0^*$ denote the double cover of $E^*_0:=E_0-\{e_1,\dots,e_4\}$ associated  to the kernel of $\varrho$.
Then $C^*_0$ can be identified with $C_0-\{p_1,\dots,p_4\}$, where $C_0$ is a compact genus $3$ surface, and  $p_1,\dots,p_4$ are $4$ distinct points on $C_0$. The covering map $f: C^*_0 \to E^*_0$ extends to a ramified covering from $C_0$ onto $E_0$ branched over $e_1,\dots,e_4$.

Since $\Z/2\Z$ is Abelian, the image of a loop in $\pi_1(E^*_0)$ by $\varrho$ depends only  on its conjugacy class. This means that $\varrho$ factors through a morphism $\bar{\varrho}: H_1(E^*_0,\Z) \to \Z/2\Z$. The preimage of a simple closed curve $c$ on $E_0^*$ has one component if $\bar{\varrho}(c)=1$, and two components if $\bar{\varrho}(c)=0$.

It is a well known fact that topologically $(E,q_1,\dots,q_5)$ can be obtained from $(E_0,e_1,\dots,e_5)$  by pinching some simple closed curves in $E_0-\{e_1,\dots,e_5\}$ that become  nodes in $E$.
The number of points in the preimage of a node in $E$ is equal to the number of components of the preimage of the corresponding closed curve in $E_0$.

We will call a node of $E$ {\em separating} (resp. {\em non-separating}) if the corresponding curve on $E_0^*$ is separating (resp. non-separating).
Consider an essential simple closed curve $c$ on $E^*_0$. Since $E_0$ is a torus,  if $c$ is separating then it must bound a disc in $E_0$.
In this case we have $\bar{\varrho}(c) = r \mod 2$, where $r$ is the number of points in $\{e_1,\dots,e_4\}$ that are contained this disc.
Let $n_c$ be the node of $E$ corresponding to $c$. Then $n_c$ is the intersection of two subcurves of $E$, one of which has genus $0$, the other one has genus $1$.  The number of nodes in the preimage of $x_c$ is then determined by the number of points in $\{q_1,\dots,q_4\}$ that are contained in the genus $0$ component.

In the case $c$ is  non-separating, $\bar{\varrho}(c)$ can be $0$ or $1$. However, if we have a family $\{c_1,\dots,c_k\}$ of pairwise disjoint non-separating curves on $E_0-\{e_1,\dots,e_4\}$, then all the values $\bar{\varrho}(c_i), \; i=1,\dots,k$,  can be computed from a single value, say $\bar{\varrho}(c_1)$. This is because the complement of the union $c_1\cup\dots\cup c_k$ in $E_0-\{e_1,\dots,e_4\}$ is a union of annuli  with punctures. This means that the numbers of nodes in the preimages of all non-separating nodes of $E$ are determined once this number is known for a chosen one.

It turns out that in most cases, the numbers of points in the preimages of the nodes of $E$ are enough for us to get the topological type of the admissible double cover of $E$.

\subsection{Case $E$ has one node}\label{sec:str:1:node}
We will show
\begin{Proposition}
	Assume that $\pp\in \ol{\cX}_D$ and the curve $E$ has only one node. Then $C$ has two irreducible components, denoted by $C_0$ and $C_1$, meeting at one node such that
	\begin{itemize}
		\item[$\bullet$] $C_0$ is isomorphic to $\Pb^1$, contains $\{p_5,p'_5\}$  and one point in $\{p_1,\dots,p_4\}$.
		
		\item[$\bullet$] $C_1$ is a Riemann surface of genus three, and contains three points in $\{p_1,\dots,p_4\}$.
		
		\item[$\bullet$] $\xi$ vanishes identically  on $C_0$, and $(C_1, \xi_{\left|C_1\right.}) \in \Omega' \cB_{4,1}(4)$.
	\end{itemize}
\end{Proposition}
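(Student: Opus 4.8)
The plan is to first enumerate the possible topological types of $C$ from the admissible double cover structure, and then to eliminate all but the asserted configuration using the characterization results of \textsection\ref{subsec:char:bdry:form}.

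Since $E=C/\langle\tau\rangle$ has arithmetic genus one and exactly one node, there are two possibilities: either (a) $E$ is an irreducible rational curve of arithmetic genus one with a non-separating node, or (b) $E=E'\cup\Pb^1$, where $E'$ is a smooth elliptic curve and $\Pb^1$ is attached at a single separating node. In each case I would determine the admissible cover $C\to E$ via the $\Z/2$-monodromy discussion of \textsection\ref{subsec:generalities:bdry}: a separating node whose genus-zero side carries $r$ of the four branch points lifts to one node (fixed by $\tau$) when $r$ is odd and to a pair of permuted nodes when $r$ is even, and the genera of the components of $C$ are then read off from Riemann--Hurwitz. In case (a) the requirement that $C$ have genus three forces the node to split into a permuted pair, so $C$ is irreducible of genus three with two permuted nodes. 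This produces a finite list of candidates, indexed by the position of $q_5$ and by $r\in\{1,2,3,4\}$ (stability forcing $r\ge 2$ when $q_5\in E'$).

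The next step is to discard every candidate except the one with a separating node, $q_5$ on the $\Pb^1$ component, and $r=1$. Whenever the lifted nodes form a single pair of permuted nodes which constitute the whole intersection of two $\tau$-invariant connected subcurves, Proposition~\ref{prop:s:poles:at:nodes:exchanged} forces $\xi$ to have simple poles there, and since these are the only nodes, Proposition~\ref{prop:s:poles:at:two:nodes:not:in:XD} shows $\pp\notin\ol{\cX}_D$; this eliminates case (a) and all even-$r$ configurations of case (b). When instead the node is fixed by $\tau$ and separates two subcurves both of positive genus (the odd-$r\ge 3$ configurations, where $C$ splits as a genus-one and a genus-two curve), Proposition~\ref{prop:node:fixed:by:invol} makes $\xi$ vanish on one side and Proposition~\ref{prop:bdry:form:dec:not:eigen:form} then excludes the point. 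The only surviving candidate is $E=E'\cup\Pb^1$ with the $\Pb^1$ carrying $q_5$ and exactly one branch point ($r=1$), whose double cover is $C=C_0\cup C_1$ with $C_0\cong\Pb^1$ (a double cover of the rational component branched over two points, namely $q_i$ and the node) and $C_1$ of genus three.

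Finally, on this configuration I would identify $\xi$. Writing $n$ for the single node, adjunction gives $\omega_C|_{C_0}\cong\omega_{\Pb^1}(n)$, a line bundle of degree $-1$, which has no nonzero section; hence $\xi$ vanishes identically on $C_0$ (here the genus-zero side survives precisely because it cannot support an anti-invariant differential), and consequently $\xi|_{C_1}$ has zero residue at $n$, so it is a genuine holomorphic differential on $C_1$. By the twisted differential formalism of Theorem~\ref{th:twisted:diff}, since $p_5,p'_5\notin C_1$, every zero of $\xi|_{C_1}$ lies at a node incident to $C_1$; as $n$ is the only such node and $\deg\omega_{C_1}=4$, this yields $\div(\xi|_{C_1})=4n$, so $(C_1,\xi|_{C_1})\in\Omega'\cB_{4,1}(4)$. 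Tracking the ramification points of the cover shows $C_0$ carries $\{p_5,p'_5\}$ together with one point of $\{p_1,\dots,p_4\}$ and $C_1$ the remaining three, as claimed. The main obstacle I anticipate is the bookkeeping in the enumeration step: correctly deciding, for each placement of $q_5$ and each $r$, whether the node splits and what the genera of the resulting components are, and verifying that the candidate list is exhaustive. Once this combinatorics is settled, the elimination and the final identification are direct applications of the cited results.
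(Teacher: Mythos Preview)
Your overall strategy matches the paper's: enumerate the admissible covers by the node type and the number $r$ of branch points on the genus-zero side, then eliminate all but the $r=1$ configuration using the characterization propositions. The final identification of the surviving case is also correct. However, two of your elimination steps have genuine gaps.

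\textbf{Case (a), non-separating node.} Your claim that ``the requirement that $C$ have genus three forces the node to split'' is false: if the node does \emph{not} split, the normalization $\tilde C\to\tilde E\cong\Pb^1$ is branched at the four $q_i$ \emph{and} at the two preimages of the node, giving $\tilde C$ genus two, and hence $C$ still has arithmetic genus three. So the non-split subcase must be ruled out separately (for instance: by Proposition~\ref{prop:node:fixed:by:invol} the $\tau$-fixed node carries no simple pole, so $\xi$ pulls back to a holomorphic differential on the genus-two normalization, which cannot have divisor $2p_5+2p'_5$ of degree four). Furthermore, in the split subcase $C$ is irreducible, so there are no ``two $\tau$-invariant connected subcurves'' and Proposition~\ref{prop:s:poles:at:nodes:exchanged} does not apply; the simple poles at the two permuted self-nodes come instead directly from Theorem~\ref{th:twisted:diff}(d), since both local branches lie on the same component and hence at the same level. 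This is exactly how the paper argues.

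\textbf{Case (b), $r=4$.} Here the double cover of the elliptic side $E'$ is \'etale, and it may be \emph{disconnected} (two copies of $E'$ exchanged by $\tau$). In that subcase the second subcurve is not connected, so Proposition~\ref{prop:s:poles:at:nodes:exchanged} again fails to apply; moreover the two lifted nodes are now separating in $C$, so $\xi$ cannot have simple poles there at all. The paper disposes of this subcase via Proposition~\ref{prop:bdry:form:dec:not:eigen:form} (one of the two positive-genus $\tau$-invariant pieces must carry $\xi\equiv 0$). Your enumeration, as written, does not flag this bifurcation.
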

\begin{proof}
	Let $q$ be the unique node of $E$. This node can be separating or not. Assume first that $p$ is separating. In this case $E$ has two irreducible components denoted by $E_0$ and $E_1$, where $E_0$ is isomorphic to $\Pb^1$ and $E_1$ is an elliptic curve.
	Let $C_0$ and $C_1$ be respectively the preimages of $E_0$ and $E_1$ in $C$.
	Let $r:=\# E_0\cap \{q_1,\dots,q_4\}$
	\begin{itemize}
		\item[$\bullet$] If $r=1$, then $q_5\in E_0$,  $C_0$ is also isomorphic to $\Pb^1$, $C_1$ is a smooth curve of genus $3$, and $C_0$ meets $C_1$ at a node fixed by $\tau$. The conclusions of the proposition the follows from Theorem~\ref{th:twisted:diff}.
		
		\item[$\bullet$] If $r=2$ then $C_0$ is also isomorphic to $\Pb^1$, $C_1$ is a smooth curve of genus $2$, and $C_0$ meets $C_1$ at two nodes exchanged by $\tau$. It follows from Proposition~\ref{prop:s:poles:at:nodes:exchanged} that $\xi$ has simple poles at these two nodes. But since $\xi$ is holomorphic outside of these nodes this case is excluded by Proposition~\ref{prop:s:poles:at:two:nodes:not:in:XD}.
		
		\item[$\bullet$] If $r=3$, then $C_0$ is a smooth curve of genus one, $C_1$ is a smooth curve of genus two, and $C_0$ meets $C_1$ at a node fixed by $\tau$. In this case either $\xi_{\left|C_0\right.} \equiv 0$ or $\xi_{\left|C_1\right.} \equiv 0$. By Proposition~\ref{prop:bdry:form:dec:not:eigen:form} this is impossible.
		
		\item[$\bullet$] If $r=4$, then $C_0$ is a smooth curve of genus one, $C_1$ is either a smooth curve of genus one, or a disjoint union of two isomorphic curves of genus one. In both cases, $C_0$ meets $C_1$ at two nodes exchanged by $\tau$. In the former case, $\xi$ has simple poles at the nodes (by Proposition~\ref{prop:s:poles:at:nodes:exchanged}). But since $\xi$ is holomorphic elsewhere this contradicts Proposition~\ref{prop:s:poles:at:two:nodes:not:in:XD}. In the latter case, either $\xi_{\left|C_0\right.}\equiv 0$ or $\xi_{\left|C_1\right.}\equiv 0$. Thus this case is ruled out by Proposition~\ref{prop:bdry:form:dec:not:eigen:form}.
	\end{itemize}
	
	In the case $q$ is a non-separating node, the preimage of $q$ in $C$ must consist of two nodes exchanged by $\tau$. By Theorem~\ref{th:twisted:diff}, $\xi$ must have simple poles at those nodes. But this is again ruled out by Proposition~\ref{prop:s:poles:at:two:nodes:not:in:XD}.
	This completes the proof of the proposition.
\end{proof}

\subsection{ Case $E$ has two nodes}\label{subsec:bdry:str:2:nodes}
Suppose now that the curve $E$ has two nodes. We have several configurations
\subsubsection{Case two separating nodes}\label{subsec:2:sep:nodes}
In this case $E$ has three irreducible components, two of which are isomorphic to $\Pb^1$, and the third one is an elliptic curve. We denote the $\Pb^1$ components by $E'_1$ and $E'_2$, and the elliptic component by $E''$. We also denote the union of $E'_1$ and $E'_2$ by $E'$.
Let $n_i:=|E'_i\cap\{q_1,\dots,q_4\}|, \; i=1,2$, and $n':=n_1+n_2$.
Denote the preimages of $E'_1, E'_2, E', E''$ in $C$ by $C'_1, C'_2, C', C''$ respectively. Note that $C'_1, C'_2, C''$ are not necessarily irreducible.
\begin{Proposition}\label{prop:bdry:form:2:sep:nodes}
	Assume that $E$ has two nodes all of which are separating. If $\pp\in \ol{\cX}_D$, then $C$ and $\xi$ satisfy one of the following
	\begin{itemize}
		\item[(a)] Up to a renumbering of $E'_1, E'_2$, $n_1=3, n_2=1$, $C'_1$ is an elliptic curve, $C'_2$ is isomorphic to $\Pb^1$ and contains $\{p_5,p'_5\}$, $C''$ is a disjoint union of two isomorphic elliptic curves, $C'_2$ intersects $C'_1$ at one node fixed by $\tau$ and intersects $C''$ at two nodes permuted by $\tau$. The differential $\xi$ vanishes identically on $C'_2$, and restricts to non-trivial holomorphic $1$-forms on the other components.
		\item[(b)] Both $C'_1, C'_2$ are isomorphic to $\Pb^1$, $n_1=n_2=2$, $C''$ is an elliptic curve which contains $\{p_5, p'_5\}$ and intersects each of $C'_1, C'_2$ at two nodes permuted by $\tau$. The differential $\xi$ has simple poles at all the nodes of $C$.
	\end{itemize}
\end{Proposition}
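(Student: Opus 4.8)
The plan is to run a finite case analysis organized by the shape of the dual graph of $E$ together with the distribution of the four branch points $q_1,\dots,q_4$ and the location of $q_5$, and then to eliminate all but the two stated configurations using the characterizing results of \textsection\ref{subsec:char:bdry:form}. First I would record the possible dual graphs. Since $E$ has two separating nodes and three components $E'_1, E'_2 \cong \Pb^1$ and the elliptic component $E''$, the dual graph is a tree with three vertices and two edges, hence a path. Up to relabeling of $E'_1, E'_2$, either $E''$ is the middle vertex (so $E'_1$ and $E'_2$ each meet only $E''$) or one of the $\Pb^1$'s, say $E'_2$, is the middle vertex (so $E'_2$ meets both $E'_1$ and $E''$). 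Writing $n_i = |E'_i \cap \{q_1,\dots,q_4\}|$ and $n'' = 4 - n_1 - n_2$, I would list the admissible distributions subject to stability of $E$: a $\Pb^1$ with a single node carries at least two of the points $q_1,\dots,q_5$, a $\Pb^1$ with two nodes at least one, while the elliptic component is automatically stable.

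Next, for each admissible combination I would determine the topology of the admissible cover $C$. Using \textsection\ref{subsec:generalities:bdry}, a separating node whose genus-zero side carries $r$ branch points is fixed by $\tau$ when $r$ is odd and splits into a $\tau$-permuted pair when $r$ is even; and the genus of each preimage component is computed by Riemann--Hurwitz from the number of branch points plus the fixed nodes lying on it. This yields, in every case, the genera of $C'_1, C'_2, C''$ and the fixed/permuted type of each node. I would then invoke Theorem~\ref{th:twisted:diff} to locate the components on which $\xi$ may vanish identically (negative level) and to read off the zero and pole orders forced at the nodes, keeping in mind that $p_5, p'_5$ must be double zeros of the twisted differential on the component carrying them.

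The elimination step is where the Appendix lemmas enter, and this is the part that requires care. At a node fixed by $\tau$, Proposition~\ref{prop:node:fixed:by:invol} forbids a simple pole, so $\xi$ must vanish identically on one of the two incident subcurves; when that node separates two $\tau$-invariant subcurves of genus at least one, Proposition~\ref{prop:bdry:form:dec:not:eigen:form} then shows $\pp \notin \partial\ol{\cX}_D$, and the configuration of two genus-one pieces glued along two fixed nodes is excluded by Corollary~\ref{cor:dec:two:tori:no:eigenform}. A configuration in which $\xi$ has simple poles at exactly one $\tau$-permuted pair of nodes and is holomorphic elsewhere is ruled out by Proposition~\ref{prop:s:poles:at:two:nodes:not:in:XD}, while Proposition~\ref{prop:s:poles:at:nodes:exchanged} forces simple poles along any permuted pair of nodes at which $C$ splits into two $\tau$-invariant connected subcurves.

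I expect the main obstacle to be the bookkeeping of the parity $\bar\varrho$ (to decide fixed versus permuted nodes) in tandem with the level structure and the Global Residue Condition of Theorem~\ref{th:twisted:diff}, so as to pin down precisely on which component $\xi$ vanishes in the cases that are not immediately excluded. Once this is carried out, the two surviving possibilities are exactly case (a), with $E'_2$ the middle vertex, $n_1 = 3$, $n_2 = 1$ and $q_5 \in E'_2$ (giving the node to $C'_1$ fixed, $\xi$ identically zero on $C'_2$, and the node to $C''$ a permuted pair), and case (b), with $E''$ the middle vertex, $n_1 = n_2 = 2$ and $q_5 \in E''$ (giving two permuted pairs of nodes and $\xi$ with simple poles at all of them). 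Verifying that each of these two configurations is genuinely consistent with all the constraints then completes the proof.
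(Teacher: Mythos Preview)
Your proposal is correct and follows essentially the same approach as the paper: organize by the two possible dual-graph shapes (equivalently, by whether $E'=E'_1\cup E'_2$ is connected), list the admissible $(n_1,n_2)$ by stability, read off the fixed/permuted type of each node from the parity of branch points on the genus-zero side, and then eliminate all cases except (a) and (b) using Propositions~\ref{prop:node:fixed:by:invol}, \ref{prop:bdry:form:dec:not:eigen:form}, \ref{prop:s:poles:at:nodes:exchanged}, \ref{prop:s:poles:at:two:nodes:not:in:XD} and Corollary~\ref{cor:dec:two:tori:no:eigenform}. The paper's write-up differs only in minor bookkeeping (it tracks $n'=n_1+n_2$ in the connected case and, within $n'=4$, further splits on whether $C''$ is connected and on $n_1$; in the $n_1=4$ subcase it rules things out by the nonexistence of a meromorphic differential on $\Pb^1$ with a double zero and two residue-free double poles, which you would hit via the Global Residue Condition).
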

\begin{proof}
	Assume first that $E'=E'_1\cup E'_2$ is connected. Note that $E'$ contains at least three points in $\{q_1,\dots,q_5\}$. Therefore $n'\geq 2$.
	\begin{itemize}
		\item[$\bullet$] If $n'=2$ then $C'$ is a genus zero curve which intersects $C''$ at two nodes (in particular $C'$ is connected). Since in this case $\{p_5,p'_5\} \subset C'$, we must have $\xi_{\left|C'\right.} \equiv 0$. It follows that $\xi$ is holomorphic at the nodes between $C'$ and $C''$. By Proposition~\ref{prop:s:poles:at:nodes:exchanged} this is impossible.
		
		\item[$\bullet$] If $n'=3$ then $C'$ is genus one curve, $C''$ is a smooth genus two curve, and $C'$ and $C''$ intersect at one node fixed by $\tau$. One readily checks that either $\xi_{\left|C'\right.} \equiv 0$ or $\xi_{\left|C''\right.} \equiv 0$. Thus this case is excluded by Proposition~\ref{prop:bdry:form:dec:not:eigen:form}.
		
		\item[$\bullet$] If $n'=4$ then $C'$ is an elliptic curve, $C''$ is either a smooth elliptic curve, or a disjoint union of two isomorphic elliptic curves. In both cases, $C'$ meets $C''$ at two nodes permuted by $\tau$. If $C''$ is a smooth elliptic curves then by Proposition~\ref{prop:s:poles:at:nodes:exchanged}, $\xi$ must have simple poles at the nodes between $C'$ and $C''$. This implies that $\xi$ must have some zeros in $C''$. Since $C''$ in invariant under $\tau$, we have $\{p_5,p'_5\} \subset C''$. But $\xi$ must have double zeros at $p_5$ and $p'_5$ (cf. Theorem~\ref{th:twisted:diff}). Thus this case cannot occur.
		
		In the case $C''$ is a disjoint union of two elliptic curves, we first observe that $\xi_{\left|C''\right.} \not\equiv 0$ by Proposition~\ref{prop:bdry:form:dec:not:eigen:form}. Since each component of $C''$ has only one node, $\xi_{\left|C''\right.}$ is holomorphic.
		Without loss of generality, we can assume that $E'_2$ is the component of $E'$ that meets $E''$. 
		Since $\xi$ does not have poles at the nodes between $C'_2$ and $C''$, we must have $\xi_{\left|C'_2\right.} \equiv 0$.
		By Proposition~\ref{prop:bdry:form:dec:not:eigen:form}, $\xi_{\left|C'_1\right.} \not\equiv 0$. This means that $C'_1$ must be an elliptic curve, which implies that either $n_1=3$ or $n_1=4$.
		\begin{itemize}
			\item[.] If $n_1=3$, $C'_2$ is isomorphic to $\Pb^1$, then $C'_1$ intersects $C'_2$ at one node fixed by $\tau$. Since $\xi$ cannot have zero in $C'_1$, we must have $\{p_5,p'_5\}\subset C'_2$. One readily checks that all the conditions in Case (a) are satisfied.
			
			\item[.] If $n_1=4$, then $C'_2$ is a disjoint union of two copies of $\Pb^1$, each of which contains one point in $\{p_5, p'_5\}$. By Theorem~\ref{th:twisted:diff}, on each component of $C'_2$ there is an Abelian differential $\nu$ which has a double zero and two double poles such that the residue of $\nu$ at either pole is zero. Since such a differential does not exist, this case is excluded. 
		\end{itemize}
	\end{itemize}    
	
	Assume now that $E'_1$ and $E'_2$ are disjoint. We can suppose that $n_1 \leq n_2$. We have $1 \leq n_1 \leq n_2 \leq 3$.
	\begin{itemize}
		\item[$\bullet$] If $n_2=3$ then $C'_2$ is an elliptic curve which intersects $C''$ at one node fixed by $\tau$. It follows that either $\xi_{\left|C'_2\right.} \equiv 0$ or $\xi_{\left|C''\right.} \equiv 0$. Note  that we must have $\xi_{\left|C'_1\right.} \equiv 0$ since $\{p_5, p'_5\} \subset C'_1$. Therefore we would have a contradiction to Proposition~\ref{prop:bdry:form:dec:not:eigen:form} in either case.

		\item[$\bullet$] If $n_2=2$ then we also have $n_1=2$. As a consequence both $C'_1$ and $C'_2$ are isomorphic to $\Pb^1$ and meet $C''$ at two nodes permuted by $\tau$. By Proposition~\ref{prop:s:poles:at:nodes:exchanged}, $\xi$ must have simple poles at all these nodes. This implies that $\xi$ is non-trivial on both $C'_1$ and $C'_2$, and therefore $\{p_5, p'_5\} \subset C''$. It follows that $C$ and $\xi$  satisfy the condition in Case (b).
	\end{itemize}
\end{proof}

\subsubsection{Case one separating node and one non-separating node}\label{subsec:str:1:sep:1:n:sep:nodes}
We now suppose that the curve $E$ has one separating node and one non-separating node. This means that $E$ has two irreducible components denoted by $E'$ and $E''$, where $E'$ has genus $0$ and $E''$ has genus $1$, and there is a node between $E'$ and $E''$.  Note that $E''$ has a self-node and its normalization has genus $0$.
Denote by $C'$ and $C''$ the preimages of $E'$ and $E''$ in $C$. Note that $C'$ is smooth, while $C''$ is a nodal curve.

\begin{Proposition}\label{prop:st:1:sep:1:n:sep:node}
	Assume that $E$ has one separating node and one non-separating node. If  $D$  is not a square,  then $\pp\in\ol{\cX}_D$ only if
	\begin{itemize}
		\item[.] $C'$ is isomorphic to $\Pb^1$ and contains two of the points $\{p_1,\dots,p_4\}$,
		
		\item[.]  $C''$ is a genus two curve with two nodes (that are exchanged by the Prym involution) containing $\{p'_5, p''_5\}$ and two points in $\{p_1,\dots,p_4\}$,
		
		\item[.] there are two nodes between $C'$ and $C''$, and
		
		\item[.] $\xi$ has simple poles at all of the nodes of $C$.
	\end{itemize}
\end{Proposition}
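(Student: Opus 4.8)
The plan is to run the same kind of exhaustive case analysis used for the preceding boundary strata, organized by the combinatorial data that determines the admissible double cover $C\to E$ over the two nodes of $E$. Write $E=E'\cup E''$ with $E'$ the genus $0$ component and $E''$ the genus $1$ component carrying the non-separating self-node, and let $C',C''$ be their preimages in $C$ (so $C'$ is smooth). Set $n':=|E'\cap\{q_1,\dots,q_4\}|$. First I record the two pieces of discrete data from \textsection\ref{subsec:generalities:bdry}: the separating node between $E'$ and $E''$ lifts to a single $\tau$-fixed node when $n'$ is odd and to a $\tau$-permuted pair when $n'$ is even (equivalently $C'\to E'\simeq\Pb^1$ is a double cover branched at the $n'$ marked points, together with the node when $n'$ is odd); whereas the self-node of $E''$ lifts either to a single $\tau$-fixed node or to a $\tau$-permuted pair according to the free monodromy value $\bar{\varrho}(c_{\mathrm{ns}})\in\{0,1\}$ of the associated non-separating curve. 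Stability of $E'$ forces $n'+[q_5\in E']\geq 2$, and the requirement $p_a(C)=3$ together with the Riemann--Hurwitz genus and parity constraints pin down the possible geometric genera of $C'$ and $C''$ in each configuration. This leaves finitely many cases, indexed by $n'\in\{1,2,3,4\}$, the location of $q_5$, and the lift-type of the self-node.

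Next I eliminate all but one configuration, the recurring moves being the following. Whenever a node is $\tau$-fixed, Proposition~\ref{prop:node:fixed:by:invol} forces $\xi$ to vanish identically on an adjacent component; since $E''$ has positive genus this usually produces a $\tau$-invariant subcurve of positive genus on which $\xi$ vanishes, and when its complement is also of positive genus this contradicts Proposition~\ref{prop:bdry:form:dec:not:eigen:form}. Thus the odd cases $n'=3$ (separating node fixed, $C'$ and $C''$ both of positive genus) are excluded immediately, and $n'=4$ with $q_5\in E''$ is excluded because a permuted lift of the self-node would force a negative geometric genus on $C''$ while a fixed lift falls to Proposition~\ref{prop:node:fixed:by:invol}. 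When both nodes lift to permuted pairs I use Proposition~\ref{prop:s:poles:at:nodes:exchanged}, applied to the decomposition $C=C'\cup C''$, and Proposition~\ref{prop:non:collapse:sub:curve}, applied to the vanishing cycles around the self-nodes, to force $\xi$ to have simple poles there; any configuration in which $\xi$ would end up with simple poles at exactly one permuted pair and be holomorphic at the remaining node is then killed by Proposition~\ref{prop:s:poles:at:two:nodes:not:in:XD}, which is the one place where the hypothesis that $D$ is not a square is essential.

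The surviving configuration is $n'=2$, $q_5\in E''$, with both nodes lifting to $\tau$-permuted pairs. Here $C'\to\Pb^1$ is a double cover branched at the two marked points, hence $C'\simeq\Pb^1$; a genus count gives $p_a(C'')=2$, so $C''$ is a $\tau$-invariant curve of arithmetic genus two whose two self-nodes are exchanged by $\tau$, containing $p_5,p'_5$ and two of the points $\{p_1,\dots,p_4\}$; the two preimages of the separating node join $C'$ and $C''$ and are permuted by $\tau$. Proposition~\ref{prop:s:poles:at:nodes:exchanged} and Proposition~\ref{prop:non:collapse:sub:curve} then show $\xi$ has simple poles at all four nodes, which is exactly the stratum $\cS_{1,1}$ of Theorem~\ref{th:bdry:eigen:form:H22}(4). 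I expect the main obstacle to be the $q_5\in E'$ branch, where $\{p_5,p'_5\}\subset C'$ and one component is forced to be $\Pb^1$: there Proposition~\ref{prop:bdry:form:dec:not:eigen:form} does not apply directly, and the configurations must instead be excluded by feeding the double-zero prescription at $p_5,p'_5$ into the order-compatibility and Global Residue Condition of Theorem~\ref{th:twisted:diff} (which quickly produce level inconsistencies for a differential supported on a single $\Pb^1$), supplemented by Proposition~\ref{prop:s:poles:at:two:nodes:not:in:XD}; carrying this twisted-differential bookkeeping through the remaining mixed lift-types is the delicate part of the argument.
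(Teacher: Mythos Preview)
Your approach is essentially the paper's: case analysis on $n'=|E'\cap\{q_1,\dots,q_4\}|$, applying Propositions~\ref{prop:node:fixed:by:invol}, \ref{prop:s:poles:at:nodes:exchanged}, \ref{prop:bdry:form:dec:not:eigen:form}, and \ref{prop:s:poles:at:two:nodes:not:in:XD} to eliminate all configurations except $n'=2$ with $q_5\in E''$. Two points need correction.

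First, your treatment of $n'=4$ with a permuted lift of the self-node is mistaken: there is no negative-genus obstruction. When $n'=4$ the cover $C''\to E''$ is unramified, and trivial monodromy around the self-node makes $C''$ the disjoint union of two nodal cubics, each of arithmetic genus one. The paper excludes this configuration differently: the two nodes between the elliptic curve $C'$ and $C''$ are now separating (each cuts off one cubic), hence $\xi$ cannot have simple poles there, so $\xi$ vanishes identically on one of $C'$, $C''$; since both have genus $\geq 1$, Proposition~\ref{prop:bdry:form:dec:not:eigen:form} gives the contradiction. For the fixed-lift subcase ($C''$ connected) your use of Proposition~\ref{prop:node:fixed:by:invol} is actually cleaner than the paper's degree argument on $C'$, but it should be combined with Proposition~\ref{prop:s:poles:at:nodes:exchanged} (which forces $\xi_{|C''}\not\equiv 0$) to produce the contradiction.

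Second, you overestimate the difficulty of the $q_5\in E'$ branch. No Global Residue bookkeeping is needed. For $n'=1$ one has $\xi_{|C'}\equiv 0$; if the self-node of $C''$ is $\tau$-fixed then Proposition~\ref{prop:node:fixed:by:invol} forces $\xi_{|C''}\equiv 0$ as well (absurd), while if it lifts to a permuted pair then Proposition~\ref{prop:s:poles:at:nodes:exchanged} gives simple poles there and nowhere else, which Proposition~\ref{prop:s:poles:at:two:nodes:not:in:XD} excludes. For $n'=2$ with $q_5\in E'$, Proposition~\ref{prop:s:poles:at:nodes:exchanged} forces $\xi_{|C'}\not\equiv 0$, and a degree count on $C'\simeq\Pb^1$ (two simple poles give degree $-2$, incompatible with double zeros at $p_5,p'_5$) immediately rules out $\{p_5,p'_5\}\subset C'$.
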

\begin{proof}
	Let $n':=|E'\cap\{q_1,\dots,q_4\}|$.

	\begin{itemize}
		\item[$\bullet$] Case $n'=1$. In this case we must have $q_5\in E'$, and therefore $p_5,p'_5\subset C'$, and there is one node between $C'$ and $C''$.  It follows that $\xi_{\left|C'\right.}\equiv 0$. Hence the restriction of $\xi$ on $C''$ is non-trivial. Note that $C''$ is connected.
		The preimage of the self-node of $E''$ consists of one or two nodes of $C''$.
		If $C''$ has one node, it must be fixed by the Prym involution, which implies that $\xi_{\left|C''\right.}\equiv 0$. But this is impossible since $\xi\not\equiv 0$. Thus $C''$ must have two nodes that are exchanged by $\tau$. By Proposition~\ref{prop:s:poles:at:nodes:exchanged}, $\xi$ must have simple poles at those nodes. However, since $\xi$ does not have any other poles this is excluded by Proposition~\ref{prop:s:poles:at:two:nodes:not:in:XD}.

		\item[$\bullet$] Case $n'=2$. In this case, both $C'$ and $C''$ are connected and $C'$ meets $C''$ at two nodes exchanged by $\tau$. By Proposition~\ref{prop:s:poles:at:nodes:exchanged}, $\xi$ has simple poles at those node. As a consequence $\xi_{\left|C'\right.}\not\equiv 0$, which implies that $\{p_5,p'_5\} \subset C''$.
		
		Since $\xi_{\left|C''\right.}\not\equiv 0$, $C''$ must have two nodes exchanged  by $\tau$ and $\xi$ must have simple poles at these nodes, and we get the desired conclusion. 
		
		\item[$\bullet$] Case $n'=3$. In this case $C'$ is an elliptic curve which intersects $C''$ at one node. It follows that either $\xi_{\left|C'\right.}\equiv 0$ or $\xi_{\left| C''\right.}\equiv 0$. By Proposition~\ref{prop:bdry:form:dec:not:eigen:form}, this case cannot occur.
		
		\item[$\bullet$] Case $n'=4$. In this case $C'$ is an elliptic curve which meets $C''$ at two nodes. If $C''$ is connected, $\xi$ must have two nodes at the nodes between $C'$ and $C''$ by Proposition~\ref{prop:s:poles:at:nodes:exchanged}. This implies that $\xi$ must have a double zero in the smooth part of $C'$. But since $C'$ is invariant by $\tau$, this is impossible. In the case $C''$ is disconnected, it must have two components, each of which is a genus $1$ curve with one node. The two nodes between $C'$ and $C''$ are separating. Therefore, $\xi$ cannot have simple poles at those nodes. As a consequence, either $\xi_{\left|C'\right.}\equiv 0$ or $\xi_{\left|C''\right.}\equiv 0$. In either case, we would have a contradiction to Proposition~\ref{prop:bdry:form:dec:not:eigen:form}. Thus this case cannot occur.
	\end{itemize}
\end{proof}

\subsubsection{Case two non-separating nodes}\label{subsec:str:2:n:sep:nodes}
In this case $E$ has two irreducible components denoted by $E_1$ and $E_2$, both of which are isomorphic to $\Pb^1$. Set $n_i:=|E_i\cap\{q_1,\dots,q_4\}|,  \; i=1,2$. We must have $n_1+n_2=4$. By convention, we always suppose that $n_1 \geq n_2$.  Let $C_1$ and $C_2$ be respectively the preimages of $E_1$ and $E_2$ in $C$.

\begin{Proposition}\label{prop:2:n:sep:nodes}
	Assume that $E$ has two non-separating nodes, and that $D$ is not a square. If $\pp\in \ol{\cX}_D$ then we have $(n_1, n_2)=(4,0)$ and
	\begin{itemize}
		\item[.] $C_1$ is a smooth curve of genus $2$,
		
		\item[.] $C_2$ is isomorphic to $\Pb^1$ and contains $\{p'_5, p''_5\}$,
		
		\item[.] $\xi$ vanishes identically on $C_2$, and $(C_1,\xi_{\left|C_1\right.}) \in \Omega\cM_2(2)$,
	\end{itemize}
\end{Proposition}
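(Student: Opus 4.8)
The plan is to run a case analysis on the distribution $(n_1,n_2)$ of the four branch points, exactly as in the proofs of Propositions~\ref{prop:bdry:form:2:sep:nodes}--\ref{prop:st:1:sep:1:n:sep:node}, and to rule out every configuration except the asserted one. First I would record the combinatorics of the admissible cover over the two non-separating nodes. Following \textsection\ref{subsec:generalities:bdry}, each node $q$ of $E$ carries a monodromy $m_q\in\Z/2\Z$: its preimage in $C$ is a single node fixed by $\tau$ when $m_q=1$, and a pair of nodes exchanged by $\tau$ when $m_q=0$. Since each $E_i\cong\Pb^1$ is a sphere punctured at its $n_i$ branch points and its two node points, the product-of-monodromies relation gives $m_a+m_b\equiv n_i \pmod 2$ for $i=1,2$; hence $n_1\equiv n_2\pmod 2$, so $(n_1,n_2)\in\{(4,0),(2,2)\}$ or $(n_1,n_2)=(3,1)$. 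Riemann--Hurwitz applied to $C_i\to E_i$ (with the node points of monodromy $1$ acting as ramification points) then fixes the genus of each $C_i$ in every sub-case.

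Next I would isolate the surviving configuration $(4,0)$ with $m_a=m_b=1$. Here $C_1\to E_1$ is branched over the four fixed points together with the two node points, i.e. over six points, so $C_1$ is a smooth genus-two curve and $\tau|_{C_1}$ is its hyperelliptic involution; while $C_2\to E_2$ is branched only over the two node points, giving $C_2\cong\Pb^1$, and the unramified point $q_5$ lifts to the pair $\{p_5,p'_5\}\subset C_2$. The two nodes of $C$ are then fixed by $\tau$, so by Proposition~\ref{prop:node:fixed:by:invol} $\xi$ vanishes identically on one side of each. A short degree count on $\Pb^1$ excludes $\xi|_{C_2}\not\equiv 0$ (a differential on $\Pb^1$ that is holomorphic at the two nodes and has double zeros at $p_5,p'_5$ would have positive degree), so $\xi\equiv 0$ on $C_2$ and $\xi_1:=\xi|_{C_1}\not\equiv 0$. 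By Theorem~\ref{th:twisted:diff}(b) the zeros of $\xi_1$ lie at the two nodes, which are Weierstrass points; anti-invariance $\tau^*\xi_1=-\xi_1$ forces the vanishing order at a fixed point to be even, so the degree-two zero divisor is a single double zero and $(C_1,\xi_1)\in\Omega\cM_2(2)$, as claimed. The complementary choices (the monodromy $m_a=m_b=0$, or $q_5\in E_1$) I would dispose of by the same degree bookkeeping on $\Pb^1$ together with Proposition~\ref{prop:node:fixed:by:invol}.

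Finally I would eliminate $(2,2)$ and $(3,1)$. For $(2,2)$ with $m_a=m_b=1$ the curve $C$ is a union of two elliptic curves meeting at two nodes both fixed by $\tau$, which is impossible by Corollary~\ref{cor:dec:two:tori:no:eigenform}; for $(2,2)$ with $m_a=m_b=0$, $C$ is two copies of $\Pb^1$ meeting at four nodes, and a degree count on the component carrying $\{p_5,p'_5\}$ (double zeros there are incompatible either with simple poles at all four nodes when both components are at level zero, or with holomorphicity at the nodes when the opposite component vanishes) gives a contradiction. The case $(3,1)$ always produces one $\tau$-fixed node; Proposition~\ref{prop:node:fixed:by:invol} forbids a simple pole there, so $\xi$ must vanish identically on one of the two components, and then I would derive a contradiction from Proposition~\ref{prop:non:collapse:sub:curve} applied to a $\tau$-anti-invariant vanishing cycle through the other, non-separating node --- this is where non-squareness of $D$ enters, via Lemma~\ref{lm:D:no:square:hol:map:inj}. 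The main obstacle I anticipate is the combinatorial bookkeeping: correctly reading off the number of preimages of each node from the monodromy data, and then invoking the twisted-differential compatibility and global residue condition of Theorem~\ref{th:twisted:diff} to control the orders of $\xi$ at those nodes --- in particular the parity argument that upgrades two putative simple zeros of $\xi_1$ into a single double zero.
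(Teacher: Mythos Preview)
Your case analysis matches the paper's approach, and most of the subcases are handled correctly. There is one genuine gap and one harmless slip.

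The gap is in the $(n_1,n_2)=(4,0)$ subcase with monodromy $m_a=m_b=0$. Here the two nodes of $E$ each lift to a \emph{pair} of nodes of $C$ exchanged by $\tau$; there is no node fixed by $\tau$, so Proposition~\ref{prop:node:fixed:by:invol} is not applicable. Your degree bookkeeping does correctly force $\xi\equiv 0$ on each $\Pb^1$ component of $C_2$ (a double zero at $p_5$ or $p'_5$ together with at most two simple poles gives degree $\ge 0$ on $\Pb^1$). But this alone is not a contradiction: $\xi|_{C_1}$ can perfectly well be a nowhere-vanishing holomorphic $1$-form on the elliptic curve $C_1$, regular at all four node points, and the twisted-differential conditions of Theorem~\ref{th:twisted:diff} are satisfied. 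What rules this configuration out is Proposition~\ref{prop:non:collapse:sub:curve}: the smooth part $C_2^*$ consists of two open annuli, and the difference $\gamma'-\gamma''$ of their core curves represents a nonzero class in $H_1(X,\Z)^-$ whose $\xi$-period vanishes since $\xi|_{C_2}\equiv 0$. This is exactly the mechanism you invoke for $(3,1)$, and it is the one the paper uses here too; you just need to invoke it in this subcase as well.

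The harmless slip: in $(4,0)$ the alternative ``$q_5\in E_1$'' never occurs, because $E_2\cong\Pb^1$ would then carry only its two node points and be unstable. So there is nothing to dispose of there.

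With the corrected tool in the $m_a=m_b=0$ subcase, your argument is complete and essentially identical to the paper's.
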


\begin{proof}
	We have three cases $(n_1,n_2)=(4,0)$, $(n_1,n_2)=(3,1)$, and $(n_1,n_2)=(2,2)$.
	\begin{itemize}
		\item[$\bullet$] Case $(n_1,n_2)=(4,0)$. In this case equivalently $\{q_1,\dots,q_4\}\subset E_1$ and $q_5\in E_2$.
		We claim that  the preimages of the two nodes of $E$ have the same cardinality. This is because the closed curves $c',c''$ on the reference torus $E_0$ that correspond to these nodes have the same image in $\Z/2\Z$ under the group morphism $\bar{\varrho}$. We have two subcases
		
		\begin{itemize}
			\item[-] Case 1: each node of $E$ gives two nodes in $C$ (that is $\bar{\varrho}(c')=\bar{\varrho}(c'')=0\in \Z/2\Z$). In this case $C_1$ is an elliptic curve, $C_2$ is a disjoint union of two copies of $\Pb_1$, each of which meets $C_1$ at two nodes.
			Since $\{p_5,p'_5\} \subset C_2$, the differential  $\xi$ vanishes identically on $C_2$ and is nowhere zero on $C_1$. 
			The smooth part $C_2^*$ of $C_2$ is the disjoint union of two open annuli denoted by $A'$ and $A''$. 
			Let $\gamma'$ and $\gamma''$ be respectively some core curves of $A'$ and $A''$. We endow these curves with the orientations such that $\gamma''=\tau_*\gamma'$. Thus $\gamma'-\gamma''\subset H_1(X,\Z)^- - \{0\}$, where $X$ is a reference smooth curve in $\cB_{4,1}$. 
			By Proposition~\ref{prop:non:collapse:sub:curve},  $\xi$ cannot vanish identically  on $C_2$. We thus have a contradiction showing that this case cannot occur.

			\item[-] Case 2: each node of $E$ gives rise to a node of $C$. In this case $C$ has two nodes, both are fixed by the Prym involution. The curve $C_1$ is a Riemann surface of genus $2$, while $C_2$ is a copy of $\Pb^1$ meeting $C_1$ at two nodes. Note that the restriction of $\tau$ to $C_1$ has $6$ fixed points: namely, $p_1,\dots,p_4$ and the two nodes of $C$. In particular, these nodes  are the Weierstrass points of $C_1$.
			It follows from Theorem~\ref{th:twisted:diff} that $\xi_{\left|C_1\right.}$ must have a double zero at one of the nodes, that is $(C_1,\xi_{\left|C_1\right.}) \in \Omega\cM_2(2)$, while $\xi_{\left|C_2\right.}\equiv 0$. We thus get the desired conclusion.
		\end{itemize}
		
		\item[$\bullet$] Case $(n_1,n_2)= (3,1)$  In this case $C_1$ is an elliptic curve, $C_2$ is isomorphic to $\Pb^1$, and there are 3 nodes between $C_1$ and $C_2$, one of the nodes is fixed by $\tau$, the other two are permuted.  Proposition~\ref{prop:node:fixed:by:invol} then implies that either $\xi_{\left|C_1\right.}\equiv 0$ or $\xi_{\left|C_2\right.} \equiv 0$. Therefore,  $\xi$ cannot have simple poles at the nodes permuted by $\tau$ which contradicts Proposition~\ref{prop:s:poles:at:nodes:exchanged}. Thus this case does not occur.
		
		\item[$\bullet$] Case $(n_1,n_2)=(2,2)$. In this case, both $C_1$ and $C_2$ are connected.  Either (a) both $C_1$ and $C_2$ are elliptic curves  intersecting each other at 2 nodes fixed by $\tau$, or (b) $C_1$ and $C_2$ are both isomorphic to $\Pb^1$ and intersect each other at $4$ nodes. By Corollary~\ref{cor:dec:two:tori:no:eigenform} (a) cannot happen. Suppose that $C$ satisfies (b). Then   $\xi$ has simple poles at all the nodes of $C$ by Proposition~\ref{prop:s:poles:at:nodes:exchanged}. This can only happen if each of $C_1, C_2$ contains a double zero of $\xi$. But since $C_1,C_2$ are both invariant by $\tau$, this cannot be the case. Thus  this case can not happen either.
	\end{itemize}
\end{proof}

\subsection{Case $E$ has three nodes}\label{subsec:st:3:nodes}
We now consider the case  $E$ has 3 nodes.

\subsubsection{Three separating nodes}\label{subsec:st:3:nodes:sep}
We first consider the case all the nodes of $E$ are separating. In this case, $E$ has $4$ irreducible components, three of which are isomorphic to $\Pb^1$,  the remaining one is an elliptic curve. We denote the $\Pb^1$ components by $E'_1, E'_2, E'_3$, and the elliptic one by $E''$.
Let $E':=E'_1\cup E'_2 \cup E'_3$.  Let $C'_i, i\in\{1,2,3\}, C'$, and $C''$ be respectively the preimages of $E'_i, E'$, and $E''$ in $C$.
Let $n':=|E'\cap\{q_1,\dots,q_4\}|$.
Define $\xi':=\xi_{\left|C'\right.}$ and $\xi'':=\xi_{\left|C''\right.}$

\begin{Proposition}\label{prop:bdry:str:3:sep:nodes}
	If $E$ has three nodes all of which are separating then $\pp\not\in\ol{\cX}_D$.
\end{Proposition}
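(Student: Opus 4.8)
The plan is to argue by contradiction: assuming $\mathfrak{p}\in\ol{\cX}_D$, I would show that every combinatorial type compatible with ``$E$ has three separating nodes'' falls into the scope of one of the obstruction results of \textsection\ref{subsec:char:bdry:form}. First I would record the topology. Since all three nodes of $E$ are separating, its dual graph is a tree on four vertices, namely the three rational components $E'_1,E'_2,E'_3$ together with the elliptic component $E''$; hence $E''$ is the unique component of positive genus and, crucially, it is \emph{smooth} (a singular elliptic component would carry a non-separating node). Up to relabeling the tree is either a star or a path, and the five marked points are distributed over the four components subject to stability, i.e.\ each $\Pb^1$ carries at least three special points and $E''$ at least one. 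This reduces the problem to a finite list of types indexed by the shape of the tree, the location of $E''$ in it, and the location of the point $q_5$ (the image of $\{p_5,p'_5\}$).

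Next I would determine, for each node, whether it lifts to a single $\tau$-fixed node or to a $\tau$-permuted pair of nodes in $C$. By the parity computation of \textsection\ref{subsec:generalities:bdry}, a separating node lifts to one $\tau$-fixed node when its genus-zero side carries an odd number of the branch points $q_1,\dots,q_4$, and to a pair of permuted nodes when that number is even. This dichotomy dictates the local behaviour of $\xi$ through the twisted-differential description of Theorem~\ref{th:twisted:diff}: at a $\tau$-fixed node $\xi$ cannot have a simple pole, so by Proposition~\ref{prop:node:fixed:by:invol} it vanishes identically on one of the two adjacent components, whereas at a permuted pair $\xi$ must have simple poles by Proposition~\ref{prop:s:poles:at:nodes:exchanged}.

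With this data in hand I would run the case analysis, eliminating each type by the already-established lemmas. The preimage $C''$ of $E''$ is $\tau$-invariant and is either a single curve of genus $\geq 1$ or, when $E''$ carries no effective ramification, a disjoint union of two isomorphic elliptic curves; in every type I would first locate $\{p_5,p'_5\}$ and argue, using Theorem~\ref{th:twisted:diff}(b),(e), where $\xi$ is forced to vanish. Whenever the level structure forces $\xi\equiv 0$ on $C''$ (or on any $\tau$-invariant subcurve of genus $\geq 1$), Proposition~\ref{prop:bdry:form:dec:not:eigen:form} gives a contradiction; the sub-case in which $C''$ splits as two elliptic curves carrying $\{p_5,p'_5\}$ with $\xi\equiv 0$ there is ruled out instead by applying Proposition~\ref{prop:non:collapse:sub:curve} to the difference of the core curves of the two annuli covering a connecting node. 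The configurations that survive these vanishing arguments are exactly those in which $\xi$ is non-trivial but acquires simple poles along a single $\tau$-permuted pair of nodes while staying holomorphic at all remaining nodes; each of these is excluded by Proposition~\ref{prop:s:poles:at:two:nodes:not:in:XD}, and the degeneration into two tori meeting at two $\tau$-fixed nodes is handled by Corollary~\ref{cor:dec:two:tori:no:eigenform}.

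The main obstacle will be the bookkeeping of parities across the three edges of the tree: the obstruction that applies to a given type (vanishing on a positive-genus subcurve versus a lone permuted pair of simple poles) depends entirely on which nodes are $\tau$-fixed and which are permuted, and these parities are linked by the requirement that the total effective ramification over the smooth elliptic curve $E''$ be even, together with the Global Residue Condition of Theorem~\ref{th:twisted:diff}(e). The delicate point is to verify that no type can simultaneously avoid all the vanishing obstructions and also place simple poles along two distinct permuted pairs, which would escape Proposition~\ref{prop:s:poles:at:two:nodes:not:in:XD}; showing that the parity constraints on a genus-one tree forbid this last possibility is what ultimately closes the argument.
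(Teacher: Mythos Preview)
Your plan is correct and matches the paper's approach: both argue by contradiction via a finite case analysis, eliminating each combinatorial type with the obstruction lemmas of \textsection\ref{subsec:char:bdry:form} (chiefly Propositions~\ref{prop:node:fixed:by:invol}, \ref{prop:s:poles:at:nodes:exchanged}, \ref{prop:bdry:form:dec:not:eigen:form}, \ref{prop:s:poles:at:two:nodes:not:in:XD}). The paper organizes the cases by whether $E'$ is connected and by the value of $n'$, while you organize by tree shape and the location of $q_5$; these are equivalent bookkeeping schemes.

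One correction to your closing paragraph: parity alone does \emph{not} forbid two (or even three) permuted pairs. For instance, on the path $E'_1-E'_2-E'_3-E''$ with $(n'_1,n'_2,n'_3)=(2,0,2)$ and $q_5\in E'_2$, all three edges have even parity and lift to permuted pairs. What actually kills such configurations is a degree argument coming from Theorem~\ref{th:twisted:diff}: the component carrying $p_5,p'_5$ (here $C'_2\cong\Pb^1\sqcup\Pb^1$) cannot support both a double zero and the required simple poles, so $\xi$ is forced to vanish there; this vanishing then propagates through the adjacent $\Pb^1$'s (residues match across nodes), yielding $\xi_{|C'}\equiv 0$ on a $\tau$-invariant genus-one subcurve, and Proposition~\ref{prop:bdry:form:dec:not:eigen:form} finishes. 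The paper executes exactly this when it says ``the only admissible configuration is\dots'' in case~(a), $n'=4$. So your strategy works, but the final obstruction in the multi-permuted-pair cases is the degree/compatibility constraint from the twisted differential, not a parity argument. (Incidentally, in case~(a), $n'=3$ the paper invokes Proposition~\ref{prop:ab:diff:dble:zero:Prym:inv}, but your route via Proposition~\ref{prop:bdry:form:dec:not:eigen:form} alone also suffices there, since both subcurves have genus~$\geq 1$.)
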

\begin{proof}
	Let us suppose that $\pp\in \ol{\cX}_D$.
	Note that  $E'$ has at most $2$ connected components. We thus have two cases
	\begin{itemize}
		\item[(a)] Case $E'$ is connected. We have two subcases
		\begin{itemize}
			\item[$\bullet$] Case $n'=3$. In this case we must have $q_5\in E'$, $C'$ is a nodal curve of genus $1$, $C''$ a smooth curve of genus two, and $C'$ intersects $C''$ at a node fixed by $\tau$. It follows from Theorem~\ref{th:twisted:diff} that  $\xi'' \not\equiv 0$ and $\xi''$ must have a double zero at the  node between $C'$ and $C''$.
			Note that $\tau$ has two fixed points on $C''$ and satisfies $\tau^*\xi''=-\xi''$. But by Proposition~\ref{prop:ab:diff:dble:zero:Prym:inv} $\xi''$ must have two simple zeros. We thus have a contradiction, which means that this case cannot occur.\\
			
			\item[$\bullet$] Case $n'=4$. In this case $C'$ is a nodal curve of genus one, $C''$ is either an elliptic curve, or a disjoint union of two isomorpĥic elliptic curves, and there are two nodes between $C'$ and $C''$. In the former case, $\xi$ must have simple poles at the nodes between $C'$ and $C''$. This implies that $\xi''\not\equiv 0$. Since $\xi''$ has either no zero, or two double zeros in the smooth part of $C''$, this is impossible.
			In the latter case, we have $\xi'\not\equiv 0$ and $\xi''\not\equiv 0$ by Proposition~\ref{prop:bdry:form:dec:not:eigen:form}. Since $\xi''$ must be holomorphic on $C''$, we have $\{p_5,p'_5\} \subset C'$. Since $\xi'$ must have double zeros at $p_5,p'_5$, or vanish identically on the component(s) that contain $p_5$ and $p'_5$, the only admissible configuration is that $C'$ has $3$ irreducible components $C'_1,C'_2,C'_3$, where
			\begin{itemize}
				\item[-] $C'_1$ contains two points in $\{p_1,\dots,p_4\}$, intersects $C'_2$ at two nodes, and is disjoint from $C'_3$,
				
				\item[-] $C'_2$ contains one point in $\{p_1,\dots,p_4\}$, and intersects both $C'_1$ and $C'_3$,
				
				\item[-] $C'_3$ contains $\{p_5,p'_5\}$ and one point in $\{p_1,\dots,p_4\}$, intersects $C'_2$ at one node, and $C''$ at two nodes.  
			\end{itemize}
			The differential $\xi'$  vanishes identically on $C'_3$ and has simple poles at the nodes between $C'_1$ and $C'_2$. However, since these are the only pair of nodes at which $\xi$ has simple poles, we have a contradiction to Proposition~\ref{prop:s:poles:at:two:nodes:not:in:XD}. Thus this case cannot occur.   
		\end{itemize}
		
		\item[(b)] Case $E'$ is not connected. In this case $E'$ has two connected components. Without loss of generality, we will assume that $E'_1$ and $E'_3$ are in the same connected component of $E$. Let $n'_1:=|(E'_1\cup E'_3)\cap\{p_1,\dots,p_4\}|$ and  $n'_2:=|E'_2\cap\{p_1,\dots,p_4\}|$.  Note that we must have $n'_1\geq 2$, $n'_2\geq 1$, and $n'_1+n'_2=4$.

		\begin{itemize}
			\item[$\bullet$] Case $(n'_1,n'_2)=(2,2)$. In this case $\{p_5,p'_5\} \subset C'_1\cup C'_3$. By considering the compatible twisted differentials (cf. Theorem~\ref{th:twisted:diff}, we see that $\xi$ must vanish identically on $C'_1\cup C'_3$. Observe that $C'_2$ intersects $C''$ at two nodes. By  Proposition~\ref{prop:s:poles:at:nodes:exchanged}, $\xi$ must have simple poles at these two nodes. But since these are the only nodes at which $\xi$ has simple poles, we would have a contradiction to Proposition~\ref{prop:s:poles:at:two:nodes:not:in:XD}. Thus this case cannot occur.
			
			\item[$\bullet$] Case $(n'_1,n'_2)=(3,1)$. In this case $C'_1\cup C'_3$ is a nodal curve of genus one intersecting $C''$ at one node, while $C'_2$ is isomorphic to $\Pb^1$, contains $p_5,p'_5$, and intersects $C''$ also at one node. This implies that $C''$ is a smooth curve of genus two. Note that $\xi_{\left|C'_2\right.} \equiv 0$. 
			Since the node between $C'_1\cup C'_3$ and $C''$ is separating, either $\xi_{\left|C'_1\cup C'_3\right.} \equiv 0$ or $\xi_{\left|C''\right.} \equiv 0$. In either case, we would have a contradiction to Proposition~\ref{prop:bdry:form:dec:not:eigen:form}. The proposition is then proved.
		\end{itemize}
	\end{itemize}
\end{proof}

\subsubsection{Two separating nodes and one non-separating node}\label{subsec:st:2:nodes:sep:1:node:n:sep}
Assume now that $E$ has 2 separating nodes and one non-separating one. Then $E$ has $3$ irreducible components, two of which, denoted by $E'_1, E'_2$, are isomorphic to $\Pb^1$, the remaining one, denoted by $E''$, is a genus $1$ nodal curve. Let $E':=E'_1\cup E'_2$, and $n':=|E'\cap\{q_1,\dots,q_4\}|$. Let $C'_1, C'_2, C', C''$ be respectively the preimages of $E'_1, E'_2, E', E''$ in $C$.

\begin{Proposition}~\label{prop:st:2:nodes:sep:1:node:n:sep:Ep:connect}
	If $E$ has two separating nodes and one non-separating node, and $E'$ is connected, then $\pp\not\in\ol{\cX}_D$.
\end{Proposition}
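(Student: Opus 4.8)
The plan is to run the same kind of exhaustive analysis used for the two-node cases in Sections~\ref{subsec:2:sep:nodes}--\ref{subsec:str:2:n:sep:nodes}, now for the chain configuration dictated by the hypotheses. Since $E'$ is connected and both $E'_1,E'_2$ are rational, the dual graph of $E$ is the chain $E'_1-E'_2-E''$, where the edge $E'_1-E'_2$ and the edge $E'_2-E''$ are the two separating nodes (call them $n_{12}$ and $n_2$), while the non-separating node is the self-node of the genus-one curve $E''$. First I would record, via the parity rules of Section~\ref{subsec:generalities:bdry}, how each node lifts to $C$: writing $n'_i=|E'_i\cap\{q_1,\dots,q_4\}|$, the node $n_{12}$ is fixed by $\tau$ iff $n'_1$ is odd (and splits into a pair exchanged by $\tau$ iff $n'_1$ is even), and likewise $n_2$ is fixed iff $n'_1+n'_2$ is odd; the parity at the self-node of $E''$ is then forced by these. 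A node fixed by $\tau$ contributes a single preimage node of $C$, an even count contributes an exchanged pair.

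Next I would enumerate the admissible distributions of $q_1,\dots,q_4$ and of the pair-point $q_5$ among the three components, subject to the stability of each $\Pb^1$, so that $E'_1$, being an end of the chain, carries at least two of the marked points $q_1,\dots,q_5$ and $E'_2$ at least one. For each distribution I would first impose Theorem~\ref{th:twisted:diff}: the order conditions $k+k'=-2$ at every node, the requirement that $\xi$ have double zeros at $p_5,p'_5$ (or vanish on the component containing them), and the Global Residue Condition. As already happens in the two-node analysis, a substantial fraction of the distributions are eliminated at this purely combinatorial stage, because no compatible twisted differential exists: the degree of a putative $\xi$ on some rational component cannot equal $-2$. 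A representative instance is $(n'_1,n'_2)=(2,2)$ with $q_5\in E''$, where the relevant component of $C$ is a self-nodal $\Pb^1$ carrying a double zero and three simple poles, so the total degree is $-1\neq -2$.

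For the distributions that survive the twisted-differential test I would apply the obstruction lemmas of Section~\ref{subsec:char:bdry:form}, and the cases split into two types. When one of the separating nodes is $\tau$-fixed, Proposition~\ref{prop:node:fixed:by:invol} forces $\xi$ to vanish identically on one side of it; if that side has arithmetic genus $\geq 1$, Proposition~\ref{prop:bdry:form:dec:not:eigen:form} gives the contradiction. When instead both separating nodes split into exchanged pairs, I would apply Proposition~\ref{prop:s:poles:at:nodes:exchanged} to conclude that $\xi$ has simple poles precisely along those pairs, and then invoke Proposition~\ref{prop:s:poles:at:two:nodes:not:in:XD} in the subcases where a single exchanged pair carries the poles.

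The main obstacle will be the bookkeeping in the remaining ``all nodes exchanged'' configurations, where $\xi$ is forced to have simple poles along two distinct exchanged pairs: there Proposition~\ref{prop:s:poles:at:two:nodes:not:in:XD} does not apply verbatim, since it is stated for a single pair. I expect to rule these out either through the degree and residue constraints of Theorem~\ref{th:twisted:diff}, which (as in the $(n'_1,n'_2)=(2,2)$ computation above) already fail on a rational self-nodal component, or through the non-vanishing-period obstruction of Proposition~\ref{prop:non:collapse:sub:curve} applied to an anti-invariant vanishing cycle built from the two exchanged pairs. Keeping the genus, connectedness, and $\tau$-action of each preimage component straight across all subcases is the delicate part; once the topology of $C$ and the shape of $\xi$ are pinned down, each individual contradiction is immediate.
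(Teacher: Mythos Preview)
Your approach---enumerate the distributions of the marked points, pin down the cover topology via parities, then kill each subcase with Theorem~\ref{th:twisted:diff} together with Propositions~\ref{prop:node:fixed:by:invol}, \ref{prop:non:collapse:sub:curve}, \ref{prop:s:poles:at:nodes:exchanged}, \ref{prop:bdry:form:dec:not:eigen:form}, \ref{prop:s:poles:at:two:nodes:not:in:XD}---is exactly what the paper does. Your degree-count example for $(n'_1,n'_2)=(2,2)$ is in fact the precise mechanism the paper uses in the $n'=4$ banana-curve subcase.

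There is one genuine error. You assert that ``the parity at the self-node of $E''$ is then forced'' by the parities at the two separating nodes. It is not. The self-node corresponds to a non-separating curve on the reference torus $E_0$, and (as recorded in \textsection\ref{subsec:generalities:bdry}) the value of $\bar\varrho$ on a non-separating curve is an independent $\Z/2$-choice, unconstrained by the values on separating curves. Concretely, in the case $n'=3$ with $(n'_1,n'_2)=(3,0)$ both parities occur: if the self-node lifts to a single $\tau$-fixed node, then Proposition~\ref{prop:node:fixed:by:invol} gives $\xi_{|C''}\equiv 0$ and Proposition~\ref{prop:bdry:form:dec:not:eigen:form} finishes; if it lifts to an exchanged pair, you get the single pair of simple poles and Proposition~\ref{prop:s:poles:at:two:nodes:not:in:XD} finishes. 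Similarly in $n'=4$, both parities of the self-node must be treated (one gives a $\tau$-fixed self-node on $C''$, the other gives either the banana or the disconnected $C''$). If you proceed believing the parity is determined, you will silently drop half of these subcases.

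A minor organizational remark: the paper avoids most of your finer bookkeeping by tracking only $n'=n'_1+n'_2\in\{2,3,4\}$ and arguing on the coarse decomposition $C=C'\cup C''$ first, deducing the finer structure (the value of $(n'_1,n'_2)$ and the location of $q_5$) only after Propositions~\ref{prop:bdry:form:dec:not:eigen:form} and \ref{prop:node:fixed:by:invol} have forced $\xi_{|C'_2}\equiv 0$. This collapses many of your subcases at once.
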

\begin{proof}
	Assume that $\pp\in \ol{\cX}_D$. 
	Without loss of generality, we can assume that $E'_2$ intersects both $E'_1$ and $E''$. Note that we have $2 \leq n' \leq 4$.
	\begin{itemize}
		\item[$\bullet$] Case $n'=2$. In this case $C'$ is a genus zero curve which contains $\{p_5,p'_5\}$ and intersects $C''$ at two nodes. It follows from Proposition~\ref{prop:s:poles:at:nodes:exchanged} that $\xi$ must have simple poles at these nodes. This means that $\xi_{\left|C'_1\cup C'_2\right.} \not\equiv 0$. Since $\xi$ must have double zeros at $p_5,p'_5$ we would have a contradiction to Theorem~\ref{th:twisted:diff}. Thus this case does not occur. 
		
		\item[$\bullet$] Case $n'=3$. In this case $C'$ is a genus $1$ nodal curve, $C''$ is a genus $2$ nodal curve which intersects $C'$ at one node. 
		By  Proposition~\ref{prop:bdry:form:dec:not:eigen:form}, $\xi_{\left| C''\right.} \not\equiv 0$ and $\xi_{\left| C'\right.} \not\equiv 0$. This implies that $\xi$ vanishes identically on $C'_2$, and $\xi_{\left| C'_1\right.} \not\equiv 0$. One readily checks that this happens only if  $C'_1$ is an elliptic curve containing $3$ points in  $\{p_1,\dots,p_4\}$, $C'_2$ is isomorphic to $\Pb^1$, contains $\{p'_5, p''_5\}$ and intersects each of $C'_1$ and $C''$ at one node.
		
		Note that $C''$ has two self-nodes, and $\xi''$ must have simples simple poles that these nodes. Since these are the only nodes of $C$ at which $\xi$ has simple poles, we have a contradiction to Proposition~\ref{prop:s:poles:at:two:nodes:not:in:XD}. We can then conclude that this case cannot occur.

		\item[$\bullet$] Case $n'=4$. In this case, $C'$ is of  genus $1$, $C''$ is either (a) a connected genus $1$ curve or (b) a disjoint union of two isomorphic genus one curves, and $C'$ intersects $C''$ at two nodes. In case (a), $C''$ can have either one or two self-nodes. If $C''$ has one self-node, since this node is fixed by $\tau$, we must have $\xi_{\left|C''\right.}\equiv 0$, but this is a contradiction to Proposition~\ref{prop:bdry:form:dec:not:eigen:form}. Thus $C''$ must have two self-nodes.  By Proposition~\ref{prop:s:poles:at:nodes:exchanged}, $\xi$ has simple poles at the nodes between $C'$ and $C''$. It follows that $\xi$ has three simple poles in each irreducible component of $C''$ (which is isomorphic to $\Pb^1$). But as $\xi$ has either no zero or a double zero on an irreducible component of $C''$, this case cannot occur.
		
		In case (b) the nodes between $C''$ and $C'$ are separating.
		Since $\xi_{\left|C''\right.}\not\equiv 0$ by Proposition~\ref{prop:bdry:form:dec:not:eigen:form}, we must have 
		$\xi_{\left|C'_2\right.}\equiv 0$. Note that we also have $\xi_{\left|C'\right.}\not\equiv 0$, which means that $\xi_{\left|C'_1\right.}\not\equiv 0$. It follows that $C'_1$ is an elliptic curve. In particular, $\xi$ does not have simple pole on $C'$. Since $\xi$ must have simple poles at the self-nodes of $C''$, we get a contradiction to Proposition~\ref{prop:s:poles:at:two:nodes:not:in:XD}, which means that this case cannot occur either.
	\end{itemize}
\end{proof}

\begin{Proposition}~\label{prop:2:nodes:sep:1:node:n:sep:Ep:no:connect}
	Suppose that $E$ has two separating nodes and one non-separating node,  and that $E'$ is not connected. Then $\pp\in \ol{\cX}_D$ only if
	\begin{itemize}
		\item[$\bullet$] $C'_1$ and $C'_2$ are both isomorphic to $\Pb^1$,
		
		\item[$\bullet$] $C''$ is either
		\begin{itemize}
			\item[(a)] a genus two curve with two nodes,
			
			\item[(b)] a genus one curve with two nodes, or
			
			\item[(c)] a disjoint union of two genus $1$ curves with one node,
		\end{itemize}
		
		\item[$\bullet$] $\xi$ has simple poles at all the non-separating nodes of $C$.
	\end{itemize}
\end{Proposition}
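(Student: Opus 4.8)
The plan is to argue exactly as in the two companion results Proposition~\ref{prop:bdry:form:2:sep:nodes} and Proposition~\ref{prop:st:2:nodes:sep:1:node:n:sep:Ep:connect}: assume $\pp\in\ol{\cX}_D$ and run through the finitely many ways in which the marked points $q_1,\dots,q_5$ can be distributed on the three components $E'_1,E'_2,E''$ of the quotient $E$, discarding every distribution incompatible with being a limit of Prym eigenforms. First I would fix notation: write $n_i:=|E'_i\cap\{q_1,\dots,q_4\}|$ for $i=1,2$, so that the separating node $E'_i\cap E''$ carries the branching datum governed by $n_i$, while the self-node of $E''$ is the unique non-separating node. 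Stability of $(E,q_1,\dots,q_5)$ forces each $\Pb^1$ component $E'_i$ to carry at least three special points, so $n_i\ge 2$ unless $q_5\in E'_i$ (in which case $n_i\ge 1$); since there are only four branch points and $n_1+n_2\le 4$, this leaves the short list $(n_1,n_2)\in\{(2,2)\}$ with $q_5\in E''$, and $(n_1,n_2)\in\{(1,2),(1,3),(2,2)\}$ with $q_5\in E'_1$ (up to swapping $E'_1,E'_2$).

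For each distribution I would read off the topology of the cover from the monodromy description recalled in \textsection\ref{subsec:generalities:bdry}: the separating node $E'_i\cap E''$ lifts to a single $\tau$-fixed node when $n_i$ is odd and to a $\tau$-permuted pair when $n_i$ is even, and a Riemann--Hurwitz count on each piece then yields $C'_i\cong\Pb^1$ when $n_i\le 2$ (a double cover of $\Pb^1$ branched at two points, namely the $n_i$ branch points together with the fixed node when $n_i=1$) and a genus-one curve when $n_i=3$ (four ramification points). The self-node of $E''$ lifts to one $\tau$-fixed node or to a $\tau$-permuted pair according to the value of $\bar\varrho$ on the corresponding non-separating loop; these two possibilities are precisely what produce, respectively, a $\tau$-fixed self-node versus a $C''$ whose normalization splits, the latter giving either a connected nodal curve (an $\Pb^1$ with two self-nodes, or two copies of $\Pb^1$ glued at two nodes) or a disjoint union of two genus-one curves.

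The eliminations then come from the appendix lemmas. The distribution $(1,3)$ makes $C'_2$ a $\tau$-invariant genus-one curve meeting $C''$ at a $\tau$-fixed node, so Proposition~\ref{prop:node:fixed:by:invol} forces $\xi$ to vanish on one of two invariant subcurves of genus $\ge 1$, which is excluded by Proposition~\ref{prop:bdry:form:dec:not:eigen:form}; this leaves $C'_1,C'_2\cong\Pb^1$. Next I would show the self-node must split. If it were $\tau$-fixed, both local branches would lie on $C''$, and Proposition~\ref{prop:node:fixed:by:invol} would force $\xi_{\left|C''\right.}\equiv 0$; when $q_5\in E''$ this contradicts $\{p_5,p'_5\}\subset C''$ carrying the double zeros, while when $q_5\in E'_1$ the level and order constraints of Theorem~\ref{th:twisted:diff} (at most simple poles on $\Pb^1$ leaves, with strict order inequalities across distinct levels) make $\xi$ identically zero on too large a subcurve, again a contradiction, reinforced by Proposition~\ref{prop:ab:diff:dble:zero:Prym:inv} and, where only one pole pair would remain, by Proposition~\ref{prop:s:poles:at:two:nodes:not:in:XD}. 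What survives is exactly $C'_1,C'_2\cong\Pb^1$ together with $C''$ of arithmetic genus two (two self-nodes), genus one (two nodes), or a pair of genus-one curves with one node each, i.e. cases (a), (b), (c). Finally, since $C''$ then carries the nontrivial limit differential it has level $0$, so each of its nodes joins two equal-level branches and hence carries a simple pole of $\xi$ by Theorem~\ref{th:twisted:diff}(d); in particular $\xi$ has simple poles at every non-separating node.

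The hard part will be the bookkeeping for the ``even--even'' distribution $n_1=n_2=2$, especially when $q_5$ sits on one of the $\Pb^1$ leaves $E'_i$: there all the coarse vanishing and pole lemmas are simultaneously compatible, and one must combine the level structure with the Global Residue Condition of Theorem~\ref{th:twisted:diff} to pin down the pole orders of the twisted differential on the leaf carrying $\{p_5,p'_5\}$ and to conclude that the self-node necessarily splits. This residue bookkeeping, which is what ultimately separates the admissible configurations from those to be discarded, runs closely parallel to the analysis already carried out in Proposition~\ref{prop:bdry:form:2:sep:nodes} and Proposition~\ref{prop:st:2:nodes:sep:1:node:n:sep:Ep:connect}, and I expect it to be the only genuinely delicate step.
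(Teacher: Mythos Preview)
Your approach is the same as the paper's: a case split on $(n_1,n_2)$ followed by the appendix lemmas (Propositions~\ref{prop:node:fixed:by:invol}, \ref{prop:s:poles:at:nodes:exchanged}, \ref{prop:bdry:form:dec:not:eigen:form}, \ref{prop:s:poles:at:two:nodes:not:in:XD}), and the surviving configurations are exactly those you list. Two small corrections. First, your stated contradiction when the self-node is $\tau$-fixed and $q_5\in E''$ (``$\{p_5,p'_5\}\subset C''$ carrying the double zeros'') is not quite the right one: having $\xi_{|C''}\equiv 0$ is perfectly compatible with $p_5,p'_5\in C''$; the actual contradiction is that this forces $\xi_{|C'_i}\equiv 0$ as well (each $C'_i\cong\Pb^1$ meets only $C''$, so no poles survive), whence $\xi\equiv 0$. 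Second, the case you flag as delicate, $(n_1,n_2)=(2,2)$ with $q_5\in E'_1$, is in fact immediate and needs no Global Residue Condition: if $\xi_{|C'_1}\not\equiv 0$ then on $C'_1\cong\Pb^1$ it would carry two double zeros (at $p_5,p'_5$) and at most two simple poles (at the nodes with $C''$), contradicting $\deg K_{\Pb^1}=-2$; hence $\xi_{|C'_1}\equiv 0$, which in turn contradicts Proposition~\ref{prop:s:poles:at:nodes:exchanged} applied to the pair of permuted nodes $C'_1\cap C''$. The paper's proof does not even separate this sub-case out.
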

\begin{proof}
	Let $n_1:=|E'_1\cap\{q_1,\dots,q_4\}|$ and $n_2:=|E'_2\cap\{q_1,\dots,q_4\}|$.
	Without loss of generality, we can assume that $n_1\geq n_2$.
	Since $n'=n_1+n_2 \leq 4$, we have $1\leq n_2 \leq n_1 \leq 3$.
	
	\begin{itemize}
		\item[(i)] Case $(n_1,n_2)=(2,1)$. In this case $q_5$ must be contained in $E'_2$, and  each of  $C'_1, C'_2$ is isomorphic to $\Pb^1$, $C'_1$ intersects $C''$ at two nodes, $C'_2$ intersects $C''$ at one node. 
		It follows that $\xi$ vanishes identically on $c'_2$ and $\xi'':=\xi_{\left| C''\right.}$ has a zero of order four at the node between $C''$ and $C'_2$.
		Note that $C''$ is a genus two curve with two self-nodes.
		By Proposition~\ref{prop:s:poles:at:nodes:exchanged}, $\xi$ has simple poles at those nodes between $C''$ and $C'_1$.  Since $\xi$ must have simple poles at the self-nodes of $C''$, we get the desired conclusion with $C''$ in case (a).

		\item[(ii)] Case $(n_1,n_2)=(3,1)$. In this case $C'_1$ is an elliptic curve, $C'_2$ is isomorphic to $\Pb^1$ and contains $\{p_5, p'_5\}$,  $C''$ is a nodal curve of genus two intersecting each of $C'_1, C'_2$ at one node. One readily checks that $\xi$ must vanishes identically on $C''$. Thus we have a contradiction to Proposition~\ref{prop:bdry:form:dec:not:eigen:form}.
		
		\item[(iii)] Case $(n_1,n_2)=(2,2)$. In this case both $C'_1, C'_2$ are isomorphic to $\Pb^1$, while $C''$ can be either a genus one curve with one node, a genus one curve with two nodes, or a union of two nodal genus one curves. Note that $C''$  and intersects each of $C'_1, C'_2$ at two nodes.  
		
		In the first case $\xi_{\left|C''\right.}\equiv 0$, which implies that $\xi_{\left|C'_1\right.}\equiv 0$ and $\xi_{\left|C'_2\right.}\equiv 0$, that is $\xi\equiv 0$. Thus this case is excluded.
		
		In the second case, $\xi$ must have simple poles at all the nodes by Proposition~\ref{prop:s:poles:at:nodes:exchanged}, and $\pp$ has all the desired properties with $C''$ in case (b).
		
		In the last case, one readily checks that $\pp$ has all the desired properties with $C''$ in case (c).
	\end{itemize}
\end{proof}

\subsubsection{One separating node and two non-separating ones}\label{subsec:str:1:sep:2:n:sep:nodes}
Assume now that $E$ has one separating nodes and two non-separating ones. In this case, $E$ has 3 irreducible components, all of which are isomorphic to $\Pb^1$. One of the component, that will be denoted by $E''_1$, intersects the other two. We denote by $E'$ the component that intersects $E''_1$ at one node, and by $E''_2$ the one that intersects  $E''_1$ at two nodes. Let $E'':=E''_1\cup E''_2$. We denote by $C',C''_1, C''_2,C''$ the preimages of $E', E''_1, E''_2, E''$ in $C$. Let $\xi':=\xi_{\left|C'\right.}, \; \xi''_1:=\xi_{\left|C''_1\right.}, \; \xi''_2:=\xi_{\left|C''_1\right.}$.
\begin{Proposition}\label{prop:str:1:sep:node:2:n:sep:nodes}
	If $E$ has one separating node and two non-separating nodes, then $\pp\not\in \ol{\cX}_D$.
\end{Proposition}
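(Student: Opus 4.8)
The plan is to run a case-by-case elimination exactly parallel to the previous propositions of this appendix, organized according to the distribution of the marked points among the three components and the ramification behaviour of the cover $C\to E$ at the three nodes. First I would fix notation: with $E',E''_1,E''_2$ as in the statement, set $n':=|E'\cap\{q_1,\dots,q_4\}|$, $m_1:=|E''_1\cap\{q_1,\dots,q_4\}|$, $m_2:=|E''_2\cap\{q_1,\dots,q_4\}|$, so that $n'+m_1+m_2=4$, and record on which component $q_5$ lies. Denote by $C',C''_1,C''_2$ the preimages in $C$. Using the description of \textsection\ref{subsec:generalities:bdry}, the ramification data of $C\to E$ at each node is governed by parity: the separating node between $E'$ and $E''_1$ lifts to a single $\tau$-fixed node when $n'$ is odd and to a $\tau$-permuted pair when $n'$ is even; the two non-separating nodes between $E''_1$ and $E''_2$ lift both to $\tau$-fixed nodes or both to $\tau$-permuted pairs when $m_2$ is even, and to one of each when $m_2$ is odd (here one uses that the corresponding parallel curves on the reference torus cobound the region $E''_2$ carrying $m_2$ punctures). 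Stability of $E$ forces $E'$ to carry at least two special points beyond its single node and $E''_2$ at least one marked point, which already trims the list of admissible distributions.

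Next I would treat the two non-separating nodes, which carry the genus of $E$ and are the heart of the matter. If at least one of them lifts to a single $\tau$-fixed node, Proposition~\ref{prop:node:fixed:by:invol} forces $\xi$ to vanish identically on $C''_1$ or on $C''_2$; combining this with the behaviour of the remaining non-separating node --- either another fixed node (again Proposition~\ref{prop:node:fixed:by:invol}) or a permuted pair at which Proposition~\ref{prop:s:poles:at:nodes:exchanged} would demand a simple pole --- I would reach a contradiction, typically via Proposition~\ref{prop:bdry:form:dec:not:eigen:form} (no identical vanishing on a $\tau$-invariant subcurve of genus $\geq 1$) or Corollary~\ref{cor:dec:two:tori:no:eigenform}. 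If instead both non-separating nodes lift to $\tau$-permuted pairs, then $C''_1$ and $C''_2$ are $\tau$-invariant and meet along these pairs, and Proposition~\ref{prop:s:poles:at:nodes:exchanged} forces simple poles of $\xi$ at all of them; hence $\xi$ is nontrivial on both $C''_1$ and $C''_2$, which in turn pins down the location of $\{p_5,p'_5\}$, since a component carrying such simple poles cannot also carry the double zeros required by Theorem~\ref{th:twisted:diff}.

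With the non-separating nodes controlled, I would conclude by running through the position of $q_5$ and the status of the separating node. When $q_5\in E'$, the identity $\tau^*\xi=-\xi$ together with the prescribed double zeros at $p_5,p'_5$ and the order/residue constraints of Theorem~\ref{th:twisted:diff} force $\xi\equiv 0$ on $C'$, and then the only simple poles of $\xi$ occur at the non-separating node pair(s); when $q_5\in E''_1$ or $q_5\in E''_2$, analogous reasoning applies, using Proposition~\ref{prop:bdry:form:dec:not:eigen:form} and Corollary~\ref{cor:dec:two:tori:no:eigenform} to exclude the configurations where $\xi$ vanishes on an invariant genus-one piece. The main obstacle, just as in the two-non-separating-node case (Proposition~\ref{prop:2:n:sep:nodes}), will be the subcase in which both non-separating nodes survive as permuted pairs and $\xi$ acquires simple poles at two distinct permuted pairs simultaneously: here Proposition~\ref{prop:s:poles:at:two:nodes:not:in:XD} does not apply verbatim, as it is stated for a single pair. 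I therefore expect to need a direct argument invoking complete periodicity (\textsection\ref{subsec:commplete:per:n:cyl:dec}) and the finiteness of the cylinder prototypes $\Pcal_{D,\cyl}$ through Proposition~\ref{prop:cyl:dec:length:ratios:22}, exploiting that $D$ is not a square so that the period ratio $a/\lambda$ attached to the two collapsing invariant cylinders cannot take the degenerate values in $\{0,1,\infty\}$ that the vanishing node pairs would impose. This contradiction, valid across all remaining distributions, yields $\pp\notin\ol{\cX}_D$.
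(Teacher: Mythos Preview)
Your overall architecture---case analysis on $(n',m_1,m_2)$ and on the ramification of the two non-separating nodes, then elimination via the toolkit of \textsection\ref{sec:degenerate:eigen:form}---matches the paper's proof. Two points deserve correction.

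First, your invocation of Proposition~\ref{prop:s:poles:at:nodes:exchanged} when both non-separating nodes lift to $\tau$-permuted pairs is not justified: that proposition requires the two connected $\tau$-invariant subcurves to meet at exactly \emph{one} permuted pair (the proof uses that the two vanishing cycles sum to zero in $H_1(X,\Z)$, which fails with four cycles). So you cannot conclude that $\xi$ has simple poles at all four of these nodes. Correspondingly, the difficulty you flag---two distinct permuted pairs of simple poles, outside the scope of Proposition~\ref{prop:s:poles:at:two:nodes:not:in:XD}---never actually materializes. In the paper these ``four-node'' configurations are disposed of either because no compatible twisted differential exists on $C$ (Theorem~\ref{th:twisted:diff} fails combinatorially, e.g.\ when $(n',m_1,m_2)=(1,1,2)$ or $(2,0,2)$), or because the component carrying $\{p_5,p'_5\}$ is forced to satisfy $\xi\equiv 0$, after which only one permuted pair carries simple poles and Proposition~\ref{prop:s:poles:at:two:nodes:not:in:XD} does apply (e.g.\ $(n',m_1,m_2)=(2,2,0)$). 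No new cylinder argument is needed.

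Second, you omit Proposition~\ref{prop:non:collapse:sub:curve} from your toolkit, and it is genuinely needed. In the subcases where $m_2$ is odd (so one non-separating node is fixed and the other lifts to a permuted pair), Proposition~\ref{prop:node:fixed:by:invol} forces $\xi\equiv 0$ on one of $C''_1,C''_2$, but neither Proposition~\ref{prop:bdry:form:dec:not:eigen:form} nor Corollary~\ref{cor:dec:two:tori:no:eigenform} applies, because the vanishing piece has genus zero and the three nodes between $C''_1$ and $C''_2$ do not admit a decomposition into two genus-one subcurves meeting at two fixed nodes. The paper instead takes small loops around the two permuted nodes, observes that their difference represents a nonzero class in $H_1(X,\Z)^-$, and derives a contradiction from Proposition~\ref{prop:non:collapse:sub:curve}. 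The same device handles $(n',m_1,m_2)=(2,1,1)$.
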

\begin{proof}
	Suppose that $\pp\in\ol{\cX}_D$. 
	Let $n':=|E'\cap\{q_1,\dots,q_4\}|, \; n''_1:=|E''_1\cap\{q_1,\dots,q_4\}|, \; n''_2:=|E''_2\cap\{q_1,\dots,q_4\}|$.   We must have $1 \leq n' \leq 4$ and $n'+n''_1+n''_2=4$.
	
	\begin{itemize}
		\item[(a)] Case $n'=1$. In this case $E'$ must contain $q_5$ and one point in $\{q_1,\dots,q_4\}$. Therefore, $C'_1$ is isomorphic to $\Pb^1$, and $\xi'\equiv 0$. We have the following subcases. 
		\begin{itemize}
			\item[(a.1)] $(n''_1,n''_2)=(0,3)$. In this case $C''_1$ is also isomorphic to $\Pb^1$, $C''_2$ is an elliptic curve, and $C''_1$ intersects $C''_2$ at three nodes. Since one of the nodes between $C''_1$ and $C''_2$ is fixed by $\tau$, 
			either $\xi''_1\equiv 0$, or $\xi''_2\equiv 0$. If $\xi''_2\equiv 0$, then since $C''_1$ is isomorphic to $\Pb^1$ we also have $\xi''_1\equiv 0$. Hence $\xi\equiv 0$ which is impossible. Thus, we must have $\xi''_1\equiv 0$. Note that two of the nodes between $C''_1$ and $C''_2$ are permuted by $\tau$. By considering the cycle supported in $C''_1$ consisting of two small circles bordering two disjoint small discs containing these nodes in the interior, we get a contradiction to Proposition~\ref{prop:non:collapse:sub:curve}. Thus this case cannot occur.
			
			\item[(a.2)] $(n''_1,n''_2)=(1,2)$. In this case either both $C''_1$ and $C''_2$ are elliptic curves that intersect each other at  two nodes fixed by $\tau$, or both $C''_1$ and $C''_2$ are isomorphic to $\Pb^1$  and intersect each other at  two pairs of nodes permuted by $\tau$. The former case is ruled out by Corollary~\ref{cor:dec:two:tori:no:eigenform}, while the latter cannot occur since there does not exist any  compatible twisted differential on $C$ (cf. Theorem~\ref{th:twisted:diff}).
			
			\item[(a.3)] $(n''_1, n''_2)=(2,1)$. In this case $C''_1$ is an elliptic curve, $C''_2$ is isomorphic to $\Pb^1$, and $C''_1,C''_2$ meet at three nodes. One readily checks that there cannot exists any compatible twisted differential on $C$. Therefore, this case does not occur. 
		\end{itemize}
		\item[(b)] Case $n'=2$. In this case $C'$ is isomorphic to $\Pb^1$ and intersects $C''_1$ at two nodes.  We have two subcases
		\begin{itemize}
			\item[(b1)] $(n''_1,n''_2)=(0,2)$. Either $C''_1$ is isomorphic to $\Pb^1$, $C''_2$ is an elliptic curve, and $C''_1$ intersects $C''_2$ at two nodes fixed by $\tau$, or $C''_1$ is a disjoint union of two copies of $\Pb^1$, $C''_2$ is isomorphic to $\Pb^1$ and intersects $C''_1$ at fours nodes. The former case is ruled out by Corollary~\ref{cor:dec:two:tori:no:eigenform}, while the latter is ruled out since there is no compatible twisted differential.
			
			\item[(b2)] $(n''_1,n''_2)=(1,1)$. In this case, both $C''_1, C''_2$ are isomorphic to $\Pb^1$ and intersect each other at three nodes. Since one of the nodes between $C''_1$ and $C''_2$ is fixed by $\tau$, $\xi$ mush vanish identically on $C''_1$ or on $C''_2$. In either case, by considering the pair of simple closed curves bordering two small discs containing the other two nodes between $C''_1$ and $C''_2$, we get a contradiction to Proposition~\ref{prop:non:collapse:sub:curve}. It follows that this case cannot occur.
			
			\item[(b3)] $(n''_1,n''_2)=(2,0)$. In this case, we must have $q_5\in E''_2$. Either $C''_1$ is an elliptic curve, $C''_2$ is isomorphic to $\Pb^1$, and $C''_1$ intersects $C''_2$ at two nodes fixed by $\tau$, or $C''_1$ is isomorphic to $\Pb^1$, $C''_2$ is a disjoint union of two copies of $\Pb^1$, and $C''_1$ intersects $C''_2$ at four nodes. In both cases, $\xi$ only has simple poles at the nodes between $C'$ and $C''_2$. Thus the two cases is ruled out by Proposition~\ref{prop:s:poles:at:two:nodes:not:in:XD}.   
		\end{itemize}
		\item[(c)] Case $n'=3$. In this case $C'$ is an elliptic curve which intersects $C''_1$ at one node, $C''_1$ is isomorphic to $\Pb^1$, $C''_2$ is either isomorphic to $\Pb^1$ or a disjoint union of two copies of $\Pb^1$. Since $C''=C''_1\cup C''_2$ is a genus two curve, by Proposition~\ref{prop:bdry:form:dec:not:eigen:form}, we must have $\xi'\not\equiv 0$ and $\xi'':=\xi_{\left|C''\right.} \not\equiv 0$. Since the node between $C'$ and $C''_1$ is fixed by $\tau$, we must have $\xi''_1=\xi_{\left|C''_1\right.} \equiv 0$. As a consequence $ \xi''_2=\xi_{\left|C''_2\right.} \not\equiv 0$. But since $C''_2$ is either isomorphic to $\Pb^1$ or a disjoint union of two copies of $\Pb^1$, $\xi$ must vanish identically on $C''_2$. We thus have a contradiction, which means that this case cannot occur.
		
		\item[(d)] Case $n'=4$. We must have $q_5\in E''_2$. In this case $C'$ is an elliptic curve which intersects $C''_1$ at two nodes. Either $C''_1$ and $C''_2$ are both isomorphic to $\Pb^1$ and intersect each other at two nodes fixed by $\tau$, or each of $C''_1$ and $C''_2$ is a disjoint union of two copies of $\Pb^1$. In the former case $\xi$ only has simple poles at the nodes between $C'$ and $C''_1$. Thus this case is ruled out by Proposition~\ref{prop:s:poles:at:two:nodes:not:in:XD}. In the latter, since $\{p_5,p'_5\} \subset C''_2$, we must have $\xi''_2\equiv 0$. It follows that $\xi''_1\equiv 0$, and we have a contradiction to Proposition~\ref{prop:bdry:form:dec:not:eigen:form}. This completes the proof of the proposition.    
	\end{itemize} 	
\end{proof}

\subsubsection{Three non-separating  nodes} \label{subsec:st:3:n:sep:nodes}
In this case $E$ has $3$ irreducible components,  denoted by $E_1, E_2, E_3$, all of which are isomorphic to $\Pb^1$.
For $i=1,2,3$, let $C_i$ be the preimage of $E_i$ in $C$, and $\xi_i:=\xi_{\left|C_i\right.}$.
\begin{Proposition}\label{prop:st:3:n:sep:nodes}
	If $E$ has three non-separating nodes then $\pp\not\in\ol{\cX}_D$. 
\end{Proposition}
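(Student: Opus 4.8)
The plan is to assume $\pp\in\ol{\cX}_D$ and derive a contradiction through a combinatorial case analysis on the double cover $C\to E$, in the same spirit as Propositions~\ref{prop:2:n:sep:nodes} and the earlier boundary propositions. First I would pin down the topology. Since $E$ has arithmetic genus one with three components all isomorphic to $\Pb^1$ meeting at three non-separating nodes, its dual graph is a $3$-cycle; realizing $E$ as a degeneration of a reference torus $E_0$ as in \textsection\ref{subsec:generalities:bdry}, the three nodes correspond to three pairwise parallel essential simple closed curves $c_1,c_2,c_3$ which cut $E_0$ into three annular regions becoming $E_1,E_2,E_3$. Setting $n_i:=|E_i\cap\{q_1,\dots,q_4\}|$, I have $n_1+n_2+n_3=4$, and the monodromy values $r_i:=\bar{\varrho}(c_i)\in\Z/2\Z$ of \textsection\ref{subsec:generalities:bdry} satisfy $r_{i+1}=r_i+n_i$. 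Since $c_i$ is fixed by $\tau$ (a single preimage node) precisely when $r_i=1$ and gives an exchanged pair of nodes when $r_i=0$, this already forces the number of odd $n_i$ to be even and, going around the cycle, restricts the number $f$ of fixed nodes to $f\in\{0,3\}$ when all $n_i$ are even and to $f\in\{1,2\}$ when exactly two $n_i$ are odd. I would thus organize the argument by the partition type of $(n_1,n_2,n_3)$, namely $\{4,0,0\},\{2,2,0\},\{3,1,0\},\{2,1,1\}$, together with the position of $q_5$ and the fixed/exchanged pattern of the nodes.

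For each configuration I would reconstruct the topology of $C$ and the compatible twisted differential $\{(C_i,\xi_i)\}$ furnished by Theorem~\ref{th:twisted:diff}, and then invoke one of several recurring contradictions. Whenever a node is fixed by $\tau$, Proposition~\ref{prop:node:fixed:by:invol} forces $\xi$ to vanish identically on one of the two adjacent components; and whenever a component of $C$ lying over some $E_i$ with $n_i=0$ splits as an exchanged pair $C_i'\sqcup C_i''$ of $\Pb^1$'s each carrying two nodes, the difference of small loops about those nodes represents a nonzero anti-invariant class, so Proposition~\ref{prop:non:collapse:sub:curve} shows $\xi$ cannot vanish there. This is exactly the mechanism used for the case $(n_1,n_2)=(4,0)$ in the proof of Proposition~\ref{prop:2:n:sep:nodes}, and it kills every configuration in which $q_5$ lies on such a pair (so that $\xi$ is forced both to vanish and not to vanish). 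Configurations in which $\xi$ has simple poles at precisely one exchanged pair of nodes and is holomorphic elsewhere are eliminated by Proposition~\ref{prop:s:poles:at:two:nodes:not:in:XD}; a configuration with two genus-one subcurves meeting at two fixed nodes by Corollary~\ref{cor:dec:two:tori:no:eigenform}; and any $\tau$-invariant decomposition into subcurves of genus $\geq 1$ on which $\xi$ vanishes by Proposition~\ref{prop:bdry:form:dec:not:eigen:form}. Finally, when exchanged pairs bound level-zero components, Proposition~\ref{prop:s:poles:at:nodes:exchanged} pins down that $\xi$ must have simple poles there, which I would play against the double-zero requirement at $p_5,p'_5$ to exclude the remaining placements of $q_5$.

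The hard part will be the bookkeeping: ensuring the case list is exhaustive and that, for each $(n_1,n_2,n_3)$, the monodromy datum $r_1$ and the placement of $q_5$ are tracked consistently, since these jointly determine whether each node is fixed or exchanged and hence the global zero/pole structure of $\xi$. The two most delicate points are (i) correctly identifying, in the all-exchanged ($f=0$) configurations, an anti-invariant cycle supported on an exchanged $\Pb^1$-pair whose class in $H_1(C^*,\Z)^-$ is genuinely nonzero, so that Proposition~\ref{prop:non:collapse:sub:curve} applies; and (ii) the cases where one tries to keep $\xi$ nonzero on all three level-zero $\Pb^1$'s, which must be ruled out by checking against Theorem~\ref{th:twisted:diff} that no assignment of orders and residues on the $\Pb^1$ components satisfies both the local order balance $k_j+k_{j'}=-2$ at each node and the Global Residue Condition, i.e.\ that no compatible twisted differential exists. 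Once these are established, every distribution of $(n_1,n_2,n_3)$ with every admissible monodromy yields a contradiction, so no $\pp$ with three non-separating nodes lies in $\ol{\cX}_D$, completing the proof and thereby the classification of Theorem~\ref{th:bdry:eigen:form:H22}.
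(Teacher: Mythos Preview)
Your plan matches the paper's proof essentially verbatim: the paper also assumes $\pp\in\ol{\cX}_D$, splits into the partition types $(3,1,0)$, $(2,2,0)$, $(2,1,1)$ of $(n_1,n_2,n_3)$, tracks the fixed/exchanged pattern of nodes via the monodromy $\bar{\varrho}$, and eliminates each subcase with exactly the tools you list (Propositions~\ref{prop:node:fixed:by:invol}, \ref{prop:non:collapse:sub:curve}, \ref{prop:s:poles:at:nodes:exchanged}, \ref{prop:s:poles:at:two:nodes:not:in:XD}, \ref{prop:bdry:form:dec:not:eigen:form}, and Corollary~\ref{cor:dec:two:tori:no:eigenform}). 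One small correction: the partition $\{4,0,0\}$ does not occur, since then two of the $E_i$ would carry only two nodes and no marked point, violating stability --- so you can drop it from your case list.
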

\begin{proof}
	We assume that $\pp \in \ol{\cX}_D$.
	We have a partition of $\{q_1,\dots,q_4\}$ associated with the decomposition $E=E_1\cup E_2 \cup E_3$.
	Let $n_i:=|E_i\cap\{q_1,\dots,q_4\}|, \; i=1,2,3$.
	By convention, we always assume that $n_1\geq n_2\geq n_3$.
	Since $n_1+n_2+n_3=4$, we have $(n_1,n_2,n_3)\in \{(3,1,0), (2,2,0), (2,1,1)\}$.
	
	\begin{itemize}
		\item[(a)] Case $(n_1,n_2,n_3)=(3,1,0)$. In this case $q_5 \in E_3$, $C_1$ is an elliptic curve, $C_2$ is isomorphic to $\Pb^1$, and $C_3$ is either isomorphic to $\Pb^1$ or a disjoint union of two copies of $\Pb^1$. In all cases, since $\{p_5,p'_5\} \subset C_3$, we must have $\xi_3\equiv 0$. If $C_3$ is isomorphic to $\Pb^1$, then $\xi$ must have simple poles at the nodes between $C_1$ and $C_2$. Since these are the only nodes where $\xi$ has simple poles, this case is excluded by Proposition~\ref{prop:s:poles:at:nodes:exchanged}.  If $C_3$ is a disjoint union of two copies of $\Pb^1$ then each component of $C_3$ meets both $C_1$ and $C_3$.  The smooth part $C_3^*$ of $C_3$ consists of two open annuli. Let $\gamma'_3$ and $\gamma''_3$ be the core curves of those annuli. Then $\gamma'-\gamma''$ corresponds to non-trivial  element of $H_1(X,\Z)^-$, where $X$ is a reference smooth curve in $\cB_{4,1}$. It follows that we have a contradiction to Proposition~\ref{prop:non:collapse:sub:curve}. Therefore, this case is also excluded.
		
		\item[(b)] Case $(n_1,n_2,n_3)=(2,2,0)$.   Again, we must have $q_5 \in E_3$, or equivalently  $\{p_5,p'_5\}\subset C_3$. We have two possible configurations
		\begin{itemize}
			\item[(b.1)] $C_1$ and $C_2$ are elliptic curves intersecting each other at one node, $C_3$ is isomorphic to $\Pb^1$ and intersects each of $C_1, C_2$ at one node. Note that all the nodes are fixed by $\tau$. It follows from Proposition~\ref{prop:node:fixed:by:invol} that $\xi$ vanishes identically on  $C_1$ or on $C_2$. Since the restrictions of $\tau$ to both $C_1, C_2$ are involutions with  four fixed points, there are non-trivial cycles anti-invariant by $\tau$ on both $C_1, C_2$. We thus have a contradiction to Proposition~\ref{prop:non:collapse:sub:curve}. Therefore this case cannot occur. 
			
			\item[(b.2)] $C_1,C_2$ are both isomorphic to $\Pb^1$ and intersect each other at two nodes permuted by $\tau$, $C_3$ is a disjoint union of two copies of $\Pb^1$ each of which meets both $C_1,C_2$. Since $\{p_5,p'_5\} \subset C_3$, $\xi$ vanishes identically on $C_3$. It follows that $\xi$ only has simple poles at the nodes between $C_1$ and $C_2$. By Proposition~\ref{prop:s:poles:at:two:nodes:not:in:XD} this impossible.  
		\end{itemize} 
		
		\item[(c)] Case $(n_1,n_2,n_3)=(2,1,1)$. We have two  configurations
		\begin{itemize}
			\item[(c.1)] $C_1$ is an elliptic curve which meets each of $C_2,C_3$ at one node fixed by $\tau$, $C_2,C_3$ are both isomorphic to $\Pb^1$ and intersect each other at two nodes. If $\xi_1\not\equiv 0$ then $\xi_2\equiv 0$ and $\xi_3\equiv 0$. By considering the simple closed curves bordering small discs containing the nodes between $C_2$ and $C_3$, we get a contradiction to Proposition~\ref{prop:non:collapse:sub:curve}. If $\xi_1\equiv 0$, then $\xi_2\not\equiv 0$ and $\xi_3\not\equiv 0$. It follows that $\xi$ has simple poles at the nodes between $C_2$ and $C_3$, and we get a contradiction to Proposition~\ref{prop:s:poles:at:nodes:exchanged}. 
			
			\item[(c.2)] $C_1$ is isomorphic to $\Pb^1$ and intersects each of $C_2,C_3$ at two nodes permuted by $\tau$, $C_2,C_3$ are both isomorphic to $\Pb^1$ and intersect each other at one node. One readily checks that a compatible twisted differential  exists only if $\{p_5,p'_5\}\subset C_2$ or $\{p_5,p'_5\}\subset C_3$. In the former case $\xi$ vanishes identically on $C_2$ and has simple poles at the nodes between $C_1$ and $C_3$. We thus have a contradiction to Proposition~\ref{prop:s:poles:at:two:nodes:not:in:XD}. The latter case is also excluded by the same argument.  This completes the proof of the proposition.  
		\end{itemize}
	\end{itemize}
\end{proof}

\subsection{Case $E$ has four nodes}\label{subsec:str:4:nodes}
\subsubsection{Case four separating nodes}\label{subsec:str:4:sep:nodes}
In this case, $E$ has $5$ irreducible components, 4 of which are isomorphic to $\Pb^1$, the remaining one is an elliptic curve.
Denote by $E'_1,\dots,E'_4$ the $\Pb^1$-components, and by $E''$ the elliptic one. The union  $E'_1\cup\dots\cup E'_4$ is denoted by $E'$.
Let $n'_i:=|E'_i\cap\{q_1,\dots,q_4\}|$.
The preimages of $E'_1,\dots, E'_4, E', E''$ in $C$ are denoted by $C'_1,\dots,C'_4, C'',C'$ respectively.

\begin{Proposition}\label{prop:4:sep:nodes}
	If $E$ has $4$ separating nodes then $\pp\not\in\ol{\cX}_D$.
\end{Proposition}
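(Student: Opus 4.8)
The plan is to follow the template of the preceding propositions in this appendix: first use the stability of pointed stable curves to pin down the topological type of $E$ and of its admissible cover $C$, and then rule out the surviving configurations with the characterizing lemmas of \textsection\ref{sec:degenerate:eigen:form}. Since all four nodes of $E$ are separating, the dual graph of $E$ is a tree with one genus-one vertex $E''$ and four genus-zero vertices $E'_1,\dots,E'_4$ (each a $\Pb^1$), joined by four edges. Each $\Pb^1$ component must carry at least three special points (nodes together with marked points). Writing $\deg(E'')$ for the valence of $E''$ in the tree and counting special points, one has $\sum_i(\deg E'_i)=8-\deg(E'')$ while the marked points contribute at most $\#\{q_1,\dots,q_5\}=5$; the inequality $\sum_i(\deg E'_i+\#\text{marked})\ge 12$ then forces $\deg(E'')=1$, all four branch points $q_1,\dots,q_4$ and the point $q_5$ to lie on $E'=E'_1\cup\cdots\cup E'_4$, and $E''$ to carry no marked point. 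Hence, after renumbering, $E$ is the chain $E''-E'_1-E'_2-E'_3-E'_4$ with $E''$ at the free end and each $\Pb^1$ carrying exactly three special points.

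Next I would decompose $C=C'\cup C''$, where $C''$ is the preimage of the elliptic end $E''$ and $C'$ the preimage of the chain $E'$; both are $\tau$-invariant. The $E'$-side of the node $r_0$ joining $E''$ to $E'_1$ contains all four branch points, an even number, so by the discussion of \textsection\ref{subsec:generalities:bdry} (via $\bar\varrho$ on the reference torus $E_0$) the preimage of $r_0$ consists of \emph{two} nodes. Since $E''$ carries no branch point, $C''$ is an unramified double cover of the elliptic curve $E''$, so either $C''$ is a connected elliptic curve (the two nodes over $r_0$ then form a $\tau$-swapped pair of non-separating nodes), or $C''$ is a disjoint union of two elliptic curves exchanged by $\tau$ (the two nodes then being separating). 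In both cases $C''$ has arithmetic genus one, and from $p_a(C)=3$ with two connecting nodes one gets $p_a(C')=1$ as well; in particular $C'$ is a $\tau$-invariant subcurve of genus one.

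In the connected case the argument is clean: the restriction $\tau|_{C''}$ is a fixed-point-free involution of an elliptic curve, i.e.\ a translation, and translations act trivially on $H^0(C'',\omega_{C''})$. Therefore $C''$ admits no nonzero anti-invariant holomorphic form, and since $\tau^*\xi=-\xi$ we must have $\xi|_{C''}\equiv 0$. Then $C=C'\cup C''$ is the union of two $\tau$-invariant subcurves of genus $\ge 1$ with $\xi$ vanishing on one of them, and Proposition~\ref{prop:bdry:form:dec:not:eigen:form} gives $\pp\notin\partial\ol{\cX}_D$. (Equivalently, since $C''$ is connected, Proposition~\ref{prop:s:poles:at:nodes:exchanged} would force $\xi$ to have simple poles at the swapped pair over $r_0$, contradicting $\xi|_{C''}\equiv 0$.) In the disconnected case, if $\xi|_{C''}\equiv 0$ the same Proposition~\ref{prop:bdry:form:dec:not:eigen:form} applies verbatim.

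The one remaining, and genuinely delicate, subcase is $C''$ \emph{disconnected with} $\xi|_{C''}\not\equiv 0$: then each component of $C''$ is elliptic, so $\xi|_{C''}$ is nowhere vanishing, which pushes the two double zeros $p_5,p'_5$ of $\xi$ (lying over $q_5\in E'$) into $C'$, while the level-compatibility of Theorem~\ref{th:twisted:diff} at the two order-zero nodes over $r_0$ forces $\xi$ to vanish on the $C'$-components adjacent to $C''$. I expect this to be the main obstacle. The plan here is to record the fixed-versus-swapped status of the interior nodes $r_1,r_2,r_3$ (which depends on the position of $q_5$ in the chain and requires a short sub-case analysis), determine the induced level structure, and then derive a contradiction either from the Global Residue Condition (Theorem~\ref{th:twisted:diff}(e)) applied at the level drop, or by applying Proposition~\ref{prop:non:collapse:sub:curve} and Proposition~\ref{prop:s:poles:at:two:nodes:not:in:XD} to the level-zero support of $\xi$ — together with Proposition~\ref{prop:ab:diff:dble:zero:Prym:inv} to exclude a double zero surviving on an invariant genus-two piece. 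Combining these, every configuration is eliminated, yielding $\pp\notin\ol{\cX}_D$.
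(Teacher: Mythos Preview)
Your counting argument correctly establishes that $\deg(E'')=1$, that all five marked points lie on $E'$, and that each $\Pb^1$ carries exactly three special points. But the ``Hence'' that follows is wrong: these constraints do \emph{not} force $E$ to be a chain. The four $\Pb^1$ components form a tree on four vertices, and there are two such trees --- the path and the star --- and in the path case $E''$ can attach either to an end or to an interior vertex. The paper accordingly distinguishes three configurations: (a1) the chain $E'_1-E'_2-E'_3-E'_4-E''$; (a2) the path with $E''$ attached to an interior vertex; (b) the star with $E''$ attached to a leaf. You have only treated (a1). In (a2), for instance, take $E''$ adjacent to $E'_3$ in the path $E'_1-E'_2-E'_3-E'_4$: then $E'_3$ has valence three and zero marked points, $E'_1,E'_4$ have valence one and two marked points each, $E'_2$ has valence two and one marked point --- this is a perfectly stable configuration that your argument silently discards.

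Even within (a1), your proof is incomplete: for the subcase where $C''$ is a disjoint union of two elliptic curves with $\xi|_{C''}\not\equiv 0$, you only sketch a plan (``record the fixed-versus-swapped status \dots and then derive a contradiction''). The paper handles this by a case split on the position of $q_5$ along the chain, in each instance either invoking Proposition~\ref{prop:bdry:form:dec:not:eigen:form} or Proposition~\ref{prop:s:poles:at:two:nodes:not:in:XD}, or showing the required twisted differential on a degenerate component cannot exist (e.g.\ a differential on $\Pb^1$ with a double zero and two double poles of zero residue). Your argument for the connected-$C''$ case is fine --- indeed your observation that $\tau|_{C''}$ is a translation and hence acts trivially on holomorphic one-forms is a clean way to see $\xi|_{C''}\equiv 0$ --- but you need to go back and actually carry out the analysis for all three tree shapes and all positions of $q_5$.
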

\begin{proof}
	Since there are $5$ marked points on $E$, $E'$ must be a connected curve and contains all the points in $\{q_1,\dots,q_5\}$.
	We can consider $E'$ as a stable genus $0$ curve with $6$ marked points,  with the 6th marked point being the node between $E'$ and $E''$.
	We call a component of $E'$ that intersects only one other component an {\em end component}.
	There are $2$ possible configurations for $E'$:  we denote by $(a)$  the configuration where $E'$ has two end components, and by $(b)$ the configuration where $E'$ has three end components. If $E'$ has configuration (a), we will denote its components such that $E'_i$ is adjacent to $E'_{i+1}$, for $i=1,2,3$. If $E'$ has configuration (b) then we denote its end components by $E'_1, E'_2, E'_3$, and the remaining component by $E'_4$.
	Each choice for the 6th marked point of $E'$ gives us an admissible configuration for $E$.
	By symmetry, we only need to consider 3 configurations, which will be denoted by $(a1), (a2)$ and $(b)$ as follows
	\begin{itemize}
		\item[(a1)] $E'$ has two end components, one of which intersects $E''$. 
		
		\item[(a2)] $E'$ has two end components, one of the remaining two intersects $E''$. 
		
		\item[(b)] $E'$ has 3 end components, one of which intersects $E''$.
	\end{itemize}
	In all cases, $C''$ can be  either an elliptic curve or a disjoint union of two elliptic curves, and there are two nodes between $C''$ and $C'$. 
	In the former case, $C''$ must have negative level in any compatible twisted differential  on $C$ (cf. Theorem~\ref{th:twisted:diff}).  This means that $\xi$ vanishes identically on $C''$, and hence $\xi$ does not have simple poles at the nodes between $C''$ and $C'$. We thus get a contradiction to Proposition~\ref{prop:s:poles:at:nodes:exchanged}, which shows that this case cannot occur.
	From now on, we suppose that $\pp\in \ol{\cX}_D$, and  that $C''$ consists of two elliptic curves permuted by $\tau$. Our goal is to obtain  a contradiction for each of the admissible configurations of $E$.

	\begin{itemize}
		\item[$\bullet$] Case (a1):  we can suppose that $E''$ intersects $E'_4$. Since $E'_1$ contains two points in $\{q_1,\dots,q_5\}$ and  for $i=2,\dots,4$, $E'_i$ contains one point in $\{q_1,\dots,q_5\}$, at least one of the following holds $n'_1+n'_2=3$ or $n'_1+n'_2+n'_3=3$. In the former case,  let $q$ denote the node between $E'_2$ and $E'_3$,  and in the latter  let $q$ denote node between $E'_3$ and $E'_4$.  The preimage of $q$ is a node fixed by $\tau$ which decomposes $C$ into a union of a genus 1 nodal curve, denoted by $C_1$, and a genus two nodal curve, denoted by $C_2$. Note that $C_1$ contains $C'_1$ and $C'_2$, while $C_2$ contains $C''$.
		If either $\xi_{\left|C_1\right.}\equiv 0$, or $\xi_{\left|C_2\right.}\equiv 0$, then $\pp\not\in\ol{\cX}_D$ by Proposition~\ref{prop:bdry:form:dec:not:eigen:form}. Thus we must have $\xi_{\left|C_1\right.}\not\equiv 0$.     One can readily check that $\xi_{\left|C_1\right.}\not\equiv 0$ only in the case $C_1=C'_1\cup C'_2$, and $\xi$ has simple poles at the nodes between $C'_1$ and $C'_2$.     It follows that $\xi$ vanishes identically on $C'_3$ and $C'_4$, and holomorphic on $C''$. 
		But since $\xi$ only has simple poles at the nodes between $C'_1$ and $C'_2$, we get a contradiction to Proposition~\ref{prop:s:poles:at:two:nodes:not:in:XD} which means that this case cannot occur.

		\item[$\bullet$] Case (a2): without loss of generality we can assume that $E''$ intersects $E'_3$. Note that $E'_3$ does not contain any point in $\{q_1,\dots, q_5\}$. In particular, $n'_3=0$. Assume first that $n'_1+n'_2=3$. Then the preimage  of the node between $E'_2$ and $E'_3$ is a node $p$ of $C$ that is fixed by $\tau$. The node $p$ decomposes $C$ into a union of two subcurves: $C_1=C'_1\cup C'_2$ is a nodal genus $1$ curve, and $C_2:=C'_3\cup C'_4\cup C''$ is a nodal genus 2 curve. It is not difficult to see that $\xi$ vanishes identically on $C'_4$ and $C'_3$. By Proposition~\ref{prop:bdry:form:dec:not:eigen:form}, $\xi_{\left| C_1 \right.}\not\equiv 0$, and $\xi_{\left|C''\right.}\not\equiv 0$. 
		Since $C_1$ is a union of two copies of  $\Pb^1$ meeting at two points, $\xi$ has simple poles at the nodes between $C'_1$ and $C'_2$. Since $\xi$ is holomorphic at all the other nodes of $C$, we get a contradiction to Proposition~\ref{prop:s:poles:at:two:nodes:not:in:XD}, which means that this case cannot occur.
		
		Suppose now that $n'_1+n'_2=2$ (that is $q_5\in E'_1\cup E'_2)$. In this case $C'_3$ (which is the preimage of $E'_3$) is a disjoint union of two copies of $\Pb^1$. On can readily check that we always have $\xi_{\left|C'\right.}\equiv 0$ (recall that $C'=C'_1\cup\dots\cup C'_4$). Thus $\xi_{\left|C''\right.}\not\equiv 0$. But since $C'$ is a nodal curve of genus one, we then get again a contradiction to Proposition~\ref{prop:bdry:form:dec:not:eigen:form}. Thus this case does not occur either.
		
		\item[$\bullet$] Case (b): we can assume that $E''$ intersects $E'_3$. This means that each of $E'_1$ and $E'_2$ contains two points in $\{q_1,\dots,q_5\}$, while  $E'_3$ contains one point in $\{q_1,\dots,q_5\}$.
		If $n'_1+n'_2=3$, then the preimage of $E'_1\cup E'_2\cup E'_4$ in $C$ is a nodal curve of genus $1$, denoted by $C_1$, and the preimage of $E'_3\cup E''$ is a genus two nodal curve, denoted by $C_2$. The subcurves $C_1$ and $C_2$ intersect at one node fixed by $\tau$.
		Since $\{p_5, p'_5\} \subset C_1$, we  have $\xi_{\left|C_1\right.}\equiv 0$. 
		Proposition~\ref{prop:bdry:form:dec:not:eigen:form} then implies that $\pp\not\in \ol{\cX}_D$. Thus this case does not occur.
		
		Assume now that $n'_1+n'_2=4$ (that is $q_5\in E_3$). Then each of $C'_1, C'_2$ is a copy of $\Pb^1$, while $C'_3$ (resp. $C'_4$) is a disjoint union of two copies of $\Pb^1$. We have $C_1=C'_1\cup C'_2\cup C'_4$ is a nodal genus $1$ curve, which has 4 self-nodes, and $C_2=C'_3\cup C''$ consists  of two copies of a genus $1$ curve.  
		Since $\{p_5, p'_5\}\subset C'_3$, we have $\xi_{\left|C'_3\right.}\equiv 0$.
		By Proposition~\ref{prop:bdry:form:dec:not:eigen:form}, we must have $\xi_1:=\xi_{\left| C_1\right.} \not\equiv 0$ and $\xi'':=\xi_{\left|C''\right.} \not\equiv 0$. Note that $\xi_1$ and $\xi''$ are nowhere vanishing on $C_1$ and $C''$ respectively.
		
		By Theorem~\ref{th:twisted:diff}, on each component of $C'_3$ (which is a copy of $\Pb^1$) there is a meromorphic Abelian differential $\nu$ which has two double poles and a double zero such that the residues of $\nu$ at the poles are both zero. Since such a differential cannot exist, we get a contradiction which completes the proof of the proposition.
	\end{itemize}
\end{proof}

\subsubsection{Case three separating and one non-separating nodes}\label{subsec:3:sep:1:non-sep:nodes}
In this case, $E$ has 4 irreducible components, $3$ of which are isomorphic to $\Pb^1$, denoted by $E'_1,E'_2,E'_3$, the remaining one is a nodal genus 1 curve denoted by $E''$. Let $E'=E'_1 \cup E'_2\cup E'_3$. Set $n'_i:=|E'_i\cap\{q_1,\dots,q_4\}|, \; i=1,2,3$, and $n'=n_1+n'_2+n'_3$. 
Denote by $C', C'_1, C'_2, C'_3, C''$  the preimages of $E', E'_1, E'_2, E'_3, E''$ in $C$ respectively.

\begin{Proposition}\label{prop:str:3:sep:1:n:sep:nodes:Ep:disc}
	If $E'$ is disconnected, then $\pp\not\in\ol{\cX}_D$.
\end{Proposition}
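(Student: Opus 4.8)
The plan is to run the same kind of finite case analysis used for the connected case in Proposition~\ref{prop:st:2:nodes:sep:1:node:n:sep:Ep:connect}, organized first by the combinatorial type of the dual graph and then by the location of $q_5$. Since $E''$ carries the only non-separating node (its self-node) and the three separating nodes join the four components without creating a cycle, the ``component graph'' is a tree on the four vertices $E'_1,E'_2,E'_3,E''$. Saying that $E'=E'_1\cup E'_2\cup E'_3$ is disconnected means exactly that $E''$ is a cut vertex separating the $E'_i$, which leaves only two shapes: the star in which $E''$ is adjacent to all three $E'_i$, and the path in which $E''$ is an interior vertex, so that after relabelling $E''$ meets $E'_1$ and $E'_2$ while $E'_2$ also meets $E'_3$.

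First I would dispose of the star by a stability count. Each $E'_i$ is then a leaf carrying a single node, so stability of $E'_i\cong\Pb^1$ forces at least two of the five marked points $q_1,\dots,q_5$ onto each $E'_i$; three leaves would need at least six marked points, which is impossible. Hence only the path configuration survives, and the same stability bound (each end needs $\geq 2$ marked points, the middle $E'_2$ needs $\geq 1$, and $E''$ needs none beyond its self-node) forces the distribution $|E'_1\cap\{q_1,\dots,q_5\}|=2$, $|E'_2\cap\{q_1,\dots,q_5\}|=1$, $|E'_3\cap\{q_1,\dots,q_5\}|=2$, $|E''\cap\{q_1,\dots,q_5\}|=0$. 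In particular $q_5\notin E''$, and there are exactly three subcases according to whether $q_5$ lies on $E'_1$, on $E'_2$, or on $E'_3$.

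In each subcase I would read off, from the parity recipe of \textsection\ref{subsec:generalities:bdry}, which separating nodes are fixed by $\tau$ and which split into a permuted pair: a separating node is fixed iff the number of branch points $q_1,\dots,q_4$ on its genus-zero side is odd. This determines the topology of each preimage $C'_i,C''$ (the $C'_i$ are rational, while $C''$ is a $\tau$-invariant subcurve of arithmetic genus $\geq 1$). Then I would combine two mechanisms. When two of the connecting nodes split into permuted pairs, Proposition~\ref{prop:s:poles:at:nodes:exchanged}, applied to the connected $\tau$-invariant splittings $C=A\cup B$ across each such pair, forces simple poles there; comparing this against Theorem~\ref{th:twisted:diff}, the component of $C'$ carrying the double zeros $p_5,p'_5$ would have to be a $\Pb^1$ bearing a double zero and exactly the two simple poles, contradicting that a differential on $\Pb^1$ has degree $-2$. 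When instead the node adjacent to the zero-carrying component is fixed, Proposition~\ref{prop:node:fixed:by:invol} forces $\xi\equiv 0$ on one side; ruling out vanishing on the small rational component by the level-order incompatibility in Theorem~\ref{th:twisted:diff}(d), I obtain $\xi\equiv 0$ on $C''$, and then Proposition~\ref{prop:bdry:form:dec:not:eigen:form} applied to a decomposition of $C$ into two $\tau$-invariant subcurves of genus $\geq 1$ (using that $C''$ has genus $\geq 1$ and that two leaves meeting along a permuted pair form a genus-one subcurve) yields the contradiction. Residual possibilities, in which $\xi$ has simple poles only at a single permuted pair or in which an anti-invariant cycle on a vanishing rational component appears, are excluded by Proposition~\ref{prop:s:poles:at:two:nodes:not:in:XD} and Proposition~\ref{prop:non:collapse:sub:curve}.

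The hard part will be the twisted-differential bookkeeping: correctly determining the splitting of every node, and in particular the non-separating self-node of $E''$, whose monodromy is not fixed by the branch-point parities and so may have to be treated in two sub-possibilities, and then verifying, through the compatibility and global residue conditions of Theorem~\ref{th:twisted:diff}, that the forced pattern of vanishing and of pole orders is genuinely inconsistent. I would need to be careful that in each application of Proposition~\ref{prop:bdry:form:dec:not:eigen:form} both halves of the decomposition are $\tau$-invariant and of arithmetic genus at least one, since disjoint unions drop the arithmetic genus, and that the component carrying $p_5,p'_5$ is correctly located relative to the fixed node before invoking the degree count.
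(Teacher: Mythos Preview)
Your approach is essentially the same as the paper's, and the structural analysis (ruling out the star by stability, then forcing the marked-point distribution on the path) is correct. The paper organizes the case split slightly differently: rather than three cases by the location of $q_5$, it uses two cases according to whether the lone leaf carries one or two of the branch points $q_1,\dots,q_4$ (i.e.\ $n'_3\in\{1,2\}$), which merges your subcases $q_5\in E'_2$ and $q_5\in E'_3$ into one. In that merged case the paper shows directly that $\xi$ vanishes identically on the entire genus-zero subcurve $C'_1\cup C'_2$ (the preimage of the connected pair of leaves), and then the two permuted nodes joining it to $C''$ give an immediate contradiction with Proposition~\ref{prop:s:poles:at:nodes:exchanged}; this is a bit cleaner than your separate degree counts.

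One point in your sketch is imprecise. In the ``fixed node adjacent to the zero-carrying component'' branch you write that you obtain $\xi\equiv 0$ on $C''$; this is not what actually comes out, and the level-order condition in Theorem~\ref{th:twisted:diff}(d) does not by itself force it. What happens (in your labelling, subcase $q_5\in E'_1$) is that there is a \emph{second} fixed node, between $C''$ and $C'_2$, and Proposition~\ref{prop:node:fixed:by:invol} there gives two sub-possibilities: either $\xi|_{C''}\equiv 0$, or $\xi|_{C'_2}\equiv 0$, which then propagates to $\xi|_{C'_2\cup C'_3}\equiv 0$. In both sub-possibilities one of the two $\tau$-invariant halves of $C$ (of genera $1$ and $2$) has $\xi\equiv 0$, so Proposition~\ref{prop:bdry:form:dec:not:eigen:form} applies---but you do need the extra split, and the vanishing piece is not always $C''$. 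Similarly, in your subcase $q_5\in E'_3$ the fixed node is between $C'_2$ and $C'_3$, not adjacent to $C''$ at all, so your stated conclusion does not match the geometry. The right tools are exactly the ones you name; just be aware that the ``fixed node'' mechanism produces vanishing on a genus-$\geq 1$ $\tau$-invariant subcurve that need not be $C''$.
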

\begin{proof}
	If the subcurve $E'$ is disconnected, then it must have two connected components and contains all the points in $\{q_1,\dots,q_5\}$. 
	We  suppose one component of $E'$ is the union of $E'_1$ and $E'_2$, and the other one consists of $E'_3$. 
	We can also assume that $E''$ intersects each of $E'_2$ and  $E'_3$ at one node.
	There are two cases:
	\begin{itemize}
		\item[$\bullet$] Case $n'_3=1$. This means that $q_5\in E'_3$ and $E'_1\cup E'_2$ contains three points in $\{q_1,\dots,q_4\}$.  Hence $C'_1\cup C'_2$ is a nodal curve of genus $1$, while $C'_3$ is isomorphic to $\Pb^1$, and $C''$ is a (connected) nodal curve of genus $2$. The differential $\xi$ vanishes identically on $C'_3$.
		Since  $C'_2$ and $C''$ intersect at a separating node, either $\xi_{\left|C'_1\cup C'_2\right.}\equiv 0$ or $\xi_{\left|C''\right.}\equiv 0$. In either case, we will have a contradiction by Proposition~\ref{prop:bdry:form:dec:not:eigen:form}. Thus this case does not occur.
		
		\item[$\bullet$] Case $n'_3=2$. In this case $C'_1\cup C'_2$ is a genus $0$ nodal curve which contains $\{p_5,p'_5\}$ and intersects $C''$ at two nodes permuted by $\tau$.  One readily checks that $\xi$ must vanish identically on $C'_1\cup C'_2$. This implies that $\xi$ is holomorphic at the nodes between $C'_2$ and $C''$.  Remark that both  $C'_1\cup C'_2$ and $C'_3\cup C''$ are connected. Therefore, we would have a contradiction to Proposition~\ref{prop:s:poles:at:nodes:exchanged}, which means that this case cannot occur either. The proposition is then proved.
	\end{itemize}
\end{proof}

We can now show 

\begin{Proposition}\label{prop:str:3:sep:1:n:sep:nodes}
	Assume that $E$ has $3$ separating nodes and one non-separating node. Then $\pp\in \ol{\cX}_D$ only if
	\begin{itemize}
		\item[.] $C'_1, C'_2, C'_3$ are all isomorphic to $\Pb^1$, and $C'=C'_1\cup C'_2 \cup C'_3$ is connected.
		
		\item[.] Up to a relabeling of the components of $C'$, $C'_2$ is adjacent to both $C'_1$ and $C'_3$, $C'_3$ is adjacent to $C''$, and we have $n'_1=2$, $n'_2=n'_1=1$,  $\{p_5,p'_5\} \subset C'_3$. 
		
		\item[.]  $C''$ is a disjoint union of two nodal curves of genus $1$ each of which intersects $C'_3$ at one node.

		\item[.] The differential $\xi$ vanishes identically on $C'_3$ and has simple poles at the nodes between $C'_1$ and $C'_2$, and the self-nodes of $C''$.
	\end{itemize}
\end{Proposition}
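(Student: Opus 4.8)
The plan is to argue by contradiction, assuming throughout that $\mathbf{p}\in\overline{\cX}_D$ and ruling out every configuration except the one in the statement. First I would apply Proposition~\ref{prop:str:3:sep:1:n:sep:nodes:Ep:disc} to reduce to the case in which $E'=E'_1\cup E'_2\cup E'_3$ is connected. Since $E'$ is then a connected genus-$0$ stable curve whose three components meet along two of the three separating nodes, its dual graph is a path, and after relabeling I may take $E'_2$ to be the middle component with $E'_1,E'_3$ its two ends. The fourth separating node joins the nodal genus-one curve $E''$ to exactly one component of $E'$; as this is the only node incident to $E''$ apart from its self-node (the unique non-separating node of $E$), the curve $E''$ is necessarily a \emph{leaf} of the dual tree. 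Hence the combinatorial type of $E$ is determined by which of $E'_1,E'_2,E'_3$ carries $E''$: either $E''$ hangs from the middle $E'_2$, giving a star, or it hangs from an end, giving the path $E'_1-E'_2-E'_3-E''$. The bulk of the argument is to eliminate the star, and to eliminate all markings of the path save one.

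For each type I would first read off the topology of the admissible double cover $C\to E$. The numbers $n'_i=|E'_i\cap\{q_1,\dots,q_4\}|$, together with the location of $q_5$, determine node by node, through the parity of the branch points lying on the far side of each node, whether that node lifts to a single $\tau$-fixed node or to a permuted pair, and whether the preimage of each component is connected or splits into a $\tau$-conjugate pair; stability (each $\Pb^1$ component needs at least three special points) further constrains how the five marked points can be distributed. Having fixed this picture, I would locate the components on which $\xi$ vanishes identically using the level structure and the Global Residue Condition of Theorem~\ref{th:twisted:diff}, and then invoke the characterizations established in \textsection\ref{subsec:char:bdry:form}. The operative principles are: Proposition~\ref{prop:node:fixed:by:invol} forces $\xi$ to vanish on one side of every $\tau$-fixed separating node; Proposition~\ref{prop:bdry:form:dec:not:eigen:form} and Corollary~\ref{cor:dec:two:tori:no:eigenform} exclude the patterns in which $\xi$ vanishes on a $\tau$-invariant subcurve of positive arithmetic genus; Proposition~\ref{prop:s:poles:at:nodes:exchanged} forces simple poles along any $\tau$-permuted pair of nodes separating two connected invariant subcurves, whereupon Proposition~\ref{prop:s:poles:at:two:nodes:not:in:XD} excludes every pattern in which these are the only simple poles; Proposition~\ref{prop:non:collapse:sub:curve} kills the patterns in which a subcurve carrying a nonzero anti-invariant cycle is made to collapse; and the double-zero requirement $\div(\xi)=2p_5+2p'_5$ together with Proposition~\ref{prop:ab:diff:dble:zero:Prym:inv} restricts where $q_5$ may sit. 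I expect the star type and every end-attached path with a marking other than the target to be contradicted by exactly one of these principles.

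The single surviving type is then the path $E'_1-E'_2-E'_3-E''$ in which all four branch points lie on $E'$ and $q_5$ lies on $E'_3$; stability forces $n'_1=2$ and $n'_2=n'_3=1$, and since $E''$ carries no branch point the separating node to $E'_3$ lifts to a permuted pair while $C''$ splits into the disjoint union $C''_1\sqcup C''_2$ of two nodal genus-one curves, each meeting $C'_3$ at one node. The level structure then forces $\xi$ to vanish identically on $C'_3$ and to have simple poles exactly at the two nodes between $C'_1$ and $C'_2$ and at the two self-nodes of $C''$, which is the asserted configuration. The main obstacle I anticipate is precisely the lifting bookkeeping at the head of each case: correctly deducing, from $(n'_1,n'_2,n'_3)$ and the position of $q_5$, how each downstairs node lifts and which preimages are connected — since this is exactly what selects the applicable principle — and, in the surviving case, verifying that a compatible twisted differential with the prescribed double zeros and simple poles actually exists.
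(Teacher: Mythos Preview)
Your approach matches the paper's: reduce to $E'$ connected via Proposition~\ref{prop:str:3:sep:1:n:sep:nodes:Ep:disc}, fix the labeling of the $E'$-chain, then case-split on $n'$ and the location of $q_5$, eliminating each unwanted configuration with the characterizing propositions of \textsection\ref{subsec:char:bdry:form}. Your explicit treatment of the star configuration (where $E''$ hangs from the middle component $E'_2$) is actually a point the paper glosses over when it simply writes ``we can label \dots so that $E'_3$ is adjacent to $E''$,'' so your bookkeeping is more careful there.

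There is, however, one genuine gap in your analysis of the surviving case. You assert that ``since $E''$ carries no branch point \dots $C''$ splits into the disjoint union $C''_1\sqcup C''_2$ of two nodal genus-one curves.'' The absence of branch points on $E''$ does force the separating node between $E'_3$ and $E''$ to lift to a $\tau$-permuted pair (parity), but it does \emph{not} determine how the non-separating self-node of $E''$ lifts: that depends on the value of $\bar\varrho$ on the corresponding non-separating curve of the reference torus, which is independent data (cf.\ \textsection\ref{subsec:generalities:bdry}). If it lifts to a single node, $C''$ is a \emph{connected} nodal genus-one curve, and this possibility must be excluded separately. The paper does exactly this: by Proposition~\ref{prop:s:poles:at:nodes:exchanged}, $\xi$ would have simple poles at the two nodes between $C''$ and $C'_3$, forcing $\xi_{|C'_3}\not\equiv 0$; since $C'_3$ meets $C'_1\cup C'_2$ at a single $\tau$-fixed node, this forces $\xi_{|C'_1\cup C'_2}\equiv 0$, contradicting Proposition~\ref{prop:bdry:form:dec:not:eigen:form}. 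Insert this step before concluding that $C''$ is disconnected.
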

\begin{proof}
	By Proposition~\ref{prop:str:3:sep:1:n:sep:nodes:Ep:disc}, we know that $E'$ must be connected. We can label the $\Pb^1$ components of $E$ such that $E'_2$ is adjacent to $E'_1$ and $E'_3$, and $E'_3$ is adjacent to $E''$.
	
	Remark that we have $3 \leq  n' \leq 4$.
	We first consider the case $n'=3$. In this case $C'$ is a nodal curve of genus one,  $C''$ is a nodal curve of genus two, and $C''$ intersects $C'$ at one node which is fixed by $\tau$. It follows from Proposition~\ref{prop:bdry:form:dec:not:eigen:form} that we must have $\xi':=\xi_{\left|C'\right.}\not\equiv 0$ and $\xi'':=\xi_{\left|C''\right.}\not\equiv 0$. This can only happen if $n'_3=0$, and $C'_3$ contains $\{p_5,p'_5\}$. It follows from Theorem~\ref{th:twisted:diff} that $\xi''$ has a double zero at the node between $C''$ and $C'_3$ and simple poles at the self-nodes of $C''$. One can simultaneously smoothen the self-nodes of $C''$ to obtain a genus two Riemann surface $X''$ together with a holomorphic Abelian differential $\omega''$ such that
	\begin{itemize}
		\item[$\bullet$]  $X''$ admits an involution $\tau''$ with two fixed points satisfying $\tau''{}^*\omega''=-\omega''$,
		
		\item[$\bullet$] $\omega''$ has a double zero at one fixed point of $\tau''$. 
	\end{itemize}
	But by Proposition~\ref{prop:ab:diff:dble:zero:Prym:inv}, the pair $(X'',\omega'')$ cannot exist. We thus have a contradiction proving that we must have $n'=4$
	
	Suppose from now on that $n'=4$. Then we must have $n'_1=2, n'_2=n'_3=1$. In this case $C'_1\cup C'_2$ is a nodal genus one curve, $C'_3$ meets $C'_2$ at one node and meets $C''$ at two nodes, and $C''$ is either a genus one nodal curve of a disjoint union of two genus one nodal curve. If $C''$ is a genus one nodal curve, then by Proposition~\ref{prop:s:poles:at:nodes:exchanged}, $\xi$ must have simple poles at the nodes between $C''$ and $C'_3$. This means that $\xi'_3:=\xi_{\left|C'_3\right.} \not\equiv 0$. Since $C'_3$ meets $C'_1\cup C'_2$ at one node, it follows that $\xi_{\left|C'_1\cup C'_2\right.} \equiv 0$, and we get a contradiction to  Proposition~\ref{prop:bdry:form:dec:not:eigen:form}. Thus $C''$ must be a disjoint union of two nodal genus one curves. 
	
	Note that each component of $C''$ has one node. 
	By Proposition~\ref{prop:bdry:form:dec:not:eigen:form}, we must have  $\xi''\not\equiv 0$. As a consequence $\xi'_3\equiv 0$, and  $\xi_{\left| c'_1\cup C'_2\right.}\not\equiv 0$. One readily checks that these conditions can be realized only if $\{p_5,p'_5\} \subset C'_3$, and in which case, by Theorem~\ref{th:twisted:diff} $\xi$ has simple poles at the nodes between $C'_1$ and $C'_2$ and at the self-nodes of $C''$, and $\xi$ is holomorphic elsewhere. This completes the proof of the proposition.
\end{proof}

\subsubsection{Case two separating nodes and two non-separating nodes}\label{subsec:2:sep:2:n:sep:nodes}
In this case, $E$ has $4$ irreducible components, all of which are isomorphic to $\Pb^1$. Note that two non-separating nodes correspond to two simple closed  curves on the reference torus $E_0$ which decompose $E_0$ into two cylinders. There are two components of $E$ that contain only separating node, we denote those components by $E'_1$ and $E'_2$.
The remaining two components intersect each other at two non-separating nodes, we denote those components by $E''_1, E''_2$.
Define $E':=E'_1\cup E'_2$ and $E'':=E''_1\cup E''_2$. 
Let $n'_i:=|E'_i\cap\{q_1,\dots,q_4\}|, \; i=1,2$, and $n'=n_1+n_2$.
Let $C'_1,C'_2,C''_1,C''_2,C',C''$ be respectively the preimages of $E'_1, E'_2, E''_1, E''_2, E',E''$ in $C$.

\begin{Proposition}\label{prop:str:2:sep:2:n:sep:nodes:Ep:connect}
	If $E'$ is connected then $\pp\not\in \ol{\cX}_D$.
\end{Proposition}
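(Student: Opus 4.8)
The plan is to assume $\pp\in\ol{\cX}_D$ and derive a contradiction in every admissible configuration, following the same template as Proposition~\ref{prop:st:2:nodes:sep:1:node:n:sep:Ep:connect} and the preceding cases. First I would pin down the topology of $E$: with $E'=E'_1\cup E'_2$ connected, the dual graph of $E$ is a chain $E'_1-E'_2$ of two rational components carrying only separating nodes, attached through a separating node to the genus-one ``core'' $E''=E''_1\cup E''_2$, which is glued along its two non-separating nodes. After relabeling I may assume $E'_2$ is the junction component, adjacent to both $E'_1$ and $E''_1$, while $E'_1$ is a terminal $\Pb^1$ with a single node. Stability of each $\Pb^1$ then bounds $n'_1,n'_2$ and restricts where $q_5$, the image of the swapped pair $\{p_5,p'_5\}$, can sit.

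Next I would run a case analysis on the total $n'=n'_1+n'_2$ and on the location of $q_5$, exactly as in the earlier boundary cases. Two principles govern the lift: the parity of the number of branch points among $q_1,\dots,q_4$ enclosed by a separating node determines whether its preimage in $C$ is a single $\tau$-fixed node or a $\tau$-swapped pair; and any twisted differential must be compatible with a level structure in the sense of Theorem~\ref{th:twisted:diff}, with $\xi$ carrying double zeros at $p_5,p'_5$ on its level-zero part and simple poles at each node joining two level-zero components.

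The argument then splits according to whether $\{p_5,p'_5\}$ lie on the chain $E'$ or on the core $E''$. In the branch where they lie on the chain, I would argue that $\xi$ vanishes identically on the rational chain (otherwise its prescribed double zeros would be incompatible with the available pole orders at the nodes), so that $\xi$ becomes holomorphic at the node joining $E'$ to the core; the only permuted pair of nodes where $\xi$ could then have simple poles is internal to $E''$, contradicting Proposition~\ref{prop:s:poles:at:two:nodes:not:in:XD}, or else $\xi$ vanishes identically on a $\tau$-invariant genus-one subcurve, contradicting Proposition~\ref{prop:bdry:form:dec:not:eigen:form}. In the branch where they lie on the core, I would exploit the genus-one structure of $E''$: the decomposition of $C$ across the separating node joining chain and core feeds into Proposition~\ref{prop:bdry:form:dec:not:eigen:form}, Corollary~\ref{cor:dec:two:tori:no:eigenform}, or Proposition~\ref{prop:s:poles:at:nodes:exchanged} together with Proposition~\ref{prop:s:poles:at:two:nodes:not:in:XD}. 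Two recurring sub-steps close the remaining configurations: whenever $\xi$ is non-trivial on a genus-two piece carrying an involution with two fixed points, Proposition~\ref{prop:ab:diff:dble:zero:Prym:inv} rules out the double zero demanded by Theorem~\ref{th:twisted:diff}; and whenever $\xi$ vanishes on a component whose smooth locus supports a nonzero anti-invariant cycle, Proposition~\ref{prop:non:collapse:sub:curve} gives a contradiction.

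The main obstacle is the combinatorial bookkeeping. For each admissible triple consisting of $n'_1$, $n'_2$ and the position of $q_5$, one must correctly determine how many preimages each of the four nodes has in $C$, equivalently which are $\tau$-fixed and which are swapped, since this is precisely what selects the applicable obstruction lemma (via Proposition~\ref{prop:node:fixed:by:invol} for $\tau$-fixed nodes and Proposition~\ref{prop:s:poles:at:nodes:exchanged} for swapped pairs); one must also verify the Global Residue Condition in the few cases where $\xi$ does not vanish on the chain. I expect every branch to terminate in one of the obstruction results above, leaving no surviving configuration and hence $\pp\notin\ol{\cX}_D$.
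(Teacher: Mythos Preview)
Your plan is correct and matches the paper's approach: assume $\pp\in\ol{\cX}_D$, branch on the combinatorics of $E'$, and close each case with the obstruction propositions. The paper's execution is leaner than you anticipate --- it splits only on $n'\in\{2,3,4\}$ without separately tracking the position of $q_5$, and uses just Theorem~\ref{th:twisted:diff}, Proposition~\ref{prop:s:poles:at:nodes:exchanged}, and Proposition~\ref{prop:bdry:form:dec:not:eigen:form}, so Proposition~\ref{prop:ab:diff:dble:zero:Prym:inv} and Proposition~\ref{prop:non:collapse:sub:curve} are never needed here.
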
 
\begin{proof}
	Let us suppose that $\pp \in \ol{\cX}_D$.  
	We have $n'\in \{2, 3,4\}$.
	\begin{itemize}
		\item[$\bullet$] Case $n'=2$. In this case, $E'$ is a nodal curve of genus zero, and $\xi':=\xi_{\left|C'\right.}\equiv 0$ (by Theorem~\ref{th:twisted:diff}). There are two nodes between $C'$ and $C''$. Since $\xi'\equiv 0$, $\xi$  does not have simple poles at these nodes and we have a contradiction to Proposition~\ref{prop:s:poles:at:nodes:exchanged}. Thus this case cannot occur.
		
		\item[$\bullet$] Case $n'=3$. In this case $C'$ is a genus one nodal curve, $C''$ a genus two nodal curve, and $C'$ intersects $C''$ at a separating node. Using Theorem~\ref{th:twisted:diff}, one readily shows that  we always have either $\xi'=\xi_{\left| C'\right.} \equiv 0$ or $\xi'':=\xi_{\left|C''\right.}\equiv 0$. In both cases we get a contradiction to Proposition~\ref{prop:bdry:form:dec:not:eigen:form}. Thus this case cannot occur.
		
		\item[$\bullet$] Case $n'=4$. In this case, $C''$ is either a nodal genus one curve with two irreducible components or a disjoint union of two isomorphic nodal genus one curves. It contains  $\{p_5,p'_5\}$ $C'$ and intersects $C'$ at two nodes. One readily checks that in all cases, $\xi$ vanishes identically  on $C''$, and we have a contradiction to either Proposition~\ref{prop:s:poles:at:nodes:exchanged} or Proposition~\ref{prop:bdry:form:dec:not:eigen:form}. This completes the proof of the proposition.
	\end{itemize} 
\end{proof}

\begin{Proposition}\label{prop:2:sep:2:n:sep:nodes:np:4:Ep:disconn}
	Assume that $E'$ is disconnected. Then $\pp\in\ol{\cX}_D$ only if up to a relabeling of $C''_1, C''_2$ 
	\begin{itemize}
		\item[.] $C''_1$ intersects each of $C'_1$ and $C'_2$ at two nodes,
		
		\item[.] there are two nodes between $C''_1$ and $C''_2$, both of which are fixed by $\tau$,
		
		\item[.] $\{p'_5, p''_5\}\subset C''_2$, and $\xi_{\left|C''_2\right.}\equiv 0$,
		
		\item[.] $\xi_{\left|C''_1\right.}$ has a double zero at a node between $C''_1$ and $C''_2$, and simple poles at all the nodes between $C''_1$ and $C'_1\cup C'_2$.
	\end{itemize}
\end{Proposition}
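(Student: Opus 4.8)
The plan is to follow the same case-by-case strategy used in the proof of Proposition~\ref{prop:str:2:sep:2:n:sep:nodes:Ep:connect}, now under the hypothesis that $E'=E'_1\cup E'_2$ is disconnected. Since each of $E'_1,E'_2$ carries a single (separating) node, that node must land on one of the two components $E''_1,E''_2$ of the banana $E''$. Up to relabelling $E''_1\leftrightarrow E''_2$ there are exactly two attachment patterns: pattern (A), in which both $E'_1$ and $E'_2$ are attached to $E''_1$, and pattern (B), in which $E'_1$ is attached to $E''_1$ while $E'_2$ is attached to $E''_2$. First I would impose stability: every $\Pb^1$ component of $E$ must carry at least three special points, so each $E'_i$ must carry at least two of the five marked points. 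Together with the fact that there are only five marked points available, this pins down, in each pattern, the admissible distributions of $q_1,\dots,q_5$ among the four components, reducing the problem to a short finite list indexed by the pair $(n'_1,n'_2)$ and by whether $q_5$ lies on $E'$, on $E''_1$, or on $E''_2$.

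For each configuration the next step is to read off the topology of $C$ and the action of $\tau$ from the parity of the branch points: the separating node attaching $E'_i$ splits into a $\tau$-exchanged pair of nodes precisely when $n'_i$ is even, and is $\tau$-fixed when $n'_i$ is odd, while the preimage of the banana $E''$ is governed by the monodromy of the restricted cover (a connected cover branched at the two non-separating nodes when these are $\tau$-fixed, and a split or necklace-type cover otherwise). Once the combinatorial type is fixed, I would eliminate the unwanted configurations exactly as in the earlier cases, invoking Proposition~\ref{prop:node:fixed:by:invol} to forbid simple poles of $\xi$ at $\tau$-fixed nodes; Proposition~\ref{prop:bdry:form:dec:not:eigen:form} to exclude configurations in which $\xi$ vanishes identically on a $\tau$-invariant subcurve of genus $\geq 1$ (which typically occurs when $q_5$ lands on $E'$ or when the banana cover is disconnected); Proposition~\ref{prop:s:poles:at:two:nodes:not:in:XD} to rule out configurations in which $\xi$ acquires simple poles at only one $\tau$-exchanged pair of nodes; Proposition~\ref{prop:non:collapse:sub:curve} to discard the cases producing a nontrivial $\tau$-anti-invariant vanishing cycle; and Theorem~\ref{th:twisted:diff} (the order-matching and Global Residue conditions) to show that the surviving combinatorial types admit no compatible twisted differential at all.

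The single configuration that survives is pattern (A) with $n'_1=n'_2=2$ and $q_5\in E''_2$: here both separating nodes split, so $C'_1$ and $C'_2$ are copies of $\Pb^1$ each meeting $C''_1$ in two nodes, while $E''$ carries no branch point among $q_1,\dots,q_4$, which forces the cover of the banana to be branched exactly at the two non-separating nodes, so these become the two $\tau$-fixed nodes joining $C''_1$ and $C''_2$. Since $\{p_5,p'_5\}\subset C''_2$, any compatible twisted differential must vanish identically on $C''_2$, and the order and residue conditions of Theorem~\ref{th:twisted:diff} then force $\xi_{|C''_1}$ to have a double zero at one of the nodes with $C''_2$ and simple poles at the four nodes joining $C''_1$ to $C'_1\cup C'_2$, which is precisely the asserted configuration. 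The main obstacle I anticipate is the careful bookkeeping of the monodromy at the two non-separating nodes of the banana, namely distinguishing the branched, the split, and the necklace coverings and verifying in each borderline case that either the anti-invariant-cycle obstruction of Proposition~\ref{prop:non:collapse:sub:curve} or the single-exchanged-pair obstruction of Proposition~\ref{prop:s:poles:at:two:nodes:not:in:XD} genuinely applies. A secondary subtlety is the treatment of the sub-cases where $q_5$ sits on $E'$ or on $E''_1$, for which the double-zero constraint on $\xi$ at $p_5,p'_5$ interacts with Proposition~\ref{prop:ab:diff:dble:zero:Prym:inv} and must be excluded with some care.
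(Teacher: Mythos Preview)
Your plan is correct and matches the paper's proof almost exactly: the same split into the two attachment patterns (both $E'_i$ on the same $E''_j$ versus on different ones), the same sub-case analysis by $(n'_1,n'_2)$ and the location of $q_5$, and elimination of each unwanted configuration by the same obstruction results (Propositions~\ref{prop:node:fixed:by:invol}, \ref{prop:s:poles:at:nodes:exchanged}, \ref{prop:bdry:form:dec:not:eigen:form}, \ref{prop:s:poles:at:two:nodes:not:in:XD}, \ref{prop:non:collapse:sub:curve}, Corollary~\ref{cor:dec:two:tori:no:eigenform}, and Theorem~\ref{th:twisted:diff}). Two small remarks. First, your sentence ``$E''$ carries no branch point among $q_1,\dots,q_4$, which forces the cover of the banana to be branched exactly at the two non-separating nodes'' is too quick: the unbranched (split) option is topologically allowed and must be excluded separately---the paper does this via Proposition~\ref{prop:non:collapse:sub:curve}, exactly as you anticipate later in your paragraph. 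Second, in the surviving configuration the paper does not rely only on Theorem~\ref{th:twisted:diff} to rule out the possibility that $\xi_{|C''_1}$ has \emph{simple} zeros at both $\tau$-fixed nodes; instead it smooths the four nodes joining $C''_1$ to $C'_1\cup C'_2$ and observes that the resulting genus two holomorphic $1$-form would have two simple zeros at Weierstrass points, which is impossible. Your route through the anti-invariance condition (c) of Theorem~\ref{th:twisted:diff} also works (it forces even vanishing order at $\tau$-fixed nodes), so this is a genuine but harmless divergence. Proposition~\ref{prop:ab:diff:dble:zero:Prym:inv} is not needed here.
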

\begin{proof}
	Suppose that $\pp \in \ol{\cX}_D$. 
	We first consider the case $E'_1$ and $E'_2$ intersect two different components of $E''$. Up to a relabeling, we can always assume that $E'_1$ intersects $E''_1$, $E'_2$ intersects $E''_2$, and that $n'_1\geq n'_2$. Note that $(n'_1,n'_2) \in \{(2,1), (3,1), (2,2)\}$. 
	
	\begin{itemize}
		\item[$\bullet$] Case $(n'_1,n'_2)=(2,1)$.  We must have $q_5\in E'_2$, or equivalently $\{p_5,p'_5\} \subset C'_2$. There are two nodes between $C'_1$ and $C''_1$, and one node between $C'_2$ and $C''_2$. There is one point in $\{q_1,\dots,q_4\}$, say $q_1$, which is contained in $E''$. 
		If $q_1\in E''_1$ then  $C''_1, C''_2$ are both isomorphic to $\Pb^1$ and intersect each other at three nodes.  In this case, there would be no compatible twisted differential on $C$.
		
		If $q_1 \in E''_2$ then there are either two nodes (both fixed by $\tau$), or four nodes between $C''_1$ and $C''_2$. The former case case $C''_1$ is an isomorphic to $\Pb^1$, while $C''_2$ is an elliptic curve. It follows from Theorem~\ref{th:twisted:diff} that $\xi$ must vanish identically on $C''_2\cup C''_1$. We then have a contradiction to Proposition~\ref{prop:bdry:form:dec:not:eigen:form}. 
		In the latter case $C''_1$ is a disjoint union of two copies of $\Pb^1$, while $C''_2$ is isomorphic to $\Pb^1$. One readily checks that in this case $\xi$ must vanish identically on all the components of $C$. Thus this case is excluded as well.
		
		\item[$\bullet$] Case $(n'_1,n'_2)=(3,1)$. In this case $C'_1$ is an elliptic curve,  $C''$ is a nodal genus two curve, and $C'_1$ intersects $C''$ at one node. We thus have a contradiction to Proposition~\ref{prop:bdry:form:dec:not:eigen:form}.
		
		\item[$\bullet$]  Case $(n'_1,n'_2)=(2,2)$. There are either two nodes or four nodes between $C''_1$ in $C''_2$. In the former case let $C_1:=C'_1\cup C''_1$ $C_2=C'_2\cup C''_2$. Then $C_1$ and $C_2$ are both nodal curves of of genus one intersecting each other at two nodes fixed by $\tau$. By Corollary~\ref{cor:dec:two:tori:no:eigenform}, this case cannot occur. In the latter case, each of $C''_1, C''_2$ is a disjoint union of two copies of $\Pb^1$, and it follows from Theorem~\ref{th:twisted:diff} that $\xi$ must vanish identically on $C$. Therefore this case is also excluded.
	\end{itemize}
	
	We now turn to the case $E'_1$ and $E'_2$ intersect the same component of $E''$. Without loss of generality we can suppose that both $E'_1$ and $E'_2$ intersect $E''_1$. In this case, $E''_2$  contains exactly one point in $\{q_1,\dots,q_4\}$. If $E''_2$ contains one point in $\{q_1,\dots,q_4\}$ then both $C'_1$ and $C'_2$ are isomorphic to $\Pb^1$, $C'_1$ intersects $C''_1$ at two nodes, $C'_2$ intersects $C''_1$ at one node, and there are three nodes between  $C''_1$ and $C''_2$. One readily checks that there is no non-trivial compatible twisted differential on $C$. Therefore $E''_2$ must contain $q_5$, and  each of $E'_1$ and $E'_2$ contains two points in $\{Q_1,\dots,q_4\}$. This means that both of $C'_1$ and $C'_2$ are isomorphic to $\Pb^1$ and intersect $C'_1$ at two nodes. There can be two nodes or four nodes between $C''_1$ and $C''_2$. If there are four nodes, the each of $C''_1$ and $C''_2$ is a disjoint union of two copies of $\Pb^1$. Since $\{p_5, p'_5\} \subset C''_2$, $\xi$ must vanish identically  on $C''_2$. This means that $\xi$ is holomorphic at the nodes between $C''_1$ and $C''_2$. But since these nodes are all non-separating, we get a contradiction to Proposition~\ref{prop:non:collapse:sub:curve}. Thus we conclude that this case cannot occur.
	
	Finally, assume that $q_5\in E'_2$ and that there are two nodes between $C''_1$ and $C''_2$. Note that both of the nodes between $C''_1$ and $C''_2$ are fixed by $\tau$. We must have $\xi_{\left|C''_2\right.}\equiv 0$. 
	Let $C_0:=C'_1\cup C'_2\cup C''_1$. Since $\xi$ must have simple poles between $C''_1$ and $C'_1\cup C'_2$, $C_0$ is the level zero subcurve in a compatible twisted differential on $C$. Let $\xi_0:=\xi_{\left|C_0\right.}$. 
	Let $p$ and $p'$ be the nodes between $C''_1$ and $C''_2$. Then $\xi_0$ may have two simple zeros at both $p$ and $p'$, or one of $\{p,p'\}$ is a double zero, and the other one is a regular point of $\xi_0$. In the former case, by smoothening simultaneously the nodes between $C''_1$ and $C'_1\cup C'_2$, we would get a Riemann surface of genus two together with a holomorphic $1$-form having two simple zeros at two Weierstrass points. Since such a $1$-form does not exist, the this case cannot occur.  Thus we then conclude that $\xi_0$ has a double zero at one of the nodes between $C''_1$ and $C''_2$, and the other node is a regular point for $\xi_0$. All the conditions in the statement of the proposition are now fulfilled. The proposition is then proved.
\end{proof} 

\subsubsection{Case one separating node and three non-separating nodes}\label{subsec:1:sep:3:n:sep:nodes}
In this case $E$ has four irreducible components. One of the components has only one node, we will denote this one by $E'$. Each of the other three components  has two non-separating nodes, we denote these components by $E''_1, E''_2$, and  $E''_3$, where by convention, $E''_1$ intersects $E'_1$ at one node. Note that the stability condition mean that each of $E''_2, E''_3$ contains at least one point in $\{q_1,\dots,q_5\}$.
Let $E'':=E''_1\cup E''_2\cup E''_3$.
Denote by $C', C'', C''_i, \; i=1,2,3$, the preimages of $E', E'', E''_i,  i=1,2,3$, in $C$ respectively.

\begin{Proposition}\label{prop:1:sep:3:n:sep:nodes}
	Assume that $E$ has one separating node, and three non-separating nodes. Then $\pp\in\ol{\cX}_D$ only if
	\begin{itemize}
		\item[.] $C'$ contains two points in $\{p_1,\dots,p_4\}$, each of $C''_1, C''_2$ contains one point in $\{p_1,\dots,p_4\}$, and $\{p'_5, p''_5\}\subset C''_3$.
		
		\item[.] $C''_1$ intersects $C''_2$ at two nodes, and $C''_3$ at one nodes.
		
		\item[.] $\xi_{\left|C''_1\right.}\equiv 0$ has a zero of order $2$ at the node between $C''_1$ and $C''_3$, and has simple poles at all the nodes between $C''_1$ and $C'_1\cup C''_2$.
	\end{itemize}
	
\end{Proposition}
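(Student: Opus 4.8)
The plan is to follow the scheme used throughout this appendix: first enumerate the topological types of the admissible double cover $C$ that are compatible with the prescribed dual graph of $E$, and then eliminate all but the claimed configuration by invoking the characterizing properties of limit Prym eigenforms established in Appendix~\ref{sec:degenerate:eigen:form}. Here $E''=E''_1\cup E''_2\cup E''_3$ is a cycle of three copies of $\Pb^1$ (the three non-separating nodes), with $E'$ a further $\Pb^1$ attached to $E''_1$ across the separating node.

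First I would record the combinatorial constraints. Since $E'$ carries a single node, stability forces it to contain at least two of the marked points $\{q_1,\dots,q_5\}$; similarly each of $E''_2$ and $E''_3$, which carry two non-separating nodes and no separating node, must contain at least one marked point. Writing $n'=|E'\cap\{q_1,\dots,q_4\}|$ and $n''_i=|E''_i\cap\{q_1,\dots,q_4\}|$ with $n'+n''_1+n''_2+n''_3=4$, these constraints leave only a short list of admissible distributions of $\{q_1,\dots,q_5\}$. For each distribution I would resolve the four nodes using the parity rule of \textsection\ref{subsec:generalities:bdry}: the separating node between $E'$ and $E''_1$ lifts to a single $\tau$-fixed node when $n'$ is odd and to a $\tau$-exchanged pair (with $C'$ connected) when $n'$ is even, while the three vanishing cycles of the triangle are mutually isotopic non-separating curves on the reference torus, so their $\Z/2\Z$-characters are linked and determined, up to a single overall choice, by the positions of the branch points. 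This produces the finite list of candidate pairs $(C,\tau)$.

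Next I would run the elimination. Whenever a candidate forces $\xi$ to vanish identically on a $\tau$-invariant subcurve of arithmetic genus at least one, Proposition~\ref{prop:bdry:form:dec:not:eigen:form} (or Corollary~\ref{cor:dec:two:tori:no:eigenform}) rules it out; whenever the only simple poles of $\xi$ occur at a single exchanged pair of nodes, Proposition~\ref{prop:s:poles:at:two:nodes:not:in:XD} applies; whenever a component splits into two copies of $\Pb^1$ on which $\xi$ vanishes, the difference of their core curves is a nontrivial class in $H_1(X,\Z)^-$ which must have nonzero period, contradicting Proposition~\ref{prop:non:collapse:sub:curve}; and any configuration requiring a double zero of $\xi$ on an invariant genus-two piece is excluded by Proposition~\ref{prop:ab:diff:dble:zero:Prym:inv}. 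Theorem~\ref{th:twisted:diff} is used both to show that no compatible twisted differential exists on the rational components forced to carry prescribed orders with vanishing residues, and, in the surviving case ($n'=2$ with $q_5\in E''_3$), to pin down that $\xi$ vanishes on the $\Pb^1$ containing $\{p_5,p'_5\}$, acquires a double zero at the node joining it to $C''_1$, and has simple poles at every non-separating node (the latter via Proposition~\ref{prop:s:poles:at:nodes:exchanged}). This is exactly the stratum $\cS_{1,3}$.

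The main obstacle I expect is bookkeeping rather than any single deep idea: correctly resolving the double cover of the triangle $E''$, since the three non-separating nodes are homologically linked and their preimages cannot be prescribed independently — an error here would manufacture spurious candidates or miss genuine ones. Tracking these linked monodromies together with the parity of the separating node, and then matching each surviving candidate to precisely the right obstruction lemma, is where the care lies; I expect the borderline cases in which $\xi$ does not obviously degenerate on a high-genus subcurve to be dispatched by the Global Residue Condition in Theorem~\ref{th:twisted:diff} combined with Proposition~\ref{prop:non:collapse:sub:curve}.
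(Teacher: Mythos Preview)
Your proposal is correct and follows essentially the same route as the paper: a case analysis on the distribution of $\{q_1,\dots,q_5\}$ among $E',E''_1,E''_2,E''_3$, resolution of the double cover via the parity rule of \textsection\ref{subsec:generalities:bdry}, and elimination via the standard obstruction lemmas from Appendix~\ref{sec:degenerate:eigen:form}. The paper organizes the case split slightly differently, grouping $E'$ and $E''_1$ together and tracking $n_1=|(E'\cup E''_1)\cap\{q_1,\dots,q_4\}|$ rather than your $n'$ and $n''_1$ separately; this is purely bookkeeping. Two small points: your description of the surviving case as ``$n'=2$ with $q_5\in E''_3$'' is not yet sharp enough, since that still allows three distributions of the remaining two branch points among $E''_1,E''_2,E''_3$ (only $n''_1=n''_2=1$, $n''_3=0$ survives); and Proposition~\ref{prop:ab:diff:dble:zero:Prym:inv} does not actually get used here, as no invariant genus-two piece with two $\tau$-fixed points arises in this stratum.
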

\begin{proof}
	Let $n_1:=|E'\cup E''_1)\cap\{q_1,\dots,q_4\}|$, and for $i=2,3$, $n_i:=|C''_i\cap\{q_1,\dots,q_4\}|$. Up to a relabeling of $E''_1, E''_2$, we have $(n_1,n_2,n_3)\in \{(1,2,1), (2,2,0), (2,1,1), (3,1,0)\}$. 
	\begin{itemize}
		\item[$\bullet$] Case $(n_1,n_2,n_3)=(1,2,1)$. In this case $E'$ contains $q_5$ and one point in $\{q_1,\dots,q_4\}$. 
		Therefore both $C'$ and $C''_1$ are isomorphic to $\Pb^1$ and there is a node between $C'$ and $C''_1$. We must have $\xi_{\left|C'_1\cup C''_1\right.} \equiv 0$. 
		Either there are two nodes between $C''_1$ and $C''_2$, or two nodes between $C''_1$ and $C''_3$. Since $\xi$ does not have simple poles at these nodes, we get a contradiction to Proposition~\ref{prop:non:collapse:sub:curve}. Thus this case does not occur.
		
		\item[$\bullet$] Case $(n_1,n_2,n_3)=(2,2,0)$. We must have $q_5\in E''_3$ and $E''_1\cap\{q_1,\dots,q_5\}=\varnothing$. It follows that  $C'$   is  isomorphic to $\Pb^1$ and intersect $C''_1$ at two nodes. 
		Both $C''_2$ and $C''_3$ intersect $C''_1$ at either one or two nodes. In the former case, we have a decomposition of $C$ into two subcurves of genus one, namely $C_1:=C'\cup C''_1$ and $C_2:=C''_2\cup C''_3$, which intersect each other at two nodes fixed by $\tau$. By Corollary~\ref{cor:dec:two:tori:no:eigenform} this is impossible. 
		If there are two nodes between $C''_1$ and $C''_i$, $i=2,3$, then these nodes are non-separating. Note that $C''_3$ is a disjoint union of two copies of $\Pb^1$. Since $\{p_5,p'_5\} \subset C''_3$, we must have $\xi_{\left| C''_3\right.} \equiv 0$. But this implies a contradiction to Proposition~\ref{prop:non:collapse:sub:curve}. Thus this case does not occur either. 
		
		\item[$\bullet$] Case $(n_1,n_2,n_3)=(2,1,1)$. We have two subcases: either $C''_1$ intersects each of $C''_2$ and $C''_3$ at one node, or $C''_1$ intersects each of $C''_2$ and $C''_3$ at two nodes.  In the first case both $C_1:=C'_1\cup C''_1$ and $C_2=C''_2\cup C''_3$ are nodal genus one curves which meet each other at two nodes fixed by $\tau$. By Corollary~\ref{cor:dec:two:tori:no:eigenform}, this case does not occur. 
		In the second case both $C''_2$ and $C''_3$ are isomorphic to $\Pb^1$ and intersect each other at one node fixed by $\tau$. We must have either $\xi_{\left| C''_2\right.} \equiv 0$ or $\xi_{\left|C''_3\right.}\equiv 0$. In both case, since the nodes between $C''_1$ and $C''_2\cup C''_3$ are non-separating, we get a contradiction to Proposition~\ref{prop:non:collapse:sub:curve}. Thus this case is also excluded. 
		
		\item[$\bullet$] Case $(n_1,n_2,n_3)=(3,1,0)$. If $E'$ contains three points in $\{q_1,\dots,q_4\}$, then $C'$ is an elliptic curve, $C''$ is an nodal genus two curve, and $C'$ intersects $C''$ at one node. This case is excluded by Proposition~\ref{prop:bdry:form:dec:not:eigen:form}. 
		This $E'$ must contains two points in $\{q_1,\dots,q_4\}$, and $E''_1$ contains one point in $\{q_1,\dots,q_4\}$. Both $C'$ and $C''_1$ are isomorphic to $\Pb^1$ and intersect each other at two nodes. 
		There are either one node of two nodes between $C''_1$ and $C''_2$. Tn the former case, $C''_3$ consists of two copies of $\Pb^1$ each of which intersects both $C''_1$ and $C''_2$. Since $\{p_5, p'_5\} \subset C''_3$, we must have $\xi_{\left|C''_3\right.}\equiv 0$. Since the nodes between $C''_3$ and $C''_1$ are non-separating, we would get a contradiction to Proposition~\ref{prop:non:collapse:sub:curve}, which means that this case does not occur. 
		
		Finally, let us assume that there are two nodes between $C''_1$ and $C''_2$. This implies that $C''_3$ is isomorphic to  $\Pb^1$ and intersects each of $C''_1, C''_2$ at one node. We must have $\xi_{\left|C''_3\right.}\equiv 0$. By Proposition~\ref{prop:s:poles:at:nodes:exchanged}, $\xi$ must have simple poles at the nodes between $C''_1$ and $C'\cup C''_2$. Let $\xi''_1:=\xi_{\left|C''_1\right.}$ and $\xi''_2:=\xi_{\left|C''_2\right.}$. By Theorem~\ref{th:twisted:diff},  $\xi''_1$ has a double zero , while $\xi''_2$ is nowhere vanishing on $C''_2$. In particular, the node between $C''_2$ and $C''_3$ is a regular point for $\xi''_2$. This complete the proof of the proposition.   
	\end{itemize}
\end{proof}

\subsubsection{Case four non-separating nodes} \label{subsec:str:4:n:sep:nodes}
In this case $E$ has four irreducible components that we will denote by $E_i, \; i=1,\dots,4$, in the cyclic order.  Let $n_i:=|E'_i\cap\{q_1,\dots,q_4\}, \; i=1,\dots,4$. Up to a renumbering of the irreducible components, we can always suppose that $n_1=\max\{n_i, \; i=1,\dots,4\}$. 
By the stability condition, we have $(n_1,\dots,n_4) \in \{(2,1,1,0), (2,1,0,1), (2,0,1,1), (1,1,1,1)\}$.
Let $C_i$ be the preimage of $E_i$ in $C$.

\begin{Proposition}~\label{prop:4:n:sep:nodes}
	Assume that $E$ has four non-separating nodes. Then $\pp\not\in\ol{\cX}_D$. 
\end{Proposition}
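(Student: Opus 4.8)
The plan is to argue by contradiction: assume $\pp=(C,p_1,\dots,p_5,p'_5,\tau,[\xi]) \in \ol{\cX}_D$ and produce a contradiction in each of the four admissible distributions $(n_1,n_2,n_3,n_4)$. Since all four components $E_1,\dots,E_4$ are rational and $E$ has arithmetic genus one, the dual graph of $E$ is a single $4$-cycle: $E_i$ meets $E_{i+1}$ (indices mod $4$) at one node, corresponding to a simple closed curve $c_i$ on the reference torus $E_0$. First I would run the monodromy computation of \textsection\ref{subsec:generalities:bdry}: writing $x:=\bar\varrho(c_1)\in\Z/2\Z$ and using the relations $\bar\varrho(c_{i-1})+\bar\varrho(c_i)\equiv n_i$, one gets $\bar\varrho(c_2)=n_2+x$, $\bar\varrho(c_3)=n_2+n_3+x$, $\bar\varrho(c_4)=n_2+n_3+n_4+x$, the loop condition being automatic because $\sum_i n_i=4$. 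For each value of $x$ this determines exactly which nodes of $C$ are fixed by $\tau$ (those over a $c_i$ with $\bar\varrho(c_i)=1$) and which occur in $\tau$-exchanged pairs (those with $\bar\varrho(c_i)=0$), hence the full topology of $C$ and of the $\tau$-action. The admissible values of $x$ are then pinned down by the requirement that $C$ be connected of genus three and that a compatible twisted differential exist (Theorem~\ref{th:twisted:diff}).

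With the topology fixed, I would treat the distributions $(2,1,1,0),(2,1,0,1),(2,0,1,1),(1,1,1,1)$ in turn, locating $q_5$ (hence $\{p_5,p'_5\}$) on the component forced by stability and by the level structure, and determining where $\xi$ vanishes identically. The contradictions should come from the mechanisms already used throughout this appendix, each sub-case falling into one of: (a) a pair of $\tau$-fixed nodes splits $C$ into two $\tau$-invariant genus-one subcurves, excluded by Corollary~\ref{cor:dec:two:tori:no:eigenform}; (b) $\xi$ vanishes identically on a whole $\tau$-invariant half of $C$, so that a nonseparating cycle $\gamma\in H_1(C^*,\Z)^-$ with $\varphi_*\gamma\neq0$ (built from core curves of a $\tau$-exchanged pair of annuli, as in the earlier proofs) satisfies $\int_\gamma\xi=0$, contradicting Proposition~\ref{prop:non:collapse:sub:curve}; (c) the only simple poles of $\xi$ occur at a single $\tau$-exchanged pair of nodes, excluded by Proposition~\ref{prop:s:poles:at:two:nodes:not:in:XD}; or (d) no twisted differential is compatible with the level structure, either because the Global Residue Condition \eqref{eq:GRC} fails or because a decomposition of $C$ into two $\tau$-invariant pieces of genus $\geq1$ forces $\xi\equiv0$ on one piece (Proposition~\ref{prop:bdry:form:dec:not:eigen:form}). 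Throughout I would invoke Proposition~\ref{prop:node:fixed:by:invol} systematically: at every $\tau$-fixed node $\xi$ must vanish on one of the two incident components, which sharply restricts where $\xi$ can be nonzero whenever $\tau$-fixed nodes are present.

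The main obstacle is the bookkeeping rather than any single hard idea: for each distribution and each admissible $x$ one must correctly decide whether the preimage of a given $E_i$ is one $\Pb^1$ or a $\tau$-exchanged pair of copies of $\Pb^1$, and whether each node is $\tau$-fixed or exchanged, before the right contradiction tool becomes visible. I expect the most delicate case to be the fully symmetric distribution $(1,1,1,1)$: here the parities alternate, $\bar\varrho(c_1)=\bar\varrho(c_3)=x$ and $\bar\varrho(c_2)=\bar\varrho(c_4)=1+x$, so for each $x$ two opposite nodes are $\tau$-fixed and two are exchanged, and one must verify both that the resulting $C$ has genus three and that, wherever $q_5$ is placed, the forced vanishing of $\xi$ yields either a two-tori decomposition (Corollary~\ref{cor:dec:two:tori:no:eigenform}) or a nonseparating $\tau$-anti-invariant vanishing cycle (Proposition~\ref{prop:non:collapse:sub:curve}). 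Once every sub-case of all four distributions is exhausted, none admits a differential lying in $\ol{\cX}_D$, which proves the proposition and, together with Propositions~\ref{prop:4:sep:nodes}--\ref{prop:1:sep:3:n:sep:nodes}, completes the case where the quotient curve $E$ has four nodes.
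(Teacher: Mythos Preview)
Your proposal is correct and follows essentially the same approach as the paper: a case-by-case analysis over the distributions $(n_1,\dots,n_4)\in\{(2,1,1,0),(2,1,0,1),(2,0,1,1),(1,1,1,1)\}$, with each subcase ruled out by one of Corollary~\ref{cor:dec:two:tori:no:eigenform}, Proposition~\ref{prop:non:collapse:sub:curve}, Proposition~\ref{prop:bdry:form:dec:not:eigen:form}, or the nonexistence of a compatible twisted differential. Your explicit parametrization of the monodromy by $x=\bar\varrho(c_1)$ is slightly more systematic than the paper's treatment, but the substance is identical; in particular, your expectation that $(1,1,1,1)$ reduces to Corollary~\ref{cor:dec:two:tori:no:eigenform} is exactly what the paper does.
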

\begin{proof}
	Suppose that $\pp \in \ol{\cX}_D$.  We have the following cases:
	\begin{itemize}
		\item[$\bullet$] Case $(n_1,\dots,n_4)=(2,1,1,0)$ or $(n_1,\dots,n_4)=(2,0,1,1)$. By symmetry, we only need to consider the case $(n_1,\dots,n_4)=(2,1,1,0)$. In this case $q_5 \in E_4$. There are either one node or two nodes between $C_1$ and $C_2$. Assume first that $C_1$ and $C_2$ intersects at two nodes. Then $C_1$ is an elliptic curve, $C_4$ is isomorphic to $\Pb^1$ and intersects each of $C_1$ and $C_3$ at one node, while $C_2$ and $C_3$ intersect each other at two nodes. Let $C':=C_1\cup C_4$ and $C'':=C_2\cup C_4$. Then $C'$ and $C''$ are both nodal curve of genus one and intersect each other at two nodes fixed by $\tau$. By Corollary~\ref{cor:dec:two:tori:no:eigenform}, this case cannot occur. 
		
		Assume now that there are two nodes between $C_1$ and $C_2$. Then $C_1,C_2,C_3$ are all isomorphic to $\Pb^1$, $C_2$ meets $C_3$ at one node, $C_4$ is a disjoint union of two copies of $\Pb^1$ each of which intersects both $C_1$ and $C_3$. Since $\{p_5,p'_5\} \subset C_4$, we must have $\xi_{\left|C_4\right.}\equiv 0$. But since the nodes between $C_1$ and $C_4$ are non-separating, we have a contradiction to Proposition~\ref{prop:non:collapse:sub:curve}. Thus this case does not occur either.
		
		\item[$\bullet$] Case $(n_1,\dots,n_4)=(2,1,0,1)$. Again, we have two subcases, either $C_1$ intersects $C_2$ at one node, or $C_1$ intersects $C_2$ at two nodes. In the former case, $C_1$ is an elliptic curve which intersects both $C_2$ and $C_4$ at one node, $C_2$ and $C_4$ are isomorphic to $\Pb^1$, $C_3$ is a disjoint union of two copies of $\Pb^1$, each of which intersects both $C_2$ and $C_4$. Note that $C':=C_2\cup C_3\cup C_4$ is a nodal curve of genus one. Since we must have  $\xi_{\left|C_3\right.}\equiv 0$, it follows that $\xi_{\left|C'\right.}\equiv 0$. Therefore we get a contradiction to Proposition~\ref{prop:bdry:form:dec:not:eigen:form}, which shows that this case does not occur. In the latter case, all the irreducible components of $C$ are isomorphic to $\Pb^1$, $C_1$ intersects both of  $C_2, C_4$ at two nodes, while $C_3$ intersects both of $C_2,C_4$ at one node. One readily checks that there cannot a compatible twisted differential on $C$. Thus is case is also excluded.
		
		\item[$\bullet$] Case $(n_1,\dots,n_4)=(1,\dots,1)$. In this case one readily checks that $C$ is always a union of two nodal curves of genus one intersecting each other at two nodes fixed by $\tau$. Thus this case is excluded by Corollary~\ref{cor:dec:two:tori:no:eigenform}. This completes the proof of the proposition. 
	\end{itemize} 
\end{proof}

\subsection{Case $E$ has five nodes}\label{subsec:str:5:nodes}
Assume now that the curve $E$ has $5$ nodes. We first remark that at least one of the nodes of $E$ is non-separating (otherwise, the stability condition  cannot be satisfied).

\subsubsection{Four separating  and one non-separating nodes}\label{subsec:4:sep:1:n:sep:node}
\begin{Proposition}\label{prop:4:sep:1:n:sep:nodes}
	If $E$ has 4 separating nodes and one non-separating node, then $\pp\not\in\ol{\cX}_D$.
\end{Proposition}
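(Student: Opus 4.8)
The plan is to first pin down the topology of $E$ and of its admissible double cover $C$, and then to eliminate every admissible distribution of the marked points by the characterizing results of Appendix~\textsection\ref{sec:degenerate:eigen:form}, exactly as in the previous propositions of this section. Since $E$ has arithmetic genus one and carries a non-separating node, the first Betti number $b_1$ of its dual graph equals one; hence all irreducible components of $E$ are rational and the unique cycle of the dual graph is supported on a single edge. As there is exactly one non-separating node, that cycle is a self-loop: one component, which I denote $E_0''$, carries the non-separating node as a self-node, while the four separating nodes organize the components into a tree. A count of vertices, edges, and $b_1$ shows that $E$ has exactly five irreducible components, all isomorphic to $\Pb^1$. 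I would then lift this picture to $C$: writing $C_0''$ for the preimage of $E_0''$, the self-node of $E_0''$ lifts either to a single node of $C$ fixed by $\tau$ or to a pair of nodes exchanged by $\tau$, and the preimages of the four separating nodes are governed, as in \textsection\ref{subsec:generalities:bdry}, by the values of $\bar{\varrho}$ on the corresponding vanishing cycles.

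The heart of the argument is a case analysis organized by the lift of the self-node and by the component carrying $q_5$ (equivalently $\{p_5,p_5'\}$, where $\xi$ must have double zeros). If the self-node lifts to a $\tau$-fixed node, then by Proposition~\ref{prop:node:fixed:by:invol} $\xi$ cannot have a simple pole there, so $\xi$ vanishes identically on $C_0''$; since $E_0''$ has tree-degree at least one, choosing a separating node incident to it exhibits $C$ as a union of two $\tau$-invariant subcurves to which Proposition~\ref{prop:bdry:form:dec:not:eigen:form} or Corollary~\ref{cor:dec:two:tori:no:eigenform} applies, unless the configuration forces $\{p_5,p_5'\}$ onto a rational component on which no compatible anti-invariant twisted differential exists. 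If instead the self-node lifts to a pair of exchanged nodes, then Proposition~\ref{prop:s:poles:at:nodes:exchanged} forces $\xi$ to have simple poles there; when these are the only exchanged nodes carrying simple poles, Proposition~\ref{prop:s:poles:at:two:nodes:not:in:XD} yields a contradiction, and otherwise the remaining exchanged pair again produces a genus-one subcurve decomposition ruled out by Proposition~\ref{prop:bdry:form:dec:not:eigen:form}, or a rational component whose compatible differential (by Theorem~\ref{th:twisted:diff}) would have to be an anti-invariant form with prescribed orders and vanishing residues that does not exist.

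Throughout, I would use the Global Residue Condition and the level/order compatibility of Theorem~\ref{th:twisted:diff} to decide on which components $\xi$ vanishes identically, together with Proposition~\ref{prop:non:collapse:sub:curve} to forbid $\xi$ from vanishing on a subcurve supporting a non-trivial anti-invariant cycle. Because the entire genus is concentrated at the single self-loop while every other component is rational, in each admissible case one of these obstructions is activated: either $\xi$ is forced to vanish on a genus-one subcurve, or $\xi$ has simple poles at only one exchanged pair of nodes, or the twisted-differential compatibility on a $\Pb^1$ component has no anti-invariant solution.

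The main obstacle will be the combinatorial enumeration itself: the four separating nodes can arrange the five rational components into several non-isomorphic trees, and the five marked points can be distributed among them in many stability-compatible ways. For each resulting configuration one must verify that the joint constraints—stability of $E$, existence of a compatible twisted differential on $C$, the ban on a single exchanged pair of simple poles (Proposition~\ref{prop:s:poles:at:two:nodes:not:in:XD}), the non-degeneration of anti-invariant cycles (Proposition~\ref{prop:non:collapse:sub:curve}), and the exclusion of genus-one decompositions (Proposition~\ref{prop:bdry:form:dec:not:eigen:form} and Corollary~\ref{cor:dec:two:tori:no:eigenform})—are simultaneously violated, so that $\pp$ cannot lie in $\ol{\cX}_D$.
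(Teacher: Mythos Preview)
Your plan is correct in spirit and invokes the right toolkit from Appendix~\ref{sec:degenerate:eigen:form}, but it is organized differently from the paper and misses a structural shortcut that makes the enumeration tractable. The paper does \emph{not} organize the case analysis by the lift of the self-node. Instead, it reuses wholesale the decomposition and tree configurations $(a1)$, $(a2)$, $(b)$ from Proposition~\ref{prop:4:sep:nodes}: write $E=E'\cup E''$ with $E''$ the nodal genus-one component carrying the self-node and $E'=E'_1\cup\cdots\cup E'_4$ the remaining $\Pb^1$'s. Stability forces $E'$ to be connected and to carry all five marked points, so $E''$ carries none; this immediately eliminates most of the combinatorial branching you anticipate. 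The paper then argues, exactly as in Proposition~\ref{prop:4:sep:nodes}, that the connected case for $C''$ is excluded (so $C''$ is two disjoint nodal genus-one curves with $\xi''\not\equiv 0$), and finally treats the three tree shapes of $E'$.

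The second difference is what actually kills the surviving configurations. Your proposal says vaguely that on some $\Pb^1$ component ``no anti-invariant solution exists''; the paper makes this concrete by \emph{explicit residue computations}. For instance, in case $(a2)$ with $q_5\in E'_4$, the component $C'_3$ must carry a differential with a zero of order $4$ and three double poles with vanishing residues; writing $C'_3\simeq\Pb^1$ with $\tau(x)=-x$, the candidate $\nu=x^4\,dx/((x-1)^2(x+1)^2)$ has $\res_{\pm 1}\nu\neq 0$, a contradiction. Similar computations (e.g.\ $\nu=(x-1)^2\,dx/x^2$ on a component with one double zero and two double poles) dispose of the other subcases. These residue checks are the genuine new ingredient beyond the appendix propositions, and your outline should flag them as the mechanism in the cases where neither Proposition~\ref{prop:bdry:form:dec:not:eigen:form} nor Proposition~\ref{prop:s:poles:at:two:nodes:not:in:XD} applies directly.

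Your Case~A (self-node fixed by $\tau$) is handled correctly: the fixed self-node forces $\xi_{|C''}\equiv 0$, and then $C'\cup C''$ is a pair of genus-one subcurves excluded by Proposition~\ref{prop:bdry:form:dec:not:eigen:form}. But Case~B as stated is too coarse: when the self-node lifts to an exchanged pair, $\xi$ does have simple poles at \emph{two} exchanged pairs (the self-nodes of the two components of $C''$ and possibly the nodes in $C'$), so Proposition~\ref{prop:s:poles:at:two:nodes:not:in:XD} alone is not always available, and one is driven to the residue computations above.
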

\begin{proof}
	In this case we will use the same notation and convention as in Proposition~\ref{prop:4:sep:nodes}. 
	By the same arguments as in the proof of Proposition~\ref{prop:4:sep:nodes}, we get that $C''$ consists of two copies of nodal genus one curve, and $\xi'':=\xi_{\left|C''\right.} \not\equiv 0$. As usual we suppose that $\pp \in \ol{\cX}_D$ in order to get a contradiction.
	\begin{itemize}
		\item[(a1)] Remark that in this case $E'_1$ contains two points in $\{q_1,\dots,q_5\}$, and each of $E_2, E_3, E_4$ contains one point in $\{q_1\dots,q_5\}$. If $q_5\in E'_1$ or $q_5\in E'_2$, we would get a contradiction to Proposition~\ref{prop:bdry:form:dec:not:eigen:form}. 
		If $q_5 \in E'_3$, then $C'_3$ and $C'_4$ are both isomorphic to $\Pb^1$ and intersect each other one node. The differential $\xi$ mush vanish identically  on $C'_3\cup C'_4$. By Theorem~\ref{th:twisted:diff}, there is a meromorphic Abelian differential $\nu$ on $C'_4$ that has a double zero at the node between $C'_4$ and $C'_3$ and double poles at the nodes between $C_4$ and $C''$. More over the residues of $\nu$ at the poles must be zeros. 
		We can   identify $C'_3$ with $\Pb^1$ such that the restriction of $\tau$ to $C'_4$ is given by $x \mapsto 1/x$, the node between $C'_4$ and $C'_3$ corresponds to $x=1$, while the nodes between $C'_4$ and $C''$ correspond to $x=0$ and $x=\infty$. It follows that up to a constant, we must have $\nu = \frac{(x-1)^2dx}{x^2}$. One readily checks that the residues of $\nu$ at the poles cannot be zero. Therefore, this case is excluded.
		
		If $q_5 \in E'_4$ then $C'_4$ consists of two copies of $\Pb^1$, and by Theorem~\ref{th:twisted:diff}, each component of $C'_4$ must carry a meromorphic Abelian differential with the same property as $\nu$. Therefore this case is excluded as well. 
		
		\item[(a2)] In this case $q_5$ is contained in one of the components $E'_1, E'_2, E'_4$. If $q_5\in E'_1$ or $q_5\in E'_3$ then $\xi$ vanish identically on $C'_1\cup\dots\cup C'_4$, and we have a contradiction by Proposition~\ref{prop:bdry:form:dec:not:eigen:form}. If $q_5\in E'_4$, then $C'_3$ is isomorphic to $\Pb^1$ and intersects each of $C'_2, C'_4$ at one node. By Theorem~\ref{th:twisted:diff}, $C'_3$ carries a meromorphic Abelian differential $\nu$ with the following properties
		\begin{itemize}
			\item[.] $\nu$ has a zero of order $4$ at the node between $C'_3$ and $C'_4$,
			\item[.] $\nu$ has poles of order two at the nodes between $C'_3$ and $C'_2\cup C''$.
			\item[.] the residues of $\nu$ at all the poles are zero. 
		\end{itemize} 
		We can identify $C'_3$ with $\Pb^1$ such that the restriction of $\tau$ to $C'_3$ is given by $x \mapsto -x$, $0$ corresponds to the node between $C'_3$ and $C'_4$, $\infty$ the node between $C'_3$ and $C'_2$, and $\pm 1$ the nodes between $C'_3$ and $C''$. Up to a constant, we have 
		$$
		\nu= \frac{x^4dx}{(x-1)^2(x+1)^2}.	
		$$
		One readily checks that $\res_{\pm 1}(\nu) \neq 0$, which means that this case does not occur.
		
		\item[(b)] Recall that in this case  $E'_4$ is adjacent to all of $E'_1, E'_2, E'_3$, and contains no point in $\{q_1,\dots,q_5\}$. Without loss of generality, we can assume that $E'_3$ is adjacent to $E''$. 
		Let $C_1$ denote the subcurve $C'_1\cup C'_2\cup C'_4$. 
		If $q_5$ is contained in either $C'_1$ or $C'_2$ then $C_1$ is a nodal curve of genus one, on which $\xi$ vanishes identically. Thus we have a contradiction to Proposition~\ref{prop:bdry:form:dec:not:eigen:form}. If $q_5\in E'_3$ then $C'_3$ consists of two copies of $\Pb^1$, each of which carries a meromorphic Abelian differential which has one double zeros and two double poles such that the residues at the poles are zero. Since such a differential does not exist, this case cannot occur. The proposition is then proved.   
	\end{itemize} 
\end{proof} 
\subsubsection{Three separating nodes and two non-separating nodes}\label{subsec:3:sep:2:n:sep:nodes}
In this case $E$ has $5$ irreducible components, all of which are isomorphic to $\Pb^1$. Three of the components are not incident to non-separating nodes, we denote those component by $E'_1, E'_2, E'_3$ and their union by $E'$. The remaining two components intersect each other at two non-separating nodes, we denote those components by $C''_1, C''_2$, and their union by $E''$. The preimages of $E'_i, E''_j, E', E''$ in $C$ are denoted by $C'_i, C''_j, C', C''$ respectively.
Let $n'_i:=|E'_i\cap\{q_1,\dots,q_4\}|, \; i=1,2,3$, and $n'=n'_1+n'_2+n'_3$.
\begin{Proposition}\label{prop:3:sep:1:n:sep:nodes}
	If $E$ has $3$ separating nodes and $2$ non-separating ones then $\pp\not\in\ol{\cX}_D$.
\end{Proposition}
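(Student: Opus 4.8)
The plan is to argue by contradiction, assuming $\pp\in\ol{\cX}_D$, and to rule out every admissible combinatorial type of $C$ by invoking the characterizing results of Appendix~\ref{sec:degenerate:eigen:form}, exactly as in the preceding propositions of this section. I would organize the configurations by two pieces of data: first, the shape of the tree formed by the three separating nodes, i.e. how the components $E'_1,E'_2,E'_3$ hang off the cycle $E''_1\cup E''_2$ (each $E'_i$ is incident only to separating nodes, hence is either a leaf attached to the core, or an interior or terminal vertex of a chain); and second, the partition $(n'_1,n'_2,n'_3)$ together with the location of $q_5$, equivalently which component of $C$ carries the pair $\{p_5,p'_5\}$. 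Stability of $(E,q_1,\dots,q_5)$ and the constraint $n'_1+n'_2+n'_3\leq 4$ bound the list to finitely many cases.

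The first step is to pin down the behaviour of $\xi$ on the elliptic core $C''$. Using the monodromy description of \textsection\ref{subsec:generalities:bdry}, the two non-separating nodes between $E''_1$ and $E''_2$ lift to a configuration through which one can build a cycle $\gamma$ supported near $C''$ representing a nontrivial class in $H_1(C^*,\Z)^-$ whose image in $H_1(X,\Z)^-$ is nonzero; by Proposition~\ref{prop:non:collapse:sub:curve} this forces $\int_\gamma\xi\neq 0$, so $\xi$ cannot vanish identically on all of $C''$. Whenever a configuration forces $\xi\equiv 0$ on a $\tau$-invariant subcurve containing such a cycle, this is already the sought contradiction; this is the mechanism that disposes of the cases in which a $\Pb^1$ component carrying $\{p_5,p'_5\}$ drags the core below level zero.

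For the remaining configurations I would read off the compatible twisted differential from Theorem~\ref{th:twisted:diff}: a $\Pb^1$ component carrying $\{p_5,p'_5\}$, or carrying only the minimal number of special points, must lie below level zero so that $\xi$ vanishes there, and the Global Residue Condition propagates these vanishings to adjacent components; Proposition~\ref{prop:node:fixed:by:invol} locates the vanishing at every $\tau$-fixed node. Once the level-zero subcurve and the polar locus of $\xi$ are determined, exactly one of the standard obstructions applies in each case: if two connected $\tau$-invariant subcurves meet at a permuted pair, Proposition~\ref{prop:s:poles:at:nodes:exchanged} forces simple poles there, and if those are the only simple poles of $\xi$, Proposition~\ref{prop:s:poles:at:two:nodes:not:in:XD} gives the contradiction; if $C$ splits as a union of two $\tau$-invariant subcurves of genus $\geq 1$ with $\xi$ trivial on one, Proposition~\ref{prop:bdry:form:dec:not:eigen:form} applies; if two genus-one subcurves meet at two $\tau$-fixed nodes, Corollary~\ref{cor:dec:two:tori:no:eigenform}; a genus-two limit with a double zero is excluded by Proposition~\ref{prop:ab:diff:dble:zero:Prym:inv}; and when the prescribed orders of zeros and poles on a $\Pb^1$ component equipped with the involution $z\mapsto -z$ or $z\mapsto 1/z$ force a nonzero residue, the twisted differential fails to exist outright.

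The main obstacle I expect is not any individual case but the bookkeeping: correctly enumerating the tree shapes together with the admissible distributions of marked points, and in particular using \textsection\ref{subsec:generalities:bdry} to determine, in each configuration, whether $C''$ is connected and how many nodes of $C$ lie above each non-separating node of $E$. This is precisely what controls whether $\xi$ may, or must, have simple poles along the core, and it interacts delicately with the placement of $q_5$. Given the pattern already established in the preceding propositions of this subsection and in Proposition~\ref{prop:4:n:sep:nodes}, together with the four- and five-node analyses of Proposition~\ref{prop:4:sep:1:n:sep:nodes}, I expect each individual case to collapse to one of the obstructions listed above, yielding $\pp\notin\ol{\cX}_D$.
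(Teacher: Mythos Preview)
Your plan is correct and essentially matches the paper's approach: a case-by-case analysis organized by the shape of $E'$ (connected chain attached at an end vs.\ at the middle vs.\ $E'$ disconnected) and the value of $n'$, with each case killed by one of the standard obstructions from Appendix~\ref{sec:degenerate:eigen:form}. The only organizational difference is that the paper does not isolate your ``first step'' (forcing $\int_\gamma\xi\neq 0$ on $C''$ via Proposition~\ref{prop:non:collapse:sub:curve}); instead it goes straight to locating, in each configuration, the $\tau$-invariant genus~$\geq 1$ subcurve on which $\xi$ is forced to vanish, invoking Proposition~\ref{prop:bdry:form:dec:not:eigen:form} directly, or else finding the permuted pair of nodes where Proposition~\ref{prop:s:poles:at:nodes:exchanged} fails. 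In practice almost every case here reduces to Proposition~\ref{prop:bdry:form:dec:not:eigen:form}; neither Corollary~\ref{cor:dec:two:tori:no:eigenform} nor Proposition~\ref{prop:ab:diff:dble:zero:Prym:inv} nor Proposition~\ref{prop:s:poles:at:two:nodes:not:in:XD} is needed for this particular proposition, so your list of tools is slightly longer than what the paper actually uses.
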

\begin{proof}
	We have two cases
	\begin{itemize}
		\item[(a)] $E'$ is connected. We label the components of $E'$ such that $E'_2$ is adjacent to both $E'_1$ and $E'_3$.	
		Without loss of generality, we can assume that $E'$ intersects $E''_1$ and disjoint from $E''_2$. Since $E''_2$ must contain one point in $\{q_1,\dots,q_5\}$,  we have $3\leq n' \leq 4$. 
		We have two subcases:
		\begin{itemize}
			\item[(a1)] $E'_3$ intersects $E''_1$. If $n'=3$, then $C'$ is a nodal curve of genus one, $C''$ is a genus two nodal curve having two irreducible components intersecting at three nodes. One readily checks that and $\xi$ must vanish identically on  $C''$. We thus have a contradiction by Proposition~\ref{prop:bdry:form:dec:not:eigen:form}. 
			
			If $n'=4$, then $q_5\in E''_2$. Let $C_1:=C'_1\cup C'_2$ and $C_2:=C'_3\cup C''$. Observe that $C_1$ is a genus one curve with two nodes, $C_2$ is a genus two curve, and $C_1$ intersects $C_2$ at one node. One then readily checks that since $\{p_5,p'_5\}\subset C''_2$, $\xi$ must vanish identically on $C_2$. We thus get a contradiction by Proposition~\ref{prop:bdry:form:dec:not:eigen:form}.
			
			\item[(a2)] $E'_2$ intersects $E''_1$. If $n'=3$, then $C'$ is a genus one curve, $C''$ is a genus two curve, and $C'$ intersects $C''$ at one node. One readily checks that $\xi_{\left|C'\right.}\equiv 0$. Thus we get a contradiction to  Proposition~\ref{prop:bdry:form:dec:not:eigen:form}.
			
			If $n'=4$, then $C'$ is a genus one curve, while $C''$ can be either a genus one curve or a disjoint union of two nodal curves of genus one. In the former case, $\xi$ must have simple poles at the nodes between $C'_2$ and $C''_1$ by Proposition~\ref{prop:s:poles:at:nodes:exchanged}. But there does not exist a compatible twisted differential on $C$ satisfying this property. Therefore, this case is excluded.   
			If there are four nodes between $C''_1$ and $C'_2$, then $C''$ has two connected components, each of which is a genus one nodal curve on which $\xi$ vanish identically. We thus have a contradiction by Proposition~\ref{prop:bdry:form:dec:not:eigen:form}.  
		\end{itemize}
		
		\item[(b)] $E'$ is disconnected. Note that $E'$ can not have more than two connected components because of the stability condition. We can always suppose that the two connected components of $E'$ are $E'_1\cup E'_2$ and $E'_3$. We can also suppose that $E'_2$ intersects $E''_1$ and $E'_3$ intersects $E''_2$. 
		
		We have $2 \leq n'_1+n'_2\leq 3$. If $n'_1+n'_2=3$ then  $C'_1\cup C'_2$ is a nodal genus one curve, $C''\cup C'_3$ is a genus two curve intersecting $C'_1\cup C'_2$ at one node. Since $\{p_5,p'_5\}\subset C'_3$, we must have $\xi_{\left|C''\cup C'_3\right.} \equiv 0$. But this is a contradiction to Proposition~\ref{prop:bdry:form:dec:not:eigen:form}. 
		
		If $n'_3=2$, then there are two nodes between $C'_2$ and $C''_1$. Since $\{p_5,p'_5\} \subset C'_1\cup C'_2$ we must have $\xi_{\left|C'_1\cup C'_2\right.} \equiv 0$. This means that $\xi$ does not have simple poles at the nodes between $C'_2$ and $C''_1$. We thus get a contradiction to Proposition~\ref{prop:s:poles:at:nodes:exchanged} and the proposition follows.	
	\end{itemize}	
\end{proof}

\subsubsection{Two separating nodes and three non-separating nodes}\label{subsec:2:sep:3:n:sep:nodes}
Two  irreducible components of $E$ contain only separating nodes, they will be denoted by $E'_1, E'_2$. The remaining components will be denoted by $E''_1,E''_2, E''_3$. Let $E':=E'_1\cup E'_2, E'':=E''_1\cup E''_2\cup E''_3$. The preimages of $E', E'', E'_i,  E''_j, i\in\{1,2\}, j\in\{1,2,3\}$ in $C$ are denoted by $C', C'', C'_i, C''_j$ respectively.

\begin{Proposition}\label{prop:2:sep:3:n:sep:nodes}
	If $E$ has two separating nodes and three non-separating ones, then $\pp\not\in\ol{\cX}_D$.
\end{Proposition}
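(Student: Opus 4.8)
The plan is to follow the same template as the preceding propositions of this appendix, carrying out a case analysis on the distribution of the five marked points together with a $\Z/2$-monodromy computation that determines how the non-separating nodes lift to $C$. First I would fix the topology of $E=C/\langle\tau\rangle$. Since the dual graph of the genus one curve $E$ has a single cycle and that cycle is spanned by the three non-separating nodes, the three components $E''_1,E''_2,E''_3$ carrying those nodes form a triangle, all isomorphic to $\Pb^1$; the two separating nodes are bridges attaching the remaining two $\Pb^1$-components $E'_1,E'_2$. Writing $C''_j,C'_i$ for the preimages in $C$, a separating (bridge) node lifts to one node fixed by $\tau$ or to two nodes exchanged by $\tau$ according as the number of branch points it cuts off is odd or even (\textsection~\ref{subsec:generalities:bdry}), and the same parity rule governs the triangle nodes through the relations $x_{ij}+x_{jk}\equiv\epsilon_j\pmod 2$, where $x_{ij}=\bar\varrho(c_{ij})$ is the monodromy of the vanishing cycle at the node $E''_i\cap E''_j$ and $\epsilon_j$ is the parity of the number of branch points lying in the disc region of $E''_j$ (including those transmitted through a bridge). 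I would then use the stability condition — every $\Pb^1$-component carries at least three special points — to pin down the two admissible attaching patterns: (a) the two bridges join the leaves to two distinct triangle vertices, and (b) the two bridges form a chain $E''_1-E'_1-E'_2$. In each pattern stability forces the distribution of the five marked points almost completely — two on each leaf and one on $E''_3$ in case (a); two on $E'_2$ and one on each of $E'_1,E''_2,E''_3$ in case (b), up to the symmetry $E'_1\leftrightarrow E'_2$ (resp.\ the triangle symmetry) — leaving only the location of $q_5$ as a genuine parameter.

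Next, for each of the finitely many resulting cases I would solve the monodromy relations above (they always have exactly two solutions, differing by the global monodromy of the cycle), read off which nodes of $C$ are fixed and which are exchanged by $\tau$, and then invoke Theorem~\ref{th:twisted:diff} to produce a compatible twisted differential and in particular to identify the subcurve on which $\xi$ vanishes identically. The contradictions will come from the lemmas already established: a component bounded by exchanged (non-separating) nodes on which $\xi$ vanishes identically contradicts Proposition~\ref{prop:non:collapse:sub:curve}; a pair of exchanged nodes at which $\xi$ is forced to be holomorphic contradicts Proposition~\ref{prop:s:poles:at:nodes:exchanged}, while a configuration in which the only exchanged pair carrying simple poles is a single one contradicts Proposition~\ref{prop:s:poles:at:two:nodes:not:in:XD}; a genus one subcurve carrying $\{p_5,p'_5\}$ on which $\xi\equiv 0$ whose complement also has genus one contradicts Proposition~\ref{prop:bdry:form:dec:not:eigen:form} or Corollary~\ref{cor:dec:two:tori:no:eigenform}; and whenever all three triangle nodes lift to nodes fixed by $\tau$, Proposition~\ref{prop:node:fixed:by:invol} forces $\xi$ to vanish on one side of each, which again runs into Proposition~\ref{prop:non:collapse:sub:curve} via a nonzero $\tau$-anti-invariant cycle supported on the triangle. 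For instance, in case (a) with $q_5\in E''_3$ one finds $\epsilon_1=\epsilon_2=\epsilon_3=0$, so either all three triangle nodes are exchanged — and then $C''_3$, which carries $\{p_5,p'_5\}$ and hence has $\xi\equiv 0$, is bounded by non-separating nodes, contradicting Proposition~\ref{prop:non:collapse:sub:curve} — or all three are fixed, and one concludes as above.

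The main obstacle, and the bulk of the work, will be the monodromy bookkeeping: correctly determining, in every sub-case and for both solutions of the relations $x_{ij}+x_{jk}\equiv\epsilon_j$, which preimage nodes are fixed and which are exchanged, and then matching this with a compatible twisted differential so that exactly one of the cited lemmas applies. I expect the symmetries (exchange of the two leaves, and the cyclic and reflection symmetry of the triangle) to cut the number of genuinely distinct cases down to a handful, and I expect every case to terminate in one of the four mechanisms above, exactly as in Proposition~\ref{prop:st:3:n:sep:nodes} and Proposition~\ref{prop:1:sep:3:n:sep:nodes}, where the same triangle of non-separating nodes was analyzed with fewer peripheral components.
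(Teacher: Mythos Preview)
Your plan is essentially the paper's own: enumerate the two attachment patterns of the bridges, split further according to where $q_5$ sits, determine the $\Z/2$-monodromy of the three non-separating vanishing cycles, and then eliminate each case with one of Propositions~\ref{prop:non:collapse:sub:curve}, \ref{prop:s:poles:at:nodes:exchanged}, \ref{prop:bdry:form:dec:not:eigen:form}, \ref{prop:s:poles:at:two:nodes:not:in:XD} or Corollary~\ref{cor:dec:two:tori:no:eigenform}. Your monodromy bookkeeping $x_{ij}+x_{jk}\equiv\epsilon_j$ is a tidy way to organize exactly what the paper does case by case, and your worked example (pattern (a), $q_5\in E''_3$, all $\epsilon_j=0$) matches the paper's case (b2).

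One mechanism you list will not fire as written. In the ``all three triangle nodes fixed by $\tau$'' subcase you propose to invoke Proposition~\ref{prop:non:collapse:sub:curve} via a $\tau$-anti-invariant cycle supported on the triangle. But when every triangle node is $\tau$-fixed, the triangle cycle itself is $\tau$-invariant, and the only anti-invariant classes in $H_1(C''_j{}^*,\Z)$ are $c-c'$ with $c,c'$ the loops around the two bridge nodes on $C''_j$; these map to $0$ in $H_1(X,\Z)$ since the smoothed $C'_i$ is a cylinder with those two circles as boundaries. The paper closes this subcase differently: once Proposition~\ref{prop:node:fixed:by:invol} forces $\xi_{|C''_1}\equiv 0$ (or $\xi_{|C''_2}\equiv 0$), the two exchanged bridge nodes between $C''_1$ and $C'_1$ carry no simple pole of $\xi$, contradicting Proposition~\ref{prop:s:poles:at:nodes:exchanged}. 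With that substitution your outline goes through.
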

\begin{proof}
	The subcurve $E'$ can be connected or not.
	\begin{itemize}
		\item[(a)] $E'$ is connected.  Without loss of generality, we can assume that $E'_2$  intersects $E''_1$ at a separating node. By the stability condition, $E''_1$ does not contain any point in $\{q_1,\dots,q_5\}$, while each of $E''_2,E''_3$ contains exactly one point in $\{q_1,\dots,q_5\}$.
		We have two subcases
		\begin{itemize}
			\item[(a1)] $q_5\in  E'$. In this case, $C'_2$ intersects $C'_1$ at two nodes. Since both $C'$ and $C''$ are connected, these nodes are non-separating in $C$. 
			Since $\{p_5,p'_5\} \subset C'$,  we have $\xi_{\left|C'\right.}\equiv 0$, which is a contradiction to Proposition~\ref{prop:s:poles:at:nodes:exchanged}. Thus this case cannot happen.

			\item[(a2)] $q_5\in E''_2\cup E''_3$. Without loss of generality we can assume that $q_2\in E''_2$. In this case $C'$ is a nodal curve of genus $1$, $C''$ is a nodal curve of genus $2$, and $C'$ intersects $C''$ at one node. 
			Note that  $C''_2$ is either isomorphic to $\Pb^1$, or a disjoint union of two copies of $\Pb^1$. Moreover we must have $\xi_{\left|C''_2\right.}\equiv 0$.
			Suppose that $C''_2$ is isomorphic to $\Pb^1$, then $C''_2$ intersects each of $C''_1, C''_3$ at one node, while $C''_1$ intersects $C''_3$ at two nodes. If either $\xi_{\left|C''_1\right.}\equiv 0$ or $\xi_{\left|C''_3\right.}\equiv 0$, then $\xi_{\left|C''\right.}\equiv 0$ and we have a contradiction to Proposition~\ref{prop:bdry:form:dec:not:eigen:form}.
			If $\xi_{\left|C''\right.}\not\equiv 0$, then we must have that $\xi_{\left|C''_1\cup C''_3\right.}$ is nowhere zero and has simple poles at the nodes between $C''_1$ and $C''_3$. It follows that $\xi_{\left|C'_2\right.}\equiv 0$, and therefore $\xi_{\left|C'\right.}\equiv 0$. But this contradicts Proposition~\ref{prop:bdry:form:dec:not:eigen:form}, hence this case cannot occur.

			In the  case $C''_2$ is a disjoint union of two copies of $\Pb^1$,  $C''_1$ intersects $C''_3$ at one node. Therefore, either $\xi_{\left|C''_1\right.}\equiv 0$ or $\xi_{\left|C''_3\right.}\equiv 0$. But in either case, we would get  $\xi_{\left|C''\right.}\equiv 0$, which is a contradiction to Proposition~\ref{prop:bdry:form:dec:not:eigen:form}. Thus we can conclude that this case cannot occur.
		\end{itemize}
		
		\item[(b)] $E'_1$ and $E'_2$ are disjoint. We can suppose that $E'_1$ (resp. $E'_2$) intersects $E''_1$ (resp. $E''_2$) at a separating node. Note that each of $E''_1, E''_2$ contains no point in $\{p_1,\dots,p_5\}$, while  $E''_3$ contains exactly  on point in $\{q_1,\dots,q_5\}|=1$.     We have two subcases
		\begin{itemize}
			\item[(b1)] $q_5\in E'$. Without loss of generality, we can assume that $q_5\in E'_1$. Then $C'_1$ intersects $C''_1$ at one node, and $C'_2$ intersects $C''_2$ at two nodes. Note that all of the irreducible components of $C$ are isomorphic to $\Pb^1$. Note that $C''_1$ intersects $C''_2\cup C''_3$ at three nodes.
			
			Let $\xi''_1:=\xi_{\left|C''_2\right.}$.
			Since $\{p_5,p'_5\}\subset C'_1$, if $\xi''_1 \not\equiv 0$ then by Theorem~\ref{th:twisted:diff}, if must have a zero of order four at the node between $C''_1$ and $C'_1$.  
			Since $\xi''_1$ has at worst simple poles at the nodes between $C''_1$ and $C''_2\cup C''_3$, this is impossible. Therefore, we must have $\xi''_1\equiv 0$. But this implies that $\xi$ does not have simple poles at two non-separating nodes permuted by $\tau$, which is a contradiction to Proposition~\ref{prop:s:poles:at:nodes:exchanged}. Thus this case cannot occur.
			
			\item[(b2)] $q_5\in E''_3$. Under this assumption, $C''$ is either a genus nodal one curve having three irreducible components, or a disjoint union of two genus one nodal curves each of which has three irreducible components,  while $C'_1, C'_2$ are both isomorphic to $\Pb^1$. If $C''$ is connected, all the nodes between components of $C''$ are fixed by $\tau$. This implies that either $\xi_{\left|C''_1\right.} \equiv 0$ or  $\xi_{\left|C''_2\right.} \equiv 0$. In either case, since there are two nodes between $C''_1$ and $C'_1$ and two nodes between $C''_2$ and $C'_2$, this would implies a contradiction to Proposition~\ref{prop:s:poles:at:nodes:exchanged}. Thus this case is excluded. 
			
			If $C''$ has two connected components, then so does $C''_3$. It follows from Theorem~\ref{th:twisted:diff} that $\xi$ must vanish identically on $C''_3$. But since the nodes between $C''_3$ and $C''_1\cup C''_2$ are non-separating, we get a contradiction to Proposition~\ref{prop:non:collapse:sub:curve}. This completes the proof of the proposition.
		\end{itemize}
	\end{itemize}
\end{proof}

\subsubsection{One separating node and four non-separating nodes}\label{subsec:1:sep:4:n:sep:nodes}
We now consider the case $E$ has one separating node and four non-separating ones. In this case, one of the irreducible components of $E$, denoted by $E'$, has only one node. The other components have two or three nodes, and are denoted by $E''_1,\dots, E''_4$ in the cyclic ordering. We will always assume that $E'$ intersects $E''_1$.
The component $E'$ must contain two points in $\{q_1,\dots,q_5\}$.
We have $E''_1\cap\{q_1,\dots,q_5\}=\vide$, and for $i=2,3,4$, $E''_i$ contains exactly one point in $\{q_1,\dots,q_5\}$.
Let $C', C''_i,\; i=1,\dots,4$, denote the  preimages of $E', E''_i$ respectively.
Let $\xi':=\xi_{\left|C'\right.}$, and $\xi''_i:=\xi_{\left|C''_i\right.}$, for $i=1,\dots,4$.

\begin{Proposition}\label{prop:1:sep:4:n:sep:nodes}
	If $E$ has one separating node and four non-separating ones, then $\pp\not\in\ol{\cX}_D$.
\end{Proposition}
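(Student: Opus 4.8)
The plan is to argue by contradiction exactly as in the preceding cases: assume $\pp\in\ol{\cX}_D$ and use the rigidity of limit Prym eigenforms to eliminate every admissible topological type of $C$. Recall the shape of $E$ in this situation: the components $E''_1\cup E''_2\cup E''_3\cup E''_4$ form a cycle of four rational curves (accounting for the four non-separating nodes), which carries all of the arithmetic genus, while $E'$ is a rational tail glued to $E''_1$ along the unique separating node. The marked points are distributed so that $E'$ carries two of $\{q_1,\dots,q_5\}$, $E''_1$ carries none, and each of $E''_2,E''_3,E''_4$ carries exactly one. The first step is therefore to pin down the topology of the admissible double cover $C$, i.e.\ to decide, for each node of $E$, whether its preimage in $C$ is a single node fixed by $\tau$ or a pair of nodes exchanged by $\tau$.

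To carry this out I would use the monodromy description of \textsection\ref{subsec:generalities:bdry}. Realising $E$ as a degeneration of a fixed torus with four branch points, the preimage of a node $c$ has one component when the associated character value $\bar{\varrho}(c)$ equals $1$ and two components when $\bar{\varrho}(c)=0$. Since the four non-separating vanishing cycles are mutually homologous on the unpunctured torus, the differences $\bar{\varrho}(c_i)-\bar{\varrho}(c_{i-1})$ are governed by the parity of the number of branch points in each region of the necklace, the branch points of the tail $E'$ being counted in the region of $E''_1$; the separating node is handled by the parity of the number of branch points on $E'$. Running this telescoping computation in each subcase, according to whether $q_5\in E'$ or $q_5\in E''_k$ for some $k\in\{2,3,4\}$, leaves only finitely many topological types for $C$: it determines which non-separating nodes are $\tau$-fixed and which split into exchanged pairs, and whether the preimage of the necklace is connected or splits into two cycles. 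Throughout one verifies that the resulting $C$ has arithmetic genus $3$ and that $\tau$ has exactly four fixed points, discarding inconsistent combinations.

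Once the topology is fixed, each configuration should be dispatched by one of the characterisation results already established, precisely as in Proposition~\ref{prop:4:n:sep:nodes} and Proposition~\ref{prop:1:sep:3:n:sep:nodes}. When $q_5\in E'$ the pair $\{p_5,p'_5\}$ lies on $C'$; analysing the compatible twisted differential (Theorem~\ref{th:twisted:diff}) shows that $\xi$ vanishes identically on a rational subcurve containing $C'$, and the level structure propagates this vanishing through the necklace. I then expect to locate a pair of $\tau$-exchanged non-separating nodes across which $\xi$ is forced to be holomorphic, contradicting Proposition~\ref{prop:s:poles:at:nodes:exchanged}, or else a $\tau$-anti-invariant vanishing cycle contradicting Proposition~\ref{prop:non:collapse:sub:curve}; the residual possibility, that $\xi$ has simple poles at a single exchanged pair and is holomorphic elsewhere, is excluded by Proposition~\ref{prop:s:poles:at:two:nodes:not:in:XD}. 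When $q_5\in E''_k$ the same bookkeeping yields either a decomposition of $C$ into two genus-one subcurves meeting at two $\tau$-fixed nodes, ruled out by Corollary~\ref{cor:dec:two:tori:no:eigenform}, or a $\tau$-invariant subcurve of genus at least one on which $\xi$ vanishes, ruled out by Proposition~\ref{prop:bdry:form:dec:not:eigen:form}, or once more a contradiction with the simple-pole requirements.

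The main obstacle is the first, combinatorial, step: the correct and exhaustive enumeration of the double covers of this necklace-with-tail. The presence of four non-separating nodes makes the monodromy tracking delicate, since the binary ambiguity in the overall character $\bar{\varrho}$ produces two potential global types (connected versus disconnected preimage of the necklace) that must both be analysed, and one must read off correctly into which necklace region each branch point falls from the cyclic arrangement. After that, every individual configuration reduces to an application of the lemmas of Appendix~\textsection\ref{sec:degenerate:eigen:form} that has already been carried out verbatim in the earlier cases, so I anticipate no new geometric input beyond the careful case enumeration.
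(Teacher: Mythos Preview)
Your proposal is correct and takes essentially the same approach as the paper: assume $\pp\in\ol{\cX}_D$, split into cases according to the position of $q_5$ (the paper distinguishes $q_5\in E'$, $q_5\in E''_2\cup E''_4$, and $q_5\in E''_3$), determine the topology of the double cover in each subcase, and then eliminate every configuration using Theorem~\ref{th:twisted:diff} together with Proposition~\ref{prop:s:poles:at:nodes:exchanged} and Proposition~\ref{prop:non:collapse:sub:curve}. The paper's actual elimination is slightly leaner than you anticipate (it does not need Corollary~\ref{cor:dec:two:tori:no:eigenform}, Proposition~\ref{prop:bdry:form:dec:not:eigen:form}, or Proposition~\ref{prop:s:poles:at:two:nodes:not:in:XD} here), but the architecture is identical.
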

\begin{proof}
	We suppose that $\pp\in \ol{\cX}_D$.
	\begin{itemize}
		\item[(a)] $q_5\in E'$. In this case $C'$ and $C''_i, \; i=1,\dots,4$, are all isomorphic to $\Pb^1$. Moreover, for each $i=1,\dots,4$, $C''_i$ intersects $C''_{i-1}\cup C''_{i+1}$ at $3$ nodes, with the convention $C''_0=C''_4$ and $C''_5=C''_1$.
		Without loss of generality, we can suppose that $C''_1$ intersects $C''_2$ at two nodes, and intersects $C''_4$ at one node.
		
		If $\xi''_1\not\equiv 0$ then from Theorem~\ref{th:twisted:diff} it must have a zero of order four at the node between $C''_1$ and $C'$.
		But since $\xi''_1$ cannot have poles of order greater than $1$ at the nodes between $C''_1$ and $C''_2\cup C''_4$,
		we then have a contradiction, which means that $\xi''_1\equiv 0$.
		It follows that $\xi$ is holomorphic at the nodes between $C''_1$ and $C''_2\cup C''_4$. But since the nodes between $C''_1$ and $C''_2$ are non-separating, we get a contradiction to Proposition~\ref{prop:s:poles:at:nodes:exchanged}. This case is therefore excluded.

		\item[(b)] $q_5\in E''_2\cup E''_4$. It is enough to consider the case $q_5\in E''_2$. We have two subcases
		\begin{itemize}
			\item[(b1)] There is one node between $C''_1$ and $C''_2$. In this case, there is also one node between $C''_1$ and $C''_4$, and two nodes between $C''_3$ and $C''_4$. Note that $C'$ intersects $C''_1$ at two nodes. If $\xi''_1\equiv 0$ then $\xi$ does not have simple poles at the nodes between $C'$ and $C''_1$, and we have a contradiction to Proposition~\ref{prop:s:poles:at:nodes:exchanged}. Thus we must have $\xi''_1\not\equiv 0$. This implies that $\xi''_4\equiv 0$ (since $C''_1$ and $C''_4$ intersects at a node fixed by $\tau$), which is a contradiction to Proposition~\ref{prop:non:collapse:sub:curve}. We conclude that this case cannot occur.

			\item[(b2)] There are  two nodes between $C''_1$ and $C''_2$. In this case  both $C''_1$ and $C''_2$ consist of two copies of $\Pb^1$, while $C''_3$ and  $C''_4$ are both isomorphic to $\Pb^1$. Note that $C''_3$ intersects $C''_4$ at one node. Since $\{p_5,p'_5\}\subset C''_2$, we must have $\xi''_2\equiv 0$. But since the nodes between $C''_2$ and $C''_1\cup C''_3$ are non-separating, this is a contradiction to Proposition~\ref{prop:non:collapse:sub:curve}. Hence this case is also excluded.  
		\end{itemize}
		
		\item[(c)] $q_5\in C''_3$. We also have two subcases
		\begin{itemize}
			\item[(c1)] There is one node between $C''_1$ and $C''_2$. In this case, there is also one node between $C''_1$ and $C''_4$. The subcurve $C''_3$ consists of two copies of $\Pb^1$ each of which intersects both $C''_2$ and $C''_4$. We must have $\xi''_3\equiv 0$, which is a contradiction to Proposition~\ref{prop:non:collapse:sub:curve} (since the nodes between $C''_3$ and $C''_2\cup C''_4$ are non-separating). Thus this case cannot occur.

			\item[(c2)] There are two nodes between $C''_1$ and $C''_2$. In this case, $C''_1$ is a disjoint union of two copies of $\Pb^1$, while all of $C''_2, C''_3, C''_4$ are isomorphic to $\Pb^1$. Note that each component of $C''_1$ intersects both $C''_2$ and $C''_4$, and $C''_3$ intersects each of $C''_2, C''_4$ at one node. We have $\xi''_3\equiv 0$. 
			All of the nodes that are not contained in $C''_3$ are non-separating and not fixed by $\tau$. By Proposition~\ref{prop:s:poles:at:nodes:exchanged}, $\xi$ must have simple poles at those nodes. But since each component of $C''_1$ contains three nodes, we get a contradiction which shows that this case cannot occur either. The proposition is then proved. 
		\end{itemize}
	\end{itemize}
\end{proof}

\subsubsection{Five non-separating nodes}\label{subsec:5:n:sep}
Suppose that $E$ has five non-separating nodes. Then $E$ has five irreducible components, denoted by $E_i, \; i=1,\dots,5$, in the cyclic order.
Each component of $E$ contains exactly one point in $\{q_1,\dots,q_5\}$.
We can suppose that $q_5\in E_1$.

For all $i=1,\dots,5$, let $C_i$ be the preimage of $E_i$ in $C$. Let $\xi_i:=\xi_{\left|C_i\right.}$, and  $(C_i,\nu_i)_{1\geq i \geq 5}$ be the twisted  differential  on the $C$, which is given by Theorem~\ref{th:twisted:diff}.
\begin{Proposition}
	Assume that $E$ has five non-separating nodes. Then $\pp\not\in\ol{\cX}_D$.
\end{Proposition}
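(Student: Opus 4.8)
The plan is to argue by contradiction, assuming $\pp \in \ol{\cX}_D$, after first pinning down the topology of the admissible double cover $C \to E$. Since $E = C/\langle\tau\rangle$ is a stable genus one curve with five marked points carrying five \emph{non-separating} nodes, a short dual-graph and Euler-characteristic count forces $E$ to be a cycle of five copies of $\Pb^1$, each carrying exactly one of the marked points $q_1,\dots,q_5$ (each component has two nodes, so stability demands one marked point), with $q_5 \in E_1$ by assumption. To determine $C$ I would use the monodromy description of \textsection\ref{subsec:generalities:bdry}: the five nodes correspond to five parallel non-separating curves $c_1,\dots,c_5$ on the reference torus, and $\bar{\varrho}(c_{i+1}) - \bar{\varrho}(c_i)$ equals $1$ across a component carrying one of $q_1,\dots,q_4$ and $0$ across $E_1$. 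Solving this recursion over $\Z/2\Z$ yields exactly two possibilities, which I denote $\alpha = 0$ and $\alpha = 1$, distinguished by which three (resp.\ two) of the nodes $s_i$ lift to $\tau$-fixed nodes and which lift to $\tau$-exchanged pairs; a genus count ($p_a(C)=3$ in both) confirms that both are admissible and must be excluded. In case $\alpha=1$ all five $C_i$ are $\Pb^1$'s, with $s_1,s_3,s_5$ lifting to fixed nodes and $s_2,s_4$ to exchanged pairs; in case $\alpha=0$ the component $C_1$ over $E_1$ splits into two $\Pb^1$'s $C_1',C_1''$ swapped by $\tau$, with $s_2,s_4$ lifting to fixed nodes and $s_1,s_3,s_5$ to exchanged pairs.

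The first key step, common to both cases, is to show that $\xi$ vanishes identically on the component(s) carrying $\{p_5,p'_5\}$. The underlying observation is a degree count on $\Pb^1$: if a level-zero component $C_j\cong\Pb^1$ carries $m\ge 1$ of the points $p_5,p'_5$, then by Theorem~\ref{th:twisted:diff}(b) these are double zeros of $\xi_j = \xi_{\left|C_j\right.}$, while at every node the order of $\xi_j$ is at least $-1$ (it is $-1$ at a node joining two level-zero components and nonnegative at a node adjacent to a lower-level component). Since $C_1$ (resp.\ $C_1'$) has exactly two nodes, summing orders gives $-2 = 2m + \sum_{\text{nodes}} k \ge 2m - 2$, forcing $m \le 0$, a contradiction. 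Hence $C_1$ (and $C_1', C_1''$) lies at negative level, so $\xi$ vanishes on it.

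The second step analyzes the level-zero subcurve $Z$, the union of the remaining components on which $\xi \not\equiv 0$; note $Z$ is $\tau$-invariant and consists of $\Pb^1$'s. Two facts constrain $Z$. First, by Proposition~\ref{prop:node:fixed:by:invol}, $Z$ cannot contain both components meeting at a $\tau$-fixed node, else $\xi$ would have a simple pole at a fixed node. Second, a level-zero $\Pb^1$ carrying none of $p_5,p'_5$ must have at least two of its nodes joining it to other level-zero components: otherwise $\xi_{\left|C_j\right.}$ would be a holomorphic form on $\Pb^1$ (all node-orders nonnegative), hence zero, contradicting $C_j\subset Z$. Since $\tau$-fixed nodes are barred from being internal to $Z$, the only internal edges of $Z$ are the $\tau$-exchanged pairs; tracing the cycle then shows the only admissible nonempty $Z$ is a single pair of adjacent components glued along one exchanged pair of nodes ($Z=\{C_3,C_4\}$ when $\alpha=0$, and $Z\in\{\{C_2,C_3\},\{C_4,C_5\}\}$ when $\alpha=1$); every other configuration either contains an isolated level-zero $\Pb^1$ or both sides of a fixed node, and is excluded. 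In the surviving configuration $\xi$ has simple poles at exactly one $\tau$-exchanged pair of nodes and is holomorphic at every other node, so Proposition~\ref{prop:s:poles:at:two:nodes:not:in:XD} yields $\pp\notin\ol{\cX}_D$, the desired contradiction.

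I expect the main obstacle to be the first, combinatorial step: correctly reading off the two monodromy types of the admissible cover and the resulting fixed-versus-exchanged pattern of the five nodes, since an error there would change which nodes can support the simple poles detected by Proposition~\ref{prop:s:poles:at:two:nodes:not:in:XD}. Once the two patterns are fixed, the degree count and the short enumeration of possible level-zero subcurves are routine, and, in contrast to the lower-node cases, no appeal to Proposition~\ref{prop:non:collapse:sub:curve} is needed: the component carrying $p_5,p'_5$ is eliminated directly by the degree argument rather than by a vanishing-cycle computation.
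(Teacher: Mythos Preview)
Your proof is correct, and the combinatorial identification of the two monodromy types matches the paper's cases (i) and (ii) exactly. The route diverges only in the endgame: after establishing $\xi_{|C_1}\equiv 0$, the paper analyses the twisted differential $\nu_1$ on $C_1$ (its poles at the fixed nodes must have even order, hence orders $-2$ and $-4$) to propagate the vanishing to $C_2,C_3$ (or symmetrically to $C_4,C_5$), and then finishes with Corollary~\ref{cor:dec:two:tori:no:eigenform} by exhibiting two genus-one subcurves meeting at two $\tau$-fixed nodes. You instead enumerate the possible level-zero sets $Z$ by a pure degree count (your Facts~1 and~2), arrive at the same $Z$, and close with Proposition~\ref{prop:s:poles:at:two:nodes:not:in:XD}. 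Both are short; yours avoids invoking the compatible twisted differential beyond its existence, while the paper's use of Corollary~\ref{cor:dec:two:tori:no:eigenform} makes the two-tori obstruction more visibly structural.

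One small imprecision: in your justification of Fact~2 you write that if fewer than two nodes of $C_j$ meet level-zero components then ``all node-orders are nonnegative''. That is only the case $r=0$; when $r=1$ one node still contributes order $-1$. The correct uniform statement is that the degree of $\xi_{|C_j}$ on $\Pb^1$ would then be at least $-r\ge -1$, contradicting $\deg=-2$. This is a one-line fix and does not affect the argument.
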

\begin{proof}
	In this case, $C_i$ is isomorphic to $\Pb^1$ for $i=2,\dots,5$. 
	Suppose that $\pp\in \ol{\cX}_D$.
	We have two cases
	\begin{itemize}
		\item[(i)] $C_1$ is isomorphic to $\Pb^1$. In this case $C_1$ intersects each of $C_2, C_5$ at one node.
		Since $\{p_5,p'_5\}\subset C_1$, $\xi_1 \equiv 0$, and $\nu_1$ has two double zeros on $C_1$. Since $\nu_1$ has poles of even order at the nodes fixed by $\tau$,  $\nu_1$ must have a pole of order $2$ and a pole of order $4$ at the nodes between $C_1$ and $C_2\cup C_5$. Without loss of generality, suppose that the node between $C_1$ and $C_2$ is a pole of order $4$ of $\nu_1$.
		Then this node is a double zero of $\nu_2$. It follows that $\nu_2$ has double poles at the node between $C_2$ and $C_3$.
		This means that $\xi_2\equiv 0$, which implies that $\xi_3\equiv 0$.
		As a consequence $\xi_{\left|C_4\cup C_5\right.}$ is nowhere zero, and has simple poles at the nodes between $C_4$ and $C_5$.
		We now remark that  $C':=C_1\cup C_2 \cup C_3$ and $C'':=C_4\cup C_5$ are two curves of genus $1$ which intersect at two nodes fixed by $\tau$. Since $\xi$ vanishes identically on $C'$, we get a contradiction to Corollary~\ref{cor:dec:two:tori:no:eigenform}. Thus this case cannot occur.
		
		\item[(ii)] $C_1$ has is a disjoint union of two copies of $\Pb^1$. In this case both components of $C_1$ intersect $C_2$ and $C_5$. There is one node between $C_2$ and $C_3$, and one node between $C_4$ and $C_5$.
		Let $C':=C_1\cup C_2\cup C_5$ and $C'':=C_3\cup C_4$.
		Then  $C'$ and $C''$ are nodal curves of genus $1$, which intersect at two nodes fixed by $\tau$.
		Since $\xi_1\equiv 0$, we have $\xi_2\equiv 0$ and $\xi_5\equiv 0$, which means that $\xi_{\left|C'\right.}\equiv 0$.
		Therefore, we get a contradiction to Corollary~\ref{cor:dec:two:tori:no:eigenform}.
		This completes the proof of the proposition.
	\end{itemize}
\end{proof}
\section{Proof of Theorem~\ref{th:iden:triple:tor:e:times:deg:proj:1:e}}\label{subsec:prf:iden:triple:tor:D:odd}
We first prove the following
\begin{Proposition}\label{prop:iden:D:1:mod:8:sigma:1}
	For all $D\equiv 1 \; [8]$, $D$ not a square, we have
	\begin{equation}\label{eq:iden:D:1:mod:8:sigma:1}
		\sum_{\substack{0 < e < \sqrt{D} \\ e \; {\rm odd}}}(-1)^{\frac{e-1}{2}}\cdot e \cdot \sigma_1(\frac{D-e^2}{8})=0.
	\end{equation}
\end{Proposition}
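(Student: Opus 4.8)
The plan is to recognize the left-hand side of \eqref{eq:iden:D:1:mod:8:sigma:1} as a single Fourier coefficient of a product built from the Dedekind eta function, and then to collapse that product onto square-supported exponents using the logarithmic derivative of $\eta$. Write $q=e^{2\pi\imath\tau}$ and set $\chi(e)=(-1)^{(e-1)/2}$ for $e$ odd, $\chi(e)=0$ for $e$ even (the non-trivial Dirichlet character modulo $4$). First I would invoke Jacobi's classical product identity
$$\eta(\tau)^3=\sum_{\substack{e>0\\ e\ \mathrm{odd}}}\chi(e)\,e\,q^{e^2/8}.$$
Since $D\equiv 1\ [8]$, for odd $e$ we have $e^2\equiv 1\ [8]$, so $(D-e^2)/8$ is a positive integer precisely when $0<e<\sqrt{D}$; writing $\sigma_1(n)=\sum_{dm=n}d$ and $G(\tau)=\sum_{m\geq 1}\sigma_1(m)q^m$, the sum $S$ in \eqref{eq:iden:D:1:mod:8:sigma:1} is exactly the coefficient of $q^{D/8}$ in the product $\eta^3\cdot G$ (matching $m=(D-e^2)/8\geq 1$ with the range $0<e<\sqrt{D}$).

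The key step is to compute $\eta^3\cdot G$ in closed form. Let $E_2=1-24G$ be the usual quasimodular Eisenstein series; the standard fact $q\tfrac{d}{dq}\log\eta=\tfrac1{24}E_2$ gives $q\tfrac{d}{dq}\eta^3=3\eta^2\,q\tfrac{d}{dq}\eta=\tfrac18 E_2\,\eta^3$. On the other hand, differentiating the Jacobi expansion term by term yields
$$q\frac{d}{dq}\eta^3=\frac18\sum_{\substack{e>0\\ e\ \mathrm{odd}}}\chi(e)\,e^3\,q^{e^2/8}.$$
Equating the two expressions gives $E_2\eta^3=\sum_{e>0,\ \mathrm{odd}}\chi(e)\,e^3\,q^{e^2/8}$, and substituting $E_2=1-24G$ produces
$$24\,G\,\eta^3=\eta^3-E_2\eta^3=\sum_{\substack{e>0\\ e\ \mathrm{odd}}}\chi(e)\,e\,(1-e^2)\,q^{e^2/8}.$$
Thus $24\,G\,\eta^3$ is supported only on exponents of the form $e^2/8$.

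Finally I would extract the coefficient of $q^{D/8}$ on both sides. The left-hand side is $24S$, while the right-hand side contributes only if $D/8=e^2/8$ for some odd $e$, i.e. only if $D$ is a perfect square. As $D$ is assumed not to be a square, the coefficient of $q^{D/8}$ on the right vanishes, so $24S=0$ and hence $S=0$, which is \eqref{eq:iden:D:1:mod:8:sigma:1}. I do not expect a genuine obstacle here: the argument is formal once the two classical ingredients (Jacobi's identity and $q\tfrac{d}{dq}\log\eta=\tfrac1{24}E_2$) are in place. The only points requiring care are the correct bookkeeping of the quasimodular $E_2$—which is exactly where the derivative of $\eta$ enters—and checking that the divisibility condition $8\mid D-e^2$ for odd $e<\sqrt D$ matches precisely the summation range $m\geq 1$ defining $G$. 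The real content is the observation that $S$ is a Fourier coefficient of $\eta^3\,G$, after which the logarithmic derivative forces the product onto square-supported exponents and the non-square hypothesis finishes the proof.
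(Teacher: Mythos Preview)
Your proof is correct and takes a genuinely different route from the paper. The paper works with the theta series $\theta_\psi(z)=\sum_{n\ge0}\psi(n)\,n\,q^{n^2}$ (which is $\eta^3(8z)$ in your notation), builds the Serre-derivative-type combination $f(z)=G_2(8z)\theta_\psi(z)+\tfrac{1}{48\pi i}\theta'_\psi(z)$, checks that $f$ transforms as a genuine modular form of weight $7/2$ on $\Gamma_0(64)$, and then kills $f$ by a valence-formula argument ($f^4$ has weight $14$ and vanishes to high enough order at $\infty$). Unwinding $f\equiv0$ gives exactly your identity $24\,G\,\eta^3=\sum_e\chi(e)\,e(1-e^2)\,q^{e^2/8}$, but you obtain that identity in one line from $q\tfrac{d}{dq}\log\eta=\tfrac{1}{24}E_2$ and term-by-term differentiation of Jacobi's formula, entirely at the level of formal $q$-expansions. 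Your argument is more elementary---no transformation laws, no automorphy factors, no dimension count---while the paper's argument has the virtue of explaining \emph{why} such an identity should exist (the sum is a Fourier coefficient of a cusp-form-like object in a space that turns out to be zero). Both are valid; yours is shorter.
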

\begin{proof}
	Let $\psi: \Z \to \{0, \pm 1\}$ be the Dirichlet character of conductor $4$ defined by
	$$
	\psi(n) = \left\{
	\begin{array}{cl}
		1 & \hbox{ if $n\equiv 1 \mod 4$}\\
		-1 & \hbox{ if $n\equiv 3 \mod 4$}\\
		0 & \hbox{ otherwise }.
	\end{array}
	\right.
	$$
	Consider the function
	$$
	\theta_\psi(z):= \sum_{n=0}^{+\infty} \psi(n)n \exp(2\pi\imath n^2 z)
	$$
	for all $z \in \Hbb$.  Define for all $\gamma= \left(\begin{smallmatrix} a & b \\ c & d \end{smallmatrix}\right) \in \Gamma_0(4)$ and $z\in \Hbb$
	$$
	J(\gamma,z):=\left(\frac{c}{d}\right)\varepsilon_d^{-1}(cz+d)^{1/2},
	$$
	where $\left(\frac{\bullet}{\bullet}\right)$ is the Kronecker symbol and
	$$
	\varepsilon_d=\left\{
	\begin{array}{cl}
		1 & \hbox{ if $d \equiv 1 \mod 4$} \\
		\imath & \hbox{ if $d \equiv 3 \mod 4$ }.
	\end{array}
	\right.
	$$
	Then for all $\gamma= \left(\begin{smallmatrix} a & b \\ c & d \end{smallmatrix}\right) \in \Gamma_0(64)$, we have
	\begin{equation}\label{eq:theta:psi:iden}
		\theta_\psi(\gamma\cdot z) =\psi(d)\cdot\left(\frac{-1}{d}\right)\cdot J(\gamma,z)^3\cdot\theta_\psi(z).
	\end{equation}
	In particular, $\theta_\psi$ is a {\em modular form of weight} $3/2$ (see \cite[\textsection 4.9]{Miyake}).  As a consequence of \eqref{eq:theta:psi:iden}, we get
	\begin{equation}\label{eq:theta:psi:iden:derived}
		\theta'_\psi(\gamma\cdot z) = \psi(d)\left(\frac{-1}{d}\right)\left(\frac{c}{d}\right)\varepsilon_d\cdot \left((cz+d)^{\frac{7}{2}}\cdot\theta'_\psi(z) +  \frac{3c}{2}\cdot(cz+d)^{\frac{5}{2}}\cdot\theta_\psi(z)\right).
	\end{equation}
	Recall that $G_2$ is the function on $\Hbb$ defined by
	$$
	G_2(z)=\frac{-1}{24}+\sum_{n=1}^\infty \sigma_1(n)\exp(2\pi\imath nz).
	$$
	It is well known that $G_2$ satisfies
	$$
	G_2(\gamma\cdot z)=(cz+d)^2G_2(z)-\frac{c(cz+d)}{4\pi\imath}.
	$$
	for all $\gamma= \left(\begin{smallmatrix} a & b \\ c & d \end{smallmatrix}\right) \in \SL(2,\Z)$.
	It is straightforward to check that the function
	$$
	f(z):=G_2(8z)\theta_\psi(z)+\frac{1}{48\pi\imath}\cdot \theta'_\psi(z)
	$$
	satisfies
	$$
	f(\gamma\cdot z)=\psi(d)\left(\frac{-1}{d}\right)\left(\frac{c}{d}\right)\varepsilon_d(cz+d)^{\frac{7}{2}}\cdot f(z),
	$$
	for all $\gamma= \left(\begin{smallmatrix} a & b \\ c & d \end{smallmatrix}\right) \in \Gamma_0(64)$.
	This means that $f$ is an {\em integral modular form of weight $7/2$} with respect to $\Gamma_0(64)$.
	Let $f(z)=\sum_{n=0}^\infty c_n\exp(2\pi\imath nz)$ be the Fourier expansion of $f$. A direct computation shows that $c_n=0$ if $n\not\equiv 1 \, [8]$, and for $n \equiv 1 \; [8]$ we have
	$$
	c_n =\left\{
	\begin{array}{cl}
		\displaystyle{\sum_{0 <e < \sqrt{n}, e \, \odd} \psi(e) \cdot e \cdot \sigma_1(\frac{n-e^2}{8})} & \hbox{ if $n$ is not a square}\\
		\displaystyle{\sum_{0 < e < d, e \, \odd} \psi(e)\cdot e \cdot \sigma_1(\frac{d^2-e^2}{8}) +\psi(d)\frac{d^3-d}{24} }& \hbox{ if $n=d^2$}
	\end{array}
	\right.
	$$
	We claim that $f \equiv 0$. To see this, we consider $f^4$ which is an integral modular form of weight $14$ with respect to $\Gamma_0(64)$.
	The Riemann surface $X_0(64):=\Hbb/\Gamma_0(64)$ has genus $3$, $12$ cusps and no elliptic points. Thus an integral modular form of weight $14$ on $X_0(64)$ which vanishes to the order at least $14\times(3-1)+14\times 12/2=112$ at $\infty$ must be zero (cf. \cite[Cor. 2.3.4]{Miyake}). One can easily check that $f$ vanishes at least to the order $30$ at $\infty$. Hence $f^4$ vanishes at least to the order $120$ at $\infty$. Therefore, we must have $f^4 \equiv 0$, which implies that $f\equiv 0$. As a consequence, $c_n=0$ for all $n\in \N$ and \eqref{eq:iden:D:1:mod:8:sigma:1} follows.
\end{proof}
\subsection*{Proof of Theorem~\ref{th:iden:triple:tor:e:times:deg:proj:1:e}}
\begin{proof}
	For all $D >9, \, D \equiv 1 \, [8]$ not a square. Let
	$$
	S_D:=\sum_{\substack{0 < e < \sqrt{D} \\ e \; {\rm odd}}}(-1)^{\frac{e-1}{2}}\cdot e \cdot m_D(e).
	$$
	It follows from Proposition~\ref{prop:iden:D:1:mod:8:sigma:1} and Corollary~\ref{cor:triple:tor:deg:proj:e:D:prim} that $S_D=0$  if $D$ is $(1,2)$-primitive.
	Assume now that $D=f^2D_0$, where $D_0$ is $(1,2)$-primitive discriminant and $f \in \Z_{>1}$.
	We claim that
	$$
	\sum_{\substack{0 < e < \sqrt{D} \\ e \; {\rm odd}}}(-1)^{\frac{e-1}{2}}\cdot e \cdot \sigma_1(\frac{D-e^2}{8}) = \sum_{r \, | \, f } (-1)^{\frac{r-1}{2}}\cdot r\cdot S_{D/r^2}.
	$$
	To see this, let us fix an odd integer $e$ such that  $0 < e < \sqrt{D}$. Then $\sigma_1(\frac{D-e^2}{8})$ is the cardinality of the set $\tilde{\Pcal}_{D,e}(0)$ of triples $(a,b,d) \in \Z^3$ such that
	$$
	a>0, \, d>0, ad=\frac{D-e^2}{8}, \;  0 \leq b < a.
	$$
	Let $r=\gcd(a,b,d,e)$ and $(a',b',d',e'):=1/r\cdot(a,b,d,e)$.
	Since we have $D=e^2+8ad=r^2(e'{}^2+8a'd')$, it follows that $r \, | \, f$, and by definition $(a',b',c',e') \in \Pcal_{D/r^2}(0)$. On the other hand, if $(a',b',d',e') \in \Pcal_{D/r^2}(0)$ then $(ra',rb', rc',re') \in \tilde{\Pcal}_{D,re'}(0)$. Thus we have
	$$
	\sigma_1(\frac{D-e^2}{8})=\#\tilde{\Pcal}_{D,e}=\sum_{r \, | \, \gcd(e,f)}\#\Pcal_{D/r^2,e/r}(0)=\sum_{r \, | \, \gcd(e,f)}m_{D/r^2}(e/r).
	$$
	Therefore
	$$
	(-1)^{\frac{e-1}{2}}\cdot e \cdot \sigma_1(\frac{D-e^2}{8}) = (-1)^{\frac{e-1}{2}}\cdot e \cdot \sum_{r \, | \, \gcd(e,f)}m_{D/r^2}(e/r).
	$$
	Using $(-1)^{(ab-1)/2}=(-1)^{(a-1)/2}(-1)^{(b-1)/2}$ if both $a,b$ are odd numbers, we get
	$$
	(-1)^{\frac{e-1}{2}}\cdot e \cdot \sigma_1(\frac{D-e^2}{8}) = \sum_{r \, | \, \gcd(e,f)}(-1)^{\frac{r-1}{2}}\cdot r \cdot (-1)^{(e/r-1)/2}\cdot(e/r)\cdot m_{D/r^2}(e/r)
	$$
	Since for any $r \, | \, f$, a prototype $(a',b',d',e') \in \Pcal_{D/r^2}(0)$ only appears in $\tilde{\Pcal}_{D, re'}(0)$, we have
	$$
	\sum_{\substack{0 < e < \sqrt{D} \\ e \; {\rm odd}}}(-1)^{\frac{e-1}{2}}\cdot e \cdot \sigma_1(\frac{D-e^2}{8}) = \sum_{r \, | \, f } (-1)^{\frac{r-1}{2}}\cdot r\cdot S_{D/r^2}.
	$$
	It follows from \eqref{eq:iden:D:1:mod:8:sigma:1} that
	$$
	\sum_{r \, | \, f } (-1)^{\frac{r-1}{2}}\cdot r\cdot S_{D/r^2}=0.
	$$
	Since $S_{D/f^2}=0$ by Proposition~\ref{prop:iden:D:1:mod:8:sigma:1}, one concludes that $S_D=0$ by induction.
\end{proof}


\end{document}